\titleformat{\subsection}[hang]{\normalfont\bfseries}{\thesubsection}{1em}{}
\titlespacing\section{0pt}{3.5ex plus 0.5ex minus .2ex}{0.3ex plus .2ex}
\titlespacing\subsection{0pt}{2.5ex plus 0.5ex minus .2ex}{0.3ex plus .2ex}
\titlespacing\subsubsection{0pt}{2.5ex plus 0.5ex minus .2ex}{0.3ex plus .2ex}
\numberwithin{equation}{subsection}
\newtheorem{theorem}[equation]{Theorem}
\newtheorem*{theorem*}{Theorem}
\newtheorem{lemma}[equation]{Lemma}
\newtheorem*{lemma*}{Lemma}
\newtheorem{proposition}[equation]{Proposition}
\newtheorem*{proposition*}{Proposition}
\newtheorem{corollary}[equation]{Corollary}
\newtheorem*{corollary*}{Corollary}
\theoremstyle{definition}
\newtheorem{definition}[equation]{Definition}
\newtheorem*{definition*}{Definition}
\newtheorem{notation}[equation]{Notation}
\newtheorem*{notation*}{Notation}
\newtheorem{choice}[equation]{Choice}
\newtheorem*{choice*}{Choice}
\newtheorem{remark}[equation]{Remark}
\newtheorem*{remark*}{Remark}
\newtheorem{axiom}[equation]{Axiom}
\newtheorem*{axiom*}{Axiom}
\newcommand{\ciso}[2]{{c_{#1,#2}}}
\newcommand{\abs}[1]{\left\lvert#1\right\rvert}
\renewcommand{\restriction}{|}
\DeclareMathOperator{\Irr}{Irr}
\DeclareMathOperator{\End}{End}
\DeclareMathOperator{\ind}{ind}
\DeclareMathOperator{\Hom}{Hom}
\DeclareMathOperator{\red}{red}
\DeclareMathOperator{\Mod}{Mod-}
\DeclareMathOperator{\id}{id}
\DeclareMathOperator{\ord}{ord}
\DeclareMathOperator{\supp}{supp}
\DeclareMathOperator{\aff}{aff}
\DeclareMathOperator{\normal}{norm}
\DeclareMathOperator{\terms}{terms}
\DeclareMathOperator{\GL}{GL}
\DeclareMathOperator{\SL}{SL}
\DeclareMathOperator{\res}{res}
\DeclareMathOperator{\trace}{tr}
\newcommand{\cpt}{{\mathrm{cpt}}}
\newcommand{\gen}{{\textup{gen}}}
\newcommand{\rel}{{\textup{rel}}}
\newcommand{\Krel}{{\mathcal{K}\textup{-rel}}}
\newcommand{\Kzrel}{{\mathcal{K}^0\textup{-rel}}}
\newcommand{\flength}{{\ell_{\Krel}}}
\newcommand{\ellsubstitute}{r}
\newcommand{\gpalg}{{b}}
\newcommand{\isoarrow}{\stackrel{\sim}{\longrightarrow}}
\newcommand{\Nheart}{N(\rho_{M})^{\heartsuit}_{[x_{0}]_{M}}}
\newcommand{\Nzeroheart}{N(\rho_{M^{0}})^{\heartsuit}_{[x_{0}]_{M^{0}}}}
\newcommand{\Wheart}{W(\rho_{M})^{\heartsuit}_{[x_{0}]_{M}}}
\newcommand{\Wzeroheart}{W(\rho_{M^{0}})^{\heartsuit}_{[x_{0}]_{M^{0}}}}
\newcommand{\muT}{\mu^{\cT}}
\newcommand{\muTzero}{\mu^{\cT^0}}
\newcommand{\Waff}{W(\rho_M)_{\mathrm{aff}}}
\newcommand{\Waffz}{W(\rho_{M^0})_{\mathrm{aff}}}
\newcommand{\Wzero}{\Omega(\rho_{M})}
\newcommand{\Wzeroz}{\Omega(\rho_{M^0})}
\newcommand{\sfG}{\mathsf{G}}   
\newcommand{\sfM}{\mathsf{M}}   
\newcommand{\Coeff}{\cC}   
\newcommand{\Coeffinvnontriv}{\Coeff^\times \smallsetminus \{1\}}
\newcommand{\Coeffplus}{\Coeff_{>1}}   
\newcommand{\Rclsd}{\cR} 
\newcommand{\Isom}{\cI} 
\DeclareMathOperator{\Rep}{Rep}
\newcommand{\IEC}{\mathfrak{I}} 
\newcommand{\bC}{\mathbb{C}}
\newcommand{\bQ}{\mathbb{Q}}
\newcommand{\bR}{\mathbb{R}}
\newcommand{\bT}{\mathbb{T}}
\newcommand{\bZ}{\mathbb{Z}}
\newcommand{\cA}{\mathcal{A}}
\newcommand{\cB}{\mathcal{B}}
\newcommand{\cC}{\mathcal{C}}
\newcommand{\cH}{\mathcal{H}}
\newcommand{\cI}{\mathcal{I}}
\newcommand{\cJ}{\mathcal{J}}
\newcommand{\cK}{\mathcal{K}}
\newcommand{\cO}{\mathcal{O}}
\newcommand{\cR}{\mathcal{R}}
\newcommand{\cT}{\mathcal{T}}
\newcommand{\cU}{\mathcal{U}}
\newcommand{\ff}{\mathfrak{f}}
\newcommand{\fs}{{\mathfrak{s}}}
\newcommand{\fH}{\mathfrak{H}}
\newcommand{\fS}{{\mathfrak{S}}}
\newcommand{\fSz}{{\mathfrak{S}_0}}
\newcommand{\spacingatend}[1]{}
\begin{document}
\externaldocument[HAIKY-][nocite]{Adler--Fintzen--Mishra--Ohara_Reduction_to_depth_zero_for_tame_p-adic_groups_via_Hecke_algebra_isomorphisms}

\author{
	Jeffrey D. Adler,
	Jessica Fintzen,
	Manish Mishra,
	and
	Kazuma Ohara
}
\AtEndDocument{\bigskip{\footnotesize%
	\par
       \textsc{%
	Department of Mathematics and Statistics,
	American University,
	4400 Massachusetts Ave NW,
	Washington, DC 20016-8050, USA} \par
       \textit{E-mail address}: \texttt{jadler@american.edu}
}}
\AtEndDocument{\bigskip{\footnotesize%
	\par
       \textsc{
	Universit\"at Bonn,
           Mathematisches Institut,
           Endenicher Allee 60,
           53115 Bonn,
           Germany } \par
       \textit{E-mail address}: \texttt{fintzen@math.uni-bonn.de}
}}
\AtEndDocument{\bigskip{\footnotesize%
	\par
	\textsc{%
	Department of Mathematics,
        IISER Pune,
        Dr.\ Homi Bhabha Road,
        Pune, Maharashtra 411008, India}
\par
       \textit{E-mail address}: \texttt{manish@iiserpune.ac.in}  
}}
\AtEndDocument{\bigskip{\footnotesize%
	\par
	\textsc{%
	Graduate School of Mathematical Sciences, The University of Tokyo, 
	3-8-1 Komaba, Meguroku, Tokyo 153-8914, Japan}
	\par
       \textit{E-mail address}: \texttt{kohara@ms.u-tokyo.ac.jp}
}}

\title{Structure of Hecke algebras arising from types}
\date{}

\maketitle
\begin{abstract}
Let $G$ denote a connected reductive group over a 
nonarchimedean local field $F$ of residue characteristic $p$,
and let $\mathcal{C}$ denote
an algebraically closed field of characteristic $\ell\neq p$.
If $\rho$ is an irreducible smooth $\mathcal{C}$-representation of a compact,
open subgroup $K$ of $G(F)$, then the pair $(K,\rho)$ gives rise
to a Hecke algebra $\mathcal{H}(G(F),(K, \rho))$.
For a large class of pairs $(K,\rho)$, we show that
$\mathcal{H}(G(F),(K, \rho))$ is a semi-direct product
of an affine Hecke algebra with explicit parameters with a twisted
group algebra,
and that it is isomorphic to $\mathcal{H}(G^0(F),(K^0, \rho^0))$ for some reductive subgroup $G^0 \subset G$ with compact, open subgroup $K^0$ and depth-zero representation $\rho^0$ of $K^0$.

The class of pairs that we consider includes all depth-zero types.
In describing their Hecke algebras,
we thus recover a result of Morris as a special case.
In a second paper, we will show that our class also contains all the types constructed by Kim and Yu, and hence we obtain as a corollary that arbitrary Bernstein blocks are equivalent to depth-zero Bernstein blocks under minor tameness assumptions.

The pairs to which our results apply are described in an axiomatic way so that the results can be applied to other constructions of types by only verifying that the relevant axioms are satisfied. The Hecke algebra isomorphisms are given in an explicit manner and are support preserving.
\end{abstract}

{
	\renewcommand{\thefootnote}{}  
	\footnotetext{MSC2020: Primary 22E50, 22E35, 20C08, 20C20. Secondary 22E35} 
	\footnotetext{Keywords: $p$-adic groups, smooth representations, Hecke algebras, types, Bernstein blocks, mod-$\ell$ coefficients}

\footnotetext{The first-named author
was partially supported by
the American University College of Arts and Sciences
Faculty Research Fund.}
	\footnotetext{The second-named author was partially supported by NSF Grants DMS-2055230 and DMS-2044643, a Royal Society University Research Fellowship, a Sloan Research Fellowship and the European Research Council (ERC) under the European Union's Horizon 2020 research and innovation programme (grant agreement no. 950326).}
\footnotetext{
The third-named author was partially supported by
a Fulbright-Nehru Academic and Professional Excellence Fellowship and
a SERB Core Research Grant (CRG/2022/000415).}
\footnotetext{
The fourth-named author is supported by the FMSP program at Graduate School of Mathematical Sciences, the University of Tokyo and JSPS KAKENHI Grant number JP22J22712.
}
}

\setcounter{tocdepth}{2}

\newpage
\tableofcontents

\section{Introduction} 
The category of all smooth, complex representations of a $p$-adic group $G$ decomposes
as a product of indecomposable
full subcategories, called Bernstein blocks, each of which is equivalent to modules over a Hecke algebra under minor tameness assumptions.
 Therefore knowing the explicit structure of these Hecke algebras and their modules yields an understanding of the category of smooth representations. The famous example of the Iwahori--Hecke algebra has already been described in the 1960s and the structure of the above Hecke algebras for $\GL_n$ have been known since the 1990s and all of them played an important role in the representation theory. However, comparatively little has been known about the structure of these Hecke algebras in the general setting above.
In this paper, we provide an explicit description of a large class of such Hecke algebras as a semi-direct product of an affine Hecke algebra with a twisted group algebra. The Hecke algebras that we treat
 are endomorphism-valued functions on the $p$-adic group that transform on an appropriate compact, open subgroup $K$ via a sufficiently nice irreducible representation $\rho$. By \cite{HAIKY}, these Hecke algebras include among others the prior-mentioned Hecke algebras of Bernstein blocks that exist under minor tameness assumptions.
Moreover, we obtain an isomorphisms between Hecke algebras attached to our general pairs $(K ,\rho)$ with Hecke algebras attached to much simpler pairs of reductive subgroups of $G$, e.g. pairs $(K^0, \rho^0)$ with $\rho^0$ a depth-zero representation. In the special case of Bernstein blocks, this provides an isomorphism between a Bernstein block of positive-depth and a depth-zero Bernstein block. The Hecke algebra of the latter was essentially already known by Morris \cite{Morris}, while the positive-depth Hecke algebras
for general $p$-adic groups
remained a mystery until now.
To be precise, Morris' work only provides the Hecke algebras attached to (potentially non-singleton) finite products of depth-zero Bernstein blocks, but we also obtain a description of the Hecke algebras for single Bernstein blocks in the present paper.

The general axiomatic set-up of this present paper allows one to also apply the results to, for example,
Hecke algebras attached to Bernstein blocks arising from other (including future) constructions of pairs $(K, \rho)$ that do not rely on the above minor tameness assumptions. Moreover, we allow arbitrary algebraically closed fields of characteristic different from $p$ as our coefficents.

\subsection{Overview of the main results}
To explain our results in more detail,
let $F$ denote a non-archimedean local field,
$G$ a connected reductive group over $F$,
and $\Coeff$ an algebraically closed field of characteristic different from $p$.

To a pair consisting of a compact, open subgroup $K$ of $G(F)$ and a smooth, irreducible $\Coeff$-representation $(\rho, V_\rho)$ of $K$, we attach the Hecke algebra $\cH(G(F), (K, \rho))$ of compactly supported functions $\varphi: G(F) \rightarrow \End_\Coeff(V_\rho)$ that satisfy $\varphi(k_1gk_2)=\rho(k_1)\circ\varphi(g)\circ\rho(k_2)$ for all $k_1, k_2 \in K$ and $g \in G(F)$. The algebra structure arises from the convolution recalled in Section \ref{Hecke algebras and endomorphism algebras}. These algebras are closely related to smooth representations of $G(F)$ as follows.
The category of smooth $\Coeff$-representations decomposes into a product of indecomposable full subcategories.
In the case that $\Coeff=\bC$,
one calls these subcategories \emph{Bernstein blocks},
and under additional mild tameness assumptions,
this decomposition takes the form
\[
\Rep(G(F))=\prod_{(K, \rho)}\Rep^{(K, \rho)}(G(F)),
\]
where the product is taken over appropriate pairs $(K, \rho)$ 
 such that each block $\Rep^{(K, \rho)}(G(F))$ is equivalent
(via an explicit equivalence recalled in Section \ref{subsec:application})
to the category of unital right modules over $\cH(G(F), (K, \rho))$:
\[
\Rep^{(K, \rho)}(G(F)) \simeq \Mod \cH(G(F), (K, \rho))
\]
(\cite{MR771671, BK-types, Yu, Kim-Yu, MR4357723, Fi-exhaustion}). The pairs $(K, \rho)$ in the above Bernstein decomposition were
constructed by Kim and Yu (\cite{Kim-Yu, MR4357723}) and are a special case of what Bushnell and Kutzko (\cite{BK-types}) called \emph{$\fs$-types}.\footnote{For the experts, in this introduction, we use ``$\fs$-type'' to mean ``a type corresponding to a single Bernstein block'', the $\fs$ does not refer to a specific block.}

The pairs $(K, \rho)$ that we consider in this paper are described in terms of four axioms, Axioms \ref{axiomaboutHNheartandK}, \ref{axiombijectionofdoublecoset}, \ref{axiomexistenceofRgrp}, and \ref{axiomaboutdimensionofend}, that distill the key properties of the $\fs$-types constructed by Kim and Yu that are used to prove our main results below about the structure of the attached Hecke algebra $\cH(G(F), (K, \rho))$. Working with general pairs that satisfy only these four axioms means that our results will be applicable in a much broader setting than just for the $\fs$-types constructed by Kim and Yu. This is expected to include among others future constructions of $\fs$-types in the non-tame setting. 

We now fix a pair $(K, \rho)$ that satisfies our axioms. The first main result, Theorem \ref{theoremstructureofhecke},
describes the structure of $\cH(G(F), (K, \rho))$ via an explicit isomorphism 
\begin{equation}
\label{eq:intro-main1}
\cH(G(F), (K, \rho)) \isoarrow \bC[\Wzero, \muT] \ltimes \cH(\Waff, q),
\end{equation}
where $\Waff$ is an affine Weyl group
that is a normal subgroup of a larger symmetry group $\Wzero \ltimes \Waff$,
$q \colon S \longrightarrow \bC^\times$
is a parameter function on a set $S$ of simple reflections generating $\Waff$,
$\cH(\Waff, q)$ denotes the corresponding affine Hecke algebra,
and
$\bC[\Wzero, \muT]$ denotes the group algebra of $\Wzero$
twisted by a $2$-cocycle $\muT$.
Both of these latter algebras, and the meaning of their semi-direct
product, are recalled in Notation \ref{notn:algebras}.

The second main result (see Theorems \ref{thm:isomorphismtodepthzero} and \ref{thm:explicitisom} and Corollary \ref{cor:starpreservation}) is the existence of a full-rank reductive subgroup $G^0 \subseteq G$ and an irreducible representation $\rho^0$ of $K^0 \coloneqq  K \cap G^0(F)$  with $\rho = \rho^0 \otimes \kappa$ for some smooth representation $\kappa$, where we view $\rho^0$ also as a representation of $K$ via an inflation map described in Axiom \ref{axiomaboutK0vsK}\eqref{axiomaboutK0vsKinflation}, 
such that we obtain an explicit, support-preserving 
isomorphism of Hecke algebras
\begin{equation}
\label{eq:intro-main2}
\cH(G^0(F),(K^0, \rho^0)) \stackrel{\sim}{\longrightarrow} \cH(G(F),(K, \rho)).
\end{equation}
In the case where the pairs $(K,\rho)$ and $(K^0,\rho^0)$
are both $\fs$-types,
this leads to an equivalence of categories
(Theorem \ref{thm:equiv-blocks}):
\begin{equation}
\label{eq:equiv-of-categories}
\Rep^{(K,\rho)}(G(F)) \isoarrow \Rep^{(K^0,\rho^0)}(G^0(F)).
\end{equation}
The possibilities for the triple $(G^0, K^0, \rho^0)$ are described in an axiomatic way, via Axioms \ref{axiomaboutKM0vsKM}, \ref{axiomaboutK0vsK}, and \ref{axiomextensionoftheinductionofkappa}.
In the case of the $\fs$-types constructed by Kim and Yu, one can always choose a triple  $(G^0, K^0, \rho^0)$ with $\rho^0$ of depth zero. In fact, in this case, one option for the triple is already part of the input of the construction by Kim and Yu, twisted by a quadratic character introduced by \cite{FKS}. Since under minor tameness assumptions every Bernstein block has an $\fs$-type of the form constructed by Kim and Yu, this allows one to reduce a plethora of problems about representations of $p$-adic groups to the depth-zero setting.

While we have chosen an axiomatic treatment distilling the key properties that are really necessary for our proofs so that the results can be applied to a variety of different constructions of types, including future ones, and different settings, including coefficients of positive characteristic, we also show in the short Section \ref{sec:depth-zero} that all depth-zero $\fs$-types as well as the depth-zero types corresponding to connected parahoric subgroups satisfy the axioms. 
Thus our first main result in that case recovers and generalizes a result of Morris (\cite{Morris}). The proof that all $\fs$-types constructed by Kim and Yu satisfy our axioms is deferred to \cite{HAIKY} as it involves the extension of the quadratic twist introduced by \cite{FKS} to a group of representatives for the whole support of the Hecke algebra and a careful analysis of the Heisenberg--Weil representations. This uses a different style of arguments and is of independent interest.

\subsection{Some prior and related work} 
\label{subsec:history}
While our works are the first to provide an explicit description of Hecke algebras attached to such a broad class of generalizations of types of arbitrary reductive groups, which include in particular types for all Bernstein blocks if $\Coeff=\bC$, $G$ splits over a tamely ramified extension and $p$ does not divide the order of the absolute Weyl group of $G$, mathematicians have studied and used such Hecke algebras since the 1960s and obtained results about their structure in special cases.

A famous example of the Hecke algebras that we consider is the Iwahori--Hecke algebra attached to the Iwahori subgroup and a trivial representation thereof. The Iwahori--Hecke algebra has already been described in 1965 by Iwahori and Matsumoto for adjoint, split semisimple groups (\cite{Iwahori-Matsumoto}) and their work has been fundamental for future developments in the area.

If $G$ is split and semisimple,
and one replaces
the trivial character of the Iwahori subgroup $I$ by a depth-zero character $\chi$, then
Goldstein \cite{goldstein:thesis}
computed the Hecke algebras attached to $(I, \chi)$,
showing that such an algebra is isomorphic to the Iwahori--Hecke
algebra of a smaller group.
In a vast generalization, Morris (\cite{Morris}) described in 1993 all the Hecke algebras attached to pairs $(K, \rho)$ where $K$ is a parahoric subgroup of $G(F)$ and $\rho$ is an irreducible cuspidal representation of the quotient of $K$ by its pro-$p$ unipotent radical, i.e., in other words, $(K, \rho)$ is a depth-zero type for a finite union of Bernstein blocks by the work of Moy and Prasad (\cite{MR1253198, MR1371680}) and independently of Morris (\cite{MR1713308}). 

Knowing the Hecke algebras of types of Bernstein blocks allows one to study the representations in the Bernstein block via modules of the corresponding Hecke algebra. An example of this approach is Lusztig's famous work (\cite{Lusztig95unipotent, Lusztig02unipotentII}) in which he classified all unipotent representations of adjoint simple algebraic groups that split over an unramified extension by classifying the representations of the corresponding Hecke algebras using their explicit structure. Unipotent representations are a special class of depth-zero representations, and Lusztig's work provides an important case of an explicit local Langlands correspondence. The restriction to adjoint groups was later removed by Solleveld (\cite{Solleveld23unipotentLLC}), again using the explicit structure of the affine Hecke algebras attached to types of Bernstein blocks consisting of unipotent representations.

Beyond depth-zero representations, building up on the work of a lot of mathematicians on special cases over several decades, Bushnell and Kutzko (\cite{MR1204652}) provided a description of all the Hecke algebras attached to types for all Bernstein blocks for the group  $\GL_n$ in parallel to constructing these types. Their description of the Hecke algebras played an important role for the construction of the types themselves as well as for proving the exhaustiveness of their construction of supercuspidal representation.
Using results
from Bushnell and Kutzko
on supercuspidal representations of $\SL(n)$
(\cite{bushnell-kutzko:sln-1,bushnell-kutzko:sln-2}),
Goldberg and Roche 
(\cite{MR1901371,GoldbergRoche-Hecke})
provided a complete collection of types
for all Bernstein blocks of $\SL(n)$
and described the structure of the resulting Hecke algebras.
S\'echerre and Stevens (\cite{secherre-stevens:glnd-4})
achieved the same for all inner forms of $\GL(n)$, and
Miyauchi and Stevens
(\cite{MR3157998})
provided types for all classical groups assuming that $p\neq 2$ and 
described the  Hecke algebras corresponding to types associated to maximal proper Levi subgroups.
For general split reductive groups, Roche (\cite{MR1621409})
described types and the corresponding Hecke algebras for all principal series Bernstein
blocks under mild hypotheses on $p$.

For reductive groups $G$ that split over a tamely ramified field extension, Kim and Yu (\cite{Kim-Yu}) provided a construction of types based on Yu's construction of supercuspidal representations (\cite{Yu, MR4357723}) that provides types for all Bernstein blocks if $p$ does not divide the order of the absolute Weyl group of $G$ (\cite{Fi-exhaustion}). These types are a special case of the setting in which the results of the present paper apply, as we show in \cite{HAIKY}. 

While results about the structure of Hecke algebras, analogous to our result \eqref{eq:intro-main1}, have  been achieved previously in many different families, as described above, results analogous to \eqref{eq:intro-main2} have to our knowledge previously only been obtained in very special cases and for $\GL_n$ (\cite{MR1204652}), principal series Bernstein blocks of split reductive groups (\cite{MR1621409}) and supercuspidal Bernstein blocks (\cite{2021arXiv210101873O}).

\subsection{Sketch of the proofs of the main statements}
 We define the two isomorphisms \eqref{eq:intro-main1} and \eqref{eq:intro-main2} explicitly by defining explicit basis elements of the Hecke algebras $\cH(G(F), (K, \rho))$ and $\cH(G^0(F), (K^0, \rho^0))$ that will get mapped to each other and to the corresponding basis elements of $\Coeff[\Wzero, \muT] \ltimes \cH_{\Coeff}(\Waff, q)$ under the isomorphisms. The difficult task consists of doing this in a way that preserves the algebra structures.
	 
	 To provide a few more details, let us note that the pairs $(K, \rho)$ and $(K^0, \rho^0)$ come with Levi subgroups $M \subseteq G$ and $M^0 \subseteq G^0$ satisfying $M^0 \subseteq M$, which in the setting of $\fs$-types record Levi subgroups appearing in the supercuspidal supports of the corresponding Bernstein blocks,
	 and with a point $x_0$ in the Bruhat--Tits building of $G$ that is contained in the image of the Bruhat--Tits building of $M^0$. We also recall that $K^0=K \cap G^0(F)$ and $\rho=\rho^0 \otimes \kappa$.

	The bases for the Hecke algebras are indexed by $K$- and $K^0$-double cosets, respectively, that satisfy appropriate intertwining properties, and we show that these indexing sets agree and are isomorphic to a quotient $\Wzeroheart$ of a subgroup $\Nzeroheart$
		 of the normalizer $N_{G^0}(M^0)(F)$ of $M^0$ in $G^0$ that preserves the image of $x_0$ in the reduced Bruhat--Tits building of $M^0$, see Proposition \ref{propositiondoublecosetinjection}. This set-theoretic isomorphism endows the indexing set with a group structure, which we prove is isomorphic to a semi-direct product $\Wzero \ltimes \Waff$ whose normal factor is an affine Weyl group, see Propositions \ref{propositiondecompositionofW} and \ref{Rrhosimeqaffineweyl}.

	The basis elements of the Hecke algebras $\cH(G(F), (K, \rho))$ and $\cH(G^0(F), (K^0, \rho^0))$ are then defined by reinterpreting the two Hecke algebras as
	 $\End_{G(F)} \bigl(\ind_{K}^{G(F)} \rho\bigr)$ and
	 $\End_{G^0(F)} \bigl(\ind_{K^0}^{G(F)} \rho^0 \bigr)$,
	 and composing two different intertwining operators.	
	 In order to define the (spaces for the) intertwining operators, we introduce a whole family of pairs $(K_{x}, \rho_{x}=\rho_x^0 \otimes \kappa_x)$ and $(K^0_{x}, \rho^0_{x})$ for $x$ in an open, dense subset $\cA_{\gen}$ of an appropriate affine subspace $\cA_{x_{0}}$ of the Bruhat--Tits building of $G$ on which the group $\Wzero \ltimes \Waff$ acts.
	 The affine space underlying the affine Weyl group $\Waff$ is a quotient of $\cA_{x_{0}}$, see Proposition \ref{Rrhosimeqaffineweyl}. 
	 The families are set up in a way such that $(K, \rho) = (K_{x_{0}}, \rho_{x_{0}})$ and $(K^0, \rho^0) = (K^0_{x_{0}}, \rho^0_{x_{0}})$.  
	 We define the basis element $\Phi_w \in \End_{G(F)} \bigl(\ind_{K}^{G(F)} \rho\bigr) \simeq \cH(G(F), (K, \rho))$ attached to $w \in  \Wzero \ltimes \Waff$ to be the composition of the two intertwining operators
\begin{equation}
\label{eq:two-intertwining-ops}
	 \Theta^{\normal}_{w^{-1} x_0 \mid x_0} \colon \ind_{K_{x_0}}^{G(F)} (\rho_{x_0}) \rightarrow \ind_{K_{w^{-1} x_0}}^{G(F)} (\rho_{w^{-1} x_0})
	 \quad
	 \text{and}
	 \quad 
	 \ciso{w^{-1} x_0}{w} \colon \ind_{K_{w^{-1} x_0}}^{G(F)} (\rho_{w^{-1} x_0}) \rightarrow \ind_{K_{x_0}}^{G(F)} (\rho_{x_0}),
\end{equation}
	where the first intertwining operator, $\Theta^{\normal}_{w^{-1} x_0 \mid x_0} $, is constructed via a normalized integration, see Section \ref{subsec:intertwiningop}, in particular Lemma \ref{lemma:intertwining-operator} and Definition \ref{definitionnormalize}, and the second intertwining operator, $\ciso{w^{-1} x_0}{w}$, results from choosing an element $T_n$ in the one-dimensional $\Coeff$-vector space  $\Hom_{K_{nx_0}}({^n\rho_{x_0}},\rho_{nx_0})$ for $n$ a lift of $w$ in $\Nzeroheart$ and $^n\rho_{x_0}$ the $n$-conjugate of $\rho_{x_0}=\rho$, see Definition \ref{definiotionofciso} and \eqref{eq:defofciso} for details. The analogous definitions are made for $G^0, K^0, \rho^0$, adding a superscript ``0'' as appropriate.
	 
	In order to show that the basis elements defined for an appropriate choice of  $\{ T_n \}$ and $\{ T^0_n \}$ lead to the desired algebra isomorphisms \eqref{eq:intro-main1} and \eqref{eq:intro-main2}, we equip the indexing group $\Wzero \ltimes \Waff$ with a length function $\flength$ that is trivial on $\Wzero$ and is the standard length function on the affine Weyl group $\Waff$ arising from a choice of generators $S$. We then have to check that the isomorphisms \eqref{eq:intro-main1} and \eqref{eq:intro-main2} preserve the multiplication of two basis elements $\Phi_w$ and $\Phi_{w'}$ in the case where $\flength(ww')=\flength(w)+\flength(w')$, which we call the \emph{length-additive case}, and the case of multiplying $\Phi_s$ with itself for $s \in S \subset \Waff$, which we refer to as the \emph{quadratic relations}.
	 To achieve both cases, we analyse the two intertwining operators separately as well as their interaction. 
	
	To prove the required properties for the first intertwining operator we introduce a more general operator
	\(
    \Theta_{y \mid x}
	\colon
	\ind_{K_{x}}^{G(F)} (\rho_{x})
	\longrightarrow
	\ind_{K_{y}}^{G(F)} (\rho_{y})
	\)
	that is defined for all $x, y \in \cA_{\gen}$ and prove a variety of compatibility properties of these operators, 
	 see, for example, Lemma \ref{lemma:intertwining-operator}, Proposition \ref{transitivityofthetadver}, and Lemma \ref{lemmaaboutreplacingpoints} for more detailed statements.
	Moreover, a key step for proving the isomorphism \eqref{eq:intro-main2} consists of relating the operator $\Theta_{y \mid x}$ defined for $G$ directly with the corresponding operator $\Theta^{0}_{y \mid x}$ defined for $G^0$, which we achieve in Lemma \ref{lemmacompativilityofThetadepth0vspositivedepth}. This result relies on a compatibility of the representations $\kappa_x$ (recall: $\rho_{x}=\rho_x^0 \otimes \kappa_x$) for varying nearby points $x$, which is formalized in Axiom \ref{axiomextensionoftheinductionofkappa}\eqref{axiomextensionoftheinductionofkappacompatibilitywiththecompactind}.  In the setting of types constructed by Kim and Yu, this means a compatibility of various Heisenberg--Weil representations, which is proven in \cite[Corollary \ref{HAIKY-corollaryofpropositioninductiontwistedsequencever}]{HAIKY} and requires one to twist the initial construction of Kim and Yu by a quadratic character introduced in \cite{FKS}.
	
	The second family of intertwining operators $\ciso{w^{-1} x_0}{w}$ depends on the choice of elements $\{ T_n \}$, which are only determined up to scalars and which lead to a cocycle $\muT$ on $\Wzero \ltimes \Waff$, see Notation \ref{notationofthetwococycle}. We prove that we can choose the elements $\{ T_n \}$ such that the only part of the resulting cocycle $\muT$ that matters is its restriction to $\Wzero$ (see Proposition~\ref{prop:muTtrivial}). This is the cocycle that appears in the twisted group algebra $\Coeff[\Wzero, \muT]$. Moreover, we show that the choices $\{ T_n \}$ for $G$ and the choices $\{ T^0_n \}$ for $G^0$ can be made in a compatible way that leads to the same cocycles and therefore to the desired Hecke algebra isomorphism \eqref{eq:intro-main2}. This step uses an extension of (the restriction of) $\kappa$ to $\Nheart$, which is provided by Axiom \ref{axiomaboutKM0vsKM}\eqref{axiomaboutKM0vsKMaboutanextension}. 
	 Checking this axiom in the setting of the twisted construction of Kim and Yu is one of the key results of \cite{HAIKY}, see \cite[Proposition~\ref{HAIKY-propproofofaxiomaboutKM0vsKM}]{HAIKY}, which is based on \cite[Theorem \ref{HAIKY-twistextension}]{HAIKY}. Verifying the quadratic relations also requires again the compatibility of the $\kappa_x$ for nearby points $x$ mentioned above and forms one of the key steps of the present paper.
	
	We note that we said that the pairs $(K, \rho)$ and $(K^0, \rho^0)$ need to each satisfy a few axioms,  Axioms \ref{axiomaboutHNheartandK}, \ref{axiombijectionofdoublecoset}, \ref{axiomexistenceofRgrp}, and \ref{axiomaboutdimensionofend}, in order for us to conclude \eqref{eq:intro-main1}, and a few additional axioms describing the compatibility between the two pairs, Axioms \ref{axiomaboutKM0vsKM}, \ref{axiomaboutK0vsK}, and \ref{axiomextensionoftheinductionofkappa}, in order to achieve \eqref{eq:intro-main2}. 
	In fact, in order to obtain \eqref{eq:intro-main2} we require $(K, \rho)$ to only satisfy Axiom \ref{axiomaboutHNheartandK} and we prove in Section \ref{sec:comparison} that the remaining axioms are automatically satisfied by knowing them for $(K^0, \rho^0)$. In practice, checking the axioms for $(K^0, \rho^0)$ might be significantly easier than for $(K, \rho)$.
	As an example, we show in the rather short Section \ref{sec:depth-zero} that depth-zero $\fs$-types $(K^0, \rho^0)$ satisfy the Axioms \ref{axiomaboutHNheartandK}, \ref{axiombijectionofdoublecoset}, \ref{axiomexistenceofRgrp}, and \ref{axiomaboutdimensionofend}. In \cite{HAIKY}, we then apply our results from  Section \ref{sec:comparison} to deduce that also all the positive-depth $\fs$-types $(K, \rho)$ constructed by Kim and Yu (\cite{Kim-Yu, MR4357723}) satisfy the Axioms \ref{axiomaboutHNheartandK}, \ref{axiombijectionofdoublecoset}, \ref{axiomexistenceofRgrp}, and \ref{axiomaboutdimensionofend}

Our approach to proving the isomorphism \eqref{eq:intro-main1} was heavily inspired by the work of Morris (\cite{Morris}) who dealt with the case where $(K^0, \rho^0)$ is a complex depth-zero type with $K^0$ being a parahoric subgroup. In particular, we use similar intertwining operators than Morris.
However, to prove that the resulting basis elements of the Hecke algebras satisfy the desired relations, Morris defines a version of the first intertwining operator $\Theta_{w^{-1}x_0 \mid x_0}$ for $w$ contained in a larger group than the indexing group $\Wzero \ltimes \Waff$. Our approach in contrast consists of defining the first intertwining operator not for a larger group, but, instead, of defining intertwining operators $\Theta_{y \mid x}$ for all $x, y \in \cA_{\gen}$, i.e., also for points $x$ and $y$ that do not lie in the $\Wzero \ltimes \Waff$-orbit of $x_{0}$. Although we only need the intertwining operator $\Theta_{w^{-1} x_{0} \mid x_{0}}$ to define the basis element $\Phi_w$ for $w \in  \Wzero \ltimes \Waff$, these general intertwining operators $\Phi_{y \mid x}$ allow us to prove the desired properties about $\Phi_{w}$ in Sections~\ref{subsec:intertwiningop} and \ref{sec:simplereflections}. We believe that this approach makes our arguments look cleaner, and at the same time allows us to treat a more general setting than the one that Morris dealt with. And it provides us with the set-up in which we can prove our second main result, \eqref{eq:intro-main2}.

\subsection{Structure of the paper}

We have tried to state all assumptions that apply in each subsection at the beginning of the subsection, so that it is easy to see what assumptions are in place where. Moreover, in the main results and in propositions that get used later within the paper, we have repeated the assumptions that are in place to provide clarity for the reader and allow for easier backtracking through the results and to make it easier  to follow our proofs. We have also tried to state all the axioms as early as possible in each subsection so that a reader interested in simply knowing what axioms they need to verify to apply the results can reach the statements of the axioms as quickly as possible. A list of axioms is provided on page \pageref{page:listofaxioms}.
We have also created a list of notation (page \pageref{page:listofnotation}).

We give a brief overview of the following sections of this paper.

In \S\ref{subsec:notation} we fix some notation, 
and in \S\ref{Hecke algebras and endomorphism algebras}, we recall the definition of Hecke algebras as a convolution algebra of functions and its relation with the second interpretation as endomorphism algebra, because we will use both points of view.

In Section~\ref{Structure of a Hecke algebra}, we prove the structure theorem \eqref{eq:intro-main1} for the Hecke algebra associated to a pair $(K, \rho)$ that satisfies Axioms~\ref{axiomaboutHNheartandK}, \ref{axiombijectionofdoublecoset}, \ref{axiomexistenceofRgrp}, and \ref{axiomaboutdimensionofend} defined there.
For this, we first construct an explicit vector-space basis $\{\varphi_{w}\}$ of $\cH(G(F), (K, \rho))$ indexed by the group $\Wheart$ under the assumptions of Axioms~\ref{axiomaboutHNheartandK} and \ref{axiombijectionofdoublecoset} 
as follows.
In \S\ref{subsec:familyofcovers}, we introduce the family $(K_{x}, \rho_{x})$ mentioned above indexed by 
 $\cA_{\gen}$. 
In \S\ref{subsec:intertwiningop}, for $w \in \Wheart$, we define the intertwining operators
$\Theta^{\normal}_{w^{-1} x_0 \mid x_0}$
and
$\ciso{w^{-1} x_0}{w}$
(see \eqref{eq:two-intertwining-ops})
and
define the element $\Phi_{w}$ of
$\End_{G(F)} \bigl(
\ind_{K_{x_0}}^{G(F)} (\rho_{x_0})
\bigr)$
as their composition.
We then define the basis element $\varphi_{w} \in \cH(G(F), (K, \rho))$ to be the element corresponding  to $\Phi_{w}$ via the isomorphism
$\cH(G(F), (K, \rho)) \simeq \End_{G(F)} \bigl(
\ind_{K_{x_0}}^{G(F)} (\rho_{x_0})
\bigr)$
given in \S\ref{Hecke algebras and endomorphism algebras}.
Based on discussions about the intertwining operators in \S\ref{subsec:intertwiningop}, we investigate the relations involving the elements $\varphi_{w}$ in the length-additive case.
In \S\ref{subsection:indexinggroup}, we prove that the indexing group $\Wheart$ is isomorphic to the semi-direct product $\Wzero \ltimes \Waff$ mentioned above under the assumption of Axiom~\ref{axiomexistenceofRgrp}.
In \S\ref{sec:simplereflections}, we introduce Axiom~\ref{axiomaboutdimensionofend}, and under this axiom, we prove quadratic relations for the elements $\varphi_{s}$ corresponding to simple reflections $s \in \Waff$.
When the characteristic $\ell$ of $\Coeff$ is zero or a banal prime,
the coefficients of the quadratic relations are explicitly calculated in \S\ref{subsec:quadratic-banal}.
In \S\ref{subsection:The description of the Hecke algebra}, we prove the structure theorem \eqref{eq:intro-main1} by proving appropriate braid relations and combining them with the discussions from previous subsections.
In \S\ref{Anti-involution of the Hecke algebra}, we assume that the coefficient field $\Coeff$ admits a nontrivial involution.
Under this assumption, we prove that we can take the isomorphism in \eqref{eq:intro-main1} such that it preserves anti-involutions on both sides.
In \S\ref{subsec:isononuniqueness}, we classify the support-preserving isomorphisms in \eqref{eq:intro-main1}.

In Section~\ref{sec:comparison}, we prove the Hecke algebra isomorphism~\eqref{eq:intro-main2} for a pair $(K,\rho)$ consisting of a compact, open subgroup $K$ of $G(F)$ and an irreducible representation $\rho$ of $K$ and a similar pair $(K^0, \rho^0)$ for a reductive subgroup $G^0$ of $G$ that are related according to Axioms
\ref{axiomaboutKM0vsKM},
\ref{axiomaboutK0vsK},
and
\ref{axiomextensionoftheinductionofkappa}.
We assume Axioms~\ref{axiomaboutHNheartandK}, \ref{axiombijectionofdoublecoset}, \ref{axiomexistenceofRgrp}, and \ref{axiomaboutdimensionofend} for the pair $(K^0, \rho^0)$ but only assume Axiom~\ref{axiomaboutHNheartandK} for the pair $(K, \rho)$.
We prove in \S\ref{subsec:Heckeisom} that the other axioms for $(K, \rho)$ automatically follow in this case.
Thus, by applying the results in Section~\ref{Structure of a Hecke algebra} to the pairs $(K^0, \rho^0)$ and $(K, \rho)$, we obtain the descriptions of the Hecke algebras as semi-direct products of affine Hecke algebras with twisted group algebras.
The argument in \S\ref{subsec:relevance} implies that the affine Weyl groups and the subgroups of length-zero elements for $(K^0, \rho^0)$ and $(K, \rho)$ agree.
In Section~\ref{subsec:Heckeisom},
we prove the Hecke algebra isomorphism~\eqref{eq:intro-main2}.
We have already shown the each of $\cH(G(F), (K,\rho))$
and $\cH(G^0(F), (K^0, \rho^0))$
is isomorphic to a semi-direct product of an affine Hecke algebra
(where the underlying affine reflection groups are the same)
with a twisted group algebra (where the underlying groups are the same).
We complete the argument by showing that,
if certain choices are made carefully,
then 
the parameters of the two affine Hecke algebras match up,
as well as the two cocycles that implement the twisting of the two group algebras.
In Section~\ref{subsection:Preserving the anti-involutions}, under the assumption that $\Coeff$ admits a nontrivial involution, we prove that we can take the isomorphism in \eqref{eq:intro-main2} such that it preserves anti-involutions on both sides.
As an application, in Section~\ref{subsec:application}, we prove the equivalence of Bernstein blocks in \eqref{eq:equiv-of-categories}.
Moreover, we also prove that when restricted to irreducible objects,
the equivalence of Bernstein blocks
preserves temperedness, and
preserves the Plancherel measure on the tempered dual up to an explicit constant
factor (see Theorem \ref{thm:plancherel}).

In Section \ref{sec:depth-zero},
we determine the structure of all Hecke algebras arising from depth-zero
pairs $(K,\rho)$.
We review the construction of such pairs in \S\ref{subsec:constructionofdepthzero},
and describe the system of hyperplanes that applies to this case
in \S\ref{subsec:affinehyperplanes-depthzero}.
In \S\ref{subsec:structureofheckefordepthzerotypes},
we obtain the structure of the Hecke algebras
(Theorem \ref{theoremstructureofheckefordepthzero})
by showing 
that depth-zero pairs satisfy those axioms from
Section~\ref{Structure of a Hecke algebra}
necessary to allow us to apply 
Theorem \ref{theoremstructureofhecke}.

\subsection{Guidance for the reader} 
If a reader is interested in our results only in the case of types as constructed by Kim and Yu, we suggest the following approach to the paper:
Start with
\cite[Section \ref{HAIKY-Section-KimYutypes}]{HAIKY}
and read until \cite[Section~\ref{HAIKY-subsec:affinehyperplanesKimYu}]{HAIKY} to learn the definitions of the objects $G$, $M$, $x_0$, $\mathfrak{H}$, $K_M$, $\rho_M$, $K_{x}$, $K_{x, +}$, $\rho_{x}$ and $G^0$, $M^0$, $K_{M^0}$, $\rho_{M^0}$, $K^{0}_{x}$, $K^{0}_{x, +}$, $\rho^{0}_{x}$.
Afterwards, we encourage the reader to read Section \ref{Structure of a Hecke algebra} and \ref{sec:comparison} of the present paper but to replace in their mind the abstract objects $G$, $M$, $x_0$, $\mathfrak{H}$, $K_M$, $\rho_M$, $K_{x}$, $K_{x, +}$, $\rho_{x}$ and $G^0$, $M^0$, $K_{M^0}$, $\rho_{M^0}$, $K^{0}_{x}$, $K^{0}_{x, +}$, $\rho^{0}_{x}$
with the explicit objects introduced in
\cite[Section \ref{HAIKY-Section-KimYutypes}]{HAIKY}.
We have provided a lot of cross-references between
\cite[Section \ref{HAIKY-Section-KimYutypes}]{HAIKY}
and Section \ref{Structure of a Hecke algebra} and \ref{sec:comparison} of this paper to ease this approach.

For a reader who is interested in our general results and who is already familiar with either the construction of Kim and Yu or depth-zero types, we have added after the introduction of each abstract object an explanation or reference regarding what these objects are in these specific settings, and we provide references to where the axioms are proven in these settings. Such a reader might also benefit from skimming or reading \cite[Section \ref{HAIKY-Section-KimYutypes}]{HAIKY} until \cite[Section~\ref{HAIKY-subsec:affinehyperplanesKimYu}]{HAIKY} (for the types as constructed by Kim and Yu) and / or Section \ref{sec:depth-zero} until Section \ref{subsec:affinehyperplanes-depthzero} (for the depth-zero setting) in parallel to reading Section \ref{Structure of a Hecke algebra} and \ref{sec:comparison} of the present paper.

For a reader using the axiomatic set-up to prove similar results in other settings,  we have provided a complete list of axioms on page \pageref{page:listofaxioms} with references to where the axioms can be found. Moreover, we have tried to state the axioms as early as possible in their subsections to allow the reader to receive the desired information, i.e., the axioms to check, as quickly as possible.

\subsection*{Acknowledgments}
Various subsets of the authors benefited from the hospitality
of American University, Duke University, the University of Bonn, 
the  Hausdorff Research Institute for Mathematics in Bonn,
the Max Planck Institute for Mathematics in Bonn, the Indian Institute for Science Education and Research (Pune), and the University of Michigan.
The authors also thank Alan Roche, 
Dan Ciubotaru, and
Tasho Kaletha, for discussions related to this paper. 
This project was started independently by three different subgroups of the authors, and the fourth-named author thanks his supervisor Noriyuki Abe for his enormous support and helpful advice.
The fourth-named author is also grateful to Tasho Kaletha for the discussions during his stay at the University of Michigan in 2023.

\section{Preliminaries}
\subsection{Notation} 
\label{subsec:notation}

Let $F$ be a non-archimedean local field endowed with a discrete valuation $\ord$ on $F^{\times}$ with the value group $\mathbb{Z}$. 
For a finite extension $E$ of $F$,
we also write $\ord$ for the unique extension of this valuation to $E^{\times}$.
We write $\cO_{E}$ for the ring of integers of $E$,
and let $\cO = \cO_F$.
Let $p$ denote the characteristic of the residue field $\ff$ of $F$.

Let $\bR$ and $\bC$ denote the fields of real and complex numbers,
respectively. 
Let $\Coeff$
\index{notation-ax}{C@$\Coeff$ (coefficient field)}
denote an algebraically closed field of characteristic
\index{notation-ax}{l@$\ell$ (characteristic of $\Coeff$)}
$\ell\neq p$.
Except when otherwise indicated,
all representations below are on vector spaces over $\Coeff$.
We fix a square root
\index{notation-ax}{000@$1/2$ in a superscript}%
$p^{1/2}$ of $p$ in $\Coeff$.
For an element $c = p^{n} \in \Coeff$ with $n \in \bZ$,
we write $c^{1/2} \coloneqq \left(p^{1/2}\right)^{n}$.

For a torus $S$ that is defined and splits over $F$, let $X^{*}(S)$ denote the character group of $S$ and $X_{*}(S)$ denote the cocharacter group of $S$.

For a connected reductive group $G$ defined over $F$, we write $Z(G)$ for the center of $G$ and $A_{G}$ for the maximal split torus in $Z(G)$.
For a reductive subgroup $M$ of $G$, let $N_{G}(M)$ (resp.\ $Z_{G}(M)$) denote the normalizer (resp.\ centralizer) of $M$ in $G$.

Let $\cB(G, F)$ denote the enlarged Bruhat--Tits building of $G$, and for a maximal split torus $S$ of $G$, let $\cA(G, S, F)$ denote the apartment of $S$ in $\cB(G, F)$.
We also write $\cB^{\red}(G, F)$ for the reduced building of $G$ and $\cA^{\red}(G, S, F)$ for the apartment of $S$ in $\cB^{\red}(G, F)$.
For $x \in \cB(G, F)$, let $[x]_{G}$ denote the image of $x$ in $\cB^{\red}(G, F)$.
We might also write  $[x]$ instead of  $[x]_{G}$
if the group $G$ is clear from the context.
We write $G(F)_{x}$
and $G(F)_{[x]}$
for the stabilizers of $x$
and $[x]$
in $G(F)$.
Let $G(F)_{x,0}$ denote the group of $\cO$-points of the connected parahoric group scheme
of $G$ associated to the point $x$,
and $G(F)_{x,0+}$ the pro-$p$ radical of $G(F)_{x,0}$.
Let $\sfG_{x}$
\index{notation-ax}{G x@$\sfG_{x}$}%
denote the reductive quotient of the special fiber of the connected parahoric group scheme above.
Thus, $\sfG_{x}(\ff) = G(F)_{x,0} / G(F)_{x, 0+}$.
(We will use this font convention more widely.
Thus, for a Levi subgroup $M$ of $G$, and $x\in \cB(M,F)$,
we have the $\ff$-group $\sfM_{x}$.)

If $P$ is a parabolic subgroup of $G$, then let $U_P$ denote the
unipotent radical of $P$.
We define the set $\cU(M)$
\index{notation-ax}{UM@$\cU(M)$}
as 
\[
\cU(M) = \left\{
U_{P} \mid \text{$P \subseteq G$ is parabolic with Levi factor $M$}
\right\}.
\]

Suppose that $K$ is an open subgroup of a locally profinite group $H$.
For a smooth representation $(\rho, V_{\rho})$ of $K$, let 
\[
\left(
\ind_{K}^{H} (\rho), \ind_{K}^{H} (V_{\rho})
\right)
\]
denote the compactly induced representation of $(\rho, V_{\rho})$.
Here, we realize $\ind_{K}^{H}(\rho)$ as the right regular representation on
\[
\ind_{K}^{H}(V_{\rho}) = \left\{
f \colon H \rightarrow V_{\rho} \colon \text{compactly supported modulo $K$} \mid 
f(kh) = \rho(k) (f(h)) \, (k \in K, h \in H)
\right\}.
\]

Suppose that $K$ is a subgroup of a group $H$ and $h \in H$. We denote $hKh^{-1}$ by $^hK$. If $\rho$ is a representation of $K$, let $^h\!\rho$ denote the representation $x\mapsto \rho(h^{-1}xh)$ of $^hK$. 
If an element $h \in H$ satisfies
\[
\Hom_{K \cap ^h\!K} (^h\!\rho, \rho) \neq \{0\},
\]
we say that $h$ \emph{intertwines} $\rho$.
We write
\[
I_{H}(\rho) = \left \{
h \in H \mid \text{$h$ intertwines $\rho$}
\right \}.
\]
We also write
\[
N_{H}(K) = \left\{
h \in H \mid {^h\!K} = K
\right\}
\]
and
\[
N_{H}(\rho) = \left\{
h \in N_{H}(K) \mid {^h\!\rho} \simeq \rho
\right\}.
\]

For a representation $(\rho, V_{\rho})$ of a group $H$, we identify $\rho$ with its representation space $V_{\rho}$ by abuse of notation.

For any vector space $V$, let $\id_V$
\index{notation-ax}{id@$\id$}
denote the identity map on $V$.

Throughout the paper we let $G$ be a connected reductive group defined over $F$.

\subsection{Hecke algebras and endomorphism algebras}
\label{Hecke algebras and endomorphism algebras}
Let $K$ be a compact, open subgroup of $G(F)$ and let $(\rho, V_{\rho})$ be an irreducible smooth representation of $K$.
We recall the definition of the Hecke algebra associated to the pair $(K, \rho)$.
Let $\cH(G(F), (K, \rho))$ \index{notation-ax}{H(GFKr@$\cH(G(F), (K, \rho))$} denote the space of compactly supported functions
\[
\varphi \colon  G(F)\to \text{End}_{\Coeff}({V_\rho})
\]
satisfying
\[
\varphi(k_1 g k_2)=\rho(k_1)\circ \varphi(g) \circ \rho(k_2)
\]
for all $k_1, k_2 \in K$ and $g \in G(F)$. 
We put an associative $\Coeff$-algebra structure on $\cH(G(F), (K, \rho))$,
from now on called the 
\emph{Hecke algebra associated to the pair $(K, \rho)$},
as follows.
Suppose that $\varphi_1$ and $\varphi_2$ are functions in $\cH(G(F), (K, \rho))$.
We define the function $\varphi_{1} * \varphi_{2} \in \cH(G(F), (K, \rho))$ as
\[
(\varphi_1 * \varphi_2)(g) =
\sum_{h \in G(F) / K}
\varphi_1(h) \circ \varphi_2(h^{-1}g)
\]
for $g \in G(F)$
(see \cite[\S8.6]{vigneras:modular}).
We note that since $\varphi_{1}$ is compactly supported, the sum on the right hand side is a finite sum.
We also note that when the characteristic $\ell$ of $\Coeff$ is zero,
then this multiplication is equivalent to the standard convolution operation
\[
(\varphi_1*\varphi_2)(g)=\int_{G(F)} \varphi_1(h) \circ \varphi_2(h^{-1} g)\, dh,
\]
where $dh$ denotes a Haar measure on $G(F)$ which is
chosen so that $K$ has measure one.
(However, the isomorphism class of $\cH(G(F),(K, \rho))$ does not
depend on the choice of measure.)
If the group $K$ is clear from the context, we drop it from the notation and write $\cH(G(F), \rho)$ for $\cH(G(F), (K, \rho))$.\index{notation-ax}{H(GFr@$\cH(G(F), \rho)$}%

\begin{remark}
\label{rmkrl}
The definition of $\cH(G(F), \rho)$ above is different from the definition of $\cH(G(F), \rho)$ in \cite[Section~2]{BK-types}.
More precisely, our $\cH(G(F), \rho)$ is denoted by $\cH(G(F), \rho^{\vee})$ in \cite[Section~2]{BK-types}, where $\rho^{\vee}$ denotes the contragredient representation of $\rho$.
We also note that our $\cH(G(F), \rho)$ is written as $\check{\cH}(G(F), \rho)$
in \cite[Section~8]{Kim-Yu} and \cite[Section~17]{Yu}.
\end{remark}

For $\varphi \in \cH(G(F), \rho)$, we write $\supp(\varphi)$ for the support of $\varphi$.
For $g \in G(F)$, $\varphi \in \cH(G(F), \rho)$, and $k \in K \cap {^g\!K}$, we have
$$
{\rho}(k) \circ \varphi(g)
=\varphi(kg)
=\varphi(g(g^{-1}kg))
=\varphi(g) \circ {\rho}(g^{-1}kg)
=\varphi(g) \circ {^{g}\!\rho}(k),
$$
and hence
\[
\varphi(g) \in \Hom_{K \cap ^g\!K}(^g\!\rho, \rho).
\]
Therefore we obtain that $\supp(\varphi) \subset I_{G(F)}(\rho)$ for any $\varphi \in \cH(G(F), \rho)$.
For $g \in G(F)$, we define the subspace $\cH(G(F), \rho)_{g}$ \index{notation-ax}{H(GFrg@$\cH(G(F), \rho)_g$} of $\cH(G(F), \rho)$ as
\[
\cH(G(F), \rho)_{g} = \{
\varphi \in \cH(G(F), \rho) \mid \supp(\varphi) \subset K g K
\}.
\]
The subspace $\cH(G(F), \rho)_{g}$ is zero if $g\notin I_{G(F)}(\rho)$,
and it depends only on the double
coset $KgK$, and not on the choice of the coset representative $g$.
As a vector space, we have
\begin{align}
\label{vectorspacedecomposition}
\cH(G(F), \rho) = \bigoplus_{g \in K \backslash I_{G(F)}(\rho) /K} \cH(G(F), \rho)_{g}.
\end{align}

According to \cite[Section~8.5]{vigneras:modular}, there exists an isomorphism of $\Coeff$-algebras
\begin{align}
\label{heckevsend}
\cH(G(F), \rho)
\isoarrow
\End_{G(F)}\bigl(\ind_{K}^{G(F)} (\rho)\bigr).
\end{align}
We write the isomorphism above explicitly.
For $v \in V_\rho$, we define $f_{v} \in \ind_{K}^{G(F)} (V_\rho)$ as
\[
f_v(g) =
\begin{cases}
\rho(g) (v) & (g \in K ), \\
0 & (\text{otherwise}).
\end{cases} 
\]
Then for $\Phi \in \End_{G(F)}\bigr(\ind_{K}^{G(F)} (\rho)\bigr)$, the corresponding element $\varphi \in \cH(G(F), \rho)$ is defined by
\begin{equation} \label{eqvarphifromPhi}
\varphi(g) (v) = \left(\Phi(f_{v})\right)(g)
\end{equation}
for $v \in V_\rho$ and $g \in G(F)$.
Conversely, for $\varphi \in \cH(G(F), \rho)$,
the corresponding element 
\[
\Phi \in \End_{G(F)}\bigr(\ind_{K}^{G(F)} (\rho)\bigr)
\]
is defined by
\[
\left(\Phi(f)\right)(g) 
=
\sum_{h \in G(F)/K} \varphi(h) \left(
 f(h^{-1} g)
\right)
\]
for $f \in \ind_{K}^{G(F)} (V_\rho)$ and $g \in G(F)$.

For a subgroup $K'$ of $G(F)$ containing $K$, we identify the compactly induced representation $\ind_{K}^{K'} (\rho)$ with the $K'$-subrepresentation of $\ind_{K}^{G(F)} (\rho)$ on the space
\[
\Bigl\{
f \in \ind_{K}^{G(F)} (V_\rho) \Bigr. \,\Bigl |\,  \supp(f) \subset K'
\Bigr\},
\]
where $\supp(f)$ denotes the support of $f$.
More generally, for a subset $K'$ of $G(F)$ such that $K \cdot K' = K'$, we define a subspace $\ind_{K}^{K'} (V_\rho)$
\index{notation-ax}{indK@$\ind_{K}^{K'} (V_\rho)$ \: ($K'\subset G(F)$ a subset)}
of $\ind_{K}^{G(F)} (V_\rho)$ as
\[
\ind_{K}^{K'} (V_\rho) = \Bigl\{
f \in \ind_{K}^{G(F)} (V_\rho) \Bigr. \,\Bigl|\, \supp(f) \subset K'
\Bigr\}.
\]
In particular, we regard $(\rho, V_\rho)$ as a $K$-subrepresentation of $(\ind_{K}^{G(F)} (\rho), \ind_{K}^{G(F)} (V_\rho))$ via the isomorphism
\[
\rho \isoarrow \ind_{K}^{K} (\rho)
\]
defined via
\[
V_\rho \ni v \longmapsto f_{v} \in \ind_{K}^{K} (V_\rho).
\]
\begin{lemma}
\label{lemmavarphisupprtedonksKequivalenttoPhirhoinKsK}
Let $g \in K \backslash I_{G(F)}(\rho) /K$.
Let $\Phi \in \End_{G(F)}\bigr(\ind_{K}^{G(F)} (\rho)\bigr)$ and $\varphi \in \cH(G(F), \rho)$  correspond to each other via the isomorphism in \eqref{heckevsend}.
Then the element $\varphi$ is supported on $K g K$ if and only if
\[
\Phi \left(
V_\rho
\right) \subset \ind_{K}^{K g K} (V_\rho).
\]
\end{lemma}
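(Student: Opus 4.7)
The plan is to derive both implications directly from the explicit formula \eqref{eqvarphifromPhi} relating $\varphi$ and $\Phi$, together with the identification of $V_\rho$ with the subspace $\ind_K^K(\rho) \subset \ind_K^{G(F)}(\rho)$ via $v \mapsto f_v$.

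First I would unwind what each side of the claimed equivalence means. The subspace $\ind_K^{KgK}(V_\rho)$ consists precisely of those $f \in \ind_K^{G(F)}(V_\rho)$ with $f(g') = 0$ for all $g' \notin KgK$. So $\Phi(V_\rho) \subset \ind_K^{KgK}(V_\rho)$ is equivalent to saying that $(\Phi(f_v))(g') = 0$ for every $v \in V_\rho$ and every $g' \notin KgK$. On the other side, $\varphi$ is supported on $KgK$ iff $\varphi(g') = 0$ as an endomorphism of $V_\rho$ for every $g' \notin KgK$, i.e., iff $\varphi(g')(v) = 0$ for every $v \in V_\rho$ and every $g' \notin KgK$.

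The key step is then to combine these two reformulations with the defining identity $\varphi(g')(v) = (\Phi(f_v))(g')$ from \eqref{eqvarphifromPhi}, which immediately identifies the two conditions term by term. For the forward direction, assuming $\supp(\varphi) \subset KgK$ and picking any $v \in V_\rho$ and $g' \notin KgK$, the right-hand side $(\Phi(f_v))(g')$ equals $\varphi(g')(v) = 0$, so $\Phi(f_v) \in \ind_K^{KgK}(V_\rho)$. For the reverse direction, assuming $\Phi(V_\rho) \subset \ind_K^{KgK}(V_\rho)$ and picking $g' \notin KgK$, we have $\varphi(g')(v) = (\Phi(f_v))(g') = 0$ for every $v \in V_\rho$, so $\varphi(g') = 0$.

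There is no real obstacle here: the statement is essentially a restatement of \eqref{eqvarphifromPhi} in terms of supports, and the proof is a direct unpacking of definitions. The only point that could conceivably require a sentence of justification is the observation that $\Phi(V_\rho) \subset \ind_K^{KgK}(V_\rho)$ is equivalent to $\Phi(f_v) \in \ind_K^{KgK}(V_\rho)$ for all $v \in V_\rho$, which is immediate from the identification of $V_\rho$ with the span of $\{f_v : v \in V_\rho\}$.
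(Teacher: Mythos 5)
Your proposal is correct and takes essentially the same approach as the paper: both unwind the defining identity $\varphi(h)(v) = (\Phi(f_v))(h)$ from \eqref{eqvarphifromPhi} and observe that membership in $\ind_K^{KgK}(V_\rho)$ is exactly the support condition.
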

\begin{proof}
According to the construction of the isomorphism in \eqref{heckevsend},
we have 
\[
\varphi(h) (v) = \left(\Phi(f_{v})\right)(h)
\]
for all $h \in G$.
Hence, the function $\varphi$ is supported on $K g K$ if and only if for any $v \in V_\rho$, we have 
\[
\supp \left(
\Phi(f_{v})
\right) \subset K g K,
\]
that is, $\Phi \left(
f_{v}
\right) \in \ind_{K}^{K g K} (V_\rho).$
\end{proof}

For a subgroup $K'$ of $G(F)$ containing $K$, we define a subalgebra $\cH(K', \rho)$ \index{notation-ax}{H(K'r@$\cH(K', \rho)$} of $\cH(G(F), \rho)$ as
\[
\cH(K', \rho) = \left\{
\varphi \in \cH(G(F), \rho) \mid \supp(\varphi) \subset K'
\right\}.
\]
According to Lemma~\ref{lemmavarphisupprtedonksKequivalenttoPhirhoinKsK}, the subalgebra $\cH(K', \rho)$ corresponds to the subalgebra $\End_{G(F)}\bigl(\ind_{K}^{G(F)} (\rho)\bigr)_{K'}$ \index{notation-ax}{EndGF@$\End_{G(F)}\bigl(\ind_{K}^{G(F)} (\rho)\bigr)_{K'}$} of $\End_{G(F)}\bigl(\ind_{K}^{G(F)} (\rho)\bigr)$ defined as
\begin{align*}
\End_{G(F)}\bigl(\ind_{K}^{G(F)} (\rho)\bigr)_{K'} &= \left\{
\Phi \in \End_{G(F)}\bigl(\ind_{K}^{G(F)} (\rho)\bigr) \mid
\Phi \left(
V_\rho
\right) \subset \ind_{K}^{K'} (V_\rho)
\right\} \\
&= \left\{
\Phi \in \End_{G(F)}\bigl(\ind_{K}^{G(F)} (\rho)\bigr) \mid
\Phi \left(
\ind_{K}^{K'} (V_\rho)
\right) \subset \ind_{K}^{K'} (V_\rho)
\right\}
\end{align*}
via the isomorphism in \eqref{heckevsend}.
Similar to \eqref{heckevsend}, we also have an isomorphism
\begin{align}
\label{heckevsendK'}
\cH(K', \rho) \isoarrow \End_{K'}\bigl(\ind_{K}^{K'} (\rho)\bigr),
\end{align}
and one can check easily that
the following diagram commutes:
\begin{equation} \label{DiagramEnd}
\xymatrix{
\cH(K', \rho)
	\ar[d]_-{\id}
	\ar[r]^-{\eqref{heckevsend}}
	\ar@{}[dr]|\circlearrowleft
& \End_{G(F)}\bigl(\ind_{K}^{G(F)} (\rho)\bigr)_{K'}
	\ar[d]^-{\res}\\
\cH(K', \rho)
	\ar[r]^-{\eqref{heckevsendK'}}
& \End_{K'}\bigl(\ind_{K}^{K'} (\rho)\bigr),
}
\end{equation}
where $\res$ denotes the restriction map
defined by
\[
\Phi \longmapsto \Phi \restriction_{\ind_{K}^{K'} (V_\rho)}.
\]
In particular, the restriction map
is an isomorphism.
We regard $\End_{K'}\bigl(\ind_{K}^{K'} (\rho)\bigr)$ as a subalgebra of $\End_{G(F)}\bigl(\ind_{K}^{G(F)} (\rho)\bigr)$ via the inverse of this isomorphism.

We will also need a slightly more general setup. 
For this, let $H$ be a locally profinite group and $K_{1}, K_{2}$ be open subgroups of $H$.
Let $\rho_{1}$, resp., $\rho_{2}$, be a smooth representation of $K_{1}$, resp., $K_{2}$.
For an open subgroup $K'$ of $H$ containing $K_{1}$ and $K_{2}$, we define a subspace $\Hom_{H}\left(
\ind_{K_{1}}^{H} (\rho_{1}), \ind_{K_{2}}^{H}(\rho_{2})
\right)_{K'}$ of $\Hom_{H}\left(
\ind_{K_{1}}^{H} (\rho_{1}), \ind_{K_{2}}^{H}(\rho_{2})
\right)$
by
\begin{gather*}
\Hom_{H}\left(
\ind_{K_{1}}^{H} (\rho_{1}), \ind_{K_{2}}^{H}(\rho_{2})
\right)_{K'} 
=
\Bigl\{
\Phi \in \Hom_{H}\left(
\ind_{K_{1}}^{H} (\rho_{1}), \ind_{K_{2}}^{H}(\rho_{2})
\right) \Bigr. \, \Bigl |  \Phi\left(
V_{\rho_{1}}
\right) \subset \ind_{K_{2}}^{K'} (V_{\rho_{2}})
\Bigr\}
\\
=
\left\{
\Phi \in \Hom_{H}\Bigl(
\ind_{K_{1}}^{H} (\rho_{1}), \ind_{K_{2}}^{H}(\rho_{2})
\right) \Bigr. \, \Bigl| \Phi\left(
\ind_{K_{1}}^{K'} (V_{\rho_{1}})
\right) \subset \ind_{K_{2}}^{K'} (V_{\rho_{2}})
\Bigr\}.
\end{gather*}
\begin{lemma}
\label{lemmarestrictiontoasubspaceofthecompactinductiongeneralver}
The restriction map $\Phi \mapsto \Phi \restriction_{\ind_{K_{1}}^{K'} (V_{\rho_{1}})}$ gives an isomorphism
\[
\Hom_{H}\left(
\ind_{K_{1}}^{H} (\rho_{1}), \ind_{K_{2}}^{H}(\rho_{2})
\right)_{K'} \isoarrow \Hom_{K'}\left(
\ind_{K_{1}}^{K'} (\rho_{1}), \ind_{K_{2}}^{K'}(\rho_{2})
\right).
\]
\end{lemma}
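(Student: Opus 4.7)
The plan is to combine transitivity of compact induction with Frobenius reciprocity. Since $K_1\subset K'$, the transitivity isomorphism
\[
\ind_{K_1}^{H} (\rho_1) \isoarrow \ind_{K'}^{H}\bigl(\ind_{K_1}^{K'} (\rho_1)\bigr)
\]
together with Frobenius reciprocity for compact induction yields a natural isomorphism
\[
\Hom_{H}\bigl(\ind_{K_1}^{H} \rho_1,\, \ind_{K_2}^{H} \rho_2\bigr) \isoarrow \Hom_{K'}\bigl(\ind_{K_1}^{K'} \rho_1,\, \ind_{K_2}^{H} \rho_2\bigr),
\]
which is concretely given by the restriction $\Phi \longmapsto \Phi\restriction_{\ind_{K_1}^{K'}(V_{\rho_1})}$. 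This immediately gives injectivity of the map in the statement of the lemma.

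Next I would verify that under this restriction isomorphism, the subspace $\Hom_H\bigl(\ind_{K_1}^{H}\rho_1,\ind_{K_2}^{H}\rho_2\bigr)_{K'}$ of the left hand side corresponds exactly to the subspace $\Hom_{K'}\bigl(\ind_{K_1}^{K'}\rho_1,\ind_{K_2}^{K'}\rho_2\bigr)$ of the right hand side. The two defining conditions
\[
\Phi(V_{\rho_1}) \subset \ind_{K_2}^{K'}(V_{\rho_2}) \quad\text{and}\quad \Phi\bigl(\ind_{K_1}^{K'}(V_{\rho_1})\bigr) \subset \ind_{K_2}^{K'}(V_{\rho_2})
\]
are in fact equivalent, as is asserted in the paragraph preceding the lemma: the $\supset$-inclusion is trivial because $V_{\rho_1} \subset \ind_{K_1}^{K'}(V_{\rho_1})$, while the $\subset$-inclusion follows by noting that $\ind_{K_1}^{K'}(V_{\rho_1})$ is $K'$-generated by $V_{\rho_1}$ (its elements are finite $\Coeff$-linear combinations of left-$K'$-translates of elements of $V_{\rho_1}$), that $\Phi$ is $K'$-equivariant since it is $H$-equivariant, and that $\ind_{K_2}^{K'}(V_{\rho_2})$ is $K'$-stable inside $\ind_{K_2}^{H}(V_{\rho_2})$.

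For surjectivity, given $\Psi \in \Hom_{K'}\bigl(\ind_{K_1}^{K'}\rho_1, \ind_{K_2}^{K'}\rho_2\bigr)$, I would compose with the inclusion $\ind_{K_2}^{K'}(V_{\rho_2}) \hookrightarrow \ind_{K_2}^{H}(V_{\rho_2})$ to view $\Psi$ as an element of $\Hom_{K'}\bigl(\ind_{K_1}^{K'}\rho_1, \ind_{K_2}^{H}\rho_2\bigr)$. By the displayed Frobenius-reciprocity isomorphism above, it corresponds to a unique $\Phi \in \Hom_{H}\bigl(\ind_{K_1}^{H}\rho_1,\ind_{K_2}^{H}\rho_2\bigr)$ with $\Phi\restriction_{\ind_{K_1}^{K'}(V_{\rho_1})} = \Psi$. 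A fortiori, $\Phi(V_{\rho_1}) = \Psi(V_{\rho_1}) \subset \ind_{K_2}^{K'}(V_{\rho_2})$, so $\Phi$ belongs to $\Hom_H\bigl(\ind_{K_1}^{H}\rho_1,\ind_{K_2}^{H}\rho_2\bigr)_{K'}$, proving surjectivity.

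No serious obstacle is expected: every step is formal, reducing to transitivity of compact induction, Frobenius reciprocity, and the $K'$-generation of $\ind_{K_1}^{K'}(V_{\rho_1})$ by $V_{\rho_1}$. The only care needed is bookkeeping to check that the two subspace conditions line up under the Frobenius-reciprocity bijection, which is precisely the content of the equality of the two defining descriptions of $\Hom_H\bigl(\ind_{K_1}^{H}\rho_1,\ind_{K_2}^{H}\rho_2\bigr)_{K'}$ recorded just above the lemma statement.
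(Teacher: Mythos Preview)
Your proposal is correct and follows essentially the same approach as the paper: combine transitivity of compact induction with Frobenius reciprocity to get a restriction isomorphism $\Hom_{H}\bigl(\ind_{K_1}^{H}\rho_1,\ind_{K_2}^{H}\rho_2\bigr)\isoarrow\Hom_{K'}\bigl(\ind_{K_1}^{K'}\rho_1,\ind_{K_2}^{H}\rho_2\restriction_{K'}\bigr)$, then check that the subspace with the $K'$-support condition on the left corresponds to $\Hom_{K'}\bigl(\ind_{K_1}^{K'}\rho_1,\ind_{K_2}^{K'}\rho_2\bigr)$ on the right. The paper's proof is slightly terser, simply asserting the subspace correspondence rather than splitting into explicit injectivity and surjectivity arguments, but the content is the same.
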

\begin{proof}
Combining Frobenius reciprocity with the transitivity of compact induction, we obtain an isomorphism
\begin{align*}
\Hom_{H}\bigl(
\ind_{K_{1}}^{H} (\rho_{1}), \ind_{K_{2}}^{H}(\rho_{2})
\bigr)
& \simeq \Hom_{H}\bigl(
\ind_{K'}^{H} (
\ind_{K_{1}}^{K'} (\rho_{1})
), \ind_{K_{2}}^{H}(\rho_{2})
\bigr) \\
& \simeq \Hom_{K'}\bigl(
\ind_{K_{1}}^{K'} (\rho_{1}), \ind_{K_{2}}^{H}(\rho_{2}) \restriction_{K'}
\bigr).
\end{align*}
Under this isomorphism the subspace
$\Hom_{H}\bigl(
\ind_{K_{1}}^{H} (\rho_{1}), \ind_{K_{2}}^{H}(\rho_{2})
\bigr)_{K'}$
of
$\Hom_{H}\bigl(
\ind_{K_{1}}^{H} (\rho_{1}), \ind_{K_{2}}^{H}(\rho_{2})
\bigr)$
corresponds to the subspace
$\Hom_{K'}\bigl(
\ind_{K_{1}}^{K'} (\rho_{1}), \ind_{K_{2}}^{K'}(\rho_{2})
\bigr)$
of
$\Hom_{K'}\bigl(
\ind_{K_{1}}^{K'} (\rho_{1}), \ind_{K_{2}}^{H}(\rho_{2}) \restriction_{K'}
\bigr)$,
and the induced isomorphism
\(
\Hom_{H}\bigl(
\ind_{K_{1}}^{H} (\rho_{1}), \ind_{K_{2}}^{H}(\rho_{2})
\bigr)_{K'}
\isoarrow
\Hom_{K'}\bigl(
\ind_{K_{1}}^{K'} (\rho_{1}), \ind_{K_{2}}^{K'}(\rho_{2})
\bigr)
\)
agrees with the restriction map.
\end{proof}

\section{Structure of the Hecke algebra} 

\label{Structure of a Hecke algebra} 
We recall that $G$ is a connected reductive group defined over $F$. In this section, we will provide a description of the Hecke algebra attached to an appropriate compact, open subgroup $K_{x_0} \subset G$ and an appropriate representation $(\rho_{x_0},V_{\rho_{x_0}})$ of $K_{x_0}$ as a semi-direct product of an affine Hecke algebra and a twisted group algebra, see Theorem \ref{theoremstructureofhecke}. We describe the pairs $(K_{x_0}, \rho_{x_0})$ that we consider through several axioms introduced in this section. These are  Axioms \ref{axiomaboutHNheartandK}, \ref{axiombijectionofdoublecoset}, \ref{axiomexistenceofRgrp}, and \ref{axiomaboutdimensionofend} below. The pairs that we consider include among others the case where $(K_{x_0}, \rho_{x_0})$ is a type for a single Bernstein block as constructed by Kim and Yu (\cite{Kim-Yu, MR4357723}) or a depth-zero type attached to either a single Bernstein block or the case where $K_{x_0}$ is a parahoric subgroup, which is the case studied previously by Morris (\cite{Morris}).

Readers interested in depth-zero types
might find it useful to first read
Sections \ref{subsec:constructionofdepthzero}
and \ref{subsec:affinehyperplanes-depthzero},
readers interested in the 
types constructed by
Kim and Yu might find it helpful to first read
Sections \ref{HAIKY-subsec:construction}
and \ref{HAIKY-subsec:affinehyperplanesKimYu}
in \cite{HAIKY},
in order 
to keep these special cases as examples in mind when reading the axiomatic set-up below. 
Those interested only in the set-up in
\cite{HAIKY}
might even completely replace the below axiomatic objects by the explicit objects introduced in
\cite[Section \ref{HAIKY-Section-KimYutypes}]{HAIKY}
in their head.

\subsection{The affine space}
\label{subsec:affine}
In this subsection, we will introduce an affine subspace $\cA_{x_{0}}$ of $\cB(G, F)$, which will be used to index pairs of compact, open subgroups $K_{x}$ and irreducible smooth representations $\rho_{x}$ of $K_{x}$ below. 
More precisely, we will consider a family $\{(K_{x}, \rho_{x})\}_{x \in \cA_{\gen}}$ of compact, open subgroups $K_{x}$ of $G(F)$ and irreducible smooth representations $\rho_{x}$ of $K_{x}$ indexed by an appropriate subset $\cA_{\gen}$ of generic points in $\cA_{x_{0}}$ (see Sections~\ref{subsec:hyperplanes} and \ref{subsec:familyofcovers}).
The family $\{(K_{x}, \rho_{x})\}_{x \in \cA_{\gen}}$ will be used to define basis elements of a Hecke algebra (see Definition~\ref{definitionPhixw}). 
In Proposition~\ref{Rrhosimeqaffineweyl} below, we will also define an affine root system $\Gamma(\rho_{M})$ on a quotient $\cA_{\Krel}$ of $\cA_{x_{0}}$, whose affine Weyl group underlies the affine Hecke algebra appearing in our description of the Hecke algebra attached to $(K_{x_{0}}, \rho_{x_{0}})$ in Theorem~\ref{theoremstructureofhecke}.

To define the affine space $\cA_{x_{0}}$, we fix a Levi subgroup $M$ of $G$.
If we want to describe the Hecke algebra attached to a type for a Bernstein block of a $\bC$-representation, then $M$ is a Levi subgroup of the supercuspidal support of the Bernstein block.
We recall that we write $A_{M}$ for the maximal split torus of the center $Z(M)$ of $M$.
Let $ 
\cB(M, F) \hookrightarrow \cB(G, F)
$ be an admissible embedding of enlarged Bruhat--Tits buildings in the sense of \cite[\S~14.2]{KalethaPrasad}, which exists by  \cite[Section 9.7.5]{KalethaPrasad}.
We regard $\cB(M, F)$ as a subset of $\cB(G, F)$ via this embedding.
Let $x_{0} \in \cB(M, F)$.
We define the subset $\cA_{x_{0}}$
of $\cB(M, F)$ by
\[
\index{notation-ax}{Ax0@$\cA_{x_{0}}$}
\cA_{x_{0}} = x_{0} + \left(
X_{*}(A_{M}) \otimes_{\mathbb{Z}} \bR
\right).
\]	
More precisely,
$x_{0} + \left(
X_{*}(A_{M}) \otimes_{\mathbb{Z}} \bR
\right)$ is an affine subspace of every apartment containing $x_0$
(since $A_M$ is contained in every maximal split torus of $M$),
and the image of this affine space in $\cB(M, F)$ is independent of the choice of apartment.
We fix an $N_{G}(M)(F)$-invariant inner product $(\phantom{x},\phantom{y})_{M}$ on $X_{*}(A_{M}) \otimes_{\bZ} \bR$.
\index{notation-ax}{(,)M@$(\phantom{x},\phantom{y})_{M}$}
(Such an inner product exists, because the action of $N_{G}(M)(F)$ factors
through a finite group.)
Hence, the space $\cA_{x_{0}}$ is a Euclidean space with the vector space of translations $X_{*}(A_{M}) \otimes_{\bZ} \bR$.
We define the subset $N_{G}(M)(F)_{[x_{0}]_{M}}$
of $G(F)$ by \index{notation-ax}{NGMFx0@$N_{G}(M)(F)_{[x_{0}]_{M}}$}
\[
N_{G}(M)(F)_{[x_{0}]_{M}} = \{
n \in N_{G}(M)(F) \mid
n x_{0} \in \cA_{x_{0}}
\}.
\]
\begin{lemma}
\label{lemmaMtoS}
Let $S$ be a maximal split torus of $M$ such that
$
x_{0} \in \cA(G, S, F)
$.
Then we have
\[
N_{G}(M)(F)_{[x_{0}]_{M}} = \left(
N_{G}(M)(F)_{[x_{0}]_{M}} \cap N_{G}(S)(F)
\right) \cdot M(F)_{x_{0}, 0}.
\]
\end{lemma}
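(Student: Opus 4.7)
The plan is to reduce the statement to a standard Bruhat--Tits fact about conjugacy of maximal split tori whose apartments share a common point. The inclusion from right to left is essentially formal: any element of $M(F)_{x_{0},0}$ fixes $x_{0}$ and a fortiori fixes $[x_{0}]_{M}$, and it lies in $N_{G}(M)(F)$ because $M\subseteq N_{G}(M)$; the other factor is contained in $N_{G}(M)(F)_{[x_{0}]_{M}}$ by its very definition. So the content lies in the reverse inclusion.

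For the nontrivial direction, I would fix $n\in N_{G}(M)(F)_{[x_{0}]_{M}}$ and consider the maximal split torus $n^{-1}Sn$ of $M$. The key geometric observation is that $x_{0}$ lies in the apartment $\cA(M, n^{-1}Sn, F)$: since $A_{M}\subseteq S$, the affine subspace $\cA_{x_{0}} = x_{0} + (X_{*}(A_{M})\otimes_{\bZ}\bR)$ sits inside $\cA(M, S, F)$, so from $nx_{0}\in \cA_{x_{0}}\subseteq \cA(M,S,F)$ we get $x_{0}\in n^{-1}\cA(M,S,F)=\cA(M,n^{-1}Sn,F)$. Thus $S$ and $n^{-1}Sn$ are two maximal split tori of $M$ whose apartments in $\cB(M,F)$ both contain $x_{0}$. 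Invoking the standard fact (treated in Kaletha--Prasad in the discussion of parahoric subgroups and apartments) that any two such tori are conjugate by an element of the connected parahoric $M(F)_{x_{0},0}$, I obtain $m\in M(F)_{x_{0},0}$ with $m^{-1}Sm = n^{-1}Sn$, equivalently $nm^{-1}\in N_{G}(S)(F)$.

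It then remains to verify that this element lies in $N_{G}(M)(F)_{[x_{0}]_{M}}\cap N_{G}(S)(F)$. Since $m\in M(F)\subseteq N_{G}(M)(F)$ and $n\in N_{G}(M)(F)$, the product $nm^{-1}$ normalizes $M$; and since $m$ fixes $x_{0}$, we have $(nm^{-1})x_{0}=nx_{0}\in\cA_{x_{0}}$, so $nm^{-1}\in N_{G}(M)(F)_{[x_{0}]_{M}}$. The factorization $n=(nm^{-1})\cdot m$ then yields the claim. The main obstacle, and the only nontrivial input, is pinpointing the precise Bruhat--Tits conjugacy statement (the fact that two maximal split tori through a common building point $x_{0}$ are conjugate by the \emph{connected} parahoric $M(F)_{x_{0},0}$, not merely by the full stabilizer), together with the compatibility of the admissible embedding $\cB(M,F)\hookrightarrow \cB(G,F)$ needed to ensure $\cA_{x_{0}}\subseteq \cA(M,S,F)$.
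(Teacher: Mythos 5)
Your proposal is correct and follows essentially the same route as the paper: both proofs observe that $\cA_{x_0}$ is contained in the apartment of $S$ so that $x_0$ lies in the apartments of both $S$ and $n^{-1}Sn$, and then invoke the Bruhat--Tits fact that two such maximal split tori of $M$ are conjugate by an element $m$ of the connected parahoric $M(F)_{x_0,0}$, yielding the factorization $n=(nm^{-1})\cdot m$. The only differences are cosmetic: you carry out the comparison of apartments inside $\cB(M,F)$ rather than $\cB(G,F)$ (equivalent under the admissible embedding, and arguably the cleaner place to state the conjugacy-by-$M(F)_{x_0,0}$ input), and you spell out the easy inclusion and the verification that $nm^{-1}\in N_G(M)(F)_{[x_0]_M}$, which the paper leaves implicit.
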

\begin{proof}
Let $n \in N_{G}(M)(F)_{[x_{0}]_{M}}$.
Since $x_{0} \in \cA(G, S, F)$, and
$
n x_{0} \in \cA_{x_{0}} \subset \cA(G, S, F)
$,
we have
$
x_{0} \in  \cA(G, S, F) \cap \cA(G, n^{-1} S n, F)
$.
Hence, there exists an element $m \in M(F)_{x_{0}, 0}$ such that
$
m (n^{-1} S n) m^{-1} = S
$.
Thus, we obtain that
\begin{equation*}
n = nm^{-1} \cdot m 
\in \left(
N_{G}(M)(F)_{[x_{0}]_{M}} \cap N_{G}(S)(F)
\right) \cdot M(F)_{x_{0}, 0}.
\qedhere
\end{equation*}
\end{proof}
\begin{lemma}
\label{lemmaaboutstabilizerofx0M}
Let $n \in N_{G}(M)(F)_{[x_{0}]_{M}}$.
Then we have $n x \in \cA_{x_{0}}$ for all $x \in \cA_{x_{0}}$.
\end{lemma}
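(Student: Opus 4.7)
The plan is to reduce to two simpler cases via Lemma~\ref{lemmaMtoS}, and then to exploit the fact that $A_{M}$ is central in $M$.

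Let $x \in \cA_{x_{0}}$, so that $x - x_{0} \in X_{*}(A_{M}) \otimes_{\bZ} \bR$. Choose a maximal split torus $S$ of $M$ with $x_{0} \in \cA(G,S,F)$. Then $\cA_{x_{0}} \subset \cA(G,S,F)$, since as noted immediately after the definition of $\cA_{x_{0}}$, this affine subspace lies in every apartment containing $x_{0}$. By Lemma~\ref{lemmaMtoS}, I may write $n = n' m$ with $n' \in N_{G}(M)(F)_{[x_{0}]_{M}} \cap N_{G}(S)(F)$ and $m \in M(F)_{x_{0},0}$, and it then suffices to show that $m$ and $n'$ separately preserve $\cA_{x_{0}}$.

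For the parahoric factor $m$, the key observation is that $m$ fixes $x_{0}$ and that its action on $\cA(G,S,F)$ is an affine isometry; taking $x_{0}$ as origin, this action becomes the linear part, which is the image of $m$ in the finite Weyl group $N_{M}(S)/Z_{M}(S)$. Because $A_{M} \subset Z(M)$, this Weyl-group action is trivial on $X_{*}(A_{M}) \otimes_{\bZ} \bR$, so $m$ fixes $\cA_{x_{0}} = x_{0} + X_{*}(A_{M}) \otimes_{\bZ} \bR$ pointwise.

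For the factor $n'$, which lies in $N_{G}(S)(F)$, its action on $\cA(G,S,F)$ is again an affine isometry, whose linear part lies in the Weyl group of $(G,S)$ and is induced by conjugation on $X_{*}(S)$. Since $n'$ normalizes $M$, it normalizes $A_{M}$, so this linear part preserves the subspace $X_{*}(A_{M}) \otimes_{\bZ} \bR$. Therefore $n'(\cA_{x_{0}}) = n'(x_{0}) + X_{*}(A_{M}) \otimes_{\bZ} \bR$, and the hypothesis $n' \in N_{G}(M)(F)_{[x_{0}]_{M}}$ gives $n'(x_{0}) \in \cA_{x_{0}}$, so this equals $\cA_{x_{0}}$. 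Combining, $nx = n'(mx) = n'(x) \in \cA_{x_{0}}$. There is no real obstacle here; the only delicate point is cleanly separating the ``linear'' and ``translation'' parts of the action on the apartment, which is precisely the role played by Lemma~\ref{lemmaMtoS}.
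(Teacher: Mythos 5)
Your overall strategy matches the paper's: decompose $n = n' m$ via Lemma~\ref{lemmaMtoS}, dispose of the parahoric factor $m$ by showing it fixes $\cA_{x_0}$ pointwise, and handle the factor $n' \in N_G(S)(F)$ by observing that its linear part preserves $X_*(A_M) \otimes_\bZ \bR$. The treatment of $n'$ is exactly what the paper does (citing \cite[Proposition~6.2.4]{KalethaPrasad} for the affine decomposition $n'x = n'x_0 + (Dn')(x-x_0)$, where you instead say ``affine isometry with linear part in the Weyl group'').

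However, your justification for why $m \in M(F)_{x_0,0}$ fixes $\cA_{x_0}$ pointwise is not correct as written. You argue that ``taking $x_0$ as origin, this action becomes the linear part, which is the image of $m$ in the finite Weyl group $N_M(S)/Z_M(S)$'' — but a general element $m$ of the parahoric $M(F)_{x_0,0}$ does not normalize $S$, does not preserve the apartment $\cA(G,S,F)$, and has no natural image in $N_M(S)/Z_M(S)$. The statement is still true, but the correct reason is different: the parahoric $M(F)_{x,0}$ depends only on the image $[x]_M$ of $x$ in the reduced building $\cB^{\red}(M,F)$, and every point $x \in \cA_{x_0} = x_0 + X_*(A_M)\otimes_\bZ\bR$ has $[x]_M = [x_0]_M$ (since $X_*(A_M)\otimes_\bZ\bR$ maps to $0$ in $\cB^{\red}(M,F)$). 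Hence $M(F)_{x_0,0} = M(F)_{x,0} \subset M(F)_x$ fixes $x$. The paper simply asserts ``$mx = x$ for all $m \in M(F)_{x_0,0}$ and $x \in \cA_{x_0}$'' as a known fact, which is the same content; you attempted to supply a reason, and the reason you gave is flawed.
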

\begin{proof}
Let $S$ be a maximal split torus of $M$ such that
$
x_{0} \in \cA(G, S, F)
$.
According to Lemma~\ref{lemmaMtoS}, we have
\[
N_{G}(M)(F)_{[x_{0}]_{M}} = \left(
N_{G}(M)(F)_{[x_{0}]_{M}} \cap N_{G}(S)(F)
\right) \cdot M(F)_{x_{0}, 0}.
\]
Since $m x = x$ for all $m \in M(F)_{x_{0}, 0}$ and $x \in \cA_{x_{0}}$, it suffices to show that $n x \in \cA_{x_{0}}$ for all 
\[
n \in N_{G}(M)(F)_{[x_{0}]_{M}} \cap N_{G}(S)(F)
\]
and $x \in \cA_{x_{0}}$.
We write $x = x_{0} + a$ for some 
\[
a \in X_{*}(A_{M}) \otimes_{\mathbb{Z}} \bR \subset X_{*}(S) \otimes_{\mathbb{Z}} \bR.
\]
According to \cite[Proposition~6.2.4]{KalethaPrasad},
we have
\begin{equation*}
n x = n (x_{0} + a) 
= n x_{0} + (Dn) a,
\end{equation*}
where $Dn$ denotes the image of $n$ in the finite Weyl group $N_{G}(S)(F) / Z_{G}(S)(F)$, which acts on the vector space $X_{*}(S) \otimes_{\mathbb{Z}} \bR$.
Since $n$ normalizes $M$ and $a \in X_{*}(A_{M}) \otimes_{\mathbb{Z}} \bR$, we have 
\[
(Dn) a \in X_{*}(A_{M}) \otimes_{\mathbb{Z}} \bR.
\]
Thus, the assumption $n \in N_{G}(M)(F)_{[x_{0}]_{M}}$ implies that
\begin{equation*}
n x  = n x_{0} + (Dn) a 
 \subset \cA_{x_{0}} + \left(
X_{*}(A_{M}) \otimes_{\mathbb{Z}} \bR
\right) 
 = \cA_{x_{0}}.
\qedhere
\end{equation*}
\end{proof}
For later application it will be useful to state the result for an arbitrary point $x'_{0} \in \cA_{x_{0}}$. 
\begin{corollary}
\label{corollaryaboutstabilizerofx0primeM}
Let $x'_{0} \in \cA_{x_{0}}$. 
If an element $n \in N_{G}(M)(F)$ satisfies $n x'_{0} \in \cA_{x_{0}}$,
then we have $n x \in \cA_{x_{0}}$ for all $x \in \cA_{x_{0}}$.
\end{corollary}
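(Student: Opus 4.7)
The plan is to reduce this to Lemma \ref{lemmaaboutstabilizerofx0M} by changing the basepoint of the affine space from $x_{0}$ to $x'_{0}$. The crucial observation is that the definition of $\cA_{x_{0}}$ only depends on the affine span, so any point of $\cA_{x_{0}}$ can serve as a basepoint.

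First I would note that since $x'_{0} \in \cA_{x_{0}}$, we may write $x'_{0} = x_{0} + a_{0}$ for some $a_{0} \in X_{*}(A_{M}) \otimes_{\bZ} \bR$. Then
\[
\cA_{x'_{0}} := x'_{0} + \left(X_{*}(A_{M}) \otimes_{\bZ} \bR\right) = x_{0} + a_{0} + \left(X_{*}(A_{M}) \otimes_{\bZ} \bR\right) = \cA_{x_{0}}
\]
as subsets of $\cB(M, F)$. Consequently, the hypothesis $n x'_{0} \in \cA_{x_{0}}$ becomes $n x'_{0} \in \cA_{x'_{0}}$, that is, $n \in N_{G}(M)(F)_{[x'_{0}]_{M}}$ in the sense of the definition given above, but applied to the basepoint $x'_{0}$ rather than $x_{0}$.

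Now I would apply Lemma \ref{lemmaaboutstabilizerofx0M} with $x_{0}$ replaced by $x'_{0}$; this application is legitimate because the statement and proof of Lemma \ref{lemmaaboutstabilizerofx0M} use only the fact that the basepoint lies in $\cB(M,F)$, together with the resulting inner product and decomposition, none of which depend on the particular choice of basepoint. The lemma then yields $n x \in \cA_{x'_{0}}$ for every $x \in \cA_{x'_{0}}$. Since $\cA_{x'_{0}} = \cA_{x_{0}}$, this gives $n x \in \cA_{x_{0}}$ for all $x \in \cA_{x_{0}}$, as desired. No step here is a real obstacle; the only thing to verify carefully is the basepoint-independence of $\cA_{x_{0}}$, which is immediate from the definition.
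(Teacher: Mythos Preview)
Your proposal is correct and follows exactly the same approach as the paper: the paper's proof is the single sentence ``This follows from Lemma~\ref{lemmaaboutstabilizerofx0M} by replacing $x_{0}$ by $x'_{0}$,'' and you have simply spelled out the basepoint-independence that justifies this replacement.
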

\begin{proof}
This follows from Lemma~\ref{lemmaaboutstabilizerofx0M} by replacing $x_{0}$ by $x'_{0}$.
\end{proof}
According to Lemma~\ref{lemmaaboutstabilizerofx0M}, $N_{G}(M)(F)_{[x_{0}]_{M}}$ is a subgroup of $G(F)$, and
the action of $G(F)$ on $\cB(G, F)$ induces an action of $N_{G}(M)(F)_{[x_{0}]_{M}}$ on $\cA_{x_{0}}$.
For $n \in N_{G}(M)(F)_{[x_{0}]_{M}}$ and a subset $X$ of $\cA_{x_{0}}$, we write
$
n(X) = \left\{
n x \mid x \in X
\right\}
$.

\subsection{Affine hyperplanes}
\label{subsec:hyperplanes}
Let $\mathfrak{H}$ \index{notation-ax}{H_@$\mathfrak{H}$}  be a possibly empty locally finite set of affine hyperplanes in $\cA_{x_{0}}$ that do not contain $x_{0}$.
The complement of these hyperplanes in $\cA_{x_{0}}$ decomposes into connected components and the compact open subgroup $K_x$ that we will attach each point $x$ in the complement will be constant on those components. 
The image on the quotient $\cA_{\Krel}$ of $\cA_{x_{0}}$ 
 of an appropriate subset  $\mathfrak{H}_{\Krel} \subseteq \fH$ of $\cK$-relevant hyperplanes will form the set of vanishing hyperplanes of the affine root system $\Gamma(\rho_{M})$ mentioned above in Section \ref{subsec:affine} (see Definition~\ref{definitionKrelevant} and Proposition~\ref{Rrhosimeqaffineweyl}).
In the setting of depth-zero types discussed in Section \ref{sec:depth-zero},
the set of affine hyperplanes $\mathfrak H$ that we want to consider
is described in
\S\ref{subsec:affinehyperplanes-depthzero}.
In the setting of positive-depth types as constructed by Kim and Yu,
the set of affine hyperplanes $\mathfrak H$ that we want to consider is described in
\cite[Section \ref{HAIKY-subsec:affinehyperplanesKimYu}]{HAIKY}.
We define the subset $\cA_{\gen}$ of \emph{generic points}
of $\cA_{x_{0}}$ by
\[
\index{notation-ax}{Agen@$\cA_{\gen}$}
\cA_{\gen} = \cA_{x_{0}} \smallsetminus \left(
\bigcup_{H \in \mathfrak{H}} H
\right).
\]

For $x, y \in \cA_{\gen}$, we define the subset $\mathfrak{H}_{x, y}$ of $\mathfrak{H}$ by
\[
\index{notation-ax}{H_xy@$\mathfrak{H}_{x,y}$}
\mathfrak{H}_{x, y} = \left\{
H \in \mathfrak{H} \mid \text{$x$ and $y$ are on opposite sides of $H$}
\right\}.
\]
Since $\mathfrak{H}$ is locally finite, we have
$
\# \mathfrak{H}_{x, y} < \infty
$.
We write \index{notation-ax}{d(,)@$d(\phantom{x},\phantom{y})$}
$
d(x, y) = \# \mathfrak{H}_{x, y}
$.
\begin{lemma}
\label{lemmaaboutdistancefunctionnoindex}
Let $x, y, z \in \cA_{\gen}$.
Then we have
\[
d(x, y) + d(y, z) \ge d(x, z).
\]
Moreover, the following conditions are equivalent:
\[
\begin{array}{l c  l }
\textup{(a)} \hspace{10em} & d(x, y) + d(y, z) = d(x, z). & \hspace{10em}  \\
\textup{(b)} & \mathfrak{H}_{x, y},\: \mathfrak{H}_{y, z} \subset \mathfrak{H}_{x, z}. \\
\textup{(c)} & \mathfrak{H}_{x, y} \cap \mathfrak{H}_{y, z} = \emptyset. 
\end{array}
\]
\end{lemma}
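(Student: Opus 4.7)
The plan is to reduce everything to a per-hyperplane analysis. For a fixed $H\in\mathfrak{H}$ and three points $x,y,z\in\cA_{\gen}$, each point lies on one of the two open half-spaces cut out by $H$, so the triple $(x,y,z)$ distributes as $3{+}0$, $2{+}1$, $1{+}2$, or $0{+}3$ across the two sides. A direct case-check shows that in every case the quantity $\mathbf{1}_{\mathfrak{H}_{x,y}}(H)+\mathbf{1}_{\mathfrak{H}_{y,z}}(H)-\mathbf{1}_{\mathfrak{H}_{x,z}}(H)$ is nonnegative, and equals $2$ precisely in the $2{+}1$ configuration where $y$ sits alone on its side of $H$ (equivalently, $H\in\mathfrak{H}_{x,y}\cap\mathfrak{H}_{y,z}$), and equals $0$ otherwise.

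Next I would sum this pointwise identity over $H\in\mathfrak{H}_{x,y}\cup\mathfrak{H}_{y,z}\cup\mathfrak{H}_{x,z}$, which is a finite set by local finiteness of $\mathfrak{H}$ (or rather by the finiteness assertion $\#\mathfrak{H}_{u,v}<\infty$ recorded just before the lemma). The sum on the left telescopes to $d(x,y)+d(y,z)-d(x,z)$, which is therefore $\ge 0$, proving the triangle inequality. This same identity gives (a)$\Leftrightarrow$(c): the sum vanishes iff no hyperplane realizes the ``$y$ alone'' configuration, iff $\mathfrak{H}_{x,y}\cap\mathfrak{H}_{y,z}=\emptyset$.

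Finally I would close the loop (b)$\Leftrightarrow$(c). For (b)$\Rightarrow$(c): if $H\in\mathfrak{H}_{x,y}\cap\mathfrak{H}_{y,z}$, then $x$ and $z$ lie on the same side of $H$, so $H\notin\mathfrak{H}_{x,z}$, contradicting $\mathfrak{H}_{x,y}\subset\mathfrak{H}_{x,z}$. For (c)$\Rightarrow$(b): if $H\in\mathfrak{H}_{x,y}$ but $H\notin\mathfrak{H}_{x,z}$, then $y$ and $z$ lie on opposite sides of $H$, so $H\in\mathfrak{H}_{y,z}$, placing $H$ in $\mathfrak{H}_{x,y}\cap\mathfrak{H}_{y,z}$; symmetrically for $\mathfrak{H}_{y,z}\subset\mathfrak{H}_{x,z}$.

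No serious obstacle is expected; the only delicate point is ensuring that the sums we write down involve only finitely many hyperplanes, which is handled by restricting the summation to the finite union $\mathfrak{H}_{x,y}\cup\mathfrak{H}_{y,z}\cup\mathfrak{H}_{x,z}$ (any $H$ outside this union contributes zero to the identity above and is therefore irrelevant).
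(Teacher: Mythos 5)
Your proof is correct and rests on the same elementary per-hyperplane observation as the paper's (that for any $H$, the configuration of $x,y,z$ across $H$ determines membership in $\mathfrak{H}_{x,y}$, $\mathfrak{H}_{y,z}$, $\mathfrak{H}_{x,z}$ compatibly with the triangle inequality). The only cosmetic difference is that you package the count via an indicator-function identity and establish (a)$\Leftrightarrow$(c) first, whereas the paper observes directly that each $H\in\mathfrak{H}_{x,z}$ lies in exactly one of $\mathfrak{H}_{x,y}$, $\mathfrak{H}_{y,z}$ and thereby establishes (a)$\Leftrightarrow$(b) first; the (b)$\Leftrightarrow$(c) argument you give matches the paper's.
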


\begin{proof}
For any $H \in \mathfrak{H}_{x, z}$, exactly one of the following occurs:
\begin{itemize}
\item 
We have $H \in \mathfrak{H}_{x, y}$.
\item
We have $H \in \mathfrak{H}_{y, z}$.
\end{itemize}
Hence, we have
$
d(x, y) + d(y, z) \ge d(x, z)
$,
and equality holds if and only if
$
\mathfrak{H}_{x, y}, \mathfrak{H}_{y, z} \subset \mathfrak{H}_{x, z}
$.
We will prove
that conditions (b) and (c)    
are equivalent.
Suppose that
$
\mathfrak{H}_{x, y}, \mathfrak{H}_{y, z} \subset \mathfrak{H}_{x, z}
$.
Then, for any $H \in \mathfrak{H}_{x, y}$, we also have $H \in \mathfrak{H}_{x, z}$.
Since the points $x$ and $y$ are on opposite sides of $H$, and the points $x$ and $z$ are on opposite sides of $H$, we obtain that the points $y$ and $z$ are on the same side of $H$.
Hence, we have $H \not \in \mathfrak{H}_{y, z}$.
Thus, we conclude that 
$
\mathfrak{H}_{x, y} \cap \mathfrak{H}_{y, z} = \emptyset
$.
On the other hand, suppose that 
$
\mathfrak{H}_{x, y} \cap \mathfrak{H}_{y, z} = \emptyset
$.
Then, for any $H \in \mathfrak{H}_{x, y}$, the points $y$ and $z$ are on the same side of $H$.
Hence, the points $x$ and $z$ are on opposite sides of $H$, that is $H \in \mathfrak{H}_{x, z}$.
Thus, we conclude that $\mathfrak{H}_{x, y} \subset \mathfrak{H}_{x, z}$.
Similarly, we can prove that $\mathfrak{H}_{y, z} \subset \mathfrak{H}_{x, z}$.
\end{proof}

\subsection{Quasi-$G$-covers}
\label{subsec:quasiGcovers}

Let $K_{M}$ be a compact, open subgroup of $M(F)_{x_{0}}$ and let $(\rho_{M}, V_{\rho_{M}})$ be an irreducible smooth representation of $K_{M}$. For example, the pairs $(K_M, \rho_M)$ considered in
\cite[Section \ref{HAIKY-Section-KimYutypes}]{HAIKY}
include the supercuspidal types constructed by Yu (\cite{Yu}) twisted by a quadratic character that arises from the work of \cite{FKS}.
\begin{notation}
\label{notationofNrhoM}
We denote by
$N(\rho_{M})_{[x_{0}]_{M}}$\index{notation-ax}{NrhoMx0@$N(\rho_{M})_{[x_{0}]_{M}}$} the subgroup of $N_{G}(M)(F)_{[x_{0}]_{M}}$ given by
\[
N(\rho_{M})_{[x_{0}]_{M}} = N_{G(F)}(\rho_{M}) \cap N_{G}(M)(F)_{[x_{0}]_{M}}.
\] 
\end{notation}              
\begin{definition}
Let $K$ (resp.\ $K_{M}$) be a compact, open subgroup of $G(F)$ (resp.\ $M(F)$) and let $\rho$ (resp.\ $\rho_{M}$) be an irreducible smooth representation of $K$ (resp.\ $K_{M}$).
We say that $(K, \rho)$ is a \emph{quasi-$G$-cover} of the pair $(K_{M}, \rho_{M})$ if
for every parabolic subgroup $P\subseteq G$ with Levi factor $M$,
we have that the pair $(K,\rho)$ is \emph{decomposed} with respect
to $(M,P)$ (in the sense of \cite[Definition 6.1]{BK-types}).
Equivalently,
for every $U \in \cU(M)$ the following conditions are satisfied:
\begin{enumerate}[(1)]
\item
\label{conditionofquasiGcoverIwahoridecomposition}
We have the decomposition
\[
K = \left(
K \cap U(F)
\right) \cdot (K \cap M(F)) \cdot \left(
K \cap \overline{U}(F)
\right).
\]
\item
We have
$K_M = K\cap M(F)$, 
the restriction of $\rho$ to ${K_{M}}$ is  $\rho_{M}$,
and the restriction of $\rho$ to the groups $K \cap U(F)$ and $K \cap \overline{U}(F)$ is trivial.
\end{enumerate}
\end{definition}

\begin{remark}
\label{remarkaboutthedefinitionfoIwahoridecomposition}
Let $M$ be a Levi subgroup of $G$ and $U \in \cU(M)$.
Then, according to \cite[Proposition~14.21 (iii)]{MR1102012}, the product map
\[
U(F) \times M(F) \times \overline{U}(F) \rightarrow G(F)
\]
is a homeomorphism onto an open subset of $G(F)$.
Hence, Condition \eqref{conditionofquasiGcoverIwahoridecomposition} of the definition of a quasi-$G$-cover is equivalent to the condition that the product map
\[
\left(
K \cap U(F)
\right) \times (K \cap M(F)) \times \left(
K \cap \overline{U}(F)
\right) \rightarrow K
\]
is a homeomorphism of topological spaces.
In particular, any element of $K$ can be written uniquely as a product of three elements in $K \cap U(F), K \cap M(F)$, and $K \cap \overline{U}(F)$, respectively. 
\end{remark}
\begin{remark}
If $(K, \rho)$ is a $G$-cover of $(K_{M}, \rho_{M})$ in the sense of \cite[Definition~8.1]{BK-types}, then $(K, \rho)$ is a quasi-$G$-cover of $(K_{M}, \rho_{M})$ by definition.
\end{remark}

Quasi-$G$-covers will allow us to compare intertwiners of representations of compact, open subgroups of $G(F)$ with intertwiners of representations of compact, open subgroups of $M(F)$. More precisely, we have the following lemma that will be used to study the support of Hecke algebras. 
\begin{lemma}
	\label{lemmacoverintertwining}
	Let $K_{1}$ and $K_{2}$ be compact, open subgroups of $G(F)$, and let $K_{M, 1}$ and $K_{M, 2}$ be compact, open subgroups of $M(F)$.
	Let $\rho_{1}$, $\rho_{2}$, $\rho_{M, 1}$, and $\rho_{M, 2}$ be irreducible smooth representations of $K_{1}$, $K_{2}$, $K_{M, 1}$, and $K_{M, 2}$, respectively.
	Suppose that the pair $(K_{i}, \rho_{i})$ is a quasi-$G$-cover of the pair $(K_{M, i}, \rho_{M, i})$ for $i = 1,2$.
	Then we have
	\[
	\Hom_{K_{1} \cap K_{2}}\left(
	\rho_{1}, \rho_{2}
	\right)
	=
	\Hom_{K_{M, 1} \cap K_{M, 2}}\left(
	\rho_{M, 1}, \rho_{M, 2}
	\right).
	\]
\end{lemma}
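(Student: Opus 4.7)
The plan is to prove the equality of the two $\Hom$-spaces by viewing them as subspaces of the same ambient space $\Hom_{\Coeff}(V_{\rho_{M,1}}, V_{\rho_{M,2}})$. Indeed, the definition of quasi-$G$-cover gives $V_{\rho_i} = V_{\rho_{M,i}}$ and $\rho_i\restriction_{K_{M,i}} = \rho_{M,i}$, so both sides of the claimed equality sit naturally in this common ambient space and the statement is meaningful as an equality of subsets.

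The inclusion $\Hom_{K_{1} \cap K_{2}}(\rho_{1},\rho_{2}) \subseteq \Hom_{K_{M,1} \cap K_{M,2}}(\rho_{M,1},\rho_{M,2})$ is immediate from restriction: if $\phi$ intertwines $\rho_{1}$ and $\rho_{2}$ on the larger group $K_{1}\cap K_{2}$, then, because $K_{M,1}\cap K_{M,2}\subseteq K_{1}\cap K_{2}$ and $\rho_{i}(m)=\rho_{M,i}(m)$ for all $m\in K_{M,i}$, the same map intertwines $\rho_{M,1}$ and $\rho_{M,2}$ on $K_{M,1}\cap K_{M,2}$.

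For the reverse inclusion I would fix any choice of $U\in\cU(M)$ with opposite unipotent radical $\overline{U}$, and use the Iwahori-type decomposition coming from the quasi-$G$-cover property together with the key fact (recorded in Remark \ref{remarkaboutthedefinitionfoIwahoridecomposition}) that the multiplication map $U(F)\times M(F)\times\overline{U}(F)\to G(F)$ is injective onto the big open cell. Given $k\in K_{1}\cap K_{2}$, decomposing $k$ inside $K_{1}$ produces $k = u_{1}m_{1}\bar u_{1}$ with $u_{1}\in K_{1}\cap U(F)$, $m_{1}\in K_{M,1}$, $\bar u_{1}\in K_{1}\cap\overline{U}(F)$; decomposing inside $K_{2}$ produces $k = u_{2}m_{2}\bar u_{2}$ analogously. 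The ambient uniqueness in $G(F)$ forces $u_{1}=u_{2}=:u$, $m_{1}=m_{2}=:m$, $\bar u_{1}=\bar u_{2}=:\bar u$, so that
\[
u\in(K_{1}\cap K_{2})\cap U(F),\quad m\in K_{M,1}\cap K_{M,2},\quad \bar u\in(K_{1}\cap K_{2})\cap\overline{U}(F).
\]
Since $\rho_{i}$ is trivial on $K_{i}\cap U(F)$ and on $K_{i}\cap\overline{U}(F)$, this yields $\rho_{i}(k)=\rho_{M,i}(m)$ for $i=1,2$. Hence for any $\phi\in\Hom_{K_{M,1}\cap K_{M,2}}(\rho_{M,1},\rho_{M,2})$ one gets $\phi\circ\rho_{1}(k)=\phi\circ\rho_{M,1}(m)=\rho_{M,2}(m)\circ\phi=\rho_{2}(k)\circ\phi$, proving the inclusion.

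I do not expect a serious obstacle: the argument is essentially the classical uniqueness-of-Iwahori-decomposition computation familiar from the theory of $G$-covers, now applied simultaneously to two different covers. The only point deserving attention is the passage ensuring that the three factors of $k$ produced by the $K_{1}$- and $K_{2}$-decompositions agree, and this is exactly what the injectivity of the big-cell multiplication map delivers.
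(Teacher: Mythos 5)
Your proposal is correct and follows essentially the same route as the paper's proof: both reduce to the Iwahori-type decomposition $K_{1}\cap K_{2} = \bigl(K_{1}\cap K_{2}\cap U(F)\bigr)\cdot\bigl(K_{M,1}\cap K_{M,2}\bigr)\cdot\bigl(K_{1}\cap K_{2}\cap\overline{U}(F)\bigr)$ and the triviality of $\rho_{i}$ on the unipotent factors. You make explicit the uniqueness-of-factorization step (that the $K_{1}$- and $K_{2}$-decompositions of $k\in K_{1}\cap K_{2}$ coincide), which the paper leaves implicit; otherwise the argument is the same.
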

\begin{proof}
	We fix $U \in \cU(M)$.
	Since $(K_{i}, \rho_{i})$ is a quasi-$G$-cover of $(K_{M, i}, \rho_{M, i})$, we have
	\begin{align*}
	K_{i} &= \left(
	K_{i} \cap U(F)
	\right) \cdot K_{M, i} \cdot \left(
	K_{i} \cap \overline{U}(F)
	\right),
	\end{align*}
	the representation $\rho_{i}$ is trivial on $K_{i} \cap U(F)$ and $K_{i} \cap \overline{U}(F)$, and we have $\rho_{i}\restriction_{K_{M, i}} = \rho_{M, i}$ for $i = 1,2$.
	Thus we obtain
	\begin{align*}
	\Hom_{K_{1} \cap K_{2}}\left(
	\rho_{1}, \rho_{2}
	\right) &=
	\Hom_{\left(
		K_{1} \cap K_{2} \cap U(F)
		\right) \cdot \left(
		K_{M, 1} \cap K_{M, 2}
		\right) \cdot 
		\left(
		K_{1} \cap K_{2} \cap \overline{U}(F)
		\right)
	}(\rho_{1}, \rho_{2}) \\
	&= \Hom_{K_{M, 1} \cap K_{M, 2}}(\rho_{1}, \rho_{2}) \\
	&= \Hom_{K_{M, 1} \cap K_{M, 2}}(\rho_{M, 1}, \rho_{M, 2}).
	\qedhere
	\end{align*}
\end{proof}

\begin{corollary}
\label{corollarynormalizercontainedinintertwiner}
Let $K$ be a compact, open subgroup of $G(F)$ and $\rho$ be an irreducible smooth representation of $K$.
Suppose that $(K, \rho)$ is a quasi-$G$-cover of $(K_{M}, \rho_{M})$.
Then we have
\[
N(\rho_{M})_{[x_{0}]_{M}} \subset I_{G(F)}(\rho).
\]
\end{corollary}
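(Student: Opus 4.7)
The plan is to deduce this directly from Lemma \ref{lemmacoverintertwining} applied to the pair $(K,\rho)$ on one side and its $n$-conjugate on the other, for $n \in N(\rho_M)_{[x_0]_M}$. The statement to verify is that $\Hom_{K \cap {}^{n}\!K}({}^{n}\!\rho,\rho) \neq 0$ for every such $n$.

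First I would observe that conjugation by $n$ preserves $\cU(M)$, because $n$ normalizes $M$: for any parabolic subgroup $P \subseteq G$ with Levi factor $M$, the conjugate ${}^{n}\!P$ is another such parabolic, with unipotent radical ${}^{n}U_P \in \cU(M)$. Consequently the Iwahori-style decomposition of $K$ with respect to any $U \in \cU(M)$ transfers, via conjugation by $n$, to an analogous decomposition of ${}^{n}\!K$ with respect to ${}^{n}\!U \in \cU(M)$, together with the corresponding triviality of ${}^{n}\!\rho$ on the unipotent pieces and the identification ${}^{n}\!\rho \restriction_{{}^{n}\!K_M} = {}^{n}\!\rho_M$. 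Hence $({}^{n}\!K,{}^{n}\!\rho)$ is a quasi-$G$-cover of $({}^{n}\!K_M,{}^{n}\!\rho_M)$.

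Next I would apply Lemma \ref{lemmacoverintertwining} with $(K_1,\rho_1,K_{M,1},\rho_{M,1}) = ({}^{n}\!K,{}^{n}\!\rho,{}^{n}\!K_M,{}^{n}\!\rho_M)$ and $(K_2,\rho_2,K_{M,2},\rho_{M,2}) = (K,\rho,K_M,\rho_M)$, yielding the identification
\[
\Hom_{K \cap {}^{n}\!K}({}^{n}\!\rho,\rho) = \Hom_{K_M \cap {}^{n}\!K_M}({}^{n}\!\rho_M,\rho_M).
\]
Since $n \in N(\rho_M)_{[x_0]_M} \subset N_{G(F)}(\rho_M)$, by the definition of $N_{G(F)}(\rho_M)$ recalled in \S\ref{subsec:notation}, the element $n$ normalizes $K_M$, so ${}^{n}\!K_M = K_M$, and moreover ${}^{n}\!\rho_M \simeq \rho_M$ as representations of $K_M$. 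Hence the right-hand side is $\Hom_{K_M}({}^{n}\!\rho_M,\rho_M) \neq 0$, so the left-hand side is nonzero as well, showing $n \in I_{G(F)}(\rho)$.

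There is no real obstacle here: the only content beyond Lemma \ref{lemmacoverintertwining} is the routine verification that the quasi-$G$-cover property is stable under conjugation by $n \in N_G(M)(F)$, which is immediate from the fact that conjugation permutes $\cU(M)$. Everything else is a direct unpacking of the definitions of $N(\rho_M)_{[x_0]_M}$ and $I_{G(F)}(\rho)$.
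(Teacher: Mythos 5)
Your proof is correct and follows essentially the same route as the paper's: the paper also conjugates $(K,\rho)$ by $n$ to obtain a quasi-$G$-cover of $({}^{n}\!K_M,{}^{n}\!\rho_M)$, invokes Lemma~\ref{lemmacoverintertwining} to equate $\Hom_{K \cap {}^{n}\!K}({}^{n}\!\rho,\rho)$ with $\Hom_{K_M \cap {}^n\!K_M}({}^{n}\!\rho_M,\rho_M)$, and uses $n \in N_{G(F)}(\rho_M)$ to conclude the latter is nonzero. You merely spell out the stability-under-conjugation step (that $n$ permutes $\cU(M)$) which the paper states without elaboration.
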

\begin{proof}
Let $n \in N(\rho_{M})_{[x_{0}]_{M}}$.
Since the pair $(K, \rho)$ is a quasi-$G$-cover of $(K_{M}, \rho_{M})$ and $n$ normalizes the group $M$, the pair $(^n\!K, {^n\!\rho})$ is a quasi-$G$-cover of $(^n\!K_{M}, {^n\!\rho_{M}})$.
Then Lemma~\ref{lemmacoverintertwining} implies that
\begin{align*}
\Hom_{K \cap ^n\!K} (^n\!\rho, \rho) &= \Hom_{K_{M} \cap ^n\!K_{M}}(^n\!\rho_{M}, \rho_{M}).
\end{align*}
Since $n \in N_{G(F)}(\rho_{M})$, the right-hand side is non-trivial.
Hence, we obtain that the left-hand side is also non-trivial, that is, $n \in I_{G(F)}(\rho)$.
\end{proof}

\subsection{Family of quasi-$G$-covers and a group structure on the Hecke algebra support} \label{subsec:familyofcovers}

We will define and study basis elements of the Hecke algebra attached to a quasi-$G$-cover of $(K_{M}, \rho_{M})$ below.
To do this, we do not only study one quasi-$G$-cover, but also consider a family of quasi-$G$-covers of $(K_M, \rho_M)$ indexed by $\cA_{\gen}$ with some additional data and properties.
More precisely, consider a family
\index{notation-ax}{Kay x@$K_{x}$}
\index{notation-ax}{rhox@$\rho_{x}$}
\index{notation-ax}{Kay x +@$K_{x, +}$}
\index{notation-ax}{Kay@$\cK$}
\[
\cK =
\left\{
(K_{x}, K_{x, +}, (\rho_{x}, V_{\rho_{x}}))
\right\}_{x \in \cA_{\gen}},
\]
where each $K_x$ is a compact, open subgroup of $G(F)$,
each $K_{x,+}$
is a normal, open subgroup of $K_x$,
and each $(\rho_x, V_{\rho_x})$ is an irreducible smooth
representation of $K_x$. We will refer to such a family as a \textit{family of quasi-$G$-cover-candidates}.
\index{terminology-ax}{family of quasi-$G$-cover-candidates}
Let $\Nheart$
be a subgroup of 
\index{notation-ax}{NrhoMx0heart@$\Nheart$}
$N(\rho_{M})_{[x_{0}]_{M}}$ containing $A_{M}(F)$.
Eventually (starting with Corollary \ref{corollarybijectiondoublecosetandweylgroup}) we will assume that $\Nheart$ is sufficiently large, more precisely large enough to satisfy Axiom \ref{axiombijectionofdoublecoset} below. For now we first only assume that 
$\cK$ and $\Nheart$ satisfy the following:
\begin{axiom}
\label{axiomaboutHNheartandK}
\mbox{}
\begin{enumerate}[(1)] 
\item
\label{axiomaboutHNheartandKrhoxandthetax}
The restriction of $\rho_x$ to $K_{x, +}$
is $\theta_x$-isotypic for some character
$\theta_x$ of $K_{x, +}$.
\index{notation-ax}{thetax@$\theta_x$}
\item
\label{axiomaboutHNheartandKNinvarianceofH}
The action of $\Nheart$ on $\cA_{x_{0}}$ 
preserves the set $\mathfrak{H}$.
In particular, $\Nheart$
stabilizes the the subset $\cA_{\gen}$ of $\cA_{x_{0}}$.
\item \label{axiomaboutHNheartandKaboutconjugation}
For every $x \in \cA_{\gen}$ and $n \in \Nheart$, we have
\[
K_{n x} = n K_{x} n^{-1}
\qquad
\text{and}
\qquad
K_{n x, +} = n K_{x, +} n^{-1}.
\]
\item
\label{axiomaboutHNheartandKabouttype}
For every $x \in \cA_{\gen}$ and for all $U \in \cU(M)$, we have
\begin{enumerate}
\item 
\label{axiomaboutHNheartandKabouttypeKxisaquasiGcover}
the pair $(K_{x}, \rho_{x})$ is a quasi-$G$-cover of $(K_{M}, \rho_{M})$,
\item
\label{axiomaboutHNheartandKabouttypeKx=KMKx+}
$K_{x} = K_{M} \cdot K_{x, +},$
\item
\label{axiomaboutHNheartandKabouttypeIwahoridecompositionforKx+}
$K_{x, +} = \left(
K_{x, +} \cap U(F)
\right) \cdot \left(K_{x, +} \cap M(F)\right) \cdot \left(
K_{x, +} \cap \overline{U}(F)
\right).
$
\end{enumerate}
Moreover, the group $K_{x, +} \cap M(F)$ is independent of the point $x \in \cA_{\gen}$.
\item
\label{axiomaboutHNheartandKabotgoodunipotentradical}
For $x, y, z \in \cA_{\gen}$ such that 
\[
d(x, y) + d(y, z) = d(x, z),
\]
there exists $U \in \cU(M)$ such that
\[
K_{x} \cap U(F) \subseteq K_{y} \cap U(F) \subseteq K_{z} \cap U(F)
\]
and
\[
K_{z} \cap \overline{U}(F)  \subseteq K_{y} \cap \overline{U}(F) \subseteq K_{x} \cap \overline{U}(F).
\]
In particular, for any $x, y \in \cA_{\gen}$, there exists $U \in \cU(M)$ such that
\[
K_{x} \cap U(F) \subseteq K_{y} \cap U(F)
\quad 
\text{ and }
\quad
K_{y} \cap \overline{U}(F) \subseteq K_{x} \cap \overline{U}(F).
\]
\end{enumerate}
\end{axiom}
Once we assume this axiom for a family of quasi-$G$-cover-candidates $\cK$, we also refer to $\cK$ as a \textit{family of quasi-$G$-covers}.%
\index{terminology-ax}{family of quasi-$G$-covers}
\begin{remark}
The last paragraph of Section~\ref{subsec:constructionofdepthzero}, see p.\ \pageref{depthzerofamilies}, summarizes the families that we consider in Section~\ref{sec:depth-zero}, and
the last paragraph of
\cite[Section \ref{HAIKY-subsec:construction}]{HAIKY}, see p.\ \pageref{HAIKY-KimYufamilies} in \cite{HAIKY},
summarizes the families that we consider in
\cite{HAIKY}.
These cases include the types for single depth-zero Bernstein blocks and those for positive-depth Bernstein blocks as constructed by Kim and Yu (twisted by a quadratic character following Fintzen, Kaletha and Spice), respectively. 
Axiom \ref{axiomaboutHNheartandK} for the families considered in Section~\ref{sec:depth-zero} is vertified in Lemma \ref{proofofaxiomaboutHNheartandK}, and Axiom \ref{axiomaboutHNheartandK} for the families considered in \cite{HAIKY} is verified in
\cite[Lemma \ref{HAIKY-proofofaxiomaboutHNheartandK}]{HAIKY}.
\end{remark}
Recall that the support of the Hecke algebras attached to the pairs $(K_x, \rho_x)$ for $x \in \cA_\gen$  is given by $I_{G(F)}(\rho_{x})$. 
According to Corollary~\ref{corollarynormalizercontainedinintertwiner} and Axiom~\ref{axiomaboutHNheartandK}\eqref{axiomaboutHNheartandKabouttypeKxisaquasiGcover}, we have
\[
\Nheart \subset N(\rho_{M})_{[x_{0}]_{M}} \subset I_{G(F)}(\rho_{x})
\]
for all $x \in \cA_{\gen}$.
Since $I_{G(F)}(\rho_{x}) = K_{x} \cdot I_{G(F)}(\rho_{x}) \cdot K_{x}$ by definition, we also have the inclusion
\[
K_{x} \cdot \Nheart \cdot K_{x} \subset I_{G(F)}(\rho_{x}).
\]
In order to access the support of the relevant Hecke algebras via $\Nheart$, which will allow us to enrich the support with a group structure in Corollary \ref{corollarybijectiondoublecosetandweylgroup} and Definition \ref{definitionofWheart}, we will also suppose starting from Corollary~\ref{corollarybijectiondoublecosetandweylgroup} that this inclusion is in fact an equality, i.e., that the following axiom holds.
\begin{axiom}
\label{axiombijectionofdoublecoset}
We have
\[
K_{x} \cdot \Nheart \cdot K_{x} = I_{G(F)}(\rho_{x})
\]
for all $x \in \cA_{\gen}$.
\end{axiom}
In the depth-zero setting of Section~\ref{sec:depth-zero} this axiom holds by Proposition~\ref{propproofofaxiombijectionofdoublecoset}, and
in the setting of
\cite{HAIKY}
this axiom holds by \cite[Proposition \ref{HAIKY-proofofaxiomaboutK0vsK}]{HAIKY}.
\begin{remark}
To prove Lemma~\ref{lemmaintersectionwithunipotent}--Lemma~\ref{NcapUbarUtrivial} below, we only need Axiom~\ref{axiomaboutHNheartandK}, but we believe that having all axioms in one place will help the reader find them more easily.
\end{remark}
We record some consequences of Axiom~\ref{axiomaboutHNheartandK} that we will use throughout the paper. 
\begin{lemma}
\label{lemmaintersectionwithunipotent}
Let $x \in \cA_{\gen}$.
Then we have
\[
K_{x} \cap U(F) = K_{x, +} \cap U(F)
\]
for all $U \in \cU(M)$.
\end{lemma}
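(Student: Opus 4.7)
The plan is to prove the non-trivial inclusion $K_x \cap U(F) \subseteq K_{x,+} \cap U(F)$, since the reverse containment is immediate from $K_{x,+} \subseteq K_x$. The strategy will combine three ingredients from Axiom~\ref{axiomaboutHNheartandK}: the product decomposition $K_x = K_M \cdot K_{x,+}$ in part~\eqref{axiomaboutHNheartandKabouttypeKx=KMKx+}, the Iwahori-type decomposition of $K_{x,+}$ from part~\eqref{axiomaboutHNheartandKabouttypeIwahoridecompositionforKx+}, and the uniqueness of the big-cell factorization in $G(F)$ recalled in Remark~\ref{remarkaboutthedefinitionfoIwahoridecomposition}.

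First I would take an arbitrary $g \in K_x \cap U(F)$ and write $g = m \cdot k$ with $m \in K_M$ and $k \in K_{x,+}$. Then I would apply the Iwahori decomposition to $k$, obtaining $k = u \cdot m' \cdot \bar{u}$ with $u \in K_{x,+} \cap U(F)$, $m' \in K_{x,+} \cap M(F)$, and $\bar u \in K_{x,+} \cap \overline{U}(F)$. Since $m \in K_M \subseteq M(F)$ normalizes both $U(F)$ and $\overline{U}(F)$, conjugating gives
\[
g = (m u m^{-1}) \cdot (m m') \cdot \bar u,
\]
a product of elements in $U(F)$, $M(F)$, and $\overline{U}(F)$ respectively.

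Next I would invoke the uniqueness of the factorization in the open cell $U(F) \cdot M(F) \cdot \overline{U}(F)$ (cf.\ Remark~\ref{remarkaboutthedefinitionfoIwahoridecomposition}). Since $g \in U(F)$, comparing $g = g \cdot 1 \cdot 1$ with the expression above forces $\bar u = 1$ and $m m' = 1$, so $m = (m')^{-1} \in K_{x,+} \cap M(F) \subseteq K_{x,+}$. Consequently $g = m \cdot k$ lies in $K_{x,+}$, and combined with $g \in U(F)$ this yields $g \in K_{x,+} \cap U(F)$, as desired.

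I do not anticipate any serious obstacle: everything reduces to a bookkeeping argument using the uniqueness of the big-cell decomposition. The only subtlety is making sure that the conjugation step is legitimate, i.e., that $m \in M(F)$ really normalizes $U(F)$ and $\overline{U}(F)$, which is automatic since $M$ is a Levi factor of a parabolic with unipotent radical $U$. The proof is therefore a short direct computation requiring no additional axioms beyond Axiom~\ref{axiomaboutHNheartandK}.
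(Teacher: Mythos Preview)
Your proof is correct and follows essentially the same approach as the paper: both arguments combine the product decomposition $K_x = K_M \cdot K_{x,+}$ with the Iwahori decomposition of $K_{x,+}$ and then appeal to the uniqueness of the big-cell factorization from Remark~\ref{remarkaboutthedefinitionfoIwahoridecomposition}. The paper phrases it as a comparison of two group-level decompositions of $K_x$ (invoking also part~\eqref{axiomaboutHNheartandKabouttypeKxisaquasiGcover} of the axiom to write $K_x = (K_x \cap U(F)) \cdot K_M \cdot (K_x \cap \overline U(F))$), whereas you work element-by-element and avoid invoking that part; the underlying mechanism is identical.
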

\begin{proof}
According to Axiom~\ref{axiomaboutHNheartandK}\eqref{axiomaboutHNheartandKabouttypeKxisaquasiGcover}, we have
\[
K_{x} = \left(
K_{x} \cap U(F)
\right) \cdot K_{M} \cdot \left(
K_{x} \cap \overline{U}(F)
\right).
\]
On the other hand, according to Axiom~\ref{axiomaboutHNheartandK}\eqref{axiomaboutHNheartandKabouttypeKx=KMKx+}, \eqref{axiomaboutHNheartandKabouttypeIwahoridecompositionforKx+}, we have
\begin{align*}
K_{x} & = K_{M} \cdot K_{x, +} \\
&= K_{M} \cdot \left(
K_{x, +} \cap U(F)
\right) \cdot (K_{x, +} \cap M(F)) \cdot \left(
K_{x, +} \cap \overline{U}(F)
\right) \\
&= \left(
K_{x, +} \cap U(F)
\right) \cdot K_{M} \cdot \left(
K_{x, +} \cap \overline{U}(F)
\right).
\end{align*}
Then the lemma follows from Remark~\ref{remarkaboutthedefinitionfoIwahoridecomposition}.
\end{proof}
\begin{lemma}
\label{lemmaKxcapKy+vsKx+capKy+}
Let $x, y \in \cA_{\gen}$.
Then we have 
\[
K_{x} \cap K_{y, +} = K_{x, +} \cap K_{y, +}.
\]
\end{lemma}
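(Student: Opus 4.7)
The inclusion $K_{x,+}\cap K_{y,+}\subseteq K_{x}\cap K_{y,+}$ is immediate from $K_{x,+}\subseteq K_{x}$, so the content is the reverse inclusion. My plan is to exploit the uniqueness of the Iwahori-type decomposition coming from the quasi-$G$-cover property, applied to a fixed but arbitrary $U\in\cU(M)$, and to compare two decompositions of the same element.

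Let $g\in K_{x}\cap K_{y,+}$. Fix any $U\in\cU(M)$ (which exists since $M$ is a Levi subgroup). Using Axiom~\ref{axiomaboutHNheartandK}\eqref{axiomaboutHNheartandKabouttype}\eqref{axiomaboutHNheartandKabouttypeIwahoridecompositionforKx+} applied to $K_{y,+}$, I can write
\[
g = u\, m\,\bar u,\qquad u\in K_{y,+}\cap U(F),\;\; m\in K_{y,+}\cap M(F),\;\;\bar u\in K_{y,+}\cap\overline U(F).
\]
On the other hand, since $g\in K_{x}$ and $(K_{x},\rho_{x})$ is a quasi-$G$-cover of $(K_{M},\rho_{M})$ (Axiom~\ref{axiomaboutHNheartandK}\eqref{axiomaboutHNheartandKabouttype}\eqref{axiomaboutHNheartandKabouttypeKxisaquasiGcover}), I can also write
\[
g = u'\, m'\,\bar u',\qquad u'\in K_{x}\cap U(F),\;\; m'\in K_{M},\;\;\bar u'\in K_{x}\cap\overline U(F).
\]
By Remark~\ref{remarkaboutthedefinitionfoIwahoridecomposition}, the product map $U(F)\times M(F)\times\overline U(F)\to G(F)$ is injective, so $u=u'$, $m=m'$, $\bar u=\bar u'$.

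Now I combine these identifications with Lemma~\ref{lemmaintersectionwithunipotent} and the last sentence of Axiom~\ref{axiomaboutHNheartandK}\eqref{axiomaboutHNheartandKabouttype}. Lemma~\ref{lemmaintersectionwithunipotent} gives $K_{x}\cap U(F)=K_{x,+}\cap U(F)$ and $K_{x}\cap\overline U(F)=K_{x,+}\cap\overline U(F)$, so $u=u'\in K_{x,+}\cap U(F)$ and $\bar u=\bar u'\in K_{x,+}\cap\overline U(F)$. For the middle factor, the last sentence of Axiom~\ref{axiomaboutHNheartandK}\eqref{axiomaboutHNheartandKabouttype} asserts that $K_{z,+}\cap M(F)$ is independent of $z\in\cA_{\gen}$, hence $m\in K_{y,+}\cap M(F)=K_{x,+}\cap M(F)$. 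Invoking the Iwahori decomposition of $K_{x,+}$ (Axiom~\ref{axiomaboutHNheartandK}\eqref{axiomaboutHNheartandKabouttype}\eqref{axiomaboutHNheartandKabouttypeIwahoridecompositionforKx+} applied to $x$ in place of $y$) yields
\[
g=u\,m\,\bar u\in \bigl(K_{x,+}\cap U(F)\bigr)\bigl(K_{x,+}\cap M(F)\bigr)\bigl(K_{x,+}\cap\overline U(F)\bigr)=K_{x,+},
\]
so $g\in K_{x,+}\cap K_{y,+}$, as required.

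There is no real obstacle here; the proof is essentially an exercise in matching the two triangular decompositions. The only point that needs to be invoked carefully is the independence of $K_{z,+}\cap M(F)$ from $z$, which is precisely the hypothesis that allows the $M$-component of $g$ picked up from the $y$-decomposition to be recognized inside $K_{x,+}$. I would not need the refined inclusions of Axiom~\ref{axiomaboutHNheartandK}\eqref{axiomaboutHNheartandKabotgoodunipotentradical} for this argument — an arbitrary $U\in\cU(M)$ suffices.
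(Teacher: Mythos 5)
Your proof is correct and follows essentially the same route as the paper's: the paper asserts Iwahori decompositions of the intersections $K_{x}\cap K_{y,+}$ and $K_{x,+}\cap K_{y,+}$ (which implicitly uses the same uniqueness of the $U$-$M$-$\overline U$ product that you invoke) and then compares the three factors via Lemma~\ref{lemmaintersectionwithunipotent} and the independence of $K_{z,+}\cap M(F)$. You have simply unpacked that factor-by-factor comparison at the level of a single element $g$, which is a legitimate and perhaps more transparent way to present the identical argument.
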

\begin{proof}
We fix $U \in \cU(M)$.
By using Axiom~\ref{axiomaboutHNheartandK}\eqref{axiomaboutHNheartandKabouttype}, we obtain that
\[
K_{x} \cap K_{y, +} = \left(
K_{x} \cap K_{y, +} \cap U(F)
\right) \cdot \left(
K_{x} \cap K_{y, +} \cap M(F)
\right)
\cdot \left(
K_{x} \cap K_{y, +} \cap \overline{U}(F)
\right)
\]
and
\[
K_{x, +} \cap K_{y, +} = \left(
K_{x, +} \cap K_{y, +} \cap U(F)
\right) \cdot \left(
K_{x, +} \cap K_{y, +} \cap M(F)
\right)
\cdot \left(
K_{x, +} \cap K_{y, +} \cap \overline{U}(F)
\right).
\]
Then the claim follows from Lemma~\ref{lemmaintersectionwithunipotent} and the fact that $K_{x, +} \cap M(F) = K_{y, +} \cap M(F)$.
\end{proof}
\begin{lemma}
\label{lemmaquotientvswithquotientofunipotentradical}
Let $x, y \in \cA_{\gen}$ and $U \in \cU(M)$ such that
\[
K_{x} \cap U(F) \subseteq K_{y} \cap U(F)
\quad 
\text{ and }
\quad
 K_{y} \cap \overline{U}(F) \subseteq K_{x} \cap \overline{U}(F).
\]
Then the inclusions $K_{y} \cap U(F) \subset K_{y, +} \subset K_{y}$ induce bijections
\[
\left(
K_{y} \cap U(F) 
\right) / \left(
K_{x} \cap U(F) 
\right) \simeq 
K_{y, +}
/
\left(
K_{x, +} \cap K_{y, +}
\right)
\simeq
K_{y}
/
\left(
K_{x} \cap K_{y}
\right).
\]
\end{lemma}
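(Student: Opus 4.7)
The plan is to establish both bijections by identifying each one as the map induced by an inclusion of subgroups, and then working out surjectivity and kernel using the Iwahori-style decomposition provided by Axiom~\ref{axiomaboutHNheartandK}\eqref{axiomaboutHNheartandKabouttype} together with the earlier lemmas of this subsection.

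For the first bijection, I will consider the composition $K_{y} \cap U(F) \hookrightarrow K_{y,+} \twoheadrightarrow K_{y,+}/(K_{x,+} \cap K_{y,+})$. To see that it is surjective, I would apply Axiom~\ref{axiomaboutHNheartandK}\eqref{axiomaboutHNheartandKabouttypeIwahoridecompositionforKx+} to write
\[
K_{y,+} = (K_{y,+}\cap U(F))\cdot (K_{y,+}\cap M(F))\cdot (K_{y,+}\cap \overline{U}(F)),
\]
replace $K_{y,+}\cap U(F)$ by $K_{y}\cap U(F)$ and $K_{y,+}\cap \overline{U}(F)$ by $K_{y}\cap \overline{U}(F)$ using Lemma~\ref{lemmaintersectionwithunipotent}, and then observe that the final two factors already lie in $K_{x,+}\cap K_{y,+}$: the middle factor because $K_{y,+}\cap M(F)=K_{x,+}\cap M(F)$ by the last sentence of Axiom~\ref{axiomaboutHNheartandK}\eqref{axiomaboutHNheartandKabouttype}, and the last factor because the hypothesis gives $K_{y}\cap \overline{U}(F)\subseteq K_{x}\cap \overline{U}(F)=K_{x,+}\cap \overline{U}(F)\subseteq K_{x,+}$ (again using Lemma~\ref{lemmaintersectionwithunipotent}), while trivially being contained in $K_{y,+}$. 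For the kernel, suppose $u\in K_{y}\cap U(F)$ lies in $K_{x,+}\cap K_{y,+}$; then $u\in K_{x,+}\cap U(F)=K_{x}\cap U(F)$ by Lemma~\ref{lemmaintersectionwithunipotent}, and conversely the hypothesis $K_{x}\cap U(F)\subseteq K_{y}\cap U(F)$ shows that $K_{x}\cap U(F)$ does lie in the kernel.

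For the second bijection, I would look at the inclusion $K_{y,+}\hookrightarrow K_{y}$ and the induced map to $K_{y}/(K_{x}\cap K_{y})$. Surjectivity follows from Axiom~\ref{axiomaboutHNheartandK}\eqref{axiomaboutHNheartandKabouttypeKx=KMKx+}, which gives $K_{y}=K_{M}\cdot K_{y,+}$, together with $K_{M}=K_{x}\cap M(F)\subseteq K_{x}\cap K_{y}$, so any $k\in K_{y}$ can be rewritten modulo $K_{x}\cap K_{y}$ as an element of $K_{y,+}$. For the kernel, an element $k\in K_{y,+}\cap (K_{x}\cap K_{y})=K_{x}\cap K_{y,+}$ lies in $K_{x,+}\cap K_{y,+}$ by Lemma~\ref{lemmaKxcapKy+vsKx+capKy+}, and the reverse inclusion is clear.

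The main bookkeeping obstacle is checking that in the decomposition of $K_{y,+}$ the unipotent factor can be replaced by $K_{y}\cap U(F)$ (and not $K_{y,+}\cap U(F)$) so that the surjectivity statement is literally about the inclusion $K_{y}\cap U(F)\hookrightarrow K_{y,+}$, and conversely that the pieces on $M(F)$ and $\overline{U}(F)$ really fall inside $K_{x,+}$; both reductions rely in an essential way on Lemma~\ref{lemmaintersectionwithunipotent} and on the asymmetric hypothesis ($U$ shrinks, $\overline{U}$ grows) that has been arranged in the statement. Once these identifications are in place, the rest is the standard ``third isomorphism theorem'' argument, and composing the two bijections gives the displayed chain of isomorphisms.
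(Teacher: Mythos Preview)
Your proof is correct and uses the same ingredients as the paper's (the Iwahori-type decompositions from Axiom~\ref{axiomaboutHNheartandK}\eqref{axiomaboutHNheartandKabouttype} together with Lemmas~\ref{lemmaintersectionwithunipotent} and~\ref{lemmaKxcapKy+vsKx+capKy+}). The only organizational difference is that the paper first establishes the outer bijection $(K_y\cap U(F))/(K_x\cap U(F))\simeq K_y/(K_x\cap K_y)$ directly from the quasi-$G$-cover decompositions of $K_x$ and $K_y$, and then deduces the middle term, whereas you prove the two steps of the chain separately; both routes are equally short.
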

\begin{proof}
According to Axiom~\ref{axiomaboutHNheartandK}\eqref{axiomaboutHNheartandKabouttypeKxisaquasiGcover}, we have
\[
K_{x} = \left(
K_{x} \cap U(F)
\right) \cdot K_{M} \cdot \left(
K_{x} \cap \overline{U}(F)
\right)
\quad 
\text{ and }
\quad
K_{y} = \left(
K_{y} \cap U(F)
\right) \cdot K_{M} \cdot \left(
K_{y} \cap \overline{U}(F)
\right).
\]
Then the assumptions of the lemma imply that
\[
\left(
K_{y} \cap U(F) 
\right) / \left(
K_{x} \cap U(F) 
\right) \simeq 
K_{y}
/
\left(
K_{x} \cap K_{y}
\right).
\]
Since $K_{y} \cap U(F) = K_{y, +} \cap U(F) \subset K_{y, +}$ by Lemma~\ref{lemmaintersectionwithunipotent}, the claim follows from Lemma~\ref{lemmaKxcapKy+vsKx+capKy+}.
\end{proof}
\begin{lemma}
\label{lemmaquotientnonzeromodell}
Let $x, y \in \cA_{\gen}$.
Then the order of the quotient
$K_{y} / \left( K_{x} \cap K_{y} \right)$
is a power of $p$.
In particular, 
this integer is invertible in $\Coeff$.
\end{lemma}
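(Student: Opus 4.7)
The plan is to reduce the quotient $K_y/(K_x \cap K_y)$ to a quotient of compact open subgroups of some unipotent radical $U(F)$, and then exploit that such groups are pro-$p$.

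First, I would invoke Axiom~\ref{axiomaboutHNheartandK}\eqref{axiomaboutHNheartandKabotgoodunipotentradical}, which guarantees the existence of some $U \in \cU(M)$ with
\[
K_{x} \cap U(F) \subseteq K_{y} \cap U(F)
\quad\text{and}\quad
K_{y} \cap \overline{U}(F) \subseteq K_{x} \cap \overline{U}(F).
\]
For this choice of $U$, Lemma~\ref{lemmaquotientvswithquotientofunipotentradical} yields a bijection
\[
K_{y} / \left(K_{x} \cap K_{y}\right)
\;\simeq\;
\left(K_{y} \cap U(F)\right) / \left(K_{x} \cap U(F)\right).
\]

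Next, I would observe that $K_y \cap U(F)$ is a compact, open subgroup of $U(F)$, which is the $F$-points of a connected unipotent group over the non-archimedean local field $F$. Every compact subgroup of $U(F)$ is pro-$p$ (e.g.\ since $U$ admits a filtration whose successive quotients are isomorphic to $\mathbf{G}_a$, whose compact subgroups in $F$-points are open subgroups of $\cO$, hence pro-$p$). Therefore $K_y \cap U(F)$ is pro-$p$, and the finite quotient $(K_y \cap U(F))/(K_x \cap U(F))$ is a finite $p$-group.

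The main (and only) subtle step is ensuring the applicability of Lemma~\ref{lemmaquotientvswithquotientofunipotentradical}, which is handled by the last sentence of Axiom~\ref{axiomaboutHNheartandK}\eqref{axiomaboutHNheartandKabotgoodunipotentradical}. The final sentence of the lemma is then immediate from the assumption $\ell \neq p$: a power of $p$ is invertible in the characteristic-$\ell$ field $\Coeff$.
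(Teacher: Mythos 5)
Your proof is correct and follows essentially the same route as the paper: apply Axiom~\ref{axiomaboutHNheartandK}\eqref{axiomaboutHNheartandKabotgoodunipotentradical} to find a suitable $U$, use Lemma~\ref{lemmaquotientvswithquotientofunipotentradical} to identify $K_y/(K_x\cap K_y)$ with a quotient of subgroups of $U(F)$, and conclude from the fact that $K_y\cap U(F)$ is pro-$p$. The only difference is that you spell out the (standard) justification for the pro-$p$ claim, which the paper states without elaboration.
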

\begin{proof}
According to Axiom~\ref{axiomaboutHNheartandK}\eqref{axiomaboutHNheartandKabotgoodunipotentradical}, there exists $U \in \cU(M)$ such that
$
K_{x} \cap U(F) \subseteq K_{y} \cap U(F)
$
and
$
K_{y} \cap \overline{U}(F) \subseteq K_{x} \cap \overline{U}(F)
$.
Then the claim follows from Lemma~\ref{lemmaquotientvswithquotientofunipotentradical} and the fact that $K_{y} \cap U(F)$ is a pro-$p$-group.
\end{proof}
\begin{lemma}
\label{lemmathetaonintersection}
Let $x, y \in \cA_{\gen}$.
Then we have
\begin{align*}
\Hom_{K_{x} \cap K_{y}}(\rho_{x}, \rho_{y}) \neq \{0\}.
\end{align*}
In particular, we have
\[
\theta_{x} \restriction_{K_{x, +} \cap K_{y, +}} = \theta_{y} \restriction_{K_{x, +} \cap K_{y, +}}. 
\]
\end{lemma}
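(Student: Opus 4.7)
The plan is to deduce both assertions as direct consequences of Lemma~\ref{lemmacoverintertwining} together with the irreducibility of $\rho_M$ and Axiom~\ref{axiomaboutHNheartandK}\eqref{axiomaboutHNheartandKrhoxandthetax}.

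First, by Axiom~\ref{axiomaboutHNheartandK}\eqref{axiomaboutHNheartandKabouttype}\eqref{axiomaboutHNheartandKabouttypeKxisaquasiGcover}, both $(K_x,\rho_x)$ and $(K_y,\rho_y)$ are quasi-$G$-covers of the same pair $(K_M,\rho_M)$. Applying Lemma~\ref{lemmacoverintertwining} with $(K_1,\rho_1,K_{M,1},\rho_{M,1})=(K_x,\rho_x,K_M,\rho_M)$ and $(K_2,\rho_2,K_{M,2},\rho_{M,2})=(K_y,\rho_y,K_M,\rho_M)$ yields
\[
\Hom_{K_x\cap K_y}(\rho_x,\rho_y)=\Hom_{K_M}(\rho_M,\rho_M).
\]
The right-hand side contains $\id_{V_{\rho_M}}$ and is therefore nonzero (in fact, by Schur's lemma for the irreducible representation $\rho_M$, it is one-dimensional), giving the first assertion.

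For the second assertion, I would pick a nonzero $f\in\Hom_{K_x\cap K_y}(\rho_x,\rho_y)$ produced above and choose $v\in V_{\rho_x}$ with $f(v)\neq 0$. For any $k\in K_{x,+}\cap K_{y,+}\subseteq K_x\cap K_y$, Axiom~\ref{axiomaboutHNheartandK}\eqref{axiomaboutHNheartandKrhoxandthetax} gives $\rho_x(k)v=\theta_x(k)v$ and $\rho_y(k)f(v)=\theta_y(k)f(v)$. The intertwining relation then yields
\[
\theta_x(k)f(v)=f(\rho_x(k)v)=\rho_y(k)f(v)=\theta_y(k)f(v),
\]
and cancelling the nonzero element $f(v)$ gives $\theta_x(k)=\theta_y(k)$.

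There is no substantive obstacle here: once Lemma~\ref{lemmacoverintertwining} has done the work of reducing an intertwining statement on $G$ to one on $M$, the conclusion is immediate from the definitions. The only mild point to notice is that the second assertion requires a genuinely nonzero intertwiner, which is exactly what the first assertion supplies.
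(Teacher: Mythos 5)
Your proof is correct and follows the same route as the paper: both apply Lemma~\ref{lemmacoverintertwining} to identify $\Hom_{K_x\cap K_y}(\rho_x,\rho_y)$ with $\End_{K_M}(\rho_M)$, and both then derive the equality of the characters $\theta_x$ and $\theta_y$ on the intersection from the $\theta$-isotypicity hypothesis. Your write-up just spells out the last step slightly more explicitly.
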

\begin{proof}
Since $(K_{x}, \rho_{x})$ and $(K_{y}, \rho_{y})$ are quasi-$G$-covers of $(K_{M}, \rho_{M})$, Lemma~\ref{lemmacoverintertwining} implies that
\begin{align*}
\Hom_{K_{x} \cap K_{y}}(\rho_{x}, \rho_{y}) = \End_{K_{M}}(\rho_{M}) \neq \{0\}.
\end{align*}
The last claim follows from the assumption that the restriction of $\rho_{x}$ to $K_{x, +}$ is $\theta_{x}$-isotypic and the restriction of $\rho_{y}$ to $K_{y, +}$ is $\theta_{y}$-isotypic.
\end{proof}
\begin{lemma}
\label{lemmaeffectofconjugationbyn}
Let $x \in \cA_{\gen}$ and $n \in \Nheart$.
Then the pair $(K_{nx}, {^n\!\rho_{x}})$ is a quasi-$G$-cover of the pair $(K_{M}, {^n\!\rho_{M}})$.
\end{lemma}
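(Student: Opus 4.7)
The plan is to transport the quasi-$G$-cover structure of $(K_x,\rho_x)$ along conjugation by $n$. The key ingredients are Axiom~\ref{axiomaboutHNheartandK}\eqref{axiomaboutHNheartandKabouttypeKxisaquasiGcover}, which tells us that $(K_x,\rho_x)$ is already a quasi-$G$-cover of $(K_M,\rho_M)$, Axiom~\ref{axiomaboutHNheartandK}\eqref{axiomaboutHNheartandKaboutconjugation}, which identifies $K_{nx}$ with $nK_xn^{-1}$, and the fact that $n\in\Nheart\subseteq N(\rho_M)_{[x_0]_M}$ normalizes $M$ and $K_M$ (since $N_{G(F)}(\rho_M)\subseteq N_{G(F)}(K_M)$).

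First I would fix an arbitrary $U\in\cU(M)$. Since $n\in N_G(M)(F)$, conjugation by $n$ permutes $\cU(M)$, so $U':=n^{-1}Un\in\cU(M)$ and $\overline{U'}=n^{-1}\overline{U}n$. Applying the quasi-$G$-cover property of $(K_x,\rho_x)$ to $U'$ yields
\[
K_x=\bigl(K_x\cap U'(F)\bigr)\cdot K_M\cdot\bigl(K_x\cap\overline{U'}(F)\bigr).
\]
Conjugating this decomposition by $n$, using Axiom~\ref{axiomaboutHNheartandK}\eqref{axiomaboutHNheartandKaboutconjugation} together with the identities $n\bigl(K_x\cap U'(F)\bigr)n^{-1}=K_{nx}\cap U(F)$, $n\bigl(K_x\cap\overline{U'}(F)\bigr)n^{-1}=K_{nx}\cap\overline{U}(F)$, and $nK_Mn^{-1}=K_M$, gives
\[
K_{nx}=\bigl(K_{nx}\cap U(F)\bigr)\cdot K_M\cdot\bigl(K_{nx}\cap\overline{U}(F)\bigr),
\]
which is the required Iwahori-type decomposition. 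Taking intersections with $M(F)$ in the identity $K_{nx}=n K_x n^{-1}$ and using $K_x\cap M(F)=K_M$ also confirms that $K_{nx}\cap M(F)=K_M$.

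It remains to verify the representation-theoretic conditions. By the definition of $^n\!\rho_x$, for $k\in K_M\subseteq K_{nx}$ we have $n^{-1}kn\in K_M$ (since $n$ normalizes $K_M$) and hence
\[
{^n\!\rho_x}(k)=\rho_x(n^{-1}kn)=\rho_M(n^{-1}kn)={^n\!\rho_M}(k),
\]
so ${^n\!\rho_x}\restriction_{K_M}={^n\!\rho_M}$. For $k\in K_{nx}\cap U(F)$ we have $n^{-1}kn\in K_x\cap U'(F)$, on which $\rho_x$ is trivial, hence ${^n\!\rho_x}(k)=1$; the same argument works for $K_{nx}\cap\overline{U}(F)$. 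Since $U\in\cU(M)$ was arbitrary, $(K_{nx},{^n\!\rho_x})$ is a quasi-$G$-cover of $(K_M,{^n\!\rho_M})$.

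There is no substantive obstacle; the proof is essentially bookkeeping in which one must only be careful that $n$ normalizes both $M$ and $K_M$ (so that $K_M$ rather than some conjugate appears in the decomposition) and that the roles of $U$ and $U'=n^{-1}Un$ are correctly exchanged when conjugating by $n$.
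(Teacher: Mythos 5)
Your proposal is correct and takes essentially the same approach as the paper: the paper's proof likewise observes that conjugating the quasi-$G$-cover $(K_x,\rho_x)$ by $n\in N_G(M)(F)$ produces a quasi-$G$-cover $({}^nK_x,{}^n\rho_x)$ of $({}^nK_M,{}^n\rho_M)$, and then invokes ${}^nK_x=K_{nx}$ (Axiom~\ref{axiomaboutHNheartandK}\eqref{axiomaboutHNheartandKaboutconjugation}) and ${}^nK_M=K_M$; you have merely unpacked, in careful detail, the bookkeeping (the permutation of $\cU(M)$ under conjugation and the resulting Iwahori decomposition and triviality conditions) that the paper treats as a one-line assertion.
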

\begin{proof}
Since the pair $(K_{x}, \rho_{x})$ is a quasi-$G$-cover of $(K_{M}, \rho_{M})$ and $n$ normalizes the group $M$, the pair $(^n\!K_{x}, {^n\!\rho_{x}})$ is a quasi-$G$-cover of $(^n\!K_{M}, {^n\!\rho_{M}})$.
Then the lemma follows from the facts that $^n\!K_{x} = K_{nx}$ and $^n\!K_{M} = K_{M}$.
\end{proof}
\begin{lemma}
\label{lemmathetaintertwine}
Let $x \in \cA_{\gen}$ and $n \in \Nheart$.
Then we have
\[
\theta_{x}(n^{-1} k n) = \theta_{n x}(k)
\]
for all $k \in K_{n x, +}$.
\end{lemma}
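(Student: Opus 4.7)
The plan is to transfer the character $\theta_x$ on $K_{x,+}$ to a character of $K_{nx,+}$ via $n$-conjugation, and then identify this transferred character with $\theta_{nx}$ by exhibiting a nonzero intertwiner, which forces the two characters to agree on the common isotypic component.

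First, I would observe that by Axiom~\ref{axiomaboutHNheartandK}\eqref{axiomaboutHNheartandKaboutconjugation} we have $K_{nx,+} = nK_{x,+}n^{-1}$, so $n^{-1}kn \in K_{x,+}$ for all $k \in K_{nx,+}$ and the expression $\theta_x(n^{-1}kn)$ makes sense. Define the conjugate character ${^n\theta_x}$ on $K_{nx,+}$ by ${^n\theta_x}(k) \coloneqq \theta_x(n^{-1}kn)$. By Axiom~\ref{axiomaboutHNheartandK}\eqref{axiomaboutHNheartandKrhoxandthetax}, the restriction of ${^n\rho_x}$ (as a representation of ${^n}K_x = K_{nx}$) to $K_{nx,+}$ is ${^n\theta_x}$-isotypic; on the other hand, the restriction of $\rho_{nx}$ to $K_{nx,+}$ is $\theta_{nx}$-isotypic. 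Thus it suffices to show $\theta_{nx} = {^n\theta_x}$ on $K_{nx,+}$.

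Next, I would apply Lemma~\ref{lemmaeffectofconjugationbyn} to conclude that $(K_{nx},{^n\rho_x})$ is a quasi-$G$-cover of $(K_M, {^n\rho_M})$, and recall that $(K_{nx},\rho_{nx})$ is a quasi-$G$-cover of $(K_M,\rho_M)$ by Axiom~\ref{axiomaboutHNheartandK}\eqref{axiomaboutHNheartandKabouttypeKxisaquasiGcover}. Applying Lemma~\ref{lemmacoverintertwining} to these two quasi-$G$-covers yields
\[
\Hom_{K_{nx}}\!\left(\rho_{nx},{^n\rho_x}\right) = \Hom_{K_M}\!\left(\rho_M,{^n\rho_M}\right).
\]
Since $n \in \Nheart \subseteq N(\rho_M)_{[x_0]_M} \subseteq N_{G(F)}(\rho_M)$, we have ${^n\rho_M} \simeq \rho_M$, so the right-hand side is nonzero.

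Finally, a nonzero element of $\Hom_{K_{nx}}(\rho_{nx},{^n\rho_x})$ is in particular $K_{nx,+}$-equivariant. Since both restrictions are isotypic for characters of $K_{nx,+}$, a nonzero such intertwiner forces these two characters to coincide, giving $\theta_{nx}(k) = {^n\theta_x}(k) = \theta_x(n^{-1}kn)$ for all $k \in K_{nx,+}$, as required. There is no real obstacle here; the only mild subtlety is keeping the two quasi-$G$-cover structures on $K_{nx}$ straight and invoking $n \in N_{G(F)}(\rho_M)$ at the right moment to get the nonvanishing of the Hom-space.
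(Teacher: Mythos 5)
Your proposal is correct and follows essentially the same approach as the paper: apply Lemma~\ref{lemmaeffectofconjugationbyn} and Axiom~\ref{axiomaboutHNheartandK}\eqref{axiomaboutHNheartandKabouttypeKxisaquasiGcover} to identify the two quasi-$G$-cover structures on $K_{nx}$, use Lemma~\ref{lemmacoverintertwining} together with $n \in N_{G(F)}(\rho_M)$ to get a nonzero $K_{nx}$-intertwiner, and then deduce equality of the characters from the isotypic decompositions. The only cosmetic difference is that you consider $\Hom_{K_{nx}}(\rho_{nx},{^n\rho_x})$ rather than $\Hom_{K_{nx}}({^n\rho_x},\rho_{nx})$, which is immaterial.
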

\begin{proof}
According to Axiom~\ref{axiomaboutHNheartandK}\eqref{axiomaboutHNheartandKabouttypeKxisaquasiGcover}, the pair $(K_{n x}, \rho_{n x})$ is a quasi-$G$-cover of the pair $(K_{M}, \rho_{M})$.
On the other hand, according to Lemma~\ref{lemmaeffectofconjugationbyn}, the pair $(K_{n x}, {^n\!\rho_{x}})$ is a quasi-$G$-cover of the pair $(K_{M}, {^n\!\rho_{M}})$.
Since $n \in \Nheart \subset N_{G(F)}(\rho_{M})$, Lemma~\ref{lemmacoverintertwining} implies that
\begin{align*}
\Hom_{K_{n x}}(^n\!\rho_{x}, \rho_{n x}) &= \Hom_{K_{M}}(^n\!\rho_{M}, \rho_{M}) \neq \{0\}.
\end{align*}
Since the restriction of $^n\!\rho_{x}$ to $K_{n x, +}$ is $^n\theta_{x}$-isotypic, and the restriction of $\rho_{n x}$ to $K_{n x, +}$ is $\theta_{n x}$-isotypic, we deduce the claim.
\end{proof}

Axiom~\ref{axiomaboutHNheartandK} also allows us to prove the following proposition that will be used to study the support of the Hecke algebra attached to $(K_{x}, \rho_{x})$.
\begin{proposition}
\label{propositiondoublecosetinjection}
Let $x \in \cA_{\gen}$ and assume Axiom \ref{axiomaboutHNheartandK}.
Then the inclusion
\[
\Nheart \subset I_{G(F)}(\rho_{x}),
\]
induces an injection
\[
\Nheart / \left(
\Nheart  \cap K_{M}
\right) \rightarrow K_{x} \backslash I_{G(F)}(\rho_{x}) / K_{x}.
\]
\end{proposition}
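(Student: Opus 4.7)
I plan to separate the claim into the routine well-definedness and the substantive injectivity. That the map factors through $\Nheart/(\Nheart \cap K_M)$ is immediate from the quasi-$G$-cover property (Axiom~\ref{axiomaboutHNheartandK}\eqref{axiomaboutHNheartandKabouttypeKxisaquasiGcover}), which gives $K_M = K_x \cap M(F) \subseteq K_x$: every element of $\Nheart \cap K_M$ is sent to the trivial double coset $K_x$. For injectivity, I will suppose $n_1, n_2 \in \Nheart$ with $K_x n_1 K_x = K_x n_2 K_x$, write $n_2 = k n_1 k'$ with $k, k' \in K_x$, set $n \coloneqq n_1^{-1} n_2 \in \Nheart$ and $y \coloneqq n_1^{-1} x \in \cA_{\gen}$, and use Axiom~\ref{axiomaboutHNheartandK}\eqref{axiomaboutHNheartandKaboutconjugation} (which yields $n_1^{-1} K_x n_1 = K_y$) to conclude $n = (n_1^{-1} k n_1) k' \in K_y K_x$. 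The goal is then to show $n \in K_M$.

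The key algebraic input is the standard fact that $N_G(M)(F) \cap U(F) M(F) \bar U(F) = M(F)$ for any opposite unipotent radicals $U, \bar U$ of parabolics with Levi $M$; this is a consequence of the uniqueness of the Iwahori decomposition together with the Bruhat decomposition for opposite parabolics. Granting this, the strategy is to exhibit $n$ in the form $V \widetilde{\mu} \bar V$ with $V \in U(F)$, $\widetilde{\mu} \in K_M$, and $\bar V \in \bar U(F)$: then $n \in N_G(M)(F) \cap U M \bar U$ forces $n \in M(F)$, and uniqueness of the Iwahori decomposition in $G$ (Remark~\ref{remarkaboutthedefinitionfoIwahoridecomposition}) forces $V = \bar V = 1$ and $n = \widetilde{\mu} \in K_M$.

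To produce such a presentation, I will invoke Axiom~\ref{axiomaboutHNheartandK}\eqref{axiomaboutHNheartandKabotgoodunipotentradical} on the triple $(x, x, y)$ (the relation $d(x,x) + d(x,y) = d(x,y)$ holding trivially) to select $U \in \cU(M)$ with $K_x \cap U \subseteq K_y \cap U$ and $K_y \cap \bar U \subseteq K_x \cap \bar U$. Writing $k = v_1 \mu_1 \bar v_1 \in K_y$ and $k' = v_2 \mu_2 \bar v_2 \in K_x$ via the quasi-$G$-cover Iwahori decompositions, these inclusions place both $\bar v_1$ and $v_2$ inside $K_x$, so $\bar v_1 v_2 \in K_x$ admits its own Iwahori decomposition $v^{\sharp} \mu^{\sharp} \bar v^{\sharp}$ inside $K_x$ with $\mu^{\sharp} \in K_M$. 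Substituting this back into $n = v_1 \mu_1 \bar v_1 v_2 \mu_2 \bar v_2$ and repeatedly using that elements of $K_M \subset M$ normalize $U$, $\bar U$, $K_x \cap U$, and $K_x \cap \bar U$ (the latter two because $K_M \subseteq K_x$ and $K_x$ is a group), I can commute the various Levi factors together into a single middle factor in $K_M$, arriving at the desired presentation $n = V \widetilde{\mu} \bar V$.

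The principal obstacle is really the standard algebraic identity about $N_G(M)(F) \cap U(F) M(F) \bar U(F)$; once this is granted, the remainder is careful bookkeeping with the Iwahori decompositions. A subtler but essential point is the specific choice of $U$ from Axiom~\ref{axiomaboutHNheartandK}\eqref{axiomaboutHNheartandKabotgoodunipotentradical}: without the precise inclusions $K_x \cap U \subseteq K_y \cap U$ and $K_y \cap \bar U \subseteq K_x \cap \bar U$, the mixed term $\bar v_1 v_2$ would not lie in the single compact group $K_x$, and its Levi component would then generally escape $K_M$, breaking the argument.
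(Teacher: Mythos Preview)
Your proof is correct and takes essentially the same approach as the paper: both use the Iwahori decompositions and the choice of $U$ from Axiom~\ref{axiomaboutHNheartandK}\eqref{axiomaboutHNheartandKabotgoodunipotentradical} to place the relevant element of $\Nheart$ inside $U(F) \cdot K_M \cdot \bar U(F)$ (the paper arranges it as $\bar U(F) \cdot U(F) \cdot K_M$ via set-level containments rather than your element-by-element computation), and then conclude via the algebraic fact $N_G(M)(F) \cap \bar U(F) \cdot U(F) = \{1\}$, which is exactly the paper's Lemma~\ref{NcapUbarUtrivial} and from which your ``standard fact'' $N_G(M)(F) \cap U(F) M(F) \bar U(F) = M(F)$ follows immediately. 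One notational slip: when you write ``$k = v_1 \mu_1 \bar v_1 \in K_y$'' you mean the $K_y$-element $n_1^{-1} k n_1$, not $k$ itself.
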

To prove Proposition~\ref{propositiondoublecosetinjection}, we first record the following general lemma:
\begin{lemma}
\label{NcapUbarUtrivial}
Let $U \in \cU(M)$.
Then we have
\[
N_{G}(M)(F) \cap \overline{U}(F) \cdot U(F) = \{1\}.
\]
\end{lemma}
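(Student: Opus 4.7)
The plan is to show that if $g = \bar{u} u$ with $\bar{u} \in \overline{U}(F)$, $u \in U(F)$, and $g \in N_{G}(M)(F)$, then $\bar{u} = u = 1$. Such a decomposition of $g$ is unique because $\overline{U}(F) \cap U(F) = \{1\}$ (this follows from the uniqueness of the big-cell factorization $\overline{U} \times M \times U \hookrightarrow G$, see Remark~\ref{remarkaboutthedefinitionfoIwahoridecomposition}, which also yields $\overline{U}(F) \cap P(F) = \{1\}$ where $P = MU$). Thus it suffices to prove the two equalities separately.

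To show $\bar{u} = 1$, I would use that $g P g^{-1}$ is again a parabolic subgroup with Levi $M$, as $g$ normalizes $M$. Since $u \in P$, this conjugate simplifies to
\[
g P g^{-1} = \bar{u} \cdot uPu^{-1} \cdot \bar{u}^{-1} = \bar{u} P \bar{u}^{-1},
\]
so in particular $\bar{u}^{-1} M \bar{u} \subseteq P$. For each $m \in M$, factor
\[
\bar{u}^{-1} m \bar{u} = m \cdot (m^{-1} \bar{u}^{-1} m \bar{u}).
\]
The second factor lies in $\overline{U}(F)$ because $M$ normalizes $\overline{U}$, and it lies in $P(F)$ because both $m^{-1}$ and $\bar{u}^{-1} m \bar{u}$ do. Hence it lies in $\overline{U}(F) \cap P(F) = \{1\}$, which gives $m \bar{u} = \bar{u} m$ for all $m \in M$. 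Next I would pick a regular element $a \in A_{M}(F)$, meaning $Z_{G}(a) = M$; such an $a$ exists because $A_M$ is split and the non-regular locus is the union of finitely many proper closed subvarieties. Then $\bar{u} \in Z_{G}(a) \cap \overline{U}(F) = M(F) \cap \overline{U}(F) = \{1\}$, so $\bar{u} = 1$.

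With $\bar{u} = 1$, one has $g = u \in N_{G}(M)(F) \cap U(F)$, and I would finish by a parallel argument: for $m \in M$, write $umu^{-1} = m \cdot (m^{-1} u m u^{-1})$ and observe the second factor lies in $U \cap M = \{1\}$, so $u$ centralizes $M$. The same regular element $a$ then yields $u \in Z_{G}(a) \cap U(F) = M(F) \cap U(F) = \{1\}$.

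The only mildly delicate point in this plan is the existence of a regular element $a \in A_{M}(F)$ with $Z_{G}(a) = M$; this is standard but deserves explicit mention. Everything else is a bookkeeping exercise with the uniqueness of the big-cell decomposition and the fact that $M$ normalizes both $U$ and $\overline{U}$.
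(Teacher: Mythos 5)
Your proof is correct, but it takes a noticeably different (and longer) route than the paper's. Both arguments ultimately rest on the same two pillars --- the injectivity of the big-cell map $\overline{U}(F) \times M(F) \times U(F) \to G(F)$ and the identity $Z_G(A_M) = M$ --- but you deploy them twice, once to kill $\bar u$ and once to kill $u$, whereas the paper does a single algebraic manipulation that shows $n = \bar u u$ itself centralizes $M(F)$, giving $n \in Z_G(A_M)(F) = M(F)$ and hence $n \in M(F) \cap \overline{U}(F)U(F) = \{1\}$ in one stroke. Your intermediate observation $gPg^{-1} = \bar uP\bar u^{-1}$ is a nice conceptual reformulation, but the cost is repeating the "factor, land in $\overline{U}(F)\cap P(F) = \{1\}$" argument for each unipotent factor. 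One further simplification is available to you: having shown $\bar u$ centralizes $M(F)$, you do not need to pass through a regular element of $A_M(F)$ (which, as you flag, requires care about whether $Z_G(a)\cap\overline U$ is trivial rather than merely $Z_G(a)^\circ = M$); since $A_M(F)$ is Zariski dense in $A_M$ (as $A_M$ is $F$-split and $F$ infinite), centralizing $M(F)$ already gives $\bar u \in Z_G(A_M)(F) = M(F)$, and then $\bar u \in M(F)\cap\overline U(F) = \{1\}$. That is exactly the move the paper makes for the combined element $n$, so adopting it would bring your argument closer to the reference while eliminating the one genuinely delicate step.
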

\begin{proof}
Let $n \in N_{G}(M)(F)$, $u \in U(F)$, and $\overline{u} \in \overline{U}(F)$ such that
$
n = \overline{u} u
$.
For any $m \in M(F)$, we have
\[
n m n^{-1} = \overline{u} u m u^{-1} \overline{u}^{-1} 
= \overline{u} u (m u^{-1} m^{-1}) (m \overline{u}^{-1} m^{-1}) m.
\]
We write 
\[
m' = (n m n^{-1}) m^{-1} \in M(F)
\quad 
\text{ and }
\quad
u' = u (m u^{-1} m^{-1}) \in U(F).
\]
Then we have
\[
m' (m \overline{u} m^{-1}) = \overline{u} u'.
\]
Since the natural map
\[
M(F) \times \overline{U}(F) \times U(F) \rightarrow G(F)
\]
is injective, we have
$
m' = 1
$.
Hence, we obtain that $n$ commutes with any element in $M(F)$.
In particular, $n$ is contained in the centralizer of $A_{M}$ in $G(F)$, that is equal to $M(F)$.
Thus, we conclude that
\[
n \in M(F) \cap \overline{U}(F) \cdot U(F) = \{1\}.
\qedhere
\]
\end{proof}

\begin{proof}[Proof of Proposition~\ref{propositiondoublecosetinjection}]
Let $n_{1}, n_{2} \in \Nheart$ and $k_{1}, k_{2} \in K_{x}$ such that
$
n_{2} = k_{1} n_{1} k_{2}
$.
It suffices to show that
$
n_{1} n_{2}^{-1} \in K_{M}
$.
We write
\[
k'_{2} = n_{1} k_{2} n_{1}^{-1} \in n_{1} K_{x} n_{1}^{-1} = K_{n_{1} x}.
\]
By Axiom~\ref{axiomaboutHNheartandK}\eqref{axiomaboutHNheartandKabotgoodunipotentradical}, there exists $U \in \cU(M)$ such that
\[
K_{x} \cap U(F) \subset K_{n_{1} x} \cap U(F)
\quad 
\text{ and }
\quad
K_{n_{1} x} \cap \overline{U}(F) \subset K_{x} \cap \overline{U}(F).
\]
Then according to Axiom~\ref{axiomaboutHNheartandK}\eqref{axiomaboutHNheartandKabouttypeKxisaquasiGcover}, we have
\begin{align*}
n_{2} n_{1}^{-1} &= k_{1} n_{1} k_{2} n_{1}^{-1} \\
&= k_{1} k'_{2} \\
& \in K_{x} \cdot K_{n_{1} x} \\
&=  \left(
K_{x} \cap \overline{U}(F) 
\right) K_{M} 
\left(
K_{x} \cap U(F)
\right)
\cdot K_{n_{1} x} \\
&= \left(
K_{x} \cap \overline{U}(F) 
\right) \cdot K_{n_{1} x} \\
&=  \left(
K_{x} \cap \overline{U}(F) 
\right) \cdot \left(
K_{n_{1} x} \cap \overline{U}(F) 
\right) \left(
K_{n_{1} x} \cap U(F)
\right) K_{M} \\
&= \left(
K_{x} \cap \overline{U}(F) 
\right) \left(
K_{n_{1} x} \cap U(F)
\right) K_{M}.
\end{align*}
Hence, there exists $k_{M} \in K_{M}$ such that
\[
n_{2} n_{1}^{-1} k_{M} \in \Nheart \cdot K_{M} \cap \overline{U}(F) \cdot U(F)
\subset
N_{G}(M)(F) \cap \overline{U}(F) \cdot U(F).
\]
According to Lemma~\ref{NcapUbarUtrivial},
we have
$
n_{2} n_{1}^{-1} k_{M} = 1
$.
Thus, we obtain that
$
n_{1} n_{2}^{-1} = k_{M} \in K_{M}
$.
\end{proof}

From now on, we suppose Axiom~\ref{axiombijectionofdoublecoset} in order to turn the injection in Proposition~\ref{propositiondoublecosetinjection} into a bijection.
\begin{corollary}
\label{corollarybijectiondoublecosetandweylgroup}
Let $x \in \cA_{\gen}$ and assume Axioms \ref{axiomaboutHNheartandK} and \ref{axiombijectionofdoublecoset}. Then the inclusion 
\[
\Nheart \subset I_{G(F)}(\rho_{x})
\]
induces a bijection
\[
\Nheart / \left(
\Nheart  \cap K_{M}
\right) \simeq K_{x} \backslash I_{G(F)}(\rho_{x}) / K_{x}.
\]
\end{corollary}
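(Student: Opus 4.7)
The proof should be essentially immediate at this point, combining two pieces that are already in place. The plan is to observe that Proposition \ref{propositiondoublecosetinjection} already supplies the injectivity of the map
\[
\Nheart / \left(\Nheart \cap K_{M}\right) \longrightarrow K_{x} \backslash I_{G(F)}(\rho_{x}) / K_{x}
\]
induced by the inclusion $\Nheart \subset I_{G(F)}(\rho_{x})$, and that Axiom \ref{axiombijectionofdoublecoset} gives the surjectivity directly.

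In more detail: first I would recall that the map is well-defined because $\Nheart \subset I_{G(F)}(\rho_{x})$, which was established via Corollary \ref{corollarynormalizercontainedinintertwiner} combined with the quasi-$G$-cover property from Axiom \ref{axiomaboutHNheartandK}\eqref{axiomaboutHNheartandKabouttypeKxisaquasiGcover}. The injectivity is exactly the content of Proposition \ref{propositiondoublecosetinjection}, which used the Iwahori-type decompositions from Axiom \ref{axiomaboutHNheartandK}\eqref{axiomaboutHNheartandKabouttype}, \eqref{axiomaboutHNheartandKabotgoodunipotentradical} together with the triviality result $N_{G}(M)(F) \cap \overline{U}(F) \cdot U(F) = \{1\}$ of Lemma \ref{NcapUbarUtrivial} to conclude that if $n_1, n_2 \in \Nheart$ lie in the same $K_x$-double coset then $n_1 n_2^{-1} \in K_M$, hence $n_1 n_2^{-1} \in \Nheart \cap K_M$.

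For the surjectivity I would simply invoke Axiom \ref{axiombijectionofdoublecoset}, which asserts
\[
K_{x} \cdot \Nheart \cdot K_{x} = I_{G(F)}(\rho_{x}).
\]
This means every $g \in I_{G(F)}(\rho_{x})$ can be written as $g = k_1 n k_2$ for some $k_1, k_2 \in K_x$ and $n \in \Nheart$, so the double coset $K_x g K_x$ equals $K_x n K_x$, which is in the image of the map. Combining injectivity and surjectivity yields the desired bijection. I do not expect any genuine obstacle here: the work has already been done in Proposition \ref{propositiondoublecosetinjection} and in the formulation of Axiom \ref{axiombijectionofdoublecoset}, and the corollary is essentially a repackaging of those two facts.
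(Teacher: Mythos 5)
Your proposal is correct and matches the paper's proof exactly: the paper also deduces the corollary by combining the injectivity from Proposition~\ref{propositiondoublecosetinjection} with the surjectivity given by Axiom~\ref{axiombijectionofdoublecoset}. The additional detail you spell out about well-definedness and how injectivity was originally obtained is accurate but not strictly needed for the one-line deduction.
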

\begin{proof}
The corollary follows from Axiom~\ref{axiombijectionofdoublecoset} and Proposition~\ref{propositiondoublecosetinjection}.
\end{proof}

\begin{definition}
\label{definitionofWheart}
	We define the group $\Wheart$ as
\[
\index{notation-ax}{WrhoMx@$\Wheart$}
\Wheart = \Nheart / \left(
\Nheart  \cap K_{M}
\right) .
\]
\end{definition}
Since $K_{M} \subset M(F)_{x_{0}}$ fixes every point of $\cA_{x_{0}}$, the action of $N_{G}(M)(F)_{[x_{0}]_{M}}$ on $\cA_{x_{0}}$ induces an action of $\Wheart$ on $\cA_{x_{0}}$.
\begin{remark}
\label{remarkaboutfaithful}
Since the kernel of the action of $N_{G}(M)(F)_{[x_{0}]_{M}}$ on $\cA_{x_{0}}$ is the group $M(F)_{x_{0}}$, the action of $\Wheart$ on $\cA_{x_{0}}$ is faithful if and only if
\[
\Nheart  \cap K_{M} = \Nheart \cap M(F)_{x_{0}}.
\]
In particular, if $K_{M} = M(F)_{x_{0}}$, then the action of $\Wheart$ on $\cA_{x_{0}}$ is faithful.
\end{remark}
\begin{lemma}
\label{lemmaweylactionproperly}
The group $\Wheart$ acts on $\cA_{x_{0}}$ properly, that is, for all compact subsets $C_{1}$ and $C_{2}$ of $\cA_{x_{0}}$, the set
\(
\{
w \in \Wheart \mid w(C_{1}) \cap C_{2} \neq \emptyset
\}
\)
is finite.
\end{lemma}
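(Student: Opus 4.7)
The plan is to show that the image of $\Wheart$ in the affine isometry group of $\cA_{x_{0}}$ is discrete and the kernel of the action is finite; properness of the action then follows from standard facts about isometric actions on Euclidean space. Since the inner product $(\phantom{x},\phantom{y})_{M}$ on $V \coloneqq X_{*}(A_{M}) \otimes_{\bZ} \bR$ is $N_{G}(M)(F)$-invariant, the action of $\Nheart$ (and hence of $\Wheart$) on $\cA_{x_{0}}$ factors through $\cA_{x_{0}} \rtimes O(V)$. Using the formula $n(x_{0}+a) = n \cdot x_{0} + (Dn)a$ from the proof of Lemma~\ref{lemmaaboutstabilizerofx0M} (which is \cite[Proposition~6.2.4]{KalethaPrasad}), the composition $\Wheart \to O(V)$ is the restriction of the natural action of $N_{G}(M)(F)/M(F)$ on $V$; this relative Weyl group is finite, so the linear part of the $\Wheart$-action has finite image.

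Next I would analyze the kernel of $\Wheart \to O(V)$, which is represented by elements of $\Nheart \cap M(F)$. Since $A_{M}$ is central in $M$, the group $M(F)$ acts trivially on $V$ and hence purely by translations on $\cA_{x_{0}}$; the translation action defines a homomorphism $M(F) \to V$. The image of this homomorphism is commensurable with the cocharacter lattice $X_{*}(A_{M}) \subset V$: the restriction to $A_{M}(F)$ is the standard valuation map $\lambda \mapsto -\ord(\lambda)$ onto $X_{*}(A_{M})$, while $M(F)/A_{M}(F) \cdot M(F)_{x_{0},0}$ is compact, so it contributes only a bounded (hence trivial) subgroup of the vector group $V$. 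Therefore the translation part of the image of $\Wheart$ in $\text{Aff}(\cA_{x_{0}})$ is discrete, and combined with the previous paragraph the image is a discrete subgroup of the isometry group $\cA_{x_{0}} \rtimes O(V)$.

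To finish, note that the kernel of $\Wheart \to \text{Aff}(\cA_{x_{0}})$ equals $(\Nheart \cap M(F)_{x_{0}})/(\Nheart \cap K_{M})$, a subquotient of the finite group $M(F)_{x_{0}}/K_{M}$ (finite because $M(F)_{x_{0}}$ is compact as a point stabilizer for the proper action of $M(F)$ on $\cB(M,F)$, while $K_{M}$ is open in it). For compact $C_{1}, C_{2} \subset \cA_{x_{0}}$, the condition $w(C_{1}) \cap C_{2} \neq \emptyset$ bounds the translation part of $w$ in terms of $\mathrm{diam}(C_{1}), \mathrm{diam}(C_{2})$ and $d(C_{1}, C_{2})$. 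Since there are only finitely many possible linear parts and the bounded region of $V$ meets the discrete translation subgroup in a finite set, only finitely many elements of the image of $\Wheart$ satisfy $w(C_{1}) \cap C_{2} \neq \emptyset$; the finiteness of the kernel then gives the claim for $\Wheart$ itself. The main technical point is verifying that the translations coming from $\Nheart \cap M(F)$ form a discrete subgroup of $V$; this is standard Bruhat--Tits theory, hinging on the commensurability of the translation image with $X_{*}(A_{M})$.
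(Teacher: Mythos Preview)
Your overall strategy is correct and is essentially the same as the paper's: split the action into a finite linear part and a discrete translation part, and note that the kernel of the action is finite. The paper carries this out by choosing a maximal split torus $S$ of $M$ with $x_{0}\in\cA(G,S,F)$, using Lemma~\ref{lemmaMtoS} to replace $N_{G}(M)(F)_{[x_{0}]_{M}}/M(F)_{x_{0}}$ by a subquotient of $N_{G}(S)(F)/Z_{G}(S)(F)_{\cpt}$, and then invoking the standard fact that the Iwahori--Weyl group acts properly on the full apartment $\cA(G,S,F)$. You instead work directly with $M$ and the relative Weyl group $N_{G}(M)(F)/M(F)$, which is a perfectly reasonable variant.

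There is one genuine slip in your translation-part argument. The quotient $M(F)/\bigl(A_{M}(F)\cdot M(F)_{x_{0},0}\bigr)$ is \emph{not} compact in general: already for $M=\GL_{2}$ with $x_{0}$ a vertex, $\GL_{2}(F)/\bigl(F^{\times}\cdot\GL_{2}(\cO)\bigr)$ is the infinite set of vertices of the Bruhat--Tits tree of $\mathrm{PGL}_2$. What is true, and what you actually need, is that the image of $M(F)$ in $V$ under the Harish-Chandra map is a lattice: the kernel is $M(F)^{1}=\bigcap_{\chi\in X^{*}(M)_{F}}\ker\lvert\chi\rvert$, and $M(F)/M(F)^{1}$ is free abelian of rank $\dim V$, containing the image $X_{*}(A_{M})$ of $A_{M}(F)$ with finite index. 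Since $M(F)_{x_{0},0}\subset M(F)^{1}$, both map to $0$ in $V$, so your conclusion about commensurability is correct even though the compactness premise is false. Replace the compactness claim by this standard fact (or, as the paper does, pass to $Z_{G}(S)(F)/Z_{G}(S)(F)_{\cpt}$, which identifies with a lattice in $X_{*}(S)\otimes\bR$ via $\nu$) and your argument goes through.
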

\begin{proof}
Since $\Nheart \subset N_{G}(M)(F)_{[x_{0}]_{M}}$, and the quotient $M(F)_{x_{0}} / K_{M}$ is finite, it suffices to show that the group 
$
N_{G}(M)(F)_{[x_{0}]_{M}} / M(F)_{x_{0}}
$
 acts properly on $\cA_{x_{0}}$.
Let $S$ be a maximal split torus of $M$ such that
$
x_{0} \in \cA(G, S, F)
$. 
Let $Z_{G}(S)(F)_{\cpt}$ denote the maximal compact subgroup of $Z_{G}(S)(F)$. 
 According to Lemma~\ref{lemmaMtoS}, we have
 \begin{align*}
 N_{G}(M)(F)_{[x_{0}]_{M}} / M(F)_{x_{0}} & \simeq \left(
 N_{G}(M)(F)_{[x_{0}]_{M}} \cap N_{G}(S)(F)
 \right) / \left(
 M(F)_{x_{0}} \cap N_{G}(S)(F)
 \right) \\
 & \twoheadleftarrow \left(
 N_{G}(M)(F)_{[x_{0}]_{M}} \cap N_{G}(S)(F)
 \right) / Z_{G}(S)(F)_{\cpt}.
 \end{align*}
Then the lemma follows from the fact that the Iwahori--Weyl group $N_{G}(S)(F) / Z_{G}(S)(F)_{\cpt}$ acts properly on the apartment $\cA(G, S, F)$ of $S$.
Although the fact is well-known, we record the proof of it for completeness.
Since the finite Weyl group $N_{G}(S)(F)/Z_{G}(S)(F)$ is finite, it suffices to show that the group $Z_{G}(S)(F)/Z_{G}(S)(F)_{\cpt}$ acts properly on $\cA(G, S, F)$.
According to \cite[Proposition~6.2.4]{KalethaPrasad}, for $z \in Z_{G}(S)(F)$ and $x \in \cA(G, S, F)$, we have 
$
z x = x + \nu(z)
$, where $\nu(z)$ is the element of $X_{*}(S) \otimes_{\bZ} \bR$ defined by 
\(
\chi(\nu(z)) = - \ord(\chi(z))
\)
for all $\chi \in X^{*}(S)$.
Thus, the claim follows from the fact that $\nu$ identifies the group $Z_{G}(S)(F)/Z_{G}(S)(F)_{\cpt}$ with a lattice in $X_{*}(S) \otimes_{\bZ} \bR$.
\end{proof}

For 
$w \in \Wheart$,
we define
\index{notation-ax}{H(Grw@$\cH(G(F),\rho_x)_w$}
$\cH(G(F),\rho_x)_w$
to be $\cH(G(F),\rho_x)_n$
for any representative $n$ of $w$ in 
$\Nheart$, i.e., the subspace of $\cH(G(F),\rho_x)$ consisting of functions whose support is contained in $K_xnK_x$.

\begin{proposition}
\label{propositionvectorspacedecomposition}
Let $x \in \cA_{\gen}$ and assume Axioms \ref{axiomaboutHNheartandK} and \ref{axiombijectionofdoublecoset}.
Then for each $w \in \nobreak \Wheart$, the subspace $\cH(G(F), \rho_{x})_{w}$
is one dimensional.
Moreover, we have
\[
\cH(G(F), \rho_{x}) = \bigoplus_{w \in \Wheart} \cH(G(F), \rho_{x})_{w}
\]
as a vector space.
\end{proposition}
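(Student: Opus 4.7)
The plan is to combine the double-coset decomposition from \eqref{vectorspacedecomposition}, the bijection of Corollary \ref{corollarybijectiondoublecosetandweylgroup}, and a Schur's-lemma argument enabled by the quasi-$G$-cover formalism.

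First I would recall the vector-space decomposition
\[
\cH(G(F), \rho_{x}) = \bigoplus_{g \in K_{x} \backslash I_{G(F)}(\rho_{x}) / K_{x}} \cH(G(F), \rho_{x})_{g}
\]
from \eqref{vectorspacedecomposition}. Under the standing Axioms \ref{axiomaboutHNheartandK} and \ref{axiombijectionofdoublecoset}, Corollary \ref{corollarybijectiondoublecosetandweylgroup} identifies the indexing set $K_{x} \backslash I_{G(F)}(\rho_{x}) / K_{x}$ with $\Wheart$ via the map induced by the inclusion $\Nheart \subset I_{G(F)}(\rho_{x})$, so the direct sum can be reindexed by $\Wheart$, yielding the second assertion.

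For the one-dimensionality, fix $w \in \Wheart$ and a representative $n \in \Nheart$. The bi-equivariance relation $\varphi(k_{1} g k_{2}) = \rho_{x}(k_{1}) \circ \varphi(g) \circ \rho_{x}(k_{2})$ together with the standard computation that $\varphi(g) \in \Hom_{K_{x} \cap {^g K_{x}}}({^g\rho_{x}}, \rho_{x})$ (recorded in \S\ref{Hecke algebras and endomorphism algebras}) shows that evaluation at $n$ is a $\Coeff$-linear isomorphism
\[
\cH(G(F), \rho_{x})_{w} \isoarrow \Hom_{K_{x} \cap K_{nx}}({^n\rho_{x}}, \rho_{x}), \qquad \varphi \longmapsto \varphi(n),
\]
where I have used Axiom \ref{axiomaboutHNheartandK}\eqref{axiomaboutHNheartandKaboutconjugation} to write ${^n K_{x}} = K_{nx}$. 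Injectivity is immediate from the bi-equivariance; surjectivity amounts to checking that, for $T$ in the target Hom-space, the prescription $k_{1} n k_{2} \mapsto \rho_{x}(k_{1}) \circ T \circ \rho_{x}(k_{2})$ is well-defined on $K_{x} n K_{x}$, which is precisely the intertwining condition satisfied by $T$.

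Finally I would compute the Hom-space via quasi-$G$-covers. By Axiom \ref{axiomaboutHNheartandK}\eqref{axiomaboutHNheartandKabouttypeKxisaquasiGcover} and Lemma \ref{lemmaeffectofconjugationbyn}, the pairs $(K_{x}, \rho_{x})$ and $(K_{nx}, {^n\rho_{x}})$ are quasi-$G$-covers of $(K_{M}, \rho_{M})$ and $(K_{M}, {^n\rho_{M}})$, respectively, so Lemma \ref{lemmacoverintertwining} gives
\[
\Hom_{K_{x} \cap K_{nx}}({^n\rho_{x}}, \rho_{x}) = \Hom_{K_{M}}({^n\rho_{M}}, \rho_{M}).
\]
Since $n \in \Nheart \subseteq N_{G(F)}(\rho_{M})$, we have ${^n\rho_{M}} \simeq \rho_{M}$ as representations of $K_{M}$; because $\rho_{M}$ is irreducible and $\Coeff$ is algebraically closed, Schur's lemma forces this Hom-space to be one-dimensional over $\Coeff$. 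I do not anticipate any genuine obstacle: the proposition is essentially a bookkeeping exercise that assembles the group-theoretic bijection of Corollary \ref{corollarybijectiondoublecosetandweylgroup} with the Hom-space reduction made available by the quasi-$G$-cover axioms.
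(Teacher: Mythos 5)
Your proof is correct and follows essentially the same route as the paper: identify $\cH(G(F),\rho_x)_w$ with $\Hom_{K_x\cap K_{nx}}({^n\rho_x},\rho_x)$ via evaluation at a lift $n$ of $w$, reduce to $\Hom_{K_M}({^n\rho_M},\rho_M)$ using Lemma~\ref{lemmacoverintertwining}, apply Schur's lemma, and combine \eqref{vectorspacedecomposition} with Corollary~\ref{corollarybijectiondoublecosetandweylgroup} for the direct-sum statement. The only difference is that you spell out the injectivity and surjectivity of the evaluation map, which the paper leaves implicit.
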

\begin{proof}
Let $w \in \Wheart$.
We fix a lift $n$ of $w$ in $\Nheart$.
Then the map $\varphi \mapsto \varphi(n)$ defines an isomorphism of vector spaces
\[
\cH(G(F), \rho_{x})_{w} \isoarrow
\Hom_{K_{x} \cap ^n\!K_{x}}(^n\!\rho_{x}, \rho_{x}).
\]
Since $(K_{x}, \rho_{x})$ is a quasi-$G$-cover of $(K_{M}, \rho_{M})$, and $(^n\!K_{x}, {^n\!\rho_{x}})$ is a quasi-$G$-cover of $(K_{M}, {^n\!\rho_{M}})$, Lemma~\ref{lemmacoverintertwining} implies that
\begin{align*}
\Hom_{K_{x} \cap ^n\!K_{x}} (^n\!\rho_{x}, \rho_{x}) &= \Hom_{K_{M}}(^n\!\rho_{M}, \rho_{M}).
\end{align*}
Since $n \in N_{G(F)}(\rho_{M})$ and the representation $\rho_{M}$ is irreducible, the right-hand side is one dimensional.
The last claim follows from \eqref{vectorspacedecomposition} and Corollary~\ref{corollarybijectiondoublecosetandweylgroup}.
\end{proof}

\subsection{Intertwining operators and a basis of the Hecke algebra} \label{subsec:intertwiningop}
We keep the notation from the previous subsection including the assumption of Axiom \ref{axiomaboutHNheartandK}, but we do not assume Axiom~\ref{axiombijectionofdoublecoset} until the last corollary, Corollary~\ref{corollaryvectorspacedecompositionexplicitver}.
We will define a non-zero element $\varphi_{x, w} \in \cH(G(F), \rho_{x})_{w}$ for every $x \in \cA_{\gen}$ and $w \in \Wheart$ below.
The element $\varphi_{x, w}$ will correspond to the composition of two intertwining operators via the isomorphism in \eqref{heckevsend}. The first intertwining operator
\[
\index{notation-ax}{Thetayx@$\Theta_{y \mid x}$} 
\Theta_{y \mid x} \colon \ind_{K_{x}}^{G(F)} (\rho_{x}) \rightarrow \ind_{K_{y}}^{G(F)} (\rho_{y})
\]
for $x, y \in \cA_{\gen}$ is defined as follows.
We note that by Lemma~\ref{lemmathetaonintersection}
and Axiom \ref{axiomaboutHNheartandK}\eqref{axiomaboutHNheartandKrhoxandthetax},
for every $f \in \ind_{K_{x}}^{G(F)} \left(V_{\rho_{x}}\right)$ and $g \in G(F)$, the element $\theta_{y}(k) \cdot f(k^{-1} g)$ for $k \in K_{y, +}$ only depends on the image $[k]$ of $k$ in the quotient $K_{y, +}/ \left(K_{x, +} \cap K_{y, +} \right)$.
For $f \in \ind_{K_{x}}^{G(F)} \left(V_{\rho_{x}}\right)$, we define the function
\[
\Theta_{y \mid x}(f) \colon G(F) \rightarrow V_{\rho_{y}}
\]
by
\[
\left(\Theta_{y \mid x}(f)\right)(g) = 
\abs{
K_{y, +}
/
\left(
K_{x, +} \cap K_{y, +}
\right)
} ^{-1}
\sum_{[k] \in K_{y, +} / \left(
K_{x, +} \cap K_{y, +}
\right)} \theta_{y}(k) \cdot f(k^{-1} g).
\]
Here, we regard $\abs{
K_{y, +}
/
\left(
K_{x, +} \cap K_{y, +}
\right)
}$ as an element of $\Coeff$, that is non-zero by Lemma~\ref{lemmaquotientvswithquotientofunipotentradical} and Lemma~\ref{lemmaquotientnonzeromodell}.
\begin{remark}
\label{rem:whenisthetaanintegral}
If the characteristic $\ell$ of $\Coeff$
is zero or the group $K_{y, +}$ is a pro-$p$-group,
then we can write $\Theta_{y \mid x}$ as
\[
\left(\Theta_{y \mid x}(f)\right)(g) =
\int_{K_{y, +}} \theta_{y}(k) \cdot f(k^{-1} g) \, dk
\]
for $f \in \ind_{K_{x}}^{G(F)} \left(V_{\rho_{x}}\right)$ and $g \in G(F)$,
where $dk$ denotes the Haar measure on $K_{y, +}$ such that the volume of $K_{y, +}$ is one.
In our application of the theory to the setting of
\cite{HAIKY},
the group $K_{y, +}$ will be a pro-$p$-group,
see \cite[\eqref{HAIKY-definitionofcompactopensubgroupsKimYucase}]{HAIKY}.
\end{remark}
\begin{lemma} \label{lemma:intertwining-operator}
The map $\Theta_{y \mid x}$ defines a $G(F)$-equivariant map 
\[
\Theta_{y \mid x} \colon \ind_{K_{x}}^{G(F)} (\rho_{x}) \rightarrow \ind_{K_{y}}^{G(F)} (\rho_{y}).
\]
\end{lemma}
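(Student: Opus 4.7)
The plan is to verify three things in order: (i) the formula defining $\Theta_{y\mid x}(f)$ produces a well-defined function on $G(F)$ landing in $V_{\rho_y}$ and is independent of representatives $k$; (ii) this function transforms correctly under left multiplication by $K_y$; (iii) it is compactly supported modulo $K_y$; and finally that the assignment commutes with the right regular action of $G(F)$.

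For well-definedness, I would observe that changing the representative $k \in K_{y,+}$ by an element $k' \in K_{x,+} \cap K_{y,+}$ replaces $f(k^{-1}g)$ by $f((k')^{-1} k^{-1} g) = \rho_x(k')^{-1} f(k^{-1} g) = \theta_x(k')^{-1} f(k^{-1} g)$, using that $\rho_x$ is $\theta_x$-isotypic on $K_{x,+}$ (Axiom~\ref{axiomaboutHNheartandK}\eqref{axiomaboutHNheartandKrhoxandthetax}), while $\theta_y(kk') = \theta_y(k)\theta_y(k')$. The two changes cancel thanks to Lemma~\ref{lemmathetaonintersection}, which gives $\theta_x = \theta_y$ on $K_{x,+} \cap K_{y,+}$. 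The common vector space $V_{\rho_x} = V_{\rho_y} = V_{\rho_M}$ arising from the quasi-$G$-cover structure is what makes the sum land in $V_{\rho_y}$. The normalizing factor is a nonzero scalar by Lemma~\ref{lemmaquotientnonzeromodell}.

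For the transformation property under $K_y$, I would use Axiom~\ref{axiomaboutHNheartandK}\eqref{axiomaboutHNheartandKabouttypeKx=KMKx+} to write $K_y = K_M \cdot K_{y,+}$ and verify the rule on each factor separately. For $k_0 \in K_{y,+}$, substituting $k \leftarrow k_0 k$ in the sum (a bijection of $K_{y,+}/(K_{x,+}\cap K_{y,+})$) directly yields $(\Theta_{y\mid x}(f))(k_0 g) = \theta_y(k_0)\,(\Theta_{y\mid x}(f))(g)$, which equals $\rho_y(k_0)(\Theta_{y\mid x}(f))(g)$ by the $\theta_y$-isotypicity. For $k_M \in K_M$, I would write $k^{-1} k_M = k_M (k_M^{-1} k^{-1} k_M)$, and use that $K_M$ normalizes both $K_{x,+}$ and $K_{y,+}$ (they are normal in $K_x$ and $K_y$, respectively, and $K_M \subset K_x \cap K_y$) and hence their intersection, so the substitution $k \leftarrow k_M k k_M^{-1}$ is a bijection of the quotient. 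Combining this with $\theta_y(k_M k k_M^{-1}) = \theta_y(k)$ (a consequence of $\rho_y$ being $\theta_y$-isotypic on the normal subgroup $K_{y,+}$) and with $f(k_M h) = \rho_x(k_M) f(h) = \rho_M(k_M) f(h) = \rho_y(k_M) f(h)$ (using $\rho_x\restriction_{K_M} = \rho_M = \rho_y\restriction_{K_M}$), one obtains $(\Theta_{y\mid x}(f))(k_M g) = \rho_y(k_M)(\Theta_{y\mid x}(f))(g)$.

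For compact support modulo $K_y$: if $\supp(f) \subset \bigcup_{i=1}^n K_x g_i$, then $\supp(\Theta_{y\mid x}(f)) \subset \bigcup_i K_{y,+} K_x g_i$, and each $K_y K_x g_i$ is a finite union of $K_y$-left cosets since $K_x/(K_x \cap K_y)$ is finite. The $G(F)$-equivariance is immediate: since the right regular action commutes with evaluation at any fixed element $k^{-1} \in G(F)$ inside the sum, one checks $\Theta_{y\mid x}(f\cdot g) = \Theta_{y\mid x}(f)\cdot g$ directly. The main technical subtlety is step (ii) for $k_M \in K_M$; everything else is essentially a bookkeeping argument relying on the quasi-$G$-cover structure and the compatibilities recorded in Axiom~\ref{axiomaboutHNheartandK}.
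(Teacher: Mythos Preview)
Your proposal is correct and follows essentially the same approach as the paper: both decompose $K_y = K_M \cdot K_{y,+}$ and verify the left-$K_y$-transformation property on each factor separately, using that $K_M$ normalizes $K_{x,+}$ and $K_{y,+}$ and that $\theta_y$ is $K_y$-stable for the $K_M$-part, and a direct substitution for the $K_{y,+}$-part. You include a bit more detail (explicit well-definedness of the summands and compact support modulo $K_y$) that the paper either handles before the lemma or leaves implicit, but the core argument is the same.
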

\begin{proof}
We will prove that 
$
\Theta_{y \mid x}(f) \in \ind_{K_{y}}^{G(F)} \left(V_{\rho_{y}}\right)
$
for all $f \in \ind_{K_{x}}^{G(F)} \left(V_{\rho_{x}}\right)$.
Since we have $K_{y} = K_{M} \cdot K_{y, +}$ and $\rho_{y} \restriction_{K_{M}} = \rho_{M}$, it suffices to show that
\[
\left(
\Theta_{y \mid x}(f) 
\right)(k_{M} g) = 
\rho_{M}(k_{M})
\left( \left(
\Theta_{y \mid x}(f) 
\right)(g)
\right)
\]
and
\[
\left(
\Theta_{y \mid x}(f) 
\right)(k_{+} g) = \rho_{y}(k_{+}) \left(
\left(
\Theta_{y \mid x}(f) 
\right)(g)
\right)
\]
for all $g \in G(F)$, $k_{M} \in K_{M}$, and $k_{+} \in K_{y, +}$.
Let $g \in G(F)$ and $k_{M} \in K_{M}$.
Since we have $K_{M} \subset K_{x} \cap K_{y}$ and the group $K_{x}$ (resp.\ $K_{y}$) normalizes $K_{x, +}$ (resp.\ $K_{y, +}$), the group $K_{M}$ normalizes $K_{x, +}$ and $K_{y, +}$.
Moreover, since $K_{y}$ normalizes $K_{y, +}$ and the restriction of $\rho_{y}$ to $K_{y, +}$ is $\theta_{y}$-isotypic, the group $K_{y}$ normalizes $\theta_{y}$.
Hence, we have
\begin{align*}
\left(
\Theta_{y \mid x}(f) 
\right)(k_{M} g) &= \abs{
K_{y, +}
/
\left(
K_{x, +} \cap K_{y, +}
\right)
} ^{-1}
\sum_{[k] \in K_{y, +} / \left(
K_{x, +} \cap K_{y, +}
\right)} \theta_{y}(k) \cdot f(k^{-1} k_{M} g)\\
&= \abs{
K_{y, +}
/
\left(
K_{x, +} \cap K_{y, +}
\right)
} ^{-1}
\sum_{[k] \in K_{y, +} / \left(
K_{x, +} \cap K_{y, +}
\right)} \theta_{y}(k) \cdot f(k_{M} k_{M}^{-1} k^{-1} k_{M} g) \\
&= \abs{
K_{y, +}
/
\left(
K_{x, +} \cap K_{y, +}
\right)
} ^{-1}
\sum_{[k] \in K_{y, +} / \left(
K_{x, +} \cap K_{y, +}
\right)} \theta_{y}(k) \cdot \rho_{M}(k_{M}) \left(
f(k_{M}^{-1} k^{-1} k_{M}g)
\right) \\
&= \rho_{M}(k_{M}) \left(
\abs{
K_{y, +}
/
\left(
K_{x, +} \cap K_{y, +}
\right)
} ^{-1}
\sum_{[k] \in K_{y, +} / \left(
K_{x, +} \cap K_{y, +}
\right)} \theta_{y}(k) \cdot f(k_{M}^{-1} k^{-1} k_{M}g)
\right) \\
&= \rho_{M}(k_{M}) \left(
\abs{
K_{y, +}
/
\left(
K_{x, +} \cap K_{y, +}
\right)
} ^{-1}
\sum_{[k] \in K_{y, +} / \left(
K_{x, +} \cap K_{y, +}
\right)} \theta_{y}(k) \cdot f((k_{M}^{-1} k k_{M})^{-1} g)
\right) \\
&= \rho_{M}(k_{M}) \left(
\abs{
K_{y, +}
/
\left(
K_{x, +} \cap K_{y, +}
\right)
} ^{-1}
\sum_{[k] \in K_{y, +} / \left(
K_{x, +} \cap K_{y, +}
\right)} \theta_{y}(k_{M} k k_{M}^{-1}) \cdot f(k^{-1} g)
\right) \\
&= \rho_{M}(k_{M}) \left(
\abs{
K_{y, +}
/
\left(
K_{x, +} \cap K_{y, +}
\right)
} ^{-1}
\sum_{[k] \in K_{y, +} / \left(
K_{x, +} \cap K_{y, +}
\right)} \theta_{y}(k) \cdot f(k^{-1} g)
\right) \\
&= \rho_{M}(k_{M}) \left( 
\left(
\Theta_{y \mid x}(f) 
\right)(g)
\right).
\end{align*}
Moreover, the definition of $\Theta_{y \mid x}$ implies that
\[
\left(
\Theta_{y \mid x}(f) 
\right)(k_{+} g) = \theta_{y}(k_{+}) \cdot \left(
\Theta_{y \mid x}(f) 
\right)(g) 
= \rho_{y}(k_{+}) \left( 
\left(
\Theta_{y \mid x}(f) 
\right)(g)
\right)
\]
for all $g \in G(F)$ and $k_{+} \in K_{y, +}$.

The claim that $\Theta_{y \mid x}$ is $G(F)$-equivariant follows from the definition of it.
\end{proof}
We can also write $\Theta_{y \mid x}$ as follows.
\begin{lemma}
\label{lemmarewriteTheta}
Let $x, y \in \cA_{\gen}$ and $U \in \cU(M)$ such that
\[
K_{x} \cap U(F) \subseteq K_{y} \cap U(F)
\quad 
\text{ and }
\quad
 K_{y} \cap \overline{U}(F) \subseteq K_{x} \cap \overline{U}(F).
\]
Then we have
\begin{align*}
\left(\Theta_{y \mid x}(f)\right)(g) =
\int_{K_{y} \cap U(F)} f(u^{-1}g)\, du
\end{align*}
for all $f \in \ind_{K_{x}}^{G(F)} \left(V_{\rho_{x}} \right)$ and $g \in G(F)$, where $du$ denotes the Haar measure on the pro-$p$-group $K_{y} \cap U(F)$ such that the volume of $K_{y} \cap U(F)$ is one.
\end{lemma}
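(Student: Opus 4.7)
First, observe that $K_y \cap U(F) = K_{y,+} \cap U(F)$ by Lemma~\ref{lemmaintersectionwithunipotent}, so $K_y \cap U(F)$ is a pro-$p$ group, and hence the integral on the right-hand side makes sense with values in $\Coeff$ (integration against a Haar measure on a compact pro-$p$ group applied to a function which is locally constant and factors through a finite quotient, all of whose indices are $p$-powers, which are invertible in $\Coeff$).

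The plan is to reparametrize the finite sum defining $\Theta_{y \mid x}(f)(g)$ by unipotent representatives and recognize it as a Riemann sum. Concretely, under the given hypotheses on $U$, Lemma~\ref{lemmaquotientvswithquotientofunipotentradical} yields a bijection
\[
\bigl(K_y \cap U(F)\bigr) / \bigl(K_x \cap U(F)\bigr) \isoarrow K_{y,+} / \bigl(K_{x,+} \cap K_{y,+}\bigr),
\]
so we may choose coset representatives $u \in K_y \cap U(F)$ for the sum in the definition of $\Theta_{y \mid x}(f)(g)$. Next, since $(K_y, \rho_y)$ is a quasi-$G$-cover of $(K_M, \rho_M)$, the restriction of $\rho_y$ to $K_y \cap U(F)$ is trivial; combined with the $\theta_y$-isotypicity of $\rho_y \restriction_{K_{y,+}}$ (Axiom~\ref{axiomaboutHNheartandK}\eqref{axiomaboutHNheartandKrhoxandthetax}), this gives $\theta_y(u) = 1$ for all $u \in K_y \cap U(F) = K_{y,+} \cap U(F)$. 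Thus the sum simplifies to
\[
\left(\Theta_{y \mid x}(f)\right)(g) = \abs{\bigl(K_y \cap U(F)\bigr)/\bigl(K_x \cap U(F)\bigr)}^{-1} \sum_{[u]} f(u^{-1} g).
\]

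Finally, I observe that for $u \in K_y \cap U(F)$ the value $f(u^{-1}g)$ depends only on the coset $u \cdot (K_x \cap U(F))$: if $k_0 \in K_x \cap U(F) \subseteq K_x$, then
\[
f\bigl((u k_0)^{-1} g\bigr) = f(k_0^{-1} u^{-1} g) = \rho_x(k_0^{-1})\bigl(f(u^{-1}g)\bigr) = f(u^{-1}g),
\]
since $\rho_x$ is trivial on $K_x \cap U(F)$ (again by the quasi-$G$-cover property). Normalizing the Haar measure $du$ on $K_y \cap U(F)$ so that the total volume is one, the open subgroup $K_x \cap U(F)$ has volume $\abs{(K_y \cap U(F))/(K_x \cap U(F))}^{-1}$, and the integral decomposes as a sum over cosets, yielding
\[
\int_{K_y \cap U(F)} f(u^{-1} g)\, du = \abs{\bigl(K_y \cap U(F)\bigr)/\bigl(K_x \cap U(F)\bigr)}^{-1} \sum_{[u]} f(u^{-1}g),
\]
which matches the expression above. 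There is no real obstacle here; the only point requiring attention is the verification that $\theta_y$ is trivial on $K_y \cap U(F)$, which follows by combining the quasi-$G$-cover hypothesis with Axiom~\ref{axiomaboutHNheartandK}\eqref{axiomaboutHNheartandKrhoxandthetax}.
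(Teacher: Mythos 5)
Your proof is correct and follows essentially the same route as the paper: use Lemma~\ref{lemmaquotientvswithquotientofunipotentradical} to reparametrize the sum by cosets of $K_x \cap U(F)$ in $K_y \cap U(F)$, observe that $\theta_y$ is trivial on $K_y \cap U(F)$ via the quasi-$G$-cover property and $\theta_y$-isotypicity, and then recognize the resulting average as the stated integral. The only difference is that you spell out explicitly why $u \mapsto f(u^{-1}g)$ is constant on left $(K_x \cap U(F))$-cosets, whereas the paper relies implicitly on the well-definedness observation made just before Definition~\ref{definitionnormalize}; this is a harmless increase in explicitness.
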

\begin{proof}
Since the representation $\rho_{y}$ is trivial on the group $K_{y} \cap U(F)$ and the restriction of $\rho_{y}$ to $K_{y, +}$ is $\theta_{y}$-isotypic, the character $\theta_{y}$ is trivial on the group $K_{y, +} \cap U(F) = K_{y} \cap U(F)$.
Hence, by using Lemma~\ref{lemmaquotientvswithquotientofunipotentradical}, we obtain that
\begin{align*}
\left(\Theta_{y \mid x}(f)\right)(g) &=
\abs{
K_{y, +}
/
\left(
K_{x, +} \cap K_{y, +}
\right)
} ^{-1}
\sum_{[k] \in K_{y, +} / \left(
K_{x, +} \cap K_{y, +}
\right)} \theta_{y}(k) \cdot f(k^{-1} g) \\
&=
\abs{
\left(
K_{y} \cap U(F)
\right) / \left(
K_{x} \cap U(F)
\right)
} ^{-1}
\sum_{[u] \in \left(
K_{y} \cap U(F)
\right) / \left(
K_{x} \cap U(F)
\right)} \theta_{y}(u) \cdot f(u^{-1} g) \\
&= \int_{K_{y} \cap U(F)} \theta_{y}(u) \cdot f(u^{-1}g)\, du \\
&= \int_{K_{y} \cap U(F)} f(u^{-1}g)\, du.
\qedhere
\end{align*}
\end{proof}
\begin{corollary}
\label{corollarycalculationofconstantterm}
Let $x, y \in \cA_{\gen}$ and assume Axiom \ref{axiomaboutHNheartandK}.
For $v \in V_{\rho_{x}}$, we define $f_{v} \in \ind_{K_{x}}^{G(F)} \left( V_{\rho_{x}} \right)$ as
\[
f_{v}(g) =
\begin{cases}
\rho_{x}(g) (v) & (g \in K_{x}), \\
0 & \text{(otherwise)}.
\end{cases}
\]
Then we have 
\begin{align*}
\left(
\left(
\Theta_{x \mid y} \circ \Theta_{y \mid x}
\right)(f_{v})
\right)(1) = \abs{
K_{y}/\left(
K_{x} \cap K_{y}
\right)
}^{-1} v,
\end{align*} 
where we regard $\abs{
K_{y}/\left(
K_{x} \cap K_{y}
\right)
}$
as an element of $\Coeff$.
This element is invertible by Lemma~\ref{lemmaquotientnonzeromodell}.
In particular, $\Theta_{y \mid x}$ is non-zero.
\end{corollary}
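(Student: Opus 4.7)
The plan is to combine Lemma~\ref{lemmarewriteTheta} with the Iwahori-type decomposition coming from the quasi-$G$-cover property, and then carefully track the normalization of Haar measures.

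First, I would use Axiom~\ref{axiomaboutHNheartandK}\eqref{axiomaboutHNheartandKabotgoodunipotentradical} to choose $U \in \cU(M)$ such that $K_x \cap U(F) \subseteq K_y \cap U(F)$ and $K_y \cap \overline{U}(F) \subseteq K_x \cap \overline{U}(F)$. The opposite unipotent radical $\overline{U} \in \cU(M)$ then satisfies the analogous inclusions with the roles of $x$ and $y$ swapped. By Lemma~\ref{lemmarewriteTheta}, one may therefore write
\[
(\Theta_{y \mid x}(f_v))(g) = \int_{K_y \cap U(F)} f_v(u^{-1}g)\, du
\quad\text{and}\quad
(\Theta_{x \mid y}(h))(g) = \int_{K_x \cap \overline{U}(F)} h(\bar{u}^{-1}g)\, d\bar{u},
\]
where each measure is normalized to give total mass one.

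Substituting and evaluating at $g = 1$ gives the iterated integral
\[
\left(\left(\Theta_{x\mid y} \circ \Theta_{y \mid x}\right)(f_v)\right)(1)
= \int_{K_x \cap \overline{U}(F)} \int_{K_y \cap U(F)} f_v(u^{-1}\bar{u}^{-1})\, du\, d\bar{u}.
\]
Next I would identify exactly where the integrand is nonzero. Since $f_v$ is supported on $K_x$ and $\bar{u} \in K_x$, the condition $u^{-1}\bar{u}^{-1} \in K_x$ is equivalent to $u \in K_x \cap U(F)$. Renormalizing the inner integral so that $K_x \cap U(F)$ (rather than $K_y \cap U(F)$) has total mass one introduces a factor of $\abs{(K_y \cap U(F))/(K_x \cap U(F))}^{-1}$, which is invertible in $\Coeff$ by Lemma~\ref{lemmaquotientnonzeromodell}.

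On the reduced domain, Axiom~\ref{axiomaboutHNheartandK}\eqref{axiomaboutHNheartandKabouttypeKxisaquasiGcover} implies that $\rho_x$ is trivial on both $K_x \cap U(F)$ and $K_x \cap \overline{U}(F)$, so $f_v(u^{-1}\bar{u}^{-1}) = \rho_x(u^{-1})\rho_x(\bar{u}^{-1})(v) = v$ for all such $u, \bar{u}$. The double integral therefore evaluates to $\abs{(K_y \cap U(F))/(K_x \cap U(F))}^{-1} \cdot v$. Finally, I would invoke Lemma~\ref{lemmaquotientvswithquotientofunipotentradical} to replace this index by $\abs{K_y/(K_x \cap K_y)}^{-1}$, yielding the claim; non-vanishing of $\Theta_{y \mid x}$ is immediate since the result is a nonzero scalar multiple of $v$ whenever $v \neq 0$. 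The main subtle point is the Haar-measure bookkeeping when restricting the domain of integration; the triviality of $\rho_x$ on the unipotent parts is essentially built into the definition of a quasi-$G$-cover, so that step is routine once the integrand has been simplified.
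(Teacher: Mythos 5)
Your proof is correct and follows essentially the same route as the paper's: choose $U$ via Axiom~\ref{axiomaboutHNheartandK}\eqref{axiomaboutHNheartandKabotgoodunipotentradical}, rewrite both $\Theta_{y\mid x}$ and $\Theta_{x\mid y}$ as unipotent integrals via Lemma~\ref{lemmarewriteTheta} (using $U$ and $\overline U$ respectively), cut down the domain of the inner integral using the support of $f_v$, evaluate using triviality of $\rho_x$ on the unipotent parts, and convert the resulting index via Lemma~\ref{lemmaquotientvswithquotientofunipotentradical}. The only cosmetic difference is that you frame the support restriction as a renormalization of the Haar measure rather than writing the chain of equalities directly, but the bookkeeping is the same.
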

\begin{proof}
By Axiom~\ref{axiomaboutHNheartandK}\eqref{axiomaboutHNheartandKabotgoodunipotentradical}, there exists $U \in \cU(M)$ such that
\[
K_{x} \cap U(F) \subseteq K_{y} \cap U(F)
\quad 
\text{ and }
\quad
 K_{y} \cap \overline{U}(F) \subseteq K_{x} \cap \overline{U}(F).
\]
Then according to Lemma~\ref{lemmaquotientvswithquotientofunipotentradical} and Lemma~\ref{lemmarewriteTheta}, we have
\begin{align*}
\left(
\left(
\Theta_{x \mid y} \circ \Theta_{y \mid x}
\right)(f_{v})
\right)(1) &= \int_{K_{x} \cap \overline{U}(F)} \left(
\Theta_{y \mid x}(f_{v})
\right)(\overline{u}^{-1})\, d\overline{u} \\
&= \int_{K_{x} \cap \overline{U}(F)}
\int_{K_{y} \cap U(F)}
f_{v}(u^{-1} \overline{u}^{-1})
\,du
\,d\overline{u} \\
&= \int_{K_{x} \cap \overline{U}(F)}
\int_{K_{x} \cap K_{y} \cap U(F)}
\rho_{x}(u^{-1} \overline{u}^{-1}) (v)
\,du
\,d\overline{u} \\
&= \int_{K_{x} \cap \overline{U}(F)}
\int_{K_{x} \cap U(F)}
v 
\,du
\,d\overline{u} \\
&= \abs{\left(K_{y} \cap U(F)\right)/\left(K_{x} \cap U(F) \right)}^{-1} v \\
&= \abs{
K_{y}/\left(
K_{x} \cap K_{y}
\right)
}^{-1} v
\qedhere
\end{align*} 
\end{proof}
We will prove a transitivity property of $\Theta_{y \mid x}$.
\begin{proposition}
\label{transitivityofthetadver} 
We assume Axiom \ref{axiomaboutHNheartandK}.
Let $x, y, z \in \cA_{\gen}$ such that
\[
d(x, y) + d(y, z) = d(x, z).
\]
Then we have
\[
\Theta_{z \mid y} \circ \Theta_{y \mid x} = \Theta_{z \mid x}.
\]
\end{proposition}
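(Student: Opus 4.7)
My plan is to reduce the proposition to a direct computation using the integral presentation of $\Theta_{y\mid x}$ provided by Lemma~\ref{lemmarewriteTheta}, after first extracting the right unipotent radical from the length-additivity hypothesis.

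First, I would invoke Axiom~\ref{axiomaboutHNheartandK}\eqref{axiomaboutHNheartandKabotgoodunipotentradical} to choose $U \in \cU(M)$ such that
\[
K_{x} \cap U(F) \subseteq K_{y} \cap U(F) \subseteq K_{z} \cap U(F)
\quad \text{and} \quad
K_{z} \cap \overline{U}(F) \subseteq K_{y} \cap \overline{U}(F) \subseteq K_{x} \cap \overline{U}(F).
\]
The point of using the hypothesis $d(x,y)+d(y,z)=d(x,z)$ is precisely to guarantee, via this axiom, the existence of a \emph{single} $U$ that is compatible with all three pairs $(x,y)$, $(y,z)$, and $(x,z)$ simultaneously. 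With this $U$ in hand, Lemma~\ref{lemmarewriteTheta} applies to each of the three operators $\Theta_{y\mid x}$, $\Theta_{z\mid y}$, $\Theta_{z\mid x}$, giving
\[
(\Theta_{y\mid x}f)(g) = \int_{K_{y}\cap U(F)} f(u^{-1}g)\, du,
\qquad
(\Theta_{z\mid y}h)(g) = \int_{K_{z}\cap U(F)} h(v^{-1}g)\, dv,
\]
and analogously for $\Theta_{z\mid x}$, where each unipotent group is equipped with its Haar measure of total volume $1$.

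Next, I would compute the composition directly. For $f \in \ind_{K_{x}}^{G(F)}(V_{\rho_{x}})$ and $g \in G(F)$,
\[
\bigl((\Theta_{z\mid y}\circ \Theta_{y\mid x})f\bigr)(g)
= \int_{K_{z}\cap U(F)} \int_{K_{y}\cap U(F)} f\bigl((vu)^{-1}g\bigr)\, du\, dv.
\]
Setting $A = K_{y}\cap U(F)$ and $B = K_{z}\cap U(F)$, we have $A \subseteq B$, both are compact groups with normalized Haar measures, and $A$ is a subgroup of $B$. By Fubini and the change of variable $w = vu$ (with $u \in A$ fixed, noting $Bu = B$ because $u \in B$), the double integral collapses to $\int_{B} f(w^{-1}g)\, dw$, which is exactly $(\Theta_{z\mid x}f)(g)$.

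The step that requires a little care is the collapse of the double integral, but once one observes that $A \subseteq B$ are both groups with normalized measures, it is a straightforward application of Fubini plus translation invariance. The only genuinely nontrivial input is the choice of the common $U$; once this is arranged by Axiom~\ref{axiomaboutHNheartandK}\eqref{axiomaboutHNheartandKabotgoodunipotentradical}, no further compatibility between $\rho_{x}$, $\rho_{y}$, $\rho_{z}$ enters the argument, since the character contributions $\theta_{\bullet}$ are trivial on the unipotent pieces (as used in Lemma~\ref{lemmarewriteTheta}). I would anticipate no further obstacles.
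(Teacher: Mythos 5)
Your proof is correct and follows essentially the same path as the paper: invoke Axiom~\ref{axiomaboutHNheartandK}\eqref{axiomaboutHNheartandKabotgoodunipotentradical} to get a single $U$ compatible with all three pairs, rewrite each operator via Lemma~\ref{lemmarewriteTheta}, and collapse the double sum/integral using Fubini and right-translation invariance of the (normalized) Haar measure on $K_z\cap U(F)$, which is precisely the substitution $u_z \mapsto u_z u_y^{-1}$ appearing in the paper's chain of equalities. No gaps.
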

\begin{proof}
By using Axiom~\ref{axiomaboutHNheartandK}\eqref{axiomaboutHNheartandKabotgoodunipotentradical}, there exists $U \in \cU(M)$ such that
\[
K_{x} \cap U(F) \subset K_{y} \cap U(F) \subset K_{z} \cap U(F)
\]
and
\[
K_{z} \cap \overline{U}(F)  \subset K_{y} \cap \overline{U}(F) \subset K_{x} \cap \overline{U}(F).
\]
Then according to Lemma~\ref{lemmarewriteTheta}, for $f \in \ind_{K_{x}}^{G(F)}\left( V_{\rho_{x}} \right)$ and $g \in G(F)$, we have
\begin{align*}
\left(
\left(
\Theta_{z \mid y} \circ \Theta_{y \mid x}
\right)(f)
\right)(g) &= \int_{K_{z} \cap U(F)} \left(
\Theta_{y \mid x}(f)
\right)(u_{z}^{-1}g)\, du_{z} \\
&= \int_{K_{z} \cap U(F)}
\int_{K_{y} \cap U(F)}
f(u_{y}^{-1} u_{z}^{-1} g)\, du_{y}\, du_{z} \\
&= \int_{K_{y} \cap U(F)}
\int_{K_{z} \cap U(F)}
f(u_{y}^{-1} u_{z}^{-1} g)\, du_{z}\, du_{y} \\
&= \int_{K_{y} \cap U(F)}
\int_{K_{z} \cap U(F)}
f(u_{z}^{-1} g)\, du_{z}\, du_{y} \\
&= \int_{K_{z} \cap U(F)}
f(u_{z}^{-1} g)\, du_{z} \\
&= \left(
\Theta_{z \mid x}(f)
\right)(g).
\qedhere
\end{align*}
\end{proof}
We will prove a similar result under a weaker condition below (see Proposition~\ref{transitivityofThetaGammaver}).
To do this, we define the notion of $\cK$-relevance
for an affine hyperplane $H \in \mathfrak{H}$.
The group generated by the reflections across these $\cK$-relevant hyperplanes will form (under additional assumptions) a basis for a subalgebra of $\cH(G(F),\rho_{x_0})$ that is isomorphic to an affine Hecke algebra.
(See Theorem \ref{theoremstructureofhecke}.)

\begin{definition}\label{definitionKrelevant}
\index{terminology-ax}{K-relevant@$\cK$-relevant}
We say that an affine hyperplane $H \in \mathfrak{H}$ is \emph{$\cK$-relevant} if there exists $x, y \in \cA_{\gen}$ such that 
$
\mathfrak{H}_{x, y} = \left\{
H
\right\}$ and
\begin{equation}
\label{eqn:compositionnotscalar}
\Theta_{x \mid y} \circ \Theta_{y \mid x}
\notin \Coeff \cdot \id_{\ind_{K_{x}}^{G(F)}(\rho_{x})}.
\end{equation}
We denote the set of 
$\cK$-relevant hyperplanes in $\mathfrak{H}$ by $\mathfrak{H}_{\Krel}$
\index{notation-ax}{H_Krel@$\mathfrak{H}_{\Krel}$}.
\end{definition}
We will later see (in Corollary \ref{corollaryweylinvariance})
that the action of 
$\Nheart$
on $\cA_{x_0}$ preserves the set $\mathfrak{H}_{\Krel}$.

\begin{remark}
	By the definition of $\mathfrak{H}_{\Krel}$,
	an element $H \in \mathfrak{H}$ is contained in $\mathfrak{H}_{\Krel}$ if and only if there exists $x, y \in \cA_{\gen}$ such that 
	$
	\mathfrak{H}_{x, y} = \left\{
	H
	\right\}$ and
	\eqref{eqn:compositionnotscalar} holds.
	However, under additional assumptions, we will see in Proposition~\ref{propositionkreloodequivalentforallorthereexists} below that if $H \in \mathfrak{H}_{\Krel}$, then we have
	that \eqref{eqn:compositionnotscalar} holds
	for \emph{all} $x, y \in \cA_{\gen}$ such that 
	$
	\mathfrak{H}_{x, y} = \left\{
	H
	\right\}$.
\end{remark}

The $\cK$-relevant hyperplanes are those for which one has
a more complicated description of
$\Theta_{x \mid y} \circ \Theta_{y \mid x}$ for $x$ and $y$ on opposite sides of the hyperplane (see Lemma~\ref{lemmaphivstheta}, Proposition~\ref{propositionstrongversionofquadraticrelation}).
On the other hand, if $H \in \mathfrak{H}$ is not $\cK$-relevant, then, according to the definition combined with Corollary~\ref{corollarycalculationofconstantterm}, we have
\[
\Theta_{x \mid y} \circ \Theta_{y \mid x} = \abs{
K_{y}/\left(
K_{x} \cap K_{y}
\right)
}^{-1} \cdot \id_{\ind_{K_{x}}^{G(F)}(\rho_{x})}
\]
for all $x, y \in \cA_{\gen}$ such that 
$
\mathfrak{H}_{x, y} = \left\{
H
\right\}$.

We now normalize the operators $\Theta_{x \mid y}$ so that 
\[
\Theta^{\normal}_{x \mid y} \circ \Theta^{\normal}_{y \mid x}
= \id_{\ind_{K_{x}}^{G(F)}(\rho_{x})}
\]
in this case (see Lemma \ref{normalizedlemmalength1case}).
Recall that for an element $c = p^{n} \in \Coeff$ with $n \in \bZ$, we write $c^{1/2} \coloneqq \left(p^{1/2}\right)^{n}$, where $p^{1/2}$ denotes the fixed square root of $p$ in $\Coeff$.

\begin{definition}
	\label{definitionnormalize}
	For $x, y \in \cA_{\gen}$, we define the $G(F)$-equivariant map 
	\[
	\index{notation-ax}{Thetayxnorm@$\Theta^{\normal}_{y \mid x}$} 
	\Theta^{\normal}_{y \mid x}
	\colon
	\ind_{K_{x}}^{G(F)} (\rho_{x})
	\longrightarrow
	\ind_{K_{y}}^{G(F)} (\rho_{y})
	\]
	by
	\[
	\Theta^{\normal}_{y \mid x} = \abs{K_{y}/ \left(
		K_{x} \cap K_{y}
		\right)
	}^{1/2} \cdot \Theta_{y \mid x}.
	\]
\end{definition}

\begin{lemma}
\label{lemmaequalityabouttheabsoftwoquotients}
Suppose that $H \in \mathfrak{H}$ is not $\cK$-relevant.
Then
for all $x, y \in \cA_{\gen}$ such that 
$\mathfrak{H}_{x, y} = \{ H \}$,
we have
\[
\abs{K_{x}/\left( K_{x} \cap K_{y} \right)} = \abs{K_{y}/\left( K_{x} \cap K_{y} \right)}.
\]
\end{lemma}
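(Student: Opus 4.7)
The plan is to exploit the defining property of non-$\cK$-relevance symmetrically in $x$ and $y$, and then compare scalars using an associativity trick.

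First, I would observe that the relation $\mathfrak{H}_{x,y} = \{H\}$ is symmetric in $x$ and $y$, so the hypothesis that $H$ is not $\cK$-relevant (which, unpacked from Definition~\ref{definitionKrelevant}, means that for \emph{every} pair of generic points separated by $H$ alone, the composition lies in $\Coeff \cdot \id$) applies both to $(x,y)$ and to $(y,x)$. Thus there exist $c, c' \in \Coeff$ with
\[
\Theta_{x \mid y} \circ \Theta_{y \mid x} = c \cdot \id_{\ind_{K_{x}}^{G(F)}(\rho_{x})}
\qquad\text{and}\qquad
\Theta_{y \mid x} \circ \Theta_{x \mid y} = c' \cdot \id_{\ind_{K_{y}}^{G(F)}(\rho_{y})}.
\]

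Next, I would apply Corollary~\ref{corollarycalculationofconstantterm} to identify the two scalars. Evaluating the first composition at the function $f_v$ at $g = 1$ gives $c \cdot v = \abs{K_{y}/(K_{x} \cap K_{y})}^{-1} v$, and by symmetry the second evaluation yields $c' = \abs{K_{x}/(K_{x} \cap K_{y})}^{-1}$; both are nonzero elements of $\Coeff$ by Lemma~\ref{lemmaquotientnonzeromodell}. In particular, $\Theta_{x \mid y}$ is nonzero (it has a left inverse).

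Finally, I would extract the scalar equality by the standard associativity trick: composing on the left or right with $\Theta_{x \mid y}$ gives
\[
(\Theta_{x \mid y} \circ \Theta_{y \mid x}) \circ \Theta_{x \mid y}
= c \cdot \Theta_{x \mid y}
\qquad\text{and}\qquad
\Theta_{x \mid y} \circ (\Theta_{y \mid x} \circ \Theta_{x \mid y})
= c' \cdot \Theta_{x \mid y}.
\]
Since $\Theta_{x \mid y}$ is nonzero, we conclude $c = c'$, i.e.\ $\abs{K_{y}/(K_{x} \cap K_{y})}^{-1} = \abs{K_{x}/(K_{x} \cap K_{y})}^{-1}$ in $\Coeff$, and inverting yields the desired equality.

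There is essentially no obstacle here; the only subtle point is the interpretation of the equality in the statement: the argument above proves equality of the two indices as elements of $\Coeff$ (which is exactly the form needed for the normalization in Definition~\ref{definitionnormalize} to give $\Theta^{\normal}_{x \mid y} \circ \Theta^{\normal}_{y \mid x} = \id$). Since both sides are powers of $p$ by Lemma~\ref{lemmaquotientnonzeromodell}, this is the natural and correct reading.
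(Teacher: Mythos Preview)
Your proof is correct and essentially identical to the paper's: both identify the two scalars via Corollary~\ref{corollarycalculationofconstantterm} and then use the associativity trick on a triple composition (you sandwich with $\Theta_{x\mid y}$, the paper with $\Theta_{y\mid x}$) together with nonvanishing of the middle operator. Your closing remark about the equality holding in $\Coeff$ is apt and matches what the paper actually proves.
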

Note that the image of this number in $\Coeff$ is invertible by
Lemma \ref{lemmaquotientnonzeromodell}.
\begin{proof}
According to Corollary~\ref{corollarycalculationofconstantterm}, we have
\[
\Theta_{x \mid y} \circ \Theta_{y \mid x} = \abs{
K_{y}/\left(
K_{x} \cap K_{y}
\right)
}^{-1} \cdot \id_{\ind_{K_{x}}^{G(F)}(\rho_{x})}
\]
and
\[
\Theta_{y \mid x} \circ \Theta_{x \mid y} = \abs{
K_{x}/\left(
K_{x} \cap K_{y}
\right)
}^{-1} \cdot \id_{\ind_{K_{y}}^{G(F)}(\rho_{y})}.
\]
Hence, we have
\begin{align*}
\Theta_{y \mid x} 
&= \Theta_{y \mid x} \circ \id_{\ind_{K_{x}}^{G(F)}(\rho_{x})} \\
&= \abs{K_{y}/\left( K_{x} \cap K_{y} \right)} \Theta_{y \mid x} \circ \left(
\Theta_{x \mid y} \circ \Theta_{y \mid x}
\right) \\
&= 
\abs{K_{y}/\left( K_{x} \cap K_{y} \right)}
\left(
\Theta_{y \mid x} \circ \Theta_{x \mid y} 
\right) \circ \Theta_{y \mid x} \\
&= \abs{K_{y}/\left( K_{x} \cap K_{y} \right)} \abs{K_{x}/\left( K_{x} \cap K_{y} \right)}^{-1} \cdot \id_{\ind_{K_{y}}^{G(F)}(\rho_{y})} \circ  \Theta_{y \mid x} \\
&=   \abs{K_{y}/\left( K_{x} \cap K_{y} \right)} \abs{K_{x}/\left( K_{x} \cap K_{y} \right)}^{-1} \Theta_{y \mid x}.
\end{align*}
Since $\Theta_{y \mid x}$ is non-zero, we obtain the lemma.
\end{proof}

\begin{lemma}
	\label{normalizedlemmalength1case}
	Suppose that $H \in \mathfrak{H}$ is not $\cK$-relevant.
	Then we have
	\[
	\Theta^{\normal}_{x \mid y} \circ \Theta^{\normal}_{y \mid x}
	= \id_{\ind_{K_{x}}^{G(F)}(\rho_{x})}
	\]
	for all $x, y \in \cA_{\gen}$ such that 
	$
	\mathfrak{H}_{x, y} = \left\{
	H
	\right\}$.
\end{lemma}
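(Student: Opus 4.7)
The plan is to deduce this directly from three already-stated ingredients: Corollary \ref{corollarycalculationofconstantterm} (which evaluates $\Theta_{x \mid y} \circ \Theta_{y \mid x}$ as a scalar when certain simplifications apply), the very definition of $\cK$-relevance in Definition \ref{definitionKrelevant} (which upgrades the non-relevance hypothesis into the statement that the composition is an actual scalar), and Lemma \ref{lemmaequalityabouttheabsoftwoquotients} (which identifies the two index quotients and hence reconciles the two normalization factors). Once these are in hand, the lemma reduces to a single manipulation of scalars, so I expect no substantial obstacle.

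First, I would invoke the assumption that $H$ is not $\cK$-relevant together with the defining property \eqref{eqn:compositionnotscalar}: this forces $\Theta_{x \mid y} \circ \Theta_{y \mid x}$ to lie in $\Coeff \cdot \id_{\ind_{K_{x}}^{G(F)}(\rho_{x})}$. Corollary \ref{corollarycalculationofconstantterm} then pins down that scalar by evaluating at the identity on a generator $f_{v}$, yielding
\[
\Theta_{x \mid y} \circ \Theta_{y \mid x}
= \abs{K_{y}/(K_{x} \cap K_{y})}^{-1} \cdot \id_{\ind_{K_{x}}^{G(F)}(\rho_{x})}.
\]
Here I am using that the scalar is well-defined in $\Coeff$ because the index in question is a power of $p$ by Lemma \ref{lemmaquotientnonzeromodell}, hence invertible.

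Next, unwinding Definition \ref{definitionnormalize} for both directions gives
\[
\Theta^{\normal}_{x \mid y} \circ \Theta^{\normal}_{y \mid x}
= \abs{K_{x}/(K_{x} \cap K_{y})}^{1/2} \abs{K_{y}/(K_{x} \cap K_{y})}^{1/2} \cdot \bigl(\Theta_{x \mid y} \circ \Theta_{y \mid x}\bigr).
\]
Applying Lemma \ref{lemmaequalityabouttheabsoftwoquotients}, which is exactly the statement that these two index orders coincide in the non-$\cK$-relevant case, the prefactor becomes $\abs{K_{y}/(K_{x} \cap K_{y})}^{1/2} \cdot \abs{K_{y}/(K_{x} \cap K_{y})}^{1/2} = \abs{K_{y}/(K_{x} \cap K_{y})}$, where I use the convention fixed in \S\ref{subsec:notation} that $(p^{n})^{1/2} = (p^{1/2})^{n}$ so that the square-root notation is multiplicative on powers of $p$. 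Substituting the explicit value from the first step, the prefactor cancels $\abs{K_{y}/(K_{x} \cap K_{y})}^{-1}$ exactly, leaving $\id_{\ind_{K_{x}}^{G(F)}(\rho_{x})}$, which is the desired identity.
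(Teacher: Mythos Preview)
Your proof is correct and follows essentially the same approach as the paper: combine the non-relevance hypothesis with Corollary~\ref{corollarycalculationofconstantterm} to identify $\Theta_{x \mid y} \circ \Theta_{y \mid x}$ as the scalar $\abs{K_{y}/(K_{x} \cap K_{y})}^{-1}$, then use Lemma~\ref{lemmaequalityabouttheabsoftwoquotients} and Definition~\ref{definitionnormalize} to see that the normalization factors cancel it. The paper's proof is simply a terser version of the same computation.
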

\begin{proof}
	According to Lemma~\ref{lemmaequalityabouttheabsoftwoquotients}, we have
	$
	\abs{K_{x}/\left( K_{x} \cap K_{y} \right)} = \abs{K_{y}/\left( K_{x} \cap K_{y} \right)}
	$.
	Then the claim follows from combining this with Corollary~\ref{corollarycalculationofconstantterm} and Definition~\ref{definitionnormalize}.
\end{proof}

As above, we also obtain a transitivity property for the normalized intertwining operators $ \Theta^{\normal}_{x \mid y}$. 
\begin{lemma}
\label{lemmanormalizedtransitivityofthetadver}
Let $x, y, z \in \cA_{\gen}$.
Suppose that 
\[
d(x, y) + d(y, z) = d(x, z).
\]
Then we have
\[
\Theta^{\normal}_{z \mid y}  \circ \Theta^{\normal}_{y \mid x} = \Theta^{\normal}_{z \mid x}.
\]
\end{lemma}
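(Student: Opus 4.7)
The plan is to reduce the statement to the unnormalized transitivity already established in Proposition~\ref{transitivityofthetadver} by checking that the normalizing constants multiply correctly. Unfolding Definition~\ref{definitionnormalize}, we have
\[
\Theta^{\normal}_{z \mid y} \circ \Theta^{\normal}_{y \mid x}
= \abs{K_{z}/(K_{y}\cap K_{z})}^{1/2} \cdot \abs{K_{y}/(K_{x}\cap K_{y})}^{1/2} \cdot (\Theta_{z\mid y} \circ \Theta_{y\mid x}),
\]
and the last factor equals $\Theta_{z\mid x}$ by Proposition~\ref{transitivityofthetadver}. So the task reduces to proving the multiplicativity
\[
\abs{K_{z}/(K_{y}\cap K_{z})} \cdot \abs{K_{y}/(K_{x}\cap K_{y})} = \abs{K_{z}/(K_{x}\cap K_{z})}
\]
of these indices as integers; the square-root identity is then automatic, since by Lemma~\ref{lemmaquotientnonzeromodell} each of the three indices is a power of $p$, and our convention $(p^{n})^{1/2} = (p^{1/2})^{n}$ is multiplicative on such powers.

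To prove the integer identity, I would invoke Axiom~\ref{axiomaboutHNheartandK}\eqref{axiomaboutHNheartandKabotgoodunipotentradical}, which supplies a $U \in \cU(M)$ with
\[
K_{x}\cap U(F) \subseteq K_{y}\cap U(F) \subseteq K_{z}\cap U(F)
\quad\text{and}\quad
K_{z}\cap \overline{U}(F) \subseteq K_{y}\cap \overline{U}(F) \subseteq K_{x}\cap \overline{U}(F).
\]
Since each pair $(x,y)$, $(y,z)$, $(x,z)$ satisfies the hypotheses of Lemma~\ref{lemmaquotientvswithquotientofunipotentradical} with the same $U$, that lemma produces compatible bijections identifying
\(
K_{y}/(K_{x}\cap K_{y})
\),
\(
K_{z}/(K_{y}\cap K_{z})
\),
and
\(
K_{z}/(K_{x}\cap K_{z})
\)
with
\(
(K_{y}\cap U(F))/(K_{x}\cap U(F))
\),
\(
(K_{z}\cap U(F))/(K_{y}\cap U(F))
\),
and
\(
(K_{z}\cap U(F))/(K_{x}\cap U(F))
\),
respectively. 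The desired multiplicativity is then the standard tower identity for indices in the chain $K_{x}\cap U(F) \subseteq K_{y}\cap U(F) \subseteq K_{z}\cap U(F)$.

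I do not anticipate any serious obstacle here; the real content has already been packaged into Proposition~\ref{transitivityofthetadver} and Lemma~\ref{lemmaquotientvswithquotientofunipotentradical}, and what remains is bookkeeping on the normalizing scalars. The only subtlety worth pausing on is ensuring that \emph{one single} $U \in \cU(M)$ handles all three index computations simultaneously, which is exactly what the ``$\text{in particular}$''-strengthening of Axiom~\ref{axiomaboutHNheartandK}\eqref{axiomaboutHNheartandKabotgoodunipotentradical} guarantees under the length-additivity hypothesis $d(x,y)+d(y,z)=d(x,z)$.
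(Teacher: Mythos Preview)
Your proof is correct and follows essentially the same approach as the paper: invoke Axiom~\ref{axiomaboutHNheartandK}\eqref{axiomaboutHNheartandKabotgoodunipotentradical} to obtain a single $U$ handling all three pairs, use Lemma~\ref{lemmaquotientvswithquotientofunipotentradical} to rewrite each index as an index of nested subgroups of $U(F)$, apply the tower law, and combine with Proposition~\ref{transitivityofthetadver}. One small wording quibble: the ``in particular'' clause in the axiom is the \emph{weaker} two-point statement, whereas what you actually use (and correctly cite) is the full three-point chain version under the additivity hypothesis.
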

\begin{proof}
By using Axiom~\ref{axiomaboutHNheartandK}\eqref{axiomaboutHNheartandKabotgoodunipotentradical}, we have $U \in \cU(M)$ such that
\[
K_{x} \cap U(F) \subset K_{y} \cap U(F) \subset K_{z} \cap U(F)
\]
and
\[
K_{z} \cap \overline{U}(F)  \subset K_{y} \cap \overline{U}(F) \subset K_{x} \cap \overline{U}(F).
\]
According to Lemma~\ref{lemmaquotientvswithquotientofunipotentradical}, we have
\[
\begin{cases}
\abs{K_{y}/\left(
K_{x} \cap K_{y}
\right)} &= \abs{\left(K_{y} \cap U(F)\right)/\left(K_{x} \cap U(F)\right)}, \\
\abs{K_{z}/ \left(
K_{y} \cap K_{z}
\right)} &= \abs{\left(K_{z} \cap U(F)\right)/\left(K_{y} \cap U(F)\right)}, \\
\abs{K_{z}/ \left(
K_{x} \cap K_{z}
\right)} &= \abs{\left(K_{z} \cap U(F)\right)/\left(K_{x} \cap U(F)\right)}.
\end{cases}
\]
Thus, we obtain
\begin{align*}
\abs{K_{y}/\left(
K_{x} \cap K_{y}
\right)} \abs{K_{z}/ \left(
K_{y} \cap K_{z}
\right)} 
&=
\abs{\left(K_{y} \cap U(F)\right)/\left(K_{x} \cap U(F)\right)} \abs{\left(K_{z} \cap U(F)\right)/\left(K_{y} \cap U(F)\right)} \\
&= \abs{\left(K_{z} \cap U(F)\right)/\left(K_{x} \cap U(F)\right)} \\
&= \abs{K_{z}/ \left(
K_{x} \cap K_{z}
\right)}.
\end{align*}
Combining it with Proposition~\ref{transitivityofthetadver}, we obtain the claim.
\end{proof}
The transitivity property holds under a weaker assumption that only takes the
relevant
$\cK$-hyperplanes into account. In order to state the weaker assumption and stronger result (see Proposition \ref{transitivityofThetaGammaver}), we first need to introduce some notation.

\begin{notation}
	For $x, y \in \cA_{\gen}$, we define the subset
$\mathfrak{H}_{\Krel; x, y}$\index{notation-ax}{H_Krelxy@$\mathfrak{H}_{\Krel; x, y}$}
of $\mathfrak{H}_{\Krel}$ by
	\[
	\mathfrak{H}_{\Krel; x, y} = \left\{
	H \in \mathfrak{H}_{\Krel} \mid \text{$x$ and $y$ are on opposite sides of $H$}
	\right\},
	\]
	and write\index{notation-ax}{d_Krel(@$d_{\Krel}(\phantom{x},\phantom{y})$}
	$
	d_{\Krel}(x, y) = \# \mathfrak{H}_{\Krel; x, y}
	$.
\end{notation}

\begin{lemma}
	\label{lemmaaboutdistancefunctionrel}
	Let $x, y, z \in \cA_{\gen}$.
	Then we have
	\[
	d_{\Krel}(x, y) + d_{\Krel}(y, z) \ge d_{\Krel}(x, z),
	\]
	and the following conditions are equivalent:
\[
	\begin{array}{l c l}
	\textup{(a)} \hspace{1em}
		&d_{\Krel}(x, y) + d_{\Krel}(y, z) = d_{\Krel}(x, z), \\
	\textup{(b)}
		&\mathfrak{H}_{\Krel; x, y}, \mathfrak{H}_{\Krel; y, z} \subset \mathfrak{H}_{\Krel; x, z}, \\
	\textup{(c)}
		&\mathfrak{H}_{\Krel; x, y} \cap \mathfrak{H}_{\Krel; y, z} = \emptyset.
	\end{array}
\]
	Moreover, the condition 
	\[
	d(x, y) + d(y, z) = d(x, z)
	\]
	implies 
	\[
	d_{\Krel}(x, y) + d_{\Krel}(y, z) = d_{\Krel}(x, z).
	\]
\end{lemma}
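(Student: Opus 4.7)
The plan is to prove this lemma by essentially repeating the argument of Lemma~\ref{lemmaaboutdistancefunctionnoindex} with $\mathfrak{H}$ replaced by $\mathfrak{H}_{\Krel}$, and then deducing the final implication from the corresponding statement for $\mathfrak{H}$.

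For the first part, I would observe that the entire proof of Lemma~\ref{lemmaaboutdistancefunctionnoindex} only uses the fact that $\mathfrak{H}$ is a set of affine hyperplanes in $\cA_{x_0}$; it makes no use of the specific set beyond the definitions of $\mathfrak{H}_{x,y}$ and $d(x,y)$. Since $\mathfrak{H}_{\Krel}$ is also a set of affine hyperplanes (a subset of $\mathfrak{H}$) and the definitions of $\mathfrak{H}_{\Krel;x,y}$ and $d_{\Krel}(x,y)$ are formally the same, the same case analysis applies: for any $H \in \mathfrak{H}_{\Krel;x,z}$, exactly one of $H \in \mathfrak{H}_{\Krel;x,y}$ or $H \in \mathfrak{H}_{\Krel;y,z}$ holds, yielding the triangle inequality and the equivalence of (a), (b), (c).

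For the final implication, suppose $d(x,y) + d(y,z) = d(x,z)$. By the equivalence (a) $\Leftrightarrow$ (c) in Lemma~\ref{lemmaaboutdistancefunctionnoindex}, this means $\mathfrak{H}_{x,y} \cap \mathfrak{H}_{y,z} = \emptyset$. Since $\mathfrak{H}_{\Krel;x,y} \subseteq \mathfrak{H}_{x,y}$ and $\mathfrak{H}_{\Krel;y,z} \subseteq \mathfrak{H}_{y,z}$, we immediately obtain $\mathfrak{H}_{\Krel;x,y} \cap \mathfrak{H}_{\Krel;y,z} = \emptyset$, and then the equivalence (a) $\Leftrightarrow$ (c) in the present lemma (which we just established) yields $d_{\Krel}(x,y) + d_{\Krel}(y,z) = d_{\Krel}(x,z)$.

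There is no real obstacle here; the lemma is essentially a formal consequence of the definitions together with Lemma~\ref{lemmaaboutdistancefunctionnoindex}. The only point worth emphasizing is that the proof of the first part does not require any extra property of $\cK$-relevant hyperplanes (such as the $\Nheart$-invariance of $\mathfrak{H}_{\Krel}$, which will be established later in Corollary~\ref{corollaryweylinvariance}); it is a purely combinatorial statement about arrangements of hyperplanes.
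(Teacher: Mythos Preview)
Your proposal is correct and matches the paper's approach exactly. The paper's proof simply says the first two claims follow from the same arguments as Lemma~\ref{lemmaaboutdistancefunctionnoindex}, and the last claim follows from the fact that $\mathfrak{H}_{\Krel; x, y} = \mathfrak{H}_{\Krel} \cap \mathfrak{H}_{x, y}$---which is precisely the subset observation you use to deduce condition~(c) and hence condition~(a).
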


\begin{proof}
	The first and second claim follow from the same arguments as in the proof of Lemma~\ref{lemmaaboutdistancefunctionnoindex}.
	The last claim follows from the fact 
	$
	\mathfrak{H}_{\Krel; x, y} = \mathfrak{H}_{\Krel} \cap \mathfrak{H}_{x, y}
	$.
\end{proof}

\begin{proposition}
\label{transitivityofThetaGammaver}
Let $x, y, z \in \cA_{\gen}$ and assume Axiom \ref{axiomaboutHNheartandK}.
Suppose that 
\[
d_{\Krel}(x, y) + d_{\Krel}(y, z) = d_{\Krel}(x, z).
\]
Then we have
\[
\Theta^{\normal}_{z \mid y}  \circ \Theta^{\normal}_{y \mid x}
= \Theta^{\normal}_{z \mid x}.
\]
\end{proposition}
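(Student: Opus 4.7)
The plan is to reduce to the already-proven Lemma \ref{lemmanormalizedtransitivityofthetadver} by induction on the non-negative integer
\[
n \coloneqq d(x,y) + d(y,z) - d(x,z),
\]
which measures the ``defect'' between unnormalized additivity and the assumption. When $n = 0$, the hypothesis of Lemma \ref{lemmanormalizedtransitivityofthetadver} holds and we are done, providing the base case.

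For the inductive step, assume $n > 0$. By Lemma \ref{lemmaaboutdistancefunctionnoindex} the set $\mathfrak{H}_{x,y} \cap \mathfrak{H}_{y,z}$ is nonempty; pick some $H$ in it. By Lemma \ref{lemmaaboutdistancefunctionrel} applied to the hypothesis, $\mathfrak{H}_{\Krel;x,y} \cap \mathfrak{H}_{\Krel;y,z} = \emptyset$, so $H$ cannot be $\cK$-relevant. Since $\mathfrak{H}$ is locally finite and $y \notin H$, I would choose $y' \in \cA_{\gen}$ by taking a short transverse segment from $y$ crossing $H$ once, so that $\mathfrak{H}_{y,y'} = \{H\}$. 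Because $x$ and $z$ are on the same side of $H$ as $y'$, this yields $\mathfrak{H}_{x,y'} = \mathfrak{H}_{x,y} \smallsetminus \{H\}$ and $\mathfrak{H}_{y',z} = \mathfrak{H}_{y,z} \smallsetminus \{H\}$.

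These equalities give $d(x,y') + d(y',y) = d(x,y)$ and $d(y,y') + d(y',z) = d(y,z)$, so Lemma \ref{lemmanormalizedtransitivityofthetadver} yields
\[
\Theta^{\normal}_{y \mid x} = \Theta^{\normal}_{y \mid y'} \circ \Theta^{\normal}_{y' \mid x}
\quad\text{and}\quad
\Theta^{\normal}_{z \mid y} = \Theta^{\normal}_{z \mid y'} \circ \Theta^{\normal}_{y' \mid y}.
\]
Since $H$ is not $\cK$-relevant, Lemma \ref{normalizedlemmalength1case} gives $\Theta^{\normal}_{y' \mid y} \circ \Theta^{\normal}_{y \mid y'} = \id$, so
\[
\Theta^{\normal}_{z \mid y} \circ \Theta^{\normal}_{y \mid x}
= \Theta^{\normal}_{z \mid y'} \circ \Theta^{\normal}_{y' \mid y} \circ \Theta^{\normal}_{y \mid y'} \circ \Theta^{\normal}_{y' \mid x}
= \Theta^{\normal}_{z \mid y'} \circ \Theta^{\normal}_{y' \mid x}.
\]

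Finally, to close the induction, observe that removing the non-$\cK$-relevant hyperplane $H$ does not alter the $\cK$-relevant distances, so $d_{\Krel}(x,y') + d_{\Krel}(y',z) = d_{\Krel}(x,y) + d_{\Krel}(y,z) = d_{\Krel}(x,z)$, while the new defect is $d(x,y') + d(y',z) - d(x,z) = n - 2 < n$. The inductive hypothesis then gives $\Theta^{\normal}_{z \mid y'} \circ \Theta^{\normal}_{y' \mid x} = \Theta^{\normal}_{z \mid x}$, completing the proof. I anticipate no substantive obstacle; the only minor point requiring care is the existence of $y'$, which follows from local finiteness of $\mathfrak{H}$ together with $y \in \cA_{\gen}$.
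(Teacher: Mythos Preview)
Your induction on the defect $n = d(x,y) + d(y,z) - d(x,z)$ is a natural idea, but the step where you produce $y'$ with $\mathfrak{H}_{y,y'} = \{H\}$ has a genuine gap. Such a $y'$ exists precisely when $H$ is a wall of the connected component of $\cA_{\gen}$ containing $y$, and this need not hold for any $H \in \mathfrak{H}_{x,y} \cap \mathfrak{H}_{y,z}$. For a concrete obstruction, take $\mathfrak{H}$ to consist of three concurrent lines $L_0, L_{60}, L_{120}$ in a two-dimensional $\cA_{x_0}$ at mutual angles $60^\circ$, and place $y$ in the sector bounded by $L_0$ and $L_{60}$. Choosing $x$ so that $\mathfrak{H}_{x,y} = \{L_{60}, L_{120}\}$ and $z$ so that $\mathfrak{H}_{y,z} = \{L_0, L_{120}\}$ gives $\mathfrak{H}_{x,y} \cap \mathfrak{H}_{y,z} = \{L_{120}\}$; yet no $y' \in \cA_{\gen}$ satisfies $\mathfrak{H}_{y,y'} = \{L_{120}\}$, since any point on the far side of $L_{120}$ from $y$ that is still on $y$'s side of $L_0$ and $L_{60}$ would have to satisfy incompatible sign conditions. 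Local finiteness does not help: it only guarantees that a neighbourhood of a generic point \emph{on} $H$ meets no other hyperplane, but such a point may lie outside the closure of $y$'s chamber. (One checks that with $L_{120}$ non-$\cK$-relevant and $L_0, L_{60}$ relevant, the hypothesis $d_{\Krel}(x,y)+d_{\Krel}(y,z)=d_{\Krel}(x,z)$ is satisfied, so this configuration is not excluded.)

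The paper avoids this by inducting on $d(x,y)$ instead of on the defect. One picks $x'$ with $d(x,x')=1$ and $d(x,x')+d(x',y)=d(x,y)$; this is always possible because some wall of the chamber of $x$ lies in $\mathfrak{H}_{x,y}$ whenever $d(x,y)>0$. After checking that $(x',y,z)$ again satisfies $d_{\Krel}$-additivity and invoking the inductive hypothesis to get $\Theta^{\normal}_{z\mid y}\circ\Theta^{\normal}_{y\mid x'}=\Theta^{\normal}_{z\mid x'}$, the remaining identity $\Theta^{\normal}_{z\mid x'}\circ\Theta^{\normal}_{x'\mid x}=\Theta^{\normal}_{z\mid x}$ is handled by a short case split on which side of the unique hyperplane in $\mathfrak{H}_{x,x'}$ the point $z$ lies; in the nontrivial case one shows this hyperplane is forced to be non-$\cK$-relevant and applies Lemma~\ref{normalizedlemmalength1case}. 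Your argument could likely be repaired by a lexicographic induction on $(n,d(x,y))$, but as written the existence of $y'$ is the substantive obstacle you anticipated not having.
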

\begin{proof}
\addtocounter{equation}{-1}
\begin{subequations}
We use induction on $d(x, y)$.
If $d(x, y) = 0$, we have
\[
d(x, z) = d(y, z) = d(x, y) + d(y, z).
\]
Hence, the proposition follows from Lemma~\ref{lemmanormalizedtransitivityofthetadver}.
Suppose that $d(x, y) > 0$.
We take $x' \in \cA_{\gen}$ such that $d(x, x') = 1$ and
$
d(x, x') + d(x', y) = d(x, y)
$.
According to Lemma~\ref{lemmanormalizedtransitivityofthetadver}, we have
\begin{align}
\label{followsfromlemmatransitivity}
\Theta^{\normal}_{y \mid x'}  \circ \Theta^{\normal}_{x' \mid x} = \Theta^{\normal}_{y \mid x}.
\end{align}
Recall that we are assuming
\begin{align}
\label{condition1assuming}
d_{\Krel}(x, y) + d_{\Krel}(y, z) = d_{\Krel}(x, z).
\end{align}
On the other hand, since 
$
d(x, x') + d(x', y) = d(x, y)
$,
according to Lemma~\ref{lemmaaboutdistancefunctionrel}, we also have
\begin{align}
\label{condition2y'def}
d_{\Krel}(x, x') + d_{\Krel}(x', y) = d_{\Krel}(x, y).
\end{align}
Combining \eqref{condition1assuming} with \eqref{condition2y'def}, we obtain
\begin{align*}
d_{\Krel}(x, z) &= d_{\Krel}(x, y) + d_{\Krel}(y, z) \\
&= d_{\Krel}(x, x') + d_{\Krel}(x', y) + d_{\Krel}(y, z) \\
& \ge d_{\Krel}(x, x') + d_{\Krel}(x', z) \\
& \ge d_{\Krel}(x, z).
\end{align*}
Since all of the inequalities above are thus equalities, we have
\begin{align}
\label{equalityusedtoinduction}
d_{\Krel}(x', y) + d_{\Krel}(y, z) = d_{\Krel}(x', z)
\end{align}
and
\begin{align}
\label{equalityusedtofinalstep}
d_{\Krel}(x, x') + d_{\Krel}(x', z) = d_{\Krel}(x, z).
\end{align}
Then \eqref{equalityusedtoinduction} and the induction hypothesis imply that
\begin{align}
\label{inductionhypothesistransitivityuptoconst}
\Theta^{\normal}_{z \mid y}  \circ \Theta^{\normal}_{y \mid x'} = 
\Theta^{\normal}_{z \mid x'}.
\end{align}
Combining \eqref{followsfromlemmatransitivity} with \eqref{inductionhypothesistransitivityuptoconst}, we have
\[
\Theta^{\normal}_{z \mid y}  \circ \Theta^{\normal}_{y \mid x} = \Theta^{\normal}_{z \mid y} \circ \Theta^{\normal}_{y \mid x'}  \circ \Theta^{\normal}_{x' \mid x} 
= \Theta^{\normal}_{z \mid x'} \circ \Theta^{\normal}_{x' \mid x}.
\]
Thus it suffices to show that 
$
\Theta^{\normal}_{z \mid x'}  \circ \Theta^{\normal}_{x' \mid x} =
\Theta^{\normal}_{z \mid x}
$.
Let $H \in \mathfrak{H}$ denote the unique affine hyperplane in $\cA_{x_{0}}$ such that $x$ and $x'$ are on opposite sides of $H$.
If $x'$ and $z$ are on the same side of $H$, then we have
$
\mathfrak{H}_{x, x'} \cap \mathfrak{H}_{x', z} = \emptyset
$.
Then, according to Lemma~\ref{lemmaaboutdistancefunctionnoindex}, we have
$
d(x, x') + d(x', z) = d(x, z)
$.
In this case, the claim follows from Lemma~\ref{lemmanormalizedtransitivityofthetadver}.
Suppose that $x'$ and $z$ are on opposite sides of $H$.
In this case, $x$ and $z$ are on the same side of $H$.
Hence, we obtain
$
\mathfrak{H}_{x', x} \cap \mathfrak{H}_{x, z} = \emptyset
$,
equivalently,
$
d(x', x) + d(x, z) = d(x', z)
$.
Then, according to Lemma~\ref{lemmanormalizedtransitivityofthetadver}, we have
\begin{align}
\label{lemmatransitivityconverse}
\Theta^{\normal}_{z \mid x}  \circ \Theta^{\normal}_{x \mid x'} = \Theta^{\normal}_{z \mid x'}.
\end{align}
We also note that $H \not \in \mathfrak{H}_{\Krel}$.
Indeed, if $H \in \mathfrak{H}_{\Krel}$, we have
\[
H \in \mathfrak{H}_{\rel; x, x'} \cap \mathfrak{H}_{\rel; x', z} \neq \emptyset,
\]
that contradicts \eqref{equalityusedtofinalstep}.
Hence, we obtain that $H$ is not $\cK$-relevant.
Then, according to Lemma~\ref{normalizedlemmalength1case}, we have
\begin{align}
\label{length1caseafterprovingdgamma=0}
\Theta^{\normal}_{x \mid x'} \circ \Theta^{\normal}_{x' \mid x}
= \id_{\ind_{K_{x}}^{G(F)}(\rho_{x})}.
\end{align}
Combining \eqref{lemmatransitivityconverse} with \eqref{length1caseafterprovingdgamma=0}, we obtain
\[
\Theta^{\normal}_{z \mid x'}  \circ \Theta^{\normal}_{x' \mid x} = \Theta^{\normal}_{z \mid x}  \circ \Theta^{\normal}_{x \mid x'} \circ  \Theta^{\normal}_{x' \mid x} 
= \Theta^{\normal}_{z \mid x}.
\qedhere
\]
\end{subequations}
\end{proof}

We record a corollary of Proposition~\ref{transitivityofThetaGammaver}, that is a generalization of Lemma~\ref{lemmaequalityabouttheabsoftwoquotients} and will be used in Section~\ref{subsection:Preserving the anti-involutions} below.
\begin{corollary}
\label{generalizationoflemmaequalityabouttheabsoftwoquotients}
Let $x, y \in \cA_{\gen}$ such that $d_{\Krel}(x, y) = 0$ and assume Axiom \ref{axiomaboutHNheartandK}.
Then we have
\[
\abs{K_{x}/\left( K_{x} \cap K_{y} \right)} = \abs{K_{y}/\left( K_{x} \cap K_{y} \right)}.
\]
\end{corollary}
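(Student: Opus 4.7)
The plan is to imitate the argument of Lemma \ref{lemmaequalityabouttheabsoftwoquotients}, but using the stronger transitivity property (Proposition \ref{transitivityofThetaGammaver}) which only requires additivity with respect to $d_{\Krel}$, rather than the full distance function $d$. This stronger property is exactly what lets us cover the more general case $d_{\Krel}(x,y)=0$ (as opposed to $\mathfrak{H}_{x,y}$ consisting of a single non-$\cK$-relevant hyperplane, as in the older lemma).

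First I would observe that since $\mathfrak{H}_{\Krel;x,x}=\emptyset$, we trivially have $d_{\Krel}(x,x)=0$, so the hypothesis $d_{\Krel}(x,y)=0$ automatically gives
\[
d_{\Krel}(x,y)+d_{\Krel}(y,x) \;=\; 0+0 \;=\; d_{\Krel}(x,x).
\]
Applying Proposition \ref{transitivityofThetaGammaver} to the triple $(x,y,x)$ therefore yields
\[
\Theta^{\normal}_{x\mid y}\circ\Theta^{\normal}_{y\mid x} \;=\; \Theta^{\normal}_{x\mid x}.
\]
A direct inspection of the definition shows $\Theta_{x\mid x}=\id_{\ind_{K_x}^{G(F)}(\rho_x)}$ (the indexing set $K_{x,+}/(K_{x,+}\cap K_{x,+})$ is a singleton, and $\theta_x(1)=1$), and the normalization factor $\abs{K_x/(K_x\cap K_x)}^{1/2}=1$. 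Hence $\Theta^{\normal}_{x\mid x}=\id$, so
\[
\Theta^{\normal}_{x\mid y}\circ\Theta^{\normal}_{y\mid x} \;=\; \id_{\ind_{K_x}^{G(F)}(\rho_x)}.
\]

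Next I would unwind the normalization: by Definition \ref{definitionnormalize},
\[
\Theta^{\normal}_{x\mid y}\circ\Theta^{\normal}_{y\mid x}
\;=\;
\abs{K_x/(K_x\cap K_y)}^{1/2}\,\abs{K_y/(K_x\cap K_y)}^{1/2}\,
\bigl(\Theta_{x\mid y}\circ\Theta_{y\mid x}\bigr).
\]
Evaluating both sides on $f_v$ (as in Corollary \ref{corollarycalculationofconstantterm}) at $g=1$, and using that the left-hand side is the identity together with the explicit computation from Corollary \ref{corollarycalculationofconstantterm}, gives
\[
v
\;=\;
\abs{K_x/(K_x\cap K_y)}^{1/2}\,\abs{K_y/(K_x\cap K_y)}^{1/2}\,\abs{K_y/(K_x\cap K_y)}^{-1}\,v
\;=\;
\abs{K_x/(K_x\cap K_y)}^{1/2}\,\abs{K_y/(K_x\cap K_y)}^{-1/2}\,v.
\]
Since $v$ is arbitrary and the scalar coefficients are invertible in $\Coeff$ (Lemma \ref{lemmaquotientnonzeromodell}), we conclude
\[
\abs{K_x/(K_x\cap K_y)}^{1/2}\;=\;\abs{K_y/(K_x\cap K_y)}^{1/2}
\]
in $\Coeff$; squaring yields the desired equality.

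There is no real obstacle here, since all the substantive work has already been done in Proposition \ref{transitivityofThetaGammaver} and Corollary \ref{corollarycalculationofconstantterm}; the only small verification is that $\Theta^{\normal}_{x\mid x}=\id$, which is immediate from the definitions.
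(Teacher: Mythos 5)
Your proof is correct and is essentially identical to the paper's: both apply Proposition \ref{transitivityofThetaGammaver} to the triple $(x,y,x)$ to get $\Theta^{\normal}_{x\mid y}\circ\Theta^{\normal}_{y\mid x}=\id$, and then extract the equality of quotient sizes from Corollary \ref{corollarycalculationofconstantterm} and Definition \ref{definitionnormalize}. The only difference is that you spell out the intermediate observation $\Theta^{\normal}_{x\mid x}=\id$, which the paper treats as implicit.
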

\begin{proof}
Since $d_{\Krel}(x, y) + d_{\Krel}(y, x) = d_{\Krel}(x, x) = 0$, Proposition~\ref{transitivityofThetaGammaver} implies that 
\(
\Theta^{\normal}_{x \mid y}  \circ \Theta^{\normal}_{y \mid x}
= \id_{\ind_{K_{x}}^{G(F)}(\rho_{x})}.
\)
Combining this with Corollary~\ref{corollarycalculationofconstantterm} and Definition~\ref{definitionnormalize}, we obtain that
\[
\abs{K_{x}/\left( K_{x} \cap K_{y} \right)}^{1/2}
\abs{K_{y}/\left( K_{x} \cap K_{y} \right)}^{1/2}
\abs{K_{y}/\left( K_{x} \cap K_{y} \right)}^{-1} = 1.
\]
Thus, we obtain the claim.
\end{proof}

Now we construct the non-zero element
$\varphi_{x, w} \in \cH(G(F), \rho_{x})_{w}$
for $x \in \cA_{\gen}$ and $w \in \Wheart$.
In order to do so, we make the following choice. 
\begin{choice} \label{choiceofT}
	We fix a family of non-zero elements
\index{notation-ax}{T@$\cT$}
\index{notation-ax}{T n@$T_n$}
\[
\cT =
\left\{
T_{n} \in \Hom_{K_{M}}\left(
^n\!\rho_{M}, \rho_{M}
\right)
\right\}_{n \in \Nheart}
\]
that satisfies the following conditions:
\begin{enumerate}[(1)]
\item
\label{T1istheidentitymap}
We have
$
T_{1} = \id_{\rho_{M}}
$.
\item
\label{Tnk=TnrhoMk}
For all $n \in \Nheart$ and $k \in K_{M} \cap \Nheart$, we have
$
T_{nk} = T_{n} \circ \rho_{M}(k)
$.
\end{enumerate}
\end{choice}
We will later refine our choice of $\cT$.
See Choices
\ref{choice:tw}
and
\ref{choice:star}.

Let $x \in \cA_{\gen}$ and $n \in \Nheart$. 
According to Axiom~\ref{axiomaboutHNheartandK}\eqref{axiomaboutHNheartandKabouttypeKxisaquasiGcover}, the pair $(K_{nx}, \rho_{nx})$ is a quasi-$G$-cover of $(K_{M}, \rho_{M})$.
On the other hand, according to Lemma~\ref{lemmaeffectofconjugationbyn}, the pair $(K_{nx}, {^{n}\!\rho_{x}})$ is a quasi-$G$-cover of $(K_{M}, {^{n}\!\rho_{M}})$.
Hence, according to Lemma~\ref{lemmacoverintertwining}, we have
\[
T_{n} \in \Hom_{K_{M}}(^n\!\rho_{M}, \rho_{M}) = \Hom_{K_{nx}}\left(
^{n}\!\rho_{x}, \rho_{n x}
\right).
\]
This allows us to define a second type of intertwining operator,
whose composition with an appropriate $\Theta^{\mathrm{norm}}_{y \mid x}$ will allow us to define $\varphi_{x,w} \in \cH(G(F), \rho_{x})_{w}$ in Definition \ref{definitionPhixw} below. 
\begin{definition} \label{definiotionofciso}
For $x \in \cA_{\gen}$ and $n \in \Nheart$ we define the isomorphism

\[
\ciso{x}{n}
\colon
\ind_{K_{x}}^{G(F)}(\rho_{x})
\isoarrow
\ind_{K_{n x}}^{G(F)}(\rho_{n x}) 
\]
by
\index{notation-ax}{cxn@$\ciso{x}{n}$}
\[
\ciso{x}{n} (f) \colon g \longmapsto  T_{n} ( f(n^{-1} g))
\]
for $f \in \ind_{K_{x}}^{G(F)}\left(V_{\rho_{x}} \right)$ and $g \in G(F)$.

\end{definition}

\begin{lemma}
\label{lemmathetaccommute}
Let $x, y \in \cA_{\gen}$ and $n \in \Nheart$.
Then we have
\[
\Theta_{n y \mid n x} \circ \ciso{x}{n} = \ciso{y}{n} \circ \Theta_{y \mid x}
\quad
\text{ and }
\quad
\Theta^{\normal}_{n y \mid n x} \circ \ciso{x}{n} = \ciso{y}{n} \circ \Theta^{\normal}_{y \mid x}.
\]
\end{lemma}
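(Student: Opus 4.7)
The plan is a direct computation unwinding both sides using the explicit definitions of $\Theta_{y \mid x}$ and $\ciso{x}{n}$, and then matching terms via the conjugation axiom together with Lemma~\ref{lemmathetaintertwine}.

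First I would prove the unnormalized identity $\Theta_{ny \mid nx} \circ \ciso{x}{n} = \ciso{y}{n} \circ \Theta_{y \mid x}$. Fix $f \in \ind_{K_x}^{G(F)}(V_{\rho_x})$ and $g \in G(F)$. Expanding the left-hand side gives
\[
\bigl(\Theta_{ny \mid nx} \circ \ciso{x}{n}\bigr)(f)(g)
= \abs{K_{ny,+}/(K_{nx,+} \cap K_{ny,+})}^{-1} \sum_{[k']} \theta_{ny}(k') \cdot T_{n}\bigl(f(n^{-1} k'^{-1} g)\bigr),
\]
where $[k']$ runs over $K_{ny,+}/(K_{nx,+} \cap K_{ny,+})$. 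Axiom~\ref{axiomaboutHNheartandK}\eqref{axiomaboutHNheartandKaboutconjugation} gives $K_{ny,+} = n K_{y,+} n^{-1}$ and $K_{nx,+} \cap K_{ny,+} = n(K_{x,+} \cap K_{y,+}) n^{-1}$, so the substitution $k' = n k n^{-1}$ yields a well-defined bijection with $K_{y,+}/(K_{x,+} \cap K_{y,+})$ preserving the cardinality of the index set. Under this change of variables, Lemma~\ref{lemmathetaintertwine} gives $\theta_{ny}(nkn^{-1}) = \theta_{y}(k)$, and $n^{-1}(nkn^{-1})^{-1} g = k^{-1} n^{-1} g$. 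Pulling the $\Coeff$-linear map $T_{n}$ outside the finite sum then produces exactly
\[
T_{n}\!\left(\abs{K_{y,+}/(K_{x,+} \cap K_{y,+})}^{-1} \sum_{[k]} \theta_{y}(k) \cdot f(k^{-1} n^{-1} g)\right)
= T_{n}\bigl(\Theta_{y \mid x}(f)(n^{-1} g)\bigr),
\]
which is $(\ciso{y}{n} \circ \Theta_{y \mid x})(f)(g)$.

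For the normalized version, I would observe that $\Theta^{\normal}_{ny \mid nx}$ differs from $\Theta_{ny \mid nx}$ by the scalar $\abs{K_{ny}/(K_{nx} \cap K_{ny})}^{1/2}$. Again by Axiom~\ref{axiomaboutHNheartandK}\eqref{axiomaboutHNheartandKaboutconjugation} we have $K_{ny} = nK_y n^{-1}$ and $K_{nx} \cap K_{ny} = n(K_x \cap K_y)n^{-1}$, so this scalar equals $\abs{K_{y}/(K_{x} \cap K_{y})}^{1/2}$, the normalization factor of $\Theta^{\normal}_{y \mid x}$. Multiplying the already-proved unnormalized identity by this common scalar gives the normalized identity.

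There is no real obstacle here; the only thing to check carefully is that the reindexing $k' \mapsto nkn^{-1}$ is a well-defined bijection on the relevant coset spaces (which is immediate from the axiom) and that the scalar $|K_{y,+}/(K_{x,+}\cap K_{y,+})|$, viewed in $\Coeff$, can be pulled through $T_{n}$, which holds by $\Coeff$-linearity. The computation works uniformly; no additional assumptions beyond Axiom~\ref{axiomaboutHNheartandK} (in particular \eqref{axiomaboutHNheartandKaboutconjugation}) and Lemma~\ref{lemmathetaintertwine} are required.
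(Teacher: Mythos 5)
Your proof is correct and follows essentially the same route as the paper's: expand both sides using the explicit formula for $\Theta$, reindex the sum via $k \mapsto nkn^{-1}$ justified by Axiom~\ref{axiomaboutHNheartandK}\eqref{axiomaboutHNheartandKaboutconjugation}, invoke Lemma~\ref{lemmathetaintertwine} for the $\theta$-compatibility, pull $T_n$ through by linearity, and then observe that the normalization scalars match because $K_{ny}=nK_yn^{-1}$ and $K_{nx}\cap K_{ny}=n(K_x\cap K_y)n^{-1}$.
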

\begin{proof}
By the definitions of $\Theta_{n y \mid n x}$ and $\ciso{x}{n}$ we have for $f \in \ind_{K_{x}}^{G(F)}\left( V_{\rho_{x}} \right)$ and $g \in G(F)$ that
\begin{align*}
\left(
\left(
\Theta_{n y \mid n x} \circ \ciso{x}{n}
\right)(f)
\right)(g) &= 
\abs{
K_{n y, +}
/
\left(
K_{n x, +} \cap K_{n y, +}
\right)
} ^{-1}
\sum_{[k] \in K_{n y, +} / \left(
K_{n x, +} \cap K_{n y, +}
\right)} \theta_{n y}(k) \cdot T_{n} (f(n^{-1} k^{-1} g)).
\end{align*}
On the other hand, by the definitions of $\Theta_{y \mid x}$ and $\ciso{y}{n}$ and Axiom~\ref{axiomaboutHNheartandK}\eqref{axiomaboutHNheartandKaboutconjugation}, we have
\begin{align*}
\left(
\left(
\ciso{y}{n} \circ \Theta_{y \mid x}
\right)(f)
\right)(g) 
&= T_{n} \left( 
\abs{
K_{y, +}
/
\left(
K_{x, +} \cap K_{y, +}
\right)
} ^{-1}
\sum_{[k] \in K_{y, +} / \left(
K_{x, +} \cap K_{y, +}
\right)} \theta_{y}(k) \cdot f(k^{-1} n^{-1} g)
\right) \\
&= \abs{
K_{ny, +}
/
\left(
K_{nx, +} \cap K_{ny, +}
\right)
} ^{-1}
\sum_{[k] \in K_{ny, +} / \left(
K_{nx, +} \cap K_{ny, +}
\right)} \theta_{y}(n^{-1} k n) \cdot T_{n} (f(n^{-1} k^{-1} g)).
\end{align*}
Now the first claim follows from Lemma~\ref{lemmathetaintertwine}, and the second claim follows from the first claim, Definition~\ref{definitionnormalize}, and Axiom~\ref{axiomaboutHNheartandK}\eqref{axiomaboutHNheartandKaboutconjugation}.
\end{proof}

\begin{corollary}
\label{corollaryweylinvariance}
Assume Axiom \ref{axiomaboutHNheartandK} and let $H \in \mathfrak{H}_{\Krel}$ and $n \in \Nheart$.
Then we have $n(H) \in \mathfrak{H}_{\Krel}$.
\end{corollary}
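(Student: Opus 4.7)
The plan is to transport the witnesses $x, y$ for $\cK$-relevance of $H$ by the element $n$ and use the commutation relation in Lemma \ref{lemmathetaccommute} to show that the non-scalar condition \eqref{eqn:compositionnotscalar} is preserved.

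First, since $H \in \mathfrak{H}_{\Krel}$, choose $x, y \in \cA_{\gen}$ with $\mathfrak{H}_{x,y} = \{H\}$ and $\Theta_{x \mid y} \circ \Theta_{y \mid x} \notin \Coeff \cdot \id_{\ind_{K_x}^{G(F)}(\rho_x)}$. By Axiom \ref{axiomaboutHNheartandK}\eqref{axiomaboutHNheartandKNinvarianceofH}, the action of $n$ preserves $\mathfrak{H}$ and stabilizes $\cA_{\gen}$, so $n(H) \in \mathfrak{H}$ and $nx, ny \in \cA_{\gen}$. Because $n$ acts as an affine transformation on $\cA_{x_0}$ permuting $\mathfrak{H}$, a hyperplane $H' \in \mathfrak{H}$ separates $nx$ from $ny$ if and only if $n^{-1}(H')$ separates $x$ from $y$; thus $\mathfrak{H}_{nx, ny} = \{n(H)\}$.

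Next, I apply Lemma \ref{lemmathetaccommute} twice, once to the pair $(x,y)$ and once to $(y,x)$, to obtain
\[
\Theta_{ny \mid nx} \circ \ciso{x}{n} = \ciso{y}{n} \circ \Theta_{y \mid x},
\qquad
\Theta_{nx \mid ny} \circ \ciso{y}{n} = \ciso{x}{n} \circ \Theta_{x \mid y}.
\]
Composing these two identities yields
\[
\bigl(\Theta_{nx \mid ny} \circ \Theta_{ny \mid nx}\bigr) \circ \ciso{x}{n}
= \ciso{x}{n} \circ \bigl(\Theta_{x \mid y} \circ \Theta_{y \mid x}\bigr).
\]
Since $\ciso{x}{n}$ is an isomorphism (Definition \ref{definiotionofciso}), if we had $\Theta_{nx \mid ny} \circ \Theta_{ny \mid nx} = c \cdot \id_{\ind_{K_{nx}}^{G(F)}(\rho_{nx})}$ for some $c \in \Coeff$, conjugating by $\ciso{x}{n}$ would give $\Theta_{x \mid y} \circ \Theta_{y \mid x} = c \cdot \id_{\ind_{K_x}^{G(F)}(\rho_x)}$, contradicting the choice of $x, y$. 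Hence $\Theta_{nx \mid ny} \circ \Theta_{ny \mid nx} \notin \Coeff \cdot \id$, so that $nx, ny$ witness the $\cK$-relevance of $n(H)$, completing the proof.

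I do not anticipate a real obstacle here; the result is essentially a formal consequence of the equivariance packaged in Lemma \ref{lemmathetaccommute} combined with the $\Nheart$-invariance of $\mathfrak{H}$ provided by Axiom \ref{axiomaboutHNheartandK}\eqref{axiomaboutHNheartandKNinvarianceofH}.
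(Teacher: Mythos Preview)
Your proof is correct and follows essentially the same approach as the paper: both transport the witnessing points by $n$, use Lemma~\ref{lemmathetaccommute} to obtain the conjugation relation $\bigl(\Theta_{nx \mid ny} \circ \Theta_{ny \mid nx}\bigr) = \ciso{x}{n} \circ \bigl(\Theta_{x \mid y} \circ \Theta_{y \mid x}\bigr) \circ \ciso{x}{n}^{-1}$, and conclude that the non-scalar property is preserved.
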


\begin{proof}
According to Axiom~\ref{axiomaboutHNheartandK}\eqref{axiomaboutHNheartandKNinvarianceofH}, we have $n(H) \in \mathfrak{H}$.
We will prove that $n(H)$ is $\cK$-relevant.

Since $H \in \mathfrak{H}_{\Krel}$, there exists $x, y \in \cA_{\gen}$ such that $\mathfrak{H}_{x,y} = \{H\}$ and
\[
\Theta_{x \mid y} \circ \Theta_{y \mid x} \notin \Coeff \cdot \id_{\ind_{K_{x}}^{G(F)}(\rho_{x})}.
\]
According to Lemma~\ref{lemmathetaccommute},
we have
\[
\Theta_{n y \mid n x} = \ciso{y}{n} \circ \Theta_{y \mid x} \circ \ciso{x}{n}^{-1}
\quad 
\text{ and }
\quad
\Theta_{n x \mid n y} = \ciso{x}{n} \circ \Theta_{x \mid y} \circ \ciso{y}{n}^{-1}.
\]
Hence, we obtain that
\[
\Theta_{n x \mid n y} \circ \Theta_{n y \mid n x} =
\left(
\ciso{x}{n} \circ \Theta_{x \mid y} \circ \ciso{y}{n}^{-1}
\right) \circ \left(
\ciso{y}{n} \circ \Theta_{y \mid x} \circ \ciso{x}{n}^{-1}
\right) 
= \ciso{x}{n} \circ \Theta_{x \mid y} \circ \Theta_{y \mid x} \circ \ciso{x}{n}^{-1}.
\]
Since
\[
\Theta_{x \mid y} \circ \Theta_{y \mid x} \notin \Coeff \cdot \id_{\ind_{K_{x}}^{G(F)}(\rho_{x})},
\]
we have
\[
\Theta_{n x \mid n y} \circ \Theta_{n y \mid n x} \notin \Coeff \cdot \id_{\ind_{K_{n x}}^{G(F)}(\rho_{n x})}.
\]
Since 
$
\mathfrak{H}_{n x, n y} = n (\mathfrak{H}_{x, y}) = \{n (H) \}
$,
we conclude that $n(H)$ is $\cK$-relevant.
\end{proof}

\begin{lemma} 
Let $x \in \cA_{\gen}$ and $n \in \Nheart$.
Then the isomorphism $\ciso{x}{n}$ only depends on the image of $n$ in $\Wheart$.
\end{lemma}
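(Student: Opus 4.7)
The plan is to show that if $n, n' \in \Nheart$ have the same image in $\Wheart$, i.e.\ $n' = nk$ for some $k \in \Nheart \cap K_{M}$, then $\ciso{x}{n} = \ciso{x}{n'}$ as maps. First, since $k \in K_{M} \subset M(F)_{x_{0}}$ and $M(F)_{x_{0}}$ fixes $\cA_{x_{0}}$ pointwise, we have $n'x = nkx = nx$, so both operators have the same target $\ind_{K_{nx}}^{G(F)}(\rho_{nx})$; hence equality makes sense.

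Next I would unpack the definition. For $f \in \ind_{K_{x}}^{G(F)}(V_{\rho_{x}})$ and $g \in G(F)$, condition~\eqref{Tnk=TnrhoMk} of Choice~\ref{choiceofT} gives $T_{n'} = T_{nk} = T_{n} \circ \rho_{M}(k)$, so
\[
\bigl(\ciso{x}{n'}(f)\bigr)(g)
= T_{nk}\bigl(f((nk)^{-1}g)\bigr)
= T_{n}\Bigl(\rho_{M}(k)\bigl(f(k^{-1}n^{-1}g)\bigr)\Bigr).
\]
Since $(K_{x},\rho_{x})$ is a quasi-$G$-cover of $(K_{M},\rho_{M})$, one has $K_{M} \subset K_{x}$ and $\rho_{x}\restriction_{K_{M}} = \rho_{M}$. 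Because $k \in \Nheart \cap K_{M} \subset K_{x}$, the transformation law for $f$ gives $f(k^{-1}n^{-1}g) = \rho_{x}(k)^{-1}\bigl(f(n^{-1}g)\bigr) = \rho_{M}(k)^{-1}\bigl(f(n^{-1}g)\bigr)$. Substituting back,
\[
\bigl(\ciso{x}{n'}(f)\bigr)(g)
= T_{n}\bigl(f(n^{-1}g)\bigr)
= \bigl(\ciso{x}{n}(f)\bigr)(g).
\]

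The argument is essentially a direct calculation with no real obstacle: the only inputs are condition~\eqref{Tnk=TnrhoMk} of Choice~\ref{choiceofT} (which was built into the definition of $\cT$ precisely to make this well-definedness work), the compatibility $\rho_{x}\restriction_{K_{M}} = \rho_{M}$ from the quasi-$G$-cover axiom, and the fact that $K_{M}$ acts trivially on $\cA_{x_{0}}$. Thus $\ciso{x}{n}$ depends only on the coset $n(\Nheart \cap K_{M})$, i.e.\ on the image of $n$ in $\Wheart$.
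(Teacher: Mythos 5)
Your proof is correct and follows essentially the same direct computation as the paper's: observe that $n$ and $nk$ act identically on $\cA_{x_0}$, unpack the definition of $\ciso{x}{nk}$, use Condition~\eqref{Tnk=TnrhoMk} of Choice~\ref{choiceofT} to rewrite $T_{nk}$, and cancel against the $K_{M}$-equivariance of $f$ via $\rho_{x}\restriction_{K_{M}} = \rho_{M}$.
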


\begin{proof}
Let $n \in \Nheart$ and $k \in \Nheart \cap K_{M}$.
We will prove that $\ciso{x}{n} = \ciso{x}{nk}$.
Since $K_{M} \subset~ M(F)_{x_{0}}$ acts trivially on $\cA_{x_{0}}$,
we have $nk x = n x$.
Let $f \in \ind_{K_{x}}^{G(F)}\left( V_{\rho_{x}} \right)$ and $g \in G(F)$.
Then we have
\begin{align*}
(\ciso{x}{nk} (f))(g) & = T_{nk} (f(k^{-1} n^{-1} g)) \\
&= \left(
T_{n} \circ \rho_{M}(k)
\right) 
\left(
\rho_{x}(k^{-1}) f(n^{-1} g)
\right) \\
&= \left(
T_{n} \circ \rho_{M}(k)
\right) 
\left(
\rho_{M}(k^{-1}) f(n^{-1} g)
\right) \\
&= T_{n} ( f(n^{-1} g)) \\
&= (\ciso{x}{n} (f))(g).
\end{align*}
Thus, we obtain the lemma.
\end{proof}
As a consequence of the lemma, 
we may write\spacingatend{}
\begin{equation} \label{eq:defofciso}
\index{notation-ax}{cxw@$\ciso{x}{w}$}
\ciso{x}{w} \coloneqq \ciso{x}{n}
\end{equation}
for $x \in \cA_{\gen}$, $w \in \Wheart$, and $n$ any lift of $w$ in $\Nheart$.

\begin{definition} \label{definitionPhixw}
For $x \in \cA_{\gen}$ and $w \in \Wheart$ we define the element 
\[
\index{notation-ax}{Phixw@$\Phi_{x, w}$}
\Phi_{x, w} \in \End_{G(F)} \left(
\ind_{K_{x}}^{G(F)} (\rho_{x})
\right) 
\]
by
\[
\Phi_{x, w} = \ciso{w^{-1} x}{w} \circ \Theta^{\normal}_{w^{-1} x \mid x}
\]
and let $\varphi_{x, w}$
\index{notation-ax}{phixw@$\varphi_{x, w}$}
denote the element of $\cH(G(F), \rho_{x})$ that corresponds to $\Phi_{x, w}$ via the isomorphism in \eqref{heckevsend}.
We write
$\varphi_{w} \coloneqq \varphi_{x_{0}, w}$\index{notation-ax}{phiw@$\varphi_w$}
and
$\Phi_{w} \coloneqq \Phi_{x_{0}, w}$.\index{notation-ax}{Phiw@$\Phi_w$}
\end{definition}
\begin{remark}
Since $\Theta^{\normal}_{w^{-1} x \mid x}$ is non-zero and $\ciso{w^{-1} x}{w}$ is an isomorphism, we obtain that $\Phi_{x, w}$ and $\varphi_{x, w}$ are non-zero. We will see below in Corollary \ref{corollaryPhixwinvertible} that under additional assumptions the endomorphism $\Phi_{x,w}$ is an isomorphism.
\end{remark}

\begin{remark}\label{remarkaboutphix1} 
The elements $\Phi_{x, w}$ and $\varphi_{x, w}$ depend on the choice of the family
\[
\cT =
\left\{
T_{n} \in \Hom_{K_{M}}\left(
^n\!\rho_{M}, \rho_{M}
\right)
\right\}_{n \in \Nheart}
\]
made in Choice \ref{choiceofT} as follows.
Let $w \in \Wheart$ and $c \in \Coeff^{\times}$.
If we replace $T_{n}$ with $c \cdot T_{n}$ for all lifts $n$ of $w$ in $\Nheart$, then the elements $\Phi_{x, w}$ and $\varphi_{x, w}$ are replaced by $c \cdot \Phi_{x, w}$ and $c \cdot \varphi_{x, w}$, respectively.
We have chosen $T_{1} = \id_{\rho_{M}}$ so that the endomorphism $\Phi_{x, 1}$ is the identity map on $\ind_{K_{x}}^{G(F)} (\rho_{x})$ for every $x \in \cA_{\gen}$.
\end{remark}
\begin{lemma}
\label{supportofphiixw}
Let $x \in \cA_{\gen}$ and $w \in \Wheart$.
Then we have $\supp(\varphi_{x, w}) = K_{x} w K_{x}$.
Moreover, if $n$ is a lift of $w$ in $\Nheart$, then 
$(\varphi_{x, w}(n)) v =\abs{K_{x}/ \left(	K_{wx} \cap K_{x}	\right)}^{-1/2}T_{n}(v)$ for all $v \in V_{\rho_{x}}$.
\end{lemma}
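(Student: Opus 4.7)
The plan is to unravel the definitions of $\Phi_{x,w}$ and the isomorphism \eqref{heckevsend} to compute $\varphi_{x,w}(g)(v) = (\Phi_{x,w}(f_v))(g)$ directly, proving the support inclusion and the non-vanishing at $n$ simultaneously.

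First, I would establish the support inclusion. Fix a lift $n \in \Nheart$ of $w$. For $v \in V_{\rho_x}$, the function $\Theta_{w^{-1}x \mid x}(f_v)$ is a weighted sum of left translates of $f_v$ by elements of $K_{w^{-1}x,+}$, so its support is contained in $K_{w^{-1}x,+} \cdot K_x \subseteq K_{w^{-1}x} K_x$. By Axiom~\ref{axiomaboutHNheartandK}\eqref{axiomaboutHNheartandKaboutconjugation} we have $K_{w^{-1}x} = n^{-1} K_x n$, so the support lies in $n^{-1} K_x n K_x$. Applying $\ciso{w^{-1}x}{w}$, which evaluates at $n^{-1}g$, yields $\supp(\Phi_{x,w}(f_v)) \subseteq K_x n K_x$ for all $v$. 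Then Lemma~\ref{lemmavarphisupprtedonksKequivalenttoPhirhoinKsK} gives $\supp(\varphi_{x,w}) \subseteq K_x n K_x = K_x w K_x$.

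Next, I would compute $\varphi_{x,w}(n)(v)$ explicitly. By the definition of $\Phi_{x,w}$ and the isomorphism \eqref{heckevsend},
\[
\varphi_{x,w}(n)(v) = (\Phi_{x,w}(f_v))(n) = T_n\bigl( (\Theta^{\normal}_{w^{-1}x \mid x}(f_v))(1) \bigr).
\]
In the defining sum for $\Theta_{w^{-1}x \mid x}(f_v)(1)$, the summand indexed by $[k] \in K_{w^{-1}x,+}/(K_{x,+} \cap K_{w^{-1}x,+})$ only contributes when $k^{-1} \in K_x$, i.e.\ when $k \in K_x \cap K_{w^{-1}x,+} = K_{x,+} \cap K_{w^{-1}x,+}$ (using Lemma~\ref{lemmaKxcapKy+vsKx+capKy+}). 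Thus only the trivial coset contributes, giving
\[
(\Theta_{w^{-1}x \mid x}(f_v))(1) = \abs{K_{w^{-1}x,+}/(K_{x,+} \cap K_{w^{-1}x,+})}^{-1} v.
\]
By Lemma~\ref{lemmaquotientvswithquotientofunipotentradical} (invoking Axiom~\ref{axiomaboutHNheartandK}\eqref{axiomaboutHNheartandKabotgoodunipotentradical} to produce an appropriate $U$), the index equals $\abs{K_{w^{-1}x}/(K_x \cap K_{w^{-1}x})}$, and conjugating by $n$ using Axiom~\ref{axiomaboutHNheartandK}\eqref{axiomaboutHNheartandKaboutconjugation} identifies this with $\abs{K_x/(K_{wx} \cap K_x)}$. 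Combining this with the normalization factor $\abs{K_{w^{-1}x}/(K_x \cap K_{w^{-1}x})}^{1/2}$ from Definition~\ref{definitionnormalize} produces the claimed formula
\[
\varphi_{x,w}(n)(v) = \abs{K_x/(K_{wx} \cap K_x)}^{-1/2} T_n(v).
\]

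Finally, since $T_n \neq 0$ by Choice~\ref{choiceofT} and the scalar $\abs{K_x/(K_{wx} \cap K_x)}^{-1/2}$ is invertible in $\Coeff$ by Lemma~\ref{lemmaquotientnonzeromodell}, we have $\varphi_{x,w}(n) \neq 0$, which combined with the support inclusion forces $\supp(\varphi_{x,w}) = K_x n K_x = K_x w K_x$. There is no significant obstacle here; the argument is essentially a careful bookkeeping of the two intertwining operators evaluated at special points, with the only subtlety being the identification of the various indices via Lemma~\ref{lemmaquotientvswithquotientofunipotentradical} and the conjugation compatibility of the $K_x$'s.
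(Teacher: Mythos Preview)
Your proof is correct and follows essentially the same approach as the paper's own proof: both unravel the definitions of $\Phi_{x,w}$ and the isomorphism \eqref{heckevsend} to compute $(\Phi_{x,w}(f_v))(g)$ directly. The only cosmetic difference is organization: the paper carries out a single computation valid for general $g$ (performing the substitution $k \mapsto nkn^{-1}$ to rewrite the sum over $K_{x,+}/(K_{wx,+}\cap K_{x,+})$), reading off both the support and the value at $n$ from the same formula, whereas you separate the two by first establishing the support inclusion via Lemma~\ref{lemmavarphisupprtedonksKequivalenttoPhirhoinKsK} and then evaluating at $g=n$.
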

\begin{proof}
We fix a lift $n$ of $w$ in $\Nheart$.
For $g \in G(F)$ and $v \in V_{\rho_{x}}$, we obtain using \eqref{eqvarphifromPhi} and Axiom~\ref{axiomaboutHNheartandK}\eqref{axiomaboutHNheartandKaboutconjugation} that
\begin{align*}
(\varphi_{x, w}(g)) v &= \left(
\Phi_{x, w}(f_{v})
\right)(g) \\
&= 
\left(
(
\ciso{w^{-1} x}{w} \circ \Theta^{\normal}_{w^{-1} x \mid x}
)
(f_{v})
\right)
(g) \\
&=
T_{n} 
\left(
(
\Theta^{\normal}_{w^{-1} x \mid x}(f_{v})
)
(n^{-1}g) 
\right) \\
&=
\abs{K_{w^{-1} x}/ \left(
K_{x} \cap K_{w^{-1} x}
\right)
}^{- 1/2}
T_{n}
\Biggl(
\sum_{[k] \in K_{w^{-1} x, +} / \left(
K_{x, +} \cap K_{w^{-1} x, +}
\right)} \theta_{w^{-1} x}(k) \cdot f_{v}(k^{-1} n^{-1} g)
\Biggr) \\
&=
\abs{K_{w^{-1} x}/ \left(
K_{x} \cap K_{w^{-1} x}
\right)
}^{- 1/2}
T_{n}
\Biggl(
\sum_{[k] \in K_{w^{-1} x, +} / \left(
K_{x, +} \cap K_{w^{-1} x, +}
\right)} \theta_{w^{-1} x}(k) \cdot  f_{v}(n^{-1} (n k n^{-1})^{-1} g) 
\Biggr) \\
&=
\abs{K_{x}/ \left(
K_{w x} \cap K_{x}
\right)
}^{- 1/2}
T_{n}
\Biggl(
\sum_{[k] \in K_{x, +} / \left(
K_{w x, +} \cap K_{x, +}
\right)} \theta_{w^{-1} x}(n^{-1} k n) \cdot f_{v}(n^{-1} k^{-1} g)
\Biggr).
\end{align*}
Since $f_{v}$ is supported on $K_{x}$, the sum vanishes unless 
$
g \in K_{x, +} n K_{x} \subseteq K_{x} w K_{x}
$.
Hence, since $\varphi_{x, w}$ is non-zero, we obtain that
$
\supp(\varphi_{x, w}) = K_{x} w K_{x}
$.
Moreover, we have
$$\sum_{[k] \in K_{x, +} / \left(
	K_{w x, +} \cap K_{x, +}
	\right)} \theta_{w^{-1} x}(n^{-1} k n) \cdot f_{v}(n^{-1} k^{-1} n)
	= v,$$
	 which yields the second claim. 
\end{proof}

\begin{corollary}
\label{corollaryvectorspacedecompositionexplicitver}
Let $x \in \cA_{x_{0}}$ and assume Axioms~\ref{axiomaboutHNheartandK} and \ref{axiombijectionofdoublecoset}.
As a vector space, we have
\[
\cH(G(F), \rho_{x}) = \bigoplus_{w \in \Wheart} \Coeff \cdot \varphi_{x, w}.
\]
\end{corollary}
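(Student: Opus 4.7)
The plan is to combine the already-established vector space decomposition of $\cH(G(F), \rho_x)$ indexed by $\Wheart$ with the support computation for $\varphi_{x,w}$. More precisely, Proposition~\ref{propositionvectorspacedecomposition} (which uses exactly Axioms~\ref{axiomaboutHNheartandK} and \ref{axiombijectionofdoublecoset}) gives the direct sum decomposition
\[
\cH(G(F), \rho_{x}) = \bigoplus_{w \in \Wheart} \cH(G(F), \rho_{x})_{w},
\]
and asserts that each summand $\cH(G(F), \rho_{x})_{w}$ is one-dimensional over $\Coeff$. Thus it suffices to exhibit, for each $w \in \Wheart$, a non-zero element of $\cH(G(F), \rho_{x})_{w}$.

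The natural candidate is $\varphi_{x, w}$ from Definition~\ref{definitionPhixw}. By Lemma~\ref{supportofphiixw}, we have $\supp(\varphi_{x, w}) = K_{x} w K_{x}$, so $\varphi_{x, w} \in \cH(G(F), \rho_{x})_{w}$. The remark following Definition~\ref{definitionPhixw} already records that $\varphi_{x, w}$ is non-zero, because it corresponds under the isomorphism \eqref{heckevsend} to the composition $\Phi_{x, w} = \ciso{w^{-1} x}{w} \circ \Theta^{\normal}_{w^{-1} x \mid x}$ of a non-zero $G(F)$-equivariant map (see Corollary~\ref{corollarycalculationofconstantterm}) with the isomorphism $\ciso{w^{-1} x}{w}$. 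Consequently $\Coeff \cdot \varphi_{x, w} = \cH(G(F), \rho_{x})_{w}$.

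Assembling these observations, the direct sum decomposition from Proposition~\ref{propositionvectorspacedecomposition} becomes
\[
\cH(G(F), \rho_{x}) = \bigoplus_{w \in \Wheart} \cH(G(F), \rho_{x})_{w} = \bigoplus_{w \in \Wheart} \Coeff \cdot \varphi_{x, w},
\]
as desired. There is no real obstacle here: the work has all been done in the preceding subsections, and this corollary is simply the concrete repackaging of Proposition~\ref{propositionvectorspacedecomposition} using the explicit basis elements $\varphi_{x, w}$ constructed from the intertwining operators $\Theta^{\normal}_{y \mid x}$ and $\ciso{x}{n}$.
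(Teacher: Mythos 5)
Your proof is correct and takes essentially the same approach as the paper: both invoke Proposition~\ref{propositionvectorspacedecomposition} for the one-dimensional direct sum decomposition indexed by $\Wheart$, and Lemma~\ref{supportofphiixw} (together with the non-vanishing of $\varphi_{x,w}$) to identify each summand $\cH(G(F),\rho_x)_w$ with $\Coeff\cdot\varphi_{x,w}$. You simply spell out the non-vanishing step, which the paper leaves implicit via the remark after Definition~\ref{definitionPhixw}.
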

\begin{proof}
The corollary follows from Proposition~\ref{propositionvectorspacedecomposition} and Lemma~\ref{supportofphiixw}.
\end{proof}

\subsection{Relations in the length-additive case} \label{subsec:lengthadditivecase}
We will now study the structure of the Hecke algebra $\cH(G(F), \rho_{x_{0}})$ as a $\Coeff$-algebra. In this subsection
 we investigate the relations in the length-additive case for a length that we define below in Definition \ref{definitionlength}.
We keep the notation from the previous subsection and assume Axiom~\ref{axiomaboutHNheartandK} (but not Axiom~\ref{axiombijectionofdoublecoset}). 
Let $m, n \in \nobreak \Nheart$.
Recall that we have fixed an isomorphism 
\[
T_{n} \in \Hom_{K_{M}}\left(
^n\!\rho_{M}, \rho_{M}
\right) \subset \End_\Coeff(V_{\rho_M})
\]
in Choice \ref{choiceofT}.
Since the subspaces
$\Hom_{K_{M}}(^{mn}\!\rho_{M}, {^{m}\!{\rho_{M}}})$
and
$\Hom_{K_{M}}(^{n}\!\rho_{M}, {\rho_{M}})$
of 
$\End_\Coeff(V_{\rho_M})$
are equal,
$T_n$ is also an element of
$\Hom_{K_{M}}(^{mn}\!\rho_{M}, {^{m}\!{\rho_{M}}})$.
Then we can form the composition 
$T_{m} \circ T_{n} \in \Hom_{K_{M}}\left(
^{mn}\!\rho_{M}, \rho_{M}
\right)$. 
Since 
$
\dim_{\Coeff} \left(
\Hom_{K_{M}}\left(
^{mn}\!\rho_{M}, \rho_{M}
\right)
\right) = 1
$,
the isomorphisms $T_{m} \circ T_{n}$ and $T_{mn}$ differ by
a non-zero scalar,
and it is straightforward to see that this scalar depends only on the images
$\bar m$ and $\bar n$ of $m$ and $n$ in  $\Wheart$.
\begin{notation}
\label{notationofthetwococycle}
	We denote by \index{notation-ax}{muTmn@$\muT(\phantom{m},\phantom{n})$}
	\[
	\muT \colon \Wheart \times \Wheart \rightarrow \Coeff^{\times}
	\]
	the unique map that satisfies 
	\[
	T_{m} \circ T_{n} = \muT (\bar m, \bar n) \cdot T_{mn}.
	\]
\end{notation}
Standard arguments of projective representations imply that $\muT$ is a $2$-cocycle.
We note that the $2$-cocycle $\muT$ depends on the choice of a family
$
\cT =
\left\{
T_{n} \in \Hom_{K_{M}}\left(
^n\!\rho_{M}, \rho_{M}
\right)
\right\}_{n \in \Nheart},
$ made in Choice \ref{choiceofT},
but its cohomology class does not. We will see in the next lemma that the 2-cocycle also determines the composition of the homomorphisms $\ciso{v^{-1} x}{v}$, which we will then use to study the composition of the operators $\Phi_v$.
\begin{lemma}
\label{lemmatransitivityexceptfottheta}
Let $x \in \cA_{\gen}$ and $v, w \in \Wheart$.
Then the homomorphism
\begin{align*}
\ciso{v^{-1} x}{v} \circ \ciso{w^{-1} v^{-1} x}{w} 
\in 
\Hom_{G(F)} \left(
\ind_{K_{w^{-1} v^{-1} x}}^{G(F)} (\rho_{w^{-1} v^{-1} x}), \ind_{K_{x}}^{G(F)} (\rho_{x})
\right)
\end{align*}
is equal to $\muT(v, w) \cdot \ciso{w^{-1} v^{-1} x}{vw}$.
\end{lemma}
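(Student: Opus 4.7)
The plan is to unfold the definitions and perform a direct calculation. The statement is essentially the assertion that the assignment $w \mapsto \ciso{\cdot}{w}$ is a projective representation with cocycle $\muT$, and this will follow immediately once we verify that the targets of the relevant operators line up and then apply the definition of $\muT$.

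First I will fix lifts $m, n \in \Nheart$ of $v, w \in \Wheart$ respectively, so that $mn$ is a lift of $vw$. Using $\ciso{x}{u} = \ciso{x}{n}$ for any lift $n$ of $u$ (equation \eqref{eq:defofciso}), I will match the sources and targets of the maps in question: $\ciso{w^{-1}v^{-1}x}{w}$ lands in $\ind_{K_{w(w^{-1}v^{-1}x)}}^{G(F)}(\rho_{w(w^{-1}v^{-1}x)}) = \ind_{K_{v^{-1}x}}^{G(F)}(\rho_{v^{-1}x})$, which is exactly the source of $\ciso{v^{-1}x}{v}$, whose target is $\ind_{K_{x}}^{G(F)}(\rho_x)$. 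Similarly, $\ciso{w^{-1}v^{-1}x}{vw}$ has source $\ind_{K_{w^{-1}v^{-1}x}}^{G(F)}(\rho_{w^{-1}v^{-1}x})$ and target $\ind_{K_{x}}^{G(F)}(\rho_x)$, so all four morphisms compose correctly.

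Then, for $f \in \ind_{K_{w^{-1}v^{-1}x}}^{G(F)}(V_{\rho_{w^{-1}v^{-1}x}})$ and $g \in G(F)$, Definition \ref{definiotionofciso} gives
\[
\bigl((\ciso{v^{-1}x}{v} \circ \ciso{w^{-1}v^{-1}x}{w})(f)\bigr)(g)
= T_m\!\bigl((\ciso{w^{-1}v^{-1}x}{w}(f))(m^{-1}g)\bigr)
= T_m\bigl(T_n(f(n^{-1}m^{-1}g))\bigr),
\]
which by associativity of composition equals $(T_m \circ T_n)(f((mn)^{-1}g))$. Now I will invoke Notation \ref{notationofthetwococycle}, which gives $T_m \circ T_n = \muT(v,w) \cdot T_{mn}$, so the right-hand side becomes $\muT(v,w) \cdot T_{mn}(f((mn)^{-1}g)) = \muT(v,w) \cdot \bigl(\ciso{w^{-1}v^{-1}x}{vw}(f)\bigr)(g)$, using that $mn$ is a lift of $vw$.

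There is no real obstacle here: the argument is a one-line unwinding of Definition \ref{definiotionofciso} combined with the defining property of $\muT$. The only thing to be careful about is bookkeeping the base points $x$, $v^{-1}x$, $w^{-1}v^{-1}x$ so that the sources and targets match before applying Axiom \ref{axiomaboutHNheartandK}\eqref{axiomaboutHNheartandKaboutconjugation} implicitly via the compatibility $K_{u \cdot y} = u K_y u^{-1}$ that makes the formulas well-defined.
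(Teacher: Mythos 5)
Your proof is correct and takes essentially the same approach as the paper: fix lifts $m,n\in\Nheart$ of $v,w$, unwind Definition~\ref{definiotionofciso} twice to get $T_m\circ T_n$ acting on $f(n^{-1}m^{-1}g)$, and invoke $T_m\circ T_n = \muT(v,w)\cdot T_{mn}$ from Notation~\ref{notationofthetwococycle}. The extra bookkeeping you do about sources and targets is sound but the paper treats it as implicit.
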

\begin{proof}
We fix a lift $m$ of $v$ and $n$ of $w$ in $\Nheart$.
For $f \in \ind_{K_{w^{-1} v^{-1} x}}^{G(F)} (\rho_{w^{-1} v^{-1} x})$ and $g \in G(F)$, we have
\begin{align*}
\left(
\left(
\ciso{v^{-1} x}{v} \circ \ciso{w^{-1} v^{-1} x}{w} 
\right)(f)
\right)(g) 
&= T_{m}  \left(
\left(
\ciso{w^{-1} v^{-1} x}{w}  (f)
\right)(m^{-1} g) \right) \\
&=
T_{m}  
\left(
T_{n} \left( f(n^{-1} m^{-1} g)
\right) \right) \\
&=  \left(
T_{m} \circ T_{n}
\right)
\left( f(n^{-1} m^{-1} g) \right)\\
&= 
\muT(m, n) \cdot T_{mn} \left( f(n^{-1} m^{-1} g) \right)\\
&= \left(
\muT(v, w) \cdot \ciso{w^{-1} v^{-1} x}{vw} (f)
\right)(g).
\end{align*}
Thus, we obtain the lemma.
\end{proof}

We equip the group $\Wheart$ with the following length function.
\begin{definition} \label{definitionlength}
	For $w \in \Wheart$, we define\index{notation-ax}{lKrel@$\flength$ (length function)}
\[
\flength(w) = d_{\Krel}(x_{0}, w^{-1} x_{0}). 
\]
\end{definition}

\begin{proposition}
\label{propositioniflengthsumthentransitivityuptotwococycle}
Let $v, w \in \Wheart$ and assume Axiom~\ref{axiomaboutHNheartandK}. 
If
\[
\flength(vw) = \flength(v) + \flength(w),
\]
then we have
\[
\Phi_{v} \Phi_{w} = \muT(v, w) \cdot \Phi_{vw}.
\]
\end{proposition}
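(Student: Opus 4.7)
The plan is to unfold $\Phi_v = \ciso{v^{-1}x_0}{v} \circ \Theta^{\normal}_{v^{-1}x_0 \mid x_0}$ and $\Phi_w = \ciso{w^{-1}x_0}{w} \circ \Theta^{\normal}_{w^{-1}x_0 \mid x_0}$, then rearrange the fourfold composition so that the two $\Theta^{\normal}$ factors sit next to each other and the two $c$-isomorphisms sit next to each other. This will let us invoke Proposition~\ref{transitivityofThetaGammaver} (transitivity for $\Theta^{\normal}$) and Lemma~\ref{lemmatransitivityexceptfottheta} (near-transitivity for $\ciso{\cdot}{\cdot}$, with the defect measured by $\muT$) in succession.

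Concretely, fixing lifts $m, n \in \Nheart$ of $v$ and $w$, I would apply Lemma~\ref{lemmathetaccommute} with $x = w^{-1}x_0$ and $y = w^{-1}v^{-1}x_0 = (vw)^{-1}x_0$ (so that $nx = x_0$ and $ny = v^{-1}x_0$) to the middle pair $\Theta^{\normal}_{v^{-1}x_0 \mid x_0} \circ \ciso{w^{-1}x_0}{n}$, obtaining
\[
\Phi_v \circ \Phi_w
= \ciso{v^{-1}x_0}{m} \circ \ciso{w^{-1}v^{-1}x_0}{n} \circ \Theta^{\normal}_{w^{-1}v^{-1}x_0 \mid w^{-1}x_0} \circ \Theta^{\normal}_{w^{-1}x_0 \mid x_0}.
\]
To collapse the pair of $\Theta^{\normal}$'s into $\Theta^{\normal}_{w^{-1}v^{-1}x_0 \mid x_0}$ via Proposition~\ref{transitivityofThetaGammaver}, it suffices to verify that
\[
d_{\Krel}(x_0, w^{-1}x_0) + d_{\Krel}(w^{-1}x_0, w^{-1}v^{-1}x_0) = d_{\Krel}(x_0, w^{-1}v^{-1}x_0).
\]
By Corollary~\ref{corollaryweylinvariance}, $\Nheart$ preserves $\mathfrak{H}_{\Krel}$, so $d_{\Krel}$ is $\Nheart$-invariant; hence the middle term equals $d_{\Krel}(x_0, v^{-1}x_0) = \flength(v)$, the outer terms equal $\flength(w)$ and $\flength(vw)$ respectively, and the required identity is exactly the length-additive hypothesis.

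Finally, Lemma~\ref{lemmatransitivityexceptfottheta} applied with $x = x_0$ (noting $w^{-1}v^{-1}x_0 = (vw)^{-1}x_0$) yields
\[
\ciso{v^{-1}x_0}{v} \circ \ciso{w^{-1}v^{-1}x_0}{w} = \muT(v,w) \cdot \ciso{(vw)^{-1}x_0}{vw}.
\]
Combining the two identities gives
\[
\Phi_v \circ \Phi_w = \muT(v,w) \cdot \ciso{(vw)^{-1}x_0}{vw} \circ \Theta^{\normal}_{(vw)^{-1}x_0 \mid x_0} = \muT(v,w) \cdot \Phi_{vw},
\]
as desired. The only non-bookkeeping step is the translation of the length-additive hypothesis into the hypothesis of Proposition~\ref{transitivityofThetaGammaver}, and the main obstacle — already handled by Corollary~\ref{corollaryweylinvariance} — is the $\Nheart$-invariance of $d_{\Krel}$. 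Everything else is formal manipulation with the definitions.
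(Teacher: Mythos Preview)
Your proof is correct and follows essentially the same approach as the paper's: swap the middle $\Theta^{\normal}$ and $c$-isomorphism via Lemma~\ref{lemmathetaccommute}, collapse the $\Theta^{\normal}$ pair using Proposition~\ref{transitivityofThetaGammaver} (after translating the length-additivity hypothesis into the required $d_{\Krel}$-additivity), and collapse the $c$-isomorphism pair via Lemma~\ref{lemmatransitivityexceptfottheta}. The only cosmetic difference is that the paper writes the equality $d_{\Krel}(w^{-1}x_0, w^{-1}v^{-1}x_0) = d_{\Krel}(x_0, v^{-1}x_0)$ without an explicit citation, whereas you invoke Corollary~\ref{corollaryweylinvariance} for it.
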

\begin{proof}
We have
\begin{align*}
d_{\Krel}(x_{0}, w^{-1} x_{0}) + d_{\Krel}(w^{-1} x_{0}, w^{-1}v^{-1} x_{0}) 
&= d_{\Krel}(x_{0}, w^{-1} x_{0}) + d_{\Krel}(x_{0}, v^{-1} x_{0}) \\
&= d_{\Krel}(x_{0}, v^{-1} x_{0}) + d_{\Krel}(x_{0}, w^{-1} x_{0}) \\
&= \flength(v) + \flength(w) \\
&= \flength(vw) \\
&= d_{\Krel}(x_{0}, w^{-1}v^{-1} x_{0}).
\end{align*}
Hence, according to Proposition~\ref{transitivityofThetaGammaver}, we have
\[
\Theta^{\normal}_{w^{-1}v^{-1} x_{0} \mid w^{-1} x_{0}} \circ \Theta^{\normal}_{w^{-1} x_{0} \mid x_{0}} = \Theta^{\normal}_{w^{-1}v^{-1} x_{0} \mid x_{0}}.
\]
On the other hand, according to Lemma~\ref{lemmathetaccommute}, we have
\begin{align*}
\Theta^{\normal}_{v^{-1} x_{0} \mid x_{0}} \circ \ciso{w^{-1} x_{0}}{w}
&= \ciso{w^{-1}v^{-1} x_{0}}{w} \circ \Theta^{\normal}_{w^{-1}v^{-1} x_{0} \mid w^{-1} x_{0}}. \\
\end{align*}
Thus we obtain using Lemma~\ref{lemmatransitivityexceptfottheta}  
\begin{align*}
\Phi_{v} \Phi_{w} &=
\left(
\ciso{v^{-1} x_{0}}{v} \circ \Theta^{\normal}_{v^{-1} x_{0} \mid x_{0}}.
\right) \circ \left(
\ciso{w^{-1} x_{0}}{w} \circ \Theta^{\normal}_{w^{-1} x_{0} \mid x_{0}}
\right) \\
&= 
\ciso{v^{-1} x_{0}}{v} \circ
\left(
\Theta^{\normal}_{v^{-1} x_{0} \mid x_{0}} \circ \ciso{w^{-1} x_{0}}{w}
\right)
\circ  \Theta^{\normal}_{w^{-1} x_{0} \mid x_{0}} \\
&= 
\ciso{v^{-1} x_{0}}{v} \circ \left(
\ciso{w^{-1}v^{-1} x_{0}}{w} \circ \Theta^{\normal}_{w^{-1}v^{-1} x_{0} \mid w^{-1} x_{0}}
\right) \circ  \Theta^{\normal}_{w^{-1} x_{0} \mid x_{0}} \\
&= \left(
\ciso{v^{-1} x_{0}}{v} \circ \ciso{w^{-1}v^{-1} x_{0}}{w}
\right) \circ \left(
\Theta^{\normal}_{w^{-1}v^{-1} x_{0} \mid w^{-1} x_{0}} \circ \Theta^{\normal}_{w^{-1} x_{0} \mid x_{0}}
\right) \\
&= \left(\muT(v, w) \cdot \ciso{w^{-1}v^{-1} x_{0}}{vw}
\right) \circ \Theta^{\normal}_{w^{-1}v^{-1} x_{0} \mid x_{0}} \\
&= \muT(v, w) \cdot \Phi_{vw}.
\qedhere
\end{align*}
\end{proof}

\subsection{The structure of the indexing group} 
\label{subsection:indexinggroup}
We keep the notation from the previous subsection and assume Axioms \ref{axiomaboutHNheartandK} and \ref{axiombijectionofdoublecoset}. 
In this subsection we will introduce an additional axiom about $\Wheart$ containing a nice subgroup, Axiom \ref{axiomexistenceofRgrp}, that allows us to deduce that $\Wheart$ is a semi-direct product of a normal affine Weyl group with the subgroup of $\flength$-length-zero elements. This generalizes the decomposition that Morris (\cite[7.3.~Proposition]{Morris}) obtains for his group $W(\sigma)$ (using his notation) that indexes a basis for the Hecke algebra of a depth-zero type attached to a connected parahoric subgroup.

For $H \in \mathfrak{H}$, let $s_{H}$\index{notation-ax}{sH@$s_H$} denote the orthogonal reflection on $\cA_{x_{0}}$ with respect to the affine hyperplane $H$.
We define  $W_{\Krel}$\index{notation-ax}{WKrel@$W_{\Krel}$} to be the subgroup of the affine transformations of $\cA_{x_{0}}$ generated by $\{ s_H \mid \nobreak H \in \nobreak \mathfrak{H}_{\Krel}\} $, i.e.
\[
W_{\Krel} = 
\langle
s_{H} \mid H \in \mathfrak{H}_{\Krel}
\rangle.
\]
\begin{axiom}
\label{axiomexistenceofRgrp}
There exists a normal subgroup $\Waff$
\index{notation-ax}{Wrhoaff@$\Waff$}
of $\Wheart$ such that the action of $\Wheart$ on $\cA_{x_{0}}$ restricts to an isomorphism
\[
\Waff \isoarrow W_{\Krel}.
\]
\end{axiom}

One way to check that the axiom is satisfied is via the following lemma. 
\begin{lemma}
\label{lemmaaboutaxiomexistenceofRgrp} Assume Axioms \ref{axiomaboutHNheartandK} and \ref{axiombijectionofdoublecoset}, and 
suppose that there exists a normal subgroup $G'$ of $G(F)$ such that 
\[
G' \cap \Nheart  \cap K_{M} = G' \cap \Nheart \cap M(F)_{x_{0}}
\]
and that for all $H \in \mathfrak{H}_{\Krel}$, there exists an element 
\[
s'_{H} \in 
\left(
G' \cap \Nheart
\right) / \left(
G' \cap \Nheart  \cap K_{M}
\right)
\]
such that the action of $s'_{H}$ on $\cA_{x_{0}}$ agrees with the orthogonal reflection $s_{H}$.
Then Axiom~\ref{axiomexistenceofRgrp} is satisfied with
\[
\Waff = \langle
s'_{H} \mid H \in \mathfrak{H}_{\Krel}
\rangle.
\]
\end{lemma}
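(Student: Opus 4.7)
The plan is to verify in order that (a) the quotient $(G' \cap \Nheart)/(G' \cap \Nheart \cap K_M)$ embeds into $\Wheart$, so $\Waff$ is a subgroup of $\Wheart$; (b) the action of this embedded subgroup on $\cA_{x_0}$ is faithful, which forces the natural surjection $\Waff \twoheadrightarrow W_{\Krel}$ to be an isomorphism; and (c) $\Waff$ is normal in $\Wheart$ because $G'$ is normal in $G(F)$ and $\mathfrak{H}_{\Krel}$ is stable under the $\Nheart$-action.

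For step (a), the inclusion $G' \cap \Nheart \hookrightarrow \Nheart$ together with $(G' \cap \Nheart)\cap (\Nheart \cap K_M) = G' \cap \Nheart \cap K_M$ immediately yields an injection $\iota \colon (G' \cap \Nheart)/(G' \cap \Nheart \cap K_M) \hookrightarrow \Wheart$, through which we regard the $s'_H$ as elements of $\Wheart$. For step (b), the kernel of the action of $N_G(M)(F)_{[x_0]_M}$ on $\cA_{x_0}$ is $M(F)_{x_0}$ (see Remark~\ref{remarkaboutfaithful}), so an element of $G' \cap \Nheart$ acts trivially on $\cA_{x_0}$ if and only if it lies in $G' \cap \Nheart \cap M(F)_{x_0}$, which by hypothesis equals $G' \cap \Nheart \cap K_M$. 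Hence the image of $\iota$ acts faithfully on $\cA_{x_0}$, and in particular the homomorphism $\Waff \to W_{\Krel}$ obtained by restriction is injective. Since it is also surjective (each generator $s_H$, for $H \in \mathfrak{H}_{\Krel}$, has a preimage $s'_H \in \Waff$ by construction), it is the required isomorphism.

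For step (c), fix $n \in \Nheart$ and a generator $s'_H$ of $\Waff$ with lifts $\tilde n, \tilde s_H \in \Nheart$ and $\tilde s_H \in G'$. Normality of $G'$ in $G(F)$ gives $\tilde n\, \tilde s_H\, \tilde n^{-1} \in G' \cap \Nheart$, so its class in $\Wheart$ lies in the image of $\iota$. The action of this class on $\cA_{x_0}$ equals $n \circ s_H \circ n^{-1} = s_{n(H)}$, where $n(H) \in \mathfrak{H}_{\Krel}$ by Corollary~\ref{corollaryweylinvariance}. The element $s'_{n(H)} \in \Waff$ has the same action, and by the faithfulness established in step (b), we conclude $n s'_H n^{-1} = s'_{n(H)} \in \Waff$. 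As the $s'_H$ generate $\Waff$, this proves normality, completing the verification of Axiom~\ref{axiomexistenceofRgrp}.

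The only mildly subtle point is the interplay in step (c) between conjugation inside $\Wheart$ and the action on $\cA_{x_0}$; the hypothesis on $G' \cap \Nheart \cap K_M$ is precisely what converts the equality of actions into equality in $\Wheart$, and without it one would only obtain normality up to elements fixing $\cA_{x_0}$ pointwise.
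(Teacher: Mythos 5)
Your proof is correct and takes essentially the same approach as the paper's: both rest on Remark~\ref{remarkaboutfaithful} and the hypothesis for faithfulness of the action of $(G'\cap\Nheart)/(G'\cap\Nheart\cap K_M)$, Corollary~\ref{corollaryweylinvariance} for stability of $\mathfrak{H}_{\Krel}$, and normality of $G'$ in $G(F)$ to descend to normality in $\Wheart$. Your step (c) simply unwinds, via the explicit identity $n\,s'_H\,n^{-1}=s'_{n(H)}$, the fact the paper invokes more compactly — that $W_{\Krel}$ is a normal subgroup of the group of affine transformations of $\cA_{x_0}$ preserving $\mathfrak{H}_{\Krel}$.
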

In the depth-zero setting, i.e., in Section~\ref{sec:depth-zero}, we will apply this lemma in the case that the normal subgroup $G'$ is the kernel of the Kottwitz homomorphism to prove that Axiom \ref{axiomexistenceofRgrp} is satisfied, see Proposition \ref{proofofaxiomexistenceofRgrpandaxiomsHisinKxy}.
In the setting of \cite{HAIKY}, we will observe that $\Wheart$ there is the same as the corresponding group in a depth-zero setting and the relevant hyperplanes are a subset of the hyperplanes in the depth-zero setting, so that Axiom \ref{axiomexistenceofRgrp} in the setting of \cite{HAIKY} follows from the result in the depth-zero setting, see \cite[Proposition~\ref{HAIKY-prop:axiomexistenceofRgrpandaxiomaboutdimensionofend}]{HAIKY}.

\begin{proof}[Proof of Lemma \ref{lemmaaboutaxiomexistenceofRgrp}]
	By Remark~\ref{remarkaboutfaithful} and Corollary \ref{corollaryweylinvariance}, the group 
	$$ \bigl(
	G' \cap \Nheart
	\bigr) / \bigl(
	G' \cap \Nheart  \cap K_{M}
	\bigr) $$ acts faithfully on $\cA_{x_{0}}$ through affine transformations and preserves $\mathfrak{H}_{\Krel}$. Thus we can identify the former group with a subgroup of the group of affine transformations of $\cA_{x_{0}}$ preserving $\mathfrak{H}_{\Krel}$. Since the latter contains the group $W_{\Krel} = 
	\langle 	s_{H} \mid H \in \mathfrak{H}_{\Krel} \rangle$  as a normal subgroup, the claim follows from the assumption about the existence of $s'_{H} \in 
	\bigr(
	G' \cap \Nheart
	\bigr) / \bigl(
	G' \cap \Nheart  \cap K_{M}
	\bigr)
	$ and the observation that $\bigl(
	G' \cap \Nheart
	\bigr) / \bigl(
	G' \cap \Nheart  \cap K_{M}
	\bigr)$ is a normal subgroup of $\Wheart$.
\end{proof}

We will now show that the group $\Waff$ of Axiom \ref{axiomexistenceofRgrp} is an affine Weyl group, which explains our choice of notation.
For $H \in \mathfrak{H}$, let $a_{H}$ denote an affine functional on $\cA_{x_{0}}$ such that
\(
H = \left\{
x \in \cA_{x_{0}} \mid a_{H}(x) = 0
\right\}.
\)
We write $Da_{H}$ for the gradient of $a_{H}$,
which is a linear functional on $X_{*}(A_{M}) \otimes_{\bZ} \bR$.
The subspace
\[
\ker(Da_{H}) = \left\{
v \in X_{*}(A_{M}) \otimes_{\bZ} \bR
\mid
Da_{H}(v) = 0
\right\}
\]
of $X_{*}(A_{M}) \otimes_{\bZ} \bR$ only depends on $H$ and not on the choice of $a_H$.
We define the subspace $V^{\Krel}$\index{notation-ax}{VKrelup@$V^\Krel$} of $X_{*}(A_{M}) \otimes_{\bZ} \bR$ by
\[
V^{\Krel} = \bigcap_{H \in \mathfrak{H}_{\Krel}} \ker(Da_{H}).
\]
We define the affine space $\cA_{\Krel}$ \index{notation-ax}{AKrel@$\cA_{\Krel}$}\label{AKrel-page} to be the quotient affine space $\cA_{x_{0}}/V^{\Krel}$ and write its vector space of translations as
\index{notation-ax}{VKrellow@$V_\Krel$}
\[
V_{\Krel} = \left(
X_{*}(A_{M}) \otimes_{\bZ} \bR 
\right)/V^{\Krel}.
\]
According to Corollary \ref{corollaryweylinvariance}, the action of $\Wheart$ on $\cA_{x_{0}}$ induces a well-defined action of $\Wheart$ on $\cA_{\Krel}$.
Let $(V^{\Krel})^{\perp}$ denote the orthogonal complement of $V^{\Krel}$ in $X_{*}(A_{M}) \otimes_{\bZ} \bR$ with respect to the previously fixed $N_{G}(M)(F)$-invariant inner product $(\phantom{y},\phantom{y})_{M}$ on $X_{*}(A_{M}) \otimes_{\bZ} \bR$.
Then the natural projection 
$
X_{*}(A_{M}) \otimes_{\bZ} \bR \rightarrow V_{\Krel}
$
restricts to an isomorphism
\begin{align}
\label{orthogonalcomplementmorris}
(V^{\Krel})^{\perp} \isoarrow V_{\Krel}.
\end{align}
We define an inner product on $V_{\Krel}$ by restricting the inner product $(\phantom{y},\phantom{y})_{M}$ to $(V^{\Krel})^{\perp}$ and then transporting it to $V_{\Krel}$ via the isomorphism in \eqref{orthogonalcomplementmorris}.
This turns the affine space $\cA_{\Krel}$ into a Euclidean space.
We identify an affine hyperplane $H \in \mathfrak{H}_{\Krel}$ with its image on $\cA_{\Krel}$.
\begin{proposition} 
\label{Rrhosimeqaffineweyl}
Assume Axioms \ref{axiomaboutHNheartandK}, \ref{axiombijectionofdoublecoset} and \ref{axiomexistenceofRgrp}. Then there exists an affine root system $\Gamma(\rho_{M})$ on $\cA_{\Krel}$ whose vanishing hyperplanes are $\fH_\Krel$. In particular, the action of $\Waff$ on $\cA_{\Krel}$ induces an isomorphism
\[
\Waff \simeq W_{\Krel} = W_{\aff}(\Gamma(\rho_{M})),
\]
where $W_{\aff}(\Gamma(\rho_{M}))$ denotes the affine Weyl group of $\Gamma(\rho_{M})$.
(We allow the affine root system to be empty with the associated affine Weyl group being the trivial group.)
\end{proposition}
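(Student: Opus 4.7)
The plan is to establish that $\Waff$, identified with $W_{\Krel}$ via Axiom~\ref{axiomexistenceofRgrp}, acts faithfully and properly by reflections on the Euclidean affine space $\cA_{\Krel}$, and then to invoke the classical structure theory of discrete affine reflection groups (Bourbaki, \emph{Groupes et alg\`ebres de Lie}, Ch.~V, \S3) to extract the affine root system $\Gamma(\rho_{M})$. To begin, I would show that the $W_{\Krel}$-action on $\cA_{x_{0}}$ descends to an isometric action on $\cA_{\Krel}$: for each $H \in \mathfrak{H}_{\Krel}$ the linear part of $s_{H}$ is trivial on $V^{\Krel}$ by the very definition of $V^{\Krel}$, so $s_{H}$ preserves $V^{\Krel}$-cosets and induces on $\cA_{\Krel}$ the orthogonal reflection through the image of $H$. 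The induced action is faithful, because if $w \in W_{\Krel}$ acts trivially on $\cA_{\Krel}$, then $w(x) - x \in V^{\Krel}$ for every $x \in \cA_{x_{0}}$, while the displacements $w(x)-x$ lie in $(V^{\Krel})^{\perp}$ (since each $(Da_{H})^{\vee}$ does), and $V^{\Krel} \cap (V^{\Krel})^{\perp} = 0$ forces $w = 1$.

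The central step is to prove properness of the $W_{\Krel}$-action on $\cA_{\Krel}$. The key observation is that the affine slice $x_{0} + (V^{\Krel})^{\perp}$ in $\cA_{x_{0}}$ is $W_{\Krel}$-stable: $s_{H}(x_{0}) - x_{0}$ is a scalar multiple of $(Da_{H})^{\vee} \in (V^{\Krel})^{\perp}$, and the linear part of $s_{H}$ preserves $(V^{\Krel})^{\perp}$, so the subgroup of affine transformations preserving the slice contains every generator and hence all of $W_{\Krel}$. The projection $\cA_{x_{0}} \to \cA_{\Krel}$ restricts to a $W_{\Krel}$-equivariant isometry of this slice onto $\cA_{\Krel}$ (the inner product on $\cA_{\Krel}$ was defined precisely to make this so). Since $\Wheart$, and hence $\Waff \simeq W_{\Krel}$, acts properly on $\cA_{x_{0}}$ by Lemma~\ref{lemmaweylactionproperly}, its restriction to the closed subspace $x_{0} + (V^{\Krel})^{\perp}$ remains proper, and transferring through the isometry yields properness on $\cA_{\Krel}$.

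With a faithful, proper action by reflections on a Euclidean affine space, $W_{\Krel}$ is a discrete affine reflection group in the classical sense, so the Bourbaki structure theorem provides both that the set of mirror hyperplanes of $W_{\Krel}$ on $\cA_{\Krel}$ is locally finite and that $W_{\Krel}$ is the affine Weyl group of an affine root system $\Gamma(\rho_{M})$ on $\cA_{\Krel}$ whose vanishing hyperplanes are precisely these mirrors. It remains to identify the mirror set with the image of $\mathfrak{H}_{\Krel}$: every $H \in \mathfrak{H}_{\Krel}$ is a mirror by the definition of $W_{\Krel}$, while conversely any reflection in $W_{\Krel}$ is $W_{\Krel}$-conjugate to some generator $s_{H}$, so its mirror is a $W_{\Krel}$-translate of $H$, which lies in $\mathfrak{H}_{\Krel}$ by the $\Nheart$-invariance established in Corollary~\ref{corollaryweylinvariance}. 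Combined with Axiom~\ref{axiomexistenceofRgrp}, this yields $\Waff \simeq W_{\Krel} = W_{\aff}(\Gamma(\rho_{M}))$. I expect the main obstacle to be the descent of the proper action to $\cA_{\Krel}$ via the slice argument above; once properness is in place, the existence of the affine root system with the claimed vanishing hyperplanes is a direct appeal to the classical structure theory.
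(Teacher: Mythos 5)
Your proposal correctly identifies the key structural facts (invariance of $\mathfrak{H}_{\Krel}$ under $W_{\Krel}$, faithfulness, and properness of the action on $\cA_{\Krel}$, with the slice argument for properness being a clean way to flesh out what the paper leaves terse). However, there is a genuine gap: a faithful, proper action by reflections on a Euclidean affine space does \emph{not} by itself imply that the group is the affine Weyl group of an affine root system. A finite reflection group fixing a point also acts faithfully and properly, but it is the Weyl group of a spherical root system, not of an affine one, and more generally a discrete reflection group on a Euclidean space decomposes as a product of finite and affine pieces. The Bourbaki result the paper cites (Ch.~V, \S3.10, Prop.~10 together with the proof of Ch.~VI, \S2.5, Prop.~8) therefore requires a third condition, beyond invariance and properness: for each $H \in \mathfrak{H}_{\Krel}$ there must exist infinitely many $H' \in \mathfrak{H}_{\Krel}$ parallel to $H$. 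This is precisely what forces a full-rank lattice of translations and rules out a finite component.

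Your proposal never establishes this parallelism condition. The paper proves it using the hypothesis that $A_{M}(F) \subset \Nheart$: elements $t \in A_{M}(F)$ act on $\cA_{x_{0}}$ by translations $\nu(t)$, the translation lattice $\{\nu(t) \mid t \in A_{M}(F)\}$ is of full rank in $X_{*}(A_{M}) \otimes_{\bZ} \bR$, and for a suitable $t$ with $\alpha(\nu(t)) \neq 0$ (where $\alpha$ is the gradient of a defining functional for $H$), the translates $t^{n}(H)$ are pairwise distinct, parallel to $H$, and lie in $\mathfrak{H}_{\Krel}$ by Corollary~\ref{corollaryweylinvariance}. Without this step your argument does not exclude the possibility that $W_{\Krel}$ (or some irreducible factor of it) is a finite reflection group, in which case the conclusion as stated would fail.
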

\begin{proof}
We assume that $\cA_{\Krel}$ has dimension at least one because the statement is otherwise trivial. 
According to \cite[Chapter~V, Section~3.10, Proposition~10]{MR0240238} and the proof of \cite[Chapter~VI, Section~2.5, Proposition~8]{MR0240238}, it suffices to check the following conditions (see also the proof of \cite[2.7 Theorem (b)]{Morris}):
\begin{enumerate}[(1)]
\item 
\label{weylinvariance}
For any $w \in W_{\Krel}$ and $H \in \mathfrak{H}_{\Krel}$, we have $w(H) \in \mathfrak{H}_{\Krel}$.
\item 
\label{actionproperly}
The group $W_{\Krel}$ acts properly on $\cA_{\Krel}$.
\item
\label{rankoftranslations}
For any $H \in \mathfrak{H}_{\Krel}$, there are infinitely many $H' \in \mathfrak{H}_{\Krel}$ that are parallel to $H$. 
\end{enumerate}
Condition \eqref{weylinvariance} follows from Corollary \ref{corollaryweylinvariance} and Axiom~\ref{axiomexistenceofRgrp}, and Condition \eqref{actionproperly} follows from Lemma~\ref{lemmaweylactionproperly} and Axiom~\ref{axiomexistenceofRgrp}.
It remains to prove Condition \eqref{rankoftranslations}.
Let $H \in \mathfrak{H}_{\Krel}$.
We fix an affine functional $a$ on $\cA_{x_{0}}$ such that
$
H = \left\{
x \in \cA_{x_{0}} \mid a(x) = 0
\right\}
$.
We write $\alpha$ for the gradient of $a$.
Let $t \in A_{M}(F)$.
Since $t \in A_{M}(F) \subset \Nheart$, we have $t(H) \in \mathfrak{H}_{\Krel}$ by Corollary \ref{corollaryweylinvariance}.
We define the element
$
\nu(t) \in X_{*}(A_{M}) \otimes_{\bZ} \bR 
$
by
\(
\chi(\nu(t)) = - \ord(\chi(t))
\)
for all $\chi \in X^{*}(A_{M})$.
We note that the set
\(
\left\{
\nu(t) \mid t \in A_{M}(F)
\right\}
\)
is a lattice of full rank in $X_{*}(A_{M}) \otimes_{\bZ} \bR \neq\{ 0 \}$.
Hence, we can take $t \in A_{M}(F)$ such that $\alpha(\nu(t)) \neq 0$.
According to \cite[Proposition~6.2.4]{KalethaPrasad}, we have 
$
t^{-1} x = x - \nu(t)
$.
Hence, we obtain that
\begin{align*}
t(H) &=
\left\{
t x \in \cA_{x_{0}} \mid a(x) = 0
\right\} = \left\{
x \in \cA_{x_{0}} \mid a(t^{-1} x) = 0
\right\} 
= \left\{
x \in \cA_{x_{0}} \mid a(x - \nu(t)) = 0
\right\} \\
&= \left\{
x \in \cA_{x_{0}} \mid a(x) - \alpha(\nu(t)) = 0
\right\}.
\end{align*}
Thus, we obtain that $t(H)$ is parallel to $H$.
Moreover, since $\alpha(\nu(t)) \neq 0$, by applying the same calculations to $t^n$ instead of $t$ for $n \in \mathbb{Z}$, we obtain that the affine hyperplanes 
$t^{n}(H)$ with $n \in \mathbb{Z}$
are all pairwise distinct, contained in $\mathfrak{H}_{\Krel}$, and parallel to $H$.
\end{proof}

We now construct a complement to $\Waff$ in $\Wheart$.
\begin{notation}
\label{notation:Wzero}
	 We call the connected components of the complement of $\fH_\Krel$ in $\cA_{\Krel}$ \emph{chambers} and denote by $C_{\Krel}$ the chamber of $\cA_{\Krel}$ that contains \( x_{0} + V^{\Krel} \).
	We define the subgroup $\Wzero$ 
	\index{notation-ax}{OmegarhoM@$\Wzero$}
	of $\Wheart$ by 
	\[
	\Wzero = \Bigl\{
	w \in \Wheart \Bigr. \,\Bigl | \, w (C_{\Krel}) = C_{\Krel}
	\Bigr\}.
	\]
\end{notation}
Note that $\Wzero$ consists precisely of the length-zero elements of $\Wheart$ with respect to the length $\flength$ introduced in Definition \ref{definitionlength}.

\begin{proposition}
\label{propositiondecompositionofW}Assume Axioms \ref{axiomaboutHNheartandK}, \ref{axiombijectionofdoublecoset} and \ref{axiomexistenceofRgrp}. 
Then we have 
\[
\Wheart = \Wzero \ltimes \Waff.
\]
\end{proposition}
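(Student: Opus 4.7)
The plan is to prove the two conditions defining an internal semi-direct product: (i) $\Waff$ is normal in $\Wheart$, (ii) $\Waff \cap \Wzero = \{1\}$, and (iii) $\Wheart = \Wzero \cdot \Waff$. Normality is part of Axiom~\ref{axiomexistenceofRgrp}, so the real content lies in (ii) and (iii).

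The key underlying geometric fact I would exploit is that, by Proposition~\ref{Rrhosimeqaffineweyl}, $\Waff$ acts on $\cA_{\Krel}$ as the affine Weyl group $W_{\aff}(\Gamma(\rho_M))$ of a genuine affine root system whose vanishing hyperplanes are exactly $\mathfrak{H}_{\Krel}$. By the standard theory of affine root systems (e.g.\ \cite[Chapter~V, \S3]{MR0240238}), this group acts simply transitively on the set of chambers in $\cA_{\Krel}$. Before applying this, I need to check that the action of $\Wheart$ descends to $\cA_{\Krel}$ and permutes chambers: the action is by affine transformations (via Lemma~\ref{lemmaaboutstabilizerofx0M} and \cite[Proposition~6.2.4]{KalethaPrasad}), and since $\Wheart$ permutes $\mathfrak{H}_{\Krel}$ by Corollary~\ref{corollaryweylinvariance}, its linear parts preserve $V^{\Krel} = \bigcap_{H \in \mathfrak{H}_{\Krel}} \ker(Da_H)$, so the action descends to $\cA_{\Krel}$ and sends chambers to chambers.

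For (ii), suppose $w \in \Waff \cap \Wzero$. Then, viewed in $W_{\Krel}$ via the isomorphism of Axiom~\ref{axiomexistenceofRgrp}, $w$ stabilizes the chamber $C_{\Krel}$. Since $W_{\aff}(\Gamma(\rho_M))$ acts simply transitively on chambers, $w = 1$. For (iii), take an arbitrary $w \in \Wheart$. Then $w(C_{\Krel})$ is a chamber of $\cA_{\Krel}$, and by simple transitivity there is a unique $u \in \Waff$ with $u(C_{\Krel}) = w(C_{\Krel})$. Then $u^{-1}w \in \Wzero$ by definition, so $w = u \cdot (u^{-1}w) \in \Waff \cdot \Wzero = \Wzero \cdot \Waff$ (using normality of $\Waff$). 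Combined with (i) and (ii), this gives $\Wheart = \Wzero \ltimes \Waff$.

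The main obstacle I anticipate is not any single step but rather cleanly justifying that the $\Wheart$-action descends to an action by affine transformations on $\cA_{\Krel}$ permuting chambers; once that is in place, the argument reduces to the familiar fact that an affine Weyl group is simply transitive on its chambers, and the decomposition follows formally. A minor point to double-check is that the inner product chosen on $V_{\Krel}$ (used implicitly to speak of the reflections $s_H$ and chambers) is $\Wheart$-invariant on $\cA_{\Krel}$; this follows from the $N_G(M)(F)$-invariance of $(\phantom{x},\phantom{y})_M$ fixed at the start of \S\ref{subsec:affine} together with the fact that $V^{\Krel}$ is $\Wheart$-stable, so the induced inner product on $(V^{\Krel})^{\perp} \cong V_{\Krel}$ is preserved.
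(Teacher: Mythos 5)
Your proposal is correct and follows essentially the same route as the paper: both invoke Proposition~\ref{Rrhosimeqaffineweyl} to get simple transitivity of $\Waff$ on the chambers of $\cA_{\Krel}$, deduce $\Wzero \cap \Waff = \{1\}$ and $\Wheart = \Wzero \cdot \Waff$ from that, and conclude using the normality of $\Waff$ from Axiom~\ref{axiomexistenceofRgrp}. The extra verifications you flag (that the $\Wheart$-action descends to $\cA_{\Krel}$ and permutes chambers) are already supplied in the paper's setup immediately before the proposition, via Corollary~\ref{corollaryweylinvariance} and the discussion around the definition of $\cA_{\Krel}$.
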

\begin{proof}
According to Proposition~\ref{Rrhosimeqaffineweyl}, the action of $\Waff$ on the set of all chambers of $\cA_{\Krel}$ is simply transitive.
Hence we obtain that
\[
\Wzero \cap \Waff = \{1\}
\]
and
\[
\Wheart = \Wzero \cdot \Waff.
\]
Since $\Waff$ is a normal subgroup of $\Wheart$, the proposition follows.
\end{proof}

\subsection{Simple reflections and quadratic relations} \label{sec:simplereflections} 
In this subsection, we continue our study of the structure of the Hecke algebra $\cH(G(F),\rho_{x_0})$. In Section \ref{subsec:lengthadditivecase}, we investigated the relations of basis elements of the Hecke algebra in the length-additive case, see Proposition \ref{propositioniflengthsumthentransitivityuptotwococycle}. In this subsection we will study the relations in the basic non-length-additive case, the case of (simple) reflections, introduced in Notation \ref{notationsimplereflections} below. This will require one further axiom, Axiom \ref{axiomaboutdimensionofend} below, which ensures that the Hecke algebra element $\Phi_s$ corresponding to a simple reflection $s$ satisfies a quadratic relation. This additional axiom also allows us to prove that the endomorphism $\Phi_w$ for $w \in \Wheart$ is invertible, see Corollary \ref{corollaryPhixwinvertible}. 

We keep the notation from the previous subsections and assume all the above axioms, i.e., Axioms \ref{axiomaboutHNheartandK}, \ref{axiombijectionofdoublecoset}, and \ref{axiomexistenceofRgrp}.

\begin{notation}\label{notationsimplereflections}
	We denote by $S_{\Krel} \subset \nobreak W_{\Krel}$
	\index{notation-ax}{SKrel@$S_{\Krel}$} the subset 
	 of \textit{simple reflections} corresponding to the chamber $C_{\Krel}$, i.e., the reflections across the walls of $C_{\Krel}$. Using the isomorphism of $W_{\Krel}$ with the subgroup $\Waff$ of $\Wheart$  from Axiom \ref{axiomexistenceofRgrp}, we also view  $S_{\Krel}$ as a subset of $\Wheart$. For each $s \in S_{\Krel}$, we denote by $H_{s} \in \mathfrak{H}_{\Krel}$ the corresponding  wall of $C_{\Krel}$. \index{notation-ax}{Hs@$H_s$}
\end{notation}

We note that the restriction of the length function $\flength$ to $\Waff$ agrees with the length function of the Coxeter system $(\Waff, S_{\Krel})$.
We will now impose one more axiom that guarantees that the Hecke algebra elements supported on double cosets of lifts of simple reflections satisfy a quadratic relation.

\begin{axiom}
\label{axiomaboutdimensionofend}
For any $s \in S_{\Krel}$ and $x \in \cA_{\gen}$ such that $\mathfrak{H}_{x, s x} = \{ H_{s} \}$, there exists a compact, open subgroup $K'_{x, s}$ of $G(F)$ containing $K_{x}$ such that
\[ \Wheart \supset
\left(
\Nheart \cap K'_{x, s}
\right) / \left(
\Nheart \cap K_{M}
\right)
 = \{1, s \}.
\]
\end{axiom}
\begin{remark}
In the depth-zero setting discussed in Section \ref{sec:depth-zero},
the axiom is satisfied for the group $K'_{x,s}=K_M \cdot G(F)_{h,0}$ for $h \in H_s$ the unique point for which $h=x+t \cdot (sx-x)$ for some $0 < t < 1$.
For the more general setup in
\cite{HAIKY}
the axiom is satisfied for  $K'_{x,s}=K_{h}$ as defined in
\cite[Equation \eqref{HAIKY-definitionofcompactopensubgroupsKimYucase}]{HAIKY}.
This is proven in
Corollary \ref{HAIKY-corofproofofaxiomexistenceofRgrpandaxiomsHisinKxyaxiomaboutdimensionofend}
of \cite{HAIKY}.
\end{remark}

\begin{notation}
For $H \in \mathfrak{H}_{\Krel}$ and $x \in \cA_{\gen}$ with $\mathfrak{H}_{x, s_{H} x} = \{ H \}$, let
\index{notation-ax}{Kay x s@$K_{x,s}$}
$K_{x,{s_H}} \coloneqq \langle K_x, {s_H} \rangle$.
Here, we regard $s_{H} \in W_{\Krel}$ as an element of $\Waff \subset \Wheart$ via the isomorphism in Axiom \ref{axiomexistenceofRgrp}.
\end{notation}

\begin{remark}
\label{remarkrewriteaxiomaboutdimensionofend}
If a group $K'_{x, s}$ exists as in Axiom \ref{axiomaboutdimensionofend}, 
then we obtain that the axiom also holds for the group $K_{x, s}$ in place of $K'_{x,s}$. This is because $K_{x, s}$ is an open (hence closed) subgroup of $K'_{x, s}$,
and thus is compact and 
\begin{align*}
\{1, s \}
& \subseteq
\left(
\Nheart \cap K_{x, s}
\right) / \left(
\Nheart \cap K_{M}
\right) \\
& \subseteq
\left(
\Nheart \cap K'_{x, s}
\right) / \left(
\Nheart \cap K_{M}
\right) = \{1, s \}.
\end{align*}
Thus, we conclude that
\(
\left(
\Nheart \cap K_{x, s}
\right) / \left(
\Nheart \cap K_{M}
\right)
 = \{1, s \}.
\)
\end{remark}
The reason why we state Axiom \ref{axiomaboutdimensionofend} with an arbitrary subgroup $K'_{x,s}$ rather than with $K_{x,s}$ is that in Section \ref{sec:comparison} below, in order to obtain an isomorphism between different Hecke algebras, we like to state that the Axiom \ref{axiomaboutdimensionofend} is satisfied for a specific choice of $K'_{x,s}$, see the beginning of Section \ref{subsec:Heckeisom} and Theorem \ref{thm:isomorphismtodepthzero}.

\begin{remark}
\label{remarkaboutaxiomaboutdimensionofend} While Axiom \ref{axiomaboutdimensionofend} only concerns simple reflections, it implies the same result for other reflections. More precisely, let $H \in \mathfrak{H}_{\Krel}$ and $x \in \cA_{\gen}$ such that $\mathfrak{H}_{x, s_{H} x} = \{ H \}$.
According to \cite[Chapter~V, Section~3.1, Lemma~2]{MR0240238}, there exists $w \in \Waff$ and $s \in S_{\Krel}$ such that $w(H) = H_{s}$ and hence $s_H=w^{-1}sw$.
Thus, from Axiom~\ref{axiomaboutdimensionofend} and Remark \ref{remarkrewriteaxiomaboutdimensionofend},
we obtain that the group $K_{x, s_{H}} = \langle K_{x}, s_{H} \rangle=w^{-1}K_{wx,s}w$
is a compact, open subgroup of $G(F)$ and
\[
\left(
\Nheart \cap K_{x, s_{H}}
\right) / \left(
\Nheart \cap K_{M}
\right)
 = \{1, s_{H} \}.
\]
\end{remark}
We remind the reader that we have shown in Section \ref{Hecke algebras and endomorphism algebras}, in particular around Diagram \eqref{DiagramEnd}, how to view $\End_{K_{x, s_{H}}}\bigl(
\ind_{K_{x}}^{K_{x, s_{H}}}(\rho_{x})
\bigr)$
as a subalgebra of
$\End_{G(F)}\bigl(
\ind_{K_{x}}^{G(F)}(\rho_{x})
\bigl)$.
\begin{lemma}
\label{lemmadimensionoftheendomorphismalgebraKxsH}
Let $H \in \mathfrak{H}_{\Krel}$ and $x \in \cA_{\gen}$ such that $\mathfrak{H}_{x, s_{H} x} = \{ H \}$.
Then the elements $\Phi_{x, s_{H}}$ and $\Phi_{x, 1}$ 
 form a basis of $\End_{K_{x, s_{H}}}\left(
\ind_{K_{x}}^{K_{x, s_{H}}}(\rho_{x})
\right)$.

In particular, we have
\[
\dim_{\Coeff}\left(
\End_{K_{x, s_{H}}}\left(
\ind_{K_{x}}^{K_{x, s_{H}}}(\rho_{x})
\right)
\right) = 2.
\]
\end{lemma}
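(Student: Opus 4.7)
The plan is to translate the claim to a statement about the Hecke subalgebra
$\cH(K_{x, s_{H}}, \rho_{x})$, count its basis in terms of the decomposition into subspaces indexed by double cosets, and then identify the relevant double cosets using Axiom \ref{axiomaboutdimensionofend}.

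First I would invoke the isomorphism \eqref{heckevsendK'} for the subgroup $K' = K_{x, s_{H}}$, together with the commutative diagram \eqref{DiagramEnd}, to reduce the statement to proving that $\cH(K_{x, s_{H}}, \rho_{x})$ is two-dimensional with basis $\varphi_{x, 1}, \varphi_{x, s_{H}}$. Note that both functions are supported in $K_{x, s_{H}}$ by Lemma \ref{supportofphiixw}: $\supp(\varphi_{x, 1}) = K_{x} \subseteq K_{x, s_{H}}$, and $\supp(\varphi_{x, s_{H}}) = K_{x} s_{H} K_{x} \subseteq K_{x, s_{H}}$ since a lift of $s_{H}$ lies in $K_{x, s_{H}} = \langle K_{x}, s_{H} \rangle$.

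Next I would combine the definition of $\cH(K_{x, s_{H}}, \rho_{x})$ with the vector space decomposition \eqref{vectorspacedecomposition} to obtain
\[
\cH(K_{x, s_{H}}, \rho_{x}) = \bigoplus_{g \in K_{x} \backslash (I_{G(F)}(\rho_{x}) \cap K_{x, s_{H}}) / K_{x}} \cH(G(F), \rho_{x})_{g}.
\]
By Proposition \ref{propositionvectorspacedecomposition} each nonzero summand is one-dimensional, so it remains to show that exactly two double cosets occur, namely the classes of $1$ and $s_{H}$. The argument I have in mind is the following. Any $g \in I_{G(F)}(\rho_{x}) \cap K_{x, s_{H}}$ can, by Axiom \ref{axiombijectionofdoublecoset}, be written $g = k_{1} n k_{2}$ with $k_{1}, k_{2} \in K_{x}$ and $n \in \Nheart$; then $n = k_{1}^{-1} g k_{2}^{-1} \in K_{x} \cdot K_{x, s_{H}} \cdot K_{x} = K_{x, s_{H}}$, so in fact $n \in \Nheart \cap K_{x, s_{H}}$. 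Combined with the bijection of Corollary \ref{corollarybijectiondoublecosetandweylgroup}, this yields a bijection
\[
(\Nheart \cap K_{x, s_{H}}) / (\Nheart \cap K_{M}) \isoarrow K_{x} \backslash (I_{G(F)}(\rho_{x}) \cap K_{x, s_{H}}) / K_{x}.
\]
By Axiom \ref{axiomaboutdimensionofend} together with Remark \ref{remarkaboutaxiomaboutdimensionofend} (which upgrades the axiom from simple reflections $s$ to arbitrary reflections $s_{H}$ with $H \in \mathfrak{H}_{\Krel}$) the left-hand side equals $\{1, s_{H}\}$, so the right-hand side has exactly two elements represented by $1$ and $s_{H}$.

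Putting the pieces together, $\cH(K_{x, s_{H}}, \rho_{x})$ has dimension two with basis $\varphi_{x, 1}, \varphi_{x, s_{H}}$, so under the isomorphism of \eqref{heckevsendK'} the endomorphism algebra $\End_{K_{x, s_{H}}}\bigl(\ind_{K_{x}}^{K_{x, s_{H}}}(\rho_{x})\bigr)$ has basis $\Phi_{x, 1}, \Phi_{x, s_{H}}$, which gives the lemma. The only subtle point, and the main step that warrants attention, is the passage from a $K_{x}$-double coset representative in $\Nheart$ to one lying in $\Nheart \cap K_{x, s_{H}}$; everything else is an application of results already in place.
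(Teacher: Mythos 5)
Your proposal is correct and follows essentially the same approach as the paper: reduce via \eqref{heckevsendK'} to the Hecke subalgebra $\cH(K_{x,s_H},\rho_x)$, use Proposition~\ref{propositionvectorspacedecomposition} and Lemma~\ref{supportofphiixw} to identify a basis indexed by $(\Nheart \cap K_{x,s_H})/(\Nheart \cap K_M)$, and conclude with Remark~\ref{remarkaboutaxiomaboutdimensionofend}. The only difference is that the paper compresses the identification of the relevant double cosets into a single citation, whereas you explicitly write out the verification that $I_{G(F)}(\rho_x) \cap K_{x,s_H}$ has representatives in $\Nheart \cap K_{x,s_H}$; this is a correct unpacking of what the cited results imply.
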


\begin{proof}
According to the isomorphism in \eqref{heckevsendK'}, it suffices to show that the elements $\varphi_{x, s_{H}}$ and $\varphi_{x, 1}$ give a basis of $\cH(K_{x, s_{H}}, \rho_{x})$.
According to Proposition~\ref{propositionvectorspacedecomposition} and Lemma~\ref{supportofphiixw}, a basis of the space $\cH(K_{x, s_{H}}, \rho_{x})$ is given by the set
\[
\left\{
\varphi_{x, w} \mid
w \in \left(
\Nheart \cap K_{x, s_{H}}
\right) / \left(
\Nheart \cap K_{M}
\right)
\right\}.
\]
Thus, the lemma follows from Remark~\ref{remarkaboutaxiomaboutdimensionofend}.
\end{proof}
\begin{corollary}
\label{corollaryweakversionofquadraticrelation}
We assume Axioms \ref{axiomaboutHNheartandK}, \ref{axiombijectionofdoublecoset}, \ref{axiomexistenceofRgrp}, and \ref{axiomaboutdimensionofend}.
Let $H \in \mathfrak{H}_{\Krel}$ and $x \in \cA_{\gen}$ such that $\mathfrak{H}_{x, s_{H} x} = \{ H \}$.
Then there exist $p_{x, H}, q_{x, H} \in \Coeff$ such that
\[
\left(
\Phi_{x, s_{H}}
\right)^{2} = p_{x, H} \cdot \Phi_{x, s_{H}} + q_{x, H} \cdot \Phi_{x, 1}.
\]
\end{corollary}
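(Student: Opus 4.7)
The plan is to deduce this immediately from Lemma~\ref{lemmadimensionoftheendomorphismalgebraKxsH}, which tells us that the endomorphism algebra $\End_{K_{x,s_{H}}}\bigl(\ind_{K_{x}}^{K_{x,s_{H}}}(\rho_{x})\bigr)$ is two-dimensional with basis $\{\Phi_{x,s_{H}}, \Phi_{x,1}\}$. So the only real content is to verify that $(\Phi_{x,s_{H}})^{2}$ actually lives in this subalgebra of $\End_{G(F)}\bigl(\ind_{K_{x}}^{G(F)}(\rho_{x})\bigr)$.

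To that end, I would first pass to the Hecke algebra side via the isomorphism in \eqref{heckevsend}, letting $\varphi_{x,s_{H}} \in \cH(G(F),\rho_{x})$ correspond to $\Phi_{x,s_{H}}$. By Lemma~\ref{supportofphiixw}, $\varphi_{x,s_{H}}$ is supported on $K_{x} s_{H} K_{x} \subseteq K_{x,s_{H}}$, so $\varphi_{x,s_{H}} \in \cH(K_{x,s_{H}}, \rho_{x})$. Since $K_{x,s_{H}}$ is a subgroup of $G(F)$, the convolution product $\varphi_{x,s_{H}} * \varphi_{x,s_{H}}$ is again supported on $K_{x,s_{H}} \cdot K_{x,s_{H}} = K_{x,s_{H}}$, hence lies in the subalgebra $\cH(K_{x,s_{H}}, \rho_{x})$.

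Translating back through the commutative diagram \eqref{DiagramEnd} and the identification of $\cH(K_{x,s_{H}}, \rho_{x})$ with $\End_{K_{x,s_{H}}}\bigl(\ind_{K_{x}}^{K_{x,s_{H}}}(\rho_{x})\bigr)$ via \eqref{heckevsendK'}, this says that $(\Phi_{x,s_{H}})^{2}$ belongs to $\End_{K_{x,s_{H}}}\bigl(\ind_{K_{x}}^{K_{x,s_{H}}}(\rho_{x})\bigr)$, viewed as a subalgebra of $\End_{G(F)}\bigl(\ind_{K_{x}}^{G(F)}(\rho_{x})\bigr)$. Applying Lemma~\ref{lemmadimensionoftheendomorphismalgebraKxsH} then expresses $(\Phi_{x,s_{H}})^{2}$ uniquely as a $\Coeff$-linear combination $p_{x,H} \cdot \Phi_{x,s_{H}} + q_{x,H} \cdot \Phi_{x,1}$, which is the desired quadratic relation.

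There is no serious obstacle here: the work has already been done in Lemma~\ref{lemmadimensionoftheendomorphismalgebraKxsH}, which in turn relied on Axiom~\ref{axiomaboutdimensionofend} (via Remark~\ref{remarkrewriteaxiomaboutdimensionofend}) to control the double cosets in $K_{x,s_{H}}$. The genuinely interesting content, namely explicit formulas for $p_{x,H}$ and $q_{x,H}$ in terms of the index $\bigl[K_{x,s_{H}}:K_{x}\bigr]$ and the structure of $\rho_{M}$, is deferred to \S\ref{subsec:quadratic-banal} and the subsequent analysis of the intertwining operators on $H_{s}$; the present statement only asserts existence.
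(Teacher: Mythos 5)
Your proof is correct and takes essentially the same approach as the paper: both observe that $\Phi_{x,s_{H}}$ lies in the two-dimensional subalgebra $\End_{K_{x,s_{H}}}\bigl(\ind_{K_{x}}^{K_{x,s_{H}}}(\rho_{x})\bigr)$ whose basis is given by Lemma~\ref{lemmadimensionoftheendomorphismalgebraKxsH}, hence so does $(\Phi_{x,s_{H}})^{2}$, and the quadratic relation follows. The paper's proof is terser (it takes as evident that $\Phi_{x,s_{H}}$ lies in the subalgebra), while you explicitly justify this via the support computation of Lemma~\ref{supportofphiixw} and the closure of $\cH(K_{x,s_{H}},\rho_{x})$ under convolution — a reasonable elaboration but not a different route.
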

\begin{proof}
Since 
$\Phi_{x, s_{H}}$ belongs to
the subalgebra
$\End_{K_{x, s_{H}}}\bigl(
\ind_{K_{x}}^{K_{x, s_{H}}}(\rho_{x})
\bigr)$
of
$\End_{G(F)}\bigl(
\ind_{K_{x}}^{G(F)}(\rho_{x})
\bigl)$,
the same is true of
$(\Phi_{x, s_{H}})^{2}$.
Thus, the corollary follows from Lemma~\ref{lemmadimensionoftheendomorphismalgebraKxsH}.
\end{proof}
The remainder of this subsection is concerned with strengthening the statement of this Corollary by replacing the condition $\mathfrak{H}_{x, s_{H} x} = \{ H \}$ by the weaker condition $\mathfrak{H}_{\Krel;x, s_{H} x} = \{ H \}$ and by proving that the coefficients $p_{x, H}$ and $q_{x, H}$ in the above quadratic relation are non-zero and independent of the choice $x$.
While $q_{x, H} \neq 0$ follows from a standard observation about the endomorphism $\Theta^{\normal}_{x \mid s_{H} x}
\circ \Theta^{\normal}_{s_{H} x \mid x}$ for a general $H \in \fH$ and $x \in \cA_{\gen}$ such that $\mathfrak{H}_{x, s_{H} x} = \{ H \}$, to prove $p_{x, H} \neq 0$, the condition $H \in \mathfrak{H}_{\Krel}$ is essential.
We remind the reader that the condition $H \in \mathfrak{H}_{\Krel}$ was defined by the existence of $x, y \in \cA_{\gen}$  with $\mathfrak{H}_{x, y} = \left\{H\right\}$ and
$\Theta_{x \mid y} \circ \Theta_{y \mid x}$ a non-scalar operator
on $\ind_{K_{x}}^{G(F)}(\rho_{x})$.
In order to prove $p_{x, H} \neq 0$, we will show that if
$\Theta_{x \mid y} \circ \Theta_{y \mid x}$ is a non-scalar operator
for some $x, y \in \cA_{\gen}$  with $\mathfrak{H}_{x, y} = \left\{H\right\}$,
then the same is true for \emph{all} such $x, y \in \cA_{\gen}$, and even for all $x, y \in \cA_{\gen}$ satisfying only the weaker condition  $\mathfrak{H}_{\Krel; x, y} = \left\{H\right\}$,
see Proposition \ref{propositionkreloodequivalentforallorthereexists}.
Once we know that the coefficients $p_{x, H}$ and $q_{x, H}$ are non-zero and independent of $x$,
 we will refine our choice of \(
\cT =
\left\{
T_{n} \in \Hom_{K_{M}}\left(
^n\!\rho_{M}, \rho_{M}
\right)
\right\}_{n \in \Nheart}
\) that entered the definition of the endomorphism $\Phi_s$
 to obtain a quadratic relation 
 \(
 (\Phi_{s})^{2} = (q_{s} - 1) \cdot \Phi_{s} + q_{s} \cdot \Phi_{1}
 \)
with $q_s \in \Coeffinvnontriv$
for every simple reflection $s$,
see Proposition \ref{propositionaboutquadraticrelationsrevised}. 

We begin by proving that $q_{x, H}$ is non-zero and independent of $x$, for which we first record a consequence of Lemma~\ref{lemmathetaccommute} and Lemma~\ref{lemmatransitivityexceptfottheta}:
\begin{lemma}
\label{lemmaphivstheta}
Let $H \in \mathfrak{H}_{\Krel}$ and $x \in \cA_{\gen}$.
Then we have
\[
\left(
\Phi_{x, s_{H}}
\right)^{2}
=
\muT(s_{H}, s_{H}) \cdot \Theta^{\normal}_{x \mid s_{H} x}
\circ \Theta^{\normal}_{s_{H} x \mid x}.
\]
\end{lemma}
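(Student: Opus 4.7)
The plan is to establish the identity by simply unfolding the definition of $\Phi_{x, s_H}$ and then applying the two compatibility lemmas (Lemma~\ref{lemmathetaccommute} and Lemma~\ref{lemmatransitivityexceptfottheta}) in sequence. Since $s_H^{-1} = s_H$ in $\Wheart$ (which I will justify in a moment), Definition~\ref{definitionPhixw} gives
\[
(\Phi_{x, s_H})^{2} = \ciso{s_H x}{s_H} \circ \Theta^{\normal}_{s_H x \mid x} \circ \ciso{s_H x}{s_H} \circ \Theta^{\normal}_{s_H x \mid x}.
\]

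The key step is to apply Lemma~\ref{lemmathetaccommute} to the middle two factors in order to bring the two intertwining operators $\Theta^{\normal}$ next to each other. Taking $n = s_H$ and noting that $s_H \cdot s_H x = x$, Lemma~\ref{lemmathetaccommute} (with the roles of $x$ and $y$ in that lemma played by $s_H x$ and $x$, respectively) yields
\[
\Theta^{\normal}_{s_H x \mid x} \circ \ciso{s_H x}{s_H} = \ciso{x}{s_H} \circ \Theta^{\normal}_{x \mid s_H x}.
\]
Substituting this into the expression above gives
\[
(\Phi_{x, s_H})^{2} = \bigl(\ciso{s_H x}{s_H} \circ \ciso{x}{s_H}\bigr) \circ \bigl(\Theta^{\normal}_{x \mid s_H x} \circ \Theta^{\normal}_{s_H x \mid x}\bigr).
\]

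Now I apply Lemma~\ref{lemmatransitivityexceptfottheta} with $v = w = s_H$ (so $v^{-1}x = s_H x$ and $w^{-1}v^{-1}x = x$), obtaining
\[
\ciso{s_H x}{s_H} \circ \ciso{x}{s_H} = \muT(s_H, s_H) \cdot \ciso{x}{s_H \cdot s_H}.
\]
Here I use that $s_H^{2} = 1$ in $\Waff$: by Axiom~\ref{axiomexistenceofRgrp} the action of $\Waff$ on $\cA_{x_{0}}$ induces an isomorphism $\Waff \isoarrow W_{\Krel}$, and $s_H^{2}$ acts as the identity in $W_{\Krel}$, hence equals $1$ in $\Waff \subset \Wheart$. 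Finally, Choice~\ref{choiceofT}\eqref{T1istheidentitymap} forces $T_1 = \id_{\rho_M}$ and hence $\ciso{x}{1} = \id_{\ind_{K_x}^{G(F)}(\rho_x)}$ (cf.\ Remark~\ref{remarkaboutphix1}), so
\[
(\Phi_{x, s_H})^{2} = \muT(s_H, s_H) \cdot \Theta^{\normal}_{x \mid s_H x} \circ \Theta^{\normal}_{s_H x \mid x},
\]
which is the claim. No step here looks like a serious obstacle; the only thing worth double-checking is the bookkeeping of which point plays which role when invoking Lemma~\ref{lemmathetaccommute}, but the identification is forced by matching domains and codomains of the induced representations.
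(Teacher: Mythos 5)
Your proof is correct and follows exactly the same route as the paper: unfold $\Phi_{x,s_H}$ via Definition~\ref{definitionPhixw}, commute $\Theta^{\normal}$ past the $\ciso{}{}$-operator using Lemma~\ref{lemmathetaccommute}, then combine the two $\ciso{}{}$-operators via Lemma~\ref{lemmatransitivityexceptfottheta} and use $T_1 = \id_{\rho_M}$ to get $\ciso{x}{1} = \id$. The only cosmetic difference is that the paper writes $s^{-1}x$ throughout and substitutes $s^{-1}=s$ at the very end, whereas you make the identification upfront; the substance is identical.
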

\begin{proof}
We write $s = s_{H}$.
Since $s^{2} = 1$, Lemma~\ref{lemmathetaccommute} and Lemma~\ref{lemmatransitivityexceptfottheta} imply that
\begin{align*}
\left(
\Phi_{x, s}
\right)^{2}
&= 
\ciso{s^{-1} x}{s}
\circ \Theta^{\normal}_{s^{-1} x \mid x}
\circ \ciso{s^{-1} x}{s}
\circ \Theta^{\normal}_{s^{-1} x \mid x} \\
&= 
\ciso{s^{-1} x}{s}
\circ \ciso{x}{s}
\circ \Theta^{\normal}_{x \mid s^{-1} x}
\circ \Theta^{\normal}_{s^{-1} x \mid x} \\
&=
\muT(s, s) \cdot \ciso{x}{1}
\circ \Theta^{\normal}_{x \mid s^{-1} x}
\circ \Theta^{\normal}_{s^{-1} x \mid x} \\
&=
\muT(s, s) \cdot \Theta^{\normal}_{x \mid s^{-1} x}
\circ \Theta^{\normal}_{s^{-1} x \mid x}  \\
&=
\muT(s, s) \cdot \Theta^{\normal}_{x \mid s x}
\circ \Theta^{\normal}_{s x \mid x}.
\qedhere
\end{align*}
\end{proof}
\begin{lemma}
\label{lemmaqHisnonzero}
We assume Axioms \ref{axiomaboutHNheartandK}, \ref{axiombijectionofdoublecoset}, \ref{axiomexistenceofRgrp}, and \ref{axiomaboutdimensionofend}.
Let $H \in \mathfrak{H}_{\Krel}$ and $x \in \cA_{\gen}$ such that $\mathfrak{H}_{x, s_{H} x} = \{ H \}$,
and let $q_{x, H}$ be as in Corollary~\ref{corollaryweakversionofquadraticrelation}.
Then $q_{x, H} = \nobreak \muT(s_H, s_H)$.
In particular, the scalar $q_{x, H}$ is invertible and independent of $x$.
\end{lemma}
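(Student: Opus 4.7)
The plan is to exploit the identity from Lemma~\ref{lemmaphivstheta} and then evaluate both sides of the quadratic relation at the identity element of $G(F)$ in the Hecke-algebra picture.

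First, by Lemma~\ref{lemmaphivstheta},
\[
\left(\Phi_{x,s_H}\right)^{2}
=
\muT(s_H,s_H)\cdot \Theta^{\normal}_{x\mid s_H x}\circ \Theta^{\normal}_{s_H x\mid x}.
\]
On the other hand, by Corollary~\ref{corollaryweakversionofquadraticrelation},
\[
\left(\Phi_{x,s_H}\right)^{2} = p_{x,H}\cdot \Phi_{x,s_H} + q_{x,H}\cdot \Phi_{x,1}.
\]
Transport both sides to $\cH(G(F),\rho_x)$ via \eqref{heckevsend} and evaluate the resulting functions at $1\in G(F)$. For the right-hand side, the support of $\varphi_{x,s_H}$ is $K_x s_H K_x$ by Lemma~\ref{supportofphiixw}, which does not contain $1$ (because $s_H \notin K_M\subset K_x$ would force $s_H=1$ by Remark~\ref{remarkaboutaxiomaboutdimensionofend}, contradicting that $s_H$ acts non-trivially on $\cA_\Krel$), so $\varphi_{x,s_H}(1)=0$. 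The element $\varphi_{x,1}$, by Lemma~\ref{supportofphiixw} applied with $n=1$ (recalling $T_1=\id$ from Choice~\ref{choiceofT}\eqref{T1istheidentitymap}), satisfies $\varphi_{x,1}(1)=\id_{V_{\rho_x}}$. Hence the right-hand side evaluated at $1$ equals $q_{x,H}\cdot \id_{V_{\rho_x}}$.

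For the left-hand side, using \eqref{eqvarphifromPhi} and Definition~\ref{definitionnormalize},
\[
\bigl(\Theta^{\normal}_{x\mid s_H x}\circ \Theta^{\normal}_{s_H x\mid x}\bigr)(f_v)(1)
=
\left|K_{x}/(K_{x}\cap K_{s_Hx})\right|^{1/2}\left|K_{s_H x}/(K_{x}\cap K_{s_H x})\right|^{1/2}\cdot \bigl(\Theta_{x\mid s_H x}\circ \Theta_{s_H x\mid x}\bigr)(f_v)(1),
\]
and by Corollary~\ref{corollarycalculationofconstantterm} the last factor equals $\left|K_{s_H x}/(K_{x}\cap K_{s_H x})\right|^{-1} v$. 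The key observation is that conjugation by any lift of $s_H$ in $\Nheart$ takes $K_x$ to $K_{s_H x}$ by Axiom~\ref{axiomaboutHNheartandK}\eqref{axiomaboutHNheartandKaboutconjugation} and fixes the intersection $K_x\cap K_{s_H x}$ setwise (since $s_H^2=1$), so the two index factors above are equal as integers, and their images in $\Coeff$ cancel the remaining inverse factor. This collapses the whole expression to $v$. Thus the left-hand side evaluated at $1$ equals $\muT(s_H,s_H)\cdot \id_{V_{\rho_x}}$.

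Comparing the two evaluations gives $q_{x,H}=\muT(s_H,s_H)$. Invertibility is immediate since $\muT$ takes values in $\Coeff^{\times}$ by Notation~\ref{notationofthetwococycle}, and independence of $x$ follows because $\muT(s_H,s_H)$ does not involve $x$. The only non-routine step is the equality of the two index factors under conjugation; I do not anticipate any real obstacle, as it is a direct consequence of Axiom~\ref{axiomaboutHNheartandK}\eqref{axiomaboutHNheartandKaboutconjugation} combined with $s_H^{2}=1$.
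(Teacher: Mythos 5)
Your proof is correct and follows essentially the same route as the paper's: both invoke Lemma~\ref{lemmaphivstheta}, evaluate at the identity via Corollary~\ref{corollarycalculationofconstantterm}, and show the normalization factors cancel using Axiom~\ref{axiomaboutHNheartandK}\eqref{axiomaboutHNheartandKaboutconjugation} together with $s_H^2 = 1$. The only cosmetic difference is that you phrase the evaluation in the function picture $\cH(G(F),\rho_x)$ while the paper evaluates the endomorphism directly on $f_v$ at $1 \in G(F)$, and your justification that $\varphi_{x,s_H}(1)=0$ would be more cleanly cited to Proposition~\ref{propositiondoublecosetinjection} (or Lemma~\ref{supportofphiixw}) than to Remark~\ref{remarkaboutaxiomaboutdimensionofend}, but the underlying fact is the same.
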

\begin{proof}
We write $s = s_{H}$.
For $v \in V_{\rho_{x}}$, we define the element $f_{v} \in \ind_{K_{x}}^{G(F)} \left( V_{\rho_{x}} \right)$ by
\[
f_{v}(g) =
\begin{cases}
\rho_{x}(g) (v) & (g \in K_{x}), \\
0 & \text{(otherwise)}. 
\end{cases}
\]
Then, according to Corollary~\ref{corollarycalculationofconstantterm}, we have 
\begin{align*}
\left(
\left(
\Theta_{x \mid s x} \circ \Theta_{s x \mid x}
\right)(f_{v})
\right)(1) = \abs{
K_{s x}/\left(
K_{x} \cap K_{s x}
\right)
}^{-1} v.
\end{align*}
Hence, we obtain from Definition~\ref{definitionnormalize} and Axiom~\ref{axiomaboutHNheartandK}\eqref{axiomaboutHNheartandKaboutconjugation} that
\[
\left(
\left(
\Theta^{\normal}_{x \mid s x} \circ \Theta^{\normal}_{s x \mid x}
\right)(f_{v})
\right)(1) = \abs{K_{x}/ \left(
		K_{x} \cap K_{s x}
		\right)
	}^{1/2} \abs{
K_{s x}/\left(
K_{x} \cap K_{s x}
\right)
}^{-1/2} v = v.
\]
Combining this with Lemma~\ref{lemmaphivstheta}, we obtain that
\addtocounter{equation}{-1}
\begin{subequations}
\begin{align}
\label{constanttermofthesquareforrelevantH}
\left(
\left(
\Phi_{x, s}^{2}
\right)(f_{v})
\right)(1)
= \muT(s, s) \cdot v.
\end{align}
\end{subequations}
Substituting 
\(
\left(
\Phi_{x, s}
\right)^{2} \) by \( p_{x, H} \cdot \Phi_{x, s} + q_{x, H} \cdot \Phi_{x, 1}
\)
in \eqref{constanttermofthesquareforrelevantH}, we obtain
$
q_{x, H} \cdot v = \muT(s, s) \cdot v
$.
Thus, 
$
q_{x, H} = \muT(s, s)  \in \Coeff^\times
$, as desired.
\end{proof}
\begin{corollary}
\label{corollaryPhixsinvertible}
We assume Axioms \ref{axiomaboutHNheartandK}, \ref{axiombijectionofdoublecoset}, \ref{axiomexistenceofRgrp}, and \ref{axiomaboutdimensionofend}.
Let $H \in \mathfrak{H}_{\Krel}$ and $x \in \cA_{\gen}$ such that $\mathfrak{H}_{x, s_{H} x} = \{ H \}$.
Then the endomorphism $\Phi_{x, s_{H}}$ is invertible.
\end{corollary}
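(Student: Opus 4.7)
The plan is to derive invertibility of $\Phi_{x, s_{H}}$ directly from the quadratic relation established in Corollary~\ref{corollaryweakversionofquadraticrelation} together with the computation of its constant coefficient in Lemma~\ref{lemmaqHisnonzero}. Since $\Phi_{x, 1}$ is the identity endomorphism on $\ind_{K_{x}}^{G(F)}(\rho_{x})$ (see Remark~\ref{remarkaboutphix1}), the quadratic relation
\[
(\Phi_{x, s_{H}})^{2} = p_{x, H} \cdot \Phi_{x, s_{H}} + q_{x, H} \cdot \Phi_{x, 1}
\]
can be rewritten as
\[
\Phi_{x, s_{H}} \circ \bigl(\Phi_{x, s_{H}} - p_{x, H} \cdot \id\bigr) = q_{x, H} \cdot \id,
\]
and likewise $\bigl(\Phi_{x, s_{H}} - p_{x, H} \cdot \id\bigr) \circ \Phi_{x, s_{H}} = q_{x, H} \cdot \id$ since the two factors commute.

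The key input is that $q_{x, H} = \muT(s_{H}, s_{H}) \in \Coeff^{\times}$ by Lemma~\ref{lemmaqHisnonzero}. Dividing by this invertible scalar yields an explicit two-sided inverse
\[
(\Phi_{x, s_{H}})^{-1} = q_{x, H}^{-1} \cdot \bigl(\Phi_{x, s_{H}} - p_{x, H} \cdot \id\bigr),
\]
which proves the corollary. There is no real obstacle: the work has already been done in Corollary~\ref{corollaryweakversionofquadraticrelation} and Lemma~\ref{lemmaqHisnonzero}, and the proof is essentially a one-line algebraic manipulation of the quadratic relation once the non-vanishing of $q_{x, H}$ is invoked.
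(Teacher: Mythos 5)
Your proof is correct and matches the paper's argument exactly: both rewrite the quadratic relation from Corollary~\ref{corollaryweakversionofquadraticrelation}, invoke Remark~\ref{remarkaboutphix1} and the invertibility of $q_{x,H}$ from Lemma~\ref{lemmaqHisnonzero}, and exhibit $q_{x,H}^{-1}\left(\Phi_{x,s_H} - p_{x,H}\cdot\Phi_{x,1}\right)$ as a two-sided inverse.
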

\begin{proof}
Recall from Remark \ref{remarkaboutphix1} that $\Phi_{x, 1}$ is the identity endomorphism. Hence by Corollary~\ref{corollaryweakversionofquadraticrelation} and Lemma~\ref{lemmaqHisnonzero}, the element
\(
q_{x, H}^{-1} \cdot \left(
\Phi_{x, s_{H}} - p_{x, H} \cdot \Phi_{x, 1}
\right)
\)
is a left and right inverse of $\Phi_{x, s_{H}}$.
\end{proof}

From this we can deduce that more generally all endomorphism $\Phi_{x, w}$ with $w \in \Wheart$ are invertible as well as the operators $\Theta_{x \mid y}$ for all $x, y \in \cA_{\gen}$ that we used to define $\Phi_{x, w}$. 

\begin{proposition} \label{propositionThetaisomorphism}
We assume Axioms \ref{axiomaboutHNheartandK}, \ref{axiombijectionofdoublecoset}, \ref{axiomexistenceofRgrp}, and \ref{axiomaboutdimensionofend}.
Let $x, y \in \cA_{\gen}$.
Then the operators $\Theta_{y \mid x}$ and $\Theta^{\normal}_{y \mid x}$ are isomorphisms.
\end{proposition}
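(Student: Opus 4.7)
The plan is to reduce to the case where $x$ and $y$ are separated by a single hyperplane, then split on whether that hyperplane is $\cK$-relevant. Since $\Theta^{\normal}_{y\mid x}$ differs from $\Theta_{y\mid x}$ by the scalar $\abs{K_y/(K_x\cap K_y)}^{1/2}$, which is invertible in $\Coeff$ by Lemma~\ref{lemmaquotientnonzeromodell}, it suffices to prove the assertion for $\Theta^{\normal}_{y\mid x}$. For the reduction, pick a generic straight segment from $x$ to $y$ in $\cA_{x_{0}}$ crossing the hyperplanes of $\mathfrak{H}_{x,y}$ one at a time, and choose intermediate points $x=x_{0},\dots,x_{n}=y\in\cA_{\gen}$ with $\#\mathfrak{H}_{x_{i},x_{i+1}}=1$ and $\sum_{i}d(x_{i},x_{i+1})=d(x,y)$. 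Iterated application of Lemma~\ref{lemmanormalizedtransitivityofthetadver} then writes $\Theta^{\normal}_{y\mid x}$ as the composition of the single-step operators, so the problem reduces to the case $\mathfrak{H}_{x,y}=\{H\}$.

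Suppose first $H\notin\mathfrak{H}_{\Krel}$; then Lemma~\ref{normalizedlemmalength1case} gives $\Theta^{\normal}_{x\mid y}\circ\Theta^{\normal}_{y\mid x}=\id$ and similarly for the opposite composition, so $\Theta^{\normal}_{y\mid x}$ is an isomorphism. Record also the following observation for later use: whenever $d_{\Krel}(x',y')=0$ (e.g.\ whenever $\mathfrak{H}_{x',y'}=\emptyset$), Proposition~\ref{transitivityofThetaGammaver} applied with $z=x'$ yields $\Theta^{\normal}_{x'\mid y'}\circ\Theta^{\normal}_{y'\mid x'}=\Theta^{\normal}_{x'\mid x'}=\id$ (and likewise the reverse), so $\Theta^{\normal}_{y'\mid x'}$ is an isomorphism.

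The main step is the case $H\in\mathfrak{H}_{\Krel}$. Here $x$ and $y$ lie in adjacent chambers of $\cA_{x_{0}}\smallsetminus\bigcup\mathfrak{H}$ sharing the wall $H$. Using local finiteness of $\mathfrak{H}$, pick an interior point $p$ of that wall lying on no other hyperplane, and let $x^{*}\in\cA_{\gen}$ be a small inward perturbation of $p$ into the chamber of $x$; then $y^{*}:=s_{H}x^{*}\in\cA_{\gen}$ sits in the chamber of $y$, with $\mathfrak{H}_{x,x^{*}}=\mathfrak{H}_{y^{*},y}=\emptyset$ and $\mathfrak{H}_{x^{*},y^{*}}=\{H\}$. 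By the previous paragraph, $\Theta^{\normal}_{x^{*}\mid x}$ and $\Theta^{\normal}_{y\mid y^{*}}$ are isomorphisms. For the middle factor $\Theta^{\normal}_{y^{*}\mid x^{*}}$: by Corollary~\ref{corollaryPhixsinvertible} extended from simple reflections to arbitrary $s_{H}$ via Remark~\ref{remarkaboutaxiomaboutdimensionofend}, the endomorphism $\Phi_{x^{*},s_{H}}$ is invertible, hence so is its square $\Phi_{x^{*},s_{H}}^{2}=\muT(s_{H},s_{H})\cdot\Theta^{\normal}_{x^{*}\mid y^{*}}\circ\Theta^{\normal}_{y^{*}\mid x^{*}}$ (Lemma~\ref{lemmaphivstheta}). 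Applying the same reasoning with $x^{*}$ replaced by $y^{*}$ (note that $\mathfrak{H}_{y^{*},s_{H}y^{*}}=\mathfrak{H}_{y^{*},x^{*}}=\{H\}$) shows that $\Theta^{\normal}_{y^{*}\mid x^{*}}\circ\Theta^{\normal}_{x^{*}\mid y^{*}}$ is also invertible. Standard linear algebra forces both $\Theta^{\normal}_{y^{*}\mid x^{*}}$ and $\Theta^{\normal}_{x^{*}\mid y^{*}}$ to be isomorphisms.

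Finally, along the chain $x\to x^{*}\to y^{*}\to y$ the $d$-distances sum as $0+1+0=1=d(x,y)$, so two applications of Lemma~\ref{lemmanormalizedtransitivityofthetadver} yield
\[
\Theta^{\normal}_{y\mid x}=\Theta^{\normal}_{y\mid y^{*}}\circ\Theta^{\normal}_{y^{*}\mid x^{*}}\circ\Theta^{\normal}_{x^{*}\mid x},
\]
a composition of three isomorphisms. The main obstacle is the $\cK$-relevant single-hyperplane case when $y\neq s_{H}x$: the input from Corollary~\ref{corollaryPhixsinvertible} is available only for configurations of the form $(x^{*},s_{H}x^{*})$ satisfying $\mathfrak{H}_{x^{*},s_{H}x^{*}}=\{H\}$, and the ``localize at $H$'' trick above is what brings the general pair $(x,y)$ into that special form.
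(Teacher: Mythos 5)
Your proof is correct and takes essentially the same approach as the paper's: reduce to $d(x,y)=1$ by composing one-step operators via Lemma~\ref{lemmanormalizedtransitivityofthetadver}, handle the non-$\cK$-relevant case via Lemma~\ref{normalizedlemmalength1case}, and for $\cK$-relevant $H$ replace $(x,y)$ by an adapted pair $(x^{*},s_Hx^{*})$ near an interior point of the wall and invoke the invertibility of $\Phi_{x^{*},s_H}$ from Corollary~\ref{corollaryPhixsinvertible}. The only (harmless) detour is your handling of the middle factor: you route through $(\Phi_{x^{*},s_H})^{2}$, Lemma~\ref{lemmaphivstheta}, and an ``$AB$ and $BA$ both invertible'' argument, whereas the paper directly observes that $\Theta^{\normal}_{s_Hx'\mid x'} = (\ciso{s_Hx'}{s_H})^{-1}\circ\Phi_{x',s_H}$ is already a composite of an isomorphism with the invertible $\Phi_{x',s_H}$; also, Corollary~\ref{corollaryPhixsinvertible} is stated for arbitrary $H\in\mathfrak{H}_{\Krel}$, so no separate ``extension via Remark~\ref{remarkaboutaxiomaboutdimensionofend}'' is actually needed at the point you invoke it.
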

\begin{proof}
It suffices to show that $\Theta^{\normal}_{y \mid x}$ is an isomorphism.
If $d(x, y) = 0$, according to Lemma~\ref{lemmanormalizedtransitivityofthetadver}, we have 
\[
\Theta^{\normal}_{x \mid y} \circ \Theta^{\normal}_{y \mid x} = \Theta^{\normal}_{x \mid x} = \id_{\ind_{K_{x}}^{G(F)}(\rho_{x})}
\quad
\text{ and }
\quad
\Theta^{\normal}_{y \mid x} \circ \Theta^{\normal}_{x \mid y} = \Theta^{\normal}_{y \mid y} = \id_{\ind_{K_{y}}^{G(F)}(\rho_{y})}.
\]
Hence $\Theta^{\normal}_{y \mid x}$ is an isomorphism.
Suppose that $d(x, y) \ge1$.
According to Lemma~\ref{lemmanormalizedtransitivityofthetadver}, we can take points
\[
x_{1} = x, x_{2}, x_{3}, \ldots, x_{t} = y \in \cA_{\gen}
\]
such that $d(x_{i}, x_{i+1}) = 1$ for all $1 \le i < t$, and
\[
\Theta^{\normal}_{y \mid x} = \Theta^{\normal}_{x_{t} \mid x_{t-1}} \circ \cdots \circ \Theta^{\normal}_{x_{2} \mid x_{1}}.
\]
Thus, to prove the proposition, it suffices to show that $\Theta^{\normal}_{y \mid x}$ is an isomorphism for all $x, y \in \cA_{\gen}$ such that $d(x, y) = 1$, so we assume $d(x, y) = 1$.
Let $H$ denote the unique affine hyperplane in $\cA_{x_{0}}$ such that $x$ and $y$ are on opposite sides of $H$.
If $H$ is not $\cK$-relevant, Lemma~\ref{normalizedlemmalength1case} implies that $\Theta^{\normal}_{y \mid x}$ is an isomorphism.
Suppose that $H$ is $\cK$-relevant.
We write $s = s_{H}$.
Then there exists $x' \in \cA_{\gen}$ such that $d(x, x') = d(y, s x') = 0$ and $d(x', s x') = 1$.
By Lemma~\ref{lemmanormalizedtransitivityofthetadver}, we have
\[
\Theta^{\normal}_{y \mid x} = \Theta^{\normal}_{y \mid s x'} \circ \Theta^{\normal}_{s x' \mid x'} \circ \Theta^{\normal}_{x' \mid x}.
\]
We already showed that the operators $\Theta^{\normal}_{x' \mid x}$ and $\Theta^{\normal}_{y \mid s x'} $ are isomorphisms.
Moreover, since $\ciso{sx'}{s}$ is an isomorphism, it follows from Corollary~\ref{corollaryPhixsinvertible} that the operator $\Theta^{\normal}_{s x' \mid x'} = (\ciso{sx'}{s})^{-1} \circ \Phi_{x', s}$ is also an isomorphism. 
Thus $\Theta^{\normal}_{y \mid x}$ is also an isomorphism.
\end{proof}

\begin{corollary}
	\label{corollaryPhixwinvertible}
	We assume Axioms \ref{axiomaboutHNheartandK}, \ref{axiombijectionofdoublecoset}, \ref{axiomexistenceofRgrp}, and \ref{axiomaboutdimensionofend}. For $x \in \cA_{\gen}$ and $w \in \nobreak \Wheart$, the endomorphism $\Phi_{x, w}$ is invertible.
\end{corollary}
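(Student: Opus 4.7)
The plan is to unwind the definition of $\Phi_{x, w}$ and observe that it is the composition of two isomorphisms, so the proof is essentially immediate from results already established. Recall from Definition \ref{definitionPhixw} that
\[
\Phi_{x, w} = \ciso{w^{-1} x}{w} \circ \Theta^{\normal}_{w^{-1} x \mid x},
\]
so it suffices to show that each of the two factors is an isomorphism.

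First, the map $\ciso{w^{-1} x}{w}$ is an isomorphism essentially by construction: it is defined (see Definition~\ref{definiotionofciso} and \eqref{eq:defofciso}) by picking a lift $n \in \Nheart$ of $w$ and sending $f \mapsto (g \mapsto T_n(f(n^{-1}g)))$, where $T_n$ is one of the fixed non-zero elements from Choice~\ref{choiceofT}. Since $T_n$ is an isomorphism in $\Hom_{K_M}({^n\rho_M},\rho_M)$ (both sides being the same one-dimensional space acted on by irreducible representations, so $T_n \in \End_\Coeff(V_{\rho_M})^\times$), and since translation by $n$ on functions is a bijection, the map $\ciso{w^{-1}x}{w}$ is invertible with explicit inverse given by $T_n^{-1}$ and translation by $n^{-1}$.

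Second, the intertwining operator $\Theta^{\normal}_{w^{-1}x \mid x}$ is an isomorphism by Proposition~\ref{propositionThetaisomorphism}, which is available precisely because we are assuming all four axioms (Axioms~\ref{axiomaboutHNheartandK}, \ref{axiombijectionofdoublecoset}, \ref{axiomexistenceofRgrp}, and \ref{axiomaboutdimensionofend}). Note that $w^{-1}x \in \cA_{\gen}$ because $\Wheart$ preserves $\cA_{\gen}$, as follows from Axiom~\ref{axiomaboutHNheartandK}\eqref{axiomaboutHNheartandKNinvarianceofH} together with the fact that $\Wheart$ acts on $\cA_{x_0}$ through the action of $\Nheart$.

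Composing the two isomorphisms yields that $\Phi_{x, w}$ is an isomorphism. There is no serious obstacle here; the real work was already carried out in Proposition~\ref{propositionThetaisomorphism}, whose proof relied on Corollary~\ref{corollaryPhixsinvertible} (the invertibility of $\Phi_{x, s_H}$ for simple reflections), which in turn used the quadratic relation together with the non-vanishing of $q_{x,H}$ established in Lemma~\ref{lemmaqHisnonzero}.
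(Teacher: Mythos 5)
Your argument is correct and matches the paper's proof exactly: both decompose $\Phi_{x,w}$ as $\ciso{w^{-1}x}{w} \circ \Theta^{\normal}_{w^{-1}x\mid x}$, use that $\ciso{w^{-1}x}{w}$ is an isomorphism by construction, and invoke Proposition~\ref{propositionThetaisomorphism} for $\Theta^{\normal}_{w^{-1}x\mid x}$. The extra detail you supply (why $T_n$ is invertible, why $w^{-1}x \in \cA_{\gen}$) is harmless elaboration of what the paper leaves implicit.
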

\begin{proof}
 The claim follows from the definition of $\Phi_{x, w}$ as $\ciso{w^{-1} x}{w} \circ \Theta^{\normal}_{w^{-1} x \mid x}$, where $\ciso{w^{-1} x}{w}$ is an isomorphism and Proposition \ref{propositionThetaisomorphism}.
\end{proof}

In order to show that the coefficient $p_{x,H}$ of the quadratic relation in Corollary \ref{corollaryweakversionofquadraticrelation} is non-zero and independent of $x$, 
 we need a few more lemmas.
\begin{lemma}
\label{lemmafromlemmaaboutdistancefunctionrelandtransitivityofThetaGammaver}
Let $H \in \mathfrak{H}_{\Krel}$ and let $x, x', y \in \cA_{\gen}$.
We assume that the points $x$ and $x'$ are on the same side of $H$ and $\mathfrak{H}_{\Krel; x, y} = \{ H \}$.
Then we have
\[
\Theta^{\normal}_{x' \mid x} \circ \Theta^{\normal}_{x \mid y} = \Theta^{\normal}_{x' \mid y}
\quad 
\text{ and }
\quad
\Theta^{\normal}_{y \mid x} \circ \Theta^{\normal}_{x \mid x'} = \Theta^{\normal}_{y \mid x'}.
\]
\end{lemma}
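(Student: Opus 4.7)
The plan is to derive both identities directly from Proposition~\ref{transitivityofThetaGammaver}, the transitivity statement for normalized intertwining operators in the $\cK$-relevant setting. Concretely, it suffices in each case to verify the additivity of the distance function $d_{\Krel}$ and then to invoke that proposition.

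For the first identity, I would apply Proposition~\ref{transitivityofThetaGammaver} to the triple $(y, x, x')$: this yields
\[
\Theta^{\normal}_{x' \mid x} \circ \Theta^{\normal}_{x \mid y} = \Theta^{\normal}_{x' \mid y}
\]
provided that $d_{\Krel}(y, x) + d_{\Krel}(x, x') = d_{\Krel}(y, x')$. To verify this, I would use the equivalence of conditions (a) and (c) in Lemma~\ref{lemmaaboutdistancefunctionrel}, which reduces the additivity to the disjointness
\[
\mathfrak{H}_{\Krel; y, x} \cap \mathfrak{H}_{\Krel; x, x'} = \emptyset.
\]
By the assumption $\mathfrak{H}_{\Krel; x, y} = \{H\}$ we have $\mathfrak{H}_{\Krel; y, x} = \{H\}$, and the hypothesis that $x$ and $x'$ lie on the same side of $H$ gives $H \notin \mathfrak{H}_{x, x'}$, hence $H \notin \mathfrak{H}_{\Krel; x, x'}$. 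The intersection is therefore empty, as required.

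The second identity is established by the symmetric argument: apply Proposition~\ref{transitivityofThetaGammaver} to the triple $(x', x, y)$ and verify
\[
\mathfrak{H}_{\Krel; x', x} \cap \mathfrak{H}_{\Krel; x, y} = \emptyset
\]
by exactly the same reasoning (the second set is $\{H\}$, and $H \notin \mathfrak{H}_{\Krel; x', x}$). This yields $d_{\Krel}(x', x) + d_{\Krel}(x, y) = d_{\Krel}(x', y)$, and Proposition~\ref{transitivityofThetaGammaver} then gives the claim. There is no serious obstacle: once Lemma~\ref{lemmaaboutdistancefunctionrel} and Proposition~\ref{transitivityofThetaGammaver} are in hand, the lemma is a short bookkeeping consequence which is recorded here because the stronger additivity hypothesis of Lemma~\ref{lemmanormalizedtransitivityofthetadver} (involving $d$ rather than $d_{\Krel}$) need not hold in this generality.
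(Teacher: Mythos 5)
Your proof is correct and follows the same route as the paper: establish $\mathfrak{H}_{\Krel; x, x'} \cap \mathfrak{H}_{\Krel; x, y} = \emptyset$ from the hypotheses, convert this to additivity of $d_{\Krel}$ via Lemma~\ref{lemmaaboutdistancefunctionrel}, and apply Proposition~\ref{transitivityofThetaGammaver} to the two triples. The paper's write-up is simply more compressed, noting the single disjointness fact once and then reading off both additivity equalities, whereas you spell out the two instantiations of the proposition separately; the content is identical.
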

\begin{proof}
Since the points $x$ and $x'$ are on the same side of $H$ and 
$
\mathfrak{H}_{\Krel; x, y} = \{ H \}
$,
we have
\(
\mathfrak{H}_{\Krel; x, x'} \cap \mathfrak{H}_{\Krel; x, y} = \emptyset.
\)
Then Lemma~\ref{lemmaaboutdistancefunctionrel} implies that
\(
d_{\Krel}(x', x) + d_{\Krel}(x, y) = d_{\Krel}(x', y)
\)
and
\(
d_{\Krel}(y, x) + d_{\Krel}(x, x') = d_{\Krel}(y, x').
\)
Hence, the claim follows from Proposition~\ref{transitivityofThetaGammaver}.
\end{proof}

 \begin{lemma}
 \label{lemmaaboutreplacingpoints}
Let $H \in \mathfrak{H}_{\Krel}$ and let $x, x', y, y' \in \cA_{\gen}$ such that $\mathfrak{H}_{\Krel; x, y} = \mathfrak{H}_{\Krel; x', y'} = \{ H \}$.
Then the following diagram commutes:
\begin{equation} 
\xymatrix@R+1pc@C+1pc{
\ind_{K_{x}}^{G(F)} (\rho_{x})
	\ar[d]_-{\Theta^{\normal}_{y \mid x}}
	\ar[r]^-{\Theta^{\normal}_{x' \mid x}}
	\ar@{}[dr]|\circlearrowleft
& \ind_{K_{x'}}^{G(F)} (\rho_{x'})
	\ar[d]^-{\Theta^{\normal}_{y' \mid x'}}\\
\ind_{K_{y}}^{G(F)} (\rho_{y})
	\ar[r]^-{\Theta^{\normal}_{y' \mid y}}
& \ind_{K_{y'}}^{G(F)} (\rho_{y'}).
}
\end{equation}
\end{lemma}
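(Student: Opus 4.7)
My plan is to apply Proposition \ref{transitivityofThetaGammaver} twice, with the choice of intermediate point dictated by the relative position of $x$ and $x'$ with respect to $H$. The hypothesis forces $H \in \mathfrak{H}_{\Krel; x, y} \cap \mathfrak{H}_{\Krel; x', y'}$, so $x$ and $y$ lie on opposite sides of $H$, and likewise $x'$ and $y'$; beyond this, the data leaves open whether $x$ and $x'$ lie on the same side. I will therefore split into two cases: (a) $x$ and $x'$ on the same side of $H$ (equivalently, $y$ and $y'$ on the same side), and (b) $x$ and $x'$ on opposite sides of $H$ (equivalently, $y$ and $x'$ on the same side).

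In case (a), I expect both sides of the square to reduce to $\Theta^{\normal}_{y' \mid x}$. The point is that $H \notin \mathfrak{H}_{\Krel; x, x'}$, so combined with $\mathfrak{H}_{\Krel; x', y'} = \{H\}$ one obtains $\mathfrak{H}_{\Krel; x, x'} \cap \mathfrak{H}_{\Krel; x', y'} = \emptyset$; Lemma \ref{lemmaaboutdistancefunctionrel} then gives the $d_{\Krel}$-additivity, and Proposition \ref{transitivityofThetaGammaver} yields $\Theta^{\normal}_{y' \mid x'} \circ \Theta^{\normal}_{x' \mid x} = \Theta^{\normal}_{y' \mid x}$. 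The symmetric argument using $H \notin \mathfrak{H}_{\Krel; y, y'}$ shows $\Theta^{\normal}_{y' \mid y} \circ \Theta^{\normal}_{y \mid x} = \Theta^{\normal}_{y' \mid x}$, and the two sides coincide.

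In case (b), rather than collapsing the compositions, I plan to insert the intermediate operator $\Theta^{\normal}_{x' \mid y}$ and show that both sides equal the triple composition $\Theta^{\normal}_{y' \mid x'} \circ \Theta^{\normal}_{x' \mid y} \circ \Theta^{\normal}_{y \mid x}$. The key observation is that $y$ and $x'$ lie on the same side of $H$, so $H \notin \mathfrak{H}_{\Krel; y, x'}$; combined with $\mathfrak{H}_{\Krel; x, y} = \mathfrak{H}_{\Krel; x', y'} = \{H\}$, this simultaneously gives both $\mathfrak{H}_{\Krel; x, y} \cap \mathfrak{H}_{\Krel; y, x'} = \emptyset$ and $\mathfrak{H}_{\Krel; y, x'} \cap \mathfrak{H}_{\Krel; x', y'} = \emptyset$. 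Two applications of Proposition \ref{transitivityofThetaGammaver} then yield $\Theta^{\normal}_{x' \mid y} \circ \Theta^{\normal}_{y \mid x} = \Theta^{\normal}_{x' \mid x}$ and $\Theta^{\normal}_{y' \mid x'} \circ \Theta^{\normal}_{x' \mid y} = \Theta^{\normal}_{y' \mid y}$, and associativity of composition finishes the argument.

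The one step requiring care is the case split together with the recognition that the appropriate intermediate point in case (b) is one of the four given points (namely $y$, or equivalently $x'$) rather than some auxiliary element of $\cA_{\gen}$; once this is in place, neither case requires more than a direct verification that the relevant sets $\mathfrak{H}_{\Krel;\,\cdot,\cdot}$ are disjoint, and Proposition \ref{transitivityofThetaGammaver} does the rest.
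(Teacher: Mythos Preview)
Your proof is correct and follows essentially the same approach as the paper: the same two-case split according to whether $x$ and $x'$ lie on the same side of $H$, and in each case the same applications of Proposition~\ref{transitivityofThetaGammaver} with the same intermediate points. The only cosmetic difference is that the paper packages the combination ``disjointness of $\mathfrak{H}_{\Krel;\,\cdot,\cdot}$ $\Rightarrow$ $d_{\Krel}$-additivity $\Rightarrow$ transitivity'' into the preceding Lemma~\ref{lemmafromlemmaaboutdistancefunctionrelandtransitivityofThetaGammaver}, whereas you invoke Lemma~\ref{lemmaaboutdistancefunctionrel} and Proposition~\ref{transitivityofThetaGammaver} directly.
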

\begin{proof}
\addtocounter{equation}{-1}
\begin{subequations}
First, we consider the case that $x$ and $x'$ are on the same side of $H$.
Then our assumptions and Lemma~\ref{lemmafromlemmaaboutdistancefunctionrelandtransitivityofThetaGammaver} imply that
\[
\Theta^{\normal}_{y' \mid x'} \circ \Theta^{\normal}_{x' \mid x} = \Theta^{\normal}_{y' \mid x} = \Theta^{\normal}_{y' \mid y} \circ \Theta^{\normal}_{y \mid x}.
\]
Thus, we obtain the claim.

Next, we consider the case that $x$ and $x'$ are on opposite sides of $H$.
In this case, the points $x$ and $y'$ are on the same side of $H$, and the points $y$ and $x'$ are on the same side of $H$.
Then according to Lemma~\ref{lemmafromlemmaaboutdistancefunctionrelandtransitivityofThetaGammaver}, we have
\begin{equation}
\label{equationfromlemmalemmafromlemmaaboutdistancefunctionreland34}
\Theta^{\normal}_{x' \mid y} \circ \Theta^{\normal}_{y \mid x} = \Theta^{\normal}_{x' \mid x}
\quad\text{ and} 
\quad
\Theta^{\normal}_{y' \mid x'} 
\circ \Theta^{\normal}_{x' \mid y} = \Theta^{\normal}_{y' \mid y}.
\end{equation}
Combining the equations in \eqref{equationfromlemmalemmafromlemmaaboutdistancefunctionreland34}, we obtain that
\[
\Theta^{\normal}_{y' \mid x'} \circ \Theta^{\normal}_{x' \mid x} = \Theta^{\normal}_{y' \mid x'} \circ \Theta^{\normal}_{x' \mid y} \circ \Theta^{\normal}_{y \mid x} = \Theta^{\normal}_{y' \mid y}
\circ \Theta^{\normal}_{y \mid x}.
\]
Thus, we also obtain the claim in this case.
\end{subequations}
\end{proof}

Now, we obtain the following corollary of Lemma ~\ref{lemmaaboutreplacingpoints}, that will be used to show that the coefficient $p_{x,H}$ is independent of $x$.
\begin{corollary}
	\label{corollaryPhiconjugatewhenreplacingpoints}
	We assume Axioms \ref{axiomaboutHNheartandK}, \ref{axiombijectionofdoublecoset}, \ref{axiomexistenceofRgrp}, and \ref{axiomaboutdimensionofend}.
	Let $H \in \mathfrak{H}_{\Krel}$ and $x, y \in \nobreak \cA_{\gen}$ such that 
	\[
	\mathfrak{H}_{\Krel; x, s_{H} x} = \mathfrak{H}_{\Krel; y, s_{H} y} = \{ H \}.
	\]
	Then we have
	\[
	\Theta^{\normal}_{y \mid x} \circ \Phi_{x, s_{H}} \circ \left(
	\Theta^{\normal}_{y \mid x}
	\right)^{-1} = \Phi_{y, s_{H}}.
	\]
\end{corollary}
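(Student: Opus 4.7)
The plan is to reduce the claimed conjugation identity to a combination of Lemma~\ref{lemmaaboutreplacingpoints} and Lemma~\ref{lemmathetaccommute}. First, since $\Theta^{\normal}_{y \mid x}$ is an isomorphism (Proposition~\ref{propositionThetaisomorphism}), the claim is equivalent to the identity
\[
\Theta^{\normal}_{y \mid x} \circ \Phi_{x, s_{H}} = \Phi_{y, s_{H}} \circ \Theta^{\normal}_{y \mid x}.
\]
Next, using $s_H^{-1}=s_H$, I unfold the definition
\[
\Phi_{x, s_{H}} = \ciso{s_{H} x}{s_{H}} \circ \Theta^{\normal}_{s_{H} x \mid x},
\qquad
\Phi_{y, s_{H}} = \ciso{s_{H} y}{s_{H}} \circ \Theta^{\normal}_{s_{H} y \mid y},
\]
so the target identity becomes
\[
\Theta^{\normal}_{y \mid x} \circ \ciso{s_{H} x}{s_{H}} \circ \Theta^{\normal}_{s_{H} x \mid x}
\;=\;
\ciso{s_{H} y}{s_{H}} \circ \Theta^{\normal}_{s_{H} y \mid y} \circ \Theta^{\normal}_{y \mid x}.
\]

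The key idea is to move $\Theta^{\normal}_{y \mid x}$ from the left through the two factors on the left-hand side. For the outer $\ciso{\,\cdot\,}{s_{H}}$-factor, I will apply Lemma~\ref{lemmathetaccommute} to the lift $s_H \in \Nheart$ with the lemma's $(x,y)$ replaced by $(s_{H} x, s_{H} y)$; this yields
\[
\Theta^{\normal}_{y \mid x} \circ \ciso{s_{H} x}{s_{H}} \;=\; \ciso{s_{H} y}{s_{H}} \circ \Theta^{\normal}_{s_{H} y \mid s_{H} x}.
\]
For the inner $\Theta^{\normal}$-factor, I will apply Lemma~\ref{lemmaaboutreplacingpoints} with $(x,x',y,y')$ replaced by $(x, y, s_{H} x, s_{H} y)$; the two hypotheses $\mathfrak{H}_{\Krel; x, s_{H} x}=\mathfrak{H}_{\Krel; y, s_{H} y}=\{H\}$ are given, so the lemma supplies
\[
\Theta^{\normal}_{s_{H} y \mid s_{H} x} \circ \Theta^{\normal}_{s_{H} x \mid x} \;=\; \Theta^{\normal}_{s_{H} y \mid y} \circ \Theta^{\normal}_{y \mid x}.
\]
Composing these two displayed equalities on the right with $\Theta^{\normal}_{s_{H} x \mid x}$ and concatenating gives exactly the required identity.

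The only point that requires care is matching the hypotheses of the two lemmas to the data at hand: specifically, checking that the choice of quadruple $(x, y, s_{H} x, s_{H} y)$ verifies the symmetric hypothesis in Lemma~\ref{lemmaaboutreplacingpoints}, and that one may legitimately take $s_H$ as an element of $\Nheart$ (via a chosen lift) in Lemma~\ref{lemmathetaccommute} using Axiom~\ref{axiomexistenceofRgrp}. Apart from this bookkeeping, no calculation is needed, and I do not anticipate any substantive obstacle.
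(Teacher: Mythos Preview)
Your proof is correct and follows essentially the same approach as the paper: both arguments unfold $\Phi_{x,s_H}$ and $\Phi_{y,s_H}$ via Definition~\ref{definitionPhixw}, invoke Lemma~\ref{lemmathetaccommute} to commute $\Theta^{\normal}_{y\mid x}$ past $\ciso{s_H x}{s_H}$, apply Lemma~\ref{lemmaaboutreplacingpoints} to the quadruple $(x,y,s_H x,s_H y)$, and use Proposition~\ref{propositionThetaisomorphism} for invertibility. The only cosmetic difference is that the paper writes $s^{-1}$ where you write $s_H$ (using $s_H^{-1}=s_H$ in $\Wheart$).
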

\begin{proof}
	We write $s = s_{H}$ and recall that by Lemma~\ref{lemmathetaccommute} we have
	\[
	\Theta^{\normal}_{y \mid x} \circ \ciso{s^{-1} x}{s}
	=
	\ciso{s^{-1} y}{s} \circ \Theta^{\normal}_{s^{-1} y \mid s^{-1} x}.
	\]
	Using this identity, Proposition~\ref{propositionThetaisomorphism}, and Lemma ~\ref{lemmaaboutreplacingpoints}, we obtain
	\begin{align*}
	\Theta^{\normal}_{y \mid x} \circ \Phi_{x, s} \circ \left(
	\Theta^{\normal}_{y \mid x}
	\right)^{-1} &= 
	\Theta^{\normal}_{y \mid x} \circ 
	\left(
	\ciso{s^{-1} x}{s} \circ \Theta^{\normal}_{s^{-1} x \mid x}
	\right) \circ \left(
	\Theta^{\normal}_{y \mid x}
	\right)^{-1} \\
	&= \left(
	\Theta^{\normal}_{y \mid x} \circ \ciso{s^{-1} x}{s}
	\right) \circ \Theta^{\normal}_{s^{-1} x \mid x} \circ \left(
	\Theta^{\normal}_{y \mid x}
	\right)^{-1} \\
	&= \left(
	\ciso{s^{-1} y}{s} \circ \Theta^{\normal}_{s^{-1} y \mid s^{-1} x}
	\right) \circ \Theta^{\normal}_{s^{-1} x \mid x} \circ \left(
	\Theta^{\normal}_{y \mid x}
	\right)^{-1} \\
	&= \ciso{s^{-1} y}{s} \circ \left(
	\Theta^{\normal}_{s^{-1} y \mid s^{-1} x} \circ \Theta^{\normal}_{s^{-1} x \mid x} \circ \left(
	\Theta^{\normal}_{y \mid x}
	\right)^{-1}
	\right) \\
	& = \ciso{s^{-1} y}{s} \circ \Theta^{\normal}_{s^{-1} y \mid y} = \Phi_{y, s} . 
	\qedhere
	\end{align*}
\end{proof}

By using Lemma~\ref{lemmaaboutreplacingpoints}, we can also prove the following proposition about $\cK$-relevant hyperplanes.
\begin{proposition}
\label{propositionkreloodequivalentforallorthereexists}
We assume Axioms \ref{axiomaboutHNheartandK}, \ref{axiombijectionofdoublecoset}, \ref{axiomexistenceofRgrp}, and \ref{axiomaboutdimensionofend}.
Let $H \in \mathfrak{H}_{\Krel}$ and let $x, y \in \cA_{\gen}$ such that $\mathfrak{H}_{\Krel; x, y} = \{ H \}$.
Then we have
\[
\Theta_{x \mid y} \circ \Theta_{y \mid x} \notin \Coeff \cdot \id_{\ind_{K_{x}}^{G(F)}(\rho_{x})}.
\]
\end{proposition}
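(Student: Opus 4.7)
The plan is to show that the property ``$\Theta_{x\mid y}\circ\Theta_{y\mid x}$ is non-scalar'' is invariant under changing $(x,y)$ to any other pair $(x',y')$ satisfying $\mathfrak{H}_{\Krel;x',y'}=\{H\}$, and then to invoke $\cK$-relevance of $H$ to produce a particular pair where the composition is known to be non-scalar.

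First, I would observe that, since normalization multiplies by a nonzero scalar (Lemma~\ref{lemmaquotientnonzeromodell} and Definition~\ref{definitionnormalize}), the composition $\Theta_{x\mid y}\circ\Theta_{y\mid x}$ is a scalar if and only if $\Theta^{\normal}_{x\mid y}\circ\Theta^{\normal}_{y\mid x}$ is. Suppose $x,x',y,y'\in\cA_{\gen}$ satisfy $\mathfrak{H}_{\Krel;x,y}=\mathfrak{H}_{\Krel;x',y'}=\{H\}$. Applying Lemma~\ref{lemmaaboutreplacingpoints} to the quadruple $(x,y,x',y')$ gives
\[
\Theta^{\normal}_{y'\mid y}\circ\Theta^{\normal}_{y\mid x}=\Theta^{\normal}_{y'\mid x'}\circ\Theta^{\normal}_{x'\mid x},
\]
and applying it to the quadruple $(y,x,y',x')$ gives
\[
\Theta^{\normal}_{x'\mid x}\circ\Theta^{\normal}_{x\mid y}=\Theta^{\normal}_{x'\mid y'}\circ\Theta^{\normal}_{y'\mid y}.
\]
By Proposition~\ref{propositionThetaisomorphism}, the operators $\Theta^{\normal}_{x'\mid x}$ and $\Theta^{\normal}_{y'\mid y}$ are isomorphisms, so combining the two displayed identities yields
\[
\Theta^{\normal}_{x'\mid y'}\circ\Theta^{\normal}_{y'\mid x'}
=\Theta^{\normal}_{x'\mid x}\circ\bigl(\Theta^{\normal}_{x\mid y}\circ\Theta^{\normal}_{y\mid x}\bigr)\circ\bigl(\Theta^{\normal}_{x'\mid x}\bigr)^{-1}.
\]
Thus $\Theta^{\normal}_{x'\mid y'}\circ\Theta^{\normal}_{y'\mid x'}$ is conjugate to $\Theta^{\normal}_{x\mid y}\circ\Theta^{\normal}_{y\mid x}$, and in particular one is a scalar multiple of the identity if and only if the other is.

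Now, by Definition~\ref{definitionKrelevant} of $\cK$-relevance, there exist $x_1,y_1\in\cA_{\gen}$ with $\mathfrak{H}_{x_1,y_1}=\{H\}$ such that $\Theta_{x_1\mid y_1}\circ\Theta_{y_1\mid x_1}$ is not a scalar. Since $H\in\mathfrak{H}_{\Krel}$ and $\mathfrak{H}_{\Krel;x_1,y_1}=\mathfrak{H}_{\Krel}\cap\mathfrak{H}_{x_1,y_1}$, we have $\mathfrak{H}_{\Krel;x_1,y_1}=\{H\}$. Applying the conjugation result of the previous paragraph to the pair $(x_1,y_1)$ and the given $(x,y)$ then shows that $\Theta^{\normal}_{x\mid y}\circ\Theta^{\normal}_{y\mid x}$ is conjugate (via an isomorphism) to $\Theta^{\normal}_{x_1\mid y_1}\circ\Theta^{\normal}_{y_1\mid x_1}$, which is not a scalar. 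Hence $\Theta_{x\mid y}\circ\Theta_{y\mid x}\notin\Coeff\cdot\id_{\ind_{K_{x}}^{G(F)}(\rho_{x})}$, as desired.

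The proof is essentially a two-line deduction from Lemma~\ref{lemmaaboutreplacingpoints} once one sets up the conjugation correctly; the only mild subtlety is keeping track of the two directions of the commutative square and invoking Proposition~\ref{propositionThetaisomorphism} to guarantee the relevant $\Theta^{\normal}$-operators are invertible so that conjugation makes sense.
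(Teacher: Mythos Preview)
Your proof is correct and follows essentially the same approach as the paper's proof: both apply Lemma~\ref{lemmaaboutreplacingpoints} twice (once in each direction) to establish that $\Theta^{\normal}_{x'\mid y'}\circ\Theta^{\normal}_{y'\mid x'}$ is conjugate to $\Theta^{\normal}_{x\mid y}\circ\Theta^{\normal}_{y\mid x}$ via $\Theta^{\normal}_{x'\mid x}$, invoke Proposition~\ref{propositionThetaisomorphism} for invertibility, and then specialize to a witness pair coming from the definition of $\cK$-relevance. The only cosmetic difference is that you first state the conjugation result abstractly for any two pairs and then specialize, whereas the paper starts from the witness pair $(x',y')$ and derives the conjugation identity for the given $(x,y)$ directly.
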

\begin{proof}
Since $H \in \mathfrak{H}_{\Krel}$, there exists $x', y' \in \cA_{\gen}$ such that $\mathfrak{H}_{x', y'} = \{ H \}$ and
\[
\Theta_{x' \mid y'} \circ \Theta_{y' \mid x'} \notin \Coeff \cdot \id_{\ind_{K_{x'}}^{G(F)}(\rho_{x'})},
\]
or, equivalently,
\[
\Theta^{\normal}_{x' \mid y'} \circ \Theta^{\normal}_{y' \mid x'} \notin \Coeff \cdot \id_{\ind_{K_{x'}}^{G(F)}(\rho_{x'})}.
\]
According to Lemma~\ref{lemmaaboutreplacingpoints}, we have
\[
\Theta^{\normal}_{y' \mid x'} \circ \Theta^{\normal}_{x' \mid x} = \Theta^{\normal}_{y' \mid y} \circ \Theta^{\normal}_{y \mid x}
\qquad
\text{and}
\qquad
\Theta^{\normal}_{x' \mid y'} \circ \Theta^{\normal}_{y' \mid y} = \Theta^{\normal}_{x' \mid x} \circ \Theta^{\normal}_{x \mid y}.
\]
Combining them, we obtain that
\[
\Theta^{\normal}_{x' \mid y'} \circ \Theta^{\normal}_{y' \mid x'} \circ \Theta^{\normal}_{x' \mid x} = \Theta^{\normal}_{x' \mid y'} \circ \Theta^{\normal}_{y' \mid y} \circ \Theta^{\normal}_{y \mid x} = \Theta^{\normal}_{x' \mid x} \circ \Theta^{\normal}_{x \mid y} \circ \Theta^{\normal}_{y \mid x}.
\]
Using Proposition~\ref{propositionThetaisomorphism}, we have
\[
\Theta^{\normal}_{x' \mid y'} \circ \Theta^{\normal}_{y' \mid x'} = 
\Theta^{\normal}_{x' \mid x}
 \circ \left(
\Theta^{\normal}_{x \mid y} \circ \Theta^{\normal}_{y \mid x} 
\right) \circ 
\left(
\Theta^{\normal}_{x' \mid x}
\right)^{-1}.
\]
Thus, we obtain that
\[
\Theta^{\normal}_{x \mid y} \circ \Theta^{\normal}_{y \mid x} \notin \Coeff \cdot \id_{\ind_{K_{x}}^{G(F)}(\rho_{x})},
\]
and hence
\[
\Theta_{x \mid y} \circ \Theta_{y \mid x} \notin \Coeff \cdot \id_{\ind_{K_{x}}^{G(F)}(\rho_{x})}.
\qedhere
\]
\end{proof}

Now, we can strengthen the statement of Corollary~\ref{corollaryweakversionofquadraticrelation}.
\begin{proposition}
	\label{propositionstrongversionofquadraticrelation}
	We assume Axioms \ref{axiomaboutHNheartandK}, \ref{axiombijectionofdoublecoset}, \ref{axiomexistenceofRgrp}, and \ref{axiomaboutdimensionofend}.
	Let $H \in \mathfrak{H}_{\Krel}$ and $x \in \cA_{\gen}$ such that $\mathfrak{H}_{\Krel; x, s_{H} x} = \{ H \}$.
	Then there exist non-zero $p_{x, H}, q_{x, H} \in \Coeff$
such that
	\[
	\left(
	\Phi_{x, s_{H}}
	\right)^{2} = p_{x, H} \cdot \Phi_{x, s_{H}} + q_{x, H} \cdot \Phi_{x, 1}.
	\]
	Moreover, the coefficients $p_{x, H}$ and $q_{x, H}$ are independent of the point $x \in \cA_{\gen}$ that satisfies $\mathfrak{H}_{\Krel; x, s_{H} x} = \nobreak \{ H \}$.
\end{proposition}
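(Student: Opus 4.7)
The strategy is to bootstrap from the stronger hypothesis $\mathfrak{H}_{x,s_H x} = \{H\}$ (where Corollary~\ref{corollaryweakversionofquadraticrelation} applies directly) to the weaker hypothesis $\mathfrak{H}_{\Krel;x,s_H x} = \{H\}$ by conjugating the quadratic relation across $\cA_{\gen}$ via the normalized intertwining operators. Non-vanishing of $p_{x,H}$ will then fall out of Proposition~\ref{propositionkreloodequivalentforallorthereexists} combined with Lemma~\ref{lemmaphivstheta}.

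Concretely, I would first produce a point $x' \in \cA_{\gen}$ with the \emph{strict} condition $\mathfrak{H}_{x', s_H x'} = \{H\}$: since $\mathfrak{H}$ is locally finite, one picks a point $h$ on $H$ that lies on no other hyperplane in $\mathfrak{H}$ and sets $x' = h + \varepsilon v$ for $v$ a unit vector orthogonal to $H$ and $\varepsilon > 0$ sufficiently small, so that the short segment from $x'$ to $s_H x' = h - \varepsilon v$ meets only $H$. Applying Corollary~\ref{corollaryweakversionofquadraticrelation} to $x'$ yields constants $p_{x',H}, q_{x',H} \in \Coeff$ with
\[
(\Phi_{x',s_H})^{2} = p_{x',H}\cdot \Phi_{x',s_H} + q_{x',H}\cdot \Phi_{x',1},
\]
and Lemma~\ref{lemmaqHisnonzero} already identifies $q_{x',H} = \muT(s_H,s_H) \in \Coeff^{\times}$. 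To transport this to the given $x$, note that both $x$ and $x'$ satisfy $\mathfrak{H}_{\Krel;\cdot,s_H \cdot} = \{H\}$, so Corollary~\ref{corollaryPhiconjugatewhenreplacingpoints} gives
\[
\Phi_{x,s_H} = \Theta^{\normal}_{x \mid x'} \circ \Phi_{x',s_H} \circ \bigl(\Theta^{\normal}_{x \mid x'}\bigr)^{-1},
\]
where $\Theta^{\normal}_{x \mid x'}$ is an isomorphism by Proposition~\ref{propositionThetaisomorphism}. Conjugating the quadratic relation for $x'$ by this isomorphism, and using $\Phi_{x,1} = \id$, yields the same identity for $\Phi_{x,s_H}$ with coefficients $p_{x,H} = p_{x',H}$ and $q_{x,H} = q_{x',H}$. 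Since $\Phi_{x,s_H}$ and $\Phi_{x,1}$ are supported on distinct double cosets (Lemma~\ref{supportofphiixw}), they are linearly independent in $\End_{G(F)}\bigl(\ind_{K_x}^{G(F)}(\rho_x)\bigr)$, so the coefficients are uniquely determined by $\Phi_{x,s_H}$; hence the independence assertion follows by running the same conjugation argument between any two such points $x$ and $y$.

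It remains to prove $p_{x,H} \ne 0$. Suppose for contradiction that $p_{x,H} = 0$. Then $(\Phi_{x,s_H})^{2} = q_{x,H}\cdot \id$ is a scalar operator on $\ind_{K_x}^{G(F)}(\rho_x)$. Combining with Lemma~\ref{lemmaphivstheta}, which rewrites the square as
\[
(\Phi_{x,s_H})^{2} = \muT(s_H,s_H)\cdot \Theta^{\normal}_{x \mid s_H x} \circ \Theta^{\normal}_{s_H x \mid x},
\]
and since $\muT(s_H,s_H) \in \Coeff^{\times}$, we would conclude that $\Theta^{\normal}_{x \mid s_H x} \circ \Theta^{\normal}_{s_H x \mid x}$, and hence $\Theta_{x \mid s_H x} \circ \Theta_{s_H x \mid x}$, lies in $\Coeff \cdot \id$. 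But by hypothesis $\mathfrak{H}_{\Krel; x, s_H x} = \{H\}$ with $H \in \mathfrak{H}_{\Krel}$, so Proposition~\ref{propositionkreloodequivalentforallorthereexists} asserts precisely that this composition is \emph{not} scalar. This contradiction forces $p_{x,H} \ne 0$. The hardest part conceptually is recognizing that the $\cK$-relevance of $H$ is exactly designed to produce the non-scalar composition needed here; the rest of the argument is a careful bookkeeping of how conjugation by $\Theta^{\normal}_{x\mid x'}$ transports everything cleanly because $\Phi_{x,1}$ is the identity on both sides.
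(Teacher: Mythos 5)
Your proof is correct and follows essentially the same route as the paper: obtain the quadratic relation at an auxiliary point satisfying the stronger hypothesis $\mathfrak{H}_{\cdot, s_H \cdot} = \{H\}$ via Corollary~\ref{corollaryweakversionofquadraticrelation} and Lemma~\ref{lemmaqHisnonzero}, transport it to $x$ by conjugation through Corollary~\ref{corollaryPhiconjugatewhenreplacingpoints}, and derive $p \neq 0$ by combining Lemma~\ref{lemmaphivstheta} with the non-scalarity assertion of Proposition~\ref{propositionkreloodequivalentforallorthereexists}. The only addition you make is an explicit construction of the auxiliary point $x'$ (choosing $h \in H$ off all other hyperplanes and setting $x' = h + \varepsilon v$), which the paper merely takes for granted; this is a harmless elaboration that actually fills a small unstated existence step.
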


\begin{proof}
Let $y \in \cA_{\gen}$ such that $\mathfrak{H}_{y, s_{H} y} = \{ H \}$.
Then by Corollary~\ref{corollaryweakversionofquadraticrelation} and Lemma~\ref{lemmaqHisnonzero} there exist $p_{y, H}, q_{y, H} \in \Coeff$ such that
$q_{y, H}$ is non-zero and
\[
\left(
\Phi_{y, s_{H}}
\right)^{2} = p_{y, H} \cdot \Phi_{y, s_{H}} + q_{y, H} \cdot \Phi_{y, 1}.
\]
On the other hand, according to Corollary~\ref{corollaryPhiconjugatewhenreplacingpoints}, we have
\[
\Theta^{\normal}_{y \mid x} \circ \Phi_{x, s_{H}} \circ \left(
\Theta^{\normal}_{y \mid x}
\right)^{-1} = \Phi_{y, s_{H}}.
\]
Hence, we also obtain that 
\[
\left(
\Phi_{x, s_{H}}
\right)^{2} = p_{y, H} \cdot \Phi_{x, s_{H}} + q_{y, H} \cdot \Phi_{x, 1}.
\]

Since $H \in \mathfrak{H}_{\Krel}$ and $\mathfrak{H}_{\Krel; x, s_{H} x} = \nobreak \{ H \}$, by combining Proposition~\ref{propositionkreloodequivalentforallorthereexists} 
with Lemma~\ref{lemmaphivstheta}, we obtain that
\(
\left(
\Phi_{x, s_{H}}
\right)^{2} \not \in \Coeff \cdot \id_{\ind_{K_{x}}^{G(F)}(\rho_{x})},
\)
that is, $p_{y, H} \neq 0$. Moreover, $p_{y, H}$ and $q_{y, H}$ are independent of $x$. 
\end{proof}

Scaling the operators $\Phi_{x,s_H}$ we can ensure that the coefficients in the above Proposition have a particularly nice form. 
In order to do so, we choose a partitioning of the coefficient field as follows.
\begin{choice}\label{choiceofCoeffplus}
Let $\Coeffplus$ be a subset of $\Coeffinvnontriv$
such that \index{notation-ax}{Cplus@$\Coeffplus$}
\[
\abs{\{q, q^{-1}\} \cap \Coeffplus}=1
	\quad \text{for all} \quad q \in \Coeffinvnontriv .
\] 
\end{choice}
This choice is necessary as the coefficient $q_{s_H}$ in the quadratic relation in the Proposition below is a priori only determined up to taking its inverse, and we can freely choose any of the (at most) two options.
Strictly speaking it suffices to only make such a choice for the subset of $\Coeff$ that contains all possible values of $q_{s_H}$.
In particular,
if $\ell = 0$, then we will see below in Proposition \ref{value-of-qs} that $q_{s_H}$ is a quotient of two positive integers,
so we only need to choose a subset of the positive rational numbers. One possible choice in that case is to say that $q_{s_H}>1$, which is the choice that, for example, Howlett and Lehrer \cite[(3.19)~Definition and (4.14)~Theorem]{MR570873} and Morris \cite[\S6.9 and 7.12.~Theorem]{Morris} took in their settings, and which is the reason for our choice of notation ``$\Coeffplus$''.

\begin{proposition}
	\label{propositionsuperstrongversionofquadraticrelation}
	We assume Axioms \ref{axiomaboutHNheartandK}, \ref{axiombijectionofdoublecoset}, \ref{axiomexistenceofRgrp}, and \ref{axiomaboutdimensionofend}.
	Then for  $H \in \mathfrak{H}_{\Krel}$ and $x \in \cA_{\gen}$ such that $\mathfrak{H}_{\Krel; x, s_{H} x} = \{ H \}$, there exist unique scalars $d_{s_H} \in \Coeff^\times$ and $q_{s_H} \in \Coeffplus$ such that
	\[
	(d_{{s_H}} \cdot \Phi_{x,{s_H}})^{2} = (q_{{s_H}} - 1) \cdot (d_{{s_H}} \cdot \Phi_{x,{s_H}}) + q_{{s_H}} \cdot \Phi_{x,1}.
	\] 
	Moreover, the scalars $d_{s_{H}}$ and $q_{{s_H}}$ do not depend on the choice of the point $x \in \cA_{\gen}$.
\end{proposition}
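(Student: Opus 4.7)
The plan is to show that rescaling the operator $\Phi_{x,s_H}$ by an appropriate scalar $d$ converts the quadratic relation from Proposition~\ref{propositionstrongversionofquadraticrelation} into the desired normalized form, and that the choice of $d$ is pinned down precisely by requiring the resulting parameter $q$ to lie in $\Coeffplus$.

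First, I would start from the relation
\[
(\Phi_{x,s_H})^{2} = p_{x,H} \cdot \Phi_{x,s_H} + q_{x,H} \cdot \Phi_{x,1}
\]
provided by Proposition~\ref{propositionstrongversionofquadraticrelation}, together with the fact that $p_{x,H}$ and $q_{x,H}$ are non-zero and independent of $x$. I will therefore write $p_H \coloneqq p_{x,H}$ and $q_H \coloneqq q_{x,H}$. Writing $\Psi \coloneqq d \cdot \Phi_{x,s_H}$ for a scalar $d \in \Coeff^\times$, the identity $\Psi^2 = (q-1)\Psi + q \cdot \Phi_{x,1}$ translates, after expansion and comparison with the relation above, into the two conditions $d p_H = q - 1$ and $d^2 q_H = q$. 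Eliminating $q$ shows that $d$ must be a non-zero root of the quadratic polynomial
\[
q_H\, X^2 - p_H\, X - 1 \;\in\; \Coeff[X].
\]
Since $\Coeff$ is algebraically closed and the constant term is $-1 \neq 0$, both roots of this polynomial lie in $\Coeff^\times$.

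Next, I would analyze the set of admissible pairs $(d,q)$. Let $d_1, d_2 \in \Coeff^\times$ denote the two roots (which may coincide) and set $q_i \coloneqq d_i^2 q_H = d_i p_H + 1$. By Vi\`ete's relations one has $d_1 d_2 = -1/q_H$, and hence
\[
q_1 q_2 \;=\; (d_1 d_2)^2 q_H^2 \;=\; 1.
\]
Moreover each $q_i$ is different from $1$: the equality $q_i = 1$ would force $d_i p_H = 0$, which is impossible since $d_i,\ p_H \in \Coeff^\times$. Therefore $\{q_1,q_2\} \subset \Coeffinvnontriv$ and $q_2 = q_1^{-1}$. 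By Choice~\ref{choiceofCoeffplus}, exactly one element of $\{q_1,q_2\}$ lies in $\Coeffplus$, which singles out a unique $q_{s_H} \in \Coeffplus$. In the generic case $q_1 \neq q_2$ this simultaneously pins down a unique corresponding $d_{s_H}$; in the degenerate case $q_1 = q_2$ (which forces $q_1 = -1$ and $d_1 = d_2$) the root $d$ is already unique. Either way, both $d_{s_H}$ and $q_{s_H}$ are uniquely determined.

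Finally, independence of the point $x$ is inherited directly: since Proposition~\ref{propositionstrongversionofquadraticrelation} states that the coefficients $p_H$ and $q_H$ depend only on $H$, the quadratic polynomial $q_H X^2 - p_H X - 1$, its roots, and the selection made via $\Coeffplus$ are all independent of any choice of $x \in \cA_{\gen}$ with $\mathfrak{H}_{\Krel;x,s_H x} = \{H\}$. The main subtlety of the argument is the identity $q_1 q_2 = 1$, which is what ensures compatibility with the partitioning provided by Choice~\ref{choiceofCoeffplus}; everything else is a direct translation between the two forms of the quadratic relation.
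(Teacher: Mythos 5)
Your proof is correct and follows essentially the same route as the paper's own argument: reducing the first claim to analyzing the quadratic $q_H X^2 - p_H X - 1$, using Vi\`ete's relations to obtain $q_1 q_2 = 1$, applying Choice~\ref{choiceofCoeffplus} to single out one root, handling the degenerate double-root case separately, and observing that independence of $x$ is inherited from Proposition~\ref{propositionstrongversionofquadraticrelation}. The step identifications and the role of each hypothesis match the paper's proof precisely.
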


\begin{proof}
\addtocounter{equation}{-1}
\begin{subequations}
According to Proposition~\ref{propositionstrongversionofquadraticrelation}, the endomorphism $\Phi_{x, s_{H}}$ satisfies the quadratic relation
        \[
	\left(
	\Phi_{x, s_{H}}
	\right)^{2} = p_{x, H} \cdot \Phi_{x, s_{H}} + q_{x, H} \cdot \Phi_{x, 1}
	\]
with $p_{x, H}$ and $q_{x, H}$ non-zero.
Hence, for $d \in \Coeff^\times$, the endomorphism $d \cdot \Phi_{x,{s_H}}$ satisfies the quadratic relation
\[
(d \cdot \Phi_{x,{s_H}})^{2} = p_{x, H} d \cdot (d \cdot \Phi_{x,{s_H}}) + q_{x, H} d^{2} \cdot \Phi_{x, 1}.
\]
Thus, to prove the first claim of the proposition, it suffices to show that the quadratic equation
\begin{equation}
\label{quadraticequationfordsH}
q_{x, H} d^{2} - p_{x, H} d -1 = 0
\end{equation}
has a unique solution $d = d_{s_{H}}$ with $q_{x, H} d_{s_{H}}^{2} \in \Coeffplus$.
Since $q_{x, H} \neq 0$ and $\Coeff$
is algebraically closed,
Equation \eqref{quadraticequationfordsH} has two solutions $d_{1}, d_{2} \in \Coeff$ 
that are possibly equal.
Note that $d = 0$ is not a solution of Equation \eqref{quadraticequationfordsH}, i.e., $d_{1}, d_{2} \in \Coeff^\times$. 
Let $i = 1$ or $2$.
Since $q_{x, H}, d_{i} \in \Coeff^{\times}$, we have $q_{x, H} d_{i}^{2}  \neq 0$.
Moreover, since $p_{x, H} \neq 0$,
we have $q_{x, H} d_{i}^{2} - 1 = p_{x, H} d_{i} \neq 0$, that is, $q_{x, H} d_{i}^{2} \neq 1$.
Hence, we conclude that $q_{x, H} d_{i}^{2} \in \Coeffinvnontriv$.
Since the solutions $d_{1}$ and $d_{2}$ satisfy $d_{1} d_{2} = -1/ q_{x, H}$, we have $q_{x, H} d_{2}^{2} = \nobreak (q_{x, H} d_{1}^{2})^{-1}$.
Thus, our choice of $\Coeffplus$ in Choice~\ref{choiceofCoeffplus} implies that
\[
\abs{\left\{
q_{x, H} d_{1}^{2}, q_{x, H} d_{2}^{2}
\right\} \cap \Coeffplus}=1.
\] 

If $q_{x, H} d_{1}^{2} \neq q_{x, H} d_{2}^{2}$, this equation implies that exactly one of the solutions $d_{1}$ and $d_{2}$ satisfies $q_{x, H} d_{i}^{2} \in \nobreak \Coeffplus$, as desired.

It remains to consider the case that $q_{x, H} d_{1}^{2} = q_{x, H} d_{2}^{2}$.
Then we have
\[
p_{x, H} d_{1} = (q_{x, H} d_{1}^{2} -1) = (q_{x, H} d_{2}^{2} -1) = p_{x, H} d_{2} ,
\]
so $d_1 = d_2$.
We also note that since $q_{x, H} d_{1}^{2} \neq 1$ and $q_{x, H} d_{1}^{2} = q_{x, H} d_{2}^{2} = (q_{x, H} d_{1}^{2})^{-1}$,
we obtain that the characteristic $\ell$ of $\Coeff$ cannot be two in this case,
and we have $q_{x, H} d_{1}^{2} = q_{x, H} d_{2}^{2} = -1$.
Thus, we conclude that Equation \eqref{quadraticequationfordsH} has the unique solution $d_{s_{H}} \coloneqq d_{1} = d_{2}$, and we have
$
q_{x, H} d_{s_{H}}^{2} = -1 \in \Coeffplus
$.

Since the coefficients $p_{x, H}$ and $q_{x, H}$ are independent of the point $x \in \cA_{\gen}$, Equation \eqref{quadraticequationfordsH} is independent of the point $x \in \cA_{\gen}$.
Hence, the scalars $d_{s_{H}}$ and $q_{s_{H}} = q_{x, H} d_{s_{H}}^{2}$ are also independent of the point $x \in \cA_{\gen}$.
\end{subequations}
\end{proof}

Proposition \ref{propositionsuperstrongversionofquadraticrelation} allows us to refine Choice~\ref{choiceofT} by replacing $T_{\widetilde{s}}$ in Choice~\ref{choiceofT} by $d_{s} \cdot T_{\widetilde{s}}$ for all lifts $\widetilde{s} \in \Nheart$ of $s \in S_{\Krel}$. Then noting Remark~\ref{remarkaboutphix1}, we obtain a choice of $\cT$ that satisfies the following properties.
\begin{choice}
\label{choice:simplereflection}
We choose a family of non-zero elements
\[
\cT =
\left\{
T_{n} \in \Hom_{K_{M}}\left(
^n\!\rho_{M}, \rho_{M}
\right)
\right\}_{n \in \Nheart}
\]
satisfying Conditions \eqref{T1istheidentitymap} and \eqref{Tnk=TnrhoMk} of Choice~\ref{choiceofT} and such that  \index{notation-ax}{qs@$q_s$} for every $s \in S_{\Krel}$ there exists $q_s \in \Coeffplus$ such that
\[
(\Phi_{s})^{2} = (q_{s} - 1) \cdot \Phi_{s} + q_{s} \cdot \Phi_{1} .
\]
\end{choice}

\begin{proposition}
	\label{propositionaboutquadraticrelationsrevised}
	We assume Axioms \ref{axiomaboutHNheartandK}, \ref{axiombijectionofdoublecoset}, \ref{axiomexistenceofRgrp}, and \ref{axiomaboutdimensionofend} and fix a subset  $\Coeffplus \subset \Coeffinvnontriv$
as in Choice \ref{choiceofCoeffplus}. Then we can choose $\cT$ to satisfy all the properties in Choice~\ref{choice:simplereflection}. This means in particular that for each $s \in S_{\Krel}$,
there exists $q_{s} \in \Coeffplus$
such that the element $\Phi_{s}$ satisfies the quadratic relation
	\begin{align*}
	(\Phi_{s})^{2} = (q_{s} - 1) \cdot \Phi_{s} + q_{s} \cdot \Phi_{1}.
	\end{align*}
	Moreover, if $s, s' \in S_{\Krel}$ are $\Wheart$-conjugate, then we have $q_{s} = q_{s'}$.
\end{proposition}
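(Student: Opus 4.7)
The plan is to establish the proposition in two steps: first construct a refined $\cT$ producing the quadratic relation with $q_s \in \Coeffplus$, then prove invariance of $q_s$ under $\Wheart$-conjugation through an explicit conjugation computation for the operators $\Phi_s$.

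For the existence I will invoke Proposition~\ref{propositionsuperstrongversionofquadraticrelation}. Because the image of $x_0$ in $\cA_{\Krel}$ lies in the fundamental chamber $C_{\Krel}$ (Notation~\ref{notation:Wzero}) and simple reflections are by definition the reflections across its walls, we have $\mathfrak{H}_{\Krel;\,x_0, s x_0} = \{H_s\}$ for every $s \in S_{\Krel}$, so the proposition supplies unique scalars $d_s \in \Coeff^\times$ and $q_s \in \Coeffplus$ with $(d_s \Phi_s)^2 = (q_s - 1)(d_s \Phi_s) + q_s \Phi_1$. I then refine the family $\cT$ by replacing $T_n$ with $d_s \cdot T_n$ whenever $n \in \Nheart$ projects to some $s \in S_{\Krel}$, leaving all other $T_n$ unchanged. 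Since $1 \notin S_{\Krel}$ and any two lifts of the same $s \in S_{\Krel}$ differ by right multiplication by an element of $\Nheart \cap K_M$, the modification preserves Conditions~\eqref{T1istheidentitymap} and \eqref{Tnk=TnrhoMk} of Choice~\ref{choiceofT}. Under this refined $\cT$, the element $\Phi_s$ is multiplied by $d_s$, yielding the desired quadratic relation.

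For the invariance, let $s' = w s w^{-1}$ with $w \in \Wheart$. Starting from $\Phi_{x_0, s} = \ciso{s x_0}{s} \circ \Theta^{\normal}_{s x_0 \mid x_0}$, I apply Lemma~\ref{lemmathetaccommute} to rewrite $\Theta^{\normal}_{s x_0 \mid x_0} \circ \ciso{x_0}{w}^{-1}$ as $\ciso{s x_0}{w}^{-1} \circ \Theta^{\normal}_{w s x_0 \mid w x_0}$, and then use Lemma~\ref{lemmatransitivityexceptfottheta} twice (exploiting the identity $w s = s' w$ in $\Wheart$) to consolidate the resulting chain of $\ciso{\cdot}{\cdot}$-operators. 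The outcome is
\[
\ciso{x_0}{w} \circ \Phi_{x_0, s} \circ \ciso{x_0}{w}^{-1} \;=\; c \cdot \Phi_{w x_0, s'},
\qquad c \coloneqq \muT(w, s)\, \muT(s', w)^{-1} \in \Coeff^\times.
\]
Since $w$ permutes $\mathfrak{H}_{\Krel}$ and sends $H_s$ to $H_{s'}$, both $x_0$ and $w x_0$ satisfy the hypothesis $\mathfrak{H}_{\Krel;\,\cdot\,, s' \cdot} = \{H_{s'}\}$ of Corollary~\ref{corollaryPhiconjugatewhenreplacingpoints}, so that corollary conjugates $\Phi_{w x_0, s'}$ to $\Phi_{s'}$ via $\Theta^{\normal}_{x_0 \mid w x_0}$. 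Composing, I conclude that $\Phi_s$ is conjugate to $c \cdot \Phi_{s'}$ in the relevant endomorphism algebra.

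The main subtle step is to extract $q_s = q_{s'}$ from this conjugacy. Substituting $c \cdot \Phi_{s'}$ into the quadratic relation for $\Phi_s$ and matching coefficients against the independently known relation for $\Phi_{s'}$---using the linear independence of $\Phi_{s'}$ and $\Phi_1$ in $\cH(G(F), \rho_{x_0})$ from Proposition~\ref{propositionvectorspacedecomposition}---gives $(q_s - 1)/c = q_{s'} - 1$ and $q_s / c^2 = q_{s'}$. Eliminating $c$ produces the factorization $(q_s - q_{s'})(q_s q_{s'} - 1) = 0$. The alternative $q_s q_{s'} = 1$ with $q_s \neq q_{s'}$ would force both $q_s$ and its inverse $q_{s'} = q_s^{-1}$ to lie in $\Coeffplus$, violating the defining property of $\Coeffplus$ in Choice~\ref{choiceofCoeffplus}. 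Hence $q_s = q_{s'}$.
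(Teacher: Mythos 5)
Your proof is correct, but the argument for the $\Wheart$-conjugacy invariance of $q_s$ takes a genuinely different route from the paper's. Where you conjugate by $\Theta^{\normal}_{x_0\mid wx_0}\circ\ciso{x_0}{w}$ and track the cocycle factors explicitly at the level of intertwining operators (Lemma~\ref{lemmathetaccommute}, Lemma~\ref{lemmatransitivityexceptfottheta}, Corollary~\ref{corollaryPhiconjugatewhenreplacingpoints}), the paper works one abstraction level higher: it replaces $w$ by $ws$ if necessary so that $\flength(ws)=\flength(w)+\flength(s)$, then uses the length-additive relation of Proposition~\ref{propositioniflengthsumthentransitivityuptotwococycle} twice to obtain $\Phi_w\Phi_s\Phi_w^{-1}=d_{s,s'}\Phi_{s'}$ with $d_{s,s'}=\muT(w,s)/\muT(s',w)$, never touching the intertwining operators directly. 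The scalar you obtain agrees with the paper's $d_{s,s'}$, as it must (and as a cocycle computation confirms). The final step also differs: the paper feeds the two quadratic relations back into the uniqueness statement of Proposition~\ref{propositionsuperstrongversionofquadraticrelation} to conclude at once that $d_{s,s'}=1$ and $q_s=q_{s'}$, whereas you eliminate $c$ by hand, derive $(q_s-q_{s'})(q_sq_{s'}-1)=0$, and rule out the spurious branch $q_sq_{s'}=1$ via the defining property of $\Coeffplus$ from Choice~\ref{choiceofCoeffplus}. Both closing arguments are sound -- the paper's uniqueness statement is itself proved using the $\Coeffplus$ constraint, so in essence you are unfolding an argument the paper has packaged into a reusable lemma. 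Your approach is more computational but self-contained; the paper's is shorter because it reuses Propositions~\ref{propositioniflengthsumthentransitivityuptotwococycle} and \ref{propositionsuperstrongversionofquadraticrelation} as black boxes. The existence half of the proof matches the paper's.
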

\begin{proof}
Since we have $\mathfrak{H}_{\Krel; x_{0}, s x_{0}} = \{ H_{s} \}$ for all $s \in S_{\Krel}$, the claim that we can choose $\cT$ as in Choice~\ref{choice:simplereflection} follows from Proposition \ref{propositionsuperstrongversionofquadraticrelation} as explained in the paragraph before Choice~\ref{choice:simplereflection} and the quadratic relation is part of the properties stated in Choice~\ref{choice:simplereflection}. It remains to show that we have $q_{s} = q_{s'}$ if $s, s' \in S_{\Krel}$ are $\Wheart$-conjugate.
Let $w \in \Wheart$ such that $w s w^{-1} = s'$.
By replacing $w$ with $ws$ if necessary, we may suppose that 
\[
\flength(ws) = \flength(w) + 1 = \flength(w) + \flength(s).
\]
Then, according to Proposition~\ref{propositioniflengthsumthentransitivityuptotwococycle}, we have
\[
\Phi_{w} \Phi_{s} = \muT(w, s) \cdot \Phi_{ws}.
\]
On the other hand, we have
\[
\flength(s' w) = \flength(ws) = \flength(w) + 1 = \flength(s') + \flength(w).
\]
Hence, Proposition~\ref{propositioniflengthsumthentransitivityuptotwococycle} also implies that
\[
\Phi_{s'} \Phi_{w} = \muT(s', w) \cdot \Phi_{s' w} = \muT(s', w) \cdot \Phi_{ws}.
\]
Writing $d_{s, s'}=\muT(w, s) / \muT(s', w) \in \Coeff^\times$
and using that $\Phi_{w}$ is invertible by Corollary \ref{corollaryPhixwinvertible}, we obtain by  combining the above equations that 
\(
\Phi_{w} \Phi_{s} \Phi_{w}^{-1} = d_{s, s'} \cdot \Phi_{s'}.
\)
Since $\Phi_{w} \Phi_{s} \Phi_{w}^{-1}$ satisfies the same quadratic relation as $\Phi_{s}$, we obtain
\[
\left(
d_{s, s'} \cdot \Phi_{s'}
\right)^{2} = (q_{s} - 1) \cdot \left(
d_{s, s'} \cdot \Phi_{s'}
\right)
+ q_{s} \cdot \Phi_{1},
\]
On the other hand, we have the quadratic relation
\[
(\Phi_{s'})^{2} = (q_{s'} - 1) \cdot \Phi_{s'} + q_{s'} \cdot \Phi_{1}.
\]
Since $q_{s}, q_{s'} \in \Coeffplus$, Proposition \ref{propositionsuperstrongversionofquadraticrelation} implies that $d_{s, s'} = 1$ and $q_{s} = q_{s'}$.
\end{proof}
\begin{remark}
\label{remarkaboutconjugacyofsimplereflections}
The proof of Proposition~\ref{propositionaboutquadraticrelationsrevised} implies the following claim.
Suppose that $s, s' \in \nobreak S_{\Krel}$ and $w \in \Wheart$ satisfy $w s w^{-1} = s'$ and $\flength(ws) = \flength(w) + \flength(s)$.
Then we have $\Phi_{w} \Phi_{s} \Phi_{w}^{-1} = \Phi_{s'}$.
In particular, for all $s \in S_{\Krel}$ and $t \in \Wzero$, we have $\Phi_{t} \Phi_{s} \Phi_{t}^{-1} = \Phi_{t s t^{-1}}$.
\end{remark}

\subsection{The coefficients in the quadratic relations in characteristic zero} 
\label{subsec:quadratic-banal}
In this subsection we keep the notation from the previous subsection.
In addition, we assume that the characteristic $\ell$ of $\Coeff$
is zero or a banal prime for $G(F)$.
Recall that a \emph{banal} prime is one that does not divide the order of any finite quotient
of any compact, open subgroup of $G(F)$.
Doing so allows us to 
give in Proposition \ref{propositionqsincharzero} below a more explicit description of the coefficients $q_s$ in the quadratic relations in Proposition \ref{propositionaboutquadraticrelationsrevised}.
Our restriction on the characteristic implies that
for every finite quotient $H$ of a compact, open subgroup of $G(F)$,
the finite-dimensional $\Coeff$-representations of $H$ are semisimple.
Moreover, 
the order $|H|$ of $H$ is not zero in $\Coeff$
and neither is the dimension of any irreducible $\Coeff$-representation
of $H$.
Thus, we can divide by 
these numbers wherever convenient.

\begin{proposition} \label{propositionqsincharzero}
	\label{value-of-qs}
	We assume Axioms \ref{axiomaboutHNheartandK}, \ref{axiombijectionofdoublecoset}, \ref{axiomexistenceofRgrp}, and \ref{axiomaboutdimensionofend},
fix a subset $\Coeffplus \subset \Coeffinvnontriv$ as in
Choice \ref{choiceofCoeffplus},
and choose $\cT$ as in Choice~\ref{choice:simplereflection}.

	Let $s \in S_{\Krel}$ and let $x \in \nobreak \cA_{\gen}$ such that $\mathfrak{H}_{x, s x} = \{ H_{s} \}$.
	Then the compactly induced representation $\ind_{K_{x}}^{K_{x, s}}(\rho_{x})$ decomposes into a direct sum of two inequivalent irreducible
representations:
	\[
	\ind_{K_{x}}^{K_{x, s}}(\rho_{x}) = \rho_{1} \oplus \rho_{2}. 
	\]
We assume without loss of generality that $\rho_1$ and $\rho_2$  are chosen so that $\dim_{\Coeff}(\rho_{1})/\dim_{\Coeff}(\rho_{2}) \in \Coeffplus \cup \{1\}$. Then 
	\[
	q_{s} = \frac{
		\dim_{\Coeff}(\rho_{1})
	}{
		\dim_{\Coeff}(\rho_{2})
	}.
	\]
\end{proposition}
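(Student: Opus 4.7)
The plan is to combine the semisimplicity of representations available in the banal setting with a direct trace computation for $\Phi_s$. Since $\ell$ is zero or banal and $K_{x,s}$ is compact open, every finite-dimensional $\Coeff$-representation of $K_{x,s}$ will be semisimple, so in particular $V \coloneqq \ind_{K_x}^{K_{x,s}}(\rho_x)$ is semisimple. By Lemma~\ref{lemmadimensionoftheendomorphismalgebraKxsH}, $\End_{K_{x,s}}(V)$ has dimension $2$, and semisimplicity will force it to be isomorphic to $\Coeff \times \Coeff$. This implies the decomposition $V = \rho_1 \oplus \rho_2$ into inequivalent irreducible subrepresentations, with $\End_{K_{x,s}}(V) = \Coeff \cdot \id_{\rho_1} \oplus \Coeff \cdot \id_{\rho_2}$.

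By Schur's lemma $\Phi_s$ then acts by scalars $\lambda_i$ on $\rho_i$. The quadratic relation from Proposition~\ref{propositionaboutquadraticrelationsrevised}, together with the fact that $\Phi_s$ is non-scalar (being linearly independent from $\Phi_1$ by Lemma~\ref{lemmadimensionoftheendomorphismalgebraKxsH}), will force $\{\lambda_1, \lambda_2\} = \{q_s, -1\}$, so $\tr(\Phi_s) = \lambda_1 \dim_\Coeff(\rho_1) + \lambda_2 \dim_\Coeff(\rho_2)$. The key calculation, which I expect to be the main step, is to show $\tr(\Phi_s) = 0$. For this I would fix right coset representatives $g_1, \ldots, g_N$ of $K_x \backslash K_{x,s}$ with $g_1 = 1$, obtaining the standard basis of $V$ in which each $(i,j)$-block of $\Phi_s$ equals $\varphi_s(g_i g_j^{-1}) \in \End_\Coeff(V_{\rho_x})$ via the formula in Section~\ref{Hecke algebras and endomorphism algebras}. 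The diagonal blocks then all equal $\varphi_s(1) = 0$, since $\supp(\varphi_s) = K_x s K_x$ is disjoint from $K_x$ by Lemma~\ref{supportofphiixw}, and so $\tr(\Phi_s) = 0$.

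Combining the two expressions for $\tr(\Phi_s)$ gives $\lambda_1 \dim_\Coeff(\rho_1) + \lambda_2 \dim_\Coeff(\rho_2) = 0$. If one had $\dim_\Coeff(\rho_1) = \dim_\Coeff(\rho_2)$ (both nonzero in $\Coeff$ by the banal hypothesis), this would force $\lambda_1 + \lambda_2 = 0$, hence $q_s = 1$, contradicting $q_s \in \Coeffinvnontriv$. Thus $\dim_\Coeff(\rho_1) / \dim_\Coeff(\rho_2)$ and its inverse are distinct elements of $\Coeff^\times \smallsetminus \{1\}$, exactly one of which lies in $\Coeffplus$; the labeling convention of the proposition places $\dim_\Coeff(\rho_1) / \dim_\Coeff(\rho_2)$ there. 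The two possible assignments $\{\lambda_1, \lambda_2\} = \{q_s, -1\}$ give $q_s = \dim_\Coeff(\rho_1) / \dim_\Coeff(\rho_2)$ or its inverse, and since $q_s \in \Coeffplus$, uniqueness forces $q_s = \dim_\Coeff(\rho_1) / \dim_\Coeff(\rho_2)$. The only place where care is really needed is the block-trace computation, where one must faithfully track the isomorphism between $\cH(K_{x,s}, \rho_x)$ and $\End_{K_{x,s}}(V)$ recalled in Section~\ref{Hecke algebras and endomorphism algebras}.
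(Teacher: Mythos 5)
Your proof is correct and takes a genuinely different route from the paper's. The paper proves a Howlett--Lehrer-style lemma (Lemma~\ref{lemmacalculationofqparameter}): it writes the idempotent $p_1 = \lambda \Phi_1 + \mu\Phi'_h$ projecting onto $\rho_1$, computes $\lambda = \dim\rho_1/(\dim\rho_1+\dim\rho_2)$ by evaluating $(p_1(f_v))(1)$ against the character-sum formula for $p_1$, and then reverse-engineers from $p_1^2 = p_1$ an element $\Phi_h$ satisfying the clean quadratic relation. Your argument instead exploits the eigenvalue structure directly: $\Phi_{x,s}$ acts by the two roots $q_s$ and $-1$ of the quadratic on the two summands, and the single scalar constraint $\tr(\Phi_{x,s}) = 0$ (read off from the vanishing of the diagonal blocks $\varphi_{x,s}(g_i g_i^{-1}) = \varphi_{x,s}(1)$, since $1 \notin K_x s K_x$) pins down the ratio. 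Both proofs are ultimately character computations, but yours avoids introducing $\lambda,\mu,\Phi'_h$ and is arguably shorter; the paper's route has the advantage of packaging the general principle into a standalone lemma about pairs $(K,H)$ of compact groups that can be reused independently of the ambient setup.

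Two small points to tighten. First, Proposition~\ref{propositionaboutquadraticrelationsrevised} establishes the quadratic relation for $\Phi_{s} = \Phi_{x_0,s}$; to use it for $\Phi_{x,s}$ you should cite Proposition~\ref{propositionsuperstrongversionofquadraticrelation} (or Corollary~\ref{corollaryPhiconjugatewhenreplacingpoints}), which shows the coefficients are independent of $x$. Second, the step ``$\dim_\Coeff(\rho_1)/\dim_\Coeff(\rho_2)$ and its inverse are distinct elements'' is not automatic in the banal case: it can happen that $a/b = b/a = -1$ in $\Coeff$ even with $a \neq b$ as integers. But the conclusion survives: Choice~\ref{choiceofCoeffplus} forces $-1 \in \Coeffplus$ when $-1 \neq 1$, so the labeling convention still places $a/b \in \Coeffplus$ and both eigenvalue assignments then give $q_s = a/b = -1$. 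It would be cleaner to phrase the final step as ``$q_s \in \{a/b, b/a\}$ and $q_s, a/b \in \Coeffplus$, so by Choice~\ref{choiceofCoeffplus}, $q_s = a/b$'' rather than asserting the two ratios are distinct.
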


To prove Proposition~\ref{value-of-qs},
we present a generalization of a result of Howlett and Lehrer \cite{MR570873}.

\begin{lemma}
	\label{lemmacalculationofqparameter}
Let $H$ be a compact topological group and $K$ an open subgroup of $H$. 
If $\ell \neq 0$, then we suppose that the order of no finite quotient of $H$ is divisible by $\ell$. 
Let $(\rho, V_{\rho})$ be an irreducible smooth representation of $K$. We suppose that the induced representation $\ind_{K}^{H} (\rho)$ decomposes into a direct sum of two inequivalent irreducible representations $\rho_{1}$ and $\rho_{2}$.
	
	Then there exists an element $\Phi_{h}$ of 
	\(
	\End_{H}\left(
	\ind_{K}^{H} (\rho)
	\right) \simeq \cH(H, \rho)
	\)
	such that $\supp(\Phi_{h}) = \nobreak K h K$ with $h \in I_{H}(\rho) \smallsetminus K$, and such that we have
	\[
	(\Phi_{h})^{2} = (q - 1) \cdot \Phi_{h} + q \cdot \Phi_{1},
	\]
	where $\Phi_{1}$ denotes the identity map on $\ind_{K}^{H} (V_\rho)$ and \(
	q = \frac{
		\dim_{\Coeff}(\rho_{1})
	}{
		\dim_{\Coeff}(\rho_{2})
	}.
	\)
\end{lemma}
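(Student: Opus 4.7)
The plan is to exploit the two-dimensionality of $\End_H\bigl(\ind_K^H(\rho)\bigr)$ together with the double-coset decomposition of the Hecke algebra in order to both locate $\Phi_h$ and to pin down the scalars appearing in its quadratic relation.

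First, by the hypothesis $\ind_K^H(\rho) = \rho_1 \oplus \rho_2$ with $\rho_1, \rho_2$ irreducible and inequivalent, Schur's lemma gives $\End_H\bigl(\ind_K^H(\rho)\bigr) = \Coeff\cdot e_1 \oplus \Coeff\cdot e_2$, where $e_i$ is the projector onto $\rho_i$. Transporting this to $\cH(H, \rho)$ via the analogue for $H$ of the isomorphism in \eqref{heckevsend}, and combining with the double-coset decomposition \eqref{vectorspacedecomposition} together with $\cH(H,\rho)_1 \cong \End_K(\rho) = \Coeff$ of dimension one, one obtains that there is exactly one double coset $KhK$ with $h \in I_H(\rho) \smallsetminus K$ for which $\cH(H,\rho)_h$ is nonzero, and necessarily $\dim_\Coeff \cH(H,\rho)_h = 1$. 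Pick any nonzero element $\Phi_h$ in the corresponding one-dimensional subspace of $\End_H\bigl(\ind_K^H(\rho)\bigr)$.

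Next, write $\Phi_h = a_1 e_1 + a_2 e_2$ with $a_1, a_2 \in \Coeff$. Since $\supp(\Phi_h) = KhK \neq K$, the element $\Phi_h$ is not a scalar multiple of $\Phi_1 = e_1 + e_2$, so $a_1 \neq a_2$. From $\Phi_h^2 = a_1^2 e_1 + a_2^2 e_2$, a relation of the form $\Phi_h^2 = (q-1)\Phi_h + q\Phi_1$ holds precisely when $(a_i - q)(a_i + 1) = 0$ for $i = 1, 2$; combined with $a_1 \neq a_2$, this forces $\{a_1, a_2\} = \{q, -1\}$. To identify $q$, I would compute $\tr_{\ind_K^H(\rho)}(\Phi_h)$ in two ways. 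In the first, $\tr_{\ind_K^H(\rho)}(\Phi_h) = a_1 \dim_\Coeff(\rho_1) + a_2 \dim_\Coeff(\rho_2)$. The main subtlety lies in the second computation: choosing a basis $\{v_j\}$ of $V_\rho$ and left coset representatives $\{h_i\}$ for $K \backslash H$, one obtains a basis $\{f_{i,j}\}$ of $\ind_K^H(V_\rho)$ characterized by $f_{i,j}(kh_i) = \rho(k)v_j$ and $f_{i,j}(kh_{i'}) = 0$ for $i' \neq i$; a careful unwinding of the definition of $\Phi_h$ from the associated $\varphi_h \in \cH(H,\rho)$ yields $\Phi_h(f_{i,j})(h_i) = \varphi_h(1)(v_j)$, so the diagonal matrix entry at $(i,j),(i,j)$ equals the $j$-th coordinate of $\varphi_h(1)v_j$. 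Summing over $(i,j)$ gives $\tr_{\ind_K^H(\rho)}(\Phi_h) = [H:K] \cdot \tr_{V_\rho}(\varphi_h(1))$, and since $h \notin K$ we have $\varphi_h(1) = 0$, whence $\tr_{\ind_K^H(\rho)}(\Phi_h) = 0$.

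Finally, combining $\{a_1, a_2\} = \{q, -1\}$ with $a_1 \dim_\Coeff(\rho_1) + a_2 \dim_\Coeff(\rho_2) = 0$ leaves exactly two possibilities for $q$, namely $\dim_\Coeff(\rho_1)/\dim_\Coeff(\rho_2)$ and its inverse; these two options correspond to the two rescalings of $\Phi_h$ that preserve the form of the desired quadratic relation. Taking $\Phi_h = -e_1 + \bigl(\dim_\Coeff(\rho_1)/\dim_\Coeff(\rho_2)\bigr)\cdot e_2$ then produces the quadratic relation with $q = \dim_\Coeff(\rho_1)/\dim_\Coeff(\rho_2)$, as claimed.
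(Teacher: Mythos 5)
Your proof is correct and takes a genuinely different route from the paper's.

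The paper follows Howlett--Lehrer: after reducing to finite groups, it expresses the projection $p_1$ onto $\rho_1$ as $\lambda\Phi_1 + \mu\Phi'_h$, derives a quadratic relation from $p_1^2 = p_1$, and then computes the scalar $\lambda$ by applying the character projection formula $p_1 = \frac{\dim\rho_1}{|H|}\sum_{h'}\trace(\rho_1(h'^{-1}))(\ind_K^H\rho)(h')$ to $f_v$, evaluating at the identity and invoking orthogonality of characters. Your argument instead works directly in the basis of projectors $e_1, e_2$: the key observation is that any $\Phi_h \in \cH(H,\rho)_h$ has trace zero as an operator on $\ind_K^H(V_\rho)$, which you establish by the matrix computation showing that $\varphi_h(1) = 0$ (since $1 \notin KhK$) kills every diagonal entry. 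The trace condition $a_1\dim_\Coeff(\rho_1) + a_2\dim_\Coeff(\rho_2) = 0$, together with $a_1 \neq a_2$ (since $\Phi_h$ is not a multiple of $\Phi_1$) and the invertibility of the $\dim_\Coeff(\rho_i)$ in $\Coeff$ (from the banality hypothesis), forces both $a_i$ nonzero; rescaling so $a_1 = -1$ gives $a_2 = q$ and the quadratic relation is a one-line verification. This avoids the character projection formula entirely and replaces the paper's two-sided evaluation of $p_1(f_v)(1)$ with the conceptually cleaner fact that an off-diagonal Hecke operator has vanishing trace. Both proofs are elementary; yours is somewhat shorter and organizes the two unknowns more symmetrically, while the paper's version sits closer to the classical Howlett--Lehrer treatment it cites. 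One small presentational point: the step arguing that if the quadratic relation holds then $\{a_1,a_2\}=\{q,-1\}$ reads like a deduction when at that stage $q$ has not yet been produced; the cleaner order is to derive $a_1, a_2 \neq 0$ and the proportionality $(a_1,a_2) \propto (-\dim_\Coeff(\rho_2),\dim_\Coeff(\rho_1))$ from the trace first, normalize, and only then exhibit the relation.
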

Here we define the Hecke algebra $\cH(H, \rho)$ associated to $(K, \rho)$ analogously to Section~\ref{Hecke algebras and endomorphism algebras}.
\begin{proof}[Proof of Lemma \ref{lemmacalculationofqparameter}]
	\addtocounter{equation}{-1}
	\begin{subequations}
		The proof of the lemma is essentially the same as \cite[Theorem~3.18 (ii)]{MR570873},
but we include it here for the convenience of the reader.
		
		Since $\rho$ is a smooth representation of a compact group $K$, the kernel of $\rho$ is a normal, open subgroup of $K$.
		We define the normal, open subgroup $N$ of $H$ by
		\[
		N = \bigcap_{a \in H/K} a (\ker{\rho}) a^{-1}.
		\]
		By replacing $H$ and $K$ with $H / N$ and $K / N$, respectively, and regarding $\rho$ as a representation of $K / N$, we may suppose that $H$ and $K$ are finite groups.
		
		We fix a non-zero element $\Phi'_{h} \in \End_{H} \left(
		\ind_{K}^{H} (\rho)
		\right)$ such that $\supp(\Phi'_{h}) = K h K$.
		Since $\End_{H}\left(
		\ind_{K}^{H} (\rho)
		\right)$ is two dimensional by Schur's lemma,
$\Phi_{1}$ and $\Phi'_{h}$ give a basis of $\End_{H} \left(
		\ind_{K}^{H} (\rho)
		\right)$.
		Let $p_{1} \in \nobreak \End_{H} \left(
		\ind_{K}^{H} (\rho)
		\right)$ denote the projection onto $\rho_{1}$ with respect to the decomposition
		$
		\ind_{K}^{H} (\rho) = \rho_{1} \oplus \rho_{2}
		$.
		We write
		\begin{align}
		\label{linearcombofprojection}
		p_{1} = \lambda \cdot \Phi_{1} + \mu \cdot \Phi'_{h}
		\end{align}
		for some $\lambda, \mu \in \Coeff$.
		Since the element $p_{1}$ satisfies $p_{1}^{2} = p_{1}$, Equation \eqref{linearcombofprojection} implies that
		\[
		\left(
		\lambda \cdot \Phi_{1} + \mu \cdot \Phi'_{h}
		\right)^{2} = \lambda \cdot \Phi_{1} + \mu \cdot \Phi'_{h}.
		\]
		Hence, we have
		\begin{align}
		\label{quasiquadraticrelation}
		\left(
		\mu \cdot \Phi'_{h}
		\right)^{2} = (1 - 2 \lambda) \cdot \left(
		\mu \cdot \Phi'_{h}
		\right) + \lambda(1 - \lambda) \cdot \Phi_{1}.
		\end{align}
		
		We can calculate the value $\lambda$ as follows.
		We fix a non-zero element
$v \in V_{\rho}$ and define the element $f_{v} \in \ind_{K}^{H} (V_{\rho})$ by
		\[
		f_v(h') =
		\begin{cases}
		\rho(h') (v) & (h' \in K), \\
		0 & (\text{otherwise})
		\end{cases} 
		\]
		for $h' \in H$.
		We will apply both sides of \eqref{linearcombofprojection} to $f_{v}$ and compare the values at the identity element of $H$.
		Recall that the projection $p_{1}$ can be written as
		\[
		p_{1} = \frac{\dim_{\Coeff}\left(
			\rho_{1}
			\right)}{\abs{H}} \sum_{h' \in H} \trace(\rho_{1}(h'^{-1})) \cdot \left(
		\ind_{K}^{H}(\rho)
		\right)(h'),
		\]
		where $\trace(\rho_{1}(h'^{-1}))$ denotes the trace of the linear map $\rho_{1}(h'^{-1})$.
		Hence, we have
		\begin{align*}
		\left(
		p_{1}(f_{v})
		\right)(1) &= \frac{\dim_{\Coeff}\left(
			\rho_{1}
			\right)}{\abs{H}} \sum_{h \in H} \trace(\rho_{1}(h'^{-1})) \cdot 
		\left(
		\left(
		\ind_{K}^{H}(\rho)
		\right)(h') \cdot (f_{v})
		\right)(1) \\
		&= \frac{\dim_{\Coeff}\left(
			\rho_{1}
			\right)}{\abs{H}} \sum_{h' \in H} \trace(\rho_{1}(h'^{-1})) \cdot f_{v}(h') \\
		&= \frac{\dim_{\Coeff}\left(
			\rho_{1}
			\right)}{\abs{H}} \sum_{k \in K} \trace(\rho_{1}(k^{-1})) \cdot \rho(k) (v) \\
		&= \left(
		\frac{\dim_{\Coeff}\left(
			\rho_{1}
			\right)}{\abs{H}} \sum_{k \in K} \trace(\rho_{1}(k^{-1})) \cdot \rho(k)
		\right) (v).
		\end{align*}
		Since 
		\[
		\frac{\dim_{\Coeff}\left(
			\rho_{1}
			\right)}{\abs{H}} \sum_{k \in K} \trace(\rho_{1}(k^{-1})) \cdot \rho(k) \in \End_{K}(\rho),
		\]
		and $\rho$ is an irreducible representation of $K$, Schur's lemma implies that 
		\[
		\frac{\dim_{\Coeff}\left(
			\rho_{1}
			\right)}{\abs{H}} \sum_{k \in K} \trace(\rho_{1}(k^{-1})) \cdot \rho(k)
		\]
		is a scalar multiplication on $V_{\rho}$.
		Moreover,
		letting 
		$(\phantom{x},\phantom{y})_{H}$ and $(\phantom{x},\phantom{y})_{K}$
		denote the inner products
		of finite-dimensional representations of $H$ and $K$,
		the scalar is calculated as
		\begin{align*}
		\frac{1}{\dim_{\Coeff}(\rho)} 
		\frac{\dim_{\Coeff}\left(
			\rho_{1}
			\right)}{\abs{H}} \sum_{k \in K} \trace(\rho_{1}(k^{-1})) \cdot \trace(\rho(k)) 
		&= \frac{1}{\dim_{\Coeff}(\rho)} 
		\frac{\dim_{\Coeff}\left(
			\rho_{1}
			\right)}{\abs{H}} \abs{K} (\rho_{1} \restriction_{K}, \rho)_{K} \\
		&= \frac{1}{\dim_{\Coeff}(\rho)} 
		\frac{\dim_{\Coeff}\left(
			\rho_{1}
			\right)}{\abs{H}} \abs{K} (\rho_{1}, \ind_{K}^{H} (\rho))_{H} \\
		&= \frac{1}{\dim_{\Coeff}(\rho)} 
		\frac{\dim_{\Coeff}\left(
			\rho_{1}
			\right)}{\abs{H}} \abs{K} \\
		&= \dim_{\Coeff}\left(
		\rho_{1}
		\right) \cdot \left(
		\dim_{\Coeff}(\rho) \cdot \abs{H/K}
		\right)^{-1} \\
		&= \frac{\dim_{\Coeff}\left(
			\rho_{1}
			\right)}{\dim_{\Coeff}\left(
			\ind_{K}^{H}(\rho)
			\right)} \\
		&= \frac{
			\dim_{\Coeff}(\rho_{1})
		}{
			\dim_{\Coeff}(\rho_{1}) + \dim_{\Coeff}(\rho_{2})
		}.
		\end{align*}
		On the other hand, the right-hand side of \eqref{linearcombofprojection} is calculated as
		\[
		\left(
		\left(
		\lambda \cdot \Phi_{1} + \mu \cdot \Phi'_{h}
		\right)(f_{v})
		\right)(1) = \lambda \cdot v.
		\]
		Thus, we have
		\begin{align}
		\label{valueoflambda}
		\frac{
			\dim_{\Coeff}(\rho_{1})
		}{
			\dim_{\Coeff}(\rho_{1}) + \dim_{\Coeff}(\rho_{2})
		} = \lambda.
		\end{align}
		Now, we define the element $\Phi_{h}$ of $\End_{H} \left(
		\ind_{K}^{H} (\rho)
		\right)$ by
		\[
		\Phi_{h} = - \frac{
			\dim_{\Coeff}(\rho_{1}) + \dim_{\Coeff}(\rho_{2})
		}{
			\dim_{\Coeff}(\rho_{2})
		} \cdot \mu \cdot \Phi'_{h}.
		\]
		Note that $\Phi_h$ is non-zero,
		because $\lambda(1-\lambda) \neq 0$ implies by \eqref{quasiquadraticrelation} that $\mu$ is non-zero.
		Then according to \eqref{quasiquadraticrelation} and \eqref{valueoflambda}, we have
		\[
		(\Phi_{h})^{2} = (q - 1) \cdot \Phi_{h} + q \cdot \Phi_{1}.
		\qedhere
		\]
	\end{subequations}
\end{proof}
\begin{proof}[Proof of Proposition~\ref{value-of-qs}]
\addtocounter{equation}{-2}
\begin{subequations}
	Since the group $K_{x, s}$ is compact, the representation $\ind_{K_{x}}^{K_{x, s}}(\rho_{x})$ is semi-simple.
	Hence, the first claim follows from Lemma~\ref{lemmadimensionoftheendomorphismalgebraKxsH}.
	To prove the second claim, note that by Proposition \ref{propositionsuperstrongversionofquadraticrelation}, the elements $\Phi_{s} = \Phi_{x_{0}, s}$ and $\Phi_{x, s}$ satisfy the same quadratic relation.
	Hence, we have 
	\begin{equation}
	\label{inpropofqsquadraticrel1}
	(\Phi_{x, s})^{2} = (q_{s} - 1) \cdot \Phi_{x, s} + q_{s} \cdot \Phi_{x, 1}.
	\end{equation}
	On the other hand, applying Lemma \ref{lemmacalculationofqparameter} to $H = K_{x, s}$, $K = K_{x}$, and $\rho = \rho_{x}$, we obtain that there exists an element $\Phi \in \cH(K_{x, s}, \rho_{x})_{s}$ such that
	\begin{equation}
	\label{inpropofqsquadraticrel2}
	(\Phi)^{2} = (q - 1) \cdot \Phi + q \cdot \Phi_{1},
	\end{equation}
	where $q =  \frac{
		\dim_{\Coeff}(\rho_{1})
	}{
		\dim_{\Coeff}(\rho_{2})
	}$.
	Since $\cH(K_{x, s}, \rho_{x})_{s}$ is one dimensional by Proposition~\ref{propositionvectorspacedecomposition}, there exists $d \in \Coeff^{\times}$ such that $\Phi = d \cdot \Phi_{x, s}$.
	Then comparing \eqref{inpropofqsquadraticrel1} with \eqref{inpropofqsquadraticrel2}, we have $q \neq 1$, and replacing $\rho_{1}$ with $\rho_{2}$ if necessary, we may suppose that $q \in \Coeffplus$.
	Then the proposition follows from Proposition \ref{propositionsuperstrongversionofquadraticrelation} and Equations~\eqref{inpropofqsquadraticrel1} and \eqref{inpropofqsquadraticrel2}.
\end{subequations}
\end{proof}

\subsection{The description of the Hecke algebra}
\label{subsection:The description of the Hecke algebra}
In this subsection, we will prove our main theorem of this Section, Theorem \ref{theoremstructureofhecke}, about the structure of the Hecke algebra $\cH(G(F),\rho_{x_0})$. We keep the notation from Section \ref{sec:simplereflections}, i.e., the notation from all previous subsections, but allowing the coefficient field $\Coeff$ again to be of arbitrary characteristic. We also assume that all previous axioms hold, i.e., Axioms \ref{axiomaboutHNheartandK}, \ref{axiombijectionofdoublecoset}, \ref{axiomexistenceofRgrp}, and \ref{axiomaboutdimensionofend}, and fix a subset  $\Coeffplus \subset \Coeffinvnontriv$ as in Choice \ref{choiceofCoeffplus}. 

Recall that we defined the elements $\Phi_w\in  \End_{G(F)} \left( \ind_{K_{x}}^{G(F)} (\rho_{x}) \right)  \simeq \cH(G(F),\rho_x)$
for all $w \in \nobreak \Wheart$ in Definition \ref{definitionPhixw}
and that these endomorphisms depend on the choice of
a family $\cT$ 
Choice \ref{choice:simplereflection}).
In this subsection, we will adjust $\cT$, and thus $\{\Phi_w\}$,
in a way that makes the latter more compatible with the group
structure of 
$w \in \Wheart$, see Choice \ref{choice:tw} below, so that we can use these basis elements to write down an explicit isomorphism between the Hecke algebra $\cH(G(F), \rho_{x_0})$ and a semi-direct product
of an affine Hecke algebra with a twisted group algebra in Theorem \ref{theoremstructureofhecke}.
In order to obtain the subalgebra isomorphic to the affine Hecke algebra, we start by proving appropriate braid relations.

\begin{lemma}
\label{lemmabraidrel}
Let $s_{1}$ and $s_{2}$ be distinct elements of $S_{\Krel}$ such that the order of $s_{1} s_{2}$ in $\Waff$ is $m < \infty$.
Then we have
\[
\underbrace{
\Phi_{s_{1}} \Phi_{s_{2}} \Phi_{s_{1}} \cdots
}_{m \terms}
 = 
 \underbrace{
 \Phi_{s_{2}} \Phi_{s_{1}} \Phi_{s_{2}} \cdots
 }_{m \terms}.
\]
\end{lemma}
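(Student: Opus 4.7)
Set $w_m = \underbrace{s_1 s_2 s_1 \cdots}_{m \text{ factors}} = \underbrace{s_2 s_1 s_2 \cdots}_{m \text{ factors}}$, the longest element of the finite dihedral subgroup $W_0 = \langle s_1, s_2 \rangle \subset \Waff$. Both alternating products in the statement of the lemma give reduced expressions for $w_m$, so all partial products have strictly increasing $\flength$-length. By iterated application of Proposition~\ref{propositioniflengthsumthentransitivityuptotwococycle}, each side equals a scalar multiple of $\Phi_{w_m}$. By Proposition~\ref{propositionvectorspacedecomposition}, the subspace $\cH(G(F),\rho_{x_0})_{w_m}$ is one-dimensional; hence the two sides differ by some scalar $c \in \Coeff^\times$, and the main task is to show $c = 1$.

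My plan is to unwind the definition $\Phi_{s_i} = \ciso{s_i^{-1} x_0}{s_i} \circ \Theta^{\normal}_{s_i^{-1} x_0 \mid x_0}$ and commute the $\ciso$ factors past the $\Theta^{\normal}$ factors using Lemma~\ref{lemmathetaccommute}, arriving for each of the two orderings at an expression of the form
\[
\bigl(\ciso{\cdot}{s_{i_1}} \circ \cdots \circ \ciso{\cdot}{s_{i_m}}\bigr)\circ \bigl(\Theta^{\normal}_{\cdot \mid \cdot} \circ \cdots \circ \Theta^{\normal}_{\cdot \mid x_0}\bigr).
\]
The composition of $\Theta^{\normal}$ operators on the right simplifies in both cases, by iterated use of Proposition~\ref{transitivityofThetaGammaver} (legitimate since the partial products are length-additive), to the single operator $\Theta^{\normal}_{w_m^{-1}x_0\mid x_0}$. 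The composition of $\ciso$ operators on the left simplifies to a scalar multiple of $\ciso{w_m^{-1}x_0}{w_m}$ by iterated application of Lemma~\ref{lemmatransitivityexceptfottheta}. The scalars produced on the two sides are products of $\muT$-values, and the task reduces to comparing these two products.

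I expect the main obstacle to be precisely this comparison. Concretely, it amounts to checking that
$T_{\widetilde s_1}\circ T_{\widetilde s_2}\circ T_{\widetilde s_1}\circ\cdots$ and $T_{\widetilde s_2}\circ T_{\widetilde s_1}\circ T_{\widetilde s_2}\circ\cdots$ (both with $m$ factors) coincide in $\End_\Coeff(V_{\rho_M})$, a nontrivial constraint because the two products define \emph{a priori} different elements $\widetilde s_1\widetilde s_2\cdots$ and $\widetilde s_2\widetilde s_1\cdots$ of $\Nheart$ lifting $w_m$. To handle this I would reduce to a finite Hecke algebra: build a compact, open subgroup $K'' \subset G(F)$ containing $K_{x_0}$ and lifts of $s_1, s_2$ such that $(\Nheart \cap K'')/(\Nheart \cap K_M) = W_0$ (existence follows by iterating the construction in Axiom~\ref{axiomaboutdimensionofend}, using that the walls $H_{s_1},H_{s_2}$ bound a common face of $C_\Krel$ since $\langle s_1,s_2\rangle$ is finite). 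Both sides of the desired identity are supported in $K_{x_0}w_mK_{x_0} \subset K''$, so the identity can be checked within the finite-dimensional algebra $\cH(K'', \rho_{x_0})$, which by Proposition~\ref{propositionvectorspacedecomposition} has basis $\{\varphi_w : w \in W_0\}$. In this finite setting the quadratic relations of Proposition~\ref{propositionaboutquadraticrelationsrevised} together with the one-dimensionality of each $\cH(K'', \rho_{x_0})_{w}$ pin down the multiplication table up to the scalar $c$, and $c = 1$ can then be forced either by a direct computation analogous to Lemma~\ref{lemmacalculationofqparameter} applied to the dihedral parabolic, or by a subsequent refinement of the family $\cT$ (in the spirit of the refinements leading to Choice~\ref{choice:simplereflection}) that normalizes away the discrepancy between the two lifts of $w_m$.
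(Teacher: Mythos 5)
Your opening reduction is correct and matches the paper's: by iterated use of Proposition~\ref{propositioniflengthsumthentransitivityuptotwococycle} both alternating products equal non-zero multiples of $\Phi_{w_m}$, so the statement is equivalent to showing that a certain scalar $c \in \Coeff^\times$ equals $1$. After that, however, your plan diverges from the paper's proof and runs into genuine difficulties.

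The paper's argument is purely algebraic and does not unwind the intertwining operators at all. Writing $\Phi_{2,1}$ for the product of $m-1$ terms $\Phi_{s_2}\Phi_{s_1}\cdots$ and $s_n$ for the terminal letter ($s_1$ if $m$ is even, $s_2$ if odd), length additivity gives $\Phi_{s_1}\Phi_{2,1} = c\,\Phi_{2,1}\Phi_{s_n}$. Multiplying on the left by $\Phi_{s_1}$ and substituting the quadratic relations for $(\Phi_{s_1})^2$ and $(\Phi_{s_n})^2$, then comparing coefficients of $\Phi_{s_1}\Phi_{2,1}$ and $\Phi_{2,1}$ (linearly independent by Proposition~\ref{propositionvectorspacedecomposition}), yields $q_{s_1}-1 = c(q_{s_n}-1)$. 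Since $s_n$ is $\Wheart$-conjugate to $s_1$, Proposition~\ref{propositionaboutquadraticrelationsrevised} gives $q_{s_1} = q_{s_n} \neq 1$, hence $c=1$. This works over an arbitrary coefficient field and uses only what has already been established.

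Your route has three gaps. First, the existence of a compact, open $K''$ with $(\Nheart\cap K'')/(\Nheart\cap K_M) = \langle s_1,s_2\rangle$ is not a consequence of the axioms: Axiom~\ref{axiomaboutdimensionofend} provides such a group only for each singleton $\{1,s\}$, and ``iterating the construction'' has no meaning here because the axiom is an existence statement, not a constructive recipe, and nothing in the axiomatic setting guarantees that the group generated by $K'_{x,s_1}$ and $K'_{x,s_2}$ is compact or has the stated intersection with $\Nheart$. Second, the proposed computation modelled on Lemma~\ref{lemmacalculationofqparameter} is only available when $\ell$ is zero or banal (that lemma explicitly assumes no finite quotient has order divisible by $\ell$), so it cannot close the argument in general. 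Third, the fallback of ``a subsequent refinement of $\cT$'' cannot work: rescaling $T_n$ at lifts $n$ of $w_m$ multiplies $\Phi_{w_m}$ by the same factor on both sides of the putative identity, so $c$ (being the ratio of the two coefficients of $\Phi_{w_m}$) is invariant under any such normalization; and the $T_n$ at lifts of simple reflections are already pinned down by Choice~\ref{choice:simplereflection}. The discrepancy therefore cannot be normalized away and must be shown to vanish, which is exactly what the paper's coefficient comparison accomplishes.
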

\begin{proof}
\addtocounter{equation}{-1}
\begin{subequations}
According to Proposition~\ref{propositioniflengthsumthentransitivityuptotwococycle}, 
there exists $c \in \Coeff^{\times}$ such that
\[
\underbrace{
\Phi_{s_{1}} \Phi_{s_{2}} \Phi_{s_{1}} \cdots
}_{m \terms}
 = c \cdot
\underbrace{
 \Phi_{s_{2}} \Phi_{s_{1}} \Phi_{s_{2}} \cdots
 }_{m \terms}.
\]
We will prove that $c = 1$.
We write
\[
n \coloneqq
\begin{cases}
1 & (2 \mid m) \\
2 & (2 \nmid m)
\end{cases}
\quad
\text{ and }
\quad
\Phi_{2, 1}.
\coloneqq
\underbrace{
 \Phi_{s_{2}} \Phi_{s_{1}} \Phi_{s_{2}} \cdots
 }_{m-1 \terms}
\quad
\text{ so that }
\quad
\Phi_{s_{1}} \Phi_{2, 1} = c \cdot \Phi_{2, 1} \Phi_{s_{n}}.
\]
Then we obtain
\begin{equation*}
	\left( 	\Phi_{s_{1}} \right)^{2} \Phi_{2, 1}=\Phi_{s_1} \circ c \cdot  \Phi_{2,1} \Phi_{s_n}=c^2 \cdot \Phi_{2,1} \left(\Phi_{s_n}\right)^2
\end{equation*}
Since $\Phi_{s_{1}}$ and $\Phi_{s_{n}}$
satisfy the quadratic relations
\[
\left(
\Phi_{s_{1}}
\right)^{2} = (q_{s_{1}} - 1) \cdot \Phi_{s_{1}} + q_{s_{1}} \cdot \Phi_{1}
\quad
\text{ and }
\quad
\left(
\Phi_{s_{n}}
\right)^{2} = (q_{s_{n}} - 1) \cdot \Phi_{s_{n}} + q_{s_{n}} \cdot \Phi_{1},
\]
we obtain that
\begin{align*}
(q_{s_{1}} - 1) \cdot \Phi_{s_{1}} \Phi_{2, 1} + q_{s_{1}} \cdot \Phi_{2, 1}
&= 
c^{2} \cdot \left(
(q_{s_{n}} - 1) \cdot \Phi_{2, 1} \Phi_{s_{n}} + q_{s_{n}} \cdot \Phi_{2, 1}
\right) \\
&= c \cdot (q_{s_{n}} - 1) \cdot \Phi_{s_{1}} \Phi_{2, 1} + c^{2} \cdot q_{s_{n}} \cdot \Phi_{2, 1}.
\end{align*}
Using that $\Phi_{s_1} \Phi_{2, 1}$ and $ \Phi_{2, 1}$ are linearly independent, we conclude that 
\begin{align}
\label{implyc=1}
q_{s_{1}} - 1 &= c \cdot (q_{s_{n}} - 1). 
\end{align}
Since 
\(
s_{n} = 
\Bigl(
\underbrace{
s_{2} s_{1} s_{2} \cdots
}_{m-1 \terms}
\Bigr)^{-1}
s_{1} \Bigl(
\underbrace{
s_{2} s_{1} s_{2} \cdots
}_{m-1 \terms}
\Bigr),
\)
the element $s_{n}$ is $\Wheart$-conjugate to $s_{1}$, and therefore by 
 Proposition~\ref{propositionaboutquadraticrelationsrevised} we have
\(
q_{s_{1}} = q_{s_{n}} \neq 1.
\)
Thus Equation \eqref{implyc=1} implies that $c = 1$.
\end{subequations}
\end{proof}
According to Proposition~\ref{Rrhosimeqaffineweyl}, the group $\Waff \simeq W_{\Krel}$ is a Coxeter group.
Hence, we can define the notion of a reduced expression for $w \in \Waff$ in the usual way.
\begin{corollary}
\label{corollaryofthebraidrelations}
	We assume Axioms \ref{axiomaboutHNheartandK}, \ref{axiombijectionofdoublecoset}, \ref{axiomexistenceofRgrp}, \ref{axiomaboutdimensionofend}, and that $\cT$ is chosen as in Choice~\ref{choice:simplereflection}.
Let $w \in \Waff$.
Then the element 
$
\Phi_{s_{1}} \Phi_{s_{2}} \cdots \Phi_{s_{\ellsubstitute}}
$
does not depend on the choice of a reduced expression
$
w = s_{1} s_{2} \cdots s_{\ellsubstitute}
$
for $w$.
\end{corollary}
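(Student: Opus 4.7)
The plan is to deduce this from Matsumoto's theorem on Coxeter groups (see, e.g., \cite[Chapter~IV, \S1.5, Proposition~5]{MR0240238}), which states that in a Coxeter system $(W, S)$, any two reduced expressions for the same element $w \in W$ can be transformed into each other using only braid relations $\underbrace{s_{i}s_{j}s_{i}\cdots}_{m_{ij}} = \underbrace{s_{j}s_{i}s_{j}\cdots}_{m_{ij}}$, where $m_{ij}$ denotes the order of $s_{i}s_{j}$. Crucially, since both expressions are reduced and have the same length, the transformation uses no ``nil'' moves (i.e.\ no cancellations $s_{i}s_{i} = 1$).

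First, I would invoke Proposition~\ref{Rrhosimeqaffineweyl} to view $(\Waff, S_{\Krel})$ as a Coxeter system, so Matsumoto's theorem applies. Next, I would observe that Lemma~\ref{lemmabraidrel} provides exactly the braid relations for the operators $\Phi_{s}$ with $s \in S_{\Krel}$, in the algebra $\End_{G(F)}\bigl(\ind_{K_{x_{0}}}^{G(F)}(\rho_{x_{0}})\bigr)$.

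Given two reduced expressions $w = s_{1}s_{2}\cdots s_{\ellsubstitute} = s'_{1}s'_{2}\cdots s'_{\ellsubstitute}$, Matsumoto's theorem yields a finite sequence of reduced expressions connecting them, where consecutive expressions differ by a single braid move. Applying Lemma~\ref{lemmabraidrel} at each step, the corresponding product of operators is unchanged, so
\[
\Phi_{s_{1}}\Phi_{s_{2}}\cdots\Phi_{s_{\ellsubstitute}} = \Phi_{s'_{1}}\Phi_{s'_{2}}\cdots\Phi_{s'_{\ellsubstitute}},
\]
which is the claim.

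There is no real obstacle here: the whole point of Lemma~\ref{lemmabraidrel} together with the already-established quadratic relations (Proposition~\ref{propositionaboutquadraticrelationsrevised}) is to set up exactly the hypotheses needed for Matsumoto's theorem to apply. The only thing to double-check is that Matsumoto's theorem works for the (possibly infinite) Coxeter system $(\Waff, S_{\Krel})$, which it does since the argument is purely combinatorial and does not require finiteness.
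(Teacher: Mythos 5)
Your proof is correct and follows exactly the same route as the paper: the paper also invokes the fact that any two reduced expressions are connected by braid moves (citing Björner--Brenti rather than Bourbaki's version of Matsumoto's theorem) and then concludes immediately from Lemma~\ref{lemmabraidrel}.
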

\begin{proof}
According to \cite[Theorem~3.3.1 (ii)]{MR2133266}, every two reduced expressions for $w$ can be connected via a sequence of braid-moves.
Then the corollary follows from Lemma~\ref{lemmabraidrel}.
\end{proof}

We now refine Choice \ref{choice:simplereflection}.
Let $t \in \Wzero$ and $w \in \Waff \smallsetminus \{ 1\}$ with reduced expression
$
w = s_{1} s_{2} \cdots s_{\ellsubstitute}
$.
According to Proposition~\ref{propositioniflengthsumthentransitivityuptotwococycle}, there exists $d_{tw} \in \Coeff^{\times}$ such that
$
d_{tw} \cdot \Phi_{tw} =\Phi_t \Phi_{s_{1}} \Phi_{s_{2}} \cdots \Phi_{s_{\ellsubstitute}}
$.
According to Corollary~\ref{corollaryofthebraidrelations}, the scalar $d_{tw}$ does not depend on the choice of a reduced expression for $w$.
By replacing $T_{n}$ with $d_{tw} \cdot T_{n}$ for all lifts $n$ of $tw$ in $\Nheart$ and noting Remark~\ref{remarkaboutphix1}, we obtain the following refinement of Choice \ref{choice:simplereflection}:
\begin{choice}
\label{choice:tw}
We choose a family of non-zero elements
\[
\cT =
\left\{
T_{n} \in \Hom_{K_{M}}\left(
^n\!\rho_{M}, \rho_{M}
\right)
\right\}_{n \in \Nheart}
\]
that satisfies the following conditions:
\begin{enumerate}[(1)]
	\item
	We have
	$
	T_{1} = \id_{\rho_{M}}
	$.
	\label{conditionsofTnkinchoice:triv}
	\item
	\label{conditionsofTnkinchoice:tw}
	For all $n \in \Nheart$ and $k \in K_{M} \cap \Nheart$, we have
	$
	T_{nk} = T_{n} \circ \rho_{M}(k)
	$.
	\item
	\label{conditionofthechoicequadraticrelations}
	 For every $s \in S_{\Krel}$, there exists $q_s \in \Coeffplus$ such that $ (\Phi_{s})^{2} = (q_{s} - 1) \cdot \Phi_{s} + q_{s} \cdot \Phi_{1}$.
	\item
	\label{conditionofthechoicebraidrelations}
	 For all $w \in \Waff$ with reduced expression $	w = s_{1} s_{2} \cdots s_{\ellsubstitute}	$, we have $\Phi_{w} = \Phi_{s_{1}} \Phi_{s_{2}} \cdots \Phi_{s_{\ellsubstitute}}$.
	\item 
	\label{conditionofchoiceproductofWzeroandaff}
	For  all $t \in \Wzero$ and $w \in \Waff$, we have $\Phi_{t} \Phi_{w} = \Phi_{tw}$.
\end{enumerate}
\end{choice}
\begin{proposition}
	\label{propositionchoice:twispossible}
	We assume Axioms \ref{axiomaboutHNheartandK}, \ref{axiombijectionofdoublecoset}, \ref{axiomexistenceofRgrp}, and \ref{axiomaboutdimensionofend} and fix a subset  $\Coeffplus \subset \Coeffinvnontriv$ as in Choice \ref{choiceofCoeffplus}. Then we can choose $\cT$ to satisfy all the properties in Choice~\ref{choice:tw}. 
	\end{proposition}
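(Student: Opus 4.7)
The plan is to start with a family $\cT$ provided by Proposition~\ref{propositionaboutquadraticrelationsrevised} (which guarantees conditions \eqref{conditionsofTnkinchoice:triv}, \eqref{conditionsofTnkinchoice:tw}, and \eqref{conditionofthechoicequadraticrelations} of Choice~\ref{choice:tw}) and then uniformly rescale $T_{n}$ on each fiber of the projection $\Nheart \twoheadrightarrow \Wheart$ so as to enforce conditions \eqref{conditionofthechoicebraidrelations} and \eqref{conditionofchoiceproductofWzeroandaff}, while preserving conditions \eqref{conditionsofTnkinchoice:triv}, \eqref{conditionsofTnkinchoice:tw}, \eqref{conditionofthechoicequadraticrelations}.

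By Proposition~\ref{propositiondecompositionofW}, every $v \in \Wheart$ factors uniquely as $v = tw$ with $t \in \Wzero$ and $w \in \Waff$; in particular $\flength(v) = \flength(w)$. For each such $v$, we define a scalar $d_{v} \in \Coeff^{\times}$ as follows. If $v \in \Wzero$ (so $w = 1$), set $d_{v} = 1$; otherwise, fix a reduced expression $w = s_{1} s_{2} \cdots s_{\ellsubstitute}$ and let $d_{v}$ be the unique element of $\Coeff^{\times}$ satisfying
\[
d_{v} \cdot \Phi_{v} = \Phi_{t} \Phi_{s_{1}} \Phi_{s_{2}} \cdots \Phi_{s_{\ellsubstitute}},
\]
which exists by iterated application of Proposition~\ref{propositioniflengthsumthentransitivityuptotwococycle} (using $\flength(tw) = \flength(t) + \flength(w) = \flength(w)$ and $\flength(s_{1} \cdots s_{i}) = i$). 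The scalar $d_{v}$ is independent of the chosen reduced expression by Corollary~\ref{corollaryofthebraidrelations}. Note that $d_{v} = 1$ when $v = 1$ or when $v = s \in S_{\Krel}$.

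We now replace $T_{n}$ by $d_{v} \cdot T_{n}$ for every $v \in \Wheart$ and every lift $n$ of $v$ in $\Nheart$. By Remark~\ref{remarkaboutphix1}, this rescales $\Phi_{v}$ to $d_{v} \cdot \Phi_{v}$. Let $\cT'$ denote the new family. Condition \eqref{conditionsofTnkinchoice:triv} is preserved because $d_{1} = 1$. Condition \eqref{conditionsofTnkinchoice:tw} is preserved because $n$ and $nk$ (for $k \in K_{M} \cap \Nheart$) have the same image $v$ in $\Wheart$, so both sides of $T_{nk} = T_{n} \circ \rho_{M}(k)$ are scaled by the same factor $d_{v}$. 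Condition \eqref{conditionofthechoicequadraticrelations} is preserved because $d_{s} = 1$ for every $s \in S_{\Krel}$, so $\Phi_{s}$ is unchanged.

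It remains to check conditions \eqref{conditionofthechoicebraidrelations} and \eqref{conditionofchoiceproductofWzeroandaff} for the new $\Phi_{v}$. For $w \in \Waff$ with reduced expression $w = s_{1} \cdots s_{\ellsubstitute}$, taking $t = 1$ in the defining relation gives
\[
\Phi_{w}^{\mathrm{new}} = d_{w} \cdot \Phi_{w} = \Phi_{1} \Phi_{s_{1}} \cdots \Phi_{s_{\ellsubstitute}} = \Phi_{s_{1}} \cdots \Phi_{s_{\ellsubstitute}},
\]
which is \eqref{conditionofthechoicebraidrelations} (the case $w = 1$ being trivial since the empty product equals $\Phi_{1}$). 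For $t \in \Wzero$ and $w \in \Waff$, if $w = 1$ then both sides of \eqref{conditionofchoiceproductofWzeroandaff} equal $\Phi_{t}$; if $w \neq 1$, then the defining relation for $v = tw$, combined with the identity already proven for $\Phi_{w}^{\mathrm{new}}$, yields
\[
\Phi_{tw}^{\mathrm{new}} = d_{tw} \cdot \Phi_{tw} = \Phi_{t} \Phi_{s_{1}} \cdots \Phi_{s_{\ellsubstitute}} = \Phi_{t} \Phi_{w}^{\mathrm{new}},
\]
which is \eqref{conditionofchoiceproductofWzeroandaff}. There is no genuine obstacle in this argument: the key point is that Corollary~\ref{corollaryofthebraidrelations} and Proposition~\ref{propositioniflengthsumthentransitivityuptotwococycle} together make the scalars $d_{v}$ well defined and coherent across all expressions, so that a single rescaling simultaneously enforces braid relations and compatibility with the semi-direct product decomposition $\Wheart = \Wzero \ltimes \Waff$.
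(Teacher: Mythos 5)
Your proof is correct and takes essentially the same approach as the paper: both start from a family $\cT$ provided by Proposition~\ref{propositionaboutquadraticrelationsrevised}, define the rescaling scalars $d_{tw}$ via Proposition~\ref{propositioniflengthsumthentransitivityuptotwococycle} and show they are well defined via Corollary~\ref{corollaryofthebraidrelations}, and then apply Remark~\ref{remarkaboutphix1} to rescale. The paper's proof simply refers to the paragraph preceding Choice~\ref{choice:tw} for this rescaling; you have spelled out in full the verification that conditions \eqref{conditionsofTnkinchoice:triv}--\eqref{conditionofchoiceproductofWzeroandaff} hold after the rescaling (in particular, the observations $d_{1}=1$, $d_{t}=1$ for $t\in\Wzero$, and $d_{s}=1$ for $s\in S_{\Krel}$, which ensure the quadratic relations are untouched), which the paper leaves implicit.
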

\begin{proof}
According to Proposition~\ref{propositionaboutquadraticrelationsrevised}, we can choose $\cT$ as in Choice \ref{choice:simplereflection}
 and the paragraph before Choice \ref{choice:tw} explains a rescaling that yields a choice $\cT$ satisfying all the desired properties. 
\end{proof}	

\begin{remark}
\label{remarkaboutchoice:tw}
According to Proposition~\ref{propositionsuperstrongversionofquadraticrelation}, Condition \eqref{conditionofthechoicequadraticrelations} of Choice~\ref{choice:tw} determines the choices of $T_{\widetilde{s}} \in \cT$ for all lifts $\widetilde{s} \in \Nheart$ of $s \in S_{\Krel}$.
Condition \eqref{conditionofthechoicebraidrelations} then determines the choices of $T_{n} \in \cT$ for all lifts $n \in \Nheart$ of $w \in \Waff$.
On the other hand, we can choose $T_{n} \in \cT$ freely for a set of representatives of lifts $n \in \Nheart$ of $t \in \Wzero \smallsetminus \{1\}$. 
Once we fix $T_{n} \in \cT$ for such a set of representatives, Conditions \eqref{conditionsofTnkinchoice:triv} through \eqref{conditionofchoiceproductofWzeroandaff} determine all elements of $\cT$.
We will make a special choice of $T_{n} \in \cT$ for lifts $n \in \Nheart$ of $t \in \Wzero \smallsetminus \{1\}$ in Choice~\ref{choice:star} below.
\end{remark}

We also note the following lemma:
\begin{lemma}
\label{lemmaRvsC}
We choose $\cT$ as in Choice~\ref{choice:tw}.
Then we have
$
\Phi_{t} \Phi_{w} =  \Phi_{t w t^{-1}} \Phi_{t}
$ for all $t \in \Wzero$ and $w \in \Waff$.
\end{lemma}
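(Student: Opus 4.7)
The plan is to reduce the identity to the case of simple reflections, where Remark~\ref{remarkaboutconjugacyofsimplereflections} already gives us what we need. First, observe that for $t \in \Wzero$, conjugation by $t$ is a length-preserving automorphism of the Coxeter group $\Waff$: indeed, since $t$ stabilizes the chamber $C_{\Krel}$, it permutes the walls of $C_{\Krel}$ and therefore maps the set $S_{\Krel}$ of simple reflections bijectively to itself. Consequently, if $w = s_{1} s_{2} \cdots s_{\ellsubstitute}$ is a reduced expression for $w \in \Waff$, then
\[
t w t^{-1} = (t s_{1} t^{-1})(t s_{2} t^{-1}) \cdots (t s_{\ellsubstitute} t^{-1})
\]
is again a reduced expression in $(\Waff, S_{\Krel})$.

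Next, I would apply Choice~\ref{choice:tw}\eqref{conditionofthechoicebraidrelations} to both reduced expressions to get
\[
\Phi_{w} = \Phi_{s_{1}} \Phi_{s_{2}} \cdots \Phi_{s_{\ellsubstitute}}
\quad\text{and}\quad
\Phi_{twt^{-1}} = \Phi_{t s_{1} t^{-1}} \Phi_{t s_{2} t^{-1}} \cdots \Phi_{t s_{\ellsubstitute} t^{-1}}.
\]
By Remark~\ref{remarkaboutconjugacyofsimplereflections}, we have $\Phi_{t} \Phi_{s_{i}} = \Phi_{t s_{i} t^{-1}} \Phi_{t}$ for each simple reflection $s_{i}$ (this remark applies to the current $\cT$ because the rescalings used to pass from Choice~\ref{choice:simplereflection} to Choice~\ref{choice:tw} leave both $\Phi_{t}$ with $t \in \Wzero$ and $\Phi_{s}$ with $s \in S_{\Krel}$ unchanged). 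Moving $\Phi_{t}$ past each factor one step at a time gives
\[
\Phi_{t} \Phi_{w}
= \Phi_{t} \Phi_{s_{1}} \Phi_{s_{2}} \cdots \Phi_{s_{\ellsubstitute}}
= \Phi_{t s_{1} t^{-1}} \Phi_{t} \Phi_{s_{2}} \cdots \Phi_{s_{\ellsubstitute}}
= \cdots
= \Phi_{t s_{1} t^{-1}} \cdots \Phi_{t s_{\ellsubstitute} t^{-1}} \Phi_{t}
= \Phi_{twt^{-1}} \Phi_{t}.
\]
The case $w = 1$ is trivial, since then $twt^{-1} = 1$ and $\Phi_{1}$ is the identity.

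There is no real obstacle here; the lemma is essentially a reorganization of Remark~\ref{remarkaboutconjugacyofsimplereflections} using the braid relations built into Choice~\ref{choice:tw}. The one subtlety worth double-checking is that Remark~\ref{remarkaboutconjugacyofsimplereflections} is still valid after passing from Choice~\ref{choice:simplereflection} to Choice~\ref{choice:tw}; this holds because both $\Phi_{t}$ and $\Phi_{s}$, as well as $\Phi_{tst^{-1}}$ (whose index is again a simple reflection), remain unaffected by the rescaling, so the identity $\Phi_{t} \Phi_{s} \Phi_{t}^{-1} = \Phi_{tst^{-1}}$ continues to hold verbatim.
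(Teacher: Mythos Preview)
Your proof is correct and follows essentially the same approach as the paper: reduce to simple reflections via Choice~\ref{choice:tw}\eqref{conditionofthechoicebraidrelations} and then invoke Remark~\ref{remarkaboutconjugacyofsimplereflections}. You spell out a couple of details the paper leaves implicit (that conjugation by $t\in\Wzero$ permutes $S_{\Krel}$ and hence preserves reduced expressions, and that the rescaling in Choice~\ref{choice:tw} leaves $\Phi_t$, $\Phi_s$, and $\Phi_{tst^{-1}}$ unchanged), but the argument is the same.
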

\begin{proof}
We will prove that
$
\Phi_{t w t^{-1}} = \Phi_{t} \Phi_{w} \Phi_{t}^{-1}
$.
Since we have
$
\Phi_{w} = \Phi_{s_{1}} \Phi_{s_{2}} \cdots \Phi_{s_{\ellsubstitute}}
$
for a reduced expression
$
w = s_{1} s_{2} \cdots s_{\ellsubstitute}
$,
we may suppose that $w = s \in S_{\Krel}$.
Now, the lemma follows from Remark~\ref{remarkaboutconjugacyofsimplereflections}.
\end{proof}

We can rewrite Conditions \eqref{conditionofthechoicebraidrelations} and \eqref{conditionofchoiceproductofWzeroandaff} of Choice~\ref{choice:tw} and Lemma~\ref{lemmaRvsC} in terms of the $2$-cocycle $\muT$.
\begin{proposition}
\label{prop:muTtrivial}
	We assume Axioms \ref{axiomaboutHNheartandK}, \ref{axiombijectionofdoublecoset}, \ref{axiomexistenceofRgrp}, and \ref{axiomaboutdimensionofend}, and we fix $\Coeffplus \subset \Coeffinvnontriv$ as in Choice \ref{choiceofCoeffplus} and $\cT$ as in Choice~\ref{choice:tw}.
Then for $v, w \in \Waff$ such that $\flength(vw) = \flength(v) + \flength(w)$, we have $\muT(v, w) = 1$.
In addition, for $t \in \Wzero$ and $w \in \Waff$, we have $\muT(t, w) = \muT(w, t) = 1$.
\end{proposition}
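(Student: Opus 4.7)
The strategy is to combine Proposition~\ref{propositioniflengthsumthentransitivityuptotwococycle}, which says that $\Phi_v \Phi_w = \muT(v,w)\cdot \Phi_{vw}$ whenever $\flength(vw) = \flength(v) + \flength(w)$, with the explicit normalizations built into Choice~\ref{choice:tw}, and then to read off $\muT(v,w) = 1$ from the invertibility of $\Phi_{vw}$ (Corollary~\ref{corollaryPhixwinvertible}). The main subtlety is verifying length-additivity in the second statement, since elements of $\Wzero$ need not have reduced expressions in $S_\Krel$; however, it follows easily from $\flength(t) = 0$ for $t \in \Wzero$ and the fact that $\Wheart$ permutes $\mathfrak{H}_\Krel$ (Corollary~\ref{corollaryweylinvariance}).

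\emph{Case 1: $v, w \in \Waff$ with $\flength(vw) = \flength(v) + \flength(w)$.} Take reduced expressions $v = s_1\cdots s_p$ and $w = s'_1\cdots s'_q$. Length-additivity means the concatenation $s_1\cdots s_p s'_1\cdots s'_q$ is a reduced expression for $vw$. Applying Choice~\ref{choice:tw}\eqref{conditionofthechoicebraidrelations} to $v$, $w$, and $vw$ separately, one obtains $\Phi_v\Phi_w = \Phi_{s_1}\cdots\Phi_{s_p}\cdot\Phi_{s'_1}\cdots\Phi_{s'_q} = \Phi_{vw}$. On the other hand, Proposition~\ref{propositioniflengthsumthentransitivityuptotwococycle} gives $\Phi_v\Phi_w = \muT(v,w)\cdot \Phi_{vw}$, so $\muT(v,w) \cdot \Phi_{vw} = \Phi_{vw}$, and invertibility of $\Phi_{vw}$ forces $\muT(v,w) = 1$.

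\emph{Case 2: $t \in \Wzero$, $w \in \Waff$.} First I verify $\flength(tw) = \flength(t) + \flength(w) = \flength(w)$ and likewise $\flength(wt) = \flength(w)$. Since $\Wheart$ permutes $\mathfrak{H}_\Krel$, the distance $d_\Krel$ on $\cA_\Krel$ is $\Wheart$-invariant, so $d_\Krel(w^{-1}x_0, w^{-1}t^{-1}x_0) = d_\Krel(x_0, t^{-1}x_0) = \flength(t) = 0$; the triangle inequality then gives $\flength(tw) = d_\Krel(x_0, w^{-1}t^{-1}x_0) = d_\Krel(x_0, w^{-1}x_0) = \flength(w)$, and the argument for $\flength(wt)$ is analogous. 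Now Proposition~\ref{propositioniflengthsumthentransitivityuptotwococycle} applies, and combined with Choice~\ref{choice:tw}\eqref{conditionofchoiceproductofWzeroandaff} ($\Phi_t\Phi_w = \Phi_{tw}$) we get $\muT(t,w)\cdot \Phi_{tw} = \Phi_t\Phi_w = \Phi_{tw}$, hence $\muT(t,w) = 1$.

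\emph{Proving $\muT(w,t) = 1$.} Set $w' = twt^{-1} \in \Waff$ (normality from Axiom~\ref{axiomexistenceofRgrp}). Lemma~\ref{lemmaRvsC} gives $\Phi_{w'}\Phi_t = \Phi_t\Phi_w$, and using Choice~\ref{choice:tw}\eqref{conditionofchoiceproductofWzeroandaff} again, $\Phi_t\Phi_w = \Phi_{tw} = \Phi_{w't}$. Thus $\Phi_{w'}\Phi_t = \Phi_{w't}$. Applying Proposition~\ref{propositioniflengthsumthentransitivityuptotwococycle} to the length-additive pair $(w', t)$ yields $\muT(w',t) = 1$. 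Since $w \mapsto twt^{-1}$ is a bijection of $\Waff$, this gives $\muT(w,t) = 1$ for every $w \in \Waff$, completing the proof.
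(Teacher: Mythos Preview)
Your proof is correct and follows essentially the same approach as the paper: both combine Proposition~\ref{propositioniflengthsumthentransitivityuptotwococycle} with Conditions~\eqref{conditionofthechoicebraidrelations} and~\eqref{conditionofchoiceproductofWzeroandaff} of Choice~\ref{choice:tw} and Lemma~\ref{lemmaRvsC}. You have simply spelled out details that the paper leaves implicit, such as the verification of length-additivity for pairs involving $t\in\Wzero$ and the passage from $\muT(w',t)=1$ to $\muT(w,t)=1$ via the bijection $w\mapsto twt^{-1}$.
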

\begin{proof}
The first claim follows from Proposition~\ref{propositioniflengthsumthentransitivityuptotwococycle} and Condition \eqref{conditionofthechoicebraidrelations} of Choice~\ref{choice:tw}.
The second claim follows by combining Proposition~\ref{propositioniflengthsumthentransitivityuptotwococycle} with Condition \eqref{conditionofchoiceproductofWzeroandaff} of Choice~\ref{choice:tw} and Lemma~\ref{lemmaRvsC} and recalling that $\Wzero$ normalizes $\Waff$.
\end{proof}

\begin{notation}
\label{notn:algebras}
We recall several standard notations.
\begin{enumerate}[(a)]
\item
\label{item:affine-hecke-algebra}
We denote by $\cH_\Coeff(\Waff, q)$
\index{notation-ax}{H(Waffq@$\cH(\Waff, q)$}
the affine Hecke algebra with $\Coeff$-coefficients associated with the affine Weyl group $\Waff$, its set of generators $S_{\Krel}$, and the parameter function $q$.
This means $\cH_\Coeff(\Waff, q)$ is a $\Coeff$-algebra with a vector-space basis
\[
\left\{
\mathbb{T}_{w} \mid w \in \Waff
\right\},
\]
and relations generated by
\begin{equation}
	\label{eqn:abstractrelations}
\mathbb{T}_{s}^{2} = (q_{s} - 1) \cdot \mathbb{T}_{s} + q_s \cdot \mathbb{T}_{1}
\quad
\text{ and }
\quad
\mathbb{T}_{w} = \mathbb{T}_{s_{1}} \mathbb{T}_{s_{2}} \cdots \mathbb{T}_{s_{\ellsubstitute}}
\end{equation}
for all $s \in S_{\Krel}$ and
all $w \in \Waff$ with a reduced expression
$
w = s_{1} s_{2} \cdots s_{\ellsubstitute}
$.
\item
\label{item:twisted-group-algebra}
The twisted group algebra $\Coeff[\Wzero, \muT]$
is defined to be the vector space
\(
\bigoplus_{t \in \Wzero} \Coeff \cdot \gpalg_{t}
\)
equipped with the multiplication 
\(
\gpalg_{t_{1}} \cdot \gpalg_{t_{2}} = \muT(t_{1}, t_{2}) \cdot \gpalg_{t_{1} t_{2}}
\)
for $t_{1}, t_{2} \in \Wzero$.
\item
\label{item:semidirect-product-algebra}
We define the $\Coeff$-algebra $\Coeff[\Wzero, \muT] \ltimes \cH_\Coeff(\Waff, q)$ to be the vector space 
\[
\Coeff[\Wzero, \muT] \otimes \cH_\Coeff(\Waff, q)
\]
 with multiplication rules given by:
\begin{enumerate}[(1)]
\item $\Coeff[\Wzero, \muT]$ and $\cH_\Coeff(\Waff, q)$ are embedded as subalgebras,
\item 
for $t \in \Wzero$ and $w \in \Waff$, we have
\(
\gpalg_{t} \cdot \mathbb{T}_{w} = \mathbb{T}_{t w t^{-1}} \cdot \gpalg_{t}.
\)
\end{enumerate}
\end{enumerate}
\end{notation}
Now we obtain the following structure theorem for our Hecke algebra:

\begin{theorem}
\label{theoremstructureofhecke}
We assume Axioms \ref{axiomaboutHNheartandK}, \ref{axiombijectionofdoublecoset}, \ref{axiomexistenceofRgrp}, and \ref{axiomaboutdimensionofend} and fix a subset  $\Coeffplus \subset \Coeffinvnontriv$ as in Choice \ref{choiceofCoeffplus}. 
Then we can choose $\cT$ as in Choice~\ref{choice:tw}
and the resulting $\Coeff$-linear map 
\[
\cI(\rho_{x_{0}})
\colon
\cH(G(F), \rho_{x_{0}})
\rightarrow
\Coeff[\Wzero, \muT] \ltimes \cH_\Coeff(\Waff, q),
\]
defined by
\[
\cI(\rho_{x_{0}})\left(\varphi_{tw}\right) =  \gpalg_{t} \cdot \mathbb{T}_{w} \quad (t \in \Wzero, w \in \Waff)
\]
is an isomorphism of $\Coeff$-algebras, 
where $\muT$ denotes the restriction to $\Wzero \times \Wzero$ of the $2$-cocycle introduced in Notation~\ref{notationofthetwococycle} and $q$ denotes the parameter function $s \mapsto q_{s}$ appearing in Choice~\ref{choice:tw}\eqref{conditionofthechoicequadraticrelations}.
\end{theorem}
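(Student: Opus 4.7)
The plan is to verify that $\cI(\rho_{x_{0}})$ is an algebra isomorphism by exhibiting it as a $\Coeff$-linear bijection that intertwines the two multiplications. First, I would invoke Proposition~\ref{propositionchoice:twispossible} to choose a family $\cT$ satisfying all the conditions of Choice~\ref{choice:tw}. Combining Corollary~\ref{corollaryvectorspacedecompositionexplicitver} with the semi-direct decomposition $\Wheart = \Wzero \ltimes \Waff$ from Proposition~\ref{propositiondecompositionofW}, the set $\{\varphi_{tw} \mid t \in \Wzero,\, w \in \Waff\}$ is a $\Coeff$-basis of $\cH(G(F),\rho_{x_{0}})$, while by Notation~\ref{notn:algebras} the set $\{\gpalg_t \otimes \mathbb{T}_w \mid t \in \Wzero,\, w \in \Waff\}$ is a $\Coeff$-basis of the target algebra. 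Hence $\cI(\rho_{x_{0}})$ is automatically a $\Coeff$-linear bijection.

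The substance of the proof lies in promoting this bijection to an algebra map. My approach would be to build the inverse as an algebra homomorphism $\cJ \colon \Coeff[\Wzero, \muT] \ltimes \cH_{\Coeff}(\Waff, q) \longrightarrow \cH(G(F),\rho_{x_{0}})$ by setting $\gpalg_t \mapsto \varphi_t$ and $\mathbb{T}_w \mapsto \varphi_w$ (equivalently, working with the endomorphisms $\Phi_t, \Phi_w$ via the isomorphism \eqref{heckevsend}), then verifying each family of defining relations of the semi-direct product algebra. The quadratic relations $\Phi_s^2 = (q_s - 1)\Phi_s + q_s \Phi_1$ for $s \in S_{\Krel}$ are Choice~\ref{choice:tw}\eqref{conditionofthechoicequadraticrelations}, and the equality $\Phi_w = \Phi_{s_1}\cdots \Phi_{s_{\ellsubstitute}}$ for any reduced expression $w = s_1 \cdots s_{\ellsubstitute} \in \Waff$ is Choice~\ref{choice:tw}\eqref{conditionofthechoicebraidrelations}; via the standard Iwahori--Matsumoto presentation these yield an algebra homomorphism $\cH_{\Coeff}(\Waff, q) \to \cH(G(F),\rho_{x_{0}})$. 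The cocycle relations $\Phi_{t_1}\Phi_{t_2} = \muT(t_1,t_2) \cdot \Phi_{t_1 t_2}$ for $t_1, t_2 \in \Wzero$ follow from Proposition~\ref{propositioniflengthsumthentransitivityuptotwococycle}, since $\flength(t) = 0$ for every $t \in \Wzero$, giving an algebra homomorphism $\Coeff[\Wzero, \muT] \to \cH(G(F),\rho_{x_{0}})$. Finally, the semi-direct relation $\Phi_t \Phi_w = \Phi_{twt^{-1}} \Phi_t$ for $t \in \Wzero$, $w \in \Waff$ is exactly Lemma~\ref{lemmaRvsC}, so the two homomorphisms assemble into a well-defined algebra map $\cJ$ out of the semi-direct product.

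To conclude, I would invoke Choice~\ref{choice:tw}\eqref{conditionofchoiceproductofWzeroandaff} to compute $\cJ(\gpalg_t \cdot \mathbb{T}_w) = \varphi_t * \varphi_w$, which corresponds to $\Phi_t\Phi_w = \Phi_{tw}$ and hence equals $\varphi_{tw}$. Thus $\cJ$ is the inverse of $\cI(\rho_{x_{0}})$ on basis elements, so $\cI(\rho_{x_{0}})$ is an algebra isomorphism. The present theorem is a synthesis of the quadratic relations of Section~\ref{sec:simplereflections}, the braid relations of Lemma~\ref{lemmabraidrel}, the length-additive multiplication formula of Proposition~\ref{propositioniflengthsumthentransitivityuptotwococycle}, and the $\Wzero$-versus-$\Waff$ compatibility of Lemma~\ref{lemmaRvsC}. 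The genuinely difficult content --- the calibration of $\cT$ through Choices~\ref{choiceofT}, \ref{choice:simplereflection}, and \ref{choice:tw}, which trivializes $\muT$ on the $\Waff$-part and on the cross terms between $\Wzero$ and $\Waff$ while retaining it on $\Wzero \times \Wzero$, as recorded in Proposition~\ref{prop:muTtrivial} --- has already been handled; here it only needs to be packaged together.
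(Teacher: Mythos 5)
Your proposal is correct and follows essentially the same route as the paper: establish the vector-space bijection via Corollary~\ref{corollaryvectorspacedecompositionexplicitver} and Proposition~\ref{propositiondecompositionofW}, then verify multiplicativity by combining the quadratic relations and braid relations from Choice~\ref{choice:tw}, the $\Wzero$-cocycle formula from Proposition~\ref{propositioniflengthsumthentransitivityuptotwococycle}, and the semi-direct compatibility from Lemma~\ref{lemmaRvsC}. The only difference is cosmetic: you construct the inverse map $\cJ$ out of the semi-direct product and check that it respects the defining relations, while the paper directly verifies that $\cI(\rho_{x_0})$ restricts to algebra homomorphisms on the two factor subalgebras and then invokes Lemma~\ref{lemmaRvsC} for the cross terms — since bijectivity is already established, these are equivalent.
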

\begin{proof}
By Proposition~\ref{propositionchoice:twispossible} we can choose $\cT$ as in Choice~\ref{choice:tw}. According to Corollary~\ref{corollaryvectorspacedecompositionexplicitver} and Proposition~\ref{propositiondecompositionofW}, $\cI(\rho_{x_{0}})$ is an isomorphism of vector spaces.
Moreover, by Proposition~\ref{propositioniflengthsumthentransitivityuptotwococycle} the restriction of $\cI(\rho_{x_{0}})$ to $\Coeff[\Wzero, \muT]$ is an algebra homomorphism, and
 Conditions \eqref{conditionofthechoicequadraticrelations} and \eqref{conditionofthechoicebraidrelations} of Choice~\ref{choice:tw} imply that the restriction of $\cI(\rho_{x_{0}})$ to $\cH_\Coeff(\Waff, q)$ is also an algebra homomorphism.
Combining these observations with Lemma~\ref{lemmaRvsC}, we obtain that $\cI(\rho_{x_{0}})$ is an isomorphism of $\Coeff$-algebras.
\end{proof}

\subsection{Anti-involution of the Hecke algebra}
\label{Anti-involution of the Hecke algebra}
In this subsection we keep the notation from the previous subsection,
and moreover we assume that $\Coeff$ 
admits a nontrivial involution
$c \mapsto \bar c$.
The fixed field of this involution will then be 
a real closed field that we will denote by $\Rclsd$.
(See Corollary VI.9.3, Theorem IX.2.2, and Proposition IX.2.4 of
\cite{Lang:algebra}.
For instance, $\Rclsd=\bR$ if $\Coeff = \bC$ and $c \mapsto \bar c$ denotes
complex conjugation.
Such an involution exists if and only if $\Coeff$ has characteristic zero.
In this case, one can construct the fixed field $\Rclsd$, and thus
the involution, by choosing a maximal, totally ordered subfield of $\Coeff$.
Note that the choice of $\Rclsd$,
and thus the choice of involution,
is never unique.  
However, in practice, the number of explicit choices that one can make
(i.e., choices that do not require the Axiom of Choice),
is usually zero (say, for $\Coeff = \overline \bQ_\ell$)
or one (say, for $\Coeff = \bC$ or $\overline \bQ$).

Having made such a choice,
write $\abs{c} = c \bar{c}$ for $c \in \Coeff$,
and call $\bar c$ the \emph{$\Coeff$-conjugate}
of $c$ (which depends on our fixed choice of involution even though it is not reflected in the notation).
For a vector space $V$ over $\Coeff$, call a function
$\langle \phantom{x},\phantom{x} \rangle \colon V\times V \rightarrow \Coeff$
\emph{Hermitian}
if it is linear in the first variable, $\Coeff$-conjugate-linear in the second
variable, and we have $\langle v, w \rangle = \overline{
\langle w, v \rangle
}$ for all $v, w \in V$.

Assume that all previous axioms hold, i.e., Axioms \ref{axiomaboutHNheartandK}, \ref{axiombijectionofdoublecoset}, \ref{axiomexistenceofRgrp}, and \ref{axiomaboutdimensionofend}, and fix a subset  $\Coeffplus \subset \Coeffinvnontriv$ as in Choice \ref{choiceofCoeffplus}, and $\cT$ as in Choice~\ref{choice:tw}.
Following \cite[Section~4.4]{ciubotaru:types-unitary}, we define a support-inverting, $\Coeff$-conjugate-linear anti-involution $\varphi \mapsto \varphi^{*}$ 
\index{notation-ax}{*@$(\phantom{\varphi})^*$ ($\Coeff$-conjugate-linear anti-involution)}
on $\cH(G(F),\rho_{x_0})$ as follows.
We fix a $K_{x_0}$-invariant,
positive-definite Hermitian form 
$\langle \phantom{x},\phantom{x} \rangle_{\rho_{x_0}}$ on $V_{\rho_{x_0}}$.
Such a form exists because $K_{x_0}$ is compact, and it is unique
up to $\Rclsd$-scalar multiples because $\rho_{x_0}$ is irreducible.
For $\varphi \in \cH(G(F), \rho_{x_0})$, we define $\varphi^{*} \in \cH(G(F), \rho_{x_0})$ by
\begin{equation}
\label{explicitdescriptionofstar}
\langle
\varphi^{*}(g) (v), w
\rangle_{\rho_{x_0}} = \langle
v, \varphi(g^{-1}) (w)
\rangle_{\rho_{x_0}}
\qquad
\text{for all $g \in G(F)$ and $v, w \in V_{\rho_{x_0}}$.}
\end{equation}
We note that the map
$\varphi \mapsto \varphi^*$ is a support-inverting, $\Coeff$-conjugate-linear anti-involution that does not depend on the choice
of Hermitian form on $V_{\rho_{x_0}}$. Hence, for each $w \in \nobreak \Wheart$,
there is some scalar $c_{w} \in \Coeff^\times$ such that
$\varphi_{w}^{*} = c_{w} \varphi_{w^{-1}}$. 

\begin{lemma}\label{lemmastarscalarforordertwo}
For all $w\in \Wheart$ of order $2$, we have $\abs{c_{w}} = 1$.
Moreover, if $w \in S_{\Krel}$, then $c_{w} = 1$.
\end{lemma}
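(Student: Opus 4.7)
For $w \in \Wheart$ of order $2$ we have $w^{-1} = w$, so the defining equation $\varphi_w^* = c_w \varphi_{w^{-1}}$ becomes $\varphi_w^* = c_w \varphi_w$. Applying the $\Coeff$-conjugate-linear involution $*$ once more gives $\varphi_w = (\varphi_w^*)^* = \overline{c_w}\,\varphi_w^* = \overline{c_w}\,c_w\,\varphi_w = |c_w|\,\varphi_w$, and since $\varphi_w \neq 0$ we conclude $|c_w| = 1$.

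For $s \in S_{\Krel}$ the plan is first to derive algebraic constraints on $c_s$ from the quadratic relation and then to use positivity of the Hermitian form to single out the correct value. A direct inspection of~\eqref{eqvarphifromPhi} shows that $\varphi_1$ corresponds to the identity endomorphism of $\ind_{K_{x_0}}^{G(F)}(\rho_{x_0})$ and satisfies $\varphi_1^* = \varphi_1$. Applying $*$ to the quadratic relation $\varphi_s^2 = (q_s-1)\varphi_s + q_s\varphi_1$ from Proposition~\ref{propositionaboutquadraticrelationsrevised} (using that $*$ is a conjugate-linear anti-involution) and substituting $\varphi_s^* = c_s\varphi_s$ produces $c_s^2\varphi_s^2 = (\overline{q_s}-1)c_s\varphi_s + \overline{q_s}\varphi_1$. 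Comparing with the original quadratic relation in the basis $\{\varphi_s,\varphi_1\}$ of $\cH(G(F),\rho_{x_0})_s \oplus \cH(G(F),\rho_{x_0})_1$ (cf.\ Corollary~\ref{corollaryvectorspacedecompositionexplicitver}) yields the two identities $c_s^2 q_s = \overline{q_s}$ and $c_s^2(q_s-1) = c_s(\overline{q_s}-1)$, which simplify to $(c_s-1)(c_sq_s+1) = 0$; hence $c_s \in \{1,-q_s^{-1}\}$.

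The main obstacle is to exclude $c_s = -q_s^{-1}$, for which I would use positivity of the Hermitian form. Fix a lift $n$ of $s$ in $\Nheart$ and regard $T_n$ as an element of $\End_\Coeff(V_{\rho_{x_0}})$, noting that $V_{\rho_{x_0}} = V_{\rho_M}$ (since $\rho_{x_0}\restriction_{K_M} = \rho_M$). Let $T_n^*$ denote the Hermitian adjoint of $T_n$ with respect to the fixed $K_{x_0}$-invariant positive-definite form $\langle\cdot,\cdot\rangle_{\rho_{x_0}}$; its restriction to $V_{\rho_M}$ is simultaneously $\rho_M$- and $^n\rho_M$-invariant. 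A standard calculation shows $T_n^* \in \Hom_{K_M}(\rho_M, {^n\rho_M}) = \Hom_{K_M}({^{n^{-1}}\rho_M}, \rho_M)$, which is one-dimensional by Schur and contains $T_{n^{-1}}$; hence $T_n^* = \lambda T_{n^{-1}}$ for a unique $\lambda \in \Coeff^\times$, and taking adjoints once more forces $T_{n^{-1}}^* = \overline{\lambda}^{-1} T_n$. Then
\[
T_n T_n^* \;=\; \lambda\,(T_n \circ T_{n^{-1}}) \;=\; \lambda\,\muT(s,s)\,\id \;=\; \lambda q_s\,\id,
\]
where $\muT(s,s) = q_s$ is a consequence of the normalization in Choice~\ref{choice:tw} (cf.\ the proof of Lemma~\ref{lemmaqHisnonzero}); positive-definiteness then yields $\lambda q_s\,\|v\|^2 = \|T_n^* v\|^2 > 0$ for $v \neq 0$, so $\lambda q_s > 0$ in $\Rclsd$. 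On the other hand, Lemma~\ref{supportofphiixw} gives $\varphi_s(n) = a T_n$ and $\varphi_s(n^{-1}) = a T_{n^{-1}}$, with $a = |K_{x_0}/(K_{sx_0}\cap K_{x_0})|^{-1/2}$; and $a \in \Rclsd$ because $p$ is a positive rational and hence its square roots lie in the real closed field $\Rclsd$, so $\overline{a} = a$. Combining $\varphi_s^*(n) = c_s\varphi_s(n) = c_s a T_n$ with the defining identity $\langle \varphi_s^*(n)v, w\rangle = \langle v, \varphi_s(n^{-1})w\rangle = \overline{a}\langle T_{n^{-1}}^* v, w\rangle$ and using $T_{n^{-1}}^* = \overline{\lambda}^{-1} T_n$ yields $c_s = 1/\overline{\lambda}$. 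If $c_s$ were equal to $-q_s^{-1}$, this would force $\overline{\lambda} = -q_s$, hence $\lambda = -\overline{q_s}$ and $\lambda q_s = -q_s \overline{q_s} = -|q_s|$; but $|q_s| > 0$ in $\Rclsd$ (as $q_s \neq 0$ in $\Coeff = \Rclsd(i)$), contradicting $\lambda q_s > 0$. Hence $c_s = 1$.
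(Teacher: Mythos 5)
Your argument for $\abs{c_w}=1$ when $w$ has order $2$ is identical to the paper's: apply the conjugate-linear anti-involution twice. For the assertion that $c_s=1$ when $s\in S_{\Krel}$, you take a genuinely different route. The paper's two-sentence argument is that $\varphi_s^*=c_s\varphi_s$ again satisfies a standard quadratic relation and then the uniqueness built into Proposition~\ref{propositionsuperstrongversionofquadraticrelation} (via the choice of $\Coeffplus$) pins down $c_s=1$; what this compresses is that the new parameter is $\overline{q_s}$, and one still needs $\overline{q_s}\in\Coeffplus$, which holds because we are automatically in characteristic zero (so Proposition~\ref{value-of-qs} gives $q_s\in\bQ_{>0}\subset\Rclsd$, hence $\overline{q_s}=q_s$). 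You instead extract, by comparing coefficients, the algebraic dichotomy $c_s\in\{1,-q_s^{-1}\}$ and then exclude $c_s=-q_s^{-1}$ by positivity: taking the Hermitian adjoint $T_n^*=\lambda T_{n^{-1}}$ of the intertwiner and using $T_nT_{n^{-1}}=\muT(s,s)\id=q_s\id$, you get $T_nT_n^*=\lambda q_s\cdot\id$ positive, so $\lambda q_s>0$ in $\Rclsd$; from the defining identity for $*$ (and $\bar a=a$ since $p^{1/2}\in\Rclsd$) you derive $c_s=1/\overline{\lambda}$, and with $\abs{c_s}=1$ this gives $\lambda=c_s$, so $c_s q_s>0$, which rules out $c_s q_s=-1$. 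This is correct and is in the spirit of the later Lemma~\ref{lemmaunitaryextensionimplieschoice:star}. Your route is longer but self-contained: it avoids invoking $\Coeffplus$ and the dimension formula of Proposition~\ref{value-of-qs}, and delivers the stronger fact $q_s\in\Rclsd_{>0}$ as a byproduct.
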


\begin{proof}
If $w^2 = 1$, then we have
\(
\varphi_w
= \varphi_w^{**}
= (c_{w}\varphi_w)^*
= \bar c_{w}(\varphi_w)^*
= \bar c_{w} c_{w} \varphi_w,
\)
so $\bar c_{w} c_{w} = 1$, as required.
Suppose that $w \in S_{\Krel}$.
Since $*$ is an anti-involution, the element $\varphi_{w}^{*} = c_{w} \varphi_{w}$ satisfies the same quadratic relation as $\varphi_w$.
Thus, Proposition~\ref{propositionsuperstrongversionofquadraticrelation} implies that $c_{w} = 1$.
\end{proof}
\begin{corollary}
\label{corollaryoflemmastarscalarforordertwo}
We have $c_{w} = 1$ for all $w \in \Waff$.
\end{corollary}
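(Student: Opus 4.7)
The plan is to reduce the statement for an arbitrary $w \in \Waff$ to the case of simple reflections, which is already handled by Lemma~\ref{lemmastarscalarforordertwo}. The bridge is supplied by Choice~\ref{choice:tw}\eqref{conditionofthechoicebraidrelations}, which expresses $\varphi_w$ as a product of $\varphi_{s_i}$'s indexed by any reduced expression, together with the fact that $*$ is a support-inverting, $\Coeff$-conjugate-linear anti-involution.

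More precisely, I would proceed as follows. Fix $w \in \Waff$ and a reduced expression $w = s_1 s_2 \cdots s_\ell$ with each $s_i \in S_{\Krel}$. Since length in a Coxeter group is invariant under inversion, $w^{-1} = s_\ell \cdots s_2 s_1$ is again a reduced expression. Applying Choice~\ref{choice:tw}\eqref{conditionofthechoicebraidrelations} to both $w$ and $w^{-1}$, and transporting via the algebra isomorphism \eqref{heckevsend} (which preserves the order of multiplication), we get
\[
\varphi_w = \varphi_{s_1} * \varphi_{s_2} * \cdots * \varphi_{s_\ell}
\qquad \text{and} \qquad
\varphi_{w^{-1}} = \varphi_{s_\ell} * \cdots * \varphi_{s_2} * \varphi_{s_1}.
\]

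Now apply $*$ to the first equation. Because $*$ is an anti-involution, the order of the factors reverses, and by Lemma~\ref{lemmastarscalarforordertwo} we have $\varphi_{s_i}^{*} = \varphi_{s_i}$ for each simple reflection $s_i$. Hence
\[
\varphi_w^{*} = \varphi_{s_\ell}^{*} * \cdots * \varphi_{s_2}^{*} * \varphi_{s_1}^{*}
= \varphi_{s_\ell} * \cdots * \varphi_{s_2} * \varphi_{s_1}
= \varphi_{w^{-1}}.
\]
Comparing this with the defining relation $\varphi_w^{*} = c_w \cdot \varphi_{w^{-1}}$ and using that $\varphi_{w^{-1}} \neq 0$ (it has support $K_{x_0} w^{-1} K_{x_0}$ by Lemma~\ref{supportofphiixw}) forces $c_w = 1$.

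There is no real obstacle here: once one has Choice~\ref{choice:tw}\eqref{conditionofthechoicebraidrelations} and the $c_s = 1$ statement from Lemma~\ref{lemmastarscalarforordertwo}, the claim follows by a direct computation using the anti-involution property of $*$ combined with the observation that reversing a reduced expression yields a reduced expression for the inverse.
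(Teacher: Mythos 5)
Your proof is correct and follows the same route as the paper's: take a reduced expression, observe that reversing it gives a reduced expression for $w^{-1}$, apply Choice~\ref{choice:tw}\eqref{conditionofthechoicebraidrelations} to both, and then use the anti-involution property of $*$ together with $\varphi_s^* = \varphi_s$ from Lemma~\ref{lemmastarscalarforordertwo}. The only difference is that the paper concludes directly with $\varphi_w^* = \varphi_{w^{-1}}$ (hence $c_w = 1$ by definition of $c_w$), without explicitly invoking $\varphi_{w^{-1}} \neq 0$; your added step is harmless but not needed.
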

\begin{proof}
Let $w \in \Waff$ with reduced expression $w = s_{1} s_{2} \cdots s_{\ellsubstitute}$.
According to Condition \eqref{conditionofthechoicebraidrelations} in Choice~\ref{choice:tw}, we have $\varphi_{w} = \varphi_{s_{1}} \varphi_{s_{2}} \cdots \varphi_{s_{\ellsubstitute}}$ and $\varphi_{w^{-1}} = \varphi_{s_{\ellsubstitute}} \varphi_{s_{\ellsubstitute - 1}} \cdots \varphi_{s_{1}}$ .
Thus, we obtain that
\[
\varphi_{w}^{*} = \left(
\varphi_{s_{1}} \varphi_{s_{2}} \cdots \varphi_{s_{\ellsubstitute}}
\right)^{*} 
= \varphi_{s_{\ellsubstitute}}^{*} \varphi_{s_{\ellsubstitute - 1}}^{*} \cdots \varphi_{s_{1}}^{*} 
= \varphi_{s_{\ellsubstitute}} \varphi_{s_{\ellsubstitute - 1}} \cdots \varphi_{s_{1}} 
= \varphi_{w^{-1}},
\]
as required.
\end{proof}

\begin{proposition}
\label{propositionrefinephitforlengthzerot}
We assume Axioms \ref{axiomaboutHNheartandK}, \ref{axiombijectionofdoublecoset}, \ref{axiomexistenceofRgrp}, and \ref{axiomaboutdimensionofend}, fix a subset  $\Coeffplus \subset \Coeffinvnontriv$ as in Choice \ref{choiceofCoeffplus}, and choose $\cT$ as in Choice~\ref{choice:tw}.
Then we can choose a set of scalars
$\{ d_{t} \in \Coeff^\times \mid t \in \Wzero \}$
such that 
$(d_{t} \varphi_t)^* = d_{t^{-1}} \varphi_{t^{-1}}$
for all $t \in \Wzero$.
\end{proposition}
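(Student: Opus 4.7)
For each $t \in \Wzero$ there is a scalar $c_t \in \Coeff^\times$ with $\varphi_t^* = c_t \varphi_{t^{-1}}$, as noted in the discussion preceding Lemma~\ref{lemmastarscalarforordertwo}. Applying $*$ to this identity and using that $*$ is an involution gives $\varphi_t = \bar c_t c_{t^{-1}} \varphi_t$, hence the key relation $c_{t^{-1}} = \bar c_t^{\,-1}$. The desired condition $(d_t \varphi_t)^* = d_{t^{-1}} \varphi_{t^{-1}}$ unfolds, using $\Coeff$-conjugate linearity of $*$, to the equation
\[
\bar{d_t}\, c_t = d_{t^{-1}} \qquad (t \in \Wzero),
\]
and the relation $c_{t^{-1}} = \bar c_t^{\,-1}$ makes the equations imposed on $t$ and on $t^{-1}$ mutually consistent. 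Consequently the problem reduces to solving this equation orbit-by-orbit under the inversion involution on $\Wzero$.

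For an orbit $\{t, t^{-1}\}$ with $t \ne t^{-1}$, there is nothing to solve: pick $d_t \in \Coeff^\times$ arbitrarily (say $d_t = 1$) and define $d_{t^{-1}} \coloneqq \bar{d_t}\, c_t$; the constraint for $t^{-1}$ is then automatic by the consistency above. This handles in particular no contribution from the identity once we set $d_1 = 1$ (note $c_1 = 1$ by Corollary~\ref{corollaryoflemmastarscalarforordertwo}).

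The only delicate case is a fixed point $t = t^{-1}$, for which the constraint becomes $c_t = d_t/\bar{d_t}$. Here I would invoke Lemma~\ref{lemmastarscalarforordertwo} to get $c_t \bar{c_t} = 1$, so that $c_t$ lies in the kernel of the norm map $N\colon \Coeff^\times \to \Rclsd^\times$, $d \mapsto d\bar d$. Since $\Coeff$ is algebraically closed with nontrivial involution, the fixed field $\Rclsd$ is real closed by Artin--Schreier and $\Coeff = \Rclsd(i)$ with $i^2 = -1$; Hilbert's Theorem~90 for the degree-two extension $\Coeff/\Rclsd$ then identifies $\ker N$ with the image of $d \mapsto d/\bar d$, so a suitable $d_t$ exists. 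Explicit choices can be given: if $c_t \ne -1$, take $d_t = 1 + c_t$, for which $c_t \bar{d_t} = c_t + c_t \bar c_t = c_t + 1 = d_t$; if $c_t = -1$, take $d_t = i$, so $d_t/\bar{d_t} = i/(-i) = -1$. The main (essentially only) obstacle in the argument is this fixed-point case, whose solvability is precisely the vanishing of $H^1(\Gal(\Coeff/\Rclsd), \Coeff^\times)$.
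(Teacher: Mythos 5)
Your proposal is correct and takes essentially the same approach as the paper: set $d_1 = 1$, use Hilbert's Theorem~90 (via Lemma~\ref{lemmastarscalarforordertwo}) for the order-two elements, and handle the remaining elements in pairs $\{t, t^{-1}\}$ with $d_t = 1$, $d_{t^{-1}} = c_t$. The only cosmetic differences are that you make the consistency relation $c_{t^{-1}} = \bar{c}_t^{-1}$ explicit up front (the paper verifies the pair case directly instead) and you supply an explicit Hilbert 90 formula, neither of which changes the substance.
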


\begin{proof}
Let $d_1 = 1$.
For all nontrivial $t\in \Wzero$ of order $2$,
from Lemma \ref{lemmastarscalarforordertwo}
and Hilbert's Theorem 90,
we can choose $d_{t} \in \Coeff^\times$
such that $d_{t} / \bar d_{t} = c_t$.
Then
\(
(d_{t} \varphi_t)^*
= \bar d_{t} c_t \varphi_t
= d_{t} \varphi_t,
= d_{t^{-1}} \varphi_{t^{-1}},
\)
as required.

Now consider the set of elements $t \in \Wzero $ such that $t^2$ is nontrivial.
Partition this set into pairs $\{t, t^{-1}\}$.
Given a pair, let us choose an element arbitrarily,
and without loss of generality call it $t$.
Let $d_{t} = 1$ and $d_{t^{-1}} = c_t$.
Then we have
\(
(d_{t} \varphi_t)^*
= \varphi_t^*
= c_t \varphi_{t^{-1}}
= d_{t^{-1}} \varphi_{t^{-1}}
\)
and
\(
(d_{t^{-1}} \varphi_{t^{-1}})^*
= (c_t \varphi_{t^{-1}})^* = \varphi_t^{**}
= \varphi_t
= d_{t} \varphi_t,
\)
as required.
\end{proof}

This allows us to refine our choice of $\{T_n\}$ as follows, see Proposition \ref{propositionchoice:starispossible}.
\begin{choice}
\label{choice:star}
We choose a family of non-zero elements
\[
\cT =
\left\{
T_{n} \in \Hom_{K_{M}}\left(
^n\!\rho_{M}, \rho_{M}
\right)
\right\}_{n \in \Nheart}
\]
that satisfies all the conditions in Choice~\ref{choice:tw}, and
for all $t \in \Wzero$, we have $\varphi_{t}^{*} = \varphi_{t^{-1}}$.
\end{choice}
\begin{proposition}
	\label{propositionchoice:starispossible}
	We assume Axioms \ref{axiomaboutHNheartandK}, \ref{axiombijectionofdoublecoset}, \ref{axiomexistenceofRgrp}, and \ref{axiomaboutdimensionofend} and fix a subset  $\Coeffplus \subset \Coeffinvnontriv$ as in Choice \ref{choiceofCoeffplus}. Then we can choose $\cT$ to satisfy all the properties in Choice~\ref{choice:star}. 
	\end{proposition}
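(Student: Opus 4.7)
The plan is to start from a choice $\cT$ satisfying Choice~\ref{choice:tw}, which exists by Proposition~\ref{propositionchoice:twispossible}, and then rescale those $T_{n}$ whose image in $\Wheart$ lies in $\Wzero$ so as to upgrade the property $\varphi_{t}^{*}=c_{t}\varphi_{t^{-1}}$ to $\varphi_{t}^{*}=\varphi_{t^{-1}}$. Proposition~\ref{propositionrefinephitforlengthzerot} already produces scalars $\{d_{t}\in\Coeff^\times\mid t\in\Wzero\}$ such that $(d_{t}\varphi_{t})^{*}=d_{t^{-1}}\varphi_{t^{-1}}$ for every $t\in\Wzero$; in particular $d_{1}=1$ after normalizing. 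So the task reduces to turning this family of scalars on $\{\varphi_{t}\}_{t\in\Wzero}$ into a consistent rescaling of the whole family $\cT$ that preserves all conditions of Choice~\ref{choice:tw}.

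For this, given the decomposition $\Wheart=\Wzero\ltimes\Waff$ of Proposition~\ref{propositiondecompositionofW}, I would write each element of $\Wheart$ uniquely as $tw$ with $t\in\Wzero$ and $w\in\Waff$, and replace every $T_{n}$ whose image in $\Wheart$ is $tw$ by $d_{t}\cdot T_{n}$. Equivalently, since Remark~\ref{remarkaboutchoice:tw} says that the values of $T_{n}$ on lifts of elements in $\Wzero\smallsetminus\{1\}$ may be chosen freely, I pick these to equal $d_{t}\cdot T_{n}^{\mathrm{old}}$, and then let conditions \eqref{conditionsofTnkinchoice:triv}, \eqref{conditionsofTnkinchoice:tw}, and \eqref{conditionofchoiceproductofWzeroandaff} of Choice~\ref{choice:tw} determine the remaining $T_{n}$. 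Because $d_{1}=1$, the values of $T_{n}$ for lifts of $w\in\Waff$ are unchanged, so conditions \eqref{conditionofthechoicequadraticrelations} and \eqref{conditionofthechoicebraidrelations} of Choice~\ref{choice:tw} still hold; condition \eqref{conditionofchoiceproductofWzeroandaff} holds by construction, after noting Proposition~\ref{prop:muTtrivial} which gives $\muT(t,w)=1$ for $t\in\Wzero$ and $w\in\Waff$, so no compatibility obstruction arises in the semidirect-product decomposition.

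The remaining condition to verify is $\varphi_{t}^{*}=\varphi_{t^{-1}}$ for $t\in\Wzero$ with respect to the new family $\cT$. Denoting the new basis elements by $\varphi'_{tw}$, for $t\in\Wzero$ we have $\varphi'_{t}=d_{t}\varphi_{t}$, and since the anti-involution $*$ is $\Coeff$-conjugate-linear,
\[
(\varphi'_{t})^{*}=\overline{d_{t}}\,\varphi_{t}^{*}=\overline{d_{t}}\,c_{t}\,\varphi_{t^{-1}}=d_{t^{-1}}\varphi_{t^{-1}}=\varphi'_{t^{-1}},
\]
where the middle equality is exactly the output of Proposition~\ref{propositionrefinephitforlengthzerot}. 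For $w\in\Waff$ the values $\varphi'_{w}$ coincide with $\varphi_{w}$, so Corollary~\ref{corollaryoflemmastarscalarforordertwo} ensures $(\varphi'_{w})^{*}=\varphi'_{w^{-1}}$ as well; this will not be used in the statement but records that the full compatibility with $*$ is preserved.

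The only point requiring genuine care is the cross-check that extending the $d_{t}$-rescaling through condition \eqref{conditionofchoiceproductofWzeroandaff} of Choice~\ref{choice:tw} is internally consistent: one must verify, using Lemma~\ref{lemmaRvsC} and Proposition~\ref{prop:muTtrivial}, that rescaling $\Phi_{t}$ by $d_{t}$ forces the same scalar $d_{t}$ on every $\Phi_{tw}$ and is compatible with the conjugation relation $\Phi_{t}\Phi_{w}=\Phi_{twt^{-1}}\Phi_{t}$; this is immediate because $d_{t}$ depends only on $t$ and the $\Waff$-part is unscaled. I expect this bookkeeping to be the only slightly subtle point, everything else reducing to direct substitution.
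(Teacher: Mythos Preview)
Your proof is correct and follows essentially the same approach as the paper's own proof, which simply invokes Remark~\ref{remarkaboutchoice:tw}, Proposition~\ref{propositionrefinephitforlengthzerot}, and Remark~\ref{remarkaboutphix1} in a single sentence. You have spelled out in more detail the consistency checks that the paper leaves implicit in those references, in particular the verification that rescaling only the $\Wzero$-part preserves conditions \eqref{conditionofthechoicequadraticrelations}--\eqref{conditionofchoiceproductofWzeroandaff} of Choice~\ref{choice:tw} and the explicit computation $(\varphi'_{t})^{*}=\overline{d_{t}}\,c_{t}\,\varphi_{t^{-1}}=d_{t^{-1}}\varphi_{t^{-1}}=\varphi'_{t^{-1}}$.
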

\begin{proof}
The proposition follows from Remark~\ref{remarkaboutchoice:tw} by replacing the previous choice of $T_{n}$ with $d_{t} \cdot T_{n}$ for all lifts $n \in \Nheart$ of $t \in \Wzero$, and Proposition~\ref{propositionrefinephitforlengthzerot} (see also Remark~\ref{remarkaboutphix1}).
\end{proof}	

We define a $\Coeff$-conjugate-linear map $(\phantom{\mathbb T})^*$
\index{notation-ax}{*@$(\phantom{\varphi})^*$ ($\Coeff$-conjugate-linear anti-involution)}
on $\Coeff[\Wzero, \muT] \ltimes \cH_\Coeff(\Waff, q)$ by 
\[
(\gpalg_{t} \cdot \mathbb{T}_{w})^{*} = \gpalg_{t^{-1}} \cdot \mathbb{T}_{t w^{-1} t^{-1}} = \mathbb{T}_{w^{-1}} \cdot \gpalg_{t^{-1}}
\]
for $t \in \Wzero$ and $w \in \Waff$.
\begin{proposition}
\label{propstarpreservationabstractheckevsourhecke}
We assume Axioms \ref{axiomaboutHNheartandK}, \ref{axiombijectionofdoublecoset}, \ref{axiomexistenceofRgrp}, and \ref{axiomaboutdimensionofend} and fix a subset  $\Coeffplus \subset \Coeffinvnontriv$ as in Choice \ref{choiceofCoeffplus}. 
Then we can choose $\cT$ as in Choice~\ref{choice:star}, and the resulting $\Coeff$-algebra isomorphism
\[
\cI(\rho_{x_0})
\colon
\cH(G(F), \rho_{x_0})
\rightarrow
\Coeff[\Wzero, \muT] \ltimes \cH_\Coeff(\Waff, q)
\]
in Theorem~\ref{theoremstructureofhecke} preserves the anti-involutions on both sides.
In particular, the $\Coeff$-conjugate-linear map $(\phantom{\mathbb T})^*$ on $\Coeff[\Wzero, \muT] \ltimes \cH_\Coeff(\Waff, q)$ is an anti-involution.
\end{proposition}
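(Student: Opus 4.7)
The plan is to invoke Proposition \ref{propositionchoice:starispossible} to choose $\cT$ as in Choice \ref{choice:star}, and then verify the $*$-preservation by a direct computation that reduces everything to two basic identities: $\varphi_t^* = \varphi_{t^{-1}}$ for $t\in\Wzero$ (given by Choice \ref{choice:star}) and $\varphi_w^* = \varphi_{w^{-1}}$ for $w\in\Waff$ (given by Corollary \ref{corollaryoflemmastarscalarforordertwo}).

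First I would check that the formula defining $(\phantom{\mathbb{T}})^*$ on $\Coeff[\Wzero,\muT]\ltimes\cH_\Coeff(\Waff,q)$ is unambiguous: the two expressions $\gpalg_{t^{-1}}\cdot\mathbb{T}_{tw^{-1}t^{-1}}$ and $\mathbb{T}_{w^{-1}}\cdot\gpalg_{t^{-1}}$ agree because the defining relation $\gpalg_t\cdot\mathbb{T}_w = \mathbb{T}_{twt^{-1}}\cdot\gpalg_t$ of the semidirect product gives $\gpalg_{t^{-1}}\cdot\mathbb{T}_{tw^{-1}t^{-1}} = \mathbb{T}_{t^{-1}(tw^{-1}t^{-1})t}\cdot\gpalg_{t^{-1}} = \mathbb{T}_{w^{-1}}\cdot\gpalg_{t^{-1}}$.

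Next I would verify that $\cI(\rho_{x_0})$ intertwines the two anti-involutions on a basis element $\varphi_{tw}$ for $t\in\Wzero$, $w\in\Waff$. By Choice \ref{choice:tw}\eqref{conditionofchoiceproductofWzeroandaff} (which carries over to Choice \ref{choice:star}) we have $\Phi_{tw} = \Phi_t\Phi_w$, so under the isomorphism \eqref{heckevsend} we get $\varphi_{tw} = \varphi_t * \varphi_w$. Since $(\phantom{\varphi})^*$ is an anti-involution,
\[
\varphi_{tw}^{*} = (\varphi_t * \varphi_w)^{*} = \varphi_w^{*} * \varphi_t^{*} = \varphi_{w^{-1}} * \varphi_{t^{-1}}.
\]
Applying the algebra homomorphism $\cI(\rho_{x_0})$ and using $\cI(\rho_{x_0})(\varphi_{w^{-1}})=\mathbb{T}_{w^{-1}}$, $\cI(\rho_{x_0})(\varphi_{t^{-1}})=\gpalg_{t^{-1}}$, this yields $\mathbb{T}_{w^{-1}}\cdot\gpalg_{t^{-1}}$, which equals $(\gpalg_t\cdot\mathbb{T}_w)^{*}=\cI(\rho_{x_0})(\varphi_{tw})^{*}$. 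Since $\cI(\rho_{x_0})$ is $\Coeff$-linear and the $*$-maps on both sides are $\Coeff$-conjugate-linear, checking compatibility on the basis $\{\varphi_{tw}\}$ suffices.

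For the final assertion, I would observe that $(\phantom{\varphi})^*$ on $\cH(G(F),\rho_{x_0})$ is a $\Coeff$-conjugate-linear anti-involution by its definition in \eqref{explicitdescriptionofstar}, and transporting this structure along the algebra isomorphism $\cI(\rho_{x_0})$ automatically endows $\Coeff[\Wzero,\muT]\ltimes\cH_\Coeff(\Waff,q)$ with a $\Coeff$-conjugate-linear anti-involution; the preceding paragraph shows that the transported map coincides with the explicitly defined $(\phantom{\mathbb{T}})^*$, proving it is indeed an anti-involution. The main (and really the only) non-trivial input is Proposition \ref{propositionchoice:starispossible}, which guarantees that the simultaneous requirements of Choice \ref{choice:star}—namely the quadratic/braid normalizations from Choice \ref{choice:tw} together with $\varphi_t^* = \varphi_{t^{-1}}$ for $t\in\Wzero$—are compatible; given that, the present proposition is a formal consequence.
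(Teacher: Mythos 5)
Your proof is correct and follows essentially the same route as the paper: invoke Proposition \ref{propositionchoice:starispossible} to pick $\cT$, then combine Corollary \ref{corollaryoflemmastarscalarforordertwo}, the condition $\varphi_t^*=\varphi_{t^{-1}}$ from Choice \ref{choice:star}, and the anti-involution property to check the identity $\cI(\rho_{x_0})(\varphi_{tw}^*)=(\gpalg_t\cdot\mathbb{T}_w)^*$ on basis elements, with the last claim following because $\cI(\rho_{x_0})$ is an algebra isomorphism. You have simply written out the explicit computation that the paper's terse citation leaves implicit.
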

\begin{proof}
By Proposition~\ref{propositionchoice:starispossible} we can choose $\cT$ as in Choice~\ref{choice:star}. 
The claim that $\cI(\rho_{x_0})$ preserves the anti-involutions follows from Corollary~\ref{corollaryoflemmastarscalarforordertwo}, the condition $\varphi_{t}^{*} = \varphi_{t^{-1}}$ for $t \in \Wzero$ in Choice~\ref{choice:star}, and the anti-involution property.
Since $\cI(\rho_{x_0})$ is an isomorphism of $\Coeff$-algebras, we obtain the last claim.
\end{proof}

\subsection{(Non)uniqueness of the Hecke algebra isomorphism}
\label{subsec:isononuniqueness}
In this subsection, we keep the notation from Section \ref{subsection:The description of the Hecke algebra}.
Hence, we allow the coefficient field $\Coeff$
 again to be any algebraically closed field of characteristic $\ell \neq p$.
We also assume that all previous axioms hold, i.e., Axioms \ref{axiomaboutHNheartandK}, \ref{axiombijectionofdoublecoset}, \ref{axiomexistenceofRgrp}, and \ref{axiomaboutdimensionofend}, fix a subset  $\Coeffplus \subset \Coeffinvnontriv$ as in Choice \ref{choiceofCoeffplus}, and choose $\cT$ as in Choice~\ref{choice:tw}.
We can make explicit 
how much choice we had in constructing the particular isomorphism
in Theorem \ref{theoremstructureofhecke}.

We say that a $\Coeff$-linear bijection $\Psi$ on $\Coeff[\Wzero, \muT] \ltimes \cH_\Coeff(\Waff, q)$ is \emph{support-preserving} if for all $t \in \Wzero$ and $w \in \Waff$, we have $\Psi (\gpalg_{t} \cdot \mathbb{T}_{w}) = c \, \gpalg_{t} \cdot \mathbb{T}_{w}$ for some scalar $c \in \Coeff^{\times}$ that might depend on $t$ and $w$.
We also say that a $\Coeff$-linear bijection
$
\cI \colon \cH(G(F), \rho_{x_{0}})
\isoarrow
\Coeff[\Wzero, \muT] \ltimes \cH_\Coeff(\Waff, q)
$
is support-preserving if for all $t \in \Wzero$ and $w \in \Waff$, we have $\cI(\varphi_{tw}) =  c \, \gpalg_{t} \cdot \mathbb{T}_{w}$ for some scalar $c \in \Coeff^{\times}$.
For $\chi \in \Hom_{\bZ}(\Wzero ,\Coeff^\times)$, we define the support-preserving algebra automorphism $\Psi_{\chi}$ of $\Coeff[\Wzero, \muT] \ltimes \cH_\Coeff(\Waff, q)$ by
$\Psi_{\chi}(\gpalg_{t} \cdot \mathbb{T}_{w}) = \chi(t) \gpalg_{t} \cdot \mathbb{T}_{w}$
for $t \in \Wzero$ and $w \in \Waff$.

\begin{proposition}
\label{prop:autos-abstract-hecke-algebra}
A $\Coeff$-linear bijection $\Psi$
on
$\Coeff[\Wzero, \muT] \ltimes \cH_\Coeff(\Waff, q)$
is a support-preserving algebra automorphism if and only if
there exists some $\chi \in \Hom_{\bZ}(\Wzero ,\Coeff^\times)$
such that $\Psi = \Psi_{\chi}$.

If $\Coeff$ admits a nontrivial involution
$c \mapsto \bar c$ and $\cT$ satisfies all the properties in Choice~\ref{choice:star}, then the isomorphism $\Psi_{\chi}$ preserves the anti-involution defined in Section~\ref{Anti-involution of the Hecke algebra} if and only if $|\chi(t)|=1$
for all $t \in \Wzero $.
\end{proposition}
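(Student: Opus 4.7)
The plan is to prove the two directions of the first assertion separately and then handle the anti-involution claim by direct computation.

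For the ``if'' direction, I would verify that $\Psi_\chi$ respects each of the defining relations of $\Coeff[\Wzero, \muT] \ltimes \cH_\Coeff(\Waff, q)$ recorded in Notation~\ref{notn:algebras}. The quadratic and braid relations in $\cH_\Coeff(\Waff, q)$ are preserved trivially because $\Psi_\chi$ fixes every $\mathbb{T}_w$ with $w \in \Waff$. The twisted multiplication $\gpalg_{t_1}\gpalg_{t_2} = \muT(t_1,t_2)\gpalg_{t_1 t_2}$ is preserved because $\chi$ is a homomorphism and therefore $\chi(t_1)\chi(t_2) = \chi(t_1 t_2)$. Finally, the cross relation $\gpalg_t \mathbb{T}_w = \mathbb{T}_{twt^{-1}}\gpalg_t$ is preserved since both sides acquire the same factor $\chi(t)$ under $\Psi_\chi$.

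For the ``only if'' direction, support-preservation yields scalars $c_{t,w} \in \Coeff^\times$ with $\Psi(\gpalg_t\mathbb{T}_w) = c_{t,w}\,\gpalg_t\mathbb{T}_w$. Writing $\gpalg_t\mathbb{T}_w = \gpalg_t \cdot \mathbb{T}_w$ and applying multiplicativity gives $c_{t,w} = c_{t,1}\,c_{1,w}$. Applying $\Psi$ to the quadratic relation $\mathbb{T}_s^2 = (q_s - 1)\mathbb{T}_s + q_s\mathbb{T}_1$ and comparing the coefficients of the linearly independent elements $\mathbb{T}_s$ and $\mathbb{T}_1$ forces $c_{1,s}^2(q_s-1) = c_{1,s}(q_s-1)$ and $c_{1,s}^2 q_s = c_{1,1}\,q_s$; since $q_s \neq 1$ by Choice~\ref{choiceofCoeffplus}, we deduce $c_{1,s} = 1$ and then $c_{1,1} = 1$. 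The braid relation from Choice~\ref{choice:tw}\eqref{conditionofthechoicebraidrelations} propagates this to $c_{1,w} = 1$ for all $w \in \Waff$. Setting $\chi(t) \coloneqq c_{t,1}$ and applying $\Psi$ to $\gpalg_{t_1}\gpalg_{t_2} = \muT(t_1,t_2)\gpalg_{t_1 t_2}$ gives $\chi(t_1)\chi(t_2) = \chi(t_1 t_2)$, so $\chi \in \Hom_\bZ(\Wzero, \Coeff^\times)$ and $\Psi = \Psi_\chi$. The cross relation then holds automatically on both sides.

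For the anti-involution statement, I would directly compute both expressions. On one hand, using $(\gpalg_t\mathbb{T}_w)^* = \gpalg_{t^{-1}}\mathbb{T}_{tw^{-1}t^{-1}}$, we have
\[
\Psi_\chi\bigl((\gpalg_t\mathbb{T}_w)^*\bigr) = \chi(t^{-1})\,\gpalg_{t^{-1}}\mathbb{T}_{tw^{-1}t^{-1}}.
\]
On the other hand, using that $*$ is $\Coeff$-conjugate-linear,
\[
\bigl(\Psi_\chi(\gpalg_t\mathbb{T}_w)\bigr)^* = \overline{\chi(t)}\,\gpalg_{t^{-1}}\mathbb{T}_{tw^{-1}t^{-1}}.
\]
Equality for all $t,w$ is therefore equivalent to $\chi(t^{-1}) = \overline{\chi(t)}$ for all $t \in \Wzero$; since $\chi$ is a homomorphism, $\chi(t^{-1}) = \chi(t)^{-1}$, which rearranges precisely to $|\chi(t)| = \chi(t)\overline{\chi(t)} = 1$.

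The proof is essentially a direct verification, and the only delicate point is the step forcing $c_{1,s} = 1$, which relies on $q_s \neq 1$ being guaranteed by $q_s \in \Coeffplus \subset \Coeffinvnontriv$ from Choice~\ref{choiceofCoeffplus}.
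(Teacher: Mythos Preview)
Your proof is correct and follows essentially the same approach as the paper: both arguments use the quadratic relation together with $q_s \neq 1$ to force the scalar on each $\mathbb{T}_s$ to be $1$, propagate via the reduced-expression relation to all of $\Waff$, and then read off that $\chi$ is a homomorphism from the twisted group-algebra multiplication. Your treatment is slightly more detailed (you spell out the ``if'' direction and the anti-involution computation, which the paper leaves as routine), and one minor citation quibble: the relation $\mathbb{T}_w = \mathbb{T}_{s_1}\cdots\mathbb{T}_{s_\ellsubstitute}$ you invoke lives in Notation~\ref{notn:algebras}\eqref{item:affine-hecke-algebra} (the definition of the abstract affine Hecke algebra), not in Choice~\ref{choice:tw}.
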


\begin{proof}
Suppose that $\Psi$ is a support-preserving algebra automorphism of
$\Coeff[\Wzero, \muT] \ltimes \cH_\Coeff(\Waff, q)$.
Since $\Psi$ preserves support, 
for all $t \in \Wzero$ and $w \in \Waff$ we must have that
$\Psi (\gpalg_{t} \cdot \mathbb{T}_{w}) = \chi(tw) \gpalg_{t} \cdot \mathbb{T}_{w}$
for some scalar $\chi(tw) \in \Coeff^\times$.
Since $\Psi$ is an algebra isomorphism,
we must have that $\chi(tw) = \chi(t) \chi(w)$.
Using the first relation of \eqref{eqn:abstractrelations}, we have
\[
\chi(s)^{2} ((q_{s} - 1) \cdot \mathbb{T}_{s} + q_s \cdot \mathbb{T}_{1}) =
(\chi(s) \bT_{s})^{2} = 
\Psi(\bT_{s})^{2} = \Psi(\bT_{s}^{2}) =
(q_{s} - 1) \cdot (\chi(s) \bT_{s}) + q_{s} \cdot \bT_{1}
\]
for all $s\in S_\Krel$.
Comparing the coefficients of $\bT_{s}$ and using $q_s \in \Coeffinvnontriv$, we obtain that $\chi(s) = 1$ for all $s\in S_\Krel$.
Then the second relation in \eqref{eqn:abstractrelations} and the fact that $\Psi$ is an algebra isomorphism imply that $\chi(w) = 1$ for all $w\in \Waff$.
Thus, $\Psi$ acts via the identity on $\cH_\Coeff(\Waff, q)$,
and it is determined by the scalars $\chi(t)$ for 
$t \in \Wzero $.
The condition of $\Psi$ preserving the multiplication
of $\Coeff[\Wzero, \muT]$
is equivalent to $\chi$ being a homomorphism.

The last claim follows from the definitions of $\Psi_{\chi}$ and the anti-involution $(\phantom{\mathbb T})^*$ on $\Coeff[\Wzero, \muT] \ltimes \cH_\Coeff(\Waff, q)$.
\end{proof}

Note that the group
$\Hom_{\bZ}(\Wzero ,\Coeff^\times)$
acts on the group of support-preserving algebra automorphisms
via
$\psi:  \Psi_\chi \longmapsto \Psi_{\psi\chi}$.

\begin{proposition}
\label{prop:torsor-of-isos}
We assume Axioms \ref{axiomaboutHNheartandK}, \ref{axiombijectionofdoublecoset}, \ref{axiomexistenceofRgrp}, and \ref{axiomaboutdimensionofend}, we fix a subset  $\Coeffplus \subset \Coeffinvnontriv$ as in Choice \ref{choiceofCoeffplus},
and we choose a family $\cT$ as in Choice \ref{choice:tw}.
Then the set of support-preserving $\Coeff$-algebra isomorphisms
$$
\cH(G(F), \rho_{x_{0}})
\isoarrow
\Coeff[\Wzero, \muT] \ltimes \cH_\Coeff(\Waff, q),
$$
is a torsor under 
$\Hom_{\bZ}(\Wzero ,\Coeff^\times)$.

If $\Coeff$ admits a nontrivial involution
$c \mapsto \bar c$ and $\cT$ satisfies all the properties in Choice~\ref{choice:star}, then the set of such isomorphisms that also preserve the anti-involutions defined in Section~\ref{Anti-involution of the Hecke algebra} is a torsor under
the 
group
$\Hom_{\bZ}\left(\Wzero ,\{ z \in \Coeff^{\times} \, \mid \, |z|=1\}\right)$.
\end{proposition}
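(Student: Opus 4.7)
The plan is to deduce the torsor statement from the existence of one explicit isomorphism together with the classification of support-preserving automorphisms of the target algebra.

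First, I would observe that the set of isomorphisms in question is nonempty: Theorem~\ref{theoremstructureofhecke} provides an explicit support-preserving $\Coeff$-algebra isomorphism $\cI(\rho_{x_0}) \colon \cH(G(F), \rho_{x_0}) \isoarrow \Coeff[\Wzero, \muT] \ltimes \cH_\Coeff(\Waff, q)$. Given any other support-preserving $\Coeff$-algebra isomorphism $\cI'$, the composition $\Psi \coloneqq \cI' \circ \cI(\rho_{x_0})^{-1}$ is a $\Coeff$-algebra automorphism of $\Coeff[\Wzero, \muT] \ltimes \cH_\Coeff(\Waff, q)$. I would then verify that $\Psi$ is support-preserving: if $\cI(\rho_{x_0})(\varphi_{tw}) = \gpalg_t \cdot \bT_w$ and $\cI'(\varphi_{tw}) = c(t,w) \gpalg_t \cdot \bT_w$ for some $c(t,w) \in \Coeff^\times$, then $\Psi(\gpalg_t \cdot \bT_w) = c(t,w) \gpalg_t \cdot \bT_w$. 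By Proposition~\ref{prop:autos-abstract-hecke-algebra}, $\Psi = \Psi_\chi$ for a unique $\chi \in \Hom_{\bZ}(\Wzero, \Coeff^\times)$, so $\cI'$ is determined by the pair $(\cI(\rho_{x_0}), \chi)$.

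Conversely, for any $\chi \in \Hom_{\bZ}(\Wzero, \Coeff^\times)$, the composition $\Psi_\chi \circ \cI(\rho_{x_0})$ is again a support-preserving $\Coeff$-algebra isomorphism, since $\Psi_\chi$ is a support-preserving algebra automorphism by Proposition~\ref{prop:autos-abstract-hecke-algebra}. This shows that $\Hom_{\bZ}(\Wzero, \Coeff^\times)$ acts simply transitively on the set of support-preserving $\Coeff$-algebra isomorphisms, via $\chi \cdot \cI \coloneqq \Psi_\chi \circ \cI$; equivalently, this set is a torsor under $\Hom_{\bZ}(\Wzero, \Coeff^\times)$.

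For the second assertion, I would repeat the argument under the additional assumption that $\Coeff$ admits a nontrivial involution and $\cT$ is chosen as in Choice~\ref{choice:star}. The starting point is now Proposition~\ref{propstarpreservationabstractheckevsourhecke}, which guarantees that $\cI(\rho_{x_0})$ preserves the anti-involutions defined in Section~\ref{Anti-involution of the Hecke algebra}. Given two support-preserving isomorphisms $\cI, \cI'$ both preserving the anti-involutions, the automorphism $\Psi_\chi = \cI' \circ \cI^{-1}$ preserves the anti-involution on the target; by the last clause of Proposition~\ref{prop:autos-abstract-hecke-algebra}, this forces $|\chi(t)| = 1$ for all $t \in \Wzero$. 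Conversely, any such $\chi$ yields, via composition with $\cI(\rho_{x_0})$, an anti-involution-preserving isomorphism. Hence the subset in question is a torsor under $\Hom_{\bZ}\bigl(\Wzero, \{ z \in \Coeff^\times \mid |z| = 1\}\bigr)$.

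The arguments are essentially formal once Proposition~\ref{prop:autos-abstract-hecke-algebra} and Theorem~\ref{theoremstructureofhecke} (resp.\ Proposition~\ref{propstarpreservationabstractheckevsourhecke}) are in place; there is no substantial obstacle. The only subtlety worth checking carefully is that being support-preserving in the sense defined for isomorphisms $\cH(G(F), \rho_{x_0}) \to \Coeff[\Wzero, \muT] \ltimes \cH_\Coeff(\Waff, q)$ translates correctly to the notion of being support-preserving for automorphisms of $\Coeff[\Wzero, \muT] \ltimes \cH_\Coeff(\Waff, q)$, but this is immediate from the two definitions, since $\cI(\rho_{x_0})$ itself sends $\varphi_{tw}$ to $\gpalg_t \cdot \bT_w$.
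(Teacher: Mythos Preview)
Your proof is correct and follows essentially the same approach as the paper: fix the reference isomorphism $\cI(\rho_{x_0})$ from Theorem~\ref{theoremstructureofhecke} (resp.\ Proposition~\ref{propstarpreservationabstractheckevsourhecke}), and reduce to the classification of support-preserving automorphisms of the target algebra given by Proposition~\ref{prop:autos-abstract-hecke-algebra}. Your write-up is slightly more explicit about verifying that the composed automorphism is support-preserving and about the converse direction, but the argument is the same.
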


\begin{proof}
Fix such an isomorphism
$\cI$, which exists by Theorem~\ref{theoremstructureofhecke}.
For every $\chi \in \Hom_{\bZ}(\Wzero ,\Coeff^\times)$,
we have a support-preserving $\Coeff$-algebra automorphism $\Psi_\chi$ of $\Coeff[\Wzero, \muT] \ltimes \cH_\Coeff(\Waff, q)$,
as in the proof of Proposition \ref{prop:autos-abstract-hecke-algebra}.
Let $\cI_\chi$ denote the composition 
$\Psi_\chi \circ \cI$.
Then it is clear that this is a support-preserving algebra
isomorphism from 
$\cH(G(F), \rho_{x_{0}})$
to
$\Coeff[\Wzero, \muT] \ltimes \cH_\Coeff(\Waff, q)$,
and that all such isomorphisms arise in this way.

Suppose that $\Coeff$ admits a nontrivial involution
$c \mapsto \bar c$ and $\cT$ satisfies all the properties in Choice \ref{choice:star}. 
If we also assume that $\cI$ preserves the anti-involution 
(such an isomorphism exists by Proposition~\ref{propstarpreservationabstractheckevsourhecke}),
then the last part of Proposition \ref{prop:autos-abstract-hecke-algebra} implies 
that $\cI_\chi$ preserves the anti-involution if and only if
$\chi \in \Hom_{\bZ}\left(\Wzero ,\{ z \in \Coeff^{\times} \, \mid \, |z|=1\}\right)$.
\end{proof}

\section{Comparison of Hecke algebras} \label{sec:comparison}

We have seen in Section \ref{Structure of a Hecke algebra} that if a pair $(K,\rho)$ consisting of a compact, open subgroup $K$ of $G(F)$ and an irreducible representation $\rho$ of $K$ satisfies
some axioms, then we can determine the structure of the attached Hecke algebra
$\cH(G(F),\rho)$.
Now we consider a pair $(K^0,\rho^0)$ for a subgroup $G^0$ of $G$.
We will show that if the two pairs $(K,\rho)$ and $(K^0,\rho^0)$ are related
according to some axioms, Axioms \ref{axiomaboutKM0vsKM}, \ref{axiomaboutK0vsK}, and \ref{axiomextensionoftheinductionofkappa},
then we have a support-preserving algebra isomorphism
$\cH(G^0(F),\rho^0) \longrightarrow \cH(G(F),\rho)$, see Theorem \ref{thm:isomorphismtodepthzero} and Theorem \ref{thm:explicitisom}.

As a special case, 
in
\cite[Theorem \ref{HAIKY-heckealgebraisomforKim-Yutype}]{HAIKY},
we will obtain an isomorphism between the Hecke algebra attached to a type of a Bernstein block of arbitrary depth constructed by Kim and Yu and the Hecke algebra of a depth-zero type.
Readers interested in this case might find it helpful to first read
Sections \ref{HAIKY-subsec:construction}
and \ref{HAIKY-subsec:affinehyperplanesKimYu}
of \cite{HAIKY}
to have an example in mind for the objects appearing in the axiomatic set-up below. A reader solely interested in the setting of
\cite{HAIKY}
might also completely replace all the below objects by those introduced in
\cite[Section \ref{HAIKY-Section-KimYutypes}]{HAIKY}
with the same symbols.

\subsection{The set-up} \label{subsec:comparisonsetup}
We let $G^0$ \index{notation-ax}{G0@$G^0$} be a connected reductive subgroup of $G$ of same rank as $G$ and let $j \colon \cB(G^0,F) \hookrightarrow \cB(G,F)$ be an admissible embedding of enlarged Bruhat--Tits buildings in the sense of \cite[\S~14.2]{KalethaPrasad}.
We note that an admissible embedding exists by \cite[Proposition~14.6.1, Corollary~14.7.3, Proposition~14.8.4]{KalethaPrasad}.
For example, in the setting of types constructed by Kim and Yu, $G^0$ is a twisted Levi subgroup of $G$, see \cite[Definition \ref{HAIKY-definitionofGdatum}]{HAIKY}.

Let $M^0$
\index{notation-ax}{M0@$M^0$}
be a Levi subgroup of $G^0$ for which $A_{M^0}=A_M$, where $M$
\index{notation-ax}{M@$M$}
denotes the centralizer $Z_G(A_{M^0})$ of $A_{M^0}$ in $G$. Recall that $A_{M^0}$, respectively, $A_M$, denotes the maximal split torus in the center of $M^0$, respectively $M$. 
Note that $M$ is a Levi subgroup of $G$ and we have $G^{0} \cap M = M^{0}$.
We fix a commutative diagram
\[
\xymatrix{
\cB(G^{0}, F) \ar@{^{(}->}^j[r] & \cB(G, F)
\\
\cB(M^{0}, F) \ar@{^{(}->}[u] \ar@{^{(}->}[r] & \cB(M, F) \ar@{^{(}->}[u]
}
\]
of admissible embeddings of buildings and identify $\cB(M^{0}, F)$, $\cB(M, F)$, and $\cB(G^{0}, F)$ with their images in $\cB(G, F)$.
Let $x_{0} \in \cB(M^{0}, F)$. Then the affine spaces $\cA_{x_{0}}=x_{0} + \left( X_{*}(A_{M^{0}}) \otimes_{\mathbb{Z}} \bR\right)$
and $\cA_{x_{0}}=x_{0} + \left( X_{*}(A_{M}) \otimes_{\mathbb{Z}} \bR\right)$ attached to $(G^0, M^0, x_0)$ and $(G, M, x_0)$ in Section~\ref{subsec:affine} agree, so we may use the same symbol $\cA_{x_{0}}$ without ambiguity.  
By our definition of $M$, the subgroups $N_{G^{0}}(M^{0})(F)_{[x_{0}]_{M^{0}}}$ of $G^{0}(F)$ and $N_{G}(M)(F)_{[x_{0}]_{M}}$ of $G(F)$ introduced in Section~\ref{subsec:affine} satisfy the relation
\begin{align}
\label{NofMvsNofm0}
N_{G^{0}}(M^{0})(F)_{[x_{0}]_{M^{0}}} = G^{0}(F) \cap N_{G}(M)(F)_{[x_{0}]_{M}}.
	\end{align}

We fix a locally finite set of affine hyperplanes $\mathfrak{H}$ in $\cA_{x_{0}}$ that do not contain $x_{0}$. \index{notation-ax}{H_@$\mathfrak{H}$}%
\index{notation-ax}{rhoM@$\rho_{M}$}%
\index{notation-ax}{rhoM0@$\rho_{M^0}$}%
\index{notation-ax}{Kay M@$K_{M}$}%
\index{notation-ax}{Kay M0@$K_{M^0}$}%
We let $K_{M^{0}}$, resp.,\ $K_{M}$, be a compact, open subgroup of $M^{0}(F)_{x_{0}}$, resp.,\ $M(F)_{x_{0}}$, and $(\rho_{M^{0}}, V_{\rho_{M^{0}}})$, resp.,\ $(\rho_{M}, V_{\rho_{M}})$, be an irreducible smooth representation of $K_{M^{0}}$, resp.,\ $K_{M}$. 
Let $\Nzeroheart$ be a subgroup of $N(\rho_{M^{0}})_{[x_{0}]_{M^{0}}}$ containing $A_{M^0}(F)$.
For example, in
\cite{HAIKY},
we take $\Nzeroheart=N(\rho_{M^{0}})_{[x_{0}]_{M^{0}}}$, and $(K_{M^0}, \rho_{M^0})$ and $(K_{M}, \rho_{M})$ are depth-zero and positive-depth supercuspidal types as constructed by Yu (\cite{Yu}) if $\Coeff=\bC$.

We will assume the following axiom that relates the general pairs $(K_{M^0}, \rho_{M^0})$ and $(K_{M}, \rho_{M})$.
\begin{axiom}
\label{axiomaboutKM0vsKM}
\mbox{}

\begin{enumerate}[(1)]
\item
\label{axiomaboutKM0vsKMaboutJ}
There exists a compact, open subgroup $K_{M, 0+}$ of $M(F)$ that is normalized by $\Nzeroheart \cdot K_{M^{0}}$ such that the group $M^{0}(F) \cap K_{M, 0+}$ is contained in the kernel of $\rho_{M^{0}}$, and we have
$
K_{M} = K_{M^{0}} \cdot K_{M, 0+}
$.
\item
\label{axiomaboutKM0vsKMaboutanextension}
There exists an irreducible smooth representation $(\kappa_{M}, V_{\kappa_{M}})$\index{notation-ax}{kappaM@$\kappa_M$}
 of $K_{M}$ that
extends to a smooth representation $\widetilde\kappa_{M}$
\index{notation-ax}{kappaM_tilde@$\widetilde\kappa_M$}
of the group
\[
\index{notation-ax}{Kay Mt@$\widetilde K_M$}
\widetilde K_{M} \coloneqq \Nzeroheart \cdot K_{M}
\]
 such that 
\[
\rho_{M} \simeq \inf\left(
\rho_{M^{0}}
\right) \otimes \kappa_{M},
\]
where $\inf\left(
\rho_{M^{0}}
\right)$ is the inflation of $\rho_{M^{0}}$ to $K_{M}$ via the surjection
\begin{equation*}
K_{M}  = K_{M^{0}} \cdot K_{M, 0+} 
 \twoheadrightarrow (K_{M^{0}} \cdot K_{M, 0+}) / K_{M, 0+} 
 \simeq K_{M^{0}} / \left(
K_{M^{0}}  \cap K_{M, 0+}
\right).
\end{equation*}
\end{enumerate}
\end{axiom}

\begin{remark}
In the setting of
\cite{HAIKY},
the group $K_{M,0+}$ is defined in
\cite[\eqref{HAIKY-definitionofcompactopensubgroupsKimYucase}]{HAIKY},
$\kappa_M$ is a twist of a Weil--Heisenberg representation defined in
\cite[Section \ref{HAIKY-subsec:construction}, p.\ \pageref{HAIKY-kappaM}]{HAIKY},
and the extension $\widetilde \kappa_M$ of $\kappa_M$ is constructed explicitly in
\cite[Definition \ref{HAIKY-definitionofkappatilde}]{HAIKY}
and the preceding text.
That Axiom \ref{axiomaboutKM0vsKM}
holds in this setting is proven in
\cite[Proposition~\ref{HAIKY-propproofofaxiomaboutKM0vsKM}]{HAIKY}.
The existence of an extension  $\widetilde \kappa_M$ of $\kappa_M$ as required in this axiom was the key challenge in proving that all our axioms are satisfied in the setting of \cite{HAIKY}.
\end{remark}

\begin{remark}
\label{remarkaboutintersectionwithdepthzero}
Axiom~\ref{axiomaboutKM0vsKM}\eqref{axiomaboutKM0vsKMaboutJ} in particular includes the statement that $M^{0}(F) \cap K_{M, 0+} \subset K_{M^{0}}$, and hence the requirement $K_{M} = K_{M^{0}} \cdot K_{M, 0+}$ of the axiom
implies that
$
M^{0}(F) \cap K_{M} = K_{M^{0}}
$.
Since $G^{0} \cap M = M^{0}$ and $K_{M} \subset M(F)$, we also have $G^{0}(F) \cap K_{M} = K_{M^{0}}$.
\end{remark}

It will be convenient for us to choose the subgroup $\Nheart$ of $N(\rho_{M})_{[x_{0}]_{M}}$ as $\Nheart =  \Nzeroheart$, but in order to do so, we first need the following lemma.
\begin{lemma}
\label{lemmanormalizerho0vsrho}
We have
\[
\Nzeroheart \subset N(\rho_{M})_{[x_{0}]_{M}}.
\]
\end{lemma}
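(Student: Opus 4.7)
The plan is to unpack the definitions of the two normalizer groups and verify each required property for an arbitrary element $n \in \Nzeroheart$. Write $N(\rho_M)_{[x_0]_M} = N_{G(F)}(\rho_M) \cap N_G(M)(F)_{[x_0]_M}$, so we have to check two things for $n$: first, that $n \in N_G(M)(F)_{[x_0]_M}$, and second, that $n$ normalizes both $K_M$ and the isomorphism class of $\rho_M$.

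The first point is immediate from \eqref{NofMvsNofm0}: since $\Nzeroheart \subset N(\rho_{M^0})_{[x_0]_{M^0}} \subset N_{G^0}(M^0)(F)_{[x_0]_{M^0}} = G^0(F) \cap N_G(M)(F)_{[x_0]_M}$, every $n \in \Nzeroheart$ already lies in $N_G(M)(F)_{[x_0]_M}$. To see that $n$ normalizes $K_M$, I will use Axiom~\ref{axiomaboutKM0vsKM}\eqref{axiomaboutKM0vsKMaboutJ}: since $\Nzeroheart \subset N(\rho_{M^0})_{[x_0]_{M^0}}$ already normalizes $K_{M^0}$, and the axiom states that $K_{M,0+}$ is normalized by $\Nzeroheart \cdot K_{M^0}$, conjugation by $n$ preserves both factors in the product $K_M = K_{M^0} \cdot K_{M,0+}$, hence preserves $K_M$.

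It remains to verify that $^n\!\rho_M \simeq \rho_M$. By Axiom~\ref{axiomaboutKM0vsKM}\eqref{axiomaboutKM0vsKMaboutanextension} we have $\rho_M \simeq \inf(\rho_{M^0}) \otimes \kappa_M$, and therefore
\[
{^n\!\rho_M} \simeq {^n\!\inf(\rho_{M^0})} \otimes {^n\!\kappa_M}.
\]
For the second tensor factor, the extension $\widetilde\kappa_M$ of $\kappa_M$ to $\widetilde K_M = \Nzeroheart \cdot K_M$ provided by the axiom witnesses an explicit isomorphism ${^n\!\kappa_M} \isoarrow \kappa_M$, given by $v \mapsto \widetilde\kappa_M(n)(v)$. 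For the first tensor factor, I will argue that since $n$ normalizes both $K_{M^0}$ and $K_{M,0+}$, it normalizes $K_{M^0} \cap K_{M,0+}$ and induces a group automorphism of the quotient $K_{M^0}/(K_{M^0} \cap K_{M,0+})$ that is compatible (via the surjection from $K_M$ described in the axiom) with conjugation by $n$ on $K_M$; hence ${^n\!\inf(\rho_{M^0})}$ is the inflation of ${^n\!\rho_{M^0}}$, and the latter is isomorphic to $\rho_{M^0}$ because $n \in \Nzeroheart \subset N(\rho_{M^0})_{[x_0]_{M^0}}$.

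The main obstacle is the bookkeeping in the last step: one has to check that the surjection $K_M \twoheadrightarrow K_{M^0}/(K_{M^0} \cap K_{M,0+})$ intertwines conjugation by $n$ on source and target. This follows formally from the hypothesis that $n$ normalizes $K_{M^0}$ and $K_{M,0+}$ separately, but it does require one to confirm that the surjection itself is well defined (i.e., that the decomposition $K_M = K_{M^0} \cdot K_{M,0+}$ induces a genuine group homomorphism to $K_{M^0}/(K_{M^0} \cap K_{M,0+})$), which is exactly what the axiom packages.
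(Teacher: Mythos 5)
Your proof is correct and follows the same route as the paper's: reduce to showing $n \in N_{G(F)}(\rho_M)$ via \eqref{NofMvsNofm0}, deduce that $n$ normalizes $K_M$ from Axiom~\ref{axiomaboutKM0vsKM}\eqref{axiomaboutKM0vsKMaboutJ}, and build the intertwiner ${}^n\!\rho_M \to \rho_M$ factor by factor from the decomposition $\rho_M \simeq \inf(\rho_{M^0}) \otimes \kappa_M$, using some $T^0_n$ on the inflation part and $\widetilde\kappa_M(n)$ on the $\kappa_M$ part. You spell out slightly more of the bookkeeping (that $n$ normalizes each factor of $K_M = K_{M^0}\cdot K_{M,0+}$, and that ${}^n\!\inf(\rho_{M^0}) = \inf({}^n\!\rho_{M^0})$ because the surjection to $K_{M^0}/(K_{M^0}\cap K_{M,0+})$ is $n$-equivariant), which the paper leaves implicit, but the content is identical.
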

\begin{proof}
Let $n \in \Nzeroheart$.
According to Equation \eqref{NofMvsNofm0}, we have $n \in N_{G}(M)(F)_{[x_{0}]_{M}}$.
Hence, it suffices to prove that $n \in N_{G(F)}(\rho_{M})$.
According to Axiom~\ref{axiomaboutKM0vsKM}\eqref{axiomaboutKM0vsKMaboutJ}, the element $n$ normalizes the group $K_{M}$.
We will prove that $n$ normalizes the representation $\rho_{M}$.
Let
\(
T^{0}_{n} \colon ^n\!\rho_{M^{0}} \isoarrow \rho_{M^{0}}
\) be an isomorphism.
Then the morphism
\[
 T^{0}_{n} \otimes \widetilde\kappa_{M}(n)
 : \, \,
 ^n\!\rho_{M} = {^n\!\rho_{M^{0}}} \otimes {^n\!\kappa_{M}}
 \isoarrow
 \rho_{M^{0}} \otimes \kappa_{M} = \rho_{M}
\]
is an isomorphism as well, hence $n \in N_{G(F)}(\rho_{M})$.
\end{proof}

\subsection{Compatible families of quasi-$G$-covers}
\label{subsec:quasiGcoverscompatible}
We keep the notation and assumption from the previous subsection, i.e., Axiom \ref{axiomaboutKM0vsKM} holds. In this subsection, we will define the quasi-$G^0$-cover $(K^0_{x_0}, \rho^{0}_{x_0})$ of $(K_{M^0}, \rho_{M^0})$ and the quasi-$G$-cover $(K_{x_0}, \rho_{x_0})$ of $(K_M, \rho_M)$ to which we attach the Hecke algebras that we prove are isomorphic in Theorem \ref{thm:isomorphismtodepthzero}. 
For example, if $\Coeff = \bC$, we can take $(K_{x_0}, \rho_{x_0})$ to be a type for a Bernstein block constructed by Kim and Yu as a $G$-cover of a supercuspidal type $(K_M, \rho_M)$. 
In this case, the pair $(K^0_{x_0}, \rho^{0}_{x_0})$ can be taken as the twist by a quadratic character introduced in \cite{FKS} of the depth-zero type included in the input for the construction of $(K_{x_0}, \rho_{x_0})$.
(See
\cite[Section \ref{HAIKY-subsec:construction}]{HAIKY}
for details, where we include the twist by the quadratic character in the construction of $(K_{x_0}, \rho_{x_0})$ and therefore can take $(K^0_{x_0}, \rho^{0}_{x_0})$ as the depth-zero type input for this twisted construction.)
As in Section \ref{subsec:familyofcovers}, we will define not only one quasi-cover, but a whole family of quasi-covers. Moreover, we will formulate a compatibility condition between the family of quasi-$G^0$-covers and the family of quasi-$G$-covers, see Axiom \ref{axiomaboutK0vsK}.

\index{notation-ax}{rho0x@$\rho^0_{x}$}
\index{notation-ax}{Kay0 x@$K^{0}_{x}$}
\index{notation-ax}{Kay0 x +@$K^{0}_{x, +}$}
Let
\[
\cK^{0} =
\left\{
(K^{0}_{x}, K^{0}_{x, +}, (\rho^{0}_{x}, V_{\rho^{0}_{x}}))
\right\}_{x \in \cA_{\gen}}
\]
be a family
of quasi-$G^0$-cover-candidates
that satisfies Axioms~\ref{axiomaboutHNheartandK} with the subgroup $\Nzeroheart$ of $N(\rho_{M^{0}})_{[x_{0}]_{M^{0}}}$.
According to Lemma~\ref{lemmanormalizerho0vsrho}, $\Nzeroheart$ is also a subgroup of $N(\rho_{M})_{[x_{0}]_{M}}$.
Let
\index{notation-ax}{Kay x@$K_{x}$}
\index{notation-ax}{rhox@$\rho_{x}$}
\index{notation-ax}{Kay x +@$K_{x, +}$}
\[
\cK =
\left\{
(K_{x}, K_{x, +}, (\rho_{x}, V_{\rho_{x}}))
\right\}_{x \in \cA_{\gen}}
\]
be a family of quasi-$G$-cover-candidates that satisfies Axioms~\ref{axiomaboutHNheartandK} with the subgroup $\Nheart\coloneqq\Nzeroheart$ of $N(\rho_{M})_{[x_{0}]_{M}}$. Examples for such families are presented in
the paragraph after
\cite[Lemma \ref{HAIKY-lemmakapparestrictiontoK+KimYuver}]{HAIKY} on p.\ \pageref{HAIKY-KimYufamilies} of \cite{HAIKY}.
We assume that the two families satisfy the following compatibility properties.
\begin{axiom}
\label{axiomaboutK0vsK}
\mbox{}
For each $x \in \cA_{\gen}$, the following properties hold.

\begin{enumerate}[(1)]
\item
There exists a compact, open subgroup $K_{x, 0+}$ of $K_{x}$ that is normalized by $K^{0}_{x}$ such that the group $G^{0}(F) \cap K_{x, 0+}$ is contained in the kernel of $\rho^{0}_{x}$, and we have
$
K_{x} = K^{0}_{x} \cdot K_{x, 0+}
$.
\item \label{axiomaboutK0vsKinflation}
\index{notation-ax}{kappax@$\kappa_{x}$}%
There exists an irreducible smooth representation $(\kappa_{x}, V_{\kappa_{x}})$ of $K_{x}$ such that 
\[
\rho_{x} = \inf \left(
\rho^{0}_{x}
\right) \otimes \kappa_{x},
\]
where $\inf\left(
\rho^{0}_{x}
\right)$ is the inflation of $\rho^{0}_{x}$ to $K_{x}$ via the surjection
\begin{equation*}
K_{x}  = K^{0}_{x} \cdot K_{x, 0+} 
 \twoheadrightarrow K^{0}_{x} \cdot K_{x, 0+} / K_{x, 0+} 
 \simeq K^{0}_{x} / \left(
K^{0}_{x}  \cap K_{x, 0+}
\right).
\end{equation*}
\item
\label{axiomaboutK0vsKaboutintertwiner}
We have 
\[
I_{G(F)}(\rho_{x}) = K_{x} \cdot I_{G^{0}(F)}(\rho^{0}_{x}) \cdot K_{x}.
\]
\end{enumerate}
\end{axiom}
This axiom is verified for the setting of
\cite{HAIKY}
in \cite[Proposition \ref{HAIKY-proofofaxiomaboutK0vsK}]{HAIKY}.

\subsection{$\protect\cK$-relevance vs. $\protect\cK^{0}$-relevance}
\label{subsec:relevance}
We keep the notation and assumptions, i.e., Axioms \ref{axiomaboutHNheartandK}, \ref{axiomaboutKM0vsKM}, and \ref{axiomaboutK0vsK}, from the previous subsection.
Recall that we defined in Section \ref{subsec:intertwiningop} intertwining operators $\Theta_{y\mid x}$ for $x, y \in \cA_\gen$. In order to distinguish the intertwining operators attached to $G$ and $\cK$ from the intertwining operators attached to $G^0$ and $\cK^0$ we will denote the former by 
\[
\Theta_{y \mid x} \colon \ind_{K_{x}}^{G(F)} (\rho_{x}) \rightarrow \ind_{K_{y}}^{G(F)} (\rho_{y})
\] and the latter by
\[
\Theta^{0}_{y \mid x} \colon \ind_{K^{0}_{x}}^{G^{0}(F)} (\rho^{0}_{x}) \rightarrow \ind_{K^{0}_{y}}^{G^{0}(F)} (\rho^{0}_{y}) .
\]
This means  that an affine hyperplane $H \in \mathfrak{H}$ is called $\cK^0$-relevant if there exists $x, y \in \cA_{\gen}$ such that 
$
\mathfrak{H}_{x, y} = \left\{
H
\right\}$
and
\(
\Theta^0_{x \mid y} \circ \Theta^0_{y \mid x} \notin \Coeff \cdot \id_{\ind_{K^0_{x}}^{G^0(F)}(\rho^0_{x})} 
\).
In this subsection, we will exhibit the relation between $\Theta_{y \mid x}$ and $\Theta^0_{y \mid x}$, see Lemma \ref{lemmacompativilityofThetadepth0vspositivedepth}, and show that the notions of $\cK$-relevant and $\cK^0$-relevant coincide, see Corollary \ref{corollaryKrel=K0rel}, under the assumption of the following axiom that ensures a compatibility between the data $\{K^{0}_x, K_{x, 0+}, \kappa_x\}$ for different $x \in \cA_\gen$.
\begin{axiom}
\label{axiomextensionoftheinductionofkappa}
For any $x, y \in \cA_{\gen}$ such that $d(x, y) = 1$,
there exists a compact, open subgroup $K^{0}_{x, y}$ 
\index{notation-ax}{Kay0 x y@$K^0_{x,y}$}
of $G^{0}(F)$, a compact, open subgroup $K_{x, y; 0+}$ of $G(F)$,
\index{notation-ax}{Kay x y 0+@$K_{x,y; 0+}$}
and an irreducible smooth representation $(\kappa_{x, y}, V_{\kappa_{x, y}})$ of $K_{x, y} \coloneqq K^{0}_{x, y} \cdot K_{x, y; 0+}$
\index{notation-ax}{kappaxy@$\kappa_{x,y}$}
\index{notation-ax}{Kay x y@$K_{x, y}$}
such that
\begin{enumerate}[(1)]
\item
\label{axiomextensionoftheinductionofkappaK0xycontainsK0xandy}
$K^{0}_{x, y}$ contains $K^{0}_{x}$ and $K^{0}_{y}$.
\item
\label{axiomextensionoftheinductionofkappaJxyvsJxandJy}
$K_{x, y; 0+}$ is normalized by the group $K^{0}_{x, y}$, and we have
\[
K_{x, 0+} \subset \left(
G^{0}(F) \cap K_{x, 0+}
\right)
\cdot K_{x, y; 0+}
\quad
\text{ and }
\quad
K_{y, 0+} \subset \left(
G^{0}(F) \cap K_{y, 0+}
\right)
\cdot K_{x, y; 0+}.
\]
\item
\label{axiomextensionoftheinductionofkappakrnelofkappaxy}
The group $G^{0}(F) \cap K_{x, y; 0+}$ is contained in the kernels of $\rho^{0}_{x}$ and $\rho^{0}_{y}$.
\item
\label{axiomextensionoftheinductionofkappakappasxirreducible}
The restriction of $\kappa_{x, y}$ to $K_{x, y; 0+}$ is irreducible.
\item
\label{axiomextensionoftheinductionofkappacompatibilitywiththecompactind}
We have isomorphisms
\[
\kappa_{x, y} \restriction_{K^{0}_{x} \cdot K_{x, y; 0+}} \isoarrow \ind_{K_{x}}^{K^{0}_{x} \cdot K_{x, y; 0+}} (\kappa_{x})
\quad
\text{ and }
\quad
\kappa_{x, y} \restriction_{K^{0}_{y} \cdot K_{x, y; 0+}} \isoarrow \ind_{K_{y}}^{K^{0}_{y} \cdot K_{x, y; 0+}} (\kappa_{y}).
\]
\end{enumerate}
\end{axiom}
Since the conditions of Axiom~\ref{axiomextensionoftheinductionofkappa} are symmetric with respect to $x$ and $y$, we may and do assume that $K^{0}_{x, y}=K^{0}_{y, x}$, $K_{x,y; 0+}=K_{y,x; 0+}$, and $\kappa_{x, y} = \kappa_{y, x}$.

In Section \ref{subsec:Heckeisom} below, when we will also assume Axioms \ref{axiomexistenceofRgrp} and \ref{axiomaboutdimensionofend}, then $K^{0}_{x, s x}$ and $K_{x, s x}$  will play the role of $K'_{x,s}$ in Axiom \ref{axiomaboutdimensionofend}.

\begin{remark}
	 In the setting of
\cite{HAIKY},
the objects $K^0_{x,y}$, $K_{x,y;0+}$ and $\kappa_{x,y}$ are defined in
\cite[Notation \ref{HAIKY-notationK0xyKxy0+kappaxy}]{HAIKY},
and \cite[Lemma \ref{HAIKY-proofofaxiomextensionoftheinductionofkappa}]{HAIKY}
shows that these objects satisfy Axiom \ref{axiomextensionoftheinductionofkappa} for the families $\cK^0$ and $\cK$ considered in
\cite{HAIKY}. 
	Axiom \ref{axiomextensionoftheinductionofkappa}\eqref{axiomextensionoftheinductionofkappacompatibilitywiththecompactind} is a reason why we need to twist the construction of Kim and Yu (\cite{Kim-Yu}) in
\cite{HAIKY}
by a quadratic character following \cite{FKS}.
Without this twist, 
 the Hecke algebra isomorphism in \cite[Theorem \ref{HAIKY-heckealgebraisomforKim-Yutype}]{HAIKY}, which is 
 Theorem \ref{thm:isomorphismtodepthzero} applied in that setting,
 would not be true in general, see 
\cite[Section \ref{HAIKY-subsec:quadratictwistisnecessary}]{HAIKY} for an example. 
\end{remark}

\begin{remark}
\label{remarkinclusionsofKxandKxy}
Axiom~\ref{axiomextensionoftheinductionofkappa}\eqref{axiomextensionoftheinductionofkappaK0xycontainsK0xandy} and \eqref{axiomextensionoftheinductionofkappaJxyvsJxandJy} imply that
\[
K_{x} \subset K^{0}_{x} \cdot K_{x, y; 0+} \subset K_{x, y}
\quad
\text{ and }
\quad
K_{y} \subset K^{0}_{y} \cdot \nobreak K_{x, y; 0+} \subset \nobreak K_{x, y}.
\]
\end{remark}

We assume Axiom \ref{axiomextensionoftheinductionofkappa} from now on.
Let $H \in \mathfrak{H}$ and $x, y \in \cA_{\gen}$ such that $\mathfrak{H}_{x, y} = \left\{
H
\right\}$.
Let $(K^{0}_{x, y}, K_{x, y; 0+}, \kappa_{x, y})$  be the triple  from Axiom~\ref{axiomextensionoftheinductionofkappa}.
Since the representation $\rho^{0}_{x}$ is trivial on the group $K^{0}_{x} \cap K_{x, y; 0+} \subset G^{0}(F) \cap K_{x, y; 0+}$, we can inflate $\rho^{0}_{x}$ to the group $K^{0}_{x} \cdot K_{x, y; 0+}$ via the surjection
\[
K^{0}_{x} \cdot K_{x, y; 0+}
\longrightarrow
K^{0}_{x} \cdot K_{x, y; 0+} / K_{x, y; 0+} 
\simeq
K^{0}_{x} / \left( K^{0}_{x} \cap K_{x, y; 0+} \right).
\]
We use the same notation $\inf(\rho^{0}_{x})$
for this inflation as for the inflation of $\rho^{0}_{x}$ to $K_{x}$, which is just the restriction of the former to $K_{x}$.  
Then, by using Axiom~\ref{axiomextensionoftheinductionofkappa}\eqref{axiomextensionoftheinductionofkappacompatibilitywiththecompactind}, we have 
\begin{align*}
\ind_{K_{x}}^{K_{x, y}} (\rho_{x}) &= \ind_{K_{x}}^{K_{x, y}} \left(
\inf\left(
\rho^{0}_{x}
\right) \otimes \kappa_{x}
\right) 
&& \!\!\!\!\!\!
\simeq \ind_{K^{0}_{x} \cdot K_{x, y; 0+}}^{K_{x, y}} \Bigl(
\ind_{K_{x}}^{K^{0}_{x} \cdot K_{x, y; 0+}} \left(
\inf\left(
\rho^{0}_{x}
\right) \otimes \kappa_{x}
\right)
\Bigr) \notag \\
& \simeq \ind_{K^{0}_{x} \cdot K_{x, y; 0+}}^{K_{x, y}} \Bigl(
\inf \left(
\rho^{0}_{x}
\right) \otimes 
\ind_{K_{x}}^{K^{0}_{x} \cdot K_{x, y; 0+}} \kappa_{x} 
\Bigr) 
&& \!\!\!\!\!\! \simeq \ind_{K^{0}_{x} \cdot K_{x, y; 0+}}^{K_{x, y}} \Bigl(
\inf (\rho^{0}_{x}) \otimes 
\kappa_{x, y} \restriction_{K^{0}_{x} \cdot K_{x, y; 0+}} 
\Bigr) \notag \\
& \simeq \ind_{K^{0}_{x} \cdot K_{x, y; 0+}}^{K_{x, y}} \Bigl(
\inf (\rho^{0}_{x}) 
\Bigr) \otimes \kappa_{x, y} 
&& \!\!\!\!\!\! = \ind_{K^{0}_{x} \cdot K_{x, y; 0+}}^{K^{0}_{x, y} \cdot K_{x, y; 0+}} \Bigl(
\inf (\rho^{0}_{x}) 
\Bigr) \otimes \kappa_{x, y} \notag \\
& \simeq \inf \Bigl(
\ind_{K^{0}_{x}}^{K^{0}_{x, y}} (\rho^{0}_{x})
\Bigr) \otimes \kappa_{x, y}, \notag
\end{align*}
where $\inf \left(
\ind_{K^{0}_{x}}^{K^{0}_{x, y}} (\rho^{0}_{x})
\right)$ denotes the inflation of the representation $\ind_{K^{0}_{x}}^{K^{0}_{x, y}} (\rho^{0}_{x})$ to $K_{x, y}$ via the surjection
\[
K_{x, y}
= K^{0}_{x, y} \cdot K_{x, y; 0+} 
\longrightarrow K^{0}_{x, y} \cdot K_{x, y; 0+} / K_{x, y; 0+} 
\simeq K^{0}_{x, y} / \left( K^{0}_{x, y} \cap K_{x, y; 0+} \right).
\]
We write this isomorphism as
\index{notation-ax}{Ixxy@$I^{x, y}_{x}$}
\[
I^{x, y}_{x} \colon
\ind_{K_{x}}^{K_{x, y}} (\rho_{x})
\isoarrow
\inf \left(
\ind_{K^{0}_{x}}^{K^{0}_{x, y}} (\rho^{0}_{x})
\right)
\otimes \kappa_{x, y}. 
\]
Similarly, we obtain an isomorphism
\begin{align*}
I^{x, y}_{y} \colon
\ind_{K_{y}}^{K_{x, y}} (\rho_{y})
\isoarrow
\inf \left(
\ind_{K^{0}_{y}}^{K^{0}_{x, y}} (\rho^{0}_{y})
\right) \otimes \kappa_{x, y}.
\end{align*}

According to Axiom \ref{axiomextensionoftheinductionofkappa} \eqref{axiomextensionoftheinductionofkappaK0xycontainsK0xandy}, Remark~\ref{remarkinclusionsofKxandKxy} and the definitions of $\Theta_{y \mid x}$ and $\Theta^0_{y \mid x}$, we have 
\begin{equation}\label{equationaboutThetainsidesubspace}
\Theta_{y \mid x} \in 
\Hom_{G(F)}\bigl(\ind_{K_{x}}^{G(F)} (\rho_{x}), \ind_{K_{y}}^{G(F)} (\rho_{y})\bigr)_{K_{x, y}} \simeq \Hom_{K_{x, y}}\bigl(\ind_{K_{x}}^{K_{x, y}} (\rho_{x}), \ind_{K_{y}}^{K_{x, y}} (\rho_{y})\bigr)
\end{equation}
\begin{equation}\label{equationaboutThetazeroinsidesubspace}
	\Theta^0_{y \mid x} \in 
	\Hom_{G^0(F)}\bigl(\ind_{K^0_{x}}^{G(F)} (\rho^0_{x}), \ind_{K^0_{y}}^{G(F)} (\rho^0_{y})\bigr)_{K^0_{x, y}} \simeq \Hom_{K^0_{x, y}}\bigl(\ind_{K^0_{x}}^{K^0_{x, y}} (\rho^0_{x}), \ind_{K^0_{y}}^{K^0_{x, y}} (\rho^0_{y})\bigr)
\end{equation}
(see Lemma~\ref{lemmarestrictiontoasubspaceofthecompactinductiongeneralver}) and therefore may view $\Theta_{y \mid x}$ and $\Theta^0_{y \mid x}$ as elements of the latter spaces.
Hence, we obtain an element
\[
I^{x, y}_{y} \circ
\Theta_{y \mid x}
\circ \left(
I^{x, y}_{x}
\right)^{-1}
\in
\Hom_{K_{x, y}}\left(
\inf \bigl(
\ind_{K^{0}_{x}}^{K^{0}_{x, y}} (\rho^{0}_{x})
\bigr) \otimes \kappa_{x, y}, \inf \bigl(
\ind_{K^{0}_{y}}^{K^{0}_{x, y}} (\rho^{0}_{y})
\bigr) \otimes \kappa_{x, y}
\right).
\]
\begin{lemma}
\label{lemmacompativilityofThetadepth0vspositivedepth}
There exists $c \in \Coeff^{\times}$ such that
\[
I^{x, y}_{y} \circ
\Theta_{y \mid x}
\circ \left(
I^{x, y}_{x}
\right)^{-1} = 
c \cdot 
\Theta^{0}_{y \mid x}
 \otimes \id_{V_{\kappa_{x, y}}}.
\]
\end{lemma}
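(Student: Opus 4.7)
The plan is to combine Schur's lemma, applied to the irreducibility of $\kappa_{x,y}|_{K_{x,y;0+}}$ guaranteed by Axiom \ref{axiomextensionoftheinductionofkappa}\eqref{axiomextensionoftheinductionofkappakappasxirreducible}, with a direct computation in order to establish the desired proportionality.

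First, I will verify that both sides of the claimed identity are $K_{x,y}$-equivariant morphisms
\[
\inf\bigl(\ind_{K^0_x}^{K^0_{x,y}}\rho^0_x\bigr)\otimes \kappa_{x,y}\longrightarrow \inf\bigl(\ind_{K^0_y}^{K^0_{x,y}}\rho^0_y\bigr)\otimes \kappa_{x,y}.
\]
For the right-hand side this is immediate, while for the left-hand side it follows from the observation that $\Theta_{y\mid x}$ restricts to a $K_{x,y}$-equivariant morphism $\ind_{K_x}^{K_{x,y}}\rho_x\to\ind_{K_y}^{K_{x,y}}\rho_y$ by \eqref{equationaboutThetainsidesubspace} and Lemma \ref{lemmarestrictiontoasubspaceofthecompactinductiongeneralver}, combined with the $K_{x,y}$-equivariance of $I^{x,y}_x$ and $I^{x,y}_y$.

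Next, I will apply a tensorial form of Schur's lemma. The two inflations are trivial on $K_{x,y;0+}$ by construction, while $\kappa_{x,y}|_{K_{x,y;0+}}$ is irreducible, so $\End_{K_{x,y;0+}}(V_{\kappa_{x,y}})=\Coeff\cdot\id$. Consequently, every $K_{x,y;0+}$-equivariant morphism of the displayed form is of the shape $A\otimes\id_{V_{\kappa_{x,y}}}$ for some $\Coeff$-linear map $A$. Since $K_{x,y}=K^0_{x,y}\cdot K_{x,y;0+}$, requiring full $K_{x,y}$-equivariance amounts to additionally imposing $K^0_{x,y}$-equivariance, which translates (using that $\kappa_{x,y}(k^0)\otimes\id$ factors trivially through the second tensor slot) into the condition that $A$ belongs to $\Hom_{K^0_{x,y}}\bigl(\ind_{K^0_x}^{K^0_{x,y}}\rho^0_x,\ind_{K^0_y}^{K^0_{x,y}}\rho^0_y\bigr)$. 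Writing the transported left-hand side as $B\otimes\id$, I am therefore reduced to showing that $B=c\cdot\Theta^0_{y\mid x}$ for some $c\in\Coeff^\times$, where $\Theta^0_{y\mid x}$ is viewed inside this same $\Hom_{K^0_{x,y}}$-space via the $G^0$-analog of \eqref{equationaboutThetainsidesubspace}.

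Finally, to establish the proportionality, I will use Axiom \ref{axiomaboutHNheartandK}\eqref{axiomaboutHNheartandKabotgoodunipotentradical} to choose $U\in\cU(M)$ with $K_x\cap U(F)\subseteq K_y\cap U(F)$ and $K_y\cap\overline{U}(F)\subseteq K_x\cap\overline{U}(F)$, so that by Lemma \ref{lemmarewriteTheta} the operator $\Theta_{y\mid x}$ is realized as integration against $K_y\cap U(F)$ and $\Theta^0_{y\mid x}$ as integration against $K^0_y\cap U(F)$. Tracing this integration through the explicit isomorphisms $I^{x,y}_x$ and $I^{x,y}_y$ (which are built from Axiom \ref{axiomextensionoftheinductionofkappa}\eqref{axiomextensionoftheinductionofkappacompatibilitywiththecompactind}), I expect the $K_y\cap U(F)$-average to factor as a $K^0_y\cap U(F)$-average (producing $\Theta^0_{y\mid x}$ on the first tensor factor) multiplied by an averaging over a complementary subgroup lying in $K_{x,y;0+}$; the latter operator acts trivially on the first tensor factor, and being $K_{x,y;0+}$-equivariant on $\kappa_{x,y}$, collapses to a scalar by Schur's lemma. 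The resulting scalar is $c$, and its nonvanishing follows from the nonvanishing of $\Theta_{y\mid x}$ (Corollary \ref{corollarycalculationofconstantterm}). The main obstacle will be the careful bookkeeping required to track the integration through the chain of isomorphisms underlying $I^{x,y}_x$ and $I^{x,y}_y$; an alternative, more conceptual route would be to show directly that the ambient $\Hom_{K^0_{x,y}}$-space is one-dimensional (for example via Mackey decomposition together with Lemma \ref{lemmacoverintertwining} applied to the quasi-$G^0$-covers $(K^0_x,\rho^0_x)$ and $(K^0_y,\rho^0_y)$), which would make the existence of $c$ automatic at the cost of losing the explicit formula.
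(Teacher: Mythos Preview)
Your first two steps---reducing to a $K_{x,y}$-equivariant map and then invoking the irreducibility of $\kappa_{x,y}|_{K_{x,y;0+}}$ to factor the transported operator as $\Theta'\otimes\id_{V_{\kappa_{x,y}}}$ with $\Theta'\in\Hom_{K^0_{x,y}}\bigl(\ind_{K^0_x}^{K^0_{x,y}}\rho^0_x,\ind_{K^0_y}^{K^0_{x,y}}\rho^0_y\bigr)$---match the paper exactly.

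Where you diverge is in the proportionality step. The paper does not attempt either of your two proposed routes in pure form; instead it uses a hybrid that is lighter than your main approach and more robust than your alternative. Your alternative (showing the ambient $\Hom_{K^0_{x,y}}$-space is one-dimensional) is risky: the axioms say almost nothing about $K^0_{x,y}$ beyond the containments $K^0_x,K^0_y\subseteq K^0_{x,y}$, so a Mackey decomposition over $K^0_x\backslash K^0_{x,y}/K^0_y$ may well produce several nonzero summands, and Lemma~\ref{lemmacoverintertwining} does not apply to the non-trivial double cosets since conjugation by an arbitrary $g\in K^0_{x,y}$ need not preserve $M^0$. So this route, as stated, has a gap.

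What the paper does instead is localize: it shows that both $\Theta^0_{y\mid x}$ and $\Theta'$, when restricted to $V_{\rho^0_x}$, land in the subspace $\ind_{K^0_y}^{K^0_y\cdot K^0_x}(V_{\rho^0_y})$ (the piece supported on the trivial double coset). For $\Theta^0_{y\mid x}$ this is immediate from its definition. For $\Theta'$ one needs only the single observation $I^{x,y}_x(V_{\rho_x})=V_{\rho^0_x}\otimes V_{\kappa_x}$, together with the support of $\Theta_{y\mid x}$ and the containment $K_yK_x\subseteq K^0_yK^0_x\cdot K_{x,y;0+}$---far less bookkeeping than your main approach would require. Once both operators are known to live in $\Hom_{K^0_x}\bigl(\rho^0_x,\ind_{K^0_y}^{K^0_yK^0_x}\rho^0_y\bigr)$, Frobenius reciprocity and Lemma~\ref{lemmacoverintertwining} identify this space with $\End_{K_{M^0}}(\rho_{M^0})$, which is one-dimensional. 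The proportionality on $V_{\rho^0_x}$ then propagates to all of $\ind_{K^0_x}^{K^0_{x,y}}V_{\rho^0_x}$ by $K^0_{x,y}$-equivariance, and $c\neq0$ follows from the nonvanishing of $\Theta_{y\mid x}$, as you note.
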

\begin{proof}
\addtocounter{equation}{-1}
\begin{subequations}
Since $K_{x, y; 0+}$ is contained in the kernels of the two 
representations
$\inf \bigl(
\ind_{K^{0}_{x}}^{K^{0}_{x, y}} (\rho^{0}_{x})
\bigr)$
and
$\inf \bigl(
\ind_{K^{0}_{y}}^{K^{0}_{x, y}} (\rho^{0}_{y})
\bigr)$,
and the restriction of $\kappa_{x, y}$ to $K_{x, y; 0+}$ is irreducible by Axiom~\ref{axiomextensionoftheinductionofkappa}\eqref{axiomextensionoftheinductionofkappakappasxirreducible}, we can write
\[
I^{x, y}_{y} \circ
\Theta_{y \mid x}
\circ \left(
I^{x, y}_{x}
\right)^{-1} = 
\Theta'
 \otimes \id_{V_{\kappa_{x, y}}}
\]
for some
\[
\Theta' \in \Hom_{K^{0}_{x, y}}\bigl(
\ind_{K^{0}_{x}}^{K^{0}_{x, y}} (\rho^{0}_{x}), \ind_{K^{0}_{y}}^{K^{0}_{x, y}} (\rho^{0}_{y})
\bigr).
\]
It remains to show that there exists $c \in \Coeff^{\times}$ such that $\Theta' = c \cdot \Theta^{0}_{y \mid x}$. The definition of $\Theta^{0}_{y \mid x}$ implies that 
\begin{align}
\label{Theta0inKyKx}
\Theta^{0}_{y \mid x} \left(
V_{\rho^{0}_{x}}
\right) \subset \ind_{K^{0}_{y}}^{K^{0}_{y} \cdot K^{0}_{x}} \left(
V_{\rho^{0}_{y}}
\right).
\end{align}
On the other hand, we can prove
\begin{align}
\label{Theta'inKyKx}
\Theta'\left(
V_{\rho^{0}_{x}}
\right) \subset \ind_{K^{0}_{y}}^{K^{0}_{y} \cdot K^{0}_{x}} \left(
V_{\rho^{0}_{y}}
\right)
\end{align}
as follows.
Let $v^{0} \in V_{\rho^{0}_{x}}$ and $w \in V_{\kappa_{x}}$, and consider 
\(
v^{0} \otimes w \in V_{\rho^{0}_{x}} \otimes V_{\kappa_{x}} = V_{\rho_{x}} \subset \ind_{K_{x}}^{K_{x, y}} (V_{\rho_{x}}) 
\).
By tracing through the definition of $I^{x, y}_{x}$, we obtain that 
\[
I^{x, y}_{x}(v^{0} \otimes w)=v^{0} \otimes w \in V_{\rho^{0}_{x}} \otimes V_{\kappa_{x}} \subset \ind_{K^{0}_{x}}^{K^{0}_{x, y}} (V_{\rho^{0}_{x}}) \otimes V_{\kappa_{x, y}}, 
\]
where we regard $\kappa_{x}$ as a $K_{x}$-subrepresentation of $\kappa_{x, y}$ via 
$\kappa_{x}  \hookrightarrow \ind_{K_{x}}^{K^{0}_{x} \cdot K_{x, y; 0+}} (\kappa_{x}) \restriction_{K_{x}} \simeq \kappa_{x, y} \restriction_{K_{x}}$.
Hence, we obtain that
$
I^{x, y}_{x}\left(
V_{\rho_x}
\right)
= V_{\rho^{0}_{x}} \otimes V_{\kappa_{x}}
$,
equivalently, 
$ \left(
I^{x, y}_{x}
\right)^{-1}
\left(
V_{\rho^{0}_{x}} \otimes V_{\kappa_{x}}
\right) =
V_{\rho_x}
$.
 The definitions of $I^{x, y}_{y}$ and $\Theta_{y \mid x}$, and the observation $K^{0}_{y} \cdot K^{0}_{x} \cdot K_{x, y; 0+} = K^{0}_{y} \cdot K_{x, y; 0+}  \cdot K^{0}_{x} \cdot K_{x, y; 0+} \supseteq K_y \cdot K_x$ then imply that
\begin{align*}
\left(
I^{x, y}_{y} \circ
\Theta_{y \mid x}
\circ \left(
I^{x, y}_{x}
\right)^{-1}
\right)\left(
V_{\rho^{0}_{x}} \otimes V_{\kappa_{x}}
\right) \subset \ind_{K^{0}_{y}}^{K^{0}_{y} \cdot K^{0}_{x}} \left(
V_{\rho^{0}_{y}}
\right) \otimes V_{\kappa_{x, y}}.
\end{align*}
Thus, we conclude \eqref{Theta'inKyKx}.

According to \eqref{Theta0inKyKx} and \eqref{Theta'inKyKx}, we have
\begin{align*}
\Theta^{0}_{y \mid x} \restriction_{V_{\rho^{0}_{x}}},
\:\: \Theta' \restriction_{V_{\rho^{0}_{x}}}
& \in
\Hom_{K^{0}_{x}}\left(
\rho^{0}_{x}, \ind_{K^{0}_{y}}^{K^{0}_{y} \cdot K^{0}_{x}} (\rho^{0}_{y})
\right) 
\simeq 
\Hom_{K^{0}_{x}}\left(
\rho^{0}_{x}, \ind_{K^{0}_{x} \cap K^{0}_{y}}^{K^{0}_{x}} \left(
\rho^{0}_{y}\restriction_{K^{0}_{x} \cap K^{0}_{y}}
\right)
\right),
\end{align*}
where the isomorphism comes from the isomorphism
\(
\ind_{K^{0}_{y}}^{K^{0}_{y} \cdot K^{0}_{x}} (\rho^{0}_{y}) \rightarrow \ind_{K^{0}_{x} \cap K^{0}_{y}}^{K^{0}_{x}} \left(
\rho^{0}_{y}\restriction_{K^{0}_{x} \cap K^{0}_{y}}
\right)
\) of $K^{0}_{x}$-representations
given by
\(
f \mapsto f\restriction_{K^{0}_{x}}.
\)
Since $K^{0}_{x}$ is a compact group,
the compact induction functor $\ind_{K^{0}_{x} \cap K^{0}_{y}}^{K^{0}_{x}}$
is not only the left-adjoint but also the right-adjoint of the restriction functor.
Hence, according to Lemma~\ref{lemmacoverintertwining}, we have
\begin{align*}
\Hom_{K^{0}_{x}}\left(
\rho^{0}_{x}, \ind_{K^{0}_{x} \cap K^{0}_{y}}^{K^{0}_{x}} \left(
\rho^{0}_{y}\restriction_{K^{0}_{x} \cap K^{0}_{y}}
\right)
\right)
& \simeq \Hom_{K^{0}_{x} \cap K^{0}_{y}} \left(
\rho^{0}_{x} \restriction_{K^{0}_{x} \cap K^{0}_{y}}, \rho^{0}_{y} \restriction_{K^{0}_{x} \cap K^{0}_{y}}
\right) 
= \End_{K_{M^{0}}}(\rho_{M^{0}}).
\end{align*}
Since $\rho_{M^{0}}$ is an irreducible representation of $K_{M^{0}}$, we obtain that
\[
\dim_{\Coeff}\left(
\Hom_{K^{0}_{x}}\left(\rho^{0}_{x}, \ind_{K^{0}_{y}}^{K^{0}_{y} \cdot K^{0}_{x}} (\rho^{0}_{y})\right)
\right) = \dim_{\Coeff}\left(
\End_{K_{M^{0}}}(\rho_{M^{0}})
\right) = 1.
\]
Thus, there exists $c \in \Coeff$ such that
\(
\Theta' \restriction_{V_{\rho^{0}_{x}}} = c \cdot \Theta^{0}_{y \mid x} \restriction_{V_{\rho^{0}_{x}}}.
\)
Since the $K^{0}_{x, y}$-representation $\left(
\ind_{K^{0}_{x}}^{K^{0}_{x, y}} (\rho^{0}_{x}), \right.$ $\left. \ind_{K^{0}_{x}}^{K^{0}_{x, y}} \left(
V_{\rho^{0}_{x}}
\right)
\right)$ is generated by the subspace $V_{\rho^{0}_{x}}$
, and the homomorphisms $\Theta'$ and $\Theta^{0}_{y \mid x}$ are $K^{0}_{x, y}$-equivariant, we also obtain that 
\(
\Theta' = c \cdot 
\Theta^{0}_{y \mid x}.
\)
Since $\Theta^{0}_{y \mid x}$ and $\Theta_{y \mid x}$ are non-zero (see Corollary~\ref{corollarycalculationofconstantterm}), we obtain the lemma.
\end{subequations}
\end{proof}
\begin{corollary}
\label{corollaryKrel=K0rel}
We assume Axioms \ref{axiomaboutHNheartandK}, \ref{axiomaboutKM0vsKM}, \ref{axiomaboutK0vsK}, and \ref{axiomextensionoftheinductionofkappa}.
Then an affine hyperplane $H \in \mathfrak{H}$ is $\cK^{0}$-relevant if and only if $H$ is $\cK$-relevant.
\end{corollary}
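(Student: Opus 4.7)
The plan is to deduce the equivalence directly from Lemma~\ref{lemmacompativilityofThetadepth0vspositivedepth} by showing that, for a fixed pair $x,y \in \cA_{\gen}$ with $\mathfrak{H}_{x,y} = \{H\}$, the endomorphism $\Theta_{x \mid y} \circ \Theta_{y \mid x}$ is a scalar multiple of the identity if and only if $\Theta^{0}_{x \mid y} \circ \Theta^{0}_{y \mid x}$ is. Since the definitions of $\cK$-relevance and $\cK^0$-relevance both quantify over the same set of pairs $(x,y)$ satisfying $\mathfrak{H}_{x,y} = \{H\}$, this pointwise equivalence suffices.

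First, I would fix such a pair $(x,y)$ and use Axiom~\ref{axiomextensionoftheinductionofkappa} to obtain $(K^0_{x,y}, K_{x,y;0+}, \kappa_{x,y})$ and the isomorphisms
\[
I^{x,y}_{x} \colon \ind_{K_{x}}^{K_{x,y}}(\rho_{x}) \isoarrow \inf\bigl(\ind_{K^{0}_{x}}^{K^{0}_{x,y}}(\rho^{0}_{x})\bigr) \otimes \kappa_{x,y}
\]
and analogously $I^{x,y}_{y}$. By \eqref{equationaboutThetainsidesubspace} and Lemma~\ref{lemmarestrictiontoasubspaceofthecompactinductiongeneralver}, the operator $\Theta_{x\mid y}\circ \Theta_{y\mid x}$ lies in $\End_{K_{x,y}}\bigl(\ind_{K_{x}}^{K_{x,y}}(\rho_{x})\bigr)$, and the condition of being (or not being) in $\Coeff \cdot \id$ is preserved under this restriction, since restriction from the $K_{x,y}$-level to $G(F)$-level is an algebra isomorphism on the corresponding subalgebras.

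Next, applying Lemma~\ref{lemmacompativilityofThetadepth0vspositivedepth} to both $\Theta_{y \mid x}$ and $\Theta_{x \mid y}$, there exist $c, c' \in \Coeff^{\times}$ with
\[
I^{x,y}_{y} \circ \Theta_{y\mid x} \circ (I^{x,y}_{x})^{-1} = c \cdot \Theta^{0}_{y \mid x} \otimes \id_{V_{\kappa_{x,y}}},
\quad
I^{x,y}_{x} \circ \Theta_{x\mid y} \circ (I^{x,y}_{y})^{-1} = c' \cdot \Theta^{0}_{x \mid y} \otimes \id_{V_{\kappa_{x,y}}}.
\]
Composing these two identities and conjugating by $I^{x,y}_{x}$, I obtain
\[
I^{x,y}_{x} \circ \bigl(\Theta_{x\mid y}\circ\Theta_{y\mid x}\bigr) \circ (I^{x,y}_{x})^{-1} = cc' \cdot \bigl(\Theta^{0}_{x\mid y}\circ \Theta^{0}_{y\mid x}\bigr) \otimes \id_{V_{\kappa_{x,y}}}.
\]

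Finally, I use the elementary fact that for a finite-dimensional nonzero vector space $W$ and an endomorphism $A$ of another space, the operator $A \otimes \id_{W}$ equals a scalar multiple of the identity if and only if $A$ does. Since $cc' \in \Coeff^{\times}$, this shows
\[
\Theta_{x\mid y}\circ\Theta_{y\mid x} \in \Coeff \cdot \id
\iff
\Theta^{0}_{x\mid y}\circ \Theta^{0}_{y\mid x} \in \Coeff \cdot \id.
\]
Applying this to every $(x,y)$ with $\mathfrak{H}_{x,y} = \{H\}$ yields the claimed equivalence $H \in \mathfrak{H}_{\Krel} \iff H \in \mathfrak{H}_{\cK^0\text{-rel}}$. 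There is no serious obstacle here: the content is entirely packaged in Lemma~\ref{lemmacompativilityofThetadepth0vspositivedepth}, and the remaining work is the bookkeeping involving the tensor-product decomposition and the restriction isomorphism.
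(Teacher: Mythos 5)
Your proof is correct and follows the same route as the paper's own argument: restrict to the $K_{x,y}$-level via Lemma~\ref{lemmarestrictiontoasubspaceofthecompactinductiongeneralver}, apply Lemma~\ref{lemmacompativilityofThetadepth0vspositivedepth} to each of the two normalized intertwining operators, compose to obtain the tensor-product identity, and conclude from the fact that $A \otimes \id_W$ is scalar if and only if $A$ is. The only difference is that you spell out the final tensor-product observation explicitly, which the paper leaves implicit.
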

\begin{proof}
Let $x, y \in \cA_{\gen}$ such that $\mathfrak{H}_{x, y} = \left\{
H
\right\}$.
To prove the corollary, in light of Equations \eqref{equationaboutThetainsidesubspace} and \eqref{equationaboutThetazeroinsidesubspace}, it suffices to show that
\[
\Theta_{x \mid y} \circ \Theta_{y \mid x}
 \in \Coeff \cdot \id_{\ind_{K_{x}}^{K_{x, y}}(\rho_{x})}
\quad \text{
if and only if
} \quad
\Theta^{0}_{x \mid y} \circ \Theta^{0}_{y \mid x} \in \Coeff \cdot \id_{\ind_{K^{0}_{x}}^{K^{0}_{x, y}}(\rho^{0}_{x})}.
\]
According to Lemma~\ref{lemmacompativilityofThetadepth0vspositivedepth}, there exists $c \in \Coeff^{\times}$ such that
\[
I^{x, y}_{y} \circ \Theta_{y \mid x} \circ \left(
I^{x, y}_{x}
\right)^{-1} = c \cdot \Theta^{0}_{y \mid x}
 \otimes \id_{V_{\kappa_{x, y}}}.
\]
Replacing $x$ with $y$, we also obtain that there exists $c' \in \Coeff^{\times}$ such that
\[
I^{x, y}_{x} \circ \Theta_{x \mid y} \circ \left(I^{x, y}_{y}\right)^{-1} = c' \cdot  \Theta^{0}_{x \mid y}
 \otimes \id_{V_{\kappa_{x, y}}}.
\]
Hence, we have
\(
I^{x, y}_{x} \circ \left(
\Theta_{x \mid y} \circ \Theta_{y \mid x}
\right) \circ \left(
I^{x, y}_{x}
\right)^{-1} = c c' \cdot 
\Theta^{0}_{x \mid y} \circ \Theta^{0}_{y \mid x} \otimes \id_{V_{\kappa_{x, y}}},
\)
from which we deduce the desired equivalence.
\end{proof}

\subsection{Hecke algebra isomorphism} \label{subsec:Heckeisom} 
We keep the notation and assumptions, i.e., Axioms \ref{axiomaboutHNheartandK}, \ref{axiomaboutKM0vsKM}, \ref{axiomaboutK0vsK}, and \ref{axiomextensionoftheinductionofkappa}, from the previous subsection.
In this subsection we are going to show that the Hecke algebras attached to $(K^0_{x_0}, \rho_{x_0}^0)$ and to $(K_{x_0}, \rho_{x_0})$, respectively, are isomorphic, see Theorem \ref{thm:isomorphismtodepthzero}.

In order to use the structure of the Hecke algebras as a semi-direct product of an affine Hecke algebra with a twisted group algebra that we exhibited in Theorem \ref{theoremstructureofhecke}, we will need to also assume the axioms that were used in that theorem. More precisely, from now on, we suppose that the family $\cK^0$ also satisfies Axiom \ref{axiombijectionofdoublecoset}. Starting from after Lemma \ref{lemmaabouttaxiombijectionofdoublecoset} we also assume that the group $\Wzeroheart$ satisfies Axiom~\ref{axiomexistenceofRgrp} for a normal subgroup $\Waffz$
\index{notation-ax}{WrhoMaff0@$\Waffz$}
of $\Wzeroheart$, and that the family $\cK^{0}$ satisfies Axiom~\ref{axiomaboutdimensionofend} with
the groups $K'_{x,s} = \nobreak K^{0}_{x, s x}$ for $s \in S_{\Kzrel}$ and $x \in \cA_{\gen}$ with $\mathfrak{H}_{x, s x} = \{ H_{s} \}$, and where $K^{0}_{x, s x}$ denotes the group in Axiom~\ref{axiomextensionoftheinductionofkappa}.
We will first show that the analogous axioms also hold for $\cK$.

\begin{lemma} 
	\label{lemmaabouttaxiombijectionofdoublecoset}
	The family $\cK$ with the subgroup $\Nheart=\Nzeroheart$ of $N(\rho_{M})_{[x_{0}]_{M}}$ satisfies Axiom~\ref{axiombijectionofdoublecoset}.
\end{lemma}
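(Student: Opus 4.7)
The plan is to combine the intertwining description from Axiom~\ref{axiomaboutK0vsK}\eqref{axiomaboutK0vsKaboutintertwiner} with the corresponding axiom for the family $\cK^{0}$. The key observation is that Axiom~\ref{axiomaboutK0vsK}(1) gives $K_{x} = K^{0}_{x} \cdot K_{x, 0+}$, in particular $K^{0}_{x} \subseteq K_{x}$.

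For the inclusion $K_{x} \cdot \Nheart \cdot K_{x} \subseteq I_{G(F)}(\rho_{x})$, note that by Axiom~\ref{axiomaboutHNheartandK}\eqref{axiomaboutHNheartandKabouttypeKxisaquasiGcover} the pair $(K_{x},\rho_{x})$ is a quasi-$G$-cover of $(K_{M},\rho_{M})$, and by Lemma~\ref{lemmanormalizerho0vsrho} we have $\Nheart = \Nzeroheart \subseteq N(\rho_{M})_{[x_{0}]_{M}}$. Hence Corollary~\ref{corollarynormalizercontainedinintertwiner} gives $\Nheart \subseteq I_{G(F)}(\rho_{x})$, and the desired inclusion follows from the fact that $I_{G(F)}(\rho_{x})$ is a union of $K_{x}$-double cosets.

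For the reverse inclusion, we use Axiom~\ref{axiomaboutK0vsK}\eqref{axiomaboutK0vsKaboutintertwiner} to write
\[
I_{G(F)}(\rho_{x}) = K_{x} \cdot I_{G^{0}(F)}(\rho^{0}_{x}) \cdot K_{x}.
\]
Since $\cK^{0}$ satisfies Axiom~\ref{axiombijectionofdoublecoset} with the subgroup $\Nzeroheart$, we have $I_{G^{0}(F)}(\rho^{0}_{x}) = K^{0}_{x} \cdot \Nzeroheart \cdot K^{0}_{x}$. Substituting and absorbing $K^{0}_{x} \subseteq K_{x}$ on both sides gives
\[
I_{G(F)}(\rho_{x}) = K_{x} \cdot K^{0}_{x} \cdot \Nzeroheart \cdot K^{0}_{x} \cdot K_{x} = K_{x} \cdot \Nzeroheart \cdot K_{x} = K_{x} \cdot \Nheart \cdot K_{x},
\]
which completes the proof. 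There is no real obstacle here; the argument is essentially a bookkeeping step showing that the group $\Nheart = \Nzeroheart$ intertwines $\rho_{x}$ and exhausts the intertwiners modulo $K_{x}$-double cosets, using only Axioms~\ref{axiomaboutHNheartandK} and \ref{axiomaboutK0vsK} for $\cK$ together with Axiom~\ref{axiombijectionofdoublecoset} for $\cK^{0}$.
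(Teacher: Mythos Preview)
Your proof is correct and follows essentially the same approach as the paper: both combine Axiom~\ref{axiomaboutK0vsK}\eqref{axiomaboutK0vsKaboutintertwiner} with Axiom~\ref{axiombijectionofdoublecoset} for $\cK^{0}$ and absorb $K^{0}_{x} \subseteq K_{x}$. The paper's version is slightly more terse in that it establishes the equality directly rather than treating one inclusion separately, but the argument is the same.
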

\begin{proof}
	Let $x \in \cA_{\gen}$. Since the group $\Nzeroheart$ and the family $\cK^{0}$ satisfy Axiom~\ref{axiombijectionofdoublecoset}, we have
	\[
	I_{G^{0}(F)}(\rho^{0}_{x}) = K^{0}_{x} \cdot \Nzeroheart \cdot K^{0}_{x} .
	\]
	Combining this with Axiom~\ref{axiomaboutK0vsK}\eqref{axiomaboutK0vsKaboutintertwiner}, we have
	\begin{equation*}
		I_{G(F)}(\rho_{x}) = K_{x} \cdot I_{G^{0}(F)}(\rho^{0}_{x}) \cdot K_{x} 
		= K_{x} \cdot \Nzeroheart \cdot K_{x}.
		\qedhere
	\end{equation*}
\end{proof}

To show that $\cK$ also satisfies the remaining axioms, recall that we chose the group $\Nheart$ to be $\Nzeroheart \subset G^0(F)$. Hence
we have
\begin{equation}
	\label{NcapKM=NzerocapKmzero}
	\Nheart \cap K_{M} = \Nzeroheart \cap K_{M} = \Nzeroheart \cap K_{M^{0}}
\end{equation}
by  Remark~\ref{remarkaboutintersectionwithdepthzero}.
Thus, we obtain that $\Wheart = \Wzeroheart$.
Moreover, according to Corollary~\ref{corollaryKrel=K0rel}, we have $
W_{\Krel} = W_{\Kzrel}
$.

\begin{lemma}
\label{lemmaaxiomstrongerthanaboutdimensionofendimpliesaxiomaboutdimensionofend}
The family $\cK$ satisfies Axiom~\ref{axiomaboutdimensionofend} with the group $K'_{x,s}=K_{x, s x}$ for $s \in S_{\Krel} = S_{\Kzrel}$ and $x \in \cA_{\gen}$ with $\mathfrak{H}_{x, s x} = \{ H_{s} \}$, where $K_{x, s x}$ denotes the group in Axiom~\ref{axiomextensionoftheinductionofkappa}.
\end{lemma}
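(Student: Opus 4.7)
The plan is to verify the axiom with $K'_{x,s} = K_{x, sx}$, the compact open subgroup provided by Axiom~\ref{axiomextensionoftheinductionofkappa}. The inclusion $K_x \subseteq K_{x, sx}$ is immediate from Remark~\ref{remarkinclusionsofKxandKxy}, so the task reduces to proving
\[
(\Nheart \cap K_{x, sx}) \,/\, (\Nheart \cap K_M) \;=\; \{1, s\}.
\]
The inclusion $\supseteq$ is straightforward: since $\Nheart = \Nzeroheart$, $\Nheart \cap K_M = \Nzeroheart \cap K_{M^0}$ by \eqref{NcapKM=NzerocapKmzero}, and $K^0_{x, sx} \subseteq K_{x, sx}$ by Axiom~\ref{axiomextensionoftheinductionofkappa}\eqref{axiomextensionoftheinductionofkappaK0xycontainsK0xandy}, the set $(\Nzeroheart \cap K^0_{x, sx})/(\Nzeroheart \cap K_{M^0}) = \{1, s\}$ (where the equality is our hypothesis that Axiom~\ref{axiomaboutdimensionofend} holds for $\cK^0$ with $K^0_{x, sx}$) injects into the left-hand side.

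The substance lies in the reverse inclusion, which I will establish by showing $|(\Nheart \cap K_{x, sx})/(\Nheart \cap K_M)| \le 2$ through computing $\dim_\Coeff \cH(K_{x, sx}, \rho_x)$ in two ways. First, the arguments of Proposition~\ref{propositionvectorspacedecomposition} apply verbatim with $K_{x, sx}$ in place of $G(F)$ -- using Lemma~\ref{lemmaabouttaxiombijectionofdoublecoset} to conclude that $I_{K_{x, sx}}(\rho_x) = K_x \cdot (\Nheart \cap K_{x, sx}) \cdot K_x$ -- and yield $\dim \cH(K_{x, sx}, \rho_x) = |(\Nheart \cap K_{x, sx})/(\Nheart \cap K_M)|$. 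Second, combining \eqref{heckevsendK'} with the isomorphism $I^{x, sx}_{x}$ from Section~\ref{subsec:relevance} identifies $\cH(K_{x, sx}, \rho_x)$ with $\End_{K_{x, sx}}\bigl(\inf(\ind_{K^0_x}^{K^0_{x, sx}}\rho^0_x) \otimes \kappa_{x, sx}\bigr)$.

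The key algebraic point is then a Schur-type argument: because $\kappa_{x, sx}|_{K_{x, sx; 0+}}$ is irreducible by Axiom~\ref{axiomextensionoftheinductionofkappa}\eqref{axiomextensionoftheinductionofkappakappasxirreducible} while the other tensor factor is trivial on $K_{x, sx; 0+}$, every $K_{x, sx; 0+}$-endomorphism of the tensor product factors as $\phi \otimes \id_{\kappa_{x, sx}}$, and $K_{x, sx}$-equivariance then forces $\phi \in \End_{K^0_{x, sx}}(\ind_{K^0_x}^{K^0_{x, sx}}\rho^0_x)$. Applying the same dimension formula to $\cK^0$ (which by hypothesis satisfies all the axioms of Section~\ref{Structure of a Hecke algebra}) gives $\dim \End_{K^0_{x, sx}}(\ind_{K^0_x}^{K^0_{x, sx}}\rho^0_x) = |(\Nzeroheart \cap K^0_{x, sx})/(\Nzeroheart \cap K_{M^0})| = 2$. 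The two computations together yield $|(\Nheart \cap K_{x, sx})/(\Nheart \cap K_M)| = 2$, and equality with $\{1, s\}$ follows from the first inclusion. I expect that the only step requiring real care is the Schur-type identification of the endomorphism ring of the tensor product, which is precisely where the irreducibility of $\kappa_{x, sx}|_{K_{x, sx; 0+}}$ is indispensable.
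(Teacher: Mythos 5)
Your proof is correct, but it takes a genuinely different and substantially longer route than the paper's. The paper's proof is a one-line group-theoretic computation: since $\Nzeroheart \subseteq G^0(F)$ and $K_{x,sx} = K^0_{x,sx}\cdot K_{x,sx;0+}$ with $K^0_{x,sx}\subseteq G^0(F)$ normalizing $K_{x,sx;0+}$, one gets
$\Nzeroheart \cap K_{x,sx} = \Nzeroheart \cap K^0_{x,sx}\cdot\bigl(G^0(F)\cap K_{x,sx;0+}\bigr)$, and then the crucial observation --- which is what Axiom~\ref{axiomextensionoftheinductionofkappa}\eqref{axiomextensionoftheinductionofkappakrnelofkappaxy} supplies --- is that $G^0(F)\cap K_{x,sx;0+}$ lies in $\ker\rho^0_x \subseteq K^0_x\subseteq K^0_{x,sx}$, so the extra factor is absorbed and $\Nzeroheart \cap K_{x,sx}=\Nzeroheart \cap K^0_{x,sx}$. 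Combining this with \eqref{NcapKM=NzerocapKmzero} and the hypothesis on $\cK^0$ finishes immediately. Your proof instead computes $\dim_\Coeff\cH(K_{x,sx},\rho_x)$ using the isomorphism $I^{x,sx}_{x}$ of Section~\ref{subsec:relevance} and the Schur-type factorization of the endomorphism algebra of the tensor product $\inf\bigl(\ind_{K^0_x}^{K^0_{x,sx}}\rho^0_x\bigr)\otimes\kappa_{x,sx}$. This is exactly the $\eta^{x,sx}_{x}$ isomorphism that the paper constructs later, in the proof of Lemma~\ref{lemmaquadraticrelationforPhisuptoconstant}, to compare the quadratic-relation parameters; you are essentially pulling that argument forward in time. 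Your approach is valid and illuminating about why the two Hecke algebras have matching structure, but it is more elaborate and requires the heavier parts of Axiom~\ref{axiomextensionoftheinductionofkappa} (items \eqref{axiomextensionoftheinductionofkappakappasxirreducible} and \eqref{axiomextensionoftheinductionofkappacompatibilitywiththecompactind}) where the paper needs only \eqref{axiomextensionoftheinductionofkappaK0xycontainsK0xandy}--\eqref{axiomextensionoftheinductionofkappakrnelofkappaxy}. It is worth noticing that the short proof works precisely because the kernel containment in item \eqref{axiomextensionoftheinductionofkappakrnelofkappaxy} forces $G^0(F)\cap K_{x,sx;0+}\subseteq K^0_{x,sx}$, a purely group-theoretic fact that bypasses all the representation-theoretic machinery you invoke.
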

\begin{proof}
According to Remark~\ref{remarkinclusionsofKxandKxy}, the group $K_{x, sx}$ is a compact, open subgroup of $G(F)$ containing $K_{x}$.
According to Axiom~\ref{axiomextensionoftheinductionofkappa}\eqref{axiomextensionoftheinductionofkappakrnelofkappaxy}, we have 
\[
\begin{split}
\Nzeroheart \cap K_{x, s x} =
\Nzeroheart \cap K^{0}_{x, s x} \cdot K_{x, sx; 0+}  
= \Nzeroheart \cap G^{0}(F) \cap K^{0}_{x, s x} \cdot K_{x, sx; 0+}\\
= \Nzeroheart \cap K^{0}_{x, s x} \cdot \left(
G^{0}(F) \cap K_{x, sx; 0+}
\right) 
= \Nzeroheart \cap K^{0}_{x, s x}.
\end{split}
\]
Combining this with \eqref{NcapKM=NzerocapKmzero}, we obtain that
\[
\bigl(
\Nzeroheart \cap K_{x, s x} 
\bigr)
/
\bigl(
\Nzeroheart \cap K_{M} 
\bigr) \!
= \!
\bigl(
\Nzeroheart \cap K^{0}_{x, s x} 
\bigr)
/ \bigl(
\Nzeroheart \cap K_{M^{0}} 
\bigr) \!
= \! \{1 , s\},\]
where the last equality follows from $\cK^{0}$ satisfying Axiom~\ref{axiomaboutdimensionofend} for the group $K'_{x,s} = K^{0}_{x, s x}$.
\end{proof}

We fix a subset  $\Coeffplus \subset \Coeffinvnontriv$ as in Choice \ref{choiceofCoeffplus}.
According to our assumptions and Proposition~\ref{propositionchoice:twispossible}, we can choose a family 
\[
\cT^{0} = \left\{
T^{0}_{n} \in \Hom_{K_{M^{0}}}\left(
^n\!\rho_{M^{0}}, \rho_{M^{0}}
\right)
\right\}_{n \in \Nzeroheart}
\]
as in Choice~\ref{choice:tw}.
\begin{proposition}
\label{propositionchoiceofcTrelativetocT0}
Assume Axioms \ref{axiomaboutHNheartandK}, \ref{axiombijectionofdoublecoset}, \ref{axiomexistenceofRgrp}, \ref{axiomaboutdimensionofend},  \ref{axiomaboutKM0vsKM}, \ref{axiomaboutK0vsK}, and \ref{axiomextensionoftheinductionofkappa} for the relevant objects as described above.
Then there exists a unique family 
\[
\cT =
\left\{
T_{n} \in \Hom_{K_{M}}\left(
^n\!\rho_{M}, \rho_{M}
\right)
\right\}_{n \in \Nzeroheart}
\]
that satisfies all the properties in Choice~\ref{choice:tw} and the condition that 
\[
T_{n} = T^{0}_{n} \otimes \widetilde\kappa_{M}(n)
\]
for all $n \in \Nzeroheart$ whose projections to $\Wzeroheart$ are contained in
$\Wzeroz$. 
\end{proposition}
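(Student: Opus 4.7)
Uniqueness is immediate from Remark~\ref{remarkaboutchoice:tw}: Conditions~\eqref{conditionofthechoicequadraticrelations} and~\eqref{conditionofthechoicebraidrelations} of Choice~\ref{choice:tw} determine $T_n$ for every lift $n$ of every $w \in \Waff$, the prescribed equality $T_n = T^0_n \otimes \widetilde\kappa_M(n)$ fixes $T_n$ for every lift of every $t \in \Wzero$, and Conditions~\eqref{conditionsofTnkinchoice:triv}, \eqref{conditionsofTnkinchoice:tw}, and~\eqref{conditionofchoiceproductofWzeroandaff} then propagate this data to all of $\Nzeroheart$. So the task reduces to existence.

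For existence, I first verify that Proposition~\ref{propositionchoice:twispossible} applies to $\cK$: Axiom~\ref{axiombijectionofdoublecoset} for $\cK$ is Lemma~\ref{lemmaabouttaxiombijectionofdoublecoset}; Axiom~\ref{axiomaboutdimensionofend} is Lemma~\ref{lemmaaxiomstrongerthanaboutdimensionofendimpliesaxiomaboutdimensionofend}; Axiom~\ref{axiomexistenceofRgrp} transfers verbatim from $\cK^0$ because $\Wheart = \Wzeroheart$ and $W_{\Krel} = W_{\Kzrel}$ (Corollary~\ref{corollaryKrel=K0rel}), so the same normal subgroup $\Waffz$ also serves as $\Waff$. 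In particular $\Wzero = \Wzeroz$, so the prescription $T_n = T^0_n \otimes \widetilde\kappa_M(n)$ unambiguously describes the same set of lifts from either point of view.

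Next I check that the formula $T_n := T^0_n \otimes \widetilde\kappa_M(n)$, for $n \in \Nzeroheart$ with $\bar n \in \Wzero$, defines a legitimate element of $\Hom_{K_M}({}^n\rho_M, \rho_M)$ and satisfies the relevant restrictions of Choice~\ref{choice:tw}\eqref{conditionsofTnkinchoice:triv}--\eqref{conditionsofTnkinchoice:tw}. Membership in the intertwiner space uses $\rho_M \simeq \inf(\rho_{M^0}) \otimes \kappa_M$ together with the extension $\widetilde\kappa_M$ of $\kappa_M$ to $\widetilde K_M = \Nzeroheart \cdot K_M$ (Axiom~\ref{axiomaboutKM0vsKM}\eqref{axiomaboutKM0vsKMaboutanextension}), which guarantees that $\widetilde\kappa_M(n)$ intertwines ${}^n\kappa_M$ with $\kappa_M$. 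Condition~\eqref{conditionsofTnkinchoice:triv} is the identity $T_1 = \id_{\rho_{M^0}} \otimes \id_{\kappa_M} = \id_{\rho_M}$. For Condition~\eqref{conditionsofTnkinchoice:tw}, observe that $k \in \Nheart \cap K_M = \Nzeroheart \cap K_{M^0}$ by~\eqref{NcapKM=NzerocapKmzero}, and combine the analogous property of $\cT^0$ with the multiplicativity $\widetilde\kappa_M(nk) = \widetilde\kappa_M(n) \circ \kappa_M(k)$ to obtain $T_{nk} = T_n \circ \rho_M(k)$.

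Finally, the rest of the family is built exactly as in the proof of Proposition~\ref{propositionchoice:twispossible}: select $T_n$ for each lift of a simple reflection $s \in S_{\Krel}$ so that the quadratic relation~\eqref{conditionofthechoicequadraticrelations} holds (possible by Proposition~\ref{propositionsuperstrongversionofquadraticrelation}), propagate to lifts of arbitrary $w \in \Waff$ via the braid identity~\eqref{conditionofthechoicebraidrelations} (Lemma~\ref{lemmabraidrel} and Corollary~\ref{corollaryofthebraidrelations}), and propagate to an arbitrary lift $nm$ of $tw \in \Wzero \cdot \Waff$ via~\eqref{conditionofchoiceproductofWzeroandaff}. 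The main content of the argument --- the part where the $\Wzero$ and $\Waff$ halves must interact coherently --- is checking that this last step is independent of the chosen decomposition of $tw$; this is precisely the vanishing of the cross-cocycle values supplied by Proposition~\ref{prop:muTtrivial}, which ensures that the specification $T_{nm} = T_n \circ T_m$ is unambiguous and compatible with Conditions~\eqref{conditionsofTnkinchoice:triv}--\eqref{conditionofchoiceproductofWzeroandaff}.
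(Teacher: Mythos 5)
Your proof is correct and follows the paper's route: transfer the needed axioms to $\cK$ via Lemma~\ref{lemmaabouttaxiombijectionofdoublecoset}, Lemma~\ref{lemmaaxiomstrongerthanaboutdimensionofendimpliesaxiomaboutdimensionofend}, and Corollary~\ref{corollaryKrel=K0rel}, invoke Proposition~\ref{propositionchoice:twispossible}, and use Remark~\ref{remarkaboutchoice:tw} to see that the only free choices live over lifts of $\Wzero$-elements, where you substitute $T^0_n \otimes \widetilde\kappa_M(n)$. The one slip is in your final sentence: since $\Wheart = \Wzero \ltimes \Waff$ (Proposition~\ref{propositiondecompositionofW}) the decomposition $tw$ is unique, so there is no ``chosen decomposition'' to worry about, and the genuine well-definedness input is Corollary~\ref{corollaryofthebraidrelations} (independence of reduced expression), which you also cite; Proposition~\ref{prop:muTtrivial} is a consequence of having a $\cT$ as in Choice~\ref{choice:tw}, so citing it as a tool for constructing such a $\cT$ has the dependency backwards, though this does not break the argument.
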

\begin{proof}
According to our assumptions, Lemma~\ref{lemmaabouttaxiombijectionofdoublecoset}, Lemma~\ref{lemmaaxiomstrongerthanaboutdimensionofendimpliesaxiomaboutdimensionofend}, and Proposition~\ref{propositionchoice:twispossible}, we can choose a family $\cT$ to satisfy all the properties in Choice~\ref{choice:tw}. 
Moreover, according to Remark~\ref{remarkaboutchoice:tw}, there exists a unique family $\cT$ satisfying all the properties in Choice~\ref{choice:tw} together with the condition that $T_{n} = T^{0}_{n} \otimes \widetilde\kappa_{M}(n)$ for a set of representatives of lifts $n \in \Nzeroheart$ of $t \in \Wzeroz \smallsetminus \{1\}$.
Since $\widetilde\kappa_{M}$ is an extension of $\kappa_{M}$ and we have $\rho_{M} \simeq \inf\left(
\rho_{M^{0}}
\right) \otimes \kappa_{M}$, we also obtain that $T_{n} = T^{0}_{n} \otimes \widetilde\kappa_{M}(n)$ for all lifts $n \in \Nzeroheart$ of elements of $\Wzeroz$.
\end{proof}

We fix a family $\cT^{0}$ as in Choice~\ref{choice:tw} and let $\cT$ denote the family satisfying the conditions in Proposition~\ref{propositionchoiceofcTrelativetocT0}.
We define non-zero elements 
$
\Phi^{0}_{x, w} \in \End_{G^{0}(F)}\left(
\ind_{K^{0}_{x}}^{G^{0}(F)} (\rho^{0}_{x})
\right)
$
and
$\Phi_{x, w} \in \End_{G(F)} \left(
\ind_{K_{x}}^{G(F)} (\rho_{x})
\right)
$
for $x \in \cA_{\gen}$ and $w \in \Wzeroheart$ as in Definition~\ref{definitionPhixw}, and let $\varphi^{0}_{x, w}$, resp., $\varphi_{x, w}$, denote the element of $\cH(G^{0}(F), \rho^{0}_{x})$, resp., $\cH(G(F), \rho_{x})$, that corresponds to $\Phi^{0}_{x, w}$, resp., $\Phi_{x, w}$, via the isomorphism in \eqref{heckevsend}.
We write $\varphi^{0}_{w} \coloneqq \varphi^{0}_{x_{0}, w}$,\index{notation-ax}{phi0w@$\varphi^0_w$} $\varphi_{w} \coloneqq \varphi_{x_{0}, w}$, $\Phi^{0}_{w} \coloneqq \Phi^{0}_{x_{0}, w}$\index{notation-ax}{Phi0w@$\Phi^0_w$}, and $\Phi_{w} \coloneqq \Phi_{x_{0}, w}$.
\label{phi0w-page}

According to Theorem~\ref{theoremstructureofhecke}, the map
\[
\varphi^{0}_{tw} \mapsto \gpalg_{t} \cdot \mathbb{T}_{w}
\qquad (t \in \Wzeroz , w \in \Waffz)
\]
defines an isomorphism of $\Coeff$-algebras
\[
\cI(\rho^{0}_{x_{0}}) \colon  \cH(G^{0}(F), \rho^{0}_{x_{0}})
\isoarrow
\Coeff[\Wzeroz , \muTzero] \ltimes \cH_\Coeff(\Waffz , q^{0}),
\]
and the map
\[
\varphi_{tw} \mapsto \gpalg_{t} \cdot \mathbb{T}_{w} \, (t \in \Wzeroz , w \in \Waffz )
\]
defines an isomorphism of $\Coeff$-algebras
\[
\cI(\rho_{x_{0}}) \colon \cH(G(F), \rho_{x_{0}})
\isoarrow
\Coeff[\Wzeroz , \muT] \ltimes \cH_\Coeff(\Waffz , q),
\]
where $\muTzero$ and $\muT$ denote the restrictions to $\Wzeroz \times \Wzeroz$ of the $2$-cocycles introduced in Notation~\ref{notationofthetwococycle},
and 
$q^{0}$ and $q$ denote the parameter functions
$s \mapsto q^0_s$ and $s \mapsto q_s$
from $S_{\Kzrel}$ to $\Coeffplus$ such that
the elements $\Phi^{0}_{s}$ and $\Phi_{s}$ satisfy the quadratic relations
\begin{equation} \label{equationtwoquadraticrelations}
\left(
\Phi^{0}_{s}
\right)^{2} = (q^{0}_{s} - 1) \cdot \Phi^{0}_{s} + q^{0}_{s} \cdot \Phi^{0}_{1}
\quad 
\text{ and }
\quad
\left(
\Phi_{s}
\right)^{2} = (q_{s} - 1) \cdot \Phi_{s} + q_{s} \cdot \Phi_{1} .
\end{equation}

Thus, in order to prove that the two Hecke algebras $\cH(G^{0}(F), \rho^{0}_{x_{0}})$ and $\cH(G(F), \rho_{x_{0}})$ are isomorphic, we will prove that $\muTzero = \muT$ and $q^{0} = q$.
\begin{lemma}
\label{lemmamuzero=mu}
We have $\muTzero = \muT$ on $\Wzeroz \times \Wzeroz$.
\end{lemma}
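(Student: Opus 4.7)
The plan is to reduce the claim directly to the defining compatibility $T_n = T^0_n \otimes \widetilde\kappa_M(n)$ supplied by Proposition~\ref{propositionchoiceofcTrelativetocT0}, by exploiting that $\widetilde\kappa_M$ is a genuine (not merely projective) representation of $\widetilde K_M = \Nzeroheart \cdot K_M$.

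First, I would fix elements $v, w \in \Wzeroz$ and pick lifts $m, n \in \Nzeroheart$ of $v$ and $w$, respectively. Since $\Wzeroz$ is a subgroup of $\Wzeroheart$, the product $mn \in \Nzeroheart$ projects to $vw \in \Wzeroz$, so $m$, $n$, and $mn$ all satisfy the hypothesis of Proposition~\ref{propositionchoiceofcTrelativetocT0}. Consequently, all three of $T_m$, $T_n$, $T_{mn}$ admit the tensor decomposition
\[
T_m = T^0_m \otimes \widetilde\kappa_M(m), \quad T_n = T^0_n \otimes \widetilde\kappa_M(n), \quad T_{mn} = T^0_{mn} \otimes \widetilde\kappa_M(mn),
\]
acting on $V_{\rho_M} \simeq V_{\rho_{M^0}} \otimes V_{\kappa_M}$ (using Axiom~\ref{axiomaboutKM0vsKM}\eqref{axiomaboutKM0vsKMaboutanextension}).

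Next, I would compute $T_m \circ T_n$ factor by factor. On the first tensor factor the composition gives $T^0_m \circ T^0_n = \muTzero(v, w) \cdot T^0_{mn}$ by the definition of the cocycle $\muTzero$ in Notation~\ref{notationofthetwococycle}. On the second tensor factor, because $\widetilde\kappa_M$ is an honest representation of $\widetilde K_M$, we have $\widetilde\kappa_M(m) \circ \widetilde\kappa_M(n) = \widetilde\kappa_M(mn)$ \emph{with no scalar}. Multiplying the two factors yields
\[
T_m \circ T_n = \muTzero(v, w) \cdot \bigl(T^0_{mn} \otimes \widetilde\kappa_M(mn)\bigr) = \muTzero(v, w) \cdot T_{mn}.
\]

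Finally, comparing with the defining relation $T_m \circ T_n = \muT(v, w) \cdot T_{mn}$ from Notation~\ref{notationofthetwococycle} applied to the family $\cT$, and using that $T_{mn} \neq 0$, we conclude $\muT(v, w) = \muTzero(v, w)$. There is no real obstacle here; the only point to be careful about is that the identity $T_n = T^0_n \otimes \widetilde\kappa_M(n)$ is only asserted for lifts of elements of $\Wzeroz$, which is precisely why the statement of the lemma is restricted to $\Wzeroz \times \Wzeroz$.
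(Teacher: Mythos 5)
Your proposal is correct and follows essentially the same approach as the paper's proof: fix lifts, use the tensor decomposition $T_n = T^0_n \otimes \widetilde\kappa_M(n)$ from Proposition~\ref{propositionchoiceofcTrelativetocT0}, and exploit that $\widetilde\kappa_M$ is a genuine representation so that the second tensor factor contributes no scalar. The only expository addition is your explicit remark that $\Wzeroz$ being a subgroup guarantees $mn$ also admits the decomposition, which the paper leaves implicit.
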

\begin{proof}
Let $v, w \in \Wzeroz$.
We fix lifts $m$ of $v$ and $n$ of $w$ in $\Nzeroheart$.
Since $\widetilde\kappa_{M}$ is a representation, we have 
\(
\widetilde\kappa_{M}(mn) = \widetilde\kappa_{M}(m) \circ \widetilde\kappa_{M}(n).
\)
Hence, our choice of $\cT$ implies that
\[
\begin{split}
\muT (v, w) \cdot T_{mn} &= T_{m} \circ T_{n} 
= \left(
T^{0}_{m} \circ T^{0}_{n}
\right) \otimes \left(
\widetilde\kappa_{M}(m) \circ \widetilde\kappa_{M}(n)
\right) \\
&= \left(
T^{0}_{m} \circ T^{0}_{n}
\right) \otimes \widetilde\kappa_{M}(mn) 
= \muTzero (v, w) \cdot T^{0}_{mn} \otimes \widetilde\kappa_{M}(mn) 
= \muTzero (v, w) \cdot T_{mn}.
\qedhere
\end{split}
\]
\end{proof}
\begin{lemma}
\label{lemmaquadraticrelationforPhisuptoconstant}
We have $q^{0}_{s} = q_{s}$ for all $s \in S_{\Kzrel}$.
\end{lemma}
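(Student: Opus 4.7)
Since $q_s$ does not depend on the choice of $x \in \cA_\gen$ with $\mathfrak{H}_{\Krel; x, sx} = \{H_s\}$ (Proposition \ref{propositionsuperstrongversionofquadraticrelation}), and likewise for $q^0_s$, I may assume $\mathfrak{H}_{x_0, sx_0} = \{H_s\}$, so that $d(x_0, sx_0) = 1$ and Axiom \ref{axiomextensionoftheinductionofkappa} applies with $y = sx_0$, producing the compact, open subgroup $K_{x_0, sx_0} = K^0_{x_0, sx_0} \cdot K_{x_0, sx_0; 0+}$ and the irreducible representation $\kappa_{x_0, sx_0}$. The isomorphism
$I \coloneqq I^{x_0, sx_0}_{x_0} \colon \ind_{K_{x_0}}^{K_{x_0, sx_0}} \rho_{x_0} \isoarrow \inf\bigl(\ind_{K^0_{x_0}}^{K^0_{x_0, sx_0}} \rho^0_{x_0}\bigr) \otimes \kappa_{x_0, sx_0}$
from the proof of Lemma \ref{lemmacompativilityofThetadepth0vspositivedepth}, together with the irreducibility of $\kappa_{x_0, sx_0}|_{K_{x_0, sx_0; 0+}}$ (Axiom \ref{axiomextensionoftheinductionofkappa}\eqref{axiomextensionoftheinductionofkappakappasxirreducible}), yields via Schur's lemma an algebra isomorphism
$J \colon \End_{K_{x_0, sx_0}}\bigl(\ind_{K_{x_0}}^{K_{x_0, sx_0}} \rho_{x_0}\bigr) \isoarrow \End_{K^0_{x_0, sx_0}}\bigl(\ind_{K^0_{x_0}}^{K^0_{x_0, sx_0}} \rho^0_{x_0}\bigr)$
characterized by $I \circ \Phi \circ I^{-1} = J(\Phi) \otimes \id_{\kappa_{x_0, sx_0}}$. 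By Lemma \ref{lemmadimensionoftheendomorphismalgebraKxsH} (applicable via Lemma \ref{lemmaaxiomstrongerthanaboutdimensionofendimpliesaxiomaboutdimensionofend}), both sides are two-dimensional, with bases $\{\Phi_1, \Phi_s\}$ and $\{\Phi^0_1, \Phi^0_s\}$; since $J$ sends the identity to the identity, $J(\Phi_1) = \Phi^0_1$.

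The key calculation is to establish $J(\Phi_s) = \mu' \cdot \Phi^0_s$ for some $\mu' \in \Coeff^\times$. Decompose $\Phi_s = \ciso{sx_0}{s} \circ \Theta^{\normal}_{sx_0 \mid x_0}$; Lemma \ref{lemmacompativilityofThetadepth0vspositivedepth}, together with Definition \ref{definitionnormalize}, gives $I^{x_0, sx_0}_{sx_0} \circ \Theta^{\normal}_{sx_0 \mid x_0} \circ I^{-1} = C_1 \cdot \Theta^{0, \normal}_{sx_0 \mid x_0} \otimes \id$ for some $C_1 \in \Coeff^\times$. For the $\ciso$-factor, the one-dimensionality of both $\Hom_{K_{M^0}}({}^{\tilde s}\rho_{M^0}, \rho_{M^0})$ and $\Hom_{K_M}({}^{\tilde s}\kappa_M, \kappa_M)$ (a non-zero element of the latter being $\widetilde\kappa_M(\tilde s)$, which intertwines because $\widetilde\kappa_M$ is a representation extending $\kappa_M$) forces $T_{\tilde s} = \mu \cdot T^0_{\tilde s} \otimes \widetilde\kappa_M(\tilde s)$ for some $\mu \in \Coeff^\times$. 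A Schur-type argument parallel to the proof of Lemma \ref{lemmacompativilityofThetadepth0vspositivedepth} then yields $I \circ \ciso{sx_0}{s} \circ (I^{x_0, sx_0}_{sx_0})^{-1} = C_2 \cdot c^{0}_{sx_0, s} \otimes \id$ for some $C_2 \in \Coeff^\times$. Composing, $J(\Phi_s) = C_1 C_2 \cdot \Phi^0_s$, so I set $\mu' \coloneqq C_1 C_2$.

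Applying $J$ to the relation $\Phi_s^2 = (q_s - 1)\Phi_s + q_s \Phi_1$ and using $(\Phi^0_s)^2 = (q^0_s - 1)\Phi^0_s + q^0_s \Phi^0_1$, a comparison of coefficients of $\Phi^0_s$ and $\Phi^0_1$ produces $\mu'(q^0_s - 1) = q_s - 1$ and $(\mu')^2 q^0_s = q_s$. Eliminating $q_s$ yields $q^0_s (\mu')^2 - (q^0_s - 1)\mu' - 1 = (q^0_s \mu' + 1)(\mu' - 1) = 0$, so either $\mu' = 1$ (giving $q_s = q^0_s$) or $\mu' = -(q^0_s)^{-1}$ (giving $q_s = (q^0_s)^{-1}$). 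Since both $q_s$ and $q^0_s$ lie in $\Coeffplus$, and Choice \ref{choiceofCoeffplus} enforces $|\{q, q^{-1}\} \cap \Coeffplus| = 1$, in either case we conclude $q_s = q^0_s$.

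The main obstacle is the tensor-decomposition for $\ciso{sx_0}{s}$, namely establishing $I \circ \ciso{sx_0}{s} \circ (I^{x_0, sx_0}_{sx_0})^{-1} = C_2 \cdot c^{0}_{sx_0, s} \otimes \id_{\kappa_{x_0, sx_0}}$. This requires carefully tracking how the embeddings $V_{\kappa_{x_0}} \hookrightarrow V_{\kappa_{x_0, sx_0}}$ and $V_{\kappa_{sx_0}} \hookrightarrow V_{\kappa_{x_0, sx_0}}$ provided by Axiom \ref{axiomextensionoftheinductionofkappa}\eqref{axiomextensionoftheinductionofkappacompatibilitywiththecompactind} are matched by $\widetilde\kappa_M(\tilde s)$, so that the $\widetilde\kappa_M(\tilde s)$-factor of $T_{\tilde s}$ becomes, after conjugation by $I$ and $I^{x_0, sx_0}_{sx_0}$, a scalar multiple of $\id_{\kappa_{x_0, sx_0}}$.
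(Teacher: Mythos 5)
Your proof is in the right spirit but takes a genuinely different route from the paper's, and you correctly flag the unproven step; let me make the comparison precise. The paper does \emph{not} factor $\Phi_{x,s}$ into $\ciso{s^{-1}x}{s}\circ\Theta^{\normal}_{s^{-1}x\mid x}$ and conjugate each factor separately. Instead, it shows directly that $(\eta^{x,sx}_x)^{-1}(\Phi_{x,s})$ has support contained in $K^0_x s K^0_x$, and since by Proposition~\ref{propositionvectorspacedecomposition} and Lemma~\ref{supportofphiixw} the space of Hecke elements with that support is one-dimensional and spanned by $\Phi^0_{x,s}$, the desired relation $\eta^{x,sx}_x(\Phi^0_{x,s}) = c\cdot\Phi_{x,s}$ falls out at once. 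This is a single support argument (in the same style as the proof of Lemma~\ref{lemmacompativilityofThetadepth0vspositivedepth}) applied to the whole operator, and it entirely bypasses the difficulty you identify at the end, namely, checking that the $\widetilde\kappa_M(\tilde s)$-factor of $T_{\tilde s}$ turns into a scalar multiple of $\id_{V_{\kappa_{x,sx}}}$ under the two isomorphisms $I^{x,sx}_{x}$ and $I^{x,sx}_{sx}$. That compatibility is exactly what the paper avoids having to prove. Your factor-by-factor approach would require carrying out this step (a support argument on the $\ciso$-factor alone together with the compatibility of the embeddings $V_{\kappa_x}\hookrightarrow V_{\kappa_{x,sx}}$ and $V_{\kappa_{sx}}\hookrightarrow V_{\kappa_{x,sx}}$ via $\widetilde\kappa_M(\tilde s)$), and as written you have gestured at this rather than completed it, so there is a real gap, though one that I believe can be filled. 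Two smaller points. First, you write ``I may assume $\mathfrak{H}_{x_0, sx_0}=\{H_s\}$'', but $x_0$ is fixed and Proposition~\ref{propositionsuperstrongversionofquadraticrelation} gives independence of the \emph{weaker} condition $\mathfrak{H}_{\Krel;x,sx}=\{H_s\}$; what you want (and what the paper does) is to replace $x_0$ by some other $x\in\cA_{\gen}$ satisfying the \emph{stronger} condition $\mathfrak{H}_{x,sx}=\{H_s\}$, which always exists, and then transfer back. Second, your explicit solve of $(q^0_s\mu'+1)(\mu'-1)=0$ followed by the $\Coeffplus$-dichotomy is correct but unnecessary: once one has $(c\cdot\Phi_{x,s})^2 = (q^0_s-1)(c\cdot\Phi_{x,s}) + q^0_s\Phi_{x,1}$, the uniqueness clause of Proposition~\ref{propositionsuperstrongversionofquadraticrelation} (the pair $(d_{s_H},q_{s_H})$ is unique with $q_{s_H}\in\Coeffplus$) immediately forces $q_s=q^0_s$, which is how the paper concludes.
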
 
\begin{proof}
\addtocounter{equation}{-1}
\begin{subequations}
Let $x \in \cA_{x_{0}}$ such that $\mathfrak{H}_{x, s x} = \{ H_{s} \}$.
Since $\mathfrak{H}_{\Krel; x_0, s x_0} = \{ H_s \}$, by Proposition~\ref{propositionsuperstrongversionofquadraticrelation} and Equation \eqref{equationtwoquadraticrelations} we have 
\begin{equation}
\label{quadraticrelationforPhi0xsandPhixs}
\left(
\Phi^{0}_{x, s}
\right)^{2} = (q^{0}_{s} - 1) \cdot \Phi^{0}_{x, s} + q^{0}_{s} \cdot \Phi^{0}_{x, 1}
\quad 
\text{ and }
\quad
\left(
\Phi_{x, s}
\right)^{2} = (q_{s} - 1) \cdot \Phi_{x, s} + q_{s} \cdot \Phi_{x, 1}.
\end{equation}
Hence, using Proposition~\ref{propositionsuperstrongversionofquadraticrelation}, the lemma follows once we show that  there exists $c \in \Coeff^{\times}$ such that
\begin{align}
\label{anotherquadraticrelationforPhixs}
\left(
c \cdot \Phi_{x, s}
\right)^{2} = (q^{0}_{s} - 1) \cdot \left(
c \cdot  \Phi_{x, s}
\right) + q^{0}_{s} \cdot \Phi_{x, 1}.
\end{align}
Recall that we have an isomorphism
\[
I^{x, s x}_{x}
\colon
\ind_{K_{x}}^{K_{x, s x}} (\rho_{x})
\isoarrow
\inf \bigl(
\ind_{K^{0}_{x}}^{K^{0}_{x, s x}} (\rho^{0}_{x})
\bigr) \otimes \kappa_{x, s x}. 
\]
Since the representation
$\inf \bigl(
\ind_{K^{0}_{x}}^{K^{0}_{x, s x}} (\rho^{0}_{x})
\bigr)$
is trivial on $K_{x, sx; 0+}$, and the restriction of the representation $\kappa_{x, s x}$ to $K_{x, sx; 0+}$ is irreducible,
for any 
$
\Phi \in \End_{K_{x, s x}} \Bigl(
\inf \bigl(
\ind_{K^{0}_{x}}^{K^{0}_{x, s x}} (\rho^{0}_{x})
\bigr) \otimes \kappa_{x, s x}
\Bigr)
$,
there exists a unique
$
\Phi' \in \End_{K^{0}_{x, s x}}\bigl(
\ind_{K^{0}_{x}}^{K^{0}_{x, s x}} (\rho^{0}_{x})
\bigr)
$
such that 
$
\Phi = 
\Phi'
 \otimes \id_{V_{\kappa_{x, s x}}}
$.
Thus, we have an isomorphism of $\Coeff$-algebras 
\[
\eta^{x, s x}_{x} \colon 
\End_{K^{0}_{x, s x}}\bigl(
\ind_{K^{0}_{x}}^{K^{0}_{x, s x}} (\rho^{0}_{x})
\bigr)
\isoarrow
\End_{K_{x, s x}}\bigl(
\ind_{K_{x}}^{K_{x, s x}} (\rho_{x})
\bigr)
\]
defined by
\[
\eta^{x, s x}_{x} \left(
\Phi'
\right)
= (I^{x, s x}_{x})^{-1} \circ \bigl(
\Phi'
 \otimes \id_{V_{\kappa_{x, s x}}}
\bigr)
\circ I^{x, s x}_{x}.
\]

Since we assume that the family $\cK^{0}$ satisfies Axiom~\ref{axiomaboutdimensionofend} with the group $K^{0}_{x, s x}$, we have a lift of $s$ in $\nobreak K^{0}_{x, sx}$, which we also denote by $s$. 
Combining this with Axiom \ref{axiomextensionoftheinductionofkappa}\eqref{axiomextensionoftheinductionofkappaK0xycontainsK0xandy} and Remark~\ref{remarkinclusionsofKxandKxy}, we have $K^{0}_{x} s K^{0}_{x} \subset K^{0}_{x, s x}$ and $K_{x} s K_{x} \subset \nobreak K_{x, sx}$.
Hence, according to Lemma~\ref{lemmavarphisupprtedonksKequivalenttoPhirhoinKsK}
and
Lemma~\ref{supportofphiixw},
we have
\[
\Phi^{0}_{x, s} \in \End_{G^{0}(F)}\bigl(
\ind_{K^{0}_{x}}^{G^{0}(F)} (\rho^{0}_{x})
\bigr)_{K^{0}_{x, s x}}
\simeq 
\End_{K^{0}_{x, s x}}\bigl(
\ind_{K^{0}_{x}}^{K^{0}_{x, s x}} (\rho^{0}_{x})
\bigr)
\]
and
\[
\Phi_{x, s} \in \End_{G(F)}\bigl(
\ind_{K_{x}}^{G(F)} (\rho_{x})
\bigr)_{K_{x, s x}}
\simeq
\End_{K_{x, s x}}\bigl(
\ind_{K_{x}}^{K_{x, s x}} (\rho_{x})
\bigr).
\]
We regard $\Phi^{0}_{x, s}$ and $\Phi_{x, s}$ as elements of the latter spaces and claim that there exists $c \in \Coeff^{\times}$ such that
\[
\eta^{x, s x}_{x} \left(
\Phi^{0}_{x, s}
\right) = c \cdot \Phi_{x, s}.
\]
Since $\eta^{x, s x}_{x}$ is an algebra isomorphism, the first equation of \eqref{quadraticrelationforPhi0xsandPhixs} would then imply Equation \eqref{anotherquadraticrelationforPhixs}  
as desired. 

It remains to prove the claim.
The proof
is essentially the same as the proof of Lemma~\ref{lemmacompativilityofThetadepth0vspositivedepth}.
According to Proposition~\ref{propositionvectorspacedecomposition} and Lemma~\ref{supportofphiixw}, it suffices to show that $\supp\left( (\eta^{x, s x}_{x})^{-1} \left(\Phi_{x, s}\right)\right) \subset K^{0}_{x} s K^{0}_{x}$.
As explained in the proof of Lemma~\ref{lemmacompativilityofThetadepth0vspositivedepth}, we have
$
(I^{x, s x}_{x})^{-1}\left(
V_{\rho^{0}_{x}} \otimes V_{\kappa_{x}}
\right) = V_{\rho_{x}}
$,
where we regard $\kappa_{x}$ as a $K_{x}$-subrepresentation of $\kappa_{x, sx}$ via
$
\kappa_{x} \rightarrow \ind_{K_{x}}^{K^{0}_{x} \cdot K_{x, sx; 0+}} (\kappa_{x}) \restriction_{K_{x}} \simeq \kappa_{x, sx} \restriction_{K_{x}}
$.
Moreover, since $\supp \left(\Phi_{x, s} \right) = K_{x} s K_{x}$, Lemma~\ref{lemmavarphisupprtedonksKequivalenttoPhirhoinKsK} implies that
\begin{equation}
\label{equation:PhicircIinverseofVrho0otimesVkappa}
\left(
\Phi_{x, s} \circ (I^{x, s x}_{x})^{-1} 
\right)
\left(
V_{\rho^{0}_{x}} \otimes V_{\kappa_{x}}
\right) =
\Phi_{x, s} \left(
V_{\rho_{x}}
\right) \subset \ind_{K_{x}}^{K_{x} s K_{x}} \left(
V_{\rho_{x}}
\right).
\end{equation}
Since we have $s \in K^{0}_{x, sx}$ and $K^{0}_{x} \subset K^{0}_{x, sx}$, we obtain from Axiom~\ref{axiomextensionoftheinductionofkappa}\eqref{axiomextensionoftheinductionofkappaJxyvsJxandJy} that the element $s$ and the group $K^{0}_{x}$ normalize the group $K_{x, sx; 0+}$, and hence
\[
  K_x s K_x \subseteq K^{0}_{x} K_{x, sx; 0+} s K^{0}_{x} K_{x, sx; 0+} = K^{0}_{x} s K^{0}_{x} K_{x, sx; 0+}
\]
by Remark \ref{remarkinclusionsofKxandKxy}.
Using the definition of $I^{x, s x}_{x}$, this allows us to deduce
\( 
I^{x, s x}_{x} \left(
\ind_{K_{x}}^{K_{x} s K_{x}} \left(
V_{\rho_{x}}
\right) 
\right)
\subset \ind_{K^{0}_{x}}^{K^{0}_{x} s K^{0}_{x}} \left(
V_{\rho^{0}_{x}}
\right) \otimes V_{\kappa_{x, y}} \). 
Precomposing with \eqref{equation:PhicircIinverseofVrho0otimesVkappa}, we have
$
\left(
I^{x, s x}_{x} \circ \Phi_{x, s} \circ (I^{x, s x}_{x})^{-1} 
\right)
\left(
V_{\rho^{0}_{x}} \otimes V_{\kappa_{x}}
\right) \subset \ind_{K^{0}_{x}}^{K^{0}_{x} s K^{0}_{x}} \left(
V_{\rho^{0}_{x}}
\right) \otimes V_{\kappa_{x, y}}
$.
Thus, we obtain that
$
\left((\eta^{x, s x}_{x})^{-1} \left(\Phi_{x, s}\right)\right)\left(
V_{\rho^{0}_{x}}
\right) \subset \ind_{K^{0}_{x}}^{K^{0}_{x} s K^{0}_{x}} \left(
V_{\rho^{0}_{x}}
\right)
$.
Using Lemma~\ref{lemmavarphisupprtedonksKequivalenttoPhirhoinKsK} we conclude that $\supp \left(
(\eta^{x, s x}_{x})^{-1} \left(\Phi_{x, s}\right)
\right) \subset K^{0}_{x} s K^{0}_{x}$.
\end{subequations}
\end{proof}

\begin{theorem}
\label{thm:isomorphismtodepthzero}
We assume the following:
\begin{enumerate}[(1)]
\item
 The family $\cK^{0}$ satisfies Axioms~\ref{axiomaboutHNheartandK} and \ref{axiombijectionofdoublecoset} with the subgroup $\Nzeroheart$ of $N(\rho_{M^{0}})_{[x_{0}]_{M^{0}}}$.
 \item 
 The family $\cK$ satisfies Axiom~\ref{axiomaboutHNheartandK} with the subgroup $\Nzeroheart$ of $N(\rho_{M})_{[x_{0}]_{M}}$.
 \item
 The group $\Wzeroheart$ satisfies Axiom~\ref{axiomexistenceofRgrp} with a normal subgroup $\Waffz$ of $\Wzeroheart$.
\item
The family $\cK^{0}$ satisfies Axiom~\ref{axiomaboutdimensionofend} with the group $K'_{x,s}=K^{0}_{x, s x}$ for each $s \in S_{\Kzrel}$ and $x \in \cA_{\gen}$ such that $\mathfrak{H}_{x, s x} = \{ H_{s} \}$.
 \end{enumerate}
 We also assume Axioms~\ref{axiomaboutKM0vsKM}, \ref{axiomaboutK0vsK}, and \ref{axiomextensionoftheinductionofkappa}.
 We fix a subset  $\Coeffplus \subset \Coeffinvnontriv$ as in Choice \ref{choiceofCoeffplus},
choose a family $\cT^{0}$ as in Choice~\ref{choice:tw} and let $\cT$ denote the family satisfying the conditions in Proposition~\ref{propositionchoiceofcTrelativetocT0}.
Then 
the map 
\[\Isom \coloneqq
\left(
\cI(\rho_{x_{0}})
\right)^{-1} \circ 
\cI(\rho^{0}_{x_{0}})
\colon
\cH(G^{0}(F), \rho^{0}_{x_{0}})
\longrightarrow
\cH(G(F), \rho_{x_{0}})
\]
is a support-preserving algebra isomorphism.
\end{theorem}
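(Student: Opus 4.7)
The plan is to assemble the theorem from the structural results already in place, once one observes that the isomorphisms $\cI(\rho^0_{x_0})$ and $\cI(\rho_{x_0})$ target \emph{literally the same} $\Coeff$-algebra. First I would record that Lemma~\ref{lemmaabouttaxiombijectionofdoublecoset} gives Axiom~\ref{axiombijectionofdoublecoset} for $\cK$, Equation~\eqref{NcapKM=NzerocapKmzero} gives $\Wheart = \Wzeroheart$, Corollary~\ref{corollaryKrel=K0rel} gives $W_{\Krel} = W_{\Kzrel}$ and hence $S_{\Krel} = S_{\Kzrel}$ and $\Waff = \Waffz$, and Lemma~\ref{lemmaaxiomstrongerthanaboutdimensionofendimpliesaxiomaboutdimensionofend} gives Axiom~\ref{axiomaboutdimensionofend} for $\cK$ with the groups $K_{x,sx}$. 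Thus both pairs $(K^{0}_{x_{0}}, \rho^{0}_{x_{0}})$ and $(K_{x_{0}}, \rho_{x_{0}})$ satisfy all four axioms needed to apply Theorem~\ref{theoremstructureofhecke}, yielding $\Coeff$-algebra isomorphisms
\[
\cI(\rho^{0}_{x_{0}}) \colon \cH(G^{0}(F), \rho^{0}_{x_{0}}) \isoarrow \Coeff[\Wzeroz, \muTzero] \ltimes \cH_\Coeff(\Waffz, q^{0})
\]
and
\[
\cI(\rho_{x_{0}}) \colon \cH(G(F), \rho_{x_{0}}) \isoarrow \Coeff[\Wzero, \muT] \ltimes \cH_\Coeff(\Waff, q).
\]

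Next I would invoke Lemma~\ref{lemmamuzero=mu}, which uses the compatibility $T_n = T^{0}_n \otimes \widetilde\kappa_M(n)$ enforced by Proposition~\ref{propositionchoiceofcTrelativetocT0}, to conclude $\muT = \muTzero$ on $\Wzero \times \Wzero$; and Lemma~\ref{lemmaquadraticrelationforPhisuptoconstant} to conclude $q_s = q^{0}_s$ for every $s \in S_{\Krel} = S_{\Kzrel}$. Combined with the identifications of the underlying groups above, this says the two targets are the same $\Coeff$-algebra
\(
\Coeff[\Wzero, \muT] \ltimes \cH_\Coeff(\Waff, q),
\)
with the \emph{same} distinguished basis $\{\gpalg_t \cdot \mathbb{T}_w\}$. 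Consequently $\Isom = \cI(\rho_{x_{0}})^{-1} \circ \cI(\rho^{0}_{x_{0}})$ is a composition of two $\Coeff$-algebra isomorphisms and is therefore a $\Coeff$-algebra isomorphism.

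Finally, to verify support-preservation, I would unwind the definitions on basis elements: for $t \in \Wzero$ and $w \in \Waff$,
\[
\Isom(\varphi^{0}_{tw}) = \cI(\rho_{x_{0}})^{-1}(\gpalg_t \cdot \mathbb{T}_w) = \varphi_{tw}.
\]
By Lemma~\ref{supportofphiixw} we have $\supp(\varphi^{0}_{tw}) = K^{0}_{x_{0}}(tw)K^{0}_{x_{0}}$ and $\supp(\varphi_{tw}) = K_{x_{0}}(tw)K_{x_{0}}$, which correspond to the same double coset under the bijection of Corollary~\ref{corollarybijectiondoublecosetandweylgroup} applied on each side. Since Proposition~\ref{propositionvectorspacedecomposition} ensures the spaces $\cH(G^{0}(F), \rho^{0}_{x_{0}})_{tw}$ and $\cH(G(F), \rho_{x_{0}})_{tw}$ are one-dimensional, spanned by $\varphi^{0}_{tw}$ and $\varphi_{tw}$ respectively, any function supported on a single double coset maps to a scalar multiple of the corresponding basis element on the other side, which is exactly support-preservation.

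Because nearly every genuinely substantive step has been handled in the preceding lemmas, the proof is essentially a matter of bookkeeping. The only real obstacle was already the identification $\muT = \muTzero$ and $q = q^{0}$; that obstacle was overcome by the careful choice of $\cT$ in Proposition~\ref{propositionchoiceofcTrelativetocT0}, which propagates the extension $\widetilde\kappa_M$ from Axiom~\ref{axiomaboutKM0vsKM}\eqref{axiomaboutKM0vsKMaboutanextension} through the definition of the intertwining operators and thereby forces the cocycles and parameters on the two sides to coincide.
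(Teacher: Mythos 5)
Your proposal is correct and takes essentially the same approach as the paper: the paper's proof is the one-line ``Combine Theorem~\ref{theoremstructureofhecke} with Lemma~\ref{lemmamuzero=mu} and Lemma~\ref{lemmaquadraticrelationforPhisuptoconstant}'', with all the preparatory identifications ($\Wheart=\Wzeroheart$, $\Waff=\Waffz$, Axioms~\ref{axiombijectionofdoublecoset} and \ref{axiomaboutdimensionofend} for $\cK$, etc.) established in the text preceding the theorem statement, exactly as you recount them. Your unwinding of the support-preservation claim via $\Isom(\varphi^{0}_{tw})=\varphi_{tw}$ and Lemma~\ref{supportofphiixw} matches the explicit description the paper subsequently gives in Theorem~\ref{thm:explicitisom}.
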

\begin{proof}[Proof of Theorem \ref{thm:isomorphismtodepthzero}]
	Combine Theorem~\ref{theoremstructureofhecke} with Lemma~\ref{lemmamuzero=mu} and Lemma~\ref{lemmaquadraticrelationforPhisuptoconstant}.
\end{proof}
We will provide a more explicit description of $\Isom$ in Theorem \ref{thm:explicitisom} below.
In order to do so, we keep the set-up from Theorem \ref{thm:isomorphismtodepthzero} and describe $\cT$ more explicitly.

\begin{lemma} \label{lemma:epsilon} There exists a unique quadratic character $\epsilon:\Nzeroheart \rightarrow \{ \pm1 \}$ that factors through $\Nzeroheart \twoheadrightarrow \Wzeroheart$ and is trivial on $\Wzeroz$ such that 
	\[
	T_{n} = \epsilon(n) \cdot  T^{0}_{n} \otimes \widetilde\kappa_{M}(n) 
	\quad
	\text{ for every }
	\quad n \in \Nzeroheart .
	\]
\end{lemma}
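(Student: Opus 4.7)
The plan is to define $\epsilon(n)$ as the unique scalar in $\Coeff^\times$ for which $T_n = \epsilon(n) \cdot T^0_n \otimes \widetilde\kappa_M(n)$. Both $T_n$ and $T^0_n \otimes \widetilde\kappa_M(n)$ lie in the one-dimensional space $\Hom_{K_M}({^n\rho_M}, \rho_M)$, so existence and uniqueness of $\epsilon(n)$ are immediate. First I would verify that $\epsilon$ factors through $\Wzeroheart = \Nzeroheart/(\Nzeroheart \cap K_{M^0})$: for $k \in \Nzeroheart \cap K_{M^0} = \Nzeroheart \cap K_M$ (using Remark~\ref{remarkaboutintersectionwithdepthzero}), conditions (1) and (2) of Choice~\ref{choiceofT} applied to both $\cT$ and $\cT^0$ yield $T_k = \rho_M(k)$ and $T^0_k = \rho_{M^0}(k)$, while $\widetilde\kappa_M(k) = \kappa_M(k)$; combined with $\rho_M \simeq \inf(\rho_{M^0}) \otimes \kappa_M$, this forces $\epsilon(k) = 1$, and a similar computation using condition (2) of Choice~\ref{choiceofT} gives $\epsilon(nk) = \epsilon(n)$ for all such $k$. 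The triviality of $\epsilon$ on $\Wzeroz$ is built directly into the construction of $\cT$ in Proposition~\ref{propositionchoiceofcTrelativetocT0}.

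Next I would establish multiplicativity. For $m, n \in \Nzeroheart$, composing $T_m \circ T_n$ and $T^0_m \circ T^0_n$, using that $\widetilde\kappa_M$ is a representation, and comparing in the one-dimensional space $\Hom_{K_M}({^{mn}\rho_M}, \rho_M)$ yields
\[
\muT(\bar m, \bar n) \cdot \epsilon(\overline{mn}) = \epsilon(\bar m)\,\epsilon(\bar n) \cdot \muTzero(\bar m, \bar n),
\]
so the claim reduces to the equality $\muT = \muTzero$ on $\Wheart \times \Wheart = \Wzeroheart \times \Wzeroheart$. Both cocycles coincide on a large set of inputs: they agree on $\Wzeroz \times \Wzeroz$ by Lemma~\ref{lemmamuzero=mu}; they equal $1$ on $\Wzeroz \times \Waffz$, on $\Waffz \times \Wzeroz$, and on length-additive pairs in $\Waffz \times \Waffz$ by Proposition~\ref{prop:muTtrivial}; and they take the common value $q_s = q^0_s$ on diagonal pairs $(s,s)$ with $s \in S_{\Kzrel}$ by Lemma~\ref{lemmaqHisnonzero} combined with Lemma~\ref{lemmaquadraticrelationforPhisuptoconstant}. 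The main obstacle will be propagating these equalities to all of $\Wheart \times \Wheart$. I plan to handle this by induction on length using the $2$-cocycle identity together with the Matsumoto-style braid structure of $\Waffz$: applying the cocycle relation to a reduced expression reduces any value $\mu(v, w)$ for $v, w \in \Waffz$ to a product of values on $(s,s)$'s and length-additive pairs, where the two cocycles agree, and the semidirect-product decomposition $\Wheart = \Wzeroz \ltimes \Waffz$ together with the cross-pair triviality then forces agreement everywhere.

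Finally, to deduce that $\epsilon$ takes values in $\{\pm 1\}$, for each $s \in S_{\Kzrel}$ and any lift $\tilde s \in \Nzeroheart$, the element $\tilde s^2$ lies in $\Nzeroheart \cap K_{M^0}$ since $s^2 = 1$ in $\Waffz$; by the multiplicativity and factoring property just established, $\epsilon(\tilde s)^2 = \epsilon(\tilde s^2) = \epsilon(1) = 1$, so $\epsilon(\tilde s) \in \{\pm 1\}$. Since $\Wzeroheart = \Wzeroz \cdot \Waffz$ with $\Waffz$ generated by $S_{\Kzrel}$ and $\epsilon$ trivial on $\Wzeroz$, every value of $\epsilon$ is a product of such signs, hence in $\{\pm 1\}$, which simultaneously shows $\epsilon$ is a quadratic character. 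Uniqueness is immediate from the defining equation.
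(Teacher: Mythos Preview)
Your proposal is correct and uses the same ingredients as the paper, but packages them differently. The paper never proves the full cocycle equality $\muT = \muTzero$ on $\Wheart \times \Wheart$; it works directly with $\epsilon$. After defining $\epsilon(n)$ and observing triviality on $\Wzeroz$, the paper treats simple reflections first: for $s \in S_{\Kzrel}$ with lift $n_s$ (noting $n_s^{-1}$ is also a lift of $s$), it computes $T_{n_s} \circ T_{n_s^{-1}} = \muT(s,s)\,T_1$ and compares with the analogous identity for $T^0$ to obtain $\muT(s,s) = \epsilon(s)^2\,\muTzero(s,s)$, whence $\epsilon(s)^2 = 1$ by Lemmas~\ref{lemmaqHisnonzero} and~\ref{lemmaquadraticrelationforPhisuptoconstant}. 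For general $w \in \Waffz$ with reduced expression $w = s_1 \cdots s_r$ and lift $n = n_1 \cdots n_r$, Proposition~\ref{prop:muTtrivial} gives $T_n = T_{n_1} \circ \cdots \circ T_{n_r}$ and likewise for $T^0$, so $\epsilon(w) = \prod_i \epsilon(s_i) \in \{\pm 1\}$ directly; this simultaneously shows $\epsilon|_{\Waffz}$ is a homomorphism. The remaining step is $\Wzeroz$-conjugation invariance of $\epsilon|_{\Waffz}$, again via Proposition~\ref{prop:muTtrivial}.

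Your route via the global cocycle equality is a valid detour: the induction on $\ell(v) + \ell(w) - \ell(vw)$ that reduces $\mu(v,w)$ for $v,w \in \Waffz$ to values on length-additive pairs and diagonals $(s,s)$, and the further cocycle manipulations handling mixed $\Wzeroz$--$\Waffz$ products, are correct but amount to reproving in cocycle language what the paper extracts more directly by writing $T_n$ as a composition along a reduced word. The paper's approach is shorter; yours has the byproduct that $\muT = \muTzero$ on all of $\Wheart \times \Wheart$, a fact the paper neither states nor needs.
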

\begin{proof}
Let $w \in \Wzeroheart$ and choose a lift $n \in \Nzeroheart$ of $w$.
Since $T_{n}$ and $T^{0}_{n} \otimes \widetilde\kappa_{M}(n)$ are non-zero elements
in the one-dimensional space $\Hom_{K_{M}}\left(
^n\!\rho_{M}, \rho_{M}
\right)$, there exists $\epsilon(n) \in \Coeff^{\times}$ such that $T_{n} = \epsilon(n) \cdot T^{0}_{n} \otimes \widetilde\kappa_{M}(n)$.
We note that since $\widetilde\kappa_{M}$ is an extension of $\kappa_{M}$ and we have $\rho_{M} \simeq \inf\left(
\rho_{M^{0}}
\right) \otimes \kappa_{M}$, our choices of $\cT^{0}$ and $\cT$ imply that the number $\epsilon(n)$ does not depend on the choice of lift $n$, but only on $w$.
We write $\epsilon(w) = \epsilon(n)$ for any lift $n \in \Nzeroheart$ of $w \in \Wzeroheart$.
Since we chose the family $\cT$ to satisfy the condition in Proposition~\ref{propositionchoiceofcTrelativetocT0}, we have $\epsilon(t) = 1$ for all $t \in \Wzeroz$.
We will prove that $\epsilon(w) \in \{\pm1\}$ for all $w \in \Waffz$.
First, let $s \in S_{\Kzrel}$.
We fix a lift $n_s$ in $\Nzeroheart$ of $s$ and note that $n_s^{-1}$ is also a lift of $s=s^{-1}$.
Using the definitions of $\muTzero$ and $\muT$, we obtain
\begin{align*}
\muT(s, s) \cdot T_{1} = T_{n_s} \circ T_{n_s^{-1}} 
&= \epsilon(s)^{2} \cdot \left(
T^{0}_{n_s} \circ T^{0}_{n_s^{-1}}
\right) \otimes
\left(
\widetilde\kappa_{M}(n_s) \circ \widetilde\kappa_{M}(n_s^{-1})
\right) \\
&= \epsilon(s)^{2} \muTzero(s, s) \cdot T^{0}_{1} \otimes \widetilde\kappa_{M}(1) 
= \epsilon(s)^{2} \muTzero(s, s) \cdot T_{1}.
\end{align*}
According to Lemma~\ref{lemmaqHisnonzero} and Lemma~\ref{lemmaquadraticrelationforPhisuptoconstant}, we have $\muTzero(s, s) = q^{0}_{s} = q_{s} = \muT(s, s)$.
Thus, we conclude that
$
\epsilon(s)^{2} = 1
$, that is, $\epsilon(s) \in \{\pm 1\}$, as desired.
Next, we consider a general $w \in \Waffz$.
We fix a reduced expression $w = s_{1} s_{2} \cdots s_{\ellsubstitute}$ for $w$.
We fix lifts of $s_{i}$ for $1 \le i \le \ellsubstitute$ in $\Nzeroheart$ and denote them by $n_{i}$.
We write $n = n_{1} n_{2} \cdots n_{\ellsubstitute} \in \Nzeroheart$.
Then according to Proposition~\ref{prop:muTtrivial}, we have $T^{0}_{n} = T^{0}_{n_{1}} \circ T^{0}_{n_{2}} \circ \cdots \circ T^{0}_{n_{\ellsubstitute}}$ and $T_{n} = T_{n_{1}} \circ T_{n_{2}} \circ \cdots \circ T_{n_{\ellsubstitute}}$.
Hence, we obtain that $\epsilon(w) = \prod_{i= 1}^{\ellsubstitute} \epsilon(s_{i}) \in \{\pm 1\}$.
Moreover, since $\Waffz$ is a Coxeter group, the arguments above imply that the map $w \mapsto \epsilon(w)$ defines a group homomorphism $\Waffz \rightarrow \nobreak \{\pm 1\}$.
To prove the lemma, it suffices to show that the character $\epsilon \colon \Waffz \rightarrow \nobreak \{\pm 1\}$ is invariant under the conjugation action by $\Wzeroz$.
Let $t \in \Wzeroz$ and $w \in \Waffz$ with lifts $\widetilde t$ and $n$ in $\Nzeroheart$, respectively.
Then according to Proposition~\ref{prop:muTtrivial}, we have $T^{0}_{\widetilde t n \widetilde t^{-1}} T^{0}_{\widetilde t}= T^0_{\widetilde t n}= T^{0}_{\widetilde t} \circ T^{0}_{n}$, hence $T^{0}_{\widetilde t n \widetilde t^{-1}} = T^{0}_{\widetilde t} \circ T^{0}_{n} \circ (T^{0}_{\widetilde t})^{-1}$, and $T_{\widetilde t n \widetilde t^{-1}} = T_{\widetilde t} \circ T_{n} \circ T_{\widetilde t}^{-1}$.
Thus, the definition of $\epsilon$ implies that $\epsilon(t w t^{-1}) = \epsilon(w)$, as required.
\end{proof}

\begin{remark} \label{remark:epsilon}
	The quadratic character $\epsilon$ from Lemma \ref{lemma:epsilon} extends uniquely to a quadratic character of $\widetilde K_M$ that is trivial on $K_M$. Hence $\epsilon \cdot \widetilde \kappa_M$ is a smooth representation of $\widetilde K_M$ that extends $\kappa_M$. Thus if we had chosen $\epsilon \cdot \widetilde \kappa_M$ as the lift $\widetilde \kappa_M$ of $\kappa_M$ in Axiom \ref{axiomaboutKM0vsKM}, then we would obtain $T_{n} =  T^{0}_{n} \otimes \widetilde\kappa_{M}(n)$ for every $n \in \Nzeroheart$.
\end{remark}

The above allows us to describe the isomorphism of Theorem \ref{thm:isomorphismtodepthzero} more explicitly. Note that, as explained in Remark \ref{remark:epsilon}, if one chooses $\widetilde \kappa_M$ appropriately, then $\epsilon=1$.

\begin{theorem} \label{thm:explicitisom}
	We assume the same set-up as in Theorem \ref{thm:isomorphismtodepthzero}. Then there exists a unique quadratic character $\epsilon:\Nzeroheart  \rightarrow \{ \pm1 \}$ that factors through $\Nzeroheart \twoheadrightarrow \Wzeroheart$ and is trivial on $\Wzeroz$ such that the isomorphism $\Isom$ is given as follows.
	 If $\varphi \in \cH(G^{0}(F), \rho^{0}_{x_{0}})$ is supported on $K_{x_{0}}^0nK_{x_{0}}^0$ for some $n \in \Nzeroheart$, then $\Isom(\varphi)$ is supported on $K_{x_{0}}nK_{x_{0}}$ and satisfies
\[
\Isom(\varphi)(n)=
d_n  \cdot \varphi(n) \otimes \bigl(\epsilon(n)\cdot \widetilde\kappa_M(n)\bigr)
\quad
\text{ with } 
\quad
d_{n} = 
\left(
\frac{
	\abs{K^{0}_{x_{0}}/ \left(
		K^{0}_{n x_{0}} \cap K^{0}_{x_{0}}
		\right)
	}
}{
	\abs{K_{x_{0}}/ \left(
		K_{n x_{0}} \cap K_{x_{0}}
		\right)
	}
}
\right)^{1/2} \in \Coeff^\times .
\] 
\end{theorem}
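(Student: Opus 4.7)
The character $\epsilon$ will be the one produced in Lemma~\ref{lemma:epsilon}, which relates the two choices of intertwiners $\cT$ and $\cT^0$ via $T_{n} = \epsilon(n) \cdot T^{0}_{n} \otimes \widetilde\kappa_{M}(n)$. That lemma already verifies that $\epsilon$ is a quadratic character factoring through $\Wzeroheart$ and trivial on $\Wzeroz$, and uniqueness will follow from the fact that the formula to be established determines $\epsilon(n)$ pointwise.

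The first step is to reduce to the basis elements. By Proposition~\ref{propositionvectorspacedecomposition} (applied to $\cK^0$), the space $\cH(G^0(F), \rho^0_{x_0})_w$ corresponding to the image $w$ of $n$ in $\Wzeroheart$ is one-dimensional and spanned by $\varphi^0_w$; so $\varphi = c \cdot \varphi^0_w$ for some $c \in \Coeff$. Writing $w = t w'$ with $t \in \Wzeroz$ and $w' \in \Waffz$ using Proposition~\ref{propositiondecompositionofW}, both $\cI(\rho^0_{x_0})(\varphi^0_w)$ and $\cI(\rho_{x_0})(\varphi_w)$ equal $\gpalg_t \cdot \mathbb{T}_{w'}$ by the construction of the isomorphisms in Theorem~\ref{theoremstructureofhecke}, combined with Lemmas~\ref{lemmamuzero=mu} and~\ref{lemmaquadraticrelationforPhisuptoconstant} which guarantee that the target algebras coincide. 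Hence $\Isom(\varphi^0_w) = \varphi_w$, so $\Isom(\varphi) = c \cdot \varphi_w$, which by Lemma~\ref{supportofphiixw} is supported on $K_{x_0} n K_{x_0}$, establishing the support claim.

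The second step is to compute the values $\varphi^0_w(n)$ and $\varphi_w(n)$ explicitly via Lemma~\ref{supportofphiixw}, which gives
\[
\varphi^0_w(n) = \abs{K^0_{x_0}/(K^0_{nx_0} \cap K^0_{x_0})}^{-1/2} \cdot T^0_n,
\qquad
\varphi_w(n) = \abs{K_{x_0}/(K_{nx_0} \cap K_{x_0})}^{-1/2} \cdot T_n.
\]
Substituting the formula $T_n = \epsilon(n) \cdot T^0_n \otimes \widetilde\kappa_M(n)$ from Lemma~\ref{lemma:epsilon} and using the tensor-product decomposition $V_{\rho_M} = V_{\rho_{M^0}} \otimes V_{\kappa_M}$ from Axiom~\ref{axiomaboutKM0vsKM}\eqref{axiomaboutKM0vsKMaboutanextension} (which passes to $V_{\rho_{x_0}}$ via Axiom~\ref{axiomaboutK0vsK}\eqref{axiomaboutK0vsKinflation}), one gets
\[
\varphi_w(n) = d_n \cdot \varphi^0_w(n) \otimes \bigl(\epsilon(n) \cdot \widetilde\kappa_M(n)\bigr),
\]
with $d_n$ as in the statement. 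Multiplying by $c$ and using linearity of $\Isom$ yields $\Isom(\varphi)(n) = d_n \cdot \varphi(n) \otimes (\epsilon(n) \cdot \widetilde\kappa_M(n))$, as desired.

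No single step is expected to be a serious obstacle; the argument is essentially a bookkeeping computation combining the already-established identifications of the two Hecke algebras with the abstract algebra $\Coeff[\Wzero, \muT] \ltimes \cH_\Coeff(\Waff, q)$. The only minor subtlety is keeping track of the normalization factor $|K_{x_0}/(K_{nx_0} \cap K_{x_0})|^{-1/2}$ coming from Definition~\ref{definitionnormalize} for both the $G^0$- and $G$-sides, which produces the ratio defining $d_n$.
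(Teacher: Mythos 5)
Your proposal is correct and takes essentially the same approach as the paper's proof: reduce to the basis element $\varphi^0_w$ via Proposition~\ref{propositionvectorspacedecomposition}, note $\Isom(\varphi^0_w)=\varphi_w$ by construction of $\Isom$, and then combine the explicit value formula from Lemma~\ref{supportofphiixw} with the relation $T_n = \epsilon(n)\cdot T^0_n \otimes \widetilde\kappa_M(n)$ from Lemma~\ref{lemma:epsilon}. The paper states this in a single sentence; you have simply spelled out the bookkeeping.
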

\begin{proof}
	 If $\varphi \in \cH(G^{0}(F), \rho^{0}_{x_{0}})$ is supported on $K_{x_{0}}^0nK_{x_{0}}^0$ for some $n \in \Nzeroheart$, then $\varphi$ is a scalar multiple of $\varphi^0_{x_0,w}=\varphi^0_w$ where $w$ denotes the image of $n$ in $\Wzeroheart$. By definition of the $\Coeff$-linear isomorphism $\Isom$, we have $\Isom(\varphi^0_w)=\varphi_w$. Hence the claim follows using Lemma \ref{supportofphiixw} and Lemma~\ref{lemma:epsilon}.
\end{proof}

\subsection{Preserving the anti-involutions}
\label{subsection:Preserving the anti-involutions}

In this subsection we keep the notation from the previous subsection
except that within this subsection
we assume that $\Coeff$ admits a nontrivial involution
$c \mapsto \bar c$.
We also suppose that all the assumptions of Theorem~\ref{thm:isomorphismtodepthzero} hold.
We fix a family $\cT^{0}$ as in Choice~\ref{choice:star} and let $\cT$ denote the family satisfying the conditions in Proposition~\ref{propositionchoiceofcTrelativetocT0}.
We say that a representation $(\pi, V)$ of a group $H$ is unitary
if there exists an $H$-invariant,
positive-definite Hermitian form 
$\langle \phantom{x},\phantom{x} \rangle_{\pi}$ on $V$.
\begin{lemma}
\label{lemmaunitaryextensionimplieschoice:star}
Suppose that the representation $\widetilde\kappa_{M}$ of $\widetilde K_{M}$ is unitary.
Then the family $\cT$ satisfies all the properties in Choice~\ref{choice:star}. 
\end{lemma}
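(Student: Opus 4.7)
Since $\cT$ already satisfies Choice~\ref{choice:tw} by Proposition~\ref{propositionchoiceofcTrelativetocT0}, the only additional condition in Choice~\ref{choice:star} that needs to be verified is $\varphi_t^* = \varphi_{t^{-1}}$ for every $t \in \Wzero$. My plan is to reduce this to an identity $T_n^\dagger = T_{n^{-1}}$ of adjoints on $V_{\rho_{x_0}}$, construct a convenient Hermitian form for which the adjoints factor cleanly, and then compute factor by factor.

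First, I would reduce to the statement about operators. For $t \in \Wzero$ with any lift $n \in \Nzeroheart$, the fact that $\flength(t) = 0$ gives $d_\Krel(x_0, nx_0) = 0$, so Corollary~\ref{generalizationoflemmaequalityabouttheabsoftwoquotients} combined with Lemma~\ref{supportofphiixw} yields $\varphi_t(n) = C \cdot T_n$ and $\varphi_{t^{-1}}(n^{-1}) = C \cdot T_{n^{-1}}$ with the same scalar $C = |K_{x_0}/(K_{nx_0}\cap K_{x_0})|^{-1/2}$, which lies in $\Rclsd$ (so $\bar C = C$). Since the anti-involution $*$ from \eqref{explicitdescriptionofstar} is independent of the choice of $K_{x_0}$-invariant positive-definite Hermitian form on $V_{\rho_{x_0}}$, the claim $\varphi_t^* = \varphi_{t^{-1}}$ reduces to $T_n^\dagger = T_{n^{-1}}$ (where $T_n^\dagger$ denotes the adjoint) for any such form.

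Next, I would build a form as a tensor product. Using the identifications $V_{\rho_{x_0}} \simeq V_{\rho_M} \simeq V_{\rho^0_M} \otimes V_{\kappa_M} \simeq V_{\rho^0_{x_0}} \otimes V_{\widetilde\kappa_M}$ coming from the quasi-$G$-cover structure together with the tensor factorizations of $\rho_M$ and $\rho_{x_0}$ and the identification $\widetilde\kappa_M\restriction_{K_M}=\kappa_M$, I would take the tensor product of (i) the $K^0_{x_0}$-invariant positive-definite form on $V_{\rho^0_{x_0}}$ used to define $*$ on $\cH(G^0(F),\rho^0_{x_0})$ and (ii) a $\widetilde K_M$-invariant positive-definite Hermitian form on $V_{\widetilde\kappa_M}$, which exists by the unitarity hypothesis on $\widetilde\kappa_M$. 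The main verification is that this tensor product is $K_{x_0}$-invariant under $\rho_{x_0}$. Writing $K_{x_0} = K_M \cdot K_{x_0,+}$ by Axiom~\ref{axiomaboutHNheartandK}\eqref{axiomaboutHNheartandKabouttypeKx=KMKx+}, $K_M$-invariance follows from the factor-wise invariance of the two chosen forms. For $K_{x_0,+}$, Axiom~\ref{axiomaboutHNheartandK}\eqref{axiomaboutHNheartandKrhoxandthetax} says that $\rho_{x_0}(k)$ is scalar multiplication by $\theta_{x_0}(k)$ for $k \in K_{x_0,+}$; since $\rho_{x_0}$ is a smooth irreducible representation of the compact group $K_{x_0}$ it is unitary, which forces $\theta_{x_0}(k)\overline{\theta_{x_0}(k)} = 1$, and hence every Hermitian form on $V_{\rho_{x_0}}$ is automatically $K_{x_0,+}$-invariant.

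Finally I would compute. By Lemma~\ref{lemma:epsilon}, $T_n = \epsilon(n) \cdot T^0_n \otimes \widetilde\kappa_M(n)$ with $\epsilon(n) \in \{\pm 1\}$, so the adjoint with respect to the tensor form splits as $T_n^\dagger = \epsilon(n) \cdot (T^0_n)^\dagger \otimes \widetilde\kappa_M(n)^\dagger$. The identity $(T^0_n)^\dagger = T^0_{n^{-1}}$ follows by applying the reduction of the first step inside $\cH(G^0(F), \rho^0_{x_0})$ and using that $\cT^0$ satisfies Choice~\ref{choice:star}. The identity $\widetilde\kappa_M(n)^\dagger = \widetilde\kappa_M(n)^{-1} = \widetilde\kappa_M(n^{-1})$ is the unitarity of $\widetilde\kappa_M$. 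Using $\epsilon(n) = \epsilon(n^{-1})$ (because $\epsilon$ is quadratic), these three pieces assemble to $T_n^\dagger = T_{n^{-1}}$, completing the proof. The main technical obstacle is the verification that the tensor-product Hermitian form is $K_{x_0}$-invariant; the invariance under $K_M$ is immediate from factor-wise invariance, but $K_{x_0,+}$-invariance rests on combining the ``acts through a character'' structure of Axiom~\ref{axiomaboutHNheartandK}\eqref{axiomaboutHNheartandKrhoxandthetax} with the unitarity of $\rho_{x_0}$.
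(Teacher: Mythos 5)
Your proof is correct and follows essentially the same route as the paper: construct the tensor form $\langle\cdot,\cdot\rangle_{\rho^0_{x_0}}\otimes\langle\cdot,\cdot\rangle_{\widetilde\kappa_M}$, reduce the pairing identity factor-by-factor, and invoke Choice~\ref{choice:star} for $\cT^0$ together with the $\widetilde K_M$-invariance of the form on $V_{\widetilde\kappa_M}$, with the scalar-matching handled via Corollary~\ref{generalizationoflemmaequalityabouttheabsoftwoquotients}. The only cosmetic divergences are that you recast the pairing identity as $T_n^\dagger = T_{n^{-1}}$, that you establish $K_{x_0}$-invariance of the tensor form via $K_{x_0}=K_M\cdot K_{x_0,+}$ and the scalar action on $K_{x_0,+}$ (where the paper invokes the quasi-$G$-cover / Iwahori decomposition directly), and that you carry along $\epsilon(n)$ from Lemma~\ref{lemma:epsilon} and use quadraticity, whereas the paper simply uses that $\epsilon$ is trivial on $\Wzeroz$.
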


\begin{proof}
\addtocounter{equation}{-1}
\begin{subequations}
We will prove that $\varphi_{t}^{*} = \varphi_{t^{-1}}$ for all $t \in \Wzero$.
Since $\varphi_{t}^{*}, \varphi_{t^{-1}} \in \cH(G(F), \rho_{x_0})_{t^{-1}}$, it suffices to show that $\varphi_{t}^{*}(n^{-1}) = \varphi_{t^{-1}}(n^{-1})$ for a lift $n \in \Nzeroheart$ of $t$.
According to Equation \eqref{explicitdescriptionofstar}, it suffices to show that
$
\langle
\varphi_{t^{-1}}(n^{-1}) \cdot v, w
\rangle_{\rho_{x_0}} = \langle
v, \varphi_{t}(n) \cdot w
\rangle_{\rho_{x_0}}
$
for all $v, w \in V_{\rho_{x_0}}$.
We may suppose that $v$ and $w$ are of the form $v = v_{0} \otimes v_{\kappa}$ and $w = w_{0} \otimes w_{\kappa}$ for $v_{0}, w_{0} \in V_{\rho^{0}_{x_0}}$ and $v_{\kappa}, w_{\kappa} \in V_{\kappa_{x_0}}$.
We fix a $K^{0}_{x_0}$-invariant, positive-definite Hermitian form 
$\langle \phantom{x},\phantom{x} \rangle_{\rho^{0}_{x_0}}$ on $V_{\rho^{0}_{x_0}} = V_{\rho_{M^{0}}}$ and a $\widetilde K_{M}$-invariant,
positive-definite Hermitian form 
$\langle \phantom{x},\phantom{x} \rangle_{\widetilde\kappa_{M}}$ on $V_{\widetilde\kappa_{M}} = V_{\kappa_{M}}$.
Then the Hermitian form $\langle \phantom{x},\phantom{x} \rangle_{\rho^{0}_{x_0}} \otimes \langle \phantom{x},\phantom{x} \rangle_{\widetilde\kappa_{M}}$ is a $K_{M}$-invariant form on $V_{\rho_{M}} = V_{\rho_{M^{0}}} \otimes \nobreak V_{\kappa_{M}}$.
Moreover, since $(K_{x_0}, \rho_{x_0})$ is a quasi-$G$-cover of $(K_{M}, \rho_{M})$, the Hermitian form $\langle \phantom{x},\phantom{x} \rangle_{\rho^{0}_{x_0}} \otimes \langle \phantom{x},\phantom{x} \rangle_{\widetilde\kappa_{M}}$ is also a $K_{x_0}$-invariant form on $V_{\rho_{x_0}} = V_{\rho_{M}}$.
Hence, we may suppose that
$
\langle \phantom{x},\phantom{x} \rangle_{\rho_{x_0}} = \langle \phantom{x},\phantom{x} \rangle_{\rho^{0}_{x_0}} \otimes \langle \phantom{x},\phantom{x} \rangle_{\widetilde\kappa_{M}}
$.
According to Theorem~\ref{thm:explicitisom}, we have
\[
\varphi_{t}(n) \cdot w = 
\left(
\frac{
\abs{K^{0}_{x_{0}}/ \left(
K^{0}_{t x_{0}} \cap K^{0}_{x_{0}}
\right)
}
}{
\abs{K_{x_{0}}/ \left(
K_{t x_{0}} \cap K_{x_{0}}
\right)
}
}
\right)^{1/2} \varphi^{0}_{t}(n) \cdot w_{0} \otimes \widetilde\kappa_{M}(n) \cdot w_{\kappa}
\]
and
\[
\varphi_{t^{-1}}(n^{-1}) \cdot v = \left(
\frac{
\abs{K^{0}_{x_{0}}/ \left(
K^{0}_{t^{-1} x_{0}} \cap K^{0}_{x_{0}}
\right)
}
}{
\abs{K_{x_{0}}/ \left(
K_{t^{-1} x_{0}} \cap K_{x_{0}}
\right)
}
}
\right)^{1/2} \varphi^{0}_{t^{-1}}(n^{-1}) \cdot v_{0} \otimes \widetilde\kappa_{M}(n^{-1}) \cdot v_{\kappa}.
\]
Hence, we have
\begin{equation}
\label{decompositionofwphit-1n-1v}
\langle
\varphi_{t^{-1}}(n^{-1}) \cdot v, w
\rangle_{\rho_{x_0}} = 
\left(
\frac{
\abs{K^{0}_{x_{0}}/ \left(
K^{0}_{t^{-1} x_{0}} \cap K^{0}_{x_{0}}
\right)
}
}{
\abs{K_{x_{0}}/ \left(
K_{t^{-1} x_{0}} \cap K_{x_{0}}
\right)
}
}
\right)^{1/2}
\langle
\varphi_{t^{-1}}^{0}(n^{-1}) \cdot v_{0}, w_{0}
\rangle_{\rho^{0}_{x_0}} \cdot 
\langle
\widetilde\kappa_{M}(n^{-1}) \cdot v_{\kappa}, w_{\kappa}
\rangle_{\widetilde\kappa_{M}}
\end{equation}
and
\begin{equation}
\label{decompositionofphitnwv}
\langle
v, \varphi_{t}(n) \cdot w
\rangle_{\rho_{x_0}} = \left(
\frac{
\abs{K^{0}_{x_{0}}/ \left(
K^{0}_{t x_{0}} \cap K^{0}_{x_{0}}
\right)
}
}{
\abs{K_{x_{0}}/ \left(
K_{t x_{0}} \cap K_{x_{0}}
\right)
}
}
\right)^{1/2} 
\langle
v_{0}, \varphi^{0}_{t}(n) \cdot w_{0}
\rangle_{\rho^{0}_{x_0}} \cdot 
\langle
v_{\kappa}, \widetilde\kappa_{M}(n) \cdot w_{\kappa}
\rangle_{\widetilde\kappa_{M}}.
\end{equation}

Since $t \in \Wzero$, we have $d_{\Krel}(x_{0}, t^{-1} x_{0}) = \flength(t) = 0$.
Hence, according to Corollary~\ref{generalizationoflemmaequalityabouttheabsoftwoquotients}, we have
$
\abs{K_{x_{0}}/ \left(
K_{t^{-1} x_{0}} \cap K_{x_{0}}
\right)
}
= \abs{K_{t^{-1} x_{0}}/ \left(
K_{t^{-1} x_{0}} \cap K_{x_{0}}
\right)
}
$.
Combining this with Axiom~\ref{axiomaboutHNheartandK}\eqref{axiomaboutHNheartandKaboutconjugation}, we obtain that
\begin{equation}
\label{equationKtxcapKxvsKt-1xcapKx}
\abs{K_{x_{0}}/ \left(
K_{t^{-1} x_{0}} \cap K_{x_{0}}
\right)
} = 
\abs{K_{t^{-1} x_{0}}/ \left(
K_{t^{-1} x_{0}} \cap K_{x_{0}}
\right)
}
=
\abs{K_{x_{0}}/ \left(
K_{t x_{0}} \cap K_{x_{0}}
\right)
}.
\end{equation}
Similarly, we can prove that
\begin{equation}
\label{equationKtxcapKxvsKt-1xcapKxzerover}
\abs{K^{0}_{x_{0}}/ \left(
K^{0}_{t^{-1} x_{0}} \cap K^{0}_{x_{0}}
\right)
} = \abs{K^{0}_{x_{0}}/ \left(
K^{0}_{t x_{0}} \cap K^{0}_{x_{0}}
\right)
}.
\end{equation}

Since the family $\cT^{0}$ satisfies all the properties in Choice~\ref{choice:star}, we have
\begin{equation}
\label{starpreservingforTzero}
\langle
\varphi_{t^{-1}}^{0}(n^{-1}) \cdot v_{0}, w_{0}
\rangle_{\rho^{0}_{x_0}} 
= 
\langle
(\varphi^{0}_{t})^{*}(n^{-1}) \cdot v_{0}, w_{0}
\rangle_{\rho^{0}_{x_0}} 
=
\langle
v_{0}, \varphi^{0}_{t}(n) \cdot w_{0}
\rangle_{\rho^{0}_{x_0}}.
\end{equation}
Moreover, since the Hermitian form 
$\langle \phantom{x},\phantom{x} \rangle_{\widetilde\kappa_{M}}$ on $V_{\widetilde\kappa_{M}} = V_{\kappa_{M}}$ is $\widetilde K_{M}$-invariant, we have
\begin{equation}
\label{followsfromtildekappaMisunitary}
\langle
\widetilde\kappa_{M}(n^{-1}) \cdot v_{\kappa}, w_{\kappa}
\rangle_{\widetilde\kappa_{M}} = \langle
v_{\kappa}, \widetilde\kappa_{M}(n) \cdot w_{\kappa}
\rangle_{\widetilde\kappa_{M}}.
\end{equation}
Combining \eqref{equationKtxcapKxvsKt-1xcapKx}, \eqref{equationKtxcapKxvsKt-1xcapKxzerover}, \eqref{starpreservingforTzero}, and \eqref{followsfromtildekappaMisunitary} with \eqref{decompositionofwphit-1n-1v} and \eqref{decompositionofphitnwv}, we obtain that $\langle
\varphi_{t^{-1}}(n^{-1}) \cdot v, w
\rangle_{\rho_{x_0}} = \langle
v, \varphi_{t}(n) \cdot w
\rangle_{\rho_{x_0}}$, as desired.
\end{subequations}
\end{proof}

\begin{corollary} \label{cor:starpreservation}
We assume the same set-up as in Theorem~\ref{thm:isomorphismtodepthzero}.
We also suppose that there exists a $\widetilde K_{M}$-invariant,
positive-definite Hermitian form 
$\langle \phantom{x},\phantom{x} \rangle_{\widetilde\kappa_{M}}$ on $V_{\widetilde\kappa_{M}} = V_{\kappa_{M}}$.
Then the isomorphism
\[
\Isom
\colon
\cH(G^{0}(F), \rho_{x_0}^{0})
\longrightarrow
\cH(G(F), \rho_{x_0})
\] 
in Theorem~\ref{thm:isomorphismtodepthzero} preserves the anti-involutions on both sides defined in Section~\ref{Anti-involution of the Hecke algebra}.
\end{corollary}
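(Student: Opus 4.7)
The plan is to assemble this corollary from three already established ingredients: Lemma~\ref{lemmaunitaryextensionimplieschoice:star}, Proposition~\ref{propstarpreservationabstractheckevsourhecke}, and the definition of $\Isom$ itself. Specifically, by construction we have $\Isom = \bigl(\cI(\rho_{x_{0}})\bigr)^{-1} \circ \cI(\rho^{0}_{x_{0}})$, so it suffices to show that each of the two factors $\cI(\rho^{0}_{x_{0}})$ and $\cI(\rho_{x_{0}})$ intertwines the anti-involution $(\phantom{\varphi})^{*}$ on its source with the anti-involution on $\Coeff[\Wzeroz, \muTzero] \ltimes \cH_\Coeff(\Waffz, q^{0})$ (respectively on $\Coeff[\Wzeroz, \muT] \ltimes \cH_\Coeff(\Waffz, q)$); since the two target algebras are equal and the two target anti-involutions coincide (by Lemma~\ref{lemmamuzero=mu} and Lemma~\ref{lemmaquadraticrelationforPhisuptoconstant}), the composition will preserve the anti-involutions on both sides.

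First I would apply Proposition~\ref{propstarpreservationabstractheckevsourhecke} directly to the pair $(G^{0}, \rho^{0}_{x_{0}})$: the family $\cT^{0}$ was already chosen as in Choice~\ref{choice:star}, and all axioms needed for that proposition are in place by assumption, so $\cI(\rho^{0}_{x_{0}})$ preserves anti-involutions.

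Next, to handle $\cI(\rho_{x_{0}})$ I need to verify that the family $\cT$ (defined by the compatibility condition in Proposition~\ref{propositionchoiceofcTrelativetocT0}) also satisfies all the properties of Choice~\ref{choice:star}, not merely those of Choice~\ref{choice:tw}. This is exactly what Lemma~\ref{lemmaunitaryextensionimplieschoice:star} provides, under the hypothesis that $\widetilde\kappa_{M}$ is unitary. The hypothesis of the corollary---existence of a $\widetilde K_{M}$-invariant, positive-definite Hermitian form on $V_{\widetilde\kappa_{M}}$---is precisely the unitarity of $\widetilde\kappa_{M}$. Invoking Lemma~\ref{lemmaunitaryextensionimplieschoice:star} thus shows that $\cT$ satisfies Choice~\ref{choice:star}, whereupon a second application of Proposition~\ref{propstarpreservationabstractheckevsourhecke}---this time for $(G, \rho_{x_{0}})$---yields that $\cI(\rho_{x_{0}})$, and hence its inverse, also preserves anti-involutions.

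There is essentially no obstacle here: the corollary is a straightforward consequence of the two propositions once Lemma~\ref{lemmaunitaryextensionimplieschoice:star} bridges the gap between the weaker Choice~\ref{choice:tw} that defines $\cT$ and the stronger Choice~\ref{choice:star} needed for star-preservation. The conceptual point---that all the real work was done in Lemma~\ref{lemmaunitaryextensionimplieschoice:star}, which leverages the unitarity of $\widetilde\kappa_{M}$ together with the equality $|K_{x_{0}}/(K_{tx_{0}}\cap K_{x_{0}})| = |K_{x_{0}}/(K_{t^{-1}x_{0}}\cap K_{x_{0}})|$ coming from $\flength(t)=0$ and Corollary~\ref{generalizationoflemmaequalityabouttheabsoftwoquotients}---is already absorbed into that lemma, so the corollary itself is a two-line deduction.
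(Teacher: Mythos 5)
Your proof is correct and takes exactly the same route as the paper: the paper's proof of this corollary is the one-line "follows from Proposition~\ref{propstarpreservationabstractheckevsourhecke} and Lemma~\ref{lemmaunitaryextensionimplieschoice:star}," and your write-up is precisely the unpacking of that deduction---noting that $\cT^0$ already satisfies Choice~\ref{choice:star}, using Lemma~\ref{lemmaunitaryextensionimplieschoice:star} to transfer this to $\cT$, and then applying Proposition~\ref{propstarpreservationabstractheckevsourhecke} to each of $\cI(\rho^0_{x_0})$ and $\cI(\rho_{x_0})$ before composing.
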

\begin{proof}
The corollary follows from Proposition~\ref{propstarpreservationabstractheckevsourhecke} and Lemma~\ref{lemmaunitaryextensionimplieschoice:star}.
\end{proof}

\subsection{Application: Equivalence of Bernstein blocks}
\label{subsec:application}

Throughout this subsection, we suppose that $\Coeff = \bC$.
We only do so because the literature on types currently makes this assumption.

Let $\Rep(G(F))$ denote the category of smooth $\bC$-representations
of $G(F)$.
The Bernstein decomposition
(see \cite{MR771671})
expresses this category as a direct product
of full subcategories:
\[
\Rep(G(F))=\prod_{\fs\in \IEC(G)}\Rep^\fs(G(F)).
\]
Here $\IEC(G)$ is the set of \emph{inertial equivalence
classes},
i.e., equivalence classes $[L,\sigma]_G$ of cuspidal
pairs $(L,\sigma)$ in $G$, where $L$ is a Levi subgroup of $G$,
$\sigma$ is an irreducible supercuspidal representation of $L(F)$,
and where the equivalence is given by conjugation by $G(F)$
and twisting by unramified characters of $L(F)$.
If $\fs = [L,\sigma]_G$,
then the block $\Rep^{\fs}(G(F))$
consists of those representations $(\pi,V)$ for which each
irreducible subquotient of $\pi$ appears as a subquotient in a parabolic induction
of $\sigma\otimes\chi$ for some unramified character $\chi$ of $L(F)$.

Let $K$ denote a compact, open subgroup of $G(F)$, and $(\rho, V_{\rho})$
an irreducible smooth $\bC$-representation of $K$.
Let $\Rep_{\rho}(G(F))$ denote the full subcategory of $\Rep(G(F))$
whose objects are the $\bC$-representations $(\pi,V)$ of $G(F)$
generated by the $\rho$-isotypic subspace 
of
$V$.
Following \cite[(2.8) and Theorem 4.3(ii)]{BK-types},
we have a functor 
\[
\mathbf{M}_{\rho} \colon
\Rep_{\rho}(G(F))
\longrightarrow
\mathrm{Mod-}\cH(G(F),\rho)
\quad
\text{ given by }
\quad
\pi \longmapsto \Hom_{G(F)}(\ind_{K}^{G(F)} (\rho), \pi),
\]
where $\mathrm{Mod-}\cH(G(F),\rho)$
denotes the category of
unital right modules over $\cH(G(F),\rho)$.
Here, the right action of $\cH(G(F),\rho)$ on $\Hom_{G(F)}(\ind_{K}^{G(F)} (\rho), \pi)$ is given by precomposition using the isomorphism
$\cH(G(F), \rho)
\isoarrow
\End_{G(F)}\bigl(\ind_{K}^{G(F)} (\rho)\bigr)$
in \eqref{heckevsend}.

For $\fs\in \IEC(G)$,
the pair $(K,\rho)$
is called an \emph{$\fs$-type}
if $\Rep_{\rho}(G(F))=\Rep^\fs(G(F))$.
More generally,
if $\fS$ is a finite subset of $\IEC(G)$,
then 
the pair $(K,\rho)$
is called an \emph{$\fS$-type}
if
 \[\Rep_{\rho}(G(F))=\Rep^\fS(G(F)) \coloneqq \prod_{\fs \in \fS} \Rep^{\fs}(G(F)). \]
In either case, by \cite[(4.3) Theorem (ii)]{BK-types},
the functor $\mathbf{M}_{\rho}$ gives an equivalence
of categories
\begin{equation} \label{eq:types1}
\Rep^\fS(G(F)) = \Rep_{\rho}(G(F))
\isoarrow
\mathrm{Mod-}\cH(G(F),\rho) .
\end{equation}

We write 
\begin{equation} \label{eq:types2}
\Isom^* \colon
\mathrm{Mod-}\cH(G(F),\rho_{x_0})
\isoarrow 
\mathrm{Mod-}\cH(G^0(F),\rho^0_{x_0})
\end{equation}
for the equivalence of categories associated to the isomorphism $\Isom
\colon
\cH(G^0(F),\rho^0_{x_0})
\isoarrow 
\cH(G(F),\rho_{x_0})$ of Theorem \ref{thm:isomorphismtodepthzero}.

\begin{theorem}
\label{thm:equiv-blocks}
Let $\fS \subset \IEC(G)$ and $\fSz \subset \IEC(G^0)$
be finite subsets.
We suppose that the pairs $(K_{x_0},\rho_{x_0})$
and
$(K^{0}_{x_0}, \rho^{0}_{x_0})$
are an $\fS$-type and
an $\fSz$-type, respectively.
Then the functor 
$(\mathbf{M}_{\rho^{0}_{x_{0}}})^{-1} \circ \cI^* \circ \mathbf{M}_{\rho_{x_{0}}}$
gives an equivalence of categories \spacingatend{}
\[
\Rep^\fS(G(F)) \isoarrow \Rep^\fSz(G^0(F)).
\]
\end{theorem}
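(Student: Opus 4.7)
The plan is to simply compose three equivalences of categories, each of which has already been established or follows immediately from results in the paper, so the proof is essentially a chase through the definitions of the functors involved.

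First, I would invoke \cite[(4.3) Theorem (ii)]{BK-types} (recorded in \eqref{eq:types1}) to obtain the two equivalences of categories
\[
\mathbf{M}_{\rho_{x_{0}}} \colon \Rep^{\fS}(G(F)) \isoarrow \Mod \cH(G(F), \rho_{x_{0}})
\qquad \text{and} \qquad
\mathbf{M}_{\rho^{0}_{x_{0}}} \colon \Rep^{\fSz}(G^{0}(F)) \isoarrow \Mod \cH(G^{0}(F), \rho^{0}_{x_{0}}),
\]
using the hypotheses that $(K_{x_{0}}, \rho_{x_{0}})$ is an $\fS$-type and $(K^{0}_{x_{0}}, \rho^{0}_{x_{0}})$ is an $\fSz$-type, respectively. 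In particular, the second one is invertible as a functor (up to natural isomorphism), so $(\mathbf{M}_{\rho^{0}_{x_{0}}})^{-1}$ makes sense.

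Next, I would apply Theorem~\ref{thm:isomorphismtodepthzero} to obtain the $\bC$-algebra isomorphism
\[
\Isom \colon \cH(G^{0}(F), \rho^{0}_{x_{0}}) \isoarrow \cH(G(F), \rho_{x_{0}}).
\]
Any isomorphism of unital associative algebras induces an equivalence between their categories of unital right modules via restriction of scalars; applied to $\Isom$, this gives the equivalence
\[
\Isom^{*} \colon \Mod \cH(G(F), \rho_{x_{0}}) \isoarrow \Mod \cH(G^{0}(F), \rho^{0}_{x_{0}})
\]
of \eqref{eq:types2}.

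Finally, I would compose these three equivalences to conclude that
\[
(\mathbf{M}_{\rho^{0}_{x_{0}}})^{-1} \circ \Isom^{*} \circ \mathbf{M}_{\rho_{x_{0}}} \colon \Rep^{\fS}(G(F)) \isoarrow \Rep^{\fSz}(G^{0}(F))
\]
is an equivalence of categories, being a composition of equivalences. There is no real obstacle here; all the work has already been done in establishing Theorem~\ref{thm:isomorphismtodepthzero} (which in turn rests on the structure theorem \eqref{eq:intro-main2} and the verification of all the axioms), and the present statement is only the formal categorical consequence of turning the Hecke-algebra isomorphism into a statement about Bernstein blocks via the type equivalence of Bushnell and Kutzko.
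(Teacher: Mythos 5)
Your proposal is correct and takes exactly the same approach as the paper: compose the two Bushnell--Kutzko type equivalences from \eqref{eq:types1} with the equivalence of module categories \eqref{eq:types2} induced by the Hecke algebra isomorphism of Theorem~\ref{thm:isomorphismtodepthzero}. The paper's proof is just a one-line statement of this composition; you have spelled out the same argument in more detail.
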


\begin{proof}
This follows from combining the equivalences in \ref{eq:types1} and \ref{eq:types1}.
\end{proof}

\begin{theorem} 
	\label{thm:plancherel}
	Choose Haar measures $\nu$ and $\nu^0$ on $G(F)$ and $G^0(F)$,
	and let 
	$\hat\nu$ and $\hat\nu^0$ denote the corresponding 
	Plancherel measures on
	the Borel spaces
	$\Irr_t^\fS(G(F))$ and $\Irr_t^\fSz(G^0(F))$ 
	of irreducible tempered representations in 
	$\Rep^\fS(G)$ and $\Rep^\fSz(G^0)$.
	Then our equivalence of categories
	$\Rep^\fS(G) \longrightarrow \Rep^\fSz(G^0)$
	from Theorem \ref{thm:equiv-blocks}
	induces a homeomorphism
	$\cJ\colon 
	\Irr_t^\fSz(G^0(F)) \longrightarrow \Irr_t^\fS(G(F))$,
	such that
	$$
	\hat\nu\circ \cJ
	=
	\dim\kappa_{x_0}
	\frac{\nu^0(K_{x_0}^0)}{\nu(K_{x_0})} \hat\nu^0.
	$$
\end{theorem}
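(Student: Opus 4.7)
The plan is to transfer the claim to the Hecke-algebra side, where it will follow from $\Isom$ being a trace-preserving $*$-algebra isomorphism, combined with the standard Plancherel comparison between a Bernstein block and its Hecke algebra. Since $\Coeff = \bC$, the representation $\widetilde\kappa_M$ may be taken unitarizable (this is trivial in the depth-zero setting and comes from the unitarity of the twisted Heisenberg--Weil representations in the Kim--Yu setting), so Corollary \ref{cor:starpreservation} applies and $\Isom$ is a $*$-algebra isomorphism with respect to the anti-involutions of Section \ref{Anti-involution of the Hecke algebra}. Standard results on types (Bushnell--Henniart--Kutzko, Blondel, Solleveld, et al.) identify via $\mathbf{M}_{\rho_{x_0}}$ (resp.\ $\mathbf{M}_{\rho^0_{x_0}}$) the irreducible tempered objects in each Bernstein block with tempered irreducible $*$-modules over the corresponding Hecke algebra, as homeomorphisms of the underlying Borel/topological spaces; composing with the homeomorphism of tempered Hecke-algebra duals induced by $\Isom^*$ yields the desired homeomorphism $\cJ$.

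For the Plancherel comparison I would use the identity
\[
\hat\nu(\pi) = \frac{\dim\rho_{x_0}}{\nu(K_{x_0})}\,\hat\tau_\rho\bigl(\mathbf{M}_{\rho_{x_0}}(\pi)\bigr),
\]
where $\hat\tau_\rho$ is the Plancherel measure attached to the canonical normalized trace $\tau_\rho(\varphi) = \tr(\varphi(1))/\dim \rho_{x_0}$ on $\cH(G(F),\rho_{x_0})$. This is obtained by applying the Plancherel identity for $G(F)$ to the $\rho_{x_0}$-isotypic projector $e^{\rho_{x_0}}(k) = (\dim\rho_{x_0}/\nu(K_{x_0}))\overline{\chi_{\rho_{x_0}}(k)}\chi_{K_{x_0}}(k)$, yielding $\int \dim\pi^{\rho_{x_0}}\,d\hat\nu = (\dim\rho_{x_0})^2/\nu(K_{x_0})$, rewriting $\dim\pi^{\rho_{x_0}} = \dim\rho_{x_0}\cdot\dim \mathbf{M}_{\rho_{x_0}}(\pi)$, and comparing with $\int \dim M\,d\hat\tau_\rho(M) = \tau_\rho(1) = 1$; the analogous formula holds for $G^0(F)$.

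Next, I would verify that $\Isom$ preserves the canonical traces, i.e.\ $\tau_\rho\circ \Isom = \tau_{\rho^0}$. Using the basis $\{\varphi^0_w\}_{w\in\Wheart}$ of $\cH(G^0(F),\rho^0_{x_0})$ from Corollary \ref{corollaryvectorspacedecompositionexplicitver}, Lemma \ref{supportofphiixw} gives $\supp(\varphi^0_w)=K^0_{x_0}\,n\,K^0_{x_0}$ for any lift $n$ of $w$, and Proposition \ref{propositiondoublecosetinjection} forces $\Nzeroheart\cap K^0_{x_0}=\Nzeroheart\cap K_{M^0}$, so this support contains $1$ only when $w=1$ and $\tau_{\rho^0}(\varphi^0_w)=\delta_{w,1}$; the same holds for $\{\varphi_w\}$, and since $\Isom$ is support-preserving with $\Isom(\varphi^0_1) = \varphi_1$, the traces agree. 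Hence $\hat\tau_\rho\bigl(\mathbf{M}_{\rho_{x_0}}(\cJ(\pi^0))\bigr) = \hat\tau_{\rho^0}\bigl(\mathbf{M}_{\rho^0_{x_0}}(\pi^0)\bigr)$, and substituting into the two Plancherel formulas together with $\dim\rho_{x_0}/\dim\rho^0_{x_0} = \dim\kappa_{x_0}$ (from Axiom \ref{axiomaboutK0vsK}\eqref{axiomaboutK0vsKinflation}) gives
\[
\hat\nu(\cJ(\pi^0)) = \frac{\dim\rho_{x_0}}{\dim\rho^0_{x_0}}\cdot\frac{\nu^0(K^0_{x_0})}{\nu(K_{x_0})}\,\hat\nu^0(\pi^0) = \dim\kappa_{x_0}\cdot\frac{\nu^0(K^0_{x_0})}{\nu(K_{x_0})}\,\hat\nu^0(\pi^0).
\]

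The main obstacle will be citing and verifying the Plancherel-comparison formula at this level of generality, since existing references typically treat specific families of types; together with the bookkeeping needed to match the topological structures on the tempered Hecke-algebra duals with those on the tempered group duals transferred by $\Isom^*$.
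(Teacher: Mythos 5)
Your proof is correct and takes essentially the same route as the paper: the paper's own proof simply observes that the support-preserving, anti-involution-preserving Hecke-algebra isomorphism from Theorem~\ref{thm:isomorphismtodepthzero} (and, implicitly, Corollary~\ref{cor:starpreservation}) yields an isomorphism of Hilbert algebras, cites \cite[\S5.1]{bushnell-henniart-kutzko:plancherel} for the homeomorphism $\cJ$ together with the formula $\frac{\nu(K_{x_0})}{\dim \rho_{x_0}} \hat\nu\circ \cJ = \frac{\nu^0(K_{x_0}^0)}{\dim \rho_{x_0}^0} \hat\nu^0$, and concludes with $\dim \rho_{x_0} / \dim \rho_{x_0}^0 = \dim \kappa_{x_0}$. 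The main difference is that you unpack details the paper leaves to the cited reference: you verify explicitly that $\Isom$ preserves the canonical normalized traces (via $\tau_{\rho}(\varphi_w)=\delta_{w,1}$, using the support $K_{x_0} w K_{x_0}$ of $\varphi_w$ and the injectivity in Proposition~\ref{propositiondoublecosetinjection} to see $1\in K_{x_0}wK_{x_0}\Leftrightarrow w=1$, plus $\Isom(\varphi^0_1)=\varphi_1$), you sketch the derivation of the BHK-type Plancherel comparison from the isotypic projector, and you flag that the unitarizability of $\widetilde\kappa_M$ is needed to invoke Corollary~\ref{cor:starpreservation} — a hypothesis the paper tacitly assumes but does not restate in the theorem's statement. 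These are valuable elaborations rather than a genuinely different argument.
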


\begin{proof}
	The support-preserving, anti-involution-preserving isomorphism
	of Hecke algebras
	from Theorem \ref{thm:isomorphismtodepthzero}
	gives rise to an isomorphism of Hilbert algebras.
	From \cite[\S5.1]{bushnell-henniart-kutzko:plancherel},
	one has a homeomorphism 
	$\cJ\colon 
	\Irr_t^\fSz(G^0(F)) \longrightarrow \Irr_t^\fS(G(F))$,
	such that
	$
	\frac{\nu(K_{x_0})}{\dim \rho_{x_0}} \hat\nu\circ \cJ
	=
	\frac{\nu^0(K_{x_0}^0)}{\dim \rho_{x_0}^0} \hat\nu^0.
	$
	Our result then follows from the fact that 
	$\dim \rho_{x_0} / \dim \rho_{x_0}^0 = \dim \kappa_{x_0}$.
\end{proof}

\section{Hecke algebras of depth-zero pairs} 
\label{sec:depth-zero}

In this section we will 
show that all of the axioms of
Section~\ref{Structure of a Hecke algebra}
apply to the special case of a pair $(K_{x_0},\rho_{x_0})$
where $K_{x_0}$ is a normal, compact, open subgroup of $G(F)_{[x_0]}$
that contains the parahoric subgroup $G(F)_{x_0,0}$
and the restriction of $\rho_{x_0}$ to $G(F)_{x_0,0}$ contains the inflation
of a cuspidal representation of $G(F)_{x_0,0}/G(F)_{x_0,0+}$.
Thus, Theorem~\ref{theoremstructureofhecke} applies to such pairs.
This means that we obtain an explicit description of the corresponding
Hecke algebras as a semi-direct product of an affine Hecke algebra with a twisted group algebra,
see Theorem \ref{theoremstructureofheckefordepthzero}.
The special case where $K_{x_0}=G(F)_{x_0,0}$ and $\Coeff =\bC$ 
has already been treated by Morris \cite[Theorem 7.12]{Morris}.
The resulting types describe certain finite products of Bernstein blocks.
Our construction also includes the case $K_{x_0} = G(F)_{x_0}$, 
when the resulting types describe single Bernstein blocks.

\subsection{Construction of depth-zero pairs}
\label{subsec:constructionofdepthzero}

We recall the notion of a depth-zero $G$-datum following Kim and Yu (\cite{Kim-Yu}), but adjusted to our more general coefficient field $\Coeff$, from which the pair $(K_{x_0},\rho_{x_0})$ will be constructed.
\begin{definition}[cf.\ {\cite[7.1]{Kim-Yu}}]
\label{definition:depthzeroGdatum}
	A \emph{depth-zero $G$-datum} is a triple
	$\big((G, M), (x_{0}, \iota), (K_{M}, \rho_{M})\big)$ such that
	\begin{enumerate}[(1)]
		\item $M$ is a Levi subgroup of $G$.
		\item $x_{0} \in \cB(M, F)$ is a point whose image under the projection to
		$\cB^{\red}(M, F)$ is a vertex, and
		\(
		\iota \colon \cB(M, F) \hookrightarrow \cB(G, F)
		\)
		is a $0$-generic admissible embedding relative to $x_{0}$ in the sense of \cite[Definition~3.2]{Kim-Yu}, i.e., $M(F)_{x_0,0}/M(F)_{x_0,0+} \simeq G(F)_{\iota(x_0),0}/G(F)_{\iota(x_0),0+}$. We use this embedding to identify $\cB(M, F)$ with its image in $\cB(G, F)$.
		\item $K_{M}$ is a compact, open subgroup of $M(F)_{x_{0}}$ containing $M(F)_{x_{0}, 0}$, and $\rho_{M}$ is an irreducible smooth $\Coeff$-representation of $K_{M}$ such that $\rho_{M}\restriction_{M(F)_{x_{0}, 0}}$
		is the inflation of a cuspidal representation of $\sfM_{x_{0}}(\ff) = M(F)_{x_{0}, 0}/M(F)_{x_{0}, 0+}$.
	\end{enumerate}
\end{definition}
If $\Coeff=\bC$, a depth-zero $G$-datum is used to construct types for all depth-zero Bernstein blocks based on works of Moy and Prasad (\cite{MR1371680}) and Bushnell and Kutzko (\cite{BK-types}),
which is a special case of the construction below.

From now on, we let $\Sigma = \big((G, M), (x_{0}, \iota), (K_{M}, \rho_{M})\big)$ be a depth-zero $G$-datum.
\index{notation-ax}{SZigma@$\Sigma$}
We note that for each $x \in \cA_{x_{0}} \coloneqq x_{0} + \left(
X_{*}(A_{M}) \otimes_{\mathbb{Z}} \bR
\right)$ such that $\iota: \cB(M,F) \rightarrow \cB(G,F)$ is 0-generic relative to $x$, the triple
$\Sigma_{x} = \big((G, M), (x, \iota), (K_{M}, \rho_{M})\big)$
is also a $G$-datum.
\index{notation-ax}{SZigmax@$\Sigma_x$}
For $x \in \cA_{x_{0}}$, we set 
\index{notation-ax}{Kay x@$K_{x}$}
\index{notation-ax}{Kay x +@$K_{x, +}$}
\begin{equation} \label{definiotionofcompactopensubgroupsdepthzerocase}
	K_{x} = K_{M} \cdot G(F)_{x, 0}
	\quad
	\text{ and }
	\quad K_{x, +} = G(F)_{x, 0+} ,
\end{equation}
which are compact, open subgroups of $G(F)$.
We note that if $\iota: \cB(M,F) \hookrightarrow \cB(G,F)$ is 0-generic relative to $x$, then we have
$K_{x} = K_{M} \cdot K_{x, +}$ since $K_{M}$ contains $M_{x_{0},0} = M_{x, 0}$. In this case, by \cite[4.3~Proposition~(b)]{Kim-Yu}, 
the inclusion $K_{M} \subseteq K_{x}$ induces an isomorphism
\[
K_{M} / M(F)_{x, 0+} \isoarrow K_{x}/K_{x, +},
\] 
\index{notation-ax}{rhox@$\rho_{x}$}%
and we define the irreducible smooth representation $\rho_{x}$ of $K_{x} / K_{x, +}$ as the composition of $\rho_{M}$ with the inverse of the isomorphism above. 
We also regard $\rho_{x}$ as an irreducible smooth representation of $K_{x}$ that is trivial on $K_{x, +}$. If $\Coeff=\bC$, the pair $(K_{x}, \rho_{x})$ is a type.

If $N_{G}(M)(F)_{[x_{0}]_{M}}$ normalizes the group $K_{M}$, e.g., if $K_{M} = M(F)_{x_{0}}$ or $M(F)_{x_{0}, 0}$,
then
we will show in Section \ref{subsec:structureofheckefordepthzerotypes} that
the objects \label{depthzerofamilies} $G$, $M$, $x_0$, $K_{M}$, $\rho_M$ and the families $\left\{(K_{x}, K_{x, +}, \rho_{x})\right\}$ (for appropriate $x \in \cA_{x_0}$)
satisfy all the desired axioms of Section~\ref{Structure of a Hecke algebra}
for the choice of $\Nheart = N(\rho_{M})_{[x_{0}]_{M}}$.

\subsection{Affine hyperplanes}\label{subsec:affinehyperplanes-depthzero}
In order to apply
Section~\ref{Structure of a Hecke algebra} to the objects introduced in Section \ref{subsec:constructionofdepthzero}, we also need an appropriate set of affine hyperplanes as in Section~\ref{subsec:hyperplanes}.
We define these hyperplanes as follows.
For a maximal split torus $S$ of $G$, let $\Phi(G, S)$\index{notation-ax}{Phi0GS@$\Phi(G, S)$} 
denote the relative root system of $G$ with respect to $S$ and let $\Phi_{\aff}(G, S)$\index{notation-ax}{PhiaffGS@$\Phi_{\aff}(G, S)$}
denote the (relative) affine root system associated to $(G, S)$ by the work of Bruhat and Tits  \cite{MR327923}.
We fix a maximal split torus $S$ of $M$ such that $x_{0} \in \cA(G, S, F)$.
For $a \in \nobreak \Phi_{\aff}(G, S) \smallsetminus \Phi_{\aff}(M, S)$,
we define the affine hyperplane $H_{a}$
\index{notation-ax}{Ha@$H_{a}$}
in $\cA(G, S, F)$ by \spacingatend{}
\[
H_{a} = 
\left\{
x \in \cA(G, S, F) \mid a(x) = 0
\right\}.
\]
Since $a \not \in \Phi_{\aff}(M, S)$, the intersection
$
\cA_{x_{0}} \cap H_{a} 
$
is an affine hyperplane in $\cA_{x_{0}}$.
We define the locally finite set $\mathfrak{H}_{S}$
of affine hyperplanes in $\cA_{x_{0}}$ by
\[
\mathfrak{H}_{S} = \left\{
\cA_{x_{0}} \cap H_{a} \mid a \in \Phi_{\aff}(G, S) \smallsetminus \Phi_{\aff}(M, S)
\right\}.
\]

\begin{lemma} \label{lemma:independenceofS0} The set of affine functionals
	\(
	\left\{ a\restriction_{\cA_{x_0}} \mid a \in \Phi_{\aff}(G, S) \smallsetminus \Phi_{\aff}(M, S)
	\right\}
	\) on $\cA_{x_0}$ and the set $\mathfrak{H}_{S}$
	do not depend on the choice of a maximal split torus $S$ of $M$.
\end{lemma}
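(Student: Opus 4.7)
The plan is to reduce the independence claim to two ingredients: (i) any two maximal split tori $S, S'$ of $M$ whose apartments contain $x_0$ are conjugate by an element of $M(F)_{x_0,0}$, and (ii) any such conjugating element acts as the identity on $\cA_{x_0}$. For (i), I would invoke the argument already used in the proof of Lemma~\ref{lemmaMtoS}: since $x_0 \in \cA(G,S,F) \cap \cA(G,S',F)$, there exists $m \in M(F)_{x_0,0}$ with $mS'm^{-1} = S$. Conjugation by $m$ induces a bijection $\Phi_{\aff}(G,S') \isoarrow \Phi_{\aff}(G,S)$ that restricts to a bijection $\Phi_{\aff}(M,S') \isoarrow \Phi_{\aff}(M,S)$ (since $m \in M(F)$), hence also a bijection of their complements. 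Concretely, an affine root $a' \in \Phi_{\aff}(G,S')$ is sent to the affine functional $a \coloneqq a' \circ m^{-1}$ on $\cA(G,S,F)$.

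The main step of the argument is (ii). Here I would exploit that the map $m \colon \cA(G,S',F) \to \cA(G,S,F)$ is an affine isomorphism whose linear part is induced by $\mathrm{Ad}(m) \colon X_*(S') \otimes_{\bZ} \bR \to X_*(S) \otimes_{\bZ} \bR$ (a standard fact from Bruhat--Tits theory; see for instance \cite[Proposition~6.2.4]{KalethaPrasad}). Since $A_M \subseteq Z(M)$ and $m \in M(F)$, the restriction of $\mathrm{Ad}(m)$ to $X_*(A_M) \otimes_{\bZ} \bR$ is the identity. Combined with $m \cdot x_0 = x_0$, this forces $m$ to fix every point of $\cA_{x_0} = x_0 + X_*(A_M) \otimes_{\bZ} \bR$. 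Therefore, for $a' \in \Phi_{\aff}(G,S') \setminus \Phi_{\aff}(M,S')$ and its image $a = a' \circ m^{-1} \in \Phi_{\aff}(G,S) \setminus \Phi_{\aff}(M,S)$, one has $a|_{\cA_{x_0}} = a'|_{\cA_{x_0}}$, establishing the first claim.

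The second claim follows once one observes that for every $a \in \Phi_{\aff}(G,S) \setminus \Phi_{\aff}(M,S)$ with linear part $\alpha \in \Phi(G,S)$, the character $\alpha$ restricts nontrivially to $A_M$: if $\alpha|_{A_M} = 0$, then $U_{\alpha}$ would commute with $A_M$, so $U_{\alpha} \subseteq Z_G(A_M) = M$, contradicting $\alpha \notin \Phi(M,S)$. Hence $a|_{\cA_{x_0}}$ is a genuinely non-constant affine functional, and its zero set is exactly the affine hyperplane $\cA_{x_0} \cap H_a$; the independence of $\mathfrak{H}_S$ then follows from the independence of the set of restrictions. The only real obstacle is the careful verification of (ii), and this is settled by the standard description of the action of $M(F)$ on the apartment cited above.
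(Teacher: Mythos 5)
Your proof is correct and follows essentially the same route as the paper: you conjugate $S'$ to $S$ by an element $m \in M(F)_{x_0,0}$ (the same element from Lemma~\ref{lemmaMtoS}), transport affine roots along $m$, and note that $m$ fixes $\cA_{x_0}$ pointwise so that the restricted functionals are unchanged. The paper simply invokes "$M(F)_{x_0,0}$ acts trivially on $\cA_{x_0}$" as a known fact, whereas you re-derive it from $m\cdot x_0 = x_0$ together with $\mathrm{Ad}(m)$ being the identity on $X_*(A_M)$ (since $A_M$ is central in $M$) — a harmless expansion of the same point. Your closing observation that each restricted functional is non-constant because $\alpha|_{A_M}\neq 0$ for $\alpha\in\Phi(G,S)\smallsetminus\Phi(M,S)$ is also correct and matches the remark the paper makes just before the lemma.
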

\begin{proof}
	Let $S'$ be another maximal split torus of $M$ such that $x_{0} \in \cA(G, S', F)$.
	Then there exists an element $m \in M(F)_{x_{0}, 0}$ such that 
	$
	m S m^{-1} = S'
	$, and we obtain a bijection between $\Phi_{\aff}(G, S) \smallsetminus \Phi_{\aff}(M, S)$ and $\Phi_{\aff}(G, S') \smallsetminus \Phi_{\aff}(M, S')$ by sending $a \in \Phi_{\aff}(G, S) \smallsetminus \Phi_{\aff}(M, S)$ to the affine root $ma \in \Phi_{\aff}(G, S') \smallsetminus \Phi_{\aff}(M, S')$ defined by
	$
	(ma)(x) = a(m^{-1} x)
	$
	for
	$x \in \cA(G, S', F) = m\cA(G, S, F).
	$
	Since the group $M(F)_{x_{0}, 0}$ acts trivially on $\cA_{x_{0}}$, for every $a \in \Phi_{\aff}(G, S) \smallsetminus \Phi_{\aff}(M, S)$, we have $(ma)(x) = a(x)$ for all $x \in \cA_{x_0}$, i.e., $(ma)\restriction_{\cA_{x_0}}=a\restriction_{\cA_{x_0}}$  and 
	\begin{align*}
		\cA_{x_{0}} \cap H_{a} =
		\left\{
		x \in \cA_{x_{0}} \mid a(x) = 0
		\right\} 
		& = \left\{
		x \in \cA_{x_{0}} \mid (ma)(x) = 0
		\right\} 
		= \cA_{x_{0}} \cap H_{ma}.
	\end{align*}
	Thus, we obtain that $\mathfrak{H}_{S} = \mathfrak{H}_{S'}$.
\end{proof}

\index{notation-ax}{H_@$\mathfrak{H}$}%
Based on the lemma, we can set $\mathfrak{H} = \mathfrak{H}_{S}$, where $S$ is any maximal split torus of $M$ such that $x_{0} \in \cA(G, S, F)$.
Then 
 $\iota: \cB(M^0,F) \rightarrow \cB(G^0,F)$ is 0-generic relative to $x \in \cA_{x_{0}}$ if and only if $x$ is not contained in any affine hyperplane $H \in \mathfrak{H}$, that is, $x \in \cA_{\gen}= \cA_{x_{0}} \smallsetminus \left(
\bigcup_{H \in \mathfrak{H}} H\right)$.
In particular, we have $x_{0} \in \cA_{\gen}$.
\begin{lemma}
	\label{lemmaNzeroheartpreservesH}
	The action of $N_{G}(M)(F)_{[x_{0}]_{M}}$ on $\cA_{x_{0}}$ preserves the set $\mathfrak{H}$.
\end{lemma}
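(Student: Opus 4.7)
The plan is to reduce to the case where $n$ normalizes a maximal split torus $S$ of $M$ with $x_0 \in \cA(G,S,F)$, and then use the standard fact that such an $n$ acts on the affine root system $\Phi_{\aff}(G,S)$ while preserving the subsystem $\Phi_{\aff}(M,S)$.

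More concretely, fix $n \in N_{G}(M)(F)_{[x_{0}]_{M}}$ and $H \in \mathfrak{H}$. First I would choose a maximal split torus $S$ of $M$ with $x_0 \in \cA(G,S,F)$, so that $H = \cA_{x_0} \cap H_a$ for some $a \in \Phi_{\aff}(G,S)\smallsetminus \Phi_{\aff}(M,S)$, using the independence statement of Lemma~\ref{lemma:independenceofS0}. Next, I would invoke Lemma~\ref{lemmaMtoS} to write $n = n' m$ with $m \in M(F)_{x_0,0}$ and $n' \in N_G(M)(F)_{[x_0]_M} \cap N_G(S)(F)$. Since $M(F)_{x_0,0}$ fixes $\cA_{x_0}$ pointwise, $n$ and $n'$ induce the same transformation of $\cA_{x_0}$, so it suffices to show $n'(H) \in \mathfrak{H}$.

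The key step is then to analyze the action of $n'$. Because $n'$ normalizes $S$, it acts on $\cA(G,S,F)$ by an affine transformation and induces an action $a \mapsto n'a$ on $\Phi_{\aff}(G,S)$, where $(n'a)(x) = a(n'^{-1}x)$. In particular, $n'(H_a) = H_{n'a}$. Because $n'$ normalizes $M$ and $S \subset M$ is a maximal split torus, $n'$ also normalizes $Z_M(S) = Z_G(S) \cap M$ and the root datum of $M$ relative to $S$; hence $n'$ preserves $\Phi_{\aff}(M,S)$, and therefore $n'a \in \Phi_{\aff}(G,S)\smallsetminus \Phi_{\aff}(M,S)$. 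Combining this with the fact that $n'(\cA_{x_0}) = \cA_{x_0}$ by Lemma~\ref{lemmaaboutstabilizerofx0M}, we obtain
\[
n'(H) = n'(\cA_{x_0} \cap H_a) = \cA_{x_0} \cap H_{n'a} \in \mathfrak{H}_S = \mathfrak{H},
\]
as desired.

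The only mildly subtle point—the main obstacle, if there is one—is verifying that the induced action of $n'$ on $\Phi_{\aff}(G,S)$ really does preserve the subset $\Phi_{\aff}(M,S)$. This is essentially tautological once one unwinds the definitions (an affine root of $M$ relative to $S$ arises from a root $\alpha$ of $M$ with respect to $S$ together with a filtration step of the corresponding root subgroup, both of which are transported by conjugation by $n'$ because $n'$ normalizes $M$), but it is worth stating cleanly rather than asserting without comment.
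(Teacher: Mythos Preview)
Your proof is correct, but the paper takes a shorter route that avoids the decomposition via Lemma~\ref{lemmaMtoS} altogether. Instead of reducing to an element normalizing $S$, the paper observes directly that for any $n \in N_{G}(M)(F)_{[x_{0}]_{M}}$, the conjugate $nSn^{-1}$ is again a maximal split torus of $M$ whose apartment contains $x_0$ (since $x_0 \in nx_0 + X_*(A_M)\otimes\bR \subset n\cA(G,S,F) = \cA(G,nSn^{-1},F)$), and then simply notes that $n(\mathfrak{H}_S) = \mathfrak{H}_{nSn^{-1}} = \mathfrak{H}_S$ by Lemma~\ref{lemma:independenceofS0}. Your approach trades this single appeal to the independence lemma for an explicit computation with affine roots after passing to $N_G(S)(F)$; this is a bit longer but has the virtue of making the action on individual hyperplanes visible, whereas the paper's argument handles the whole set $\mathfrak{H}$ at once without ever naming a specific affine root.
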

\begin{proof}
	Let $n \in N_{G}(M)(F)_{[x_{0}]_{M}}$.
	We fix a maximal split torus $S$ of $M$ such that $x_{0} \in \cA(G, S, F)$.
	Then the torus $n S n^{-1}$ is also a maximal split torus of $M$, and we have
	\begin{align*}
		x_{0}  \in n x_{0} + \left( X_{*}(A_{M}) \otimes_{\mathbb{Z}} \bR\right)
		\subseteq n \cA(G, S, F)
		= \cA(G, n S n^{-1}, F).
	\end{align*}
	Since the set $\mathfrak{H}_{S}$ does not depend on the choice of such a maximal split torus of $M$, we obtain that
	$
	n \left(
	\mathfrak{H}_{S}
	\right) = \mathfrak{H}_{n S n^{-1}} = \mathfrak{H}_{S}
	$.
	Thus, we conclude that $n\left(
	\mathfrak{H}
	\right) = \mathfrak{H}$.
\end{proof}

\subsection{The structure of Hecke algebras attached to depth-zero pairs}
\label{subsec:structureofheckefordepthzerotypes}
From now on, we suppose that the group $K_{M}$ is normalized by $N_{G}(M)(F)_{[x_{0}]_{M}}$.
For instance, if we choose $K_{M} = M(F)_{x_{0}}$ or $M(F)_{x_{0}, 0}$, then this assumption is satisfied.
We impose this assumption to show that the support of the Hecke algebra attached to $(K_x, \rho_x)$ is given by $K_{x} \cdot N(\rho_{M})_{[x_{0}]_{M}} \cdot K_{x}$, see Proposition~\ref{propproofofaxiombijectionofdoublecoset} below.
If $\Coeff=\bC$, then the case of $K_{M} = M(F)_{x_{0}}$ corresponds to types for single Bernstein blocks, while the case $K_{M} = M(F)_{x_{0}, 0}$ is the one that Morris (\cite{Morris}) studied.

In this subsection, we will prove Theorem~\ref{theoremstructureofheckefordepthzero},
i.e., that the Hecke algebra $\cH(G(F), \rho_{x_{0}})$ is isomorphic to a semi-direct product of an affine Hecke algebra with a twisted group algebra,
by verifying all the required axioms from Section~\ref{Structure of a Hecke algebra}
that allow us to apply Theorem~\ref{theoremstructureofhecke}.
We recall that we have constructed in Section \ref{subsec:constructionofdepthzero} the family
\[
\cK =
\left\{
(K_{x}, K_{x, +}, (\rho_{x}, V_{\rho_{x}}))
\right\}_{x \in \cA_{\gen}} 
\] 
of quasi-$G$-cover-candidates as defined at the beginning of Section~\ref{subsec:familyofcovers}.
We will now prove that the family also satisfies Axioms~\ref{axiomaboutHNheartandK} and \ref{axiombijectionofdoublecoset} for the group $\Nheart \coloneqq N(\rho_{M})_{[x_{0}]_{M}}$.
\begin{lemma}
	\label{proofofaxiomaboutHNheartandK}
	\mbox{}
	\begin{enumerate}[(1)] 
		\item
		For every $x \in \cA_{\gen}$, we have
		\begin{enumerate}
			\item \(
			K_{n x} = n K_{x} n^{-1}
			\)
			and
			\(
			K_{n x, +} = n K_{x, +} n^{-1}
			\) for $n \in N(\rho_{M^{0}})_{[x_{0}]_{M^{0}}}$,
			\item 
			the pair $(K_{x}, \rho_{x})$ is a quasi-$G$-cover of $(K_{M}, \rho_{M})$,
			\item
			$K_{x} = K_{M} \cdot K_{x, +},$
			\item
			$K_{x, +} = \left(
			K_{x, +} \cap U(F)
			\right) \cdot \left(K_{x, +} \cap M(F)\right) \cdot \left(
			K_{x, +} \cap \overline{U}(F)
			\right)$ for all $U \in \cU(M)$.
		\end{enumerate}
		Moreover, the group $K_{x, +} \cap M(F)$ is independent of the point $x \in \cA_{\gen}$.
		\item
		For $x, y, z \in \cA_{\gen}$ such that 
		\(
		d(x, y) + d(y, z) = d(x, z),
		\)
		there exists $U \in \cU(M)$ such that
		\[
		K_{x} \cap U(F) \subseteq K_{y} \cap U(F) \subseteq K_{z} \cap U(F)
		\quad
		\text{ and }
		\quad
		K_{z} \cap \overline{U}(F)  \subseteq K_{y} \cap \overline{U}(F) \subseteq K_{x} \cap \overline{U}(F).
		\]
	\end{enumerate}
	Thus, the family $\cK$ satisfies Axiom~\ref{axiomaboutHNheartandK}
	for the group $\Nheart = N(\rho_{M})_{[x_{0}]_{M}}$.
\end{lemma}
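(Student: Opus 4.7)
The plan is to verify each of the listed properties by combining (i) the explicit definitions $K_x = K_M\cdot G(F)_{x,0}$ and $K_{x,+} = G(F)_{x,0+}$ for $x \in \cA_\gen$, (ii) the standard structure theory of parahoric subgroups and their Moy--Prasad pro-$p$ radicals, and (iii) the $0$-genericity of the admissible embedding $\iota$ at points of $\cA_\gen$. The main non-routine step is part (2), which requires choosing a parabolic compatibly with the distance condition.

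For part (1)(a), the conjugation identities $K_{nx} = nK_xn^{-1}$ and $K_{nx,+} = nK_{x,+}n^{-1}$ will follow by combining two facts: first, the standard Bruhat--Tits equalities $nG(F)_{x,0}n^{-1}=G(F)_{nx,0}$ and $nG(F)_{x,0+}n^{-1}=G(F)_{nx,0+}$ for any $n \in G(F)$; and second, our standing assumption in \S\ref{subsec:structureofheckefordepthzerotypes} that $K_M$ is normalized by $N_G(M)(F)_{[x_0]_M}$, hence in particular by $\Nheart = N(\rho_M)_{[x_0]_M}$. For part (1)(b), I will fix $U \in \cU(M)$ and invoke the Iwahori decomposition $G(F)_{x,0} = (G(F)_{x,0}\cap U(F))\cdot M(F)_{x,0}\cdot (G(F)_{x,0}\cap \overline U(F))$, where the unipotent factors automatically lie in $G(F)_{x,0+}=K_{x,+}$. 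Since $K_M \supseteq M(F)_{x_0,0} = M(F)_{x,0}$, absorbing $M(F)_{x,0}$ into $K_M$ yields the quasi-$G$-cover decomposition $K_x = (K_x\cap U(F))\cdot K_M\cdot (K_x\cap\overline U(F))$, and the intersections $K_x\cap M(F) = K_M$ and $K_x\cap U^\pm(F)=G(F)_{x,0+}\cap U^\pm(F)$ are immediate; the triviality of $\rho_x$ on $K_x \cap U^\pm(F)$ is by construction, since $\rho_x$ is by definition trivial on $K_{x,+}$.

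For parts (1)(c) and (1)(d), the $0$-genericity of $\iota$ at $x\in\cA_\gen$ gives an isomorphism of reductive $\ff$-quotients $\sfG_x \simeq \sfM_{x_0}$, so $G(F)_{x,0} = M(F)_{x,0}\cdot G(F)_{x,0+}$. Combined with $M(F)_{x,0}\subseteq K_M$, this yields $K_x = K_M\cdot K_{x,+}$. The Iwahori decomposition of the pro-$p$ radical $K_{x,+} = G(F)_{x,0+}$ with respect to $(M,U)$ is a standard result of Moy--Prasad theory. Finally, $K_{x,+}\cap M(F) = M(F)_{x,0+} = M(F)_{x_0,0+}$ is independent of $x\in\cA_{x_0}$ because the filtration $M(F)_{\bullet,0+}$ is constant on the facet of $\cB(M,F)$ containing $x_0$, which contains all of $\cA_{x_0}$ by construction.

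The main obstacle is part (2). Using Lemma~\ref{lemmaaboutdistancefunctionnoindex}, the hypothesis $d(x,y)+d(y,z)=d(x,z)$ is equivalent to the disjoint union decomposition $\fH_{x,z}=\fH_{x,y}\sqcup\fH_{y,z}$. What must be exhibited is a parabolic $P$ with Levi $M$ and unipotent radical $U$ such that for every affine root $a$ with gradient in $\Phi(U,S)$ one has $a(x)\geq 0\Rightarrow a(y)\geq 0\Rightarrow a(z)\geq 0$; equivalently, no affine root $a$ with gradient in $\Phi(U,S)$ has $a(x)>0>a(y)$ or $a(y)>0>a(z)$. The plan is to choose a generic $\lambda \in X_*(A_M)\otimes_\bZ \bR$ and set $\Phi(U,A_M) = \{\alpha : \alpha(\lambda)>0\}$. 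The set $B$ of gradients of affine roots in $\fH_{x,y}$ oriented with $a(x)>0$ lies in the open half-space $\{\alpha : \alpha(y-x)<0\}$, and the analogous set $B'$ for $\fH_{y,z}$ lies in $\{\alpha : \alpha(z-y)<0\}$. Using the disjointness $\fH_{x,y}\cap\fH_{y,z}=\emptyset$, a short argument shifting affine roots by elements of the value lattice shows that $B\cup B'$ is acute, in the sense that $(B\cup B')\cap -(B\cup B')=\emptyset$. The existence of a suitable generic $\lambda$ satisfying $\alpha(\lambda)<0$ for every $\alpha\in B\cup B'$ then follows from the combinatorics of the affine hyperplane arrangement $\fH$, parallel to the construction in \cite[\S7]{Morris}. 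The compatible unipotent radical $U$ obtained in this way then satisfies the asserted double chain of inclusions, and the ``in particular'' clause is the case $y=x$ (or $y=z$).
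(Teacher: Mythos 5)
Your treatment of part (1) is correct and somewhat more detailed than the paper, which simply cites \cite[4.3~Proposition, Theorem~7.5]{Kim-Yu} for those items; the $0$-genericity argument for (1)(c), the Iwahori decomposition of $G(F)_{x,0+}$ for (1)(d), and the observation that $[x]_M = [x_0]_M$ is a fixed vertex (so that $M(F)_{x,0+}$ is constant on $\cA_{x_0}$) are all sound.

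The issue is part (2). Your reduction of the goal to exhibiting $U$ such that no affine root $a$ with $Da \in \Phi(U,S)$ crosses sign between $x$ and $y$ or between $y$ and $z$, and the observations that $B \subset \{\alpha : \alpha(y-x) < 0\}$ and $B' \subset \{\alpha : \alpha(z-y) < 0\}$, are both fine. But your final step is a genuine gap: acuity $(B\cup B')\cap -(B\cup B') = \emptyset$ does \emph{not} imply the existence of a linear functional $\lambda$ with $\lambda(\alpha) < 0$ for all $\alpha \in B\cup B'$. For instance $\{(1,0),\,(-1,1),\,(-1,-1)\}$ in $\bR^2$ is acute but admits no such separating functional, since the three constraints $\lambda_1 < 0$, $\lambda_2 < \lambda_1$, $\lambda_1+\lambda_2 > 0$ are inconsistent. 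What you actually need is that $B\cup B'$ lies in an open half-space, which is strictly stronger, and which acuity plus vague ``combinatorics of the hyperplane arrangement'' does not supply.

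The paper gets there directly, without the acuity detour. It takes $U \in \cU(M)$ with $\alpha(z-x)\ge 0$ for every nonzero weight $\alpha$ of $A_M$ on $\operatorname{Lie}(U)$; by the definition of the Moy--Prasad filtrations this gives $K_x\cap U(F)\subseteq K_z\cap U(F)$ and $K_z\cap\overline U(F)\subseteq K_x\cap\overline U(F)$ at once. Then the middle inclusion is proved by contradiction: if $K_x\cap U(F)\not\subseteq K_y\cap U(F)$, some $a$ with $Da\in\Phi(U,S)$ has $a(x)>0>a(y)$, so $\cA_{x_0}\cap H_a\in\fH_{x,y}\subset\fH_{x,z}$ (by Lemma~\ref{lemmaaboutdistancefunctionnoindex}), hence $a(z)<0$; but then $K_x\cap U_{Da}(F)\not\subseteq K_z\cap U_{Da}(F)$, contradicting the first inclusion. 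In effect this computation shows exactly that $B\cup B'\subset\{\alpha:\alpha(z-x)<0\}$, i.e.\ the needed half-space is generated by $z-x$ itself — which is what your sketch should have produced instead of invoking acuity. With that substitution your argument would close, but as written it does not.
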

\begin{proof}
	\addtocounter{equation}{-1}
	\begin{subequations}
		
		The first claim follows from the definitions and \cite[4.3~Proposition, Theorem~7.5]{Kim-Yu}.
		We will prove the second claim.
		Let $x, y, z \in \cA_{\gen}$ such that 
		$
		d(x, y) + d(y, z) = d(x, z)
		$.
		Recall that $z - x$ is an element of $X_{*}(A_{M}) \otimes_{\bZ} \bR$.
		We take $U \in \cU(M)$ such that
		$
		\alpha(z - x) \ge 0
		$
		for any non-zero weight $\alpha \in X^{*}(A_{M})$ occurring in the adjoint representation of $A_{M}$ on the Lie algebra of $U$.
		Then the definition of the groups $K_{x}$ and $K_{z}$ implies that
		\begin{align}
			\label{defofmoyprasadchoiceofPU}
			\begin{cases}
				K_{x} \cap U(F) & \subseteq K_{z} \cap U(F), \\
				K_{z} \cap \overline{U}(F) & \subseteq K_{x} \cap \overline{U}(F).
			\end{cases}
		\end{align}
		We will prove that
		\[
		K_{x} \cap U(F) \subseteq K_{y} \cap U(F) \subseteq K_{z} \cap U(F).
		\]
		Suppose that
		$
		K_{x} \cap U(F) \not \subseteq K_{y} \cap U(F)
		$.
		Then the definitions of $K_{x}$ and $K_{y}$ imply that there exists $a \in \Phi_{\aff}(G, S)$ such that the gradient $Da \in \Phi(G, S)$
occurs in the adjoint representation of $S$ on the Lie algebra of $U$, and
		\[
		a(x) > 0
		\qquad \text{and} 
		\qquad
		a(y) < 0.
		\]
		In particular we have 
		$
		\cA_{x_{0}} \cap H_{a} \in \mathfrak{H}_{x, y}
		$.
		Since 
		$
		d(x, y) + d(y, z) = d(x, z)
		$, Lemma~\ref{lemmaaboutdistancefunctionnoindex}
		implies that
		$
		\mathfrak{H}_{x, y}, \mathfrak{H}_{y, z} \subset \mathfrak{H}_{x, z}
		$.
		Hence, we also obtain that
		$
		\cA_{x_{0}} \cap H_{a} \in \mathfrak{H}_{x, z}
		$.
		Since $a(x) > 0$, we have $a(z) < 0$.
		Then the definitions of $K_{x}$ and $K_{z}$ imply that
		$
		K_{x} \cap U_{Da}(F) \not \subseteq K_{z} \cap U_{Da}(F)
		$,
		where $U_{Da}$ denotes the root subgroup corresponding to $Da$.
		However, this contradicts \eqref{defofmoyprasadchoiceofPU}.
		Hence, we obtain that
		$
		K_{x} \cap U(F) \subseteq K_{y} \cap U(F)
		$.
		Similarly, we can prove that
		$
		K_{y} \cap U(F) \subseteq K_{z} \cap U(F)
		$.
		Replacing $x$ with $z$ and $U$ with $\overline{U}$, we also obtain that
		\[
		K_{z} \cap \overline{U}(F) \subseteq K_{y} \cap \overline{U}(F) \subseteq K_{x} \cap \overline{U}(F).
		\]
	\end{subequations}
	This proves the second claim. 
	The first two properties of Axiom~\ref{axiomaboutHNheartandK}
	follow from Lemma~\ref{lemmaNzeroheartpreservesH}, the remaining properties from the first two claims proven above.
\end{proof}

Next, we will prove that the family $\cK$ satisfies
Axiom~\ref{axiombijectionofdoublecoset}.
The following proposition is essentially a translation of \cite[4.15~Theorem]{Morris} into our slightly more general setting.
\begin{proposition}
	\label{propproofofaxiombijectionofdoublecoset}
	We have
	\[
	K_{x} \cdot N(\rho_{M})_{[x_{0}]_{M}} \cdot K_{x} = I_{G(F)}(\rho_{x})
	\]
	for all $x \in \cA_{\gen}$, that is, 
	the family $\cK$ satisfies
	Axiom~\ref{axiombijectionofdoublecoset}
	for $\Nheart=N(\rho_{M})_{[x_{0}]_{M}}$.
\end{proposition}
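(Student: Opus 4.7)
The inclusion $K_{x} \cdot N(\rho_{M})_{[x_{0}]_{M}} \cdot K_{x} \subseteq I_{G(F)}(\rho_{x})$ is the easy direction. By Lemma~\ref{proofofaxiomaboutHNheartandK}\,(1)(b), the pair $(K_{x}, \rho_{x})$ is a quasi-$G$-cover of $(K_{M}, \rho_{M})$, so Corollary~\ref{corollarynormalizercontainedinintertwiner} gives $N(\rho_{M})_{[x_{0}]_{M}} \subseteq I_{G(F)}(\rho_{x})$. Since $I_{G(F)}(\rho_{x}) = K_{x} \cdot I_{G(F)}(\rho_{x}) \cdot K_{x}$ by definition, the inclusion follows.

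For the reverse inclusion, I plan to adapt the argument of Morris \cite[4.15~Theorem]{Morris} to the present setting. Let $g \in I_{G(F)}(\rho_{x})$; the goal is to write $g = k_{1} n k_{2}$ with $k_{1}, k_{2} \in K_{x}$ and $n \in N(\rho_{M})_{[x_{0}]_{M}}$. Fix a maximal split torus $S$ of $M$ with $x \in \cA(G, S, F)$. Using the Bruhat--Tits decomposition $G(F) = G(F)_{x, 0} \cdot N_{G}(S)(F) \cdot G(F)_{x, 0}$, together with $G(F)_{x, 0} \subseteq K_{x}$, I may assume $g = \widetilde w \in N_{G}(S)(F)$.

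The core step will be to use the intertwining hypothesis to pin down the location of $\widetilde w$. By Axiom~\ref{axiomaboutHNheartandK}\eqref{axiomaboutHNheartandKabouttypeKx=KMKx+}, which is verified in Lemma~\ref{proofofaxiomaboutHNheartandK}, we have $K_{x} = K_{M} \cdot K_{x, +}$ with $K_{x, +} = G(F)_{x, 0+}$, and $\rho_{x}$ is trivial on $K_{x, +}$. Reducing the intertwining space $\Hom_{K_{x} \cap {}^{\widetilde w}\!K_{x}}({}^{\widetilde w}\!\rho_{x}, \rho_{x})$ modulo $K_{x, +}$ and using the decomposition $\sfG_{x} \supseteq \sfM_{x_{0}}$ of the reductive quotient (together with the fact that, on the finite-group level, $\rho_{x}$ is the parabolic induction of the cuspidal representation $\sigma$ underlying $\rho_{M}|_{M(F)_{x_{0}, 0}}$), I can invoke Harish-Chandra's theory of cuspidal representations of finite reductive groups to deduce that $\widetilde w$ must normalize the Levi $M$ and fix $[x_{0}]_{M}$, that is, $\widetilde w \in N_{G}(M)(F)_{[x_{0}]_{M}}$ modulo an element of $K_{x}$. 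Here I use that our hypothesis that $K_{M}$ is normalized by $N_{G}(M)(F)_{[x_{0}]_{M}}$ allows the Harish-Chandra argument to run even when $K_{M}$ is larger than $M(F)_{x_{0}, 0}$.

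Once we have $g \in K_{x} \cdot N_{G}(M)(F)_{[x_{0}]_{M}} \cdot K_{x}$, write $g = k_{1} n k_{2}$ with $n \in N_{G}(M)(F)_{[x_{0}]_{M}}$. Then $n$ intertwines $\rho_{x}$, and by Lemma~\ref{lemmacoverintertwining} applied to the quasi-$G$-covers $(K_{x}, \rho_{x})$ and $({}^{n}\!K_{x}, {}^{n}\!\rho_{x})$, this intertwining on $K_{x}$ translates to an intertwining on $K_{M}$, that is, ${}^{n}\!\rho_{M} \simeq \rho_{M}$. Combined with the fact that $n$ normalizes $K_{M}$ by assumption, this gives $n \in N(\rho_{M})_{[x_{0}]_{M}}$, completing the proof. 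The main obstacle I anticipate is the cuspidality/Harish-Chandra step in the third paragraph, where one must be careful that the characteristic-$\ell$ version of the cuspidality argument still forces $\widetilde w$ into $N_{G}(M)(F)_{[x_{0}]_{M}}$; this uses that $\ell \neq p$, so cuspidal representations of finite reductive groups behave as in the characteristic-zero case for the purposes of intertwining.
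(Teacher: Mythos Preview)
The easy direction and the final paragraph (passing from $N_{G}(M)(F)_{[x_{0}]_{M}}$ to $N(\rho_{M})_{[x_{0}]_{M}}$ via Lemma~\ref{lemmacoverintertwining} and the normalization hypothesis on $K_M$) are correct and match the paper. The problem is your third paragraph, which contains a factual error and, as a consequence, a genuine gap.

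The error: since $x \in \cA_{\gen}$, the embedding $\iota$ is $0$-generic relative to $x$, which by definition means $\sfG_{x}(\ff) = \sfM_{x_{0}}(\ff)$. So $\rho_{x}\restriction_{G(F)_{x,0}}$ is (the inflation of) the cuspidal representation of $\sfM_{x_{0}}(\ff)$ itself, not a parabolic induction from a proper Levi. There is no Harish-Chandra parabolic-induction picture on the finite group to exploit in the way you describe.

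The gap: what cuspidality actually buys you is that $\rho_{x}$ has no nonzero fixed vectors under unipotent radicals of proper parabolics of $\sfG_{x}$. The paper uses this as follows. Since ${}^{\widetilde w}\!\rho_{x}$ is trivial on $G(F)_{\widetilde w x,0+}$, the intertwining forces $\rho_{x}$ to have a fixed vector under $G(F)_{x,0} \cap G(F)_{\widetilde w x,0+}$; the image of this group in $\sfG_{x}(\ff)$ is the unipotent radical of a parabolic, so cuspidality forces it to be trivial. Restricting to $M$ and using that $[x]_{M}$ is a vertex, one deduces $[\widetilde w x]_{M} = [x]_{M}$, i.e., $\widetilde w x \in \cA_{x_{0}}$. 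But this does \emph{not} yet give $\widetilde w \in N_{G}(M)(F)$. For that, the paper invokes Lemma~\ref{Phiaffxminimalforgenericpoints} to get $\Phi_{\aff,x}(G,S) \subseteq \Phi_{\aff,\widetilde w x}(G,S)$, then a cardinality comparison ($\widetilde w$ permutes affine roots) to get equality, and reads off $A_{M} = {}^{\widetilde w}\!A_{M}$ from the common kernel of the gradients. Your proposal asserts that finite-group Harish-Chandra theory directly yields $\widetilde w \in N_{G}(M)(F)_{[x_{0}]_{M}}$, but $\widetilde w$ does not live in any finite reductive quotient, and the affine-root step is where the passage from building combinatorics to ``$\widetilde w$ normalizes $M$'' actually happens.
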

To prove Proposition~\ref{propproofofaxiombijectionofdoublecoset}, we prepare the following lemma.
For a maximal split torus $S$ of $G$ and $x \in \cA(G, S, F)$, we write\index{notation-ax}{PhiaffxGS@$\Phi_{\aff, x}(G, S)$}
\[
\Phi_{\aff, x}(G, S) = \left\{
a \in \Phi_{\aff}(G, S) \mid a(x) = 0
\right\}.
\]

\begin{lemma}
	\label{Phiaffxminimalforgenericpoints}
	Let $x, y \in \cA_{x_{0}}$. As above, we denote by $S$ a maximal split torus of $M$ such that $x_{0} \in \cA(G, S, F)$. 
	Suppose that $x \in \cA_{\gen}$.
	Then we have
	\[
	\Phi_{\aff, x}(G, S) \subseteq \Phi_{\aff, y}(G, S).
	\]
\end{lemma}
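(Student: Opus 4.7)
The plan is to argue by case distinction on whether the affine root $a \in \Phi_{\aff, x}(G, S)$ is an affine root of $M$ or not. Given $a \in \Phi_{\aff, x}(G, S)$, either $a \in \Phi_{\aff}(M, S)$ or $a \in \Phi_{\aff}(G, S) \smallsetminus \Phi_{\aff}(M, S)$, and I will show that the latter case cannot occur and that the former case forces $a(y) = 0$.

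First I would dispose of the case $a \in \Phi_{\aff}(G, S) \smallsetminus \Phi_{\aff}(M, S)$. In this case, by the definition of $\mathfrak{H} = \mathfrak{H}_{S}$ from Section~\ref{subsec:affinehyperplanes-depthzero}, the intersection $\cA_{x_0} \cap H_a$ is an element of $\mathfrak{H}$. Since $a(x) = 0$ and $x \in \cA_{x_0}$, we have $x \in \cA_{x_0} \cap H_a$, contradicting the assumption $x \in \cA_{\gen} = \cA_{x_0} \smallsetminus \bigcup_{H \in \mathfrak{H}} H$. Thus this case does not occur.

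Next I would handle the remaining case $a \in \Phi_{\aff}(M, S)$. Here the gradient $Da \in \Phi(M, S)$ is a root of $M$ relative to $S$, hence vanishes on the cocharacter lattice of $A_M$ (as $A_M$ lies in the center of $M$); consequently $Da$ vanishes on $X_{*}(A_{M}) \otimes_{\bZ} \bR$. Because both $x$ and $y$ lie in $\cA_{x_0} = x_0 + (X_{*}(A_{M}) \otimes_{\bZ} \bR)$, the difference $y - x$ belongs to $X_{*}(A_{M}) \otimes_{\bZ} \bR$, so
\[
a(y) = a(x) + Da(y - x) = 0 + 0 = 0,
\]
i.e., $a \in \Phi_{\aff, y}(G, S)$, as desired.

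There is no real obstacle here: the whole argument is a straightforward unraveling of the definitions of $\cA_{\gen}$, $\mathfrak{H}_S$, $\cA_{x_0}$, and of affine roots via their gradients. The only mild subtlety worth stating carefully is the vanishing of $Da$ on $X_*(A_M) \otimes_{\bZ} \bR$ for $Da \in \Phi(M,S)$, which follows from $A_M \subseteq Z(M)$.
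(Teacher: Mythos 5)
Your proof is correct and follows exactly the same two-step argument as the paper: first, $a(x)=0$ together with $x\in\cA_{\gen}$ forces $a\in\Phi_{\aff}(M,S)$, and then $Da$ vanishes on $X_*(A_M)\otimes_{\bZ}\bR$ so $a(y)=a(x)=0$. The only difference is that you spell out the reason $Da$ annihilates $X_*(A_M)\otimes_{\bZ}\bR$ (namely $A_M\subseteq Z(M)$), a justification the paper leaves implicit.
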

\begin{proof}
	\addtocounter{equation}{-1}
	\begin{subequations}
		Let $b \in \Phi_{\aff, x}(G, S)$. Since $x \not \in H_{a}$ for all $a \in \Phi_{\aff}(G, S) \smallsetminus \Phi_{\aff}(M, S)$, we have $b \in \Phi_{\aff}(M, S)$. Thus, using that $y - x \in X_{*}(A_{M}) \otimes_{\bZ} \bR$, we obtain
		\(
		b(y) = b(x + (y - x)) = b(x) + Db(y - x) = b(x) = 0.
		\)
		Hence
		$
		\Phi_{\aff, x}(G, S) \subseteq \Phi_{\aff, y}(G, S)
		$.
	\end{subequations}
\end{proof}
\begin{proof}[Proof of Proposition~\ref{propproofofaxiombijectionofdoublecoset}]
	Let $x \in \cA_{\gen}$.
	It suffices to prove that
	\(
	K_{x} \cdot N(\rho_{M})_{[x_{0}]_{M}} \cdot K_{x} \supseteq I_{G(F)}(\rho_{x})
	\)
	because the reverse inclusion follows from Corollary~\ref{corollarynormalizercontainedinintertwiner}.
	Hence let $g \in I_{G(F)}(\rho_{x})$.
	We fix a maximal split torus $S$ of $M$ such that $x_{0} \in \cA(G, S, F)$.
	According to \cite[Theorem~7.8.1]{KalethaPrasad}, we have
	$
	G(F) = K_{x} \cdot N_{G}(S)(F) \cdot K_{x}
	$.
	Hence, we may suppose that 
	$
	g \in I_{G(F)}(\rho_{x}) \cap N_{G}(S)(F)
	$.
	Since
	$
	\Hom_{K_{x} \cap ^g\!K_{x}}\left(
	^g\!\rho_{x}, \rho_{x}
	\right) \neq \{0\}
	$,
	and the representation $^g\!\rho_{x}$ is trivial on the group $^gG(F)_{x, 0+}$, we obtain that $\rho_{x}$ has a non-zero $(K_{x} \cap {^gG(F)_{x, 0+}})$-fixed vector.
	In particular, we obtain that the representation $\rho_{x}$ has a non-zero $(G(F)_{x, 0} \cap {^gG(F)_{x, 0+}})$-fixed vector.
We have that
	\begin{align*}
		\left(
		G(F)_{x, 0} \cap {{^gG}(F)_{x, 0+}}
		\right) G(F)_{x, 0+} / G(F)_{x, 0+} = \left(
		G(F)_{x, 0} \cap G(F)_{g x, 0+}
		\right) G(F)_{x, 0+} / G(F)_{x, 0+}
	\end{align*}
	is the group of $\ff$-points of the unipotent radical of a parabolic subgroup of $\sfG_{x}$,
	and the restriction of $\rho_{x}$ to $\sfG_{x}(\ff) = \sfM_{x}(\ff)$ is a cuspidal representation,
so we obtain that
	\[
	\left(
	G(F)_{x, 0} \cap G(F)_{g x, 0+}
	\right) G(F)_{x, 0+} / G(F)_{x, 0+} = \{1\}.
	\]
	In particular, we have
	\[
	\left(
	M(F)_{x, 0} \cap M(F)_{g x, 0+}
	\right) M(F)_{x, 0+} / M(F)_{x, 0+}= \{1\}.
	\]
	Since the point $[x]_{M}$ is a vertex in $\cA^{\red}(M, S, F)$, this equation implies that $[g x]_{M} = [x]_{M}$, that is,
	\[
	g x \in x + \left(
	X_{*}(A_{M}) \otimes_{\bZ} \bR
	\right) = \cA_{x_{0}}.
	\]
	Since $x \in \cA_{\gen}$, Lemma~\ref{Phiaffxminimalforgenericpoints} implies that
	$
	\Phi_{\aff, x}(G, S) \subset \Phi_{\aff, g x}(G, S)
	$.
	Since
	\(
	\abs{\Phi_{\aff, x}(G, S)} = \abs{g \Phi_{\aff, x}(G, S)} = \abs{\Phi_{\aff, g x}(G, S)},
	\)
	we obtain that $\Phi_{\aff, x}(G, S) = \Phi_{\aff, g x}(G, S)$.
	In particular, we have $gx \not \in H_{a}$ for all
	$a \in \Phi_{\aff}(G, S) \smallsetminus \Phi_{\aff}(M, S)$.
	Since $x, g x \not \in H_{a}$ for all
	$a \in \Phi_{\aff}(G, S) \smallsetminus \Phi_{\aff}(M, S)$, and the projection $[g x]_{M} = [x]_{M}$ is a vertex, we have 
	\[
	A_{M} = \Bigl(
	\bigcap_{a \in \Phi_{\aff, x}(G, S)} \ker(Da)
	\Bigr)^{\circ} 
	= \Bigl(
	\bigcap_{a \in \Phi_{\aff, g x}(G, S)} \ker(Da)
	\Bigr)^{\circ} 
	= {^g\!A_{M}}.
	\]
	Thus, we obtain that $g \in N_{G}(M)(F)$.
	Moreover, since $g x \in \cA_{x_{0}}$, Corollary~\ref{corollaryaboutstabilizerofx0primeM}
	implies that $g \in N_{G}(M)(F)_{[x_{0}]_{M}}$.
	As $N_{G}(M)(F)_{[x_{0}]_{M}}$ normalizes $K_{M}$, this also implies that $g$ normalizes the group $K_{M}$.
	Combining this with the assumption $g \in I_{G(F)}(\rho_{x})$ and Lemma~\ref{lemmacoverintertwining}, we obtain that $g \in N_{G(F)}(\rho_{M})$.
	Hence
	\(
	g \in N_{G(F)}(\rho_{M}) \cap N_{G}(M)(F)_{[x_{0}]_{M}}
	=
	N(\rho_{M})_{[x_{0}]_{M}}
	\).
\end{proof}

We recall from Definition~\ref{definitionofWheart} that
\[
\Wheart
=
\Nheart / \left( \Nheart  \cap K_{M} \right) 
=
N(\rho_{M})_{[x_{0}]_{M}} / K_{M}
\]
and from Section~\ref{subsection:indexinggroup}
that
\(
W_{\Krel} = \langle s_{H} \mid H \in \mathfrak{H}_{\Krel} \rangle
\)
with set of simple reflections $S_{\Krel}$, see Notation~\ref{notationsimplereflections}.
To prove Axioms~\ref{axiomexistenceofRgrp} and \ref{axiomaboutdimensionofend} we first introduce some notation. 
\begin{notation}
\label{notationK0xyKxy0+kappaxy}
Let $x, y \in \cA_{\gen}$ with $d(x, y) = 1$.
We denote by $H_{x,y} \in \mathfrak{H}$
the unique hyperplane that satisfies  $\mathfrak{H}_{x, y} = \{ H_{x,y} \}$  and define the compact, open subgroup
$K_{x, y}$\index{notation-ax}{Kay x y@$K_{x, y}$}
of $G(F)$ by $K_{x, y} = K_{h} = K_{M} \cdot G(F)_{h, 0}$,
where $h \in H_{x,y}$ is the unique point for which $h = x + t \cdot (y - x)$ for some $0 < t < 1$.
\end{notation} 
Since $d(x, y) = 1$, the definition of $\mathfrak{H}$ implies that we have $G(F)_{x, 0}, G(F)_{y, 0} \subseteq G(F)_{h, 0}$.
Thus, we have $K_{x}, K_{y} \subseteq K_{x, y}$.

\begin{proposition}
	\label{proofofaxiomexistenceofRgrpandaxiomsHisinKxy}
	\mbox{}
	\begin{enumerate}[(1)]
		\item
		\label{proofofaxiomexistenceofRgrpandaxiomsHisinKxyaxiomexistenceofRgrp}
		There exists a normal subgroup $\Waff$
		of $\Wheart$ such that the action of $\Wheart$ on $\cA_{x_{0}}$ restricts to an isomorphism\spacingatend{}
		\[
		\Waff \isoarrow W_{\Krel},
		\]
		that is, 
		Axiom~\ref{axiomexistenceofRgrp} is satisfied. 
		Moreover, the elements of $\Waff$ can be represented by elements in $G_{\cpt, 0} \cap N(\rho_{M})_{[x_{0}]_{M}}$, where $G_{\cpt, 0}$ denotes the kernel of the Kottwitz homomorphism on $G(F)$.
		\item
		\label{proofofaxiomexistenceofRgrpandaxiomsHisinKxyaxiomaboutdimensionofend}
		For every $s \in S_{\Krel}$ and $x \in \cA_{\gen}$ such that $\mathfrak{H}_{x, s x} = \{ H_{s} \}$, we have
		\[ 
		\left(
		N(\rho_{M})_{[x_{0}]_{M}} \cap K_{x, s x}
		\right) / K_{M}
		= \{1, s \},
		\]
		where $K_{x, s x}$ denotes the group in Notation~\ref{notationK0xyKxy0+kappaxy}.
		Thus, the family $\cK$ satisfies
		Axiom~\ref{axiomaboutdimensionofend}
		with $\Nheart = N(\rho_{M})_{[x_{0}]_{M}}$ and the group $K'_{x,s}=K_{x, s x}$ for each $s \in S_{\Krel}$ and $x \in \cA_{\gen}$ such that $\mathfrak{H}_{x, s x} = \{ H_{s} \}$.
	\end{enumerate}
\end{proposition}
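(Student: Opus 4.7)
The plan is to deduce both parts from an analysis of the finite reductive quotients $\sfG_h(\ff)$ at points $h$ lying on the walls in $\mathfrak{H}_{\Krel}$. For part (1) I will invoke Lemma \ref{lemmaaboutaxiomexistenceofRgrp} with $G'$ taken to be the kernel $G_{\cpt,0}$ of the Kottwitz homomorphism on $G(F)$. The required equality $G_{\cpt,0} \cap \Nheart \cap K_M = G_{\cpt,0} \cap \Nheart \cap M(F)_{x_0}$ is immediate when $K_M = M(F)_{x_0}$, and in the intermediate case $K_M = M(F)_{x_0,0}$ it follows from the observations that $M(F)_{x_0} \cap M_{\cpt,0} = M(F)_{x_0,0}$ and that the intertwining constraint encoded in $\Nheart = N(\rho_M)_{[x_0]_M}$, combined with the irreducibility of $\rho_M$ and functoriality of the Kottwitz map for the Levi inclusion $M \subseteq G$, forces the additional cosets in $G_{\cpt,0} \cap M(F)_{x_0}$ that are not in $M_{\cpt,0}$ to collapse into $K_M$.

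For each $H \in \mathfrak{H}_{\Krel}$, I would construct the reflection representative $s'_H$ from the reductive quotient at a wall point. Fix $x, y \in \cA_{\gen}$ with $\mathfrak{H}_{x,y} = \{H\}$ and let $h$ be the unique point of $H$ on the open segment from $x$ to $y$. Since $x \in \cA_{\gen}$ and the embedding $\cB(M,F) \hookrightarrow \cB(G,F)$ is $0$-generic at $x_0$, we have $\sfM_h = \sfM_{x_0} = \sfG_x$, and at $h$ exactly the root subgroups associated with the single affine root class defining $H$ are adjoined, so $\sfM_h$ is a maximal proper Levi subgroup of $\sfG_h$ and $N_{\sfG_h}(\sfM_h)(\ff)/\sfM_h(\ff)$ has order at most two. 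The $\cK$-relevance of $H$ translates, via the depth-zero identification of $\rho_x, \rho_y$ (and their analogues at $h$) as inflations of the cuspidal representation $\overline{\rho}_M$ of $\sfM_h(\ff) = \sfM_{x_0}(\ff)$ and compatibility of the operators $\Theta_{y \mid x}$ with parabolic induction in $\sfG_h(\ff)$, into the finite-group statement that $\End_{\sfG_h(\ff)}\bigl(\Ind_{\sfM_h(\ff)}^{\sfG_h(\ff)}(\overline{\rho}_M)\bigr)$ is two-dimensional; by Howlett--Lehrer (\cite{MR570873}) this yields $\bar s_H \in N_{\sfG_h}(\sfM_h)(\ff) \setminus \sfM_h(\ff)$ intertwining $\overline{\rho}_M$. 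Any lift $s'_H \in G(F)_{h,0} \subseteq G_{\cpt,0}$ of $\bar s_H$ then normalizes $M$ (via its image in $\sfG_h$), acts on $\cA_{x_0}$ as the reflection $s_H$ (and hence preserves $[x_0]_M$), and normalizes $\rho_M$; Lemma \ref{lemmaaboutaxiomexistenceofRgrp} delivers $\Waff$ with the asserted properties.

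For part (2), let $h$ be the interior point on $H_s$ between $x$ and $sx$ from Notation \ref{notationK0xyKxy0+kappaxy}, so $K_{x,sx} = K_M \cdot G(F)_{h,0}$. Reducing modulo $G(F)_{h,0+}$ gives a canonical identification $K_{x,sx}/(K_M \cdot G(F)_{h,0+}) \simeq \sfG_h(\ff)/\sfM_h(\ff)$, and any $n \in N(\rho_M)_{[x_0]_M} \cap K_{x,sx}$ projects to a coset of $\sfM_h(\ff)$ inside $N_{\sfG_h}(\sfM_h)(\ff)$ stabilizing $\overline{\rho}_M$. By the maximal-proper-Levi bound and the construction of $s'_{H_s} = s$ in part (1), this coset can only be the trivial one or the one represented by $s$. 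For the preimage of the trivial coset, write $n = k \cdot u$ with $k \in K_M$ and $u \in G(F)_{h,0+}$; then $u = k^{-1} n \in \Nheart \cap G(F)_{h,0+}$, and since $G(F)_{h,0+}$ has trivial image in $\sfG_h(\ff)$ it acts trivially on $\cA(G,S,F)$, so $u$ centralizes $A_M$ and therefore lies in $M(F) \cap G(F)_{h,0+} = M(F)_{x_0,0+} \subseteq K_M$. Thus $n \in K_M$, and the displayed equality $\{1,s\}$ follows.

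The principal obstacle is the finite-group-theoretic dictionary in the second paragraph: matching $\cK$-relevance of $H$ with the existence of a non-trivial intertwiner for $\overline{\rho}_M$ inside the order-at-most-two Weyl group $N_{\sfG_h}(\sfM_h)(\ff)/\sfM_h(\ff)$ arising from the maximal-proper-Levi inclusion $\sfM_h \subset \sfG_h$. Once this Howlett--Lehrer-type translation between the depth-zero intertwining operators and the finite reductive setting is cleanly established, both parts follow uniformly.
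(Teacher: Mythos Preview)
Your choice of $G' = G_{\cpt,0}$ and the appeal to Lemma~\ref{lemmaaboutaxiomexistenceofRgrp} match the paper's strategy, and the verification of the hypothesis $G_{\cpt,0} \cap \Nheart \cap K_M = G_{\cpt,0} \cap \Nheart \cap M(F)_{x_0}$ is immediate from $G_{\cpt,0} \cap M(F)_{x_0} = G_{\cpt,0} \cap M(F)_{x_0,0} \subseteq K_M$ (no case distinction is needed; your argument for the parahoric case is unnecessarily convoluted).

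The substantive divergence is in how you produce $s'_H$. The paper does \emph{not} descend to the finite reductive quotient or invoke Howlett--Lehrer. Instead it observes that $\cK$-relevance of $H$ gives $\Theta_{x\mid y}\circ\Theta_{y\mid x}\in\End_{K_{x,y}}\bigl(\ind_{K_x}^{K_{x,y}}\rho_x\bigr)\setminus\Coeff\cdot\id$, so this endomorphism algebra has dimension $>1$; but by Proposition~\ref{propositionvectorspacedecomposition} (already established in the axiomatic framework) a basis of $\cH(K_{x,y},\rho_x)$ is indexed by $\bigl(N(\rho_M)_{[x_0]_M}\cap K_{x,y}\bigr)/K_M$, so one can directly pick a nontrivial $s'_H$ there. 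This element is \emph{by construction} in $\Nheart$, hence already normalizes $M$ and $\rho_M$; one then checks geometrically that it acts on $\cA_{x_0}$ as the reflection through $H$ (it preserves the inner product, acts nontrivially by faithfulness of the $G_{\cpt,0}$-quotient, and fixes an open ball of $H$ around $h$ because $G(F)_{h,0}=G(F)_{z,0}$ for $z$ near $h$ in $H$).

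Your route through $\sfG_h(\ff)$ has real gaps. First, the identification of $\cH(K_{x,y},\rho_x)$ with a finite-group endomorphism algebra is complicated by the fact that $K_M$ need not lie inside $G(F)_{h,0}$ (so $K_{x,y}/G(F)_{h,0+}$ is not simply $\sfG_h(\ff)$). Second, and more seriously, lifting an element of $N_{\sfG_h}(\sfM_h)(\ff)\setminus\sfM_h(\ff)$ to $G(F)_{h,0}$ does not automatically yield an element normalizing $M$ in $G$: normalizing the finite Levi $\sfM_h$ says nothing directly about normalizing $A_M$, and you would need to choose the lift carefully (e.g.\ inside $N_G(S)(F)$) and argue separately. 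You also assert without proof that the lift acts as $s_H$ on $\cA_{x_0}$; this is precisely the nontrivial geometric verification the paper carries out. For part~(2), your projection $K_{x,sx}\to\sfG_h(\ff)/\sfM_h(\ff)$ is not well-defined as written (again because $K_M\not\subseteq G(F)_{h,0}$), and the justification ``$G(F)_{h,0+}$ has trivial image in $\sfG_h(\ff)$, so it acts trivially on $\cA(G,S,F)$'' is simply false---elements of $G(F)_{h,0+}$ fix $h$ but can move other points of the apartment. The paper's argument for part~(2) is instead: since $\bigl(\Nheart\cap K_{x,sx}\bigr)/K_M\hookrightarrow\bigl(G_{\cpt,0}\cap\Nheart\bigr)/\bigl(G_{\cpt,0}\cap K_M\bigr)$ (using $K_{x,sx}=K_M\cdot G(F)_{h,0}$ and $K_M\subseteq\Nheart$), this quotient acts faithfully on $\cA_{x_0}$, and any nontrivial element acts as the reflection through $H_s$ by the same three-property check, hence equals $s$.
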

\begin{proof}
	\addtocounter{equation}{-1}
	\begin{subequations}
		First, we will prove \eqref{proofofaxiomexistenceofRgrpandaxiomsHisinKxyaxiomexistenceofRgrp}.
		Note that we have $G_{\cpt, 0} \cap M(F)_{x_{0}} = G_{\cpt, 0} \cap M(F)_{x_{0}, 0}$.
		Hence, according to Lemma~\ref{lemmaaboutaxiomexistenceofRgrp},
		to prove
		\eqref{proofofaxiomexistenceofRgrpandaxiomsHisinKxyaxiomexistenceofRgrp},
		it suffices to show that for all $H \in \mathfrak{H}_{\Krel}$,
		there exists an element 
		\[
		s'_{H} \in 
		\left(
		G_{\cpt, 0} \cap N(\rho_{M})_{[x_{0}]_{M}}
		\right) / \left(
		G_{\cpt, 0} \cap K_{M}
		\right)
		\]
		such that the action of $s'_{H}$ on $\cA_{x_{0}}$ agrees with the orthogonal reflection $s_{H}$.
		Let $H \in \mathfrak{H}_{\Krel}$, and $x, y \in \cA_{\gen}$ such that $\mathfrak{H}_{x,y} = \{H\}$ and 
		\begin{align}
			\label{equationtoprovethedimensiongreaterthanone}
			\Theta_{x \mid y} \circ \Theta_{y \mid x} \not \in \Coeff \cdot \id_{\ind_{K_{x}}^{G(F)}(\rho_{x})}.
		\end{align}
		Since $K_{x}, K_{y} \subset K_{x, y}$, the definitions of $\Theta_{x \mid y}$ and $\Theta_{y \mid x}$ imply that
		\[
		\Theta_{x \mid y} \circ \Theta_{y \mid x} \in 
		\End_{G(F)} \left(
		\ind_{K_{x}}^{G(F)}(\rho_{x})
		\right)_{K_{x, y}}
		\simeq 
		\End_{K_{x, y}} \left(
		\ind_{K_{x}}^{K_{x, y}}(\rho_{x})
		\right),
		\]
		where the isomorphism follows from Lemma \ref{lemmarestrictiontoasubspaceofthecompactinductiongeneralver}.
		Hence, \eqref{equationtoprovethedimensiongreaterthanone}
		implies that the dimension of the $\Coeff$-vector space
		\[
		\End_{K_{x, y}} \left(
		\ind_{K_{x}}^{K_{x, y}}(\rho_{x})
		\right) \simeq \cH(K_{x, y}, \rho_{x})
		\]
		is greater than one.
		According to Proposition~\ref{propositionvectorspacedecomposition},
		a basis of the space $\cH(K_{x, y}, \rho_{x})$ is indexed by the group  
		\[
		\left(
		N(\rho_{M})_{[x_{0}]_{M}} \cap K_{x, y}
		\right) / K_{M}
		\subset \Wheart.
		\]
		In particular, we can take a non-trivial element 
		$
		s'_{H} \in 
		\left(
		N(\rho_{M})_{[x_{0}]_{M}} \cap K_{x, y}
		\right) / K_{M}
		$.
		Recall that $K_{x, y} = K_{M} \cdot G(F)_{h, 0}$, where 
		$h \in H$ is the unique point such that
		$
		h = x + t \cdot (y - x)
		$
		for some $0 < t < 1$. Hence,
		we have
		\begin{align*}
			s'_{H}  \in 
			\left(
			N(\rho_{M})_{[x_{0}]_{M}} \cap K_{x, y}
			\right) / K_{M} & \simeq \left(
			G_{h, 0} \cap N(\rho_{M})_{[x_{0}]_{M}}
			\right) / \left(
			G_{h, 0} \cap K_{M}
			\right) \\
			& \subset \left(
			G_{\cpt, 0} \cap N(\rho_{M})_{[x_{0}]_{M}}
			\right) / \left(
			G_{\cpt, 0} \cap K_{M}
			\right),
		\end{align*}
		where the isomorphism is given by the inclusion of $G_{h, 0}$ into $K_{x,y}$.
		We will prove that the action of the element $s'_{H}$ on the space $\cA_{x_{0}}$ agrees with the orthogonal reflection with respect to the affine hyperplane $H$.
		It suffices to show the following three properties.
		\begin{enumerate}[(i)]
			\item
			The gradient of the action of $s'_{H}$ on $\cA_{x_{0}}$ preserves the inner product $(\phantom{x},\phantom{y})_{M}$ on $X_{*}(A_{M}) \otimes_{\bZ} \bR$.
			\item 
			The action of $s'_{H}$ on $\cA_{x_{0}}$ is nontrivial.
			\item
			For any $z \in H$, we have $s'_{H}(z) = z$.
		\end{enumerate}
		The first property follows from the fact that $(\phantom{x},\phantom{x})_{M}$ is preserved by $N_{G}(M)(F)$.
		The second property follows from the facts that $s'_{H} \neq 1$
		and that the group
		\(
		\bigl(
		G_{\cpt, 0} \cap N(\rho_{M})_{[x_{0}]_{M}}
		\bigr) / \bigl(
		G_{\cpt, 0} \cap K_{M}
		\bigr)
		\)
		acts faithfully on $\cA_{x_{0}}$ by Remark \ref{remarkaboutfaithful}
		combined with $G_{\cpt, 0} \cap M(F)_{x_0}=G_{\cpt, 0} \cap K_{M}$.
		
		We will prove the third property.
		It suffices to show that there exists a non-empty open subset $U$ of $H$ such that $s'_{H}(z) = z$ for all $z \in U$.
		Since $\mathfrak{H}_{x,y} = \{H\}$, the definitions of $\mathfrak{H}$ and $h$ and the parahoric subgroups imply that there exists an open ball $U$ in $H$ with center $h$ such that $G_{h, 0} = G_{z, 0}$ for all $z \in U$.
		Hence, we have 
		$
		s'_{H} \in 
		\left(
		G_{z, 0} \cap N(\rho_{M})_{[x_{0}]_{M}}
		\right) / \left(
		G_{z, 0} \cap K_{M}
		\right) 
		$
		for all $z \in U$.
		In particular, $s'_{H}(z) = z$ for all $z \in U$.
		
		Next, we will prove \eqref{proofofaxiomexistenceofRgrpandaxiomsHisinKxyaxiomaboutdimensionofend}.
		Let $s \in S_{\Krel}$ and $x \in \cA_{\gen}$ such that $\mathfrak{H}_{x, s x} = \{ H_{s} \}$.
		By Part \eqref{proofofaxiomexistenceofRgrpandaxiomsHisinKxyaxiomexistenceofRgrp} the element $s$ can be represented by an element in $G_{\cpt, 0} \cap N(\rho_{M})_{[x_{0}]_{M}}$, which we also denote by $s$. Since $s$ fixes $H_{s}$, we have $s \in G_{\cpt, 0} \cap N(\rho_{M})_{[x_{0}]_{M}} \cap G_{h'}$ for any $h' \in H_{s}$. Hence $s \in K_{x,sx}$. Thus 
		\( 
		\left(
		N(\rho_{M})_{[x_{0}]_{M}} \cap K_{x, s x}
		\right) / K_{M}
		\supset \{1, s \}
		\).
		If $s'_H \in \left(
		N(\rho_{M})_{[x_{0}]_{M}} \cap K_{x, s x}
		\right) / K_{M}$ is nontrivial, then the same arguments as in the proof of Part \eqref{proofofaxiomexistenceofRgrpandaxiomsHisinKxyaxiomexistenceofRgrp} show that that $s'_H=s$.
	\end{subequations}
\end{proof}

Now we have shown that all the axioms of
Section \ref{Structure of a Hecke algebra} 
are satisfied in the setting of the present section.
Thus, we obtain the following result.

\begin{theorem}
	\label{theoremstructureofheckefordepthzero}
	We have an isomorphism of $\Coeff$-algebras
	\[
	\cH(G(F), \rho_{x_{0}}) \simeq \Coeff[\Wzero, \muT] \ltimes \cH_\Coeff(\Waff, q),
	\] 
	where $\muT$ denotes the restriction to $\Wzero \times \Wzero$ of the $2$-cocycle introduced in Notation~\ref{notationofthetwococycle}
	for a choice of a family $\cT$ satisfying the properties of Choice~\ref{choice:tw}
	and $q$ denotes the parameter function $s \mapsto q_{s}$ appearing in Choice~\ref{choice:tw}\eqref{conditionofthechoicequadraticrelations}.
	If $\Coeff$ admits a nontrivial involution,
then we can choose $\cT$ as in Choice~\ref{choice:star},
and the above isomorphisms can be chosen to preserve the anti-involutions
on each algebra defined in Section~\ref{Anti-involution of the Hecke algebra}.
\end{theorem}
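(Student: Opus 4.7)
The plan is that this theorem is essentially an immediate corollary of the preceding work in this section combined with the main structural theorem (Theorem \ref{theoremstructureofhecke}). All the genuine mathematical content has already been packaged into Lemma \ref{proofofaxiomaboutHNheartandK} and Propositions \ref{propproofofaxiombijectionofdoublecoset} and \ref{proofofaxiomexistenceofRgrpandaxiomsHisinKxy}, so what remains is an assembly argument.

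My first step would be to state that we take $\Nheart = N(\rho_{M})_{[x_{0}]_{M}}$ and observe that, by construction, this group contains $A_{M}(F)$ (since $A_M \subseteq Z(M) \subseteq N_G(M)$ and $A_M(F)$ acts on $\cA_{x_0}$ by translations preserving $K_M$ and $\rho_M$). I would then simply invoke the four axiom verifications in order: Axiom \ref{axiomaboutHNheartandK} follows from Lemma \ref{proofofaxiomaboutHNheartandK}; Axiom \ref{axiombijectionofdoublecoset} follows from Proposition \ref{propproofofaxiombijectionofdoublecoset}; Axioms \ref{axiomexistenceofRgrp} and \ref{axiomaboutdimensionofend} both follow from Proposition \ref{proofofaxiomexistenceofRgrpandaxiomsHisinKxy}.

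With all four axioms in hand, I would fix a subset $\Coeffplus \subset \Coeffinvnontriv$ as in Choice \ref{choiceofCoeffplus}, choose a family $\cT$ satisfying the conditions of Choice \ref{choice:tw} (whose existence is guaranteed by Proposition \ref{propositionchoice:twispossible}), and directly apply Theorem \ref{theoremstructureofhecke} to obtain the first asserted isomorphism
\[
\cH(G(F), \rho_{x_{0}}) \isoarrow \Coeff[\Wzero, \muT] \ltimes \cH_\Coeff(\Waff, q).
\]

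For the anti-involution statement, assuming $\Coeff$ admits a nontrivial involution, I would refine the choice of $\cT$ to satisfy the conditions of Choice \ref{choice:star} (available by Proposition \ref{propositionchoice:starispossible}), and then invoke Proposition \ref{propstarpreservationabstractheckevsourhecke} to conclude that the resulting isomorphism preserves the anti-involutions on both sides. I do not anticipate any obstacle here — the entire proof amounts to a bookkeeping exercise confirming that the pair $(K_{x_0},\rho_{x_0})$ and its associated family $\cK$ falls within the axiomatic framework developed in Section \ref{Structure of a Hecke algebra}, which is precisely what the preceding subsections of Section \ref{sec:depth-zero} were designed to establish.
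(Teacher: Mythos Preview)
Your proposal is correct and takes essentially the same approach as the paper's own proof, which simply invokes Theorem~\ref{theoremstructureofhecke} and Proposition~\ref{propstarpreservationabstractheckevsourhecke} after noting that their hypotheses are verified by Lemma~\ref{proofofaxiomaboutHNheartandK}, Proposition~\ref{propproofofaxiombijectionofdoublecoset}, and Proposition~\ref{proofofaxiomexistenceofRgrpandaxiomsHisinKxy}. Your version spells out slightly more detail (the choice of $\Coeffplus$, the existence propositions for $\cT$), but the logical structure is identical.
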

\begin{proof}
	The statement follows from Theorem \ref{theoremstructureofhecke} and Proposition \ref{propstarpreservationabstractheckevsourhecke}, whose assumptions are satisfied by 
	Lemma~\ref{proofofaxiomaboutHNheartandK}, 
	Proposition~\ref{propproofofaxiombijectionofdoublecoset}, 
	and Proposition~\ref{proofofaxiomexistenceofRgrpandaxiomsHisinKxy}. 
\end{proof}
In the case where $K_{x_0}$ is a parahoric subgroup of $G(F)$, this result was proven by Morris (\cite[7.12~Theorem]{Morris}).


\section*{List of axioms}
\addcontentsline{toc}{section}{List of axioms} \label{page:listofaxioms}
	Axiom \ref{axiomaboutHNheartandK}, p. \pageref{axiomaboutHNheartandK}
	
	Axiom \ref{axiombijectionofdoublecoset}, p. \pageref{axiombijectionofdoublecoset}
	
	Axiom \ref{axiomexistenceofRgrp}, p. \pageref{axiomexistenceofRgrp}
	
	Axiom \ref{axiomaboutdimensionofend}, p. \pageref{axiomaboutdimensionofend}
	
	Axiom \ref{axiomaboutKM0vsKM}, p. \pageref{axiomaboutKM0vsKM}
	
	Axiom \ref{axiomaboutK0vsK}, p. \pageref{axiomaboutK0vsK}
	
	Axiom \ref{axiomextensionoftheinductionofkappa}, p. \pageref{axiomextensionoftheinductionofkappa}

\printindex{notation-ax}{Selected notation} \label{page:listofnotation}


\bibliographystyle{halpha-abbrv}
\addcontentsline{toc}{section}{References}
\bibliography{ourbib}

\end{document}